\numberwithin{equation}{section}
\newtheoremstyle{mystyle}
{}
{}
{\normalfont}
{ }
{\bfseries}
{}
{10pt}
{ }
\theoremstyle{mystyle}
\newtheorem{theorem}{Theorem}
\newtheorem{proposition}{Proposition}
\newtheorem{lemma}{Lemma}
\newtheorem{remark}{Remark}
\DeclareMathOperator*{\arginf}{arg\,inf}
\def\diag{\mathop{\rm diag}\nolimits}
\def\Diag{\mathop{\rm Diag}\nolimits}
\def\tr{\mathop{\rm tr}\nolimits}
\def\vec{\mathop{\rm vec}\nolimits}
\def\vech{\mathop{\rm vech}\nolimits}
\def\det{\mathop{\rm det}\nolimits}
\def\rank{\mathop{\rm rank}\nolimits}
\def\Int{\mathop{\rm Int}\nolimits}
\newcommand{\dd}{\mathrm d}
\newcommand{\E}{\mathbb E}
\newcommand{\PP}{\mathbb P}
\title[SEM for diffusion processes based on high-frequency data]{Structural equation modeling with latent variables for diffusion processes and its application to sparse estimation}
\author[S. Kusano]{Shogo Kusano $^{1}$}
\author[M. Uchida]{Masayuki Uchida $^{1,2}$}
\address{$^{1}$Graduate School of Engineering Science, Osaka University}
\address{$^{2}$Center for Mathematical Modeling and Data Science (MMDS), Osaka University and JST CREST}
\begin{document}
\begin{abstract}
\fontsize{8pt}{10pt}\selectfont
 We consider structural equation modeling (SEM) with latent variables for diffusion processes based on high-frequency data. The quasi-likelihood estimators for parameters in the SEM are proposed. The goodness-of-fit test is derived from the quasi-likelihood ratio. We also treat sparse estimation in the SEM. The goodness-of-fit test for the sparse estimation in the SEM is developed. Furthermore, the asymptotic properties of our proposed estimators are examined.
\end{abstract}
\keywords{Structural equation modeling; Asymptotic theory; High-frequency data; Stochastic differential equation; Quasi-maximum likelihood estimation; Sparse estimation}
\maketitle

\section{Introduction}
\fontsize{10pt}{16pt}\selectfont
We consider structural equation modeling (SEM) with latent variables for diffusion processes. First, a true model is set. The stochastic process $\mathbb{X}_{1,0,t}$ is defined by a true factor model as follows:
\begin{align}
    \mathbb{X}_{1,0,t}={\bf{\Lambda}}_{x_1,0}\xi_{0,t}+\delta_{0,t},
\end{align}
where $\{\mathbb{X}_{1,0,t}\}_{t\geq 0}$ is a $p_1$-dimensional observable vector process, $\{\xi_{0,t}\}_{t\geq 0}$ is a $k_{1}$-dimensional latent common factor vector process,
$\{\delta_{0,t}\}_{t\geq 0}$ is a $p_{1}$-dimensional latent unique factor vector process, ${\bf{\Lambda}}_{x_1,0}\in\mathbb{R}^{p_1\times k_{1}}$ is a constant loading matrix, $p_1$ is not zero, $p_1$ and $k_{1}$ are fixed, and $k_{1}\leq p_1$. The stochastic process $\mathbb{X}_{2,0,t}$ is defined as the following true factor model:
\begin{align}
    \mathbb{X}_{2,0,t}={\bf{\Lambda}}_{x_2,0}\eta_{0,t}+\varepsilon_{0,t},
\end{align}
where $\{\mathbb{X}_{2,0,t}\}_{t\geq 0}$ is a $p_2$-dimensional observable vector process, $\{\eta_{0,t}\}_{t\geq 0}$ is a $k_{2}$-dimensional latent common factor vector process, $\{\varepsilon_{0,t}\}_{t\geq 0}$ is a $p_{2}$-dimensional latent unique factor vector process, ${\bf{\Lambda}}_{x_2,0}\in\mathbb{R}^{p_2\times k_{2}}$ is a constant loading matrix, $p_2$ is not zero, $p_2$ and $k_{2}$ are fixed, and $k_{2}\leq p_2$. Set $p=p_1+p_2$. Moreover, we express the relationship between $\eta_{0,t}$ and  $\xi_{0,t}$ as follows:
\begin{align}
    \eta_{0,t}={\bf{B}}_{0}\eta_{0,t}+{\bf{\Gamma}}_{0}\xi_{0,t}+\zeta_{0,t},
\end{align}
where $\{\zeta_{0,t}\}_{t\geq 0}$ is a $k_{2}$-dimensional latent unique factor vector process, ${\bf{B}}_{0}\in\mathbb{R}^{k_{2}\times k_{2}}$ is a constant loading matrix, whose diagonal elements are zero, and ${\bf{\Gamma}}_0\in\mathbb{R}^{k_{2}\times k_{1}}$ is a constant loading matrix. Set ${\bf{\Psi}}_0=\mathbb{I}_{k_2}-{\bf{B}}_0$, where $\mathbb{I}_{k_2}$ denotes the identity matrix of size $k_2$. It is assumed that ${\bf{\Lambda}}_{x_1,0}$ is a full column rank matrix and ${\bf{\Psi}}_0$ is non-singular. Suppose that $\{\xi_{0,t}\}_{t\geq 0}$ is defined as the following stochastic differential equation:
\begin{align}
    \qquad\dd \xi_{0,t}=B_{1}(\xi_{0,t})\dd t+{\bf{S}}_{1,0}\dd W_{1,t}\ \ (t\in [0,T]),\ \ 
    \xi_{0,0}=c_{1},
\end{align}
where $B_{1}:\mathbb{R}^{k_{1}}\rightarrow\mathbb{R}^{k_{1}}$, ${\bf{S}}_{1,0}\in\mathbb{R}^{k_{1}\times r_{1}}$, $c_{1}\in\mathbb{R}^{k_{1}}$ and $W_{1,t}$ is an $r_{1}$-dimensional standard Wiener process. $\{\delta_{0,t}\}_{t\geq 0}$ satisfies the following stochastic differential equation:
\begin{align}
    \qquad\dd \delta_{0,t}=B_{2}(\delta_{0,t})\dd t+{\bf{S}}_{2,0}\dd W_{2,t}\ \ (t\in [0,T]),\ \ 
    \delta_{0,0}=c_{2},
\end{align}
where $B_{2}:\mathbb{R}^{p_1}\rightarrow\mathbb{R}^{p_1}$, ${\bf{S}}_{2,0}\in\mathbb{R}^{p_1\times r_{2}}$, $c_{2}\in\mathbb{R}^{p_1}$ and $W_{2,t}$ is an $r_{2}$-dimensional standard Wiener process. $\{\varepsilon_{0,t}\}_{t\geq 0}$ 
is defined as the following stochastic differential equation:
\begin{align}
    \qquad\dd \varepsilon_{0,t}=B_{3}(\varepsilon_{0,t})\dd t+{\bf{S}}_{3,0}\dd W_{3,t}\quad (t\in [0,T]),\ \  \varepsilon_{0,0}=c_{3},
\end{align}
where $B_{3}:\mathbb{R}^{p_2}\rightarrow\mathbb{R}^{p_2}$, ${\bf{S}}_{3,0}\in\mathbb{R}^{p_2\times r_{3}}$, $c_{3}\in\mathbb{R}^{p_2}$ and $W_{3,t}$ is an $r_{3}$-dimensional standard Wiener process. $\{\zeta_{0,t}\}_{t\geq 0}$ 
satisfies the following stochastic differential equation:
\begin{align}
    \qquad\dd \zeta_{0,t}=B_{4}(\zeta_{0,t})\dd t+{\bf{S}}_{4,0}\dd W_{4,t}\ \ (t\in [0,T]),\ \ 
    \zeta_{0,0}=c_{4},
\end{align}
where $B_{4}:\mathbb{R}^{k_{2}}\rightarrow\mathbb{R}^{k_{2}}$, ${\bf{S}}_{4,0}\in\mathbb{R}^{k_{2}\times r_{4}}$, $c_{4}\in\mathbb{R}^{k_{2}}$ and $W_{4,t}$ is an $r_{4}$-dimensional standard Wiener process. 
Let ${\bf{\Sigma}}_{\xi\xi,0}={\bf{S}}_{1,0}{\bf{S}}_{1,0}^{\top}$, ${\bf{\Sigma}}_{\delta\delta,0}={\bf{S}}_{2,0}{\bf{S}}_{2,0}^{\top}$, ${\bf{\Sigma}}_{\varepsilon\varepsilon,0}={\bf{S}}_{3,0}{\bf{S}}_{3,0}^{\top}$ and ${\bf{\Sigma}}_{\zeta\zeta,0}={\bf{S}}_{4,0}{\bf{S}}_{4,0}^{\top}$, where $\top$ denotes the transpose. It is assumed that ${\bf{\Sigma}}_{\delta\delta,0}$ and ${\bf{\Sigma}}_{\varepsilon\varepsilon,0}$ are positive definite matrices, and $W_{1,t}$, $W_{2,t}$, $W_{3,t}$ and $W_{4,t}$ are independent.

Next, we set a parametric model. The stochastic process $\mathbb{X}_{1,t}$ is defined as the following factor model:
\begin{align}
    \mathbb{X}_{1,t}={\bf{\Lambda}}_{x_1}\xi_{t}+\delta_{t}\label{X},
\end{align}
where $\{\xi_{t}\}_{t\geq 0}$ is a $k_{1}$-dimensional latent common factor vector process, $\{\delta_{t}\}_{t\geq 0}$ is a $p_{1}$-dimensional latent unique factor vector process and ${\bf{\Lambda}}_{x_1}\in\mathbb{R}^{p_1\times k_{1}}$ is a constant loading matrix. The stochastic process $\mathbb{X}_{2,t}$ is defined by the factor model as follows:
\begin{align}
    \mathbb{X}_{2,t}={\bf{\Lambda}}_{x_2}\eta_{t}+\varepsilon_{t}\label{Y},
\end{align}
where $\{\eta_{t}\}_{t\geq 0}$ is a $k_{2}$-dimensional latent common factor vector process, $\{\varepsilon_{t}\}_{t\geq 0}$ is a $p_{2}$-dimensional latent unique factor vector process and ${\bf{\Lambda}}_{x_2}\in\mathbb{R}^{p_2\times k_{2}}$ is a constant loading matrix. Furthermore, the relationship between $\eta_{t}$ and  $\xi_{t}$ is expressed as follows:
\begin{align}
    \eta_{t}={\bf{B}}\eta_{t}+{\bf{\Gamma}}\xi_{t}+\zeta_{t}\label{eta},
\end{align}
where $\{\zeta_{t}\}_{t\geq 0}$ is a $k_{2}$-dimensional latent unique factor vector process, ${\bf{B}}\in\mathbb{R}^{k_{2}\times k_{2}}$ is a constant loading matrix, whose diagonal elements are zero, and ${\bf{\Gamma}}\in\mathbb{R}^{k_{2}\times k_{1}}$ is a constant loading matrix. It is supposed that ${\bf{\Lambda}}_{x_1}$ is a full column rank matrix and ${\bf{\Psi}}$ is non-singular, where ${\bf{\Psi}}=\mathbb{I}_{k_2}-{\bf{B}}$. Assume that $\{\xi_{t}\}_{t\geq 0}$ satisfies the following  stochastic differential equation:
\begin{align}
    \quad\dd \xi_{t}=B_{1}(\xi_{t})\dd t+{\bf{S}}_1\dd W_{1,t}\ \ (t\in [0,T]),\ \
    \xi_{0}=c_{1},\label{xiP}
\end{align}
where ${\bf{S}}_1\in\mathbb{R}^{k_{1}\times r_{1}}$. $\{\delta_{t}\}_{t\geq 0}$ is defined as the following stochastic differential equation:
\begin{align}
    \quad\dd\delta_{t}=B_{2}(\delta_{t})\dd t+{\bf{S}}_2\dd W_{2,t}\ \ (t\in [0,T]),\ \ 
    \delta_{0}=c_{2},\label{deltaP}
\end{align}
where ${\bf{S}}_2\in\mathbb{R}^{p_1\times r_{2}}$. $\{\varepsilon_{t}\}_{t\geq 0}$ 
satisfies the following  stochastic differential equation:
\begin{align}
    \quad\dd\varepsilon_{t}=B_{3}(\varepsilon_{t})\dd t+{\bf{S}}_3\dd W_{3,t}\ \ (t\in [0,T]),\ \ \varepsilon_{0}=c_{3},\label{epsilonP}
\end{align}
where ${\bf{S}}_3\in\mathbb{R}^{p_2\times r_{3}}$. $\{\zeta_{t}\}_{t\geq 0}$ is defined by the  stochastic differential equation as follows:
\begin{align}
    \quad\dd\zeta_{t}=B_{4}(\zeta_{t})\dd t+{\bf{S}}_4\dd W_{4,t}\ \ (t\in [0,T]),\ \ 
    \zeta_{0}=c_{4},\label{zetaP}
\end{align}
where ${\bf{S}}_4\in\mathbb{R}^{k_{2}\times r_4}$. Let ${\bf{\Sigma}}_{\xi\xi}={\bf{S}}_1{\bf{S}}_1^{\top}$, ${\bf{\Sigma}}_{\delta\delta}={\bf{S}}_2{\bf{S}}_2^{\top}$, ${\bf{\Sigma}}_{\varepsilon\varepsilon}={\bf{S}}_3{\bf{S}}_3^{\top}$ and ${\bf{\Sigma}}_{\zeta\zeta}={\bf{S}}_4{\bf{S}}_4^{\top}$. It is supposed that ${\bf{\Sigma}}_{\delta\delta}$ and ${\bf{\Sigma}}_{\varepsilon\varepsilon}$ are positive definite matrices. Set $\mathbb{X}_t=(\mathbb{X}_{1,t}^{\top},\mathbb{X}_{2,t}^{\top})^{\top}$. ${\bf{\Sigma}}\in\mathbb{R}^{p\times p}$ denotes the covariance structure of $\mathbb{X}_t$. $\{\mathbb{X}_{t_{i}^n}\}_{i=1}^n$ are discrete observations, where $t_{i}^n=ih_n$, $T=nh_n$, and $p_1$, $p_2$, $k_{1}$ and $k_{2}$ are independent of $n$. 

SEM is a method that describes the relationships between latent variables that cannot
be observed. SEM has been used in various fields, e.g., behavioral science, economics, engineering, and medical science. For example, in psychology, SEM is used to investigate the relationship between intelligence and motivation. Note that intelligence and motivation are latent variables. J{\"o}reskog \cite{Joreskog(1970)} proposed this method by combining path analysis and confirmatory factor analysis. For path analysis and confirmatory factor analysis, see, e.g., Mueller \cite{Mueller(1999)}. Several models have been proposed to formulate SEM. In this paper, we consider the model defined by (\ref{X}), (\ref{Y}) and (\ref{eta}), which is called the LInear Structural RELations (LISREL) model (J{\"o}reskog \cite{Joreskog(1972)}). The LISREL model is one of the most well-known models in SEM and can express the complex relationship between latent variables. For more information on the LISREL model, see, e.g., Everitt \cite{Everitt(1984)}. Note that SEM is a confirmatory analysis method rather than an exploratory analysis method. SEM is used to specify the model from a theoretical viewpoint of each research field before conducting the analysis. This is the difference between confirmatory analysis methods and exploratory analysis methods such as exploratory factor analysis. 

Sparse estimation has been applied to many methods, e.g., principal component analysis (Zou et.al. \cite{Zou et.al.(2006)}) and exploratory factor analysis (Choi et.al. \cite{Choi(2010)}). Jacobucci et.al. \cite{Jacobucci(2016)} and Huang et.al. \cite{Huang(2017)} suggested sparse estimation in SEM. In this paper, we call the method ``Sparse estimation in Structural Equation Modeling (SSEM)". In SEM, although some parameters may be set to 0, this assumption may be incorrect, which means that the model is misspecified. SSEM can overcome the problem of model misspecification through sparse estimation. Note that in SSEM, like exploratory factor analysis, 
statisticians make only the minimum assumption to satisfy an identifiability condition. Thus, SSEM may be referred to as ``exploratory structural equation modeling". Huang et.al. \cite{Huang(2017)} showed that the estimator has the oracle property, which means that the true sparsity pattern of the parameters is correctly specified asymptotically. See Fan and Li \cite{Fan(2001)} for the oracle property. In addition, they proposed a test statistic for the goodness-of-fit test in SSEM and examined its asymptotic properties.
 
In behavioral science, factor analysis for time series data has been actively studied; see, e.g., Molenaar \cite{Molenaar(1985)} and Pena and box \cite{Pena(1987)}. 
Moreover, Czi{\'a}ky \cite{Cziraky(2004)} proposed SEM for time series data called dynamic structural equation model with latent variables (DSEM). Asparouhov et.al. \cite{Asparouhov} studied a more general DSEM model. Recently, we can easily obtain high-frequency data such as stock price data and life-log data (blood pressure and EEG, etc.) thanks to the development of measuring devices, and statistical inference for stochastic differential equations based on high-frequency data has been developed. For parametric estimation of diffusion processes based on high-frequency data, see for example, Yoshida \cite{Yoshida(1992)}, Genon-Catalot and Jacod \cite{Genon(1993)}, Kessler \cite{kessler(1997)}, Uchida and Yoshida \cite{Uchi-Yoshi(2012)} and references therein. De Gregorio and Iacus \cite{De Gregorio(2012)} and Masuda and Shimizu \cite{Masuda(2017)} studied sparse estimation of diffusion processes based on high-frequency data. Suzuki and Yoshida \cite{Suzuki(2020)} considered a more general situation and proposed a new method that has a computational advantage. In financial econometrics, the factor model for high-frequency data has been extensively researched. In this field, parameters and the number of factors are estimated by using principal component analysis for high-frequency data (A{\"i}t-Sahalia and Xiu \cite{Ait(2019)}) when the factors are latent; see, e.g., A{\"i}t-Sahalia and Xiu \cite{Ait(2017)}. However, these studies are based on high dimensionality. 
For a low-dimensional model, the estimator does not have consistency; see Bai \cite{Bai(2003)}. On the other hand, Kusano and Uchida \cite{Kusano(2022)} proposed classical factor analysis for diffusion processes. Their method works well for a low-dimensional model. However, to the best of our knowledge, there have been few studies of SEM and SSEM for high-frequency data.
Oud and Jansen \cite{Oud(2000)} and Driver et.al. \cite{Driver(2017)} considered SEM for stochastic differential equations. 
Note that their model differs from the model in this paper.
In the field of causal inference, Hansen and Sokol \cite{Hansen(2014)} studied SEM for stochastic differential equations. 
However, since their model is the path analysis model, it cannot describe the relationship between latent variables. Note that these studies do not assume that the data is sampled with high-frequency. On the other hand, we propose SEM and SSEM for diffusion processes based on high-frequency data.

In this paper, we assume that the volatilities for diffusion processes and loading matrices are not time-variant but constant to simplify the discussion.
We leave for future work the discussion on the model 
where the volatilities for diffusion processes and loading matrices are time-varying. Furthermore, we do not discuss a high-dimensional case. Bai \cite{bai(2012)} studied the asymptotic properties of factor analysis based on the maximum likelihood estimation for a high-dimension model. We expect that our quasi-likelihood method will also work well for a high-dimension model. The investigation is future work.

The paper is organized as follows. In Section 2, notation and assumptions are introduced. 
In Section 3, we study SEM for diffusion processes in the ergodic and non-ergodic cases. 
First, the asymptotic properties of the realized covariance are examined. 
Next, we obtain the quasi-likelihood estimators for parameters in the SEM. 
Asymptotic properties of the estimators are shown. Furthermore, we propose a goodness-of-fit test based on the quasi-likelihood ratio and investigate its asymptotic properties. In section 4, SSEM for diffusion processes in the ergodic and non-ergodic cases is discussed. 
We propose a goodness-of-fit test for SSEM and study its asymptotic properties. In Sections 5 and 6, we give examples and simulation studies to investigate the asymptotic performance of the results described in Sections 3 and 4. Section 7 is devoted to the proofs of the theorems given in Sections 3 and 4.
\section{Preliminaries}
First, we prepare the following notations and definitions.
For any vector $v$, $|v|=\sqrt{\tr{vv^\top}}$, $v^{(i)}$ is the $i$-th element of $v$ and $\Diag v$ is a diagonal matrix, whose $i$-th diagonal element is $v^{(i)}$. For any matrix A, $\|A\|=\sqrt{\tr{AA^\top}}$, and $A_{ij}$ is the $(i,j)$-th element of $A$. For any matrix $A\in\mathbb{R}^{p\times p}$, $\diag A$ is a $p$-dimensional vector, whose $i$-th element is $A_{ii}$. For any set $S$, $|S|$ is the number of elements in $S$.
Define $O_{p\times q}$ as the $p\times q$ zero matrix. For any symmetric matrix $A\in\mathbb{R}^{p\times p}$, $\vec A$,  $\vech A$ and $\mathbb{D}_{p}$ denote the vectorization of $A$, the half-vectorization of $A$ and the $p^2\times\bar{p}$ duplication matrix, respectively. Here, $\vec{A}=\mathbb{D}_{p}\vech{A}$ and $\bar{p}=p(p+1)/2$; see, e.g., Harville \cite{Harville(1998)}. For any matrix $A$, $A^{+}$ denotes the Moore-Penrose inverse of $A$. If $A$ is a positive definite matrix, we write $A>0$. For any positive sequence $u_{n}$, 
$R:[0,\infty)\times \mathbb{R}^d\rightarrow \mathbb{R}$ denotes the short notation for functions which satisfy $|R({u_{n}},x)|\leq u_{n}C(1+|x|)^C$ for some $C>0$. Let $ C^{k}_{\uparrow}(\mathbb R^{d})$ be the space of all functions $f$ satisfying the following conditions:
\begin{itemize}
    \item[(i)] $f$ is continuously differentiable with respect to $x\in \mathbb{R}^d$ up to order $k$. 
    \item[(ii)] $f$ and all its derivatives are of polynomial growth in $x\in \mathbb{R}^d$, i.e., 
    $g$ is
of polynomial growth in $x\in \mathbb{R}^d$ if $\displaystyle g(x)=R(1,x)$. 
\end{itemize}
$N_{p}(\mu,\Sigma)$ represents the $p$-dimensional normal random variable with mean $\mu\in\mathbb{R}^p$ and covariance matrix $\Sigma\in\mathbb{R}^{p\times p}$. 
Let $\chi^2_{r}$ be  the random variable which has 
the chi-squared distribution with $r$ degrees of freedom. $\chi^2_{r}(\alpha)$ denotes an upper $\alpha$ point of the chi-squared distribution with $r$ degrees of freedom, where $0\leq\alpha\leq 1$. The symbols $\stackrel{P}{\longrightarrow}$ and $\stackrel{d}{\longrightarrow}$ denote convergence in probability and convergence in distribution, respectively.  Set the true value of the covariance structure ${\bf{\Sigma}}$ as
\begin{align}
    {\bf{\Sigma}}_0=\begin{pmatrix}
    {\bf{\Sigma}}_0^{11} & {\bf{\Sigma}}_0^{12}\\
    {\bf{\Sigma}}_0^{12\top} & {\bf{\Sigma}}_0^{22}
    \end{pmatrix},\label{sigma0}
\end{align}
where 
\begin{align*}                      
    \qquad\qquad{\bf{\Sigma}}_0^{11}&
    ={\bf{\Lambda}}_{x_1,0}{\bf{\Sigma}}_{\xi\xi,0}{\bf{\Lambda}}_{x_1,0}^{\top}
    +{\bf{\Sigma}}_{\delta\delta,0},\\  {\bf{\Sigma}}_0^{12}&={\bf{\Lambda}}_{x_1,0}{\bf{\Sigma}}_{\xi\xi,0}{\bf{\Gamma}}_{0}^{\top}{\bf{\Psi}}_0^{-1\top}{\bf{\Lambda}}_{x_2,0}^{\top},\\
    {\bf{\Sigma}}_0^{22}&={\bf{\Lambda}}_{x_2,0}{\bf{\Psi}}_0^{-1}({\bf{\Gamma}}_0{\bf{\Sigma}}_{\xi\xi,0}{\bf{\Gamma}}_0^{\top}+{\bf{\Sigma}}_{\zeta\zeta,0}){\bf{\Psi}}_0^{-1\top}{\bf{\Lambda}}_{x_2,0}^{\top}+{\bf{\Sigma}}_{\varepsilon\varepsilon,0}.
\end{align*}
Next, we make the following assumptions.
\begin{enumerate}
    \vspace{2mm}
    \item[\bf{[A1]}]
    \begin{enumerate}
    \item[(i)] There exists a constant $C>0$ such that for any $x,y\in\mathbb R^{k_{1}}$, \begin{align*}
    |B_{1}(x)-B_{1}(y)|\leq C|x-y|.
    \end{align*}
    \item[(ii)] For all $\ell\geq 0$,
    $\displaystyle\sup_t\E\bigl[|\xi_{t}|^{\ell}\bigr]<\infty$.
    \item[(iii)] $B_{1}\in C^{4}_{\uparrow}(\mathbb R^{k_{1}})$.
    \end{enumerate}
    \vspace{2mm}
    \item[\bf{[A2]}]
    The diffusion process  $\xi_{t}$ is ergodic with its invariant measure $\pi_{\xi}$: For any $\pi_{\xi}$-integrable function $g$, it holds that
    \begin{align*}
    \frac{1}{T}\int_{0}^{T}{g(\xi_{t})dt}\overset{P}{\longrightarrow}\int g(x)\pi_{\xi}(dx)
    \end{align*}
    as $T\longrightarrow\infty$. 
    \vspace{2mm}
    \item[\bf{[B1]}]
    \begin{enumerate}
    \item[(i)] There exists a constant $C>0$ such that for any $x,y\in\mathbb R^{p_1}$, 
    \begin{align*}
    |B_{2}(x)-B_{2}(y)|\leq C|x-y|.
    \end{align*}
    \item[(ii)] For all $\ell\geq 0$, 
    $\displaystyle\sup_t\E\bigl[|\delta_{t}|^{\ell}\bigr]<\infty$.
    \item[(iii)] $B_{2}\in C^{4}_{\uparrow}(\mathbb R^{p_1})$.
    \end{enumerate}
    \vspace{2mm}
    \item[\bf{[B2]}] The diffusion process  $\delta_{t}$ is ergodic with its invariant measure $\pi_{\delta}$: For any $\pi_{\delta}$-integrable function $g$, it holds that
    \begin{align*}
    \frac{1}{T}\int_{0}^{T}{g(\delta_{t})dt}\overset{P}{\longrightarrow}\int g(x)\pi_{\delta}(dx)
    \end{align*}
    as $T\longrightarrow\infty$.
    \vspace{2mm}
    \item[\bf{[C1]}]\begin{enumerate}
    \item[(i)] There exists a constant $C>0$ such that for any $x,y\in\mathbb R^{p_2}$, 
    \begin{align*}
    |B_{3}(x)-B_{3}(y)|\leq C|x-y|.
    \end{align*}
    \item[(ii)] For all $\ell\geq 0$, 
    $\displaystyle\sup_t\E\bigl[|\varepsilon_{t}|^{\ell}\bigr]<\infty$.
    \item[(iii)] $B_{3}\in C^{4}_{\uparrow}(\mathbb R^{p_2})$.
    \end{enumerate}
    \vspace{2mm}
    \item[\bf{[C2]}]
    The diffusion process  $\varepsilon_{t}$ is ergodic with its invariant measure $\pi_{\varepsilon}$: For any $\pi_{\varepsilon}$-integrable function $g$, it holds that
    \begin{align*}
    \frac{1}{T}\int_{0}^{T}{g(\varepsilon_{t})dt}\overset{P}{\longrightarrow}\int g(x)\pi_{\varepsilon}(dx)
    \end{align*}
    as $T\longrightarrow\infty$.
    \vspace{2mm}
    \item[\bf{[D1]}]
    \begin{enumerate}
    \item[(i)] There exists a constant $C>0$ such that for any $x,y\in\mathbb R^{k_{2}}$, 
    \begin{align*}
    |B_{4}(x)-B_{4}(y)|\leq C|x-y|.
    \end{align*}
    \item[(ii)] For all $\ell\geq 0$, $\displaystyle\sup_t\E\bigl[|\zeta_{t}|^{\ell}\bigr]<\infty$.
    \item[(iii)] $B_{4}\in C^{4}_{\uparrow}(\mathbb R^{k_{2}})$.
    \end{enumerate}
    \vspace{2mm}
    \item[\bf{[D2]}]
    The diffusion process  $\zeta_{t}$ is ergodic with its invariant measure $\pi_{\zeta}$: For any $\pi_{\zeta}$-integrable function $g$, it holds that
    \begin{align*}
    \frac{1}{T}\int_{0}^{T}{g(\zeta_{t})dt}\overset{P}{\longrightarrow}\int g(x)\pi_{\zeta}(dx)
    \end{align*}
    as $T\longrightarrow\infty$.
    \vspace{2mm}
\end{enumerate}
\begin{remark}
For diffusion processes, Assumptions $[{\bf{A1}}]$, $[{\bf{B1}}]$, $[{\bf{C1}}]$ and $[{\bf{D1}}]$ are the standard assumptions; see, e.g., Kessler \cite{kessler(1997)}.
\end{remark}
\section{SEM for diffusion processes}
First, we investigate the non-ergodic case, where $[{\bf{A2}}]$, $[{\bf{B2}}]$, $[{\bf{C2}}]$ and $[{\bf{D2}}]$ are not assumed and $T$ is fixed. To estimate ${\bf{\Sigma}}_0$, we use the realized covariance as follows:
\begin{align}
    \mathbb{Q}_{\mathbb{XX}}=\frac{1}{T}\sum_{i=1}^{n}(\mathbb{X}_{t_{i}^n}-\mathbb{X}_{t_{i-1}^n})(\mathbb{X}_{t_{i}^n}-\mathbb{X}_{t_{i-1}^n})^{\top}.
    \label{Qx}
\end{align}
For the realized covariance, the following theorem holds.
\begin{theorem}\label{Qtheoremnon}
Under $[{\bf{A1}}]$, $[{\bf{B1}}]$, $[{\bf{C1}}]$ and $[{\bf{D1}}]$, as $h_n\longrightarrow0$,
\begin{align*}
    \mathbb{Q}_{\mathbb{XX}}\stackrel{P}{\longrightarrow} {\bf{\Sigma}}_0
\end{align*}
and
\begin{align*}
    \sqrt{n}(\vech{\mathbb{Q}_{\mathbb{XX}}}-\vech{{\bf{\Sigma}}_0})\stackrel{d}{\longrightarrow} N_{\bar{p}}\Bigl(0,2\mathbb{D}_{p}^{+}({\bf{\Sigma}}_0\otimes{\bf{\Sigma}}_0)\mathbb{D}_{p}^{+\top}\Bigr).
\end{align*}
\end{theorem}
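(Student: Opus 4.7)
The plan is to rewrite $\mathbb{X}_t$ as a single $p$-dimensional diffusion driven by a combined Wiener process with constant volatility matrix whose square is ${\bf{\Sigma}}_0$, and then to apply the classical realized-covariance asymptotics at fixed $T$.

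First, I would verify that $\mathbb{X}_t=(\mathbb{X}_{1,0,t}^{\top},\mathbb{X}_{2,0,t}^{\top})^{\top}$ satisfies a stochastic differential equation of the form
\begin{align*}
d\mathbb{X}_t=\mu(Y_{0,t})\,dt+{\bf{S}}_0\,dW_t,
\end{align*}
where $Y_{0,t}=(\xi_{0,t}^{\top},\delta_{0,t}^{\top},\varepsilon_{0,t}^{\top},\zeta_{0,t}^{\top})^{\top}$, $W_t=(W_{1,t}^{\top},W_{2,t}^{\top},W_{3,t}^{\top},W_{4,t}^{\top})^{\top}$, and ${\bf{S}}_0 {\bf{S}}_0^{\top}={\bf{\Sigma}}_0$. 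This is a direct computation: substituting $\eta_{0,t}={\bf{\Psi}}_0^{-1}({\bf{\Gamma}}_0\xi_{0,t}+\zeta_{0,t})$ into the true factor equations and differentiating, using the independence of $W_{1,t},W_{2,t},W_{3,t},W_{4,t}$, produces a block structure that reproduces ${\bf{\Sigma}}_0^{11},{\bf{\Sigma}}_0^{12},{\bf{\Sigma}}_0^{22}$ in (\ref{sigma0}) exactly. Under $[{\bf{A1}}]$(iii)--$[{\bf{D1}}]$(iii) the composite drift $\mu$ is of polynomial growth, and under $[{\bf{A1}}]$(ii)--$[{\bf{D1}}]$(ii) one has $\sup_t\E[|Y_{0,t}|^{\ell}]<\infty$ for every $\ell\geq 0$.

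Next, set $\Delta_i\mathbb{X}=\mathbb{X}_{t_i^n}-\mathbb{X}_{t_{i-1}^n}$ and write $\Delta_i\mathbb{X}=A_i+M_i$ with
\begin{align*}
A_i=\int_{t_{i-1}^n}^{t_i^n}\mu(Y_{0,s})\,ds,\qquad M_i=\int_{t_{i-1}^n}^{t_i^n}{\bf{S}}_0\,dW_s.
\end{align*}
Expanding $\Delta_i\mathbb{X}(\Delta_i\mathbb{X})^{\top}$ yields the four pieces $M_iM_i^{\top}+M_iA_i^{\top}+A_iM_i^{\top}+A_iA_i^{\top}$. Using the Burkholder--Davis--Gundy inequality together with the polynomial growth of $\mu$ and the moment bounds above, one gets $\E[|A_i|^k]=O(h_n^k)$ and $\E[|M_i|^k]=O(h_n^{k/2})$ for every $k\geq 1$. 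Decomposing further $A_i=h_n\mu(Y_{0,t_{i-1}^n})+R_i$ with $\E[|R_i|^2]=O(h_n^3)$ and exploiting the fact that $h_nM_i\mu(Y_{0,t_{i-1}^n})^{\top}$ is a conditional martingale difference, one verifies that, after multiplication by $\sqrt{n}$ and division by $T=nh_n$, all terms involving $A_i$ or $R_i$ vanish in probability.

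It remains to analyse the dominant term. Setting $Z_i=(W_{t_i^n}-W_{t_{i-1}^n})/\sqrt{h_n}$ gives i.i.d.\ standard Gaussian vectors on $\mathbb{R}^{r_1+r_2+r_3+r_4}$, and
\begin{align*}
\frac{1}{T}\sum_{i=1}^n M_iM_i^{\top}=\frac{1}{n}\sum_{i=1}^n {\bf{S}}_0 Z_iZ_i^{\top}{\bf{S}}_0^{\top}.
\end{align*}
The law of large numbers gives the first assertion, since $\E[{\bf{S}}_0 Z_1Z_1^{\top}{\bf{S}}_0^{\top}]={\bf{S}}_0{\bf{S}}_0^{\top}={\bf{\Sigma}}_0$. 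For the second, applying the multivariate CLT to the i.i.d.\ vectors $\vech({\bf{S}}_0 Z_iZ_i^{\top}{\bf{S}}_0^{\top}-{\bf{\Sigma}}_0)$ delivers the stated limit; the covariance $2\mathbb{D}_p^{+}({\bf{\Sigma}}_0\otimes{\bf{\Sigma}}_0)\mathbb{D}_p^{+\top}$ then follows from the Isserlis (Wick) formula for the Gaussian vector ${\bf{S}}_0 Z_1$, combined with the identity $\vec(A)=\mathbb{D}_p\vech(A)$ for symmetric $A$. The main obstacle is the bookkeeping of the negligible terms: since $nh_n=T$ is fixed, $\sqrt{n}$-amplification leaves no slack, so the finer split $A_i=h_n\mu(Y_{0,t_{i-1}^n})+R_i$ and the martingale-difference argument on $\sum_i h_nM_i\mu(Y_{0,t_{i-1}^n})^{\top}$ are essential; cruder moment bounds alone will not suffice.
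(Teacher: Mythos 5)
Your proposal is correct, and it takes a genuinely different route from the paper. The paper never assembles $\mathbb{X}_t$ into a single vector SDE; instead it expands $\mathbb{Q}_{\mathbb{XX}}$ blockwise into realized covariations of the latent processes ($\mathbb{Q}_{\xi\xi,0}$, $\mathbb{Q}_{\xi\delta,0}$, and so on), proves consistency of each block via Kessler-type conditional moment expansions combined with Lemma 9 of Genon-Catalot and Jacod, and obtains the CLT by treating $L_{i,n}=\sqrt{n}\vec{\Delta\mathbb{X}_i\Delta\mathbb{X}_i^{\top}}-n^{-1/2}\vec{{\bf{\Sigma}}_0}$ as a martingale-difference array and verifying the conditions of Jacod's stable limit theorem (conditional means, conditional covariances converging to $t\bar{{\bf{W}}}_0$, orthogonality to $\bar{W}$ and to $\mathcal{M}_b(\bar{W}^{\perp})$, and a fourth-moment Lyapunov condition), all resting on the long list of conditional moment lemmas in the appendix. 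You instead exploit the constancy of the volatility to reduce the dominant term to the exactly i.i.d.\ array $n^{-1}\sum_i{\bf{S}}_0Z_iZ_i^{\top}{\bf{S}}_0^{\top}$, invoke the classical LLN and CLT plus the Isserlis formula, and push all the work into showing the drift contributions are negligible after $\sqrt{n}$-scaling; your isolation of the borderline term $\sum_i h_nM_i\mu(Y_{0,t_{i-1}^n})^{\top}$, where Cauchy--Schwarz alone gives only $O(1)$ and the martingale-difference orthogonality gives total variance $O(n^{-1})$, is exactly the step that makes the argument close, and you handle it correctly. What each approach buys: yours is shorter and more transparent, and plain weak convergence is all that Theorem 1 asserts and all that Theorems 2--4 consume (the other factors converge in probability to constants, so Slutsky suffices); the paper's martingale-array machinery delivers stable convergence, serves as the template reused verbatim for the ergodic case in Theorem 5, and would survive non-constant volatility, whereas your reduction to an i.i.d.\ array is tied to ${\bf{S}}_0$ being constant.
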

Next, we consider the parameter estimation. The parameters ${\bf{\Lambda}}_{x_1}$, ${\bf{\Lambda}}_{x_2}$, ${\bf{\Gamma}}$, ${\bf{\Psi}}$, ${\bf{\Sigma}}_{\xi\xi}$, ${\bf{\Sigma}}_{\delta\delta}$, ${\bf{\Sigma}}_{\varepsilon\varepsilon}$ and ${\bf{\Sigma}}_{\zeta\zeta}$ are estimated. Note that some of these elements are assumed to be known in order to satisfy an identifiability condition for parameter estimation. See Remark \ref{identification} for constraints on the parameter and the identifiability condition. Set the parameter as $\theta\in\Theta$, where $\Theta\subset\mathbb{R}^{q}$ is a convex compact space. $\theta$ includes only unknown and non-duplicated elements of ${\bf{\Lambda}}_{x_1}$, ${\bf{\Lambda}}_{x_2}$, ${\bf{\Gamma}}$, ${\bf{\Psi}}$, ${\bf{\Sigma}}_{\xi\xi}$, ${\bf{\Sigma}}_{\delta\delta}$, ${\bf{\Sigma}}_{\varepsilon\varepsilon}$ and ${\bf{\Sigma}}_{\zeta\zeta}$. Set the covariance structure of the parametric model as
\begin{align}
    {\bf{\Sigma}}(\theta)=\begin{pmatrix}
    {\bf{\Sigma}}^{11}(\theta) & {\bf{\Sigma}}^{12}(\theta)\\
    {\bf{\Sigma}}^{12}(\theta)^{\top} & {\bf{\Sigma}}^{22}(\theta)
    \end{pmatrix},\label{sigmatheta}
\end{align}
where 
\begin{align*}
    \qquad\quad{\bf{\Sigma}}^{11}(\theta)&={\bf{\Lambda}}_{x_1}{\bf{\Sigma}}_{\xi\xi}{\bf{\Lambda}}_{x_1}^{\top}+{\bf{\Sigma}}_{\delta\delta},\\
    {\bf{\Sigma}}^{12}(\theta)&={\bf{\Lambda}}_{x_1}{\bf{\Sigma}}_{\xi\xi}{\bf{\Gamma}}^{\top}{\bf{\Psi}}^{-1\top}{\bf{\Lambda}}_{x_2}^{\top},\\
    {\bf{\Sigma}}^{22}(\theta)&={\bf{\Lambda}}_{x_2}{\bf{\Psi}}^{-1}({\bf{\Gamma}}{\bf{\Sigma}}_{\xi\xi}{\bf{\Gamma}}^{\top}+{\bf{\Sigma}}_{\zeta\zeta}){\bf{\Psi}}^{-1\top}{\bf{\Lambda}}_{x_2}^{\top}+{\bf{\Sigma}}_{\varepsilon\varepsilon}.
\end{align*}
Suppose that there exists $\theta_{0}\in\Int \Theta$ such that
\begin{align*}
    {\bf{\Sigma}}_0={\bf{\Sigma}}(\theta_{0}).
\end{align*}
Note that ${\bf{\Sigma}}_0$ and ${\bf{\Sigma}}(\theta)$ are positive definite matrices; see Lemma \ref{Sigmaposlemma}. Define the quasi-likelihood function of the parametric model as
\begin{align}
    \mathbb{L}_{n}(\theta)=\prod_{i=1}^{n}\frac{1}{(2\pi)^{\frac{p}{2}}\det{(h_n{\bf{\Sigma}}(\theta)})^{\frac{1}{2}}}\exp{\left\{-\frac{1}{2h_n}(\mathbb{X}_{t_{i}^n}-\mathbb{X}_{t_{i-1}^n})^{\top}{\bf{\Sigma}}(\theta)^{-1}(\mathbb{X}_{t_{i}^n}-\mathbb{X}_{t_{i-1}^n})\right\}}.\label{quasi}
\end{align}
It holds
\begin{align}
    \log\mathbb{L}_{n}(\theta)=-\frac{pn}{2}\log(2\pi)-\frac{pn}{2}\log h_n-\frac{n}{2}\log\det{\bf{\Sigma}}(\theta)-\frac{n}{2}\tr\bigl\{{\bf{\Sigma}}(\theta)^{-1}\mathbb{Q}_{\mathbb{XX}}\bigr\}.\label{quasilog}
\end{align}
See Appendix \ref{quasi-likelihood} for details of (\ref{quasi}) and (\ref{quasilog}). Let
\begin{align*}
    \ell_{n}({\bf{\Sigma}})=-\frac{pn}{2}\log(2\pi)-\frac{pn}{2}\log h_n-\frac{n}{2}\log\det{\bf{\Sigma}}-\frac{n}{2}\tr\bigl\{{\bf{\Sigma}}^{-1}\mathbb{Q}_{\mathbb{XX}}\bigr\}.
\end{align*}
Note that $\ell_{n}({\bf{\Sigma}})$ has a maximum value
\begin{align*}
    -\frac{pn}{2}\log(2\pi)-\frac{pn}{2}\log h_n-\frac{n}{2}\log\det \mathbb{Q}_{\mathbb{XX}}-\frac{np}{2}
\end{align*}
at ${\bf{\Sigma}}=\mathbb{Q}_{\mathbb{XX}}$ as $\mathbb{Q}_{\mathbb{XX}}>0$. Define the following function:
\begin{align}  
    \begin{split}
    \rm{F}(\mathbb{Q}_{\mathbb{XX}},{\bf{\Sigma}}(\theta))&=-\frac{2}{n}\log\mathbb{L}_{n}(\theta)+\frac{2}{n}\left\{-\frac{pn}{2}\log(2\pi)-\frac{pn}{2}\log h_n-\frac{n}{2}\log\det \mathbb{Q}_{\mathbb{XX}}-\frac{np}{2}\right\}\\
    &=\log\det{\bf{\Sigma}}(\theta)-\log\det \mathbb{Q}_{\mathbb{XX}}+\tr{\bigl\{{\bf{\Sigma}}(\theta)^{-1}\mathbb{Q}_{\mathbb{XX}}\bigr\}}-p.\label{F1}
    \end{split}
\end{align}
From Theorem 1 in Shapiro \cite{Shapiro(1985)}, (\ref{F1}) is rewritten as
\begin{align}
    \rm{F}(\mathbb{Q}_{\mathbb{XX}},{\bf{\Sigma}}(\theta))=(\vech{\mathbb{Q}_{\mathbb{XX}}}-\vech{{\bf{\Sigma}}(\theta)})^{\top}\rm{V}(\mathbb{Q}_{\mathbb{XX}},{\bf{\Sigma}}(\theta))(\vech{\mathbb{Q}_{\mathbb{XX}}}-\vech{{\bf{\Sigma}}(\theta})) \label{FQ}
\end{align}
as $\mathbb{Q}_{\mathbb{XX}}>0$, where 
\begin{align*}
    \rm{V}(\mathbb{Q}_{\mathbb{XX}},{\bf{\Sigma}}(\theta))
    &=\mathbb{D}_{p}^{\top}\int_{0}^{1}\int_{0}^{1}\lambda_2({\bf{\Sigma}}(\theta)+\lambda_1\lambda_2(\mathbb{Q}_{\mathbb{XX}}-{\bf{\Sigma}}(\theta)))^{-1}\\
    &\qquad\qquad\qquad\qquad\otimes({\bf{\Sigma}}(\theta)+\lambda_1\lambda_2(\mathbb{Q}_{\mathbb{XX}}-{\bf{\Sigma}}(\theta)))^{-1}d\lambda_{1}d\lambda_2\mathbb{D}_{p}
\end{align*}
as $\mathbb{Q}_{\mathbb{XX}}>0$. Moreover, set the following function:
\begin{align*}
    \tilde{\rm{F}}(\mathbb{Q}_{\mathbb{XX}},{\bf{\Sigma}}(\theta))=(\vech{\mathbb{Q}_{\mathbb{XX}}}-\vech{{\bf{\Sigma}}(\theta)})^{\top}\tilde{\rm{V}}(\mathbb{Q}_{\mathbb{XX}},{\bf{\Sigma}}(\theta))(\vech{\mathbb{Q}_{\mathbb{XX}}}-\vech{{\bf{\Sigma}}(\theta})),
\end{align*}
where 
\begin{align*}
    \tilde{\rm{V}}(\mathbb{Q}_{\mathbb{XX}},{\bf{\Sigma}}(\theta))=\left\{
    \begin{array}{ll}
    \rm{V}(\mathbb{Q}_{\mathbb{XX}},{\bf{\Sigma}}(\theta)),
    & (\mathbb{Q}_{\mathbb{XX}} \mbox{ is non-singular}), \\
    \mathbb{I}_{\bar{p}},  & (\mathbb{Q}_{\mathbb{XX}}\mbox{ is singular}).
    \end{array}\right. 
\end{align*}
The contrast function is given by
\begin{align}
    \mathbb{F}_{n}(\theta)=\tilde{\rm{F}}(\mathbb{Q}_{\mathbb{XX}},{\bf{\Sigma}}(\theta)). \label{F}
\end{align}
The minimum contrast estimator $\hat{\theta}_{n}$ is defined as
\begin{align}
    \mathbb{F}_{n}(\hat{\theta}_n)=\inf_{\theta\in\Theta}\mathbb{F}_{n}(\theta). \label{thetahat}
\end{align}
Set 
\begin{align}
    \Delta&=\left.\partial_{\theta}\vech{{\bf{\Sigma}}(\theta)}\right|_{\theta=\theta_0}, \label{Delta}
\end{align}
where $\partial_{\theta}=\partial/\partial\theta$. Let
\begin{align}
    {\bf{W}}(\theta)=2\mathbb{D}_{p}^{+}({\bf{\Sigma}}(\theta)\otimes{\bf{\Sigma}}(\theta))\mathbb{D}_{p}^{+\top}. \label{Wtheta}
\end{align}
Furthermore, we make the following assumptions.
\begin{enumerate}
    \vspace{2mm}
    \item[\bf{[E1]}] 
    \begin{enumerate}
    \item[(i)] ${\bf{\Sigma}}(\theta_1)={\bf{\Sigma}}(\theta_2)\Longrightarrow
    \theta_1=\theta_2$.
    \vspace{2mm}
    \item[(ii)] $\rank{\Delta}=q$.
    \end{enumerate}
    \vspace{2mm}
\end{enumerate}
\begin{remark}\label{identification}
Assumption $[{\bf{E1}}]$ (i) is an identifiability condition for parameter estimation and implies the consistency of the minimum contrast estimator $\hat{\theta}_{n}$. 
Like the factor model, the LISREL model does not have the identifiability condition for parameter estimation when the parameters are unconstrained.
To satisfy $[{\bf{E1}}]$ (i), some parameters may be fixed to 0 or 1, or some parameters are assumed to be the same value as other parameters. These constraints are determined from the theoretical viewpoint of each research field, see Section 5 for an example of a model that satisfies $[{\bf{E1}}]$ (i). Unfortunately, in the LISREL model, simple sufficient conditions for $[{\bf{E1}}]$ (i) are not known. For the identification problem, e.g., see Everitt \cite{Everitt(1984)}. Assumption $[{\bf{E1}}]$ (ii) implies that $\Delta^{\top}{\bf{W}}(\theta_0)^{-1}\Delta$ is non-singular, see Lemma \ref{Aposlemma}.
\end{remark}
For the minimum contrast estimator, we obtain the following theorem.
\begin{theorem}\label{thetatheoremnon}
Under $[{\bf{A1}}]$, $[{\bf{B1}}]$, $[{\bf{C1}}]$, $[{\bf{D1}}]$ and $[{\bf{E1}}]$, as $h_n\longrightarrow0$, 
\begin{align*}
    \hat{\theta}_{n}\stackrel{P}{\longrightarrow} \theta_{0}
\end{align*}
and
\begin{align*}
    \sqrt{n}(\hat{\theta}_{n}-\theta_{0})\stackrel{d}{\longrightarrow}N_{q}\Bigl(0,\bigl(\Delta^{\top}{\bf{W}}(\theta_0)^{-1}\Delta\bigr)^{-1}\Bigr).
\end{align*}
\end{theorem}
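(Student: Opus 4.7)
The plan is to reduce Theorem \ref{thetatheoremnon} to a standard minimum-contrast argument built on Theorem \ref{Qtheoremnon} and the identification hypothesis $[{\bf{E1}}]$. Since ${\bf{\Sigma}}_0>0$ (Lemma \ref{Sigmaposlemma}), Theorem \ref{Qtheoremnon} gives $\mathbb{Q}_{\mathbb{XX}}\stackrel{P}{\longrightarrow}{\bf{\Sigma}}_0$, and hence $\mathbb{Q}_{\mathbb{XX}}$ is non-singular on an event whose probability tends to one; on this event, (\ref{F1}) yields
\begin{equation*}
\mathbb{F}_{n}(\theta)=\log\det{\bf{\Sigma}}(\theta)-\log\det\mathbb{Q}_{\mathbb{XX}}+\tr\bigl\{{\bf{\Sigma}}(\theta)^{-1}\mathbb{Q}_{\mathbb{XX}}\bigr\}-p.
\end{equation*}
All asymptotic analysis will be carried out using this explicit form rather than the weighted-quadratic representation (\ref{FQ}).

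For consistency, I would first show $\sup_{\theta\in\Theta}|\mathbb{F}_{n}(\theta)-\mathbb{F}_{\infty}(\theta)|\stackrel{P}{\longrightarrow}0$, where $\mathbb{F}_{\infty}(\theta)=\log\det{\bf{\Sigma}}(\theta)-\log\det{\bf{\Sigma}}_0+\tr\{{\bf{\Sigma}}(\theta)^{-1}{\bf{\Sigma}}_0\}-p$, which follows from compactness of $\Theta$, continuity of $\theta\mapsto{\bf{\Sigma}}(\theta)$ in the positive-definite cone, and Theorem \ref{Qtheoremnon}. The limit $\mathbb{F}_{\infty}$ is (twice) the Kullback--Leibler divergence between centered Gaussians with covariances ${\bf{\Sigma}}_0$ and ${\bf{\Sigma}}(\theta)$, so it is non-negative and vanishes exactly when ${\bf{\Sigma}}(\theta)={\bf{\Sigma}}_0$; assumption $[{\bf{E1}}]$ (i) then forces the unique minimizer to be $\theta_0$, and the standard argmin theorem yields $\hat\theta_{n}\stackrel{P}{\longrightarrow}\theta_0$.

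For asymptotic normality, matrix calculus on the explicit form gives
\begin{equation*}
\partial_{\theta^{(k)}}\mathbb{F}_{n}(\theta)=-\tr\bigl\{{\bf{\Sigma}}(\theta)^{-1}\bigl(\partial_{\theta^{(k)}}{\bf{\Sigma}}(\theta)\bigr){\bf{\Sigma}}(\theta)^{-1}\bigl(\mathbb{Q}_{\mathbb{XX}}-{\bf{\Sigma}}(\theta)\bigr)\bigr\}.
\end{equation*}
Evaluating at $\theta_0$, rewriting the trace via $\vec A=\mathbb{D}_{p}\vech A$ for symmetric $A$, and invoking the identity $\mathbb{D}_{p}^{\top}({\bf{\Sigma}}_0^{-1}\otimes{\bf{\Sigma}}_0^{-1})\mathbb{D}_{p}=2{\bf{W}}(\theta_0)^{-1}$, I obtain $\sqrt{n}\,\partial_{\theta}\mathbb{F}_{n}(\theta_0)=-2\Delta^{\top}{\bf{W}}(\theta_0)^{-1}\sqrt{n}(\vech\mathbb{Q}_{\mathbb{XX}}-\vech{\bf{\Sigma}}_0)$, which by Theorem \ref{Qtheoremnon} converges in distribution to $N_{q}(0,4\Delta^{\top}{\bf{W}}(\theta_0)^{-1}\Delta)$. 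A second differentiation splits into the ``information'' piece $\tr\{{\bf{\Sigma}}^{-1}\partial_{\theta^{(k)}}{\bf{\Sigma}}\,{\bf{\Sigma}}^{-1}\partial_{\theta^{(l)}}{\bf{\Sigma}}\}$ and cross-terms carrying the factor $\mathbb{Q}_{\mathbb{XX}}-{\bf{\Sigma}}(\theta)$; the latter vanish in probability at any $\bar\theta_{n}\stackrel{P}{\longrightarrow}\theta_0$, so $\partial_{\theta}^{2}\mathbb{F}_{n}(\bar\theta_{n})\stackrel{P}{\longrightarrow}2\Delta^{\top}{\bf{W}}(\theta_0)^{-1}\Delta$, which is invertible by $[{\bf{E1}}]$ (ii) and Lemma \ref{Aposlemma}.

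To assemble the pieces, consistency and $\theta_0\in\Int\Theta$ place $\hat\theta_{n}\in\Int\Theta$ with probability tending to one, so the first-order condition $\partial_{\theta}\mathbb{F}_{n}(\hat\theta_{n})=0$ holds and a Taylor expansion around $\theta_0$ yields $\sqrt{n}(\hat\theta_{n}-\theta_0)=-[\partial_{\theta}^{2}\mathbb{F}_{n}(\bar\theta_{n})]^{-1}\sqrt{n}\,\partial_{\theta}\mathbb{F}_{n}(\theta_0)$ for some $\bar\theta_{n}$ between $\hat\theta_{n}$ and $\theta_0$. Slutsky's theorem combines the two limits above into an asymptotic sandwich whose variance, after cancellation of two inner factors of $\Delta^{\top}{\bf{W}}(\theta_0)^{-1}\Delta$, collapses to $(\Delta^{\top}{\bf{W}}(\theta_0)^{-1}\Delta)^{-1}$. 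I expect the main technical obstacle to be the uniform control of the Hessian cross-terms on a shrinking neighborhood of $\theta_0$: this reduces to continuity of the second-order $\theta$-derivatives of ${\bf{\Sigma}}^{-1}(\theta)$ on $\Theta$ combined with the $O_{P}(n^{-1/2})$ bound from Theorem \ref{Qtheoremnon}, after which the computation becomes routine bookkeeping.
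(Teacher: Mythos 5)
Your proof is correct, and it reaches the same two limits by the same overall skeleton (uniform convergence of the contrast plus an argmin argument for consistency; Taylor expansion of the score, a CLT for $\sqrt{n}\,\partial_{\theta}\mathbb{F}_{n}(\theta_0)$ inherited from Theorem \ref{Qtheoremnon}, convergence of the Hessian, and Slutsky), but it differentiates a different representation of the discrepancy. The paper never touches the log-det form after (\ref{F1}): it passes through Shapiro's identity (\ref{FQ}) and works with the weighted quadratic form $(\vech{\mathbb{Q}_{\mathbb{XX}}}-\vech{{\bf{\Sigma}}(\theta)})^{\top}\tilde{\rm{V}}(\mathbb{Q}_{\mathbb{XX}},{\bf{\Sigma}}(\theta))(\vech{\mathbb{Q}_{\mathbb{XX}}}-\vech{{\bf{\Sigma}}(\theta)})$, so that the score at $\theta_0$ is read off as $2\Delta^{\top}\tilde{V}_{n}(\theta_0)\sqrt{n}(\vech{\mathbb{Q}_{\mathbb{XX}}}-\vech{{\bf{\Sigma}}(\theta_0)})+o_p(1)$ after the auxiliary convergence $\tilde{V}_{n}(\theta_0)\stackrel{P}{\longrightarrow}{\bf{W}}(\theta_0)^{-1}$ of Lemma \ref{Vproblemma}; identification under $[{\bf{E1}}]$ (i) is obtained from positive definiteness of ${\rm{V}}$ (Lemma \ref{Sigmaposlemma}) rather than from the Kullback--Leibler inequality. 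Your route instead does matrix calculus directly on $\log\det{\bf{\Sigma}}(\theta)+\tr\{{\bf{\Sigma}}(\theta)^{-1}\mathbb{Q}_{\mathbb{XX}}\}$, which avoids Shapiro's theorem and the whole ${\rm{V}}$-machinery but requires the duplication-matrix identity $\mathbb{D}_{p}^{\top}({\bf{\Sigma}}_0^{-1}\otimes{\bf{\Sigma}}_0^{-1})\mathbb{D}_{p}=2{\bf{W}}(\theta_0)^{-1}$ to land on the stated covariance; that identity is standard and your factor bookkeeping ($[2{\bf{A}}]^{-1}\,4{\bf{A}}\,[2{\bf{A}}]^{-1}={\bf{A}}^{-1}$ with ${\bf{A}}=\Delta^{\top}{\bf{W}}(\theta_0)^{-1}\Delta$) checks out. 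Both approaches must handle the event $\{\mathbb{Q}_{\mathbb{XX}}\mbox{ singular}\}$, which you do correctly by restricting to its complement, whose probability tends to one. The one practical advantage of the paper's representation is that the objects $\tilde{V}_{n}(\theta_0)$ and $H_{n}(\theta_0)$ it introduces are reused verbatim in the proofs of Theorems \ref{chitheoremnon} and \ref{tildeT}, whereas your computation would have to be redone in that notation there; as a self-contained proof of Theorem \ref{thetatheoremnon}, however, yours is complete and arguably more elementary.
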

Next, we consider the goodness-of-fit test. The statistical hypothesis test is as follows:
\begin{align}
    \left\{
    \begin{array}{ll}
    H_0: {\bf{\Sigma}}={\bf{\Sigma}}(\theta),\\
    H_1: {\bf{\Sigma}}\neq{\bf{\Sigma}}(\theta).
    \end{array}
    \right.\label{hypothesis}
\end{align}
The quasi-likelihood ratio $\Lambda_{n}$ is defined as
\begin{align*}
    \Lambda_{n}=\frac{\max_{\theta\in\Theta}\rm{L}_{n}({\bf{\Sigma}}(\theta))}{\max_{{\bf{\Sigma}}>0}\rm{L}_{n}({\bf{\Sigma}})},
\end{align*}
where 
\begin{align*}
    {\rm{L}}_{n}({\bf{\Sigma}})=\prod_{i=1}^{n}\frac{1}{(2\pi )^{\frac{p}{2}}\det{(h_n{\bf{\Sigma}}})^{\frac{1}{2}}}\exp{\left\{-\frac{1}{2h_n}(\mathbb{X}_{t_{i}^n}-\mathbb{X}_{t_{i-1}^n})^{\top}{\bf{\Sigma}}^{-1}(\mathbb{X}_{t_{i}^n}-\mathbb{X}_{t_{i-1}^n})\right\}}.
\end{align*}
It follows that
\begin{align}
\begin{split}
    -2\log\Lambda_{n}&=-2\max_{\theta\in\Theta}\log \rm{L}_{n}({\bf{\Sigma}}(\theta))+2\max_{{\bf{\Sigma}}>0}\log \rm{L}_{n}({\bf{\Sigma}})\\
    &=-2\left\{-\frac{pn}{2}\log(2\pi)-\frac{pn}{2}\log h_n-\frac{n}{2}\log\det{{\bf{\Sigma}}(\hat{\theta}_{n})}
    -\frac{n}{2}\tr{\bigl\{{\bf{\Sigma}}(\hat{\theta}_{n})^{-1}\mathbb{Q}_{\mathbb{XX}}\bigr\}}\right\}\\
    &\quad +2\left\{-\frac{pn}{2}\log(2\pi)-\frac{pn}{2}\log h_n-\frac{n}{2}\log\det{\mathbb{Q}_{\mathbb{XX}}}-\frac{np}{2}\right\}\\
    &=n\left\{\log\det{{\bf{\Sigma}}(\hat{\theta}_{n})}-\log\det{\mathbb{Q}_{\mathbb{XX}}+\tr{\bigl\{{\bf{\Sigma}}(\hat{\theta}_{n})^{-1}\mathbb{Q}_{\mathbb{XX}}\bigr\}}-p}\right\}\\
    &=n\rm{F}(\mathbb{Q}_{\mathbb{XX}},{\bf{\Sigma}}(\hat{\theta}_n))
\end{split}\label{ratio}
\end{align}
as $\mathbb{Q}_{\mathbb{XX}}>0$. The quasi-likelihood ratio test statistic is given by
\begin{align}
    \mathbb{T}_{n}=n\mathbb{F}_{n}(\hat{\theta}_{n}). \label{T}
\end{align}
We have the following asymptotic result of the test statistic $\mathbb{T}_{n}$.
\begin{theorem}\label{chitheoremnon}
Under $[{\bf{A1}}]$, $[{\bf{B1}}]$, $[{\bf{C1}}]$, $[{\bf{D1}}]$ and $[{\bf{E1}}]$, as $h_n\longrightarrow0$, 
\begin{align*}
    \mathbb{T}_{n}\stackrel{d}{\longrightarrow}\chi^2_{\bar{p}-q}
\end{align*}
under $H_0$.
\end{theorem}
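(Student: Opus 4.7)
The plan is to reduce $\mathbb{T}_n = n\mathbb{F}_n(\hat\theta_n)$ to a quadratic form in the asymptotically normal quantity $z_n = \sqrt{n}(\vech\mathbb{Q}_{\mathbb{XX}} - \vech{\bf{\Sigma}}_0)$ and then identify its limit as a chi-squared distribution via a projection argument. First I would note that by Theorem \ref{Qtheoremnon}, $\mathbb{Q}_{\mathbb{XX}} \stackrel{P}{\to} {\bf{\Sigma}}_0$, so with probability tending to one $\mathbb{Q}_{\mathbb{XX}} > 0$ and hence $\tilde{\rm{F}} = \rm{F}$; by Theorem \ref{thetatheoremnon}, $\hat\theta_n \stackrel{P}{\to} \theta_0 \in \Int\Theta$. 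Using the integrand representation of $\rm{V}(\mathbb{Q}_{\mathbb{XX}}, {\bf{\Sigma}}(\hat\theta_n))$ together with $\int_0^1\!\!\int_0^1 \lambda_2\, d\lambda_1 d\lambda_2 = 1/2$, the continuous mapping theorem gives
\begin{align*}
    \tilde{\rm{V}}(\mathbb{Q}_{\mathbb{XX}}, {\bf{\Sigma}}(\hat\theta_n)) \stackrel{P}{\longrightarrow} \tfrac{1}{2}\mathbb{D}_p^{\top}\bigl({\bf{\Sigma}}(\theta_0)^{-1} \otimes {\bf{\Sigma}}(\theta_0)^{-1}\bigr)\mathbb{D}_p = {\bf{W}}(\theta_0)^{-1},
\end{align*}
where the last equality is the standard duplication-matrix identity $\bigl(\mathbb{D}_p^{+}(A\otimes A)\mathbb{D}_p^{+\top}\bigr)^{-1} = \mathbb{D}_p^{\top}(A^{-1}\otimes A^{-1})\mathbb{D}_p$ applied to $A = {\bf{\Sigma}}(\theta_0)$.

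Next I would perform a Taylor expansion of $\vech{\bf{\Sigma}}(\cdot)$ at $\theta_0$ to write
\begin{align*}
    \sqrt{n}\bigl(\vech\mathbb{Q}_{\mathbb{XX}} - \vech{\bf{\Sigma}}(\hat\theta_n)\bigr) = z_n - \Delta \sqrt{n}(\hat\theta_n - \theta_0) + o_{P}(1).
\end{align*}
From the argument underlying Theorem \ref{thetatheoremnon}, the first-order condition $\partial_\theta \mathbb{F}_n(\hat\theta_n) = 0$ combined with the above convergence of $\tilde{\rm{V}}$ yields the linearization
\begin{align*}
    \sqrt{n}(\hat\theta_n - \theta_0) = \bigl(\Delta^{\top}{\bf{W}}(\theta_0)^{-1}\Delta\bigr)^{-1}\Delta^{\top}{\bf{W}}(\theta_0)^{-1} z_n + o_{P}(1).
\end{align*}
Plugging this in,
\begin{align*}
    \sqrt{n}\bigl(\vech\mathbb{Q}_{\mathbb{XX}} - \vech{\bf{\Sigma}}(\hat\theta_n)\bigr) = M z_n + o_{P}(1), \qquad M = \mathbb{I}_{\bar p} - \Delta\bigl(\Delta^{\top}{\bf{W}}(\theta_0)^{-1}\Delta\bigr)^{-1}\Delta^{\top}{\bf{W}}(\theta_0)^{-1},
\end{align*}
and hence, using the quadratic-form representation (\ref{FQ}) and Slutsky,
\begin{align*}
    \mathbb{T}_n = n\mathbb{F}_n(\hat\theta_n) = z_n^{\top} M^{\top} {\bf{W}}(\theta_0)^{-1} M z_n + o_{P}(1).
\end{align*}

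To identify the limit distribution, I would use the algebraic identity $M^{\top}{\bf{W}}(\theta_0)^{-1} M = {\bf{W}}(\theta_0)^{-1} M$, which follows from a direct expansion since the cross-terms collapse. By Theorem \ref{Qtheoremnon}, $z_n \stackrel{d}{\to} z \sim N_{\bar p}(0, {\bf{W}}(\theta_0))$, so after the change of variable $u = {\bf{W}}(\theta_0)^{-1/2} z \sim N_{\bar p}(0, \mathbb{I}_{\bar p})$ the limit becomes $u^{\top}(\mathbb{I}_{\bar p} - P)u$, where
\begin{align*}
    P = {\bf{W}}(\theta_0)^{-1/2}\Delta\bigl(\Delta^{\top}{\bf{W}}(\theta_0)^{-1}\Delta\bigr)^{-1}\Delta^{\top}{\bf{W}}(\theta_0)^{-1/2}
\end{align*}
is a symmetric idempotent matrix of rank $q$ (by $[{\bf{E1}}]$ (ii) and Lemma \ref{Aposlemma}). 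Thus $\mathbb{I}_{\bar p} - P$ is a projection of rank $\bar p - q$ and Cochran's theorem gives $\mathbb{T}_n \stackrel{d}{\to}\chi^2_{\bar p - q}$.

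The main obstacle will be making the Taylor-expansion/linearization step rigorous with uniform control on the remainder, in particular verifying that the Hessian $\partial_\theta^2 \mathbb{F}_n$ converges uniformly in a neighborhood of $\theta_0$ to $2\Delta^{\top}{\bf{W}}(\theta_0)^{-1}\Delta$ and that the residual term in the expansion of $\vech{\bf{\Sigma}}(\hat\theta_n)$ is genuinely $o_P(n^{-1/2})$ when pre-multiplied by $\sqrt{n}$; the polynomial-growth conditions in $[{\bf{A1}}]$--$[{\bf{D1}}]$ and convexity/compactness of $\Theta$ should suffice, but bookkeeping the derivatives of the block-structured map $\theta \mapsto \vech{\bf{\Sigma}}(\theta)$ through the ${\bf{\Psi}}^{-1}$ factor is the tedious part of the argument.
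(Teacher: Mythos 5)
Your proposal is correct and follows essentially the same route as the paper: both reduce $\mathbb{T}_n$ to the quadratic form $z_n^{\top}\bigl({\bf{W}}(\theta_0)^{-1}-{\bf{W}}(\theta_0)^{-1}\Delta(\Delta^{\top}{\bf{W}}(\theta_0)^{-1}\Delta)^{-1}\Delta^{\top}{\bf{W}}(\theta_0)^{-1}\bigr)z_n+o_P(1)$ via the linearization of $\sqrt{n}(\hat{\theta}_n-\theta_0)$ and then identify the limit through a symmetric idempotent matrix of rank $\bar{p}-q$. The only cosmetic difference is that you Taylor-expand $\vech{\bf{\Sigma}}(\hat{\theta}_n)$ to first order inside the representation (\ref{FQ}), whereas the paper expands the scalar $\mathbb{F}_n$ to second order around $\theta_0$ and handles the three resulting terms separately; the two expansions produce the same limiting quadratic form.
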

From Theorem \ref{chitheoremnon}, the test of asymptotic significance level $\alpha\in(0,1)$ is constructed. The rejection region is set to
\begin{align*}
    \Bigl\{\mathbb{T}_{n}>\chi^2_{\bar{p}-q}(\alpha)\Bigr\}.
\end{align*}

Finally, we investigate the consistency of the test.
Let $\mathbb{U}(\theta)=\rm{F}({\bf{\Sigma}}_0,{\bf{\Sigma}}(\theta))$. $\bar{\theta}$ is defined as
\begin{align}
    \mathbb{U}(\bar{\theta})=\inf_{\theta\in\Theta} \mathbb{U}(\theta). \label{U}
\end{align}
In addition, we make the following assumption:
\begin{enumerate}
    \vspace{2mm}
    \item[\bf{[E2]}]  $\mathbb{U}(\theta_1)=\mathbb{U}(\theta_2)\Longrightarrow \theta_1=\theta_2$.
    \vspace{2mm}
\end{enumerate}
\begin{remark}
Assumption $[{\bf{E2}}]$ implies that $\hat{\theta}_{n}\stackrel{P}{\longrightarrow}\bar{\theta}$ under $H_1$, see Lemma \ref{starconslemma}. 
\end{remark}

We have the following theorem.
\begin{theorem}\label{testtheoremnon}
Under $[{\bf{A1}}]$, $[{\bf{B1}}]$, $[{\bf{C1}}]$, $[{\bf{D1}}]$ and $[{\bf{E2}}]$, as $h_n\longrightarrow0$, 
\begin{align*}
    \PP\Bigl(\mathbb{T}_{n}>\chi^2_{\bar{p}-q}(\alpha)\Bigr)\stackrel{}{\longrightarrow}1
\end{align*}
under $H_1$.
\end{theorem}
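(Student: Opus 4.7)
The plan is to show that, under $H_1$, the statistic $\mathbb{T}_n=n\mathbb{F}_n(\hat{\theta}_n)$ diverges to $+\infty$ in probability; since $\chi^2_{\bar{p}-q}(\alpha)$ is a fixed positive constant, this immediately yields $\PP(\mathbb{T}_n>\chi^2_{\bar{p}-q}(\alpha))\to 1$. Thus it suffices to prove that $\mathbb{F}_n(\hat{\theta}_n)\stackrel{P}{\longrightarrow}\mathbb{U}(\bar{\theta})$ and that $\mathbb{U}(\bar{\theta})>0$ under $H_1$.

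First I would invoke Theorem \ref{Qtheoremnon} to obtain $\mathbb{Q}_{\mathbb{XX}}\stackrel{P}{\longrightarrow}{\bf{\Sigma}}_0$. Since ${\bf{\Sigma}}_0$ is positive definite (Lemma \ref{Sigmaposlemma}), $\mathbb{Q}_{\mathbb{XX}}$ is positive definite with probability tending to $1$, so on that event $\tilde{\rm{F}}(\mathbb{Q}_{\mathbb{XX}},{\bf{\Sigma}}(\theta))$ coincides with $\rm{F}(\mathbb{Q}_{\mathbb{XX}},{\bf{\Sigma}}(\theta))$ given in (\ref{F1}). The function $\rm{F}(\cdot,{\bf{\Sigma}}(\theta))$ is continuous in its first argument on the cone of positive definite matrices, and by compactness of $\Theta$ together with continuity of $\theta\mapsto{\bf{\Sigma}}(\theta)^{-1}$ (using the uniform lower bound $\inf_{\theta\in\Theta}\lambda_{\min}({\bf{\Sigma}}(\theta))>0$ which follows from Lemma \ref{Sigmaposlemma}), this continuity is uniform in $\theta\in\Theta$. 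Therefore $\sup_{\theta\in\Theta}|\mathbb{F}_n(\theta)-\mathbb{U}(\theta)|\stackrel{P}{\longrightarrow}0$.

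Next, using this uniform convergence together with Assumption $[{\bf{E2}}]$, the standard argmin argument, which is the content of Lemma \ref{starconslemma} cited in the remark following $[{\bf{E2}}]$, yields $\hat{\theta}_n\stackrel{P}{\longrightarrow}\bar{\theta}$. Combined again with the uniform convergence, this gives $\mathbb{F}_n(\hat{\theta}_n)\stackrel{P}{\longrightarrow}\mathbb{U}(\bar{\theta})$. Under $H_1$ there is no $\theta\in\Theta$ with ${\bf{\Sigma}}(\theta)={\bf{\Sigma}}_0$, and the discrepancy $\rm{F}({\bf{\Sigma}}_0,{\bf{\Sigma}})=\log\det{\bf{\Sigma}}-\log\det{\bf{\Sigma}}_0+\tr({\bf{\Sigma}}^{-1}{\bf{\Sigma}}_0)-p$ is a Stein-type Bregman divergence on the positive-definite cone which vanishes only when ${\bf{\Sigma}}={\bf{\Sigma}}_0$. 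Hence $\mathbb{U}(\theta)>0$ for every $\theta\in\Theta$, and a fortiori $\mathbb{U}(\bar{\theta})>0$. Therefore $\mathbb{T}_n=n\mathbb{F}_n(\hat{\theta}_n)$ diverges to $+\infty$ in probability, completing the argument.

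The main obstacle is the uniform convergence $\sup_{\theta}|\mathbb{F}_n(\theta)-\mathbb{U}(\theta)|\stackrel{P}{\longrightarrow}0$, which requires a uniform lower bound on $\lambda_{\min}({\bf{\Sigma}}(\theta))$ over the compact set $\Theta$ so that the inverses and log-determinants entering $\rm{F}$ remain bounded and equicontinuous. Once this is in hand, together with the strict positivity $\mathbb{U}(\bar{\theta})>0$ guaranteed by the Bregman identity, the theorem follows by a routine continuous-mapping application of Theorem \ref{Qtheoremnon} and Slutsky's theorem.
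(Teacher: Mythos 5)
Your proposal is correct and follows essentially the same route as the paper: consistency $\hat{\theta}_{n}\stackrel{P}{\longrightarrow}\bar{\theta}$ via $[{\bf{E2}}]$ (Lemma \ref{starconslemma}), uniform convergence of the contrast function (Lemma \ref{Fproblemma}) to conclude $n^{-1}\mathbb{T}_{n}\stackrel{P}{\longrightarrow}\mathbb{U}(\bar{\theta})>0$, and then divergence of $\mathbb{T}_{n}$ past the fixed quantile. The only cosmetic difference is that you certify $\mathbb{U}(\bar{\theta})>0$ via the Stein/Bregman property of the discrepancy, whereas the paper uses the quadratic-form representation $\rm{F}({\bf{\Sigma}}_0,{\bf{\Sigma}}(\bar{\theta}))=(\vech{{\bf{\Sigma}}_0}-\vech{{\bf{\Sigma}}(\bar{\theta})})^{\top}\rm{V}(\cdot)(\vech{{\bf{\Sigma}}_0}-\vech{{\bf{\Sigma}}(\bar{\theta})})$ with $\rm{V}>0$ from Lemma \ref{Sigmaposlemma}; both are valid.
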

\begin{remark}
The goodness-of-fit test has several problems. 
See, e.g., Bentler and Bonett \cite{Bentler(1980)} for problems with 
the goodness-of-fit test. 
However, the goodness-of-fit test is one of 
the most popular methods for model evaluation in SEM; 
see, e.g., Mcdonald \cite{McDonald(2002)}. Thus, we consider only the goodness-of-fit test as a model evaluation method in this paper and leave the other methods for future work.
\end{remark}
In the ergodic case, the following results similar to the non-ergodic case hold.
\begin{theorem}\label{Qtheorem}
Under $[{\bf{A1}}]$-$[{\bf{A2}}]$, $[{\bf{B1}}]$-$[{\bf{B2}}]$, $[{\bf{C1}}]$-$[{\bf{C2}}]$ and $[{\bf{D1}}]$-$[{\bf{D2}}]$, as $h_n\longrightarrow0$ and $nh_n\longrightarrow\infty$,
\begin{align*}
    \mathbb{Q}_{\mathbb{XX}}\stackrel{P}{\longrightarrow} {\bf{\Sigma}}_0.
\end{align*}
In addition, as $nh_n^2\longrightarrow0$,
\begin{align*}
    \sqrt{n}(\vech{\mathbb{Q}_{\mathbb{XX}}}-\vech{{\bf{\Sigma}}_0})\stackrel{d}{\longrightarrow} 
    N_{\bar{p}}\Bigl(0,2\mathbb{D}_{p}^{+}({\bf{\Sigma}}_0\otimes{\bf{\Sigma}}_0)\mathbb{D}_{p}^{+\top}\Bigr). 
\end{align*}
\end{theorem}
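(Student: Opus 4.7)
The argument parallels Theorem~\ref{Qtheoremnon}, hinging on the observation that every diffusion coefficient $\mathbf{S}_{j,0}$ and every loading matrix in the model is constant in time, so the martingale part of $\mathbb{X}_t$ is itself a $p$-dimensional Brownian motion with constant covariance $\mathbf{\Sigma}_0$. Writing $N_i$ for its increment over $[t_{i-1}^n, t_i^n]$ and $D_i = (\mathbb{X}_{t_i^n} - \mathbb{X}_{t_{i-1}^n}) - N_i$ for the drift increment, the $N_i$ are iid $N_p(0, h_n \mathbf{\Sigma}_0)$, and by the Lipschitz and moment conditions in $[{\bf{A1}}]$--$[{\bf{D1}}]$ one has $\mathbb{E}|D_i|^k = O(h_n^k)$ for every $k\geq 1$. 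The basic decomposition reads
\begin{align*}
    \mathbb{Q}_{\mathbb{XX}} = \frac{1}{T}\sum_{i=1}^n N_i N_i^\top + \frac{1}{T}\sum_{i=1}^n \bigl(N_i D_i^\top + D_i N_i^\top\bigr) + \frac{1}{T}\sum_{i=1}^n D_i D_i^\top.
\end{align*}

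For the main term, $\mathbb{E}\bigl[\frac{1}{T}\sum N_i N_i^\top\bigr] = \mathbf{\Sigma}_0$ with variance $O(h_n/T)\to 0$, proving consistency. For the CLT, the iid vectors $\vech(h_n^{-1}N_i N_i^\top)$ have mean $\vech{\mathbf{\Sigma}_0}$ and covariance $2\mathbb{D}_p^{+}(\mathbf{\Sigma}_0\otimes\mathbf{\Sigma}_0)\mathbb{D}_p^{+\top}$ (a classical vech-Gaussian identity), so the Lindeberg--L\'evy CLT yields the stated Gaussian limit for $\sqrt{n}\bigl(\vech{\tfrac{1}{T}\sum N_i N_i^\top} - \vech{\mathbf{\Sigma}_0}\bigr)$.

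It remains to show that the drift-bearing terms are $o_p(1)$ for consistency and $o_p(n^{-1/2})$ for the CLT. The moment bound gives $\mathbb{E}\bigl\|\tfrac{1}{T}\sum D_i D_i^\top\bigr\| = O(h_n)$, so $\sqrt{n}$ times this term is $O_p(\sqrt{nh_n^2}) = o_p(1)$ under the extra assumption $nh_n^2\to 0$. The cross term $\tfrac{1}{T}\sum D_i N_i^\top$ is the subtle one; naive Cauchy--Schwarz is too weak, so I would split $D_i = h_n B(Z_{t_{i-1}^n}) + D_i'$, where $Z_t = (\xi_{0,t}^\top,\delta_{0,t}^\top,\varepsilon_{0,t}^\top,\zeta_{0,t}^\top)^\top$ and $B$ is the drift field induced on $\mathbb{X}_t$ through the linear relations. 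Since $N_i$ is independent of $\mathcal{F}_{t_{i-1}^n}$ and centered, $\sum h_n B(Z_{t_{i-1}^n}) N_i^\top$ is a martingale with variance $O(nh_n^3)$, producing an $O_p(\sqrt{h_n})$ contribution after the $\sqrt{n}/T$ rescaling; the Lipschitz condition on $B$ together with $\mathbb{E}|Z_s - Z_{t_{i-1}^n}|^2 = O(h_n)$ yields $\mathbb{E}|D_i'|^2 = O(h_n^3)$, and Cauchy--Schwarz then controls $\sum D_i' N_i^\top$ at the $O_p(\sqrt{nh_n^2})$ rate. The main obstacle is precisely this cross-term analysis, which relies on the martingale structure with respect to $\{\mathcal{F}_{t_i^n}\}$ rather than on crude moment bounds.
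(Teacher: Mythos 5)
Your argument is correct, and it takes a genuinely different route from the paper. The paper proves Theorem \ref{Qtheorem} by expanding the conditional moments of products of increments $\Delta \mathbb{X}_i^{(j_1)}\Delta \mathbb{X}_i^{(j_2)}$ via Kessler-type lemmas (Lemmas \ref{Alemma}--\ref{EX1X2lemma}), writing each as $h_n({\bf{\Sigma}}_0)_{j_1j_2}$ plus an $h_n^2 R_i(1,\cdot)$ remainder, and then verifying the three conditions of the martingale central limit theorem (Theorems 3.2 and 3.4 in Hall and Heyde) for the array $L_{i,n}$; the condition $nh_n^2\to 0$ enters there because the accumulated conditional bias is of order $\sqrt{nh_n^2}\cdot\frac{1}{n}\sum_i R_i(1,\cdot)$. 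You instead exploit the same structural fact the paper relies on --- constant volatilities and constant loadings, so the martingale part of $\mathbb{X}_t$ is an exact $p$-dimensional Brownian motion with covariance ${\bf{\Sigma}}_0$ --- to reduce the leading term to an i.i.d.\ Wishart-type average, for which the classical Lindeberg--L\'evy CLT and the standard $\vech$-Gaussian fourth-moment identity give the limit $N_{\bar p}(0,2\mathbb{D}_p^{+}({\bf{\Sigma}}_0\otimes{\bf{\Sigma}}_0)\mathbb{D}_p^{+\top})$ directly. Your treatment of the remainders is the part that matters and it is handled correctly: the pure drift term is $O_p(h_n)$, hence $o_p(n^{-1/2})$ exactly under $nh_n^2\to 0$, and you rightly identify that crude Cauchy--Schwarz on the cross term only gives $O_p(\sqrt{nh_n})$, which diverges when $nh_n\to\infty$; the further split $D_i=h_nB(Z_{t_{i-1}^n})+D_i'$, using the martingale orthogonality of the frozen-drift piece against $N_i$ and the $\E|D_i'|^2=O(h_n^3)$ bound from the Lipschitz condition, closes this gap. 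What your approach buys is a more elementary CLT and a transparent accounting of where $nh_n^2\to 0$ is used; what the paper's approach buys is uniformity of method, since the same conditional-moment machinery also drives the non-ergodic Theorem \ref{Qtheoremnon} (via Jacod's stable convergence theorem) and would survive state-dependent generalizations where the martingale part is no longer exactly Gaussian.
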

\begin{theorem}\label{thetatheorem}
Under $[{\bf{A1}}]$-$[{\bf{A2}}]$, $[{\bf{B1}}]$-$[{\bf{B2}}]$, $[{\bf{C1}}]$-$[{\bf{C2}}]$, $[{\bf{D1}}]$-$[{\bf{D2}}]$ and $[{\bf{E1}}]$, as $h_n\longrightarrow0$ and $nh_n\longrightarrow\infty$, 
\begin{align*}
    \hat{\theta}_{n}\stackrel{P}{\longrightarrow}\theta_{0}.
\end{align*}
In addition, as $nh_n^2\longrightarrow0$,
\begin{align*}
    \sqrt{n}(\hat{\theta}_{n}-\theta_{0})\stackrel{d}{\longrightarrow}N_{q}\Bigl(0,\bigl(\Delta^{\top}{\bf{W}}(\theta_0)^{-1}\Delta\bigr)^{-1}\Bigr).
\end{align*}
\end{theorem}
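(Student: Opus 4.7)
The plan is to reuse the proof of Theorem \ref{thetatheoremnon} essentially verbatim, with every invocation of Theorem \ref{Qtheoremnon} replaced by the corresponding statement in Theorem \ref{Qtheorem}. Since $\hat{\theta}_n$ depends on the data only through the realized covariance $\mathbb{Q}_{\mathbb{XX}}$, and the contrast function $\mathbb{F}_n(\theta)$ and covariance structure ${\bf{\Sigma}}(\theta)$ are unchanged between the two sampling regimes, the entire M-estimation scaffolding transfers unchanged; what differs is only the asymptotic regime ($h_n\to 0$ with $T$ fixed versus $h_n\to 0$ and $nh_n\to\infty$) and the extra condition $nh_n^2\to 0$ needed for the second-order CLT.

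For consistency, Theorem \ref{Qtheorem} gives $\mathbb{Q}_{\mathbb{XX}} \stackrel{P}{\longrightarrow} {\bf{\Sigma}}_0$. Continuity of $\tilde{\rm{F}}(\cdot,{\bf{\Sigma}}(\theta))$ at ${\bf{\Sigma}}_0>0$, together with compactness of $\Theta$ and smoothness of $\theta\mapsto{\bf{\Sigma}}(\theta)$, delivers the uniform convergence $\sup_{\theta\in\Theta}|\mathbb{F}_n(\theta)-\mathbb{U}(\theta)|\stackrel{P}{\longrightarrow}0$. Assumption $[{\bf{E1}}]$(i) makes $\theta_0$ the unique zero of the non-negative function $\mathbb{U}$, so a standard argmin continuous mapping argument yields $\hat{\theta}_n \stackrel{P}{\longrightarrow} \theta_0$.

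For asymptotic normality, consistency places $\hat{\theta}_n$ in the interior of $\Theta$ with probability tending to one, so $\partial_\theta \mathbb{F}_n(\hat{\theta}_n)=0$ holds eventually; a Taylor expansion then gives
\begin{align*}
\sqrt{n}(\hat{\theta}_n-\theta_0) = -\bigl[\partial_\theta^2 \mathbb{F}_n(\tilde{\theta}_n)\bigr]^{-1}\sqrt{n}\,\partial_\theta \mathbb{F}_n(\theta_0)
\end{align*}
for some intermediate $\tilde{\theta}_n$. Differentiating the quadratic form defining $\tilde{\rm{F}}$ and exploiting the convergence $\rm{V}(\mathbb{Q}_{\mathbb{XX}},{\bf{\Sigma}}(\theta_0)) \stackrel{P}{\longrightarrow} {\bf{W}}(\theta_0)^{-1}$ (which follows from the duplication-matrix identity $(\mathbb{D}_p^{+}({\bf{\Sigma}}_0\otimes{\bf{\Sigma}}_0)\mathbb{D}_p^{+\top})^{-1} = \mathbb{D}_p^{\top}({\bf{\Sigma}}_0^{-1}\otimes{\bf{\Sigma}}_0^{-1})\mathbb{D}_p$), one obtains
\begin{align*}
\sqrt{n}\,\partial_\theta \mathbb{F}_n(\theta_0) = -2\Delta^{\top}{\bf{W}}(\theta_0)^{-1}\sqrt{n}\bigl(\vech\mathbb{Q}_{\mathbb{XX}} - \vech{\bf{\Sigma}}_0\bigr) + o_P(1),
\end{align*}
together with $\partial_\theta^2 \mathbb{F}_n(\tilde{\theta}_n) \stackrel{P}{\longrightarrow} 2\Delta^{\top}{\bf{W}}(\theta_0)^{-1}\Delta$, the latter being non-singular by $[{\bf{E1}}]$(ii) and Lemma \ref{Aposlemma}. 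The CLT part of Theorem \ref{Qtheorem} (for which $nh_n^2\to 0$ is required) gives $\sqrt{n}(\vech\mathbb{Q}_{\mathbb{XX}} - \vech{\bf{\Sigma}}_0)\stackrel{d}{\longrightarrow} N_{\bar p}(0,{\bf{W}}(\theta_0))$, and Slutsky's theorem then yields the desired limit $N_q(0,(\Delta^{\top}{\bf{W}}(\theta_0)^{-1}\Delta)^{-1})$.

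The main obstacle is not in the $\theta$-level argument, which is purely algebraic and formally indistinguishable from the non-ergodic case, but inside Theorem \ref{Qtheorem}: in the ergodic regime the drift terms of the four underlying SDEs contribute an $O(h_n)$ bias to $\mathbb{Q}_{\mathbb{XX}}$, and the condition $nh_n^2\to 0$ is exactly what forces $\sqrt{n}\cdot O(h_n)\to 0$ so that the bias does not derail the CLT. Granted Theorem \ref{Qtheorem}, the proof of Theorem \ref{thetatheorem} is essentially a verbatim rerun of that of Theorem \ref{thetatheoremnon}.
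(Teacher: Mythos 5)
Your proposal matches the paper's own proof: the paper disposes of Theorem \ref{thetatheorem} in one line by invoking Theorem \ref{Qtheorem} in place of Theorem \ref{Qtheoremnon} and rerunning the argument of Theorem \ref{thetatheoremnon} (uniform convergence of the contrast function for consistency, then the Taylor-expansion/Slutsky argument with $\tilde{V}_n(\theta_0)\stackrel{P}{\longrightarrow}{\bf{W}}(\theta_0)^{-1}$ for the CLT), which is exactly what you describe. Your observation that the condition $nh_n^2\to 0$ is consumed entirely inside Theorem \ref{Qtheorem} to kill the $O(h_n)$ drift bias is also the correct reading of where the regimes differ.
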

\begin{theorem}\label{testtheorem}
Under $[{\bf{A1}}]$-$[{\bf{A2}}]$, $[{\bf{B1}}]$-$[{\bf{B2}}]$, $[{\bf{C1}}]$-$[{\bf{C2}}]$, $[{\bf{D1}}]$-$[{\bf{D2}}]$ and $[{\bf{E1}}]$, as $h_n\longrightarrow0$, $nh_n\longrightarrow\infty$ and $nh_n^2\longrightarrow 0$,
\begin{align*}
    \mathbb{T}_{n}\stackrel{d}{\longrightarrow}\chi^2_{\bar{p}-q}
\end{align*}
under $H_0$.
\end{theorem}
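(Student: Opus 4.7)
The plan is to adapt the argument for Theorem \ref{chitheoremnon} to the ergodic setting, substituting Theorems \ref{Qtheorem} and \ref{thetatheorem} for their non-ergodic counterparts. Under $H_0$ we have ${\bf{\Sigma}}_0={\bf{\Sigma}}(\theta_0)$, and by Theorem \ref{Qtheorem} together with ${\bf{\Sigma}}_0>0$, the event $\{\mathbb{Q}_{\mathbb{XX}}>0\}$ has probability tending to $1$; on this event $\tilde{\rm F}={\rm F}$, so $\mathbb{F}_n(\theta)$ coincides with the quadratic-form representation (\ref{FQ}). In particular $\mathbb{T}_n=n\mathbb{F}_n(\hat\theta_n)$ equals $n(\vech{\mathbb{Q}_{\mathbb{XX}}}-\vech{{\bf{\Sigma}}(\hat\theta_n)})^\top{\rm V}(\mathbb{Q}_{\mathbb{XX}},{\bf{\Sigma}}(\hat\theta_n))(\vech{\mathbb{Q}_{\mathbb{XX}}}-\vech{{\bf{\Sigma}}(\hat\theta_n)})$ with probability going to $1$.

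First I would exploit the joint consistency $\mathbb{Q}_{\mathbb{XX}}\stackrel{P}{\longrightarrow}{\bf{\Sigma}}_0$ (Theorem \ref{Qtheorem}) and $\hat\theta_n\stackrel{P}{\longrightarrow}\theta_0$ (Theorem \ref{thetatheorem}), which together with the continuity of ${\rm V}$ in both arguments give ${\rm V}(\mathbb{Q}_{\mathbb{XX}},{\bf{\Sigma}}(\hat\theta_n))\stackrel{P}{\longrightarrow}{\rm V}({\bf{\Sigma}}_0,{\bf{\Sigma}}_0)=\tfrac12\mathbb{D}_p^\top({\bf{\Sigma}}_0^{-1}\otimes{\bf{\Sigma}}_0^{-1})\mathbb{D}_p$, which equals ${\bf{W}}(\theta_0)^{-1}$ by standard duplication-matrix identities applied to the definition (\ref{Wtheta}).

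Next comes the linearization of $\hat\theta_n$. From the interior first-order condition $\partial_\theta\mathbb{F}_n(\hat\theta_n)=0$ together with a Taylor expansion of the score around $\theta_0$, Theorem \ref{Qtheorem}, and $[{\bf{E1}}]$(ii) (which by Lemma \ref{Aposlemma} makes $\Delta^\top{\bf{W}}(\theta_0)^{-1}\Delta$ invertible), one obtains the standard M-estimator expansion
\[
\sqrt n(\hat\theta_n-\theta_0)=\bigl(\Delta^\top{\bf{W}}(\theta_0)^{-1}\Delta\bigr)^{-1}\Delta^\top{\bf{W}}(\theta_0)^{-1}Z_n+o_P(1),
\]
where $Z_n=\sqrt n(\vech{\mathbb{Q}_{\mathbb{XX}}}-\vech{{\bf{\Sigma}}_0})$. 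The regime $nh_n^2\to0$ enters here precisely so that the CLT part of Theorem \ref{Qtheorem} applies, giving $Z_n\stackrel{d}{\longrightarrow}Z\sim N_{\bar p}(0,{\bf{W}}(\theta_0))$. Taylor-expanding $\vech{{\bf{\Sigma}}(\hat\theta_n)}$ around $\theta_0$ and substituting produces
\[
\sqrt n(\vech{\mathbb{Q}_{\mathbb{XX}}}-\vech{{\bf{\Sigma}}(\hat\theta_n)})=M_nZ_n+o_P(1),\qquad M_n\stackrel{P}{\longrightarrow}M:=\mathbb{I}_{\bar p}-\Delta\bigl(\Delta^\top{\bf{W}}(\theta_0)^{-1}\Delta\bigr)^{-1}\Delta^\top{\bf{W}}(\theta_0)^{-1}.
\]

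Finally, combining the two displays, $\mathbb{T}_n=Z_n^\top M_n^\top{\rm V}(\mathbb{Q}_{\mathbb{XX}},{\bf{\Sigma}}(\hat\theta_n))M_nZ_n+o_P(1)\stackrel{d}{\longrightarrow}Z^\top AZ$ with $A:=M^\top{\bf{W}}(\theta_0)^{-1}M={\bf{W}}(\theta_0)^{-1}-{\bf{W}}(\theta_0)^{-1}\Delta(\Delta^\top{\bf{W}}(\theta_0)^{-1}\Delta)^{-1}\Delta^\top{\bf{W}}(\theta_0)^{-1}$. A direct calculation shows $A{\bf{W}}(\theta_0)A=A$ and $\tr(A{\bf{W}}(\theta_0))=\bar p-q$, so by the standard characterization of chi-squared quadratic forms $Z^\top AZ\sim\chi^2_{\bar p-q}$, giving the claim. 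The main obstacle is not a new conceptual step — the proof is algebraically identical to that of Theorem \ref{chitheoremnon} — but rather verifying that every $o_P$ remainder in the Taylor expansion of the score and in the approximation of the increments by their Gaussian limits is controlled under the combined regime $h_n\to0$, $nh_n\to\infty$, $nh_n^2\to0$; this control is already packaged into Theorems \ref{Qtheorem}--\ref{thetatheorem}, so the remaining work reduces to linear algebra.
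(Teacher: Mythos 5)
Your proposal is correct and follows essentially the same route as the paper: the paper proves this theorem by invoking Theorem \ref{Qtheorem} and repeating the argument of Theorem \ref{chitheoremnon} verbatim, which is exactly the linearize-and-reduce-to-a-projection-quadratic-form argument you describe (your matrix $A={\bf{W}}(\theta_0)^{-1}-{\bf{W}}(\theta_0)^{-1}\Delta(\Delta^{\top}{\bf{W}}(\theta_0)^{-1}\Delta)^{-1}\Delta^{\top}{\bf{W}}(\theta_0)^{-1}$ is the paper's ${\bf{W}}(\theta_0)^{-1}-H(\theta_0)$, and $Z^{\top}AZ$ coincides with the paper's $\gamma^{\top}P(\theta_0)\gamma$). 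The only cosmetic difference is that you Taylor-expand the residual $\vech{\mathbb{Q}_{\mathbb{XX}}}-\vech{{\bf{\Sigma}}(\hat{\theta}_{n})}$ to first order while the paper expands the objective $\mathbb{F}_{n}$ to second order; these yield the same quadratic form.
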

\begin{theorem}\label{testconstheorem}
Under $[{\bf{A1}}]$-$[{\bf{A2}}]$, $[{\bf{B1}}]$-$[{\bf{B2}}]$, $[{\bf{C1}}]$-$[{\bf{C2}}]$, $[{\bf{D1}}]$-$[{\bf{D2}}]$ and $[{\bf{E2}}]$, as $h_n\longrightarrow0$ and $nh_n\longrightarrow\infty$, 
\begin{align*}
    \PP\Bigl(\mathbb{T}_{n}>\chi^2_{\bar{p}-q}(\alpha)\Bigr)\stackrel{}{\longrightarrow}1
\end{align*}
under $H_1$.
\end{theorem}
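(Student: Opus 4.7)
The plan is to reduce the statement to a law-of-large-numbers style argument by exploiting the representation $\mathbb{T}_n=n\mathbb{F}_n(\hat{\theta}_n)$ together with the ergodic consistency of the realized covariance (Theorem \ref{Qtheorem}) and the consistency $\hat{\theta}_n\stackrel{P}{\longrightarrow}\bar{\theta}$ recorded in Lemma \ref{starconslemma}. The rejection threshold $\chi^2_{\bar{p}-q}(\alpha)$ is a fixed constant, so it suffices to show that $\mathbb{T}_n\stackrel{P}{\longrightarrow}\infty$, for which it is enough to prove that $\mathbb{F}_n(\hat{\theta}_n)$ converges in probability to a strictly positive limit.

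First I would observe that under $H_1$ the minimizer $\bar{\theta}$ of $\mathbb{U}(\theta)=\rm{F}({\bf{\Sigma}}_0,{\bf{\Sigma}}(\theta))$ cannot satisfy ${\bf{\Sigma}}_0={\bf{\Sigma}}(\bar{\theta})$, for otherwise the alternative hypothesis would be violated. Since ${\bf{\Sigma}}_0>0$ and ${\bf{\Sigma}}(\bar{\theta})>0$ with ${\bf{\Sigma}}_0\neq{\bf{\Sigma}}(\bar{\theta})$, the representation (\ref{F1}) together with the strict convexity of $\rm{F}(\cdot,\cdot)$ in its second argument at fixed first argument (equivalently, the fact that $\rm{F}(A,B)=0$ iff $A=B$ for positive-definite matrices, a standard property of the Stein/Kullback loss) yields $\mathbb{U}(\bar{\theta})>0$.

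Next I would combine Theorem \ref{Qtheorem} (which under the stated assumptions gives $\mathbb{Q}_{\mathbb{XX}}\stackrel{P}{\longrightarrow}{\bf{\Sigma}}_0$ as $h_n\to 0$ and $nh_n\to\infty$) with Lemma \ref{starconslemma} (which gives $\hat{\theta}_n\stackrel{P}{\longrightarrow}\bar{\theta}$ under $[{\bf{E2}}]$ and $H_1$). Since ${\bf{\Sigma}}_0$ is positive definite, $\mathbb{Q}_{\mathbb{XX}}$ is non-singular with probability tending to one, so $\mathbb{F}_n(\theta)=\tilde{\rm{F}}(\mathbb{Q}_{\mathbb{XX}},{\bf{\Sigma}}(\theta))$ coincides with $\rm{F}(\mathbb{Q}_{\mathbb{XX}},{\bf{\Sigma}}(\theta))$ there. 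The map $(A,\theta)\mapsto \rm{F}(A,{\bf{\Sigma}}(\theta))$ is continuous on $\{A>0\}\times\Theta$ (since ${\bf{\Sigma}}(\theta)>0$ by Lemma \ref{Sigmaposlemma} and $\log\det$ and the trace of an inverse are smooth on positive-definite matrices), hence the continuous mapping theorem delivers
\begin{align*}
    \mathbb{F}_n(\hat{\theta}_n)=\tilde{\rm{F}}(\mathbb{Q}_{\mathbb{XX}},{\bf{\Sigma}}(\hat{\theta}_n))\stackrel{P}{\longrightarrow}\rm{F}({\bf{\Sigma}}_0,{\bf{\Sigma}}(\bar{\theta}))=\mathbb{U}(\bar{\theta})>0.
\end{align*}
Consequently $\mathbb{T}_n=n\mathbb{F}_n(\hat{\theta}_n)\stackrel{P}{\longrightarrow}\infty$, and for any fixed level $\alpha\in(0,1)$ we conclude $\PP(\mathbb{T}_n>\chi^2_{\bar{p}-q}(\alpha))\to 1$.

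The main obstacle is really just packaging the alternative-hypothesis consistency $\hat{\theta}_n\stackrel{P}{\longrightarrow}\bar{\theta}$ cleanly; this is not automatic from the $H_0$ consistency proof because the contrast function is no longer minimised at $\theta_0$, and one must invoke an identifiable-uniqueness/Wald-type argument (uniform convergence of $\mathbb{F}_n(\theta)$ to $\mathbb{U}(\theta)$ on the compact $\Theta$, together with $[{\bf{E2}}]$) to ensure that any limit point of $\hat{\theta}_n$ minimises $\mathbb{U}$ and hence equals $\bar{\theta}$. Given that this is already recorded in Lemma \ref{starconslemma}, the remainder of the proof is the short continuity/divergence argument above.
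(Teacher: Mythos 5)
Your proposal is correct and follows essentially the same route as the paper: establish $\hat{\theta}_{n}\stackrel{P}{\longrightarrow}\bar{\theta}$ under $H_1$, deduce $\tfrac{1}{n}\mathbb{T}_{n}\stackrel{P}{\longrightarrow}\mathbb{U}(\bar{\theta})>0$ (the paper routes this through the uniform convergence in Lemma \ref{Fproblemma} rather than a joint continuous-mapping argument, but the content is the same), and conclude since the threshold is fixed. The only small imprecision is that Lemma \ref{starconslemma} is stated for the non-ergodic regime; in the ergodic case one must note that its proof carries over verbatim once Theorem \ref{Qtheorem} replaces Theorem \ref{Qtheoremnon} as the source of $\mathbb{Q}_{\mathbb{XX}}\stackrel{P}{\longrightarrow}{\bf{\Sigma}}_0$, which is exactly the substitution the paper makes.
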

\begin{remark}
Generally, in the ergodic and non-ergodic cases, the limit distributions of $\mathbb{Q}_{\mathbb{XX}}$ and $\hat{\theta}_n$ are different. In our setting, they converge to the same distribution for both cases. It is because the volatility is constant in our setting.
\end{remark}
\section{SSEM for diffusion processes}
In this section, we study sparse estimation in SEM for diffusion processes.  Define $\mathbb{F}_{n}(\theta)$, $\hat{\theta}_n$ and $\mathbb{U}(\bar{\theta})$ by (\ref{F}), (\ref{thetahat}) and (\ref{U}) respectively. First, we consider the non-ergodic case.
The lasso estimator (Tibshirani \cite{Tibshirani(2006)}) is defined as follows:
\begin{align*}
    \hat{\theta}_{lasso,n}\in\arginf_{\theta\in\Theta}\Bigl\{\mathbb{F}_n(\theta)+\lambda_n\sum_{j=1}^{q}|\theta^{(j)}|\Bigr\},
\end{align*}
where $\lambda_n>0$. Lasso does not have the oracle property; see, e.g., Zou \cite{Zou(2006)}. Set
\begin{align*}
    \qquad\quad\kappa_{n}^{(j)}&=\lambda_{1,n}|\hat{\theta}_{n}^{(j)}|^{-\gamma}1_{\bigl\{|\hat{\theta}_{n}^{(j)}|\geq\delta\bigr\}}+\lambda_{2,n}1_{\bigl\{|\hat{\theta}_{n}^{(j)}|<\delta\bigr\}}
\end{align*}
for $j=1,\cdots, q$, where $\lambda_{1,n}>0$, $\lambda_{2,n}>0$, $\gamma>0$ and $\delta>0$. The adaptive lasso estimator (Zou \cite{Zou(2006)}) is defined by
\begin{align}
    \hat{\theta}_{adaptive,n}\in\arginf_{\theta\in\Theta}\Bigl\{\mathbb{F}_n(\theta)+\sum_{j=1}^{q}\kappa_{n}^{(j)}|\theta^{(j)}|\Bigr\}.\label{adalasso}
\end{align}
Although it needs the initial estimator $\hat{\theta}_n$, the adaptive lasso has the oracle property. Wang and  Leng \cite {Wang(2007)} showed that the contrast function of the adaptive lasso is rewritten as
\begin{align}
    (\theta-\hat{\theta}_{n})^{\top}\partial^2_{\theta}\mathbb{F}_n(\theta)(\theta-\hat{\theta}_{n})+\sum_{j=1}^{q}\kappa_{n}^{(j)}|\theta^{(j)}|,\label{Qadalasso}
\end{align}
where $\partial^2_{\theta}=\partial_{\theta}\partial_{\theta}^{\top}$.
(\ref{Qadalasso}) is much easier to solve numerically than (\ref{adalasso}). Suzuki and Yoshida \cite{Suzuki(2020)} studied the more general setting than Wang and Leng \cite {Wang(2007)}. Let $G_{n}\in\mathbb{R}^{q\times q}$ and the penalized contrast function is given by
\begin{align}
    \mathbb{Q}_{G,n}(\theta)=(\theta-\hat{\theta}_{n})^{\top}\tilde{G}_{n}(\theta-\hat{\theta}_{n})+\sum_{j=1}^{q}\kappa_{n}^{(j)}|\theta^{(j)}|,\label{Q}
\end{align} 
where
\begin{align*}
    \qquad\tilde{G}_{n}&=\begin{cases}
    G_{n},\quad (G_{n}\ \mbox{is a positive definite matrix}),\\
    \mathbb{I}_{q},\quad (G_{n}\ \mbox{is not a positive definite matrix}).
    \end{cases}
\end{align*}
The minimum penalized contrast estimator is defined by
\begin{align*}
    \mathbb{Q}_{G,n}(\tilde{\theta}_{G,n})=\inf_{\theta\in\Theta}\mathbb{Q}_{G,n}(\theta).
\end{align*}
The estimator $\tilde{\theta}_{G,n}$ is called the Penalized Least Squares Approximation (PLSA) estimator. See Suzuki and Yoshida \cite{Suzuki(2020)} for details of the PLSA estimator. In addition, we make the following assumption.
\begin{enumerate}
    \vspace{2mm}
    \item[\bf{[F1]}] There exists a positive definite matrix $G\in\mathbb{R}^{q\times q}$ such that $G_{n}\stackrel{P}{\longrightarrow}G$.
    \vspace{2mm}
\end{enumerate}
Without loss of generality, we suppose that $\theta_0^{(j)}\neq 0$ for $j=1,\cdots,q_0$ and $\theta_0^{(j)}=0$ for $j=q_0+1,\cdots,q$ in this section. Set
\begin{align*}
    \mathcal{F}_{1}&=\Bigl\{j\in\{1,\cdots,q\}\ \big|\ \theta^{(j)}_{0}\neq0\Bigr\}=\Bigl\{1,\cdots, q_0\Bigr\}.
\end{align*}
Assume $0<\delta<\min_{j\in\mathcal{F}_1}|\theta_0^{(j)}|$. For any vector $v\in\mathbb{R}^{q}$ and any matrix $M\in\mathbb{R}^{q\times q}$, 
let $v_{\mathcal{F}_{1}}=(v^{(i)})_{1\leq j\leq q_0}$ and
\begin{align*}
    M=\begin{pmatrix}
    M_{\mathcal{F}}^{11} & M_{\mathcal{F}}^{10}\\
    M_{\mathcal{F}}^{10\top} & M_{\mathcal{F}}^{00}
    \end{pmatrix},
\end{align*}
where $M_{\mathcal{F}}^{11}=(M_{ij})_{1\leq i,j\leq q_0}$, $M_{\mathcal{F}}^{10}=(M_{ij})_{1\leq i\leq q_0,q_0+1\leq j\leq q}$ and $M_{\mathcal{F}}^{00}=(M_{ij})_{q_0+1\leq i,j\leq q}$. Set
\begin{align*}
    \mathfrak{G}_{G}=\begin{pmatrix}
    \mathbb{I}_{q_0} & (G_{\mathcal{F}}^{11})^{-1}G_{\mathcal{F}}^{10}
    \end{pmatrix}
\end{align*}
and
\begin{align*}
    \tilde{\mathcal{F}}_{G,n,1}
    =\Bigl\{j\in\{1,\cdots,q\}\ \big|\  \tilde{\theta}^{(j)}_{G,n}\neq0\Bigr\}.
\end{align*}
Write ${\bf{A}}(\theta_0)=\Delta^{\top}{\bf{W}}(\theta_0)^{-1}\Delta$, where $\Delta$ and ${\bf{W}}(\theta_0)$ are defined by (\ref{Delta}) and (\ref{Wtheta}) respectively. Note that it holds from Lemma \ref{Aposlemma} and Corollary 14.2.12 in Harville \cite{Harville(1998)} that ${\bf{A}}_{\mathcal{F}}^{11}(\theta_{0})$ is a positive definite matrix. PLSA has the oracle property as follows.
\begin{lemma}\label{Aeoracle}
Under \textrm{\textbf{[A1]}}, \textrm{\textbf{[B1]}},
\textrm{\textbf{[C1]}},
\textrm{\textbf{[D1]}},
\textrm{\textbf{[E1]}} and \textrm{\textbf{[F1]}},
as $h_n\longrightarrow0$,
$\sqrt{n}{\lambda}_{1,n}\longrightarrow0$ and
$\sqrt{n}{\lambda}_{2,n}\longrightarrow\infty$,
\begin{align*}
    \PP\Bigl(\tilde{\mathcal{F}}_{G,n,1}=\mathcal{F}_{1}\Bigr)\stackrel{}{\longrightarrow}1.
\end{align*}
\end{lemma}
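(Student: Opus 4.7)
The plan is to establish the support-recovery property by combining $\sqrt{n}$-consistency of the PLSA estimator $\tilde\theta_{G,n}$ with a contradiction argument based on the subgradient/KKT optimality conditions for $\mathbb{Q}_{G,n}$. I will treat the detection of nonzero components and the sparsification of zero components separately.

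First, I would control the random weights $\kappa_n^{(j)}$. By Theorem \ref{thetatheoremnon}, $\sqrt{n}(\hat\theta_n-\theta_0)=O_P(1)$, so $\hat\theta_n^{(j)}\stackrel{P}{\longrightarrow}\theta_0^{(j)}$ for each $j$. Since $\delta<\min_{j\in\mathcal{F}_1}|\theta_0^{(j)}|$, on $j\in\mathcal{F}_1$ the indicator $1_{\{|\hat\theta_n^{(j)}|\geq\delta\}}$ equals $1$ with probability tending to $1$, giving $\kappa_n^{(j)}=\lambda_{1,n}|\hat\theta_n^{(j)}|^{-\gamma}=O_P(\lambda_{1,n})$, hence $\sqrt{n}\,\kappa_n^{(j)}=o_P(1)$. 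On $j\notin\mathcal{F}_1$, $|\hat\theta_n^{(j)}|=O_P(n^{-1/2})<\delta$ with probability tending to $1$, so $\kappa_n^{(j)}=\lambda_{2,n}$ and $\sqrt{n}\,\kappa_n^{(j)}=\sqrt{n}\lambda_{2,n}\longrightarrow\infty$.

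Second, I would prove that $\tilde\theta_{G,n}-\hat\theta_n=O_P(n^{-1/2})$. Using $\mathbb{Q}_{G,n}(\tilde\theta_{G,n})\leq\mathbb{Q}_{G,n}(\hat\theta_n)=\sum_j\kappa_n^{(j)}|\hat\theta_n^{(j)}|$ together with [\textbf{F1}] (so the smallest eigenvalue of $\tilde G_n$ is bounded away from $0$ in probability), I obtain
\begin{align*}
c\,|\tilde\theta_{G,n}-\hat\theta_n|^2 \;\leq\; \sum_{j=1}^q\kappa_n^{(j)}\bigl(|\hat\theta_n^{(j)}|-|\tilde\theta_{G,n}^{(j)}|\bigr) \;\leq\; \sum_{j=1}^q\kappa_n^{(j)}\bigl|\tilde\theta_{G,n}^{(j)}-\hat\theta_n^{(j)}\bigr|,
\end{align*}
with $c>0$ on an event of probability tending to $1$. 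Splitting the sum on $\mathcal{F}_1$ and its complement and using Cauchy--Schwarz together with the rates above yields $\sqrt{n}(\tilde\theta_{G,n}-\hat\theta_n)=O_P(1)$, so in particular $\tilde\theta_{G,n}\stackrel{P}{\longrightarrow}\theta_0$ and $\tilde\theta_{G,n}\in\mathrm{Int}\,\Theta$ with probability tending to $1$.

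Third, for $j\in\mathcal{F}_1$: since $\tilde\theta_{G,n}^{(j)}\stackrel{P}{\longrightarrow}\theta_0^{(j)}\neq 0$, the event $\{\tilde\theta_{G,n}^{(j)}\neq 0\}$ has probability tending to $1$. For $j\notin\mathcal{F}_1$: suppose for contradiction that $\tilde\theta_{G,n}^{(j)}\neq 0$ on a set of non-vanishing probability. On the high-probability event that $\tilde\theta_{G,n}$ is interior, differentiability of the quadratic part and the subgradient characterization of $|\cdot|$ give the KKT condition
\begin{align*}
2\,\bigl[\tilde G_n(\tilde\theta_{G,n}-\hat\theta_n)\bigr]_j \;+\; \kappa_n^{(j)}\,\mathrm{sgn}\bigl(\tilde\theta_{G,n}^{(j)}\bigr) \;=\; 0.
\end{align*}
Multiplying by $\sqrt{n}$, the first term is $O_P(1)$ by the rate from Step 2 and boundedness in probability of $\tilde G_n$, while the second term has absolute value $\sqrt{n}\,\lambda_{2,n}\longrightarrow\infty$ on the event $\{|\hat\theta_n^{(j)}|<\delta\}$ (of probability tending to $1$). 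This is a contradiction, so $\PP(\tilde\theta_{G,n}^{(j)}=0)\longrightarrow 1$ for each $j\notin\mathcal{F}_1$. Taking the intersection over finitely many $j$ yields $\PP(\tilde{\mathcal{F}}_{G,n,1}=\mathcal{F}_1)\longrightarrow 1$.

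The main obstacle is the $\sqrt{n}$-consistency in Step 2: one must carefully exploit [\textbf{F1}] to replace $\tilde G_n$ by a matrix bounded below on a high-probability event and then balance the contributions of the two penalty regimes, since $\kappa_n^{(j)}$ behaves qualitatively differently on $\mathcal{F}_1$ and its complement. Once this rate is in hand, the KKT contradiction argument is routine, and the cut-off $\delta$ is used only to guarantee that the indicator functions in $\kappa_n^{(j)}$ are eventually the correct ones.
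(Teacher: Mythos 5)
Your overall strategy is a direct, self-contained proof of the oracle property, whereas the paper disposes of this lemma by verifying the hypotheses of Suzuki and Yoshida \cite{Suzuki(2020)} and invoking their Theorems 2--3: \textbf{[F1]} gives their Assumption 1, Theorem \ref{thetatheoremnon} gives $\sqrt{n}(\hat{\theta}_n-\theta_0)=O_p(1)$ (their Assumption 2), and Lemma \ref{abprob} supplies $\sqrt{n}a_n\stackrel{P}{\longrightarrow}0$ and $(\sqrt{n}b_n)^{-1}\stackrel{P}{\longrightarrow}0$. Your Step 1 is exactly the paper's Lemma \ref{abprob}, and your Step 3 is the KKT/sign argument that the paper itself carries out in the $H_1$-analogue (Lemma \ref{AeoracleH1}). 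So the route is genuinely different, but it is essentially a re-derivation of the cited Suzuki--Yoshida result.

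There is, however, a concrete gap in Step 2. From $\mathbb{Q}_{G,n}(\tilde{\theta}_{G,n})\leq\mathbb{Q}_{G,n}(\hat{\theta}_{n})$ you obtain $c\,|\tilde{\theta}_{G,n}-\hat{\theta}_{n}|^2\leq\sum_{j}\kappa_n^{(j)}|\tilde{\theta}_{G,n}^{(j)}-\hat{\theta}_{n}^{(j)}|$, but on $\mathcal{F}_1^{c}$ the weights equal $\lambda_{2,n}$ with probability tending to one, and the assumptions require only $\sqrt{n}\lambda_{2,n}\longrightarrow\infty$, not $\lambda_{2,n}\longrightarrow 0$ (in the paper's simulations $\lambda_{2,n}=\delta^{-1}$ is constant). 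Splitting the sum and applying Cauchy--Schwarz therefore yields only $|\tilde{\theta}_{G,n}-\hat{\theta}_{n}|=O_p(\lambda_{2,n})$; even the sharper bound $|\hat{\theta}_{n}^{(j)}|-|\tilde{\theta}_{G,n}^{(j)}|\leq|\hat{\theta}_{n}^{(j)}|=O_p(n^{-1/2})$ on $\mathcal{F}_1^{c}$ gives only $O_p\bigl(a_n+\lambda_{2,n}^{1/2}n^{-1/4}\bigr)$ --- neither is the claimed $O_p(n^{-1/2})$. Two repairs are available: (i) compare with $\theta_0$ instead of $\hat{\theta}_{n}$, so that the $\mathcal{F}_0$ penalty terms vanish on the right-hand side (since $\theta_0^{(j)}=0$ there) and can be discarded on the left; this is exactly the device the paper uses in Lemma \ref{thetaop1} and it delivers the clean $\sqrt{n}$-rate; or (ii) keep your comparison point and note that the weaker rate still suffices for the contradiction, because $\sqrt{n}\cdot\lambda_{2,n}^{1/2}n^{-1/4}=(\sqrt{n}\lambda_{2,n})^{1/2}=o(\sqrt{n}\lambda_{2,n})$. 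With either repair your argument goes through.
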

\vspace{1mm}
\begin{lemma}\label{Aeasym}
Under \textrm{\textbf{[A1]}}, \textrm{\textbf{[B1]}},
\textrm{\textbf{[C1]}},
\textrm{\textbf{[D1]}},
\textrm{\textbf{[E1]}} and \textrm{\textbf{[F1]}},
as $h_n\longrightarrow0$, 
$\sqrt{n}\lambda_{1,n}\longrightarrow0$ and
$\sqrt{n}\lambda_{2,n}\longrightarrow\infty$,
\begin{align*}
    \sqrt{n}(\tilde{\theta}_{G,n}-\theta_{0})_{\mathcal{F}_{1}}-\mathfrak{G}_{G}\bigl\{\sqrt{n}(\hat{\theta}_{n}-\theta_{0})\bigr\}\stackrel{P}{\longrightarrow}0.
\end{align*}
In addition, as $G={\bf{A}}(\theta_0)$,
\begin{align*}
    \sqrt{n}(\tilde{\theta}_{G,n}-\theta_{0})_{\mathcal{F}_{1}}\stackrel{d}{\longrightarrow} N_{|\mathcal{F}_{1}|}\Bigl(0,{\bf{A}}_{\mathcal{F}}^{11}(\theta_{0})^{-1}\Bigr).
\end{align*}
\end{lemma}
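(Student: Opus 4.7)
The plan is to exploit the selection-consistency established in Lemma \ref{Aeoracle} to reduce the penalized optimization to a smooth quadratic subproblem on the coordinates $\mathcal{F}_1$, extract the resulting first-order (KKT) equation, and then combine it with the asymptotic normality of $\hat{\theta}_n$ from Theorem \ref{thetatheoremnon}. First I would work on the event $\mathcal{E}_n$ on which $\tilde{\theta}_{G,n}^{(j)}=0$ for all $j\notin\mathcal{F}_1$ (from Lemma \ref{Aeoracle}), $\tilde{G}_n=G_n$ (which holds eventually since $G>0$ by $[{\bf{F1}}]$), and $\mathrm{sign}(\tilde{\theta}_{G,n}^{(j)})=\mathrm{sign}(\theta_0^{(j)})$ for every $j\in\mathcal{F}_1$ (from consistency of the restricted minimizer, delivered by the proof of Lemma \ref{Aeoracle} together with $\hat{\theta}_n\stackrel{P}{\longrightarrow}\theta_0$); this event has probability tending to $1$. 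On $\mathcal{E}_n$ the $\ell_1$-part of $\mathbb{Q}_{G,n}$ is smooth in the $\mathcal{F}_1$-block, so the subdifferential optimality condition reduces to
\begin{align*}
    2\tilde{G}_{n,\mathcal{F}}^{11}\bigl(\tilde{\theta}_{G,n,\mathcal{F}_1}-\hat{\theta}_{n,\mathcal{F}_1}\bigr)-2\tilde{G}_{n,\mathcal{F}}^{10}\hat{\theta}_{n,\mathcal{F}_1^c}+\kappa_n^{(\mathcal{F}_1)}\odot\mathrm{sign}(\tilde{\theta}_{G,n,\mathcal{F}_1})=0.
\end{align*}

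Since $G_{\mathcal{F}}^{11}$ is positive definite, $\tilde{G}_{n,\mathcal{F}}^{11}$ is invertible with probability tending to $1$, so I solve for $\tilde{\theta}_{G,n,\mathcal{F}_1}-\hat{\theta}_{n,\mathcal{F}_1}$ and multiply by $\sqrt{n}$. On $\mathcal{E}_n$ and for $j\in\mathcal{F}_1$ one has $\kappa_n^{(j)}=\lambda_{1,n}|\hat{\theta}_n^{(j)}|^{-\gamma}$ with $|\hat{\theta}_n^{(j)}|\stackrel{P}{\longrightarrow}|\theta_0^{(j)}|>0$, so $\sqrt{n}\kappa_n^{(j)}=o_p(1)$ by $\sqrt{n}\lambda_{1,n}\longrightarrow 0$, and the penalty contribution vanishes. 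Because $\theta_{0,\mathcal{F}_1^c}=0$, $\sqrt{n}(\hat{\theta}_n-\theta_0)=O_p(1)$ by Theorem \ref{thetatheoremnon}, and $(\tilde{G}_{n,\mathcal{F}}^{11})^{-1}\tilde{G}_{n,\mathcal{F}}^{10}\stackrel{P}{\longrightarrow}(G_{\mathcal{F}}^{11})^{-1}G_{\mathcal{F}}^{10}$, Slutsky's lemma then gives $\sqrt{n}(\tilde{\theta}_{G,n}-\theta_0)_{\mathcal{F}_1}=\mathfrak{G}_G\sqrt{n}(\hat{\theta}_n-\theta_0)+o_p(1)$, which is the first assertion. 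For the second, I invoke Theorem \ref{thetatheoremnon} to obtain $\sqrt{n}(\tilde{\theta}_{G,n}-\theta_0)_{\mathcal{F}_1}\stackrel{d}{\longrightarrow}N_{|\mathcal{F}_1|}\bigl(0,\mathfrak{G}_G{\bf{A}}(\theta_0)^{-1}\mathfrak{G}_G^\top\bigr)$ and verify $\mathfrak{G}_G{\bf{A}}(\theta_0)^{-1}\mathfrak{G}_G^\top=({\bf{A}}_{\mathcal{F}}^{11}(\theta_0))^{-1}$ when $G={\bf{A}}(\theta_0)$ via the Schur-complement block inversion: with $S={\bf{A}}_{\mathcal{F}}^{00}(\theta_0)-{\bf{A}}_{\mathcal{F}}^{10\top}(\theta_0)({\bf{A}}_{\mathcal{F}}^{11}(\theta_0))^{-1}{\bf{A}}_{\mathcal{F}}^{10}(\theta_0)$, the four cross terms in the expansion of $\mathfrak{G}_G{\bf{A}}(\theta_0)^{-1}\mathfrak{G}_G^\top$ all equal $\pm({\bf{A}}_{\mathcal{F}}^{11}(\theta_0))^{-1}{\bf{A}}_{\mathcal{F}}^{10}(\theta_0)S^{-1}{\bf{A}}_{\mathcal{F}}^{10\top}(\theta_0)({\bf{A}}_{\mathcal{F}}^{11}(\theta_0))^{-1}$ with alternating signs $+,-,-,+$ that cancel, leaving $({\bf{A}}_{\mathcal{F}}^{11}(\theta_0))^{-1}$.

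The hard part will be justifying the KKT reduction rigorously: because of the $\ell_1$-penalty one cannot differentiate $\mathbb{Q}_{G,n}$ directly, and to obtain the smooth first-order equation above one must simultaneously control (i) the zero pattern of $\tilde{\theta}_{G,n}$ on $\mathcal{F}_1^c$ and (ii) the sign pattern on $\mathcal{F}_1$ on a single event of probability tending to $1$. The first is essentially Lemma \ref{Aeoracle}; the second requires $\tilde{\theta}_{G,n,\mathcal{F}_1}\stackrel{P}{\longrightarrow}\theta_{0,\mathcal{F}_1}$, which follows from the argument of Lemma \ref{Aeoracle} together with $|\theta_0^{(j)}|>0$ for $j\in\mathcal{F}_1$. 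Once these events are secured, the remainder is linear algebra and Slutsky.
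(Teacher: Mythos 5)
Your proof is correct in substance, but it takes a genuinely different route from the paper. The paper does not argue via KKT conditions at all: it simply verifies the three hypotheses of Suzuki and Yoshida (2020) --- Assumption 1 from $[{\bf{F1}}]$ (i.e. $\tilde{G}_n\stackrel{P}{\longrightarrow}G>0$), Assumption 2 from Theorem \ref{thetatheoremnon} (i.e. $\sqrt{n}(\hat{\theta}_n-\theta_0)=O_p(1)$ with the stated Gaussian limit and ${\bf{A}}(\theta_0)>0$ from Lemma \ref{Aposlemma}), and the tuning-rate conditions $\sqrt{n}a_n=o_p(1)$, $\sqrt{n}b_n\stackrel{P}{\longrightarrow}\infty$ from Lemma \ref{abprob} --- and then invokes Theorems 2 and 3 of that reference, which deliver Lemmas \ref{Aeoracle} and \ref{Aeasym} in one stroke. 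What you have written is essentially a self-contained reconstruction of the proof of Suzuki--Yoshida's Theorem 3: the subdifferential reduction on the event of correct support and sign recovery, the vanishing of the penalty term via $\sqrt{n}a_n=o_p(1)$, and the Schur-complement computation showing $\mathfrak{G}_G{\bf{A}}(\theta_0)^{-1}\mathfrak{G}_G^{\top}=({\bf{A}}_{\mathcal{F}}^{11}(\theta_0))^{-1}$ (your $+,-,-,+$ cancellation is right). The trade-off is that the paper's route is short and outsources the delicate event construction, whereas yours is transparent but must independently establish $\sqrt{n}(\tilde{\theta}_{G,n}-\theta_0)=O_p(1)$ (hence consistency and the sign pattern on $\mathcal{F}_1$) \emph{before} the KKT reduction; appealing to "the argument of Lemma \ref{Aeoracle}" for this is circular in the context of this paper, since that lemma is itself proved only by citation. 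The gap is fillable by the elementary comparison $\mathbb{Q}_{G,n}(\tilde{\theta}_{G,n})\leq\mathbb{Q}_{G,n}(\theta_0)$ together with the smallest eigenvalue of $\tilde{G}_n$ being bounded away from zero in probability --- exactly the device the paper uses for $\tilde{\theta}_{\mathbb{I},n}$ in Lemma \ref{thetaop1} --- so you should state and prove that tightness step explicitly rather than defer it.
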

\vspace{1mm}
\begin{remark}
Suppose that $G_{n}=2^{-1}\partial^2_{\theta}\mathbb{F}_{n}(\hat{\theta}_{n})$.
In a similar way to the proof of Theorem \ref{thetatheoremnon}, it holds $G_{n}\stackrel{P}{\longrightarrow}{\bf{A}}(\theta_0)$, which yields $G={\bf{A}}(\theta_0)$.
\end{remark}
\begin{remark}
    Theorem 2 yields 
    \begin{align*}
    \sqrt{n}(\hat{\theta}_{n}-\theta_{0})_{\mathcal{F}_{1}}\stackrel{d}{\longrightarrow} N_{|\mathcal{F}_{1}|}\Bigl(0,({\bf{A}}(\theta_0)^{-1})_{\mathcal{F}}^{11}\Bigr).   
    \end{align*}
    It follows from Theorem 8.5.11 and Theorem 14.8.4 in Harville \cite{Harville(1998)} that
    \begin{align*}
    ({\bf{A}}(\theta_0)^{-1})_{\mathcal{F}}^{11}\geq {\bf{A}}_{\mathcal{F}}^{11}(\theta_{0})^{-1}. 
    \end{align*}
\end{remark}
In (\ref{Q}), as $G_{n}=\mathbb{I}_{q}$, the penalized contrast function is
\begin{align*}
    \mathbb{Q}_{\mathbb{I},n}(\theta)=\sum_{j=1}^{q}\bigl(\theta^{(j)}-\hat{\theta}_{n}^{(j)}\bigr)^2+\sum_{j=1}^{q}\kappa_{n}^{(j)}|\theta^{(j)}|
\end{align*}
and the LSA estimator is defined as
\begin{align*}
    \mathbb{Q}_{\mathbb{I},n}(\tilde{\theta}_{\mathbb{I},n})=\inf_{\theta\in\Theta}\mathbb{Q}_{\mathbb{I},n}(\theta).
\end{align*}
Minimizing $\mathbb{Q}_{\mathbb{I},n}(\theta)$ is easier than doing $\mathbb{Q}_{G,n}(\theta)$ from a computational viewpoint. Note that the asymptotic
covariance matrix of $\tilde{\theta}_{\mathbb{I},n}$ is different from that of $\tilde{\theta}_{G,n}$. Set
\begin{align*}
    \tilde{\Theta}_n=\Bigl\{\theta\in\Theta\ \big|\ \theta^{(j)}=0\  (j\in \tilde{\mathcal{F}}_{\mathbb{I},n,0})\Bigr\},
\end{align*}
where
\begin{align*}
    \tilde{\mathcal{F}}_{\mathbb{I},n,0}=\Bigl\{j\in\{1,\cdots,q\}\ \big|\ \tilde{\theta}^{(j)}_{\mathbb{I},n}=0\Bigr\}.
\end{align*}
Define the Penalized method to Ordinary method (P-O) estimator as follows:
\begin{align*}
    \mathbb{F}_{n}(\check{\theta}_{n})=\inf_{\theta\in\tilde{\Theta}_{n}}\mathbb{F}_{n}(\theta).
\end{align*}
See Suzuki and Yoshida \cite{Suzuki(2020)} for details of the P-O estimator. The P-O estimator has the following asymptotic property.
\begin{lemma}\label{POasym}
Under \textrm{\textbf{[A1]}}, \textrm{\textbf{[B1]}},
\textrm{\textbf{[C1]}},
\textrm{\textbf{[D1]}} and
\textrm{\textbf{[E1]}}, as $h_n\longrightarrow0$, 
$\sqrt{n}\lambda_{1,n}\longrightarrow0$ and
$\sqrt{n}\lambda_{2,n}\longrightarrow\infty$,
\begin{align*}
    \sqrt{n}(\check{\theta}_{n}-\theta_{0})_{\mathcal{F}_{1}}\stackrel{d}{\longrightarrow}N_{|\mathcal{F}_{1}|}\Bigl(0,{\bf{A}}_{\mathcal{F}}^{11}(\theta_{0})^{-1}\Bigr).
\end{align*}
\end{lemma}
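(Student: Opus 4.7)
The plan is to reduce the P-O estimator to a constrained minimum contrast problem on the active coordinates $\mathcal{F}_1$ and then mirror the argument already used for Theorem \ref{thetatheoremnon}. The key ingredient is the variable-selection consistency of the LSA estimator $\tilde{\theta}_{\mathbb{I},n}$, which renders the random constraint set $\tilde{\Theta}_n$ equal to a fixed, deterministic subset of $\Theta$ with probability tending to one.

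First I would observe that $G_n=\mathbb{I}_q$ trivially satisfies \textbf{[F1]} with $G=\mathbb{I}_q>0$, so Lemma \ref{Aeoracle} applied to the LSA setting gives $\PP(\tilde{\mathcal{F}}_{\mathbb{I},n,1}=\mathcal{F}_1)\to 1$, equivalently $\PP(\tilde{\mathcal{F}}_{\mathbb{I},n,0}=\{q_0+1,\ldots,q\})\to 1$. Set $\Theta_{\mathcal{F}_1}=\{\theta\in\Theta : \theta^{(j)}=0 \text{ for every } j\notin\mathcal{F}_1\}$. On the event $A_n=\{\tilde{\mathcal{F}}_{\mathbb{I},n,1}=\mathcal{F}_1\}$ one has $\tilde{\Theta}_n=\Theta_{\mathcal{F}_1}$, so $\check{\theta}_n$ coincides with the minimizer of $\mathbb{F}_n$ over the deterministic set $\Theta_{\mathcal{F}_1}$. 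Since $\PP(A_n)\to 1$, any weak-convergence statement for $\check{\theta}_n$ can be read off from the restricted problem.

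Next I would reparametrize the restricted model by the $q_0$-dimensional vector $\tilde{\theta}=\theta_{\mathcal{F}_1}$ ranging over the convex compact projection of $\Theta_{\mathcal{F}_1}$, which contains $(\theta_0)_{\mathcal{F}_1}$ in its interior because $\theta_0\in\Int\Theta$. The covariance model $\tilde{\theta}\mapsto{\bf{\Sigma}}(\theta)\big|_{\theta^{(j)}=0,\,j\notin\mathcal{F}_1}$ is smooth, and its derivative at the truth is exactly $\Delta_{\mathcal{F}_1}$, the submatrix of $\Delta$ formed by the columns indexed by $\mathcal{F}_1$. By \textbf{[E1]}(ii) $\Delta$ has full column rank $q$, hence $\Delta_{\mathcal{F}_1}$ has full column rank $q_0$; the restricted identifiability condition corresponding to \textbf{[E1]}(i) is inherited immediately. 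Therefore the restricted model satisfies the hypotheses of Theorem \ref{thetatheoremnon} verbatim in dimension $q_0$, and the very same proof yields the expansion
\begin{align*}
    \sqrt{n}(\check{\theta}_n-\theta_0)_{\mathcal{F}_1}
    = \bigl(\Delta_{\mathcal{F}_1}^{\top}{\bf{W}}(\theta_0)^{-1}\Delta_{\mathcal{F}_1}\bigr)^{-1}\Delta_{\mathcal{F}_1}^{\top}{\bf{W}}(\theta_0)^{-1}\sqrt{n}\bigl(\vech{\mathbb{Q}_{\mathbb{XX}}}-\vech{{\bf{\Sigma}}_0}\bigr)+o_{\PP}(1).
\end{align*}

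Combining this with the central limit theorem in Theorem \ref{Qtheoremnon} and the identity $\Delta_{\mathcal{F}_1}^{\top}{\bf{W}}(\theta_0)^{-1}\Delta_{\mathcal{F}_1}={\bf{A}}_{\mathcal{F}}^{11}(\theta_0)$ (the top-left $q_0\times q_0$ block of ${\bf{A}}(\theta_0)=\Delta^{\top}{\bf{W}}(\theta_0)^{-1}\Delta$) yields the limit law $N_{|\mathcal{F}_1|}\bigl(0,{\bf{A}}_{\mathcal{F}}^{11}(\theta_0)^{-1}\bigr)$ after a routine sandwich cancellation. The main obstacle is conceptual rather than computational: one has to transfer Theorem \ref{thetatheoremnon}, which is formulated for a fixed unconstrained parameter space, to a data-dependent constrained space. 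The resolution is precisely the high-probability event $A_n$ supplied by Lemma \ref{Aeoracle}: on $A_n$ the constraint is deterministic, the classical minimum contrast machinery applies with $\Delta$ replaced by $\Delta_{\mathcal{F}_1}$, and the complement $A_n^c$ contributes nothing in the limit.
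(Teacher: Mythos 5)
Your proposal is correct and follows essentially the same route as the paper: the paper likewise establishes the asymptotic normality of the estimator restricted to the deterministic set $\underline{\Theta}=\{\theta\in\Theta:\theta^{(j)}=0,\ j\in\mathcal{F}_0\}$ by rerunning the Theorem \ref{thetatheoremnon} machinery with $\Delta$ replaced by $\Delta_{\mathcal{F}_1}$ (this is Lemma \ref{seiyakutheta}), and then transfers the result to $\check{\theta}_n$. The only cosmetic difference is that the paper delegates the transfer step to Theorem 5(a) of Suzuki and Yoshida, whereas you carry it out explicitly via the selection-consistency event of Lemma \ref{Aeoracle}, on which $\tilde{\Theta}_n$ coincides with the deterministic restricted set.
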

Next, we consider the goodness-of-fit test (\ref{hypothesis}) again. Recall that
\begin{align*}
    {\rm{L}}_{n}({\bf{\Sigma}})=\prod_{i=1}^{n}\frac{1}{(2\pi )^{\frac{p}{2}}\det{(h_n{\bf{\Sigma}}})^{\frac{1}{2}}}\exp{\left\{-\frac{1}{2h_n}(\mathbb{X}_{t_{i}^n}-\mathbb{X}_{t_{i-1}^n})^{\top}{\bf{\Sigma}}^{-1}(\mathbb{X}_{t_{i}^n}-\mathbb{X}_{t_{i-1}^n})\right\}}.
\end{align*}
The restricted quasi-likelihood ratio is defined as 
\begin{align*}
    \check{\Lambda}_{n}=\frac{\max_{\theta\in\tilde{\Theta}_{n}}{\rm{L}}_{n}({\bf{\Sigma}}(\theta))}{\max_{{\bf{\Sigma}}>0}{\rm{L}}_{n}({\bf{\Sigma}})}.
\end{align*}
In an analogous manner to (\ref{ratio}), we see that as $\mathbb{Q}_{\mathbb{XX}}>0$,
\begin{align*}
    -2\log\check{\Lambda}_{n}&=n\rm{F}(\mathbb{Q}_{\mathbb{XX}},{\bf{\Sigma}}(\check{\theta}_n)),
\end{align*}
where $\rm{F}$ is defined by (\ref{FQ}). Hence, we define the penalized quasi-likelihood ratio test statistic as
\begin{align}
    \check{\mathbb{T}}_{n}=n\mathbb{F}_{n}(\check{\theta}_{n}). \label{checkT}
\end{align}
The asymptotic result of the test statistic $\check{\mathbb{T}}_n$ is as follows.
\begin{theorem}\label{tildeT}
Under \textrm{\textbf{[A1]}}, \textrm{\textbf{[B1]}},
\textrm{\textbf{[C1]}},
\textrm{\textbf{[D1]}} and
\textrm{\textbf{[E1]}}, as $h_n\longrightarrow0$, 
$\sqrt{n}\lambda_{1,n}\longrightarrow0$ and
$\sqrt{n}\lambda_{2,n}\longrightarrow\infty$,
\begin{align*}
    \check{\mathbb{T}}_{n}\stackrel{d}{\longrightarrow}\chi^2_{\bar{p}-|\mathcal{F}_{1}|}
\end{align*}
under $H_0$.
\end{theorem}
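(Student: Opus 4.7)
The plan is to argue, on the oracle event where the LSA estimator correctly identifies the sparsity pattern, that $\check{\theta}_{n}$ behaves like the minimum contrast estimator restricted to the fixed affine slice $\tilde{\Theta}=\{\theta\in\Theta:\theta^{(j)}=0,\ j\in\mathcal{F}_{0}\}$ with $\mathcal{F}_{0}=\{q_{0}+1,\ldots,q\}$, and then to replay the proof of Theorem \ref{chitheoremnon} with $q$ replaced by $|\mathcal{F}_{1}|$. First, I apply Lemma \ref{Aeoracle} with $G_{n}=G=\mathbb{I}_{q}$ (which trivially satisfies $[{\bf{F1}}]$ and produces the LSA estimator $\tilde{\theta}_{\mathbb{I},n}$) to obtain $\PP(\tilde{\mathcal{F}}_{\mathbb{I},n,0}=\mathcal{F}_{0})\to 1$. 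On that event, $\tilde{\Theta}_{n}=\tilde{\Theta}$, and under $H_{0}$ the point $\theta_{0}$ lies in its relative interior; a restricted version of the consistency argument of Theorem \ref{thetatheoremnon} (using $[{\bf{E1}}]$(i) on $\tilde{\Theta}$) then gives $\check{\theta}_{n}\stackrel{P}{\longrightarrow}\theta_{0}$ together with the first-order condition $\partial_{\theta_{\mathcal{F}_{1}}}\mathbb{F}_{n}(\check{\theta}_{n})=0$ with probability tending to $1$.

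Next, I would Taylor-expand this condition around $\theta_{0}$. Using representation (\ref{FQ}), Theorem \ref{Qtheoremnon}, and the smoothness of ${\bf{\Sigma}}(\theta)$, the same computations as in the proof of Theorem \ref{thetatheoremnon} yield
\begin{align*}
\partial_{\theta_{\mathcal{F}_{1}}}\mathbb{F}_{n}(\theta_{0}) &=-2\,\Delta_{1}^{\top}{\bf{W}}(\theta_{0})^{-1}(\vech\mathbb{Q}_{\mathbb{XX}}-\vech{\bf{\Sigma}}_{0})+o_{p}(n^{-1/2}),\\
\partial_{\theta_{\mathcal{F}_{1}}}^{2}\mathbb{F}_{n}(\theta^{\ast}) &=2\,{\bf{A}}_{\mathcal{F}}^{11}(\theta_{0})+o_{p}(1),
\end{align*}
where $\Delta_{1}\in\mathbb{R}^{\bar{p}\times|\mathcal{F}_{1}|}$ consists of the columns of $\Delta$ indexed by $\mathcal{F}_{1}$ and $\theta^{\ast}$ lies on the segment between $\theta_{0}$ and $\check{\theta}_{n}$. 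Solving gives the linearization
\begin{align*}
\sqrt{n}(\check{\theta}_{n}-\theta_{0})_{\mathcal{F}_{1}}={\bf{A}}_{\mathcal{F}}^{11}(\theta_{0})^{-1}\Delta_{1}^{\top}{\bf{W}}(\theta_{0})^{-1}\sqrt{n}(\vech\mathbb{Q}_{\mathbb{XX}}-\vech{\bf{\Sigma}}_{0})+o_{p}(1),
\end{align*}
which is consistent with Lemma \ref{POasym}.

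For the final step, I substitute back into $\check{\mathbb{T}}_{n}=n\tilde{\rm{F}}(\mathbb{Q}_{\mathbb{XX}},{\bf{\Sigma}}(\check{\theta}_{n}))$. Expanding $\vech{\bf{\Sigma}}(\check{\theta}_{n})-\vech{\bf{\Sigma}}_{0}=\Delta_{1}(\check{\theta}_{n}-\theta_{0})_{\mathcal{F}_{1}}+o_{p}(n^{-1/2})$ and using $\tilde{\rm{V}}(\mathbb{Q}_{\mathbb{XX}},{\bf{\Sigma}}(\check{\theta}_{n}))\stackrel{P}{\longrightarrow}{\bf{W}}(\theta_{0})^{-1}$ (the integrand in $\rm{V}$ collapses to $\frac{1}{2}{\bf{\Sigma}}_{0}^{-1}\otimes{\bf{\Sigma}}_{0}^{-1}$ at the common limit ${\bf{\Sigma}}_{0}$, and $\frac{1}{2}\mathbb{D}_{p}^{\top}({\bf{\Sigma}}_{0}^{-1}\otimes{\bf{\Sigma}}_{0}^{-1})\mathbb{D}_{p}={\bf{W}}(\theta_{0})^{-1}$ is a standard duplication-matrix identity), Slutsky gives $\check{\mathbb{T}}_{n}\stackrel{d}{\to}Y^{\top}B^{\top}{\bf{W}}(\theta_{0})^{-1}B\,Y$ with $B=\mathbb{I}_{\bar{p}}-\Delta_{1}{\bf{A}}_{\mathcal{F}}^{11}(\theta_{0})^{-1}\Delta_{1}^{\top}{\bf{W}}(\theta_{0})^{-1}$ and $Y\sim N_{\bar{p}}(0,{\bf{W}}(\theta_{0}))$ by Theorem \ref{Qtheoremnon}. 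A short algebraic check gives $B^{\top}{\bf{W}}(\theta_{0})^{-1}B={\bf{W}}(\theta_{0})^{-1}-{\bf{W}}(\theta_{0})^{-1}\Delta_{1}{\bf{A}}_{\mathcal{F}}^{11}(\theta_{0})^{-1}\Delta_{1}^{\top}{\bf{W}}(\theta_{0})^{-1}$, so writing $Z={\bf{W}}(\theta_{0})^{-1/2}Y\sim N_{\bar{p}}(0,\mathbb{I}_{\bar{p}})$ turns the limit into $Z^{\top}(\mathbb{I}_{\bar{p}}-\Pi)Z$, where $\Pi$ is the orthogonal projection onto $\mathrm{range}({\bf{W}}(\theta_{0})^{-1/2}\Delta_{1})$. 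Since $\Delta_{1}$ inherits full column rank from $\Delta$ by $[{\bf{E1}}]$(ii), $\Pi$ has rank $|\mathcal{F}_{1}|$ and so $Z^{\top}(\mathbb{I}_{\bar{p}}-\Pi)Z\sim\chi^{2}_{\bar{p}-|\mathcal{F}_{1}|}$.

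The main obstacle I anticipate is justifying the transfer from the data-dependent oracle set $\tilde{\Theta}_{n}$ to the fixed slice $\tilde{\Theta}$ while keeping the asymptotic linearization intact; this rests squarely on the oracle property from Lemma \ref{Aeoracle}, after which the argument is structurally identical to the one used in Theorem \ref{chitheoremnon} but on a lower-dimensional parameter space.
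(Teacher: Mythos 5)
Your proposal is correct and follows essentially the same route as the paper: the paper also combines the oracle property of the LSA estimator (Lemma \ref{Aeoracle}) with the asymptotic $\chi^2_{\bar p-|\mathcal{F}_1|}$ distribution of the restricted quasi-likelihood ratio statistic on the fixed slice $\underline{\Theta}=\{\theta\in\Theta:\theta^{(j)}=0,\ j\in\mathcal{F}_0\}$, the latter being proved exactly by your linearization-plus-projection argument (Lemmas \ref{seiyakutheta} and \ref{seiyakuT}). The ``transfer obstacle'' you flag at the end is handled in the paper by defining the restricted estimator $\underline{\theta}_n$ unconditionally, observing that $n\mathbb{F}_n(\check{\theta}_n)=n\mathbb{F}_n(\underline{\theta}_n)$ on the event $D_n=\{\tilde{\mathcal{F}}_{\mathbb{I},n,1}=\mathcal{F}_1\}$ whose probability tends to one, so that $n\mathbb{F}_n(\check{\theta}_n)-n\mathbb{F}_n(\underline{\theta}_n)\stackrel{P}{\longrightarrow}0$ and Slutsky's theorem finishes the proof.
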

Since $|\mathcal{F}_{1}|$ is unknown, we need to estimate $|\mathcal{F}_{1}|$. The following proposition is obtained.
\begin{proposition}\label{Tprop}
Under \textrm{\textbf{[A1]}}, \textrm{\textbf{[B1]}},
\textrm{\textbf{[C1]}},
\textrm{\textbf{[D1]}} and
\textrm{\textbf{[E1]}}, as $h_n\longrightarrow0$, 
$\sqrt{n}\lambda_{1,n}\longrightarrow0$ and
$\sqrt{n}\lambda_{2,n}\longrightarrow\infty$,
\begin{align*}
    \mathbb{P}\left(\check{\mathbb{T}}_{n}>\chi^2_{\bar{p}-|\mathcal{\tilde{F}}_{\mathbb{I},n,1}|}(\alpha)\right)-\mathbb{P}\left(\check{\mathbb{T}}_{n}>\chi^2_{\bar{p}-|\mathcal{F}_{1}|}(\alpha)\right)\longrightarrow 0
\end{align*}
under $H_0$.
\end{proposition}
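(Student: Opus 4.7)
The plan is to deduce the proposition from the oracle property of the LSA estimator. Since $\mathbb{I}_q$ is positive definite, the LSA estimator $\tilde{\theta}_{\mathbb{I},n}$ coincides with the PLSA estimator obtained by taking $G_n = \mathbb{I}_q$ (so that $\tilde{G}_n = \mathbb{I}_q$), and Assumption \textbf{[F1]} is trivially satisfied with $G = \mathbb{I}_q$. Lemma \ref{Aeoracle} therefore applies and yields
\[
    \mathbb{P}(A_n)\longrightarrow 1, \qquad A_n := \bigl\{\tilde{\mathcal{F}}_{\mathbb{I},n,1} = \mathcal{F}_{1}\bigr\},
\]
as $h_n\to 0$, $\sqrt{n}\lambda_{1,n}\to 0$ and $\sqrt{n}\lambda_{2,n}\to\infty$.

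On the event $A_n$ we have $|\tilde{\mathcal{F}}_{\mathbb{I},n,1}|=|\mathcal{F}_{1}|$, so the random threshold $\chi^2_{\bar{p}-|\tilde{\mathcal{F}}_{\mathbb{I},n,1}|}(\alpha)$ agrees on $A_n$ with the deterministic number $\chi^2_{\bar{p}-|\mathcal{F}_{1}|}(\alpha)$. Writing
\[
    B_{1,n}=\bigl\{\check{\mathbb{T}}_{n}>\chi^2_{\bar{p}-|\tilde{\mathcal{F}}_{\mathbb{I},n,1}|}(\alpha)\bigr\}, \qquad B_{2,n}=\bigl\{\check{\mathbb{T}}_{n}>\chi^2_{\bar{p}-|\mathcal{F}_{1}|}(\alpha)\bigr\},
\]
the identity $B_{1,n}\cap A_n = B_{2,n}\cap A_n$ gives $\mathbb{P}(B_{1,n}\cap A_n)=\mathbb{P}(B_{2,n}\cap A_n)$, and splitting each probability according to $A_n$ and $A_n^c$ yields
\[
    \bigl|\mathbb{P}(B_{1,n})-\mathbb{P}(B_{2,n})\bigr| = \bigl|\mathbb{P}(B_{1,n}\cap A_n^c)-\mathbb{P}(B_{2,n}\cap A_n^c)\bigr| \leq \mathbb{P}(A_n^{c}) \longrightarrow 0,
\]
which is the conclusion of the proposition.

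The only substantive input is the oracle property (Lemma \ref{Aeoracle}) applied with $G_n=\mathbb{I}_q$; the remainder is elementary set theory. The main obstacle of the whole development therefore lies in the oracle property itself, which is already established; given it, the present proposition is an immediate corollary expressing that the unknown degrees-of-freedom parameter $|\mathcal{F}_1|$ can be replaced by the exact-selection-consistent estimator $|\tilde{\mathcal{F}}_{\mathbb{I},n,1}|$ without affecting the asymptotic rejection probability under $H_0$. Note in particular that Theorem \ref{tildeT} is not used in the proof; it is needed only to make the resulting test practically useful.
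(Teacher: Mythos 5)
Your proof is correct and follows essentially the same route as the paper: both arguments reduce the claim to the bound $\bigl|\mathbb{P}(B_{1,n})-\mathbb{P}(B_{2,n})\bigr|\leq\mathbb{P}(A_n^{c})$ via the observation that the two rejection events coincide on $\{\tilde{\mathcal{F}}_{\mathbb{I},n,1}=\mathcal{F}_{1}\}$, and then invoke the oracle property of Lemma \ref{Aeoracle} (with $G_n=\mathbb{I}_q$, for which \textbf{[F1]} is trivial). Your version is merely a slightly cleaner packaging of the paper's two one-sided inequalities into a single set-theoretic identity.
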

From Theorem \ref{tildeT} and Proposition \ref{Tprop}, we can construct the test of asymptotic significance level $\alpha\in(0,1)$. 
The rejection region is defined by
\begin{align*}
    \Bigl\{\check{\mathbb{T}}_{n}>\chi^2_{\bar{p}-|\mathcal{\tilde{F}}_{\mathbb{I},n,1}|}(\alpha)\Bigr\}.
\end{align*}

Finally, we consider the consistency of the test. Let
\begin{align*}
    \mathcal{\bar{F}}_{1}=\Bigl\{j\in\{1,\cdots,q\}\ \big|\ \bar{\theta}^{(j)}\neq 0\Bigr\}.
\end{align*}
Suppose that $0<\delta<\min_{j\in \bar{\mathcal{F}}_{1}}|\bar{\theta}^{(j)}|$. Additionally, we make the assumption as follows:
\vspace{2mm}
\begin{enumerate}
    \item[\textbf{[F2]}] $\partial^2_{\theta}\mathbb{U}(\bar{\theta})$ is non-singular.
\end{enumerate}
\vspace{2mm}
The following Lemma holds.
\begin{lemma}\label{AeoracleH1}
Under \textrm{\textbf{[A1]}}, \textrm{\textbf{[B1]}},
\textrm{\textbf{[C1]}}, \textrm{\textbf{[D1]}}, \textrm{\textbf{[E2]}}
and \textrm{\textbf{[F2]}}, as $h_n\longrightarrow0$,
$\sqrt{n}\lambda_{1,n}\longrightarrow0$ and
$\sqrt{n}\lambda_{2,n}\longrightarrow\infty$,
\begin{align*}
    \mathbb{P}\Bigl(\tilde{\mathcal{F}}_{\mathbb{I},n}
    =\bar{\mathcal{F}}_{1}\Bigr)\stackrel{}{\longrightarrow}1
\end{align*}
under $H_1$.
\end{lemma}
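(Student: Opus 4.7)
The plan is to exploit the coordinate-wise closed form of the LSA estimator and reduce support recovery to a componentwise check on $\hat{\theta}_n$. Since $\tilde{G}_n=\mathbb{I}_q$, the penalized objective $\mathbb{Q}_{\mathbb{I},n}(\theta)=\sum_{j=1}^{q}\{(\theta^{(j)}-\hat{\theta}_n^{(j)})^2+\kappa_n^{(j)}|\theta^{(j)}|\}$ separates, and (for the interior problem) its minimiser is the soft-threshold
\[
\tilde{\theta}_{\mathbb{I},n}^{(j)}=\mathrm{sign}(\hat{\theta}_n^{(j)})\bigl(|\hat{\theta}_n^{(j)}|-\kappa_n^{(j)}/2\bigr)_+.
\]
Thus the event $\{\tilde{\mathcal{F}}_{\mathbb{I},n,1}=\bar{\mathcal{F}}_1\}$ equals $\bigcap_{j\in\bar{\mathcal{F}}_1}\{|\hat{\theta}_n^{(j)}|>\kappa_n^{(j)}/2\}\cap\bigcap_{j\notin\bar{\mathcal{F}}_1}\{|\hat{\theta}_n^{(j)}|\le\kappa_n^{(j)}/2\}$, and since $q$ is fixed, a union bound reduces the task to showing that each single-coordinate event has probability tending to one.

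The key preliminary is the rate estimate $\hat{\theta}_n-\bar{\theta}=O_p(n^{-1/2})$ under $H_1$. Consistency $\hat{\theta}_n\stackrel{P}{\longrightarrow}\bar{\theta}$ is supplied by Lemma \ref{starconslemma} via \textrm{\textbf{[E2]}}. Taylor-expanding the first-order condition $\partial_\theta\mathbb{F}_n(\hat{\theta}_n)=0$ around $\bar{\theta}$ gives
\[
\sqrt{n}(\hat{\theta}_n-\bar{\theta})=-\bigl(\partial_\theta^2\mathbb{F}_n(\theta_n^{\ast})\bigr)^{-1}\sqrt{n}\,\partial_\theta\mathbb{F}_n(\bar{\theta})
\]
for some $\theta_n^{\ast}$ between $\hat{\theta}_n$ and $\bar{\theta}$. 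From the representation (\ref{FQ}) and the smoothness of ${\bf{\Sigma}}(\theta)$, $\partial_\theta\mathbb{F}_n(\bar{\theta})$ is a smooth function of $\vech\mathbb{Q}_{\mathbb{XX}}$ whose value at $\vech{\bf{\Sigma}}_0$ equals $\partial_\theta\mathbb{U}(\bar{\theta})=0$; the delta method combined with the CLT in Theorem \ref{Qtheoremnon} then yields $\sqrt{n}\,\partial_\theta\mathbb{F}_n(\bar{\theta})=O_p(1)$. Similarly $\partial_\theta^2\mathbb{F}_n(\theta_n^{\ast})\stackrel{P}{\longrightarrow}\partial_\theta^2\mathbb{U}(\bar{\theta})$, which is invertible by \textrm{\textbf{[F2]}}, so the rate follows.

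With the rate in hand the two cases are routine. For $j\in\bar{\mathcal{F}}_1$, $|\hat{\theta}_n^{(j)}|\stackrel{P}{\longrightarrow}|\bar{\theta}^{(j)}|>\delta$, hence $1_{\{|\hat{\theta}_n^{(j)}|\ge\delta\}}=1$ eventually, $\kappa_n^{(j)}=\lambda_{1,n}|\hat{\theta}_n^{(j)}|^{-\gamma}=O_p(\lambda_{1,n})\to 0$, and therefore $|\hat{\theta}_n^{(j)}|>\kappa_n^{(j)}/2$ with probability tending to one. For $j\notin\bar{\mathcal{F}}_1$, the rate gives $\hat{\theta}_n^{(j)}=O_p(n^{-1/2})$, so $|\hat{\theta}_n^{(j)}|<\delta$ eventually and $\kappa_n^{(j)}=\lambda_{2,n}$; then $|\hat{\theta}_n^{(j)}|/(\kappa_n^{(j)}/2)=O_p\bigl((\sqrt{n}\lambda_{2,n})^{-1}\bigr)\to 0$, so $\tilde{\theta}_{\mathbb{I},n}^{(j)}=0$ eventually. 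The main obstacle is the rate estimate under misspecification: because $\bar{\theta}$ is a pseudo-true value with ${\bf{\Sigma}}(\bar{\theta})\neq{\bf{\Sigma}}_0$, Theorem \ref{thetatheoremnon} does not apply directly, and the invertibility of the limiting Hessian must be imposed separately through \textrm{\textbf{[F2]}} rather than inherited from the identifiability condition \textrm{\textbf{[E1]}}.
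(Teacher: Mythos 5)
Your proposal is correct, and for the support-recovery step it takes a genuinely more direct route than the paper. The paper never writes down the closed form of the LSA estimator: it works with the first-order condition on the event $E_n^{(j)}=\{\tilde{\theta}_{\mathbb{I},n}^{(j)}\neq 0\}$, which forces $2\sqrt{n}|\tilde{\theta}_{\mathbb{I},n}^{(j)}-\hat{\theta}_n^{(j)}|=\sqrt{n}\kappa_n^{(j)}\geq\sqrt{n}\bar{b}_n$, and then needs a separate lemma (Lemma \ref{thetaop1}, proved by comparing $\mathbb{Q}_{\mathbb{I},n}(\tilde{\theta}_{\mathbb{I},n})$ with $\mathbb{Q}_{\mathbb{I},n}(\bar{\theta})$) to show $\sqrt{n}(\tilde{\theta}_{\mathbb{I},n}-\bar{\theta})=O_p(1)$ and hence $\sqrt{n}(\tilde{\theta}_{\mathbb{I},n}-\hat{\theta}_n)=O_p(1)$, which contradicts $\sqrt{n}\bar{b}_n\stackrel{P}{\longrightarrow}\infty$ (Lemma \ref{abbarprob}); the nonzero coordinates are handled via $|(\tilde{\theta}_{\mathbb{I},n}-\bar{\theta})_{\bar{\mathcal{F}}_1}|<\min_j|\bar{\theta}^{(j)}|$. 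Your soft-thresholding identity $\tilde{\theta}_{\mathbb{I},n}^{(j)}=\mathrm{sign}(\hat{\theta}_n^{(j)})(|\hat{\theta}_n^{(j)}|-\kappa_n^{(j)}/2)_+$ makes Lemma \ref{thetaop1} and the sign-condition bookkeeping unnecessary, reducing everything to the two elementary componentwise comparisons you state; the price is that it exploits $\tilde{G}_n=\mathbb{I}_q$ and would not extend to a general $G_n$, whereas the paper's KKT-based argument is the one that generalizes. Your rate estimate $\sqrt{n}(\hat{\theta}_n-\bar{\theta})=O_p(1)$ is exactly the content of the paper's Lemma \ref{starop1}, proved the same way (Taylor expansion of the estimating equation, $\partial_\theta\mathbb{U}(\bar{\theta})=0$ at the pseudo-true value, invertibility of $\partial_\theta^2\mathbb{U}(\bar{\theta})$ from \textbf{[F2]}), and your closing remark about why \textbf{[F2]} must be imposed separately under misspecification matches the paper's logic. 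One shared caveat: your closed form, like the paper's interior first-order condition, presupposes that the coordinatewise minimizer is not pinned at the boundary of $\Theta$; since the paper makes the same implicit assumption, this is not a gap relative to the reference proof.
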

Moreover, the following result implies that the test has consistency.
\begin{theorem}\label{checktcons}
Under \textrm{\textbf{[A1]}}, \textrm{\textbf{[B1]}},
\textrm{\textbf{[C1]}}, \textrm{\textbf{[D1]}}, \textrm{\textbf{[E2]}}
and \textrm{\textbf{[F2]}}, as $h_n\longrightarrow0$,
$\sqrt{n}\lambda_{1,n}\longrightarrow0$ and
$\sqrt{n}\lambda_{2,n}\longrightarrow\infty$,
\begin{align*}
    \mathbb{P}\left(\check{\mathbb{T}}_{n}>\chi^2_{\bar{p}-|\mathcal{\tilde{F}}_{\mathbb{I},n,1}|}(\alpha)\right)\stackrel{}{\longrightarrow}1
\end{align*}
under $H_1$.
\end{theorem}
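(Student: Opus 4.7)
The plan is to show that $\check{\mathbb{T}}_n=n\mathbb{F}_n(\check{\theta}_n)$ diverges in probability under $H_1$, while the random critical value $\chi^2_{\bar{p}-|\tilde{\mathcal{F}}_{\mathbb{I},n,1}|}(\alpha)$ remains bounded. Since $|\tilde{\mathcal{F}}_{\mathbb{I},n,1}|\in\{0,1,\ldots,q\}$, this critical value is dominated by the finite constant $M:=\max_{0\leq r\leq \bar{p}}\chi^2_{r}(\alpha)$, so it is enough to prove $\check{\mathbb{T}}_n\stackrel{P}{\longrightarrow}\infty$ under $H_1$.

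The key step is the consistency $\check{\theta}_n\stackrel{P}{\longrightarrow}\bar{\theta}$. By Lemma \ref{AeoracleH1}, on an event $A_n$ with $\mathbb{P}(A_n)\to 1$ the feasible set $\tilde{\Theta}_n$ coincides with the deterministic closed subset $\bar{\Theta}:=\{\theta\in\Theta:\theta^{(j)}=0\ \mbox{for every}\ j\notin\bar{\mathcal{F}}_{1}\}$, which contains $\bar{\theta}$ by construction. On $A_n$, $\check{\theta}_n$ is therefore the minimizer of $\mathbb{F}_n$ over the compact set $\bar{\Theta}$. Theorem \ref{Qtheoremnon} gives $\mathbb{Q}_{\mathbb{XX}}\stackrel{P}{\longrightarrow}{\bf{\Sigma}}_0$, and since $\theta\mapsto{\bf{\Sigma}}(\theta)$ is continuous and positive definite on the compact set $\Theta$, standard arguments yield $\sup_{\theta\in\Theta}|\mathbb{F}_n(\theta)-\mathbb{U}(\theta)|\stackrel{P}{\longrightarrow}0$. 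Because $[{\bf{E2}}]$ forces $\bar{\theta}$ to be the unique minimizer of $\mathbb{U}$ on $\Theta$, hence also on $\bar{\Theta}$, a standard M-estimator consistency argument on the fixed set $\bar{\Theta}$, combined with $\mathbb{P}(A_n^c)\to 0$, delivers $\check{\theta}_n\stackrel{P}{\longrightarrow}\bar{\theta}$.

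The continuous mapping theorem then produces $\mathbb{F}_n(\check{\theta}_n)\stackrel{P}{\longrightarrow}\rm{F}({\bf{\Sigma}}_0,{\bf{\Sigma}}(\bar{\theta}))=\mathbb{U}(\bar{\theta})$. Under $H_1$ no $\theta\in\Theta$ satisfies ${\bf{\Sigma}}(\theta)={\bf{\Sigma}}_0$, and since the function $\rm{F}$ in (\ref{F1}) is a Kullback--Leibler-type divergence between two positive definite matrices, it vanishes only when its arguments coincide; hence $c:=\mathbb{U}(\bar{\theta})>0$. It follows that $\mathbb{P}(\check{\mathbb{T}}_n>K)\to 1$ for every fixed $K>0$, so in particular $\mathbb{P}(\check{\mathbb{T}}_n>M)\to 1$, which yields the claim. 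The main obstacle is the consistency of $\check{\theta}_n$: because the constraint set $\tilde{\Theta}_n$ is random, one must reduce to the high-probability event $A_n$ on which it is deterministic before invoking standard M-estimator techniques, and carefully control what happens on the complement.
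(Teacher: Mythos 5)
Your proposal is correct, and its core --- establishing $\check{\theta}_{n}\stackrel{P}{\longrightarrow}\bar{\theta}$ by restricting to the high-probability event on which $\tilde{\Theta}_{n}$ equals the deterministic set $\bar{\Theta}\ni\bar{\theta}$ (via Lemma \ref{AeoracleH1}) and then running a standard M-estimation argument with the uniform convergence of $\mathbb{F}_{n}$ to $\mathbb{U}$, followed by $\tfrac{1}{n}\check{\mathbb{T}}_{n}\stackrel{P}{\longrightarrow}\mathbb{U}(\bar{\theta})>0$ --- is exactly the route the paper takes in Lemma \ref{testcons} and the proof of Theorem \ref{checktcons}. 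Where you genuinely diverge is in handling the random degrees of freedom: the paper proves Proposition \ref{TpropH1}, showing $\mathbb{P}(\check{\mathbb{T}}_{n}>\chi^2_{\bar{p}-|\tilde{\mathcal{F}}_{\mathbb{I},n,1}|}(\alpha))-\mathbb{P}(\check{\mathbb{T}}_{n}>\chi^2_{\bar{p}-|\bar{\mathcal{F}}_{1}|}(\alpha))\to 0$ by again conditioning on $\{\tilde{\mathcal{F}}_{\mathbb{I},n,1}=\bar{\mathcal{F}}_{1}\}$, and then works with the deterministic critical value; you instead dominate the random critical value by the fixed constant $\chi^2_{\bar{p}}(\alpha)$ and use only that $\check{\mathbb{T}}_{n}\stackrel{P}{\longrightarrow}\infty$. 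Your bound is more elementary and entirely sufficient here, precisely because the statistic diverges under $H_1$; it also makes the oracle property unnecessary for this final step (though you still need it for the consistency of $\check{\theta}_{n}$). The paper's comparison-of-probabilities device is the more general tool --- it is indispensable in the companion result under $H_0$ (Proposition \ref{Tprop}), where $\check{\mathbb{T}}_{n}$ converges in distribution and a crude upper bound on the critical value would not identify the asymptotic level --- which is presumably why the authors reuse it here. One small point of care: the convergence $\mathbb{F}_{n}(\check{\theta}_{n})\stackrel{P}{\longrightarrow}\mathbb{U}(\bar{\theta})$ is not purely the continuous mapping theorem; it requires splitting off $\sup_{\theta\in\Theta}|\mathbb{F}_{n}(\theta)-\mathbb{U}(\theta)|$ (Lemma \ref{Fproblemma}) before applying continuity of $\mathbb{U}$, but since you have already invoked that uniform convergence, this is only a matter of wording.
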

In the ergodic case, we obtain results similar to the non-ergodic case as follows.
\begin{theorem}
Under \textrm{\textbf{[A1]}}-\textrm{\textbf{[A2]}}, \textrm{\textbf{[B1]}}-\textrm{\textbf{[B2]}},
\textrm{\textbf{[C1]}}-\textrm{\textbf{[C2]}},
\textrm{\textbf{[D1]}}-\textrm{\textbf{[D2]}} and
\textrm{\textbf{[E1]}}, as $h_n\longrightarrow0$, $nh_n\longrightarrow\infty$, $nh_n^2\longrightarrow 0$,
$\sqrt{n}\lambda_{1,n}\longrightarrow0$ and
$\sqrt{n}\lambda_{2,n}\longrightarrow\infty$,
\begin{align*}
    \check{\mathbb{T}}_{n}\stackrel{d}{\longrightarrow}\chi^2_{\bar{p}-|\mathcal{F}_{1}|}
\end{align*}
under $H_0$.
\end{theorem}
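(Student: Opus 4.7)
The plan is to mirror the proof of Theorem \ref{tildeT} in the non-ergodic setting, with Theorem \ref{testtheorem} replacing its non-ergodic counterpart Theorem \ref{chitheoremnon}. The rate conditions $h_n\to 0$, $nh_n\to\infty$ and $nh_n^2\to 0$ are exactly what is needed for Theorem \ref{testtheorem} to apply and, as I explain below, also for the oracle selection of the LSA estimator to extend from the non-ergodic case. The overall scheme has three parts: (i) show that the LSA estimator correctly identifies the true zero pattern with probability tending to one; (ii) on that event, recognise $\check{\theta}_n$ as the minimum contrast estimator of a lower-dimensional restricted parametric model; (iii) apply Theorem \ref{testtheorem} to this restricted model and read off the chi-squared limit.

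For (i), the ergodic analogue of Lemma \ref{Aeoracle} with $G_n=\mathbb{I}_q$ gives
\[
\PP\bigl(\tilde{\mathcal{F}}_{\mathbb{I},n,1}=\mathcal{F}_1\bigr)\longrightarrow 1
\]
under $H_0$. The KKT/oracle argument of Suzuki and Yoshida \cite{Suzuki(2020)} uses only $\sqrt{n}$-tightness of the initial estimator together with $\sqrt{n}\lambda_{1,n}\to 0$ and $\sqrt{n}\lambda_{2,n}\to\infty$; Theorem \ref{thetatheorem} supplies $\sqrt{n}(\hat{\theta}_{n}-\theta_{0})=O_{p}(1)$ under the stated rate conditions (the extra restriction $nh_n^2\to 0$ is used precisely to kill the discretisation bias in $\mathbb{Q}_{\mathbb{XX}}$), so the proof of Lemma \ref{Aeoracle} carries over verbatim. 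For (ii), on the good event $E_n=\{\tilde{\mathcal{F}}_{\mathbb{I},n,1}=\mathcal{F}_1\}$, the P-O estimator $\check{\theta}_n$ is the minimiser of $\mathbb{F}_n$ over $\tilde{\Theta}_n=\{\theta\in\Theta:\theta^{(j)}=0\ (j>q_0)\}$. I parametrise this slice by $\vartheta=\theta_{\mathcal{F}_1}\in\Theta_{\mathcal{F}_1}\subset\mathbb{R}^{q_0}$ and set $\tilde{{\bf\Sigma}}(\vartheta)={\bf\Sigma}(\vartheta,0)$, and then verify \textrm{\textbf{[E1]}} for the restricted model: identifiability $\tilde{{\bf\Sigma}}(\vartheta_1)=\tilde{{\bf\Sigma}}(\vartheta_2)\Rightarrow\vartheta_1=\vartheta_2$ is immediate from \textrm{\textbf{[E1]}}(i), and the restricted Jacobian at $\theta_{0,\mathcal{F}_1}$ is the $q_0$-column submatrix of $\Delta$ indexed by $\mathcal{F}_1$, which has rank $q_0$ because $\Delta$ has full column rank $q$ by \textrm{\textbf{[E1]}}(ii).

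Applying Theorem \ref{testtheorem} to the restricted model of dimension $q_0$ then yields
\[
n\inf_{\vartheta\in\Theta_{\mathcal{F}_1}}\tilde{\rm{F}}\bigl(\mathbb{Q}_{\mathbb{XX}},\tilde{{\bf\Sigma}}(\vartheta)\bigr)\stackrel{d}{\longrightarrow}\chi^2_{\bar{p}-q_0}
\]
under $H_0$, and on $E_n$ the left-hand side coincides with $\check{\mathbb{T}}_n=n\mathbb{F}_n(\check{\theta}_n)$. Since $\PP(E_n)\to 1$, a routine Slutsky-type argument gives $\check{\mathbb{T}}_n\stackrel{d}{\longrightarrow}\chi^2_{\bar{p}-|\mathcal{F}_1|}$, as desired. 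The main obstacle is step (i): although the structural argument of Lemma \ref{Aeoracle} is essentially unchanged, one has to track that every $O_p$ and $o_p$ control in the KKT analysis remains valid when the rate of convergence of $\hat{\theta}_n$ is supplied by the ergodic CLT of Theorem \ref{thetatheorem} rather than its non-ergodic counterpart, which is why the hypothesis $nh_n^2\to 0$ must be imposed; the remaining steps are direct consequences of the minimum contrast theory already developed in Section 3.
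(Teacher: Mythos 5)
Your proposal is correct and follows essentially the same route as the paper: the paper proves this theorem by rerunning, with the ergodic realized-covariance limit (Theorem \ref{Qtheorem}) in place of Theorem \ref{Qtheoremnon}, the same chain used for the non-ergodic case --- oracle selection (Lemma \ref{Aeoracle}), identification of $\check{\theta}_n$ with the restricted estimator $\underline{\theta}_n$ on the event $\{\tilde{\mathcal{F}}_{\mathbb{I},n,1}=\mathcal{F}_1\}$, a $\chi^2_{\bar{p}-|\mathcal{F}_1|}$ limit for the restricted test statistic (Lemma \ref{seiyakuT}), and Slutsky. The only cosmetic difference is that you obtain the restricted limit by invoking Theorem \ref{testtheorem} for the lower-dimensional model (after checking \textrm{\textbf{[E1]}} for it), whereas the paper re-derives it directly via the Taylor expansion in Lemma \ref{seiyakuT}; these are the same computation, and your observation that $nh_n^2\to0$ is what supplies the $\sqrt{n}$-rate needed for the oracle step matches the paper's use of Theorem \ref{thetatheorem}.
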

\begin{theorem}
Under \textrm{\textbf{[A1]}}-\textrm{\textbf{[A2]}}, \textrm{\textbf{[B1]}}-\textrm{\textbf{[B2]}},
\textrm{\textbf{[C1]}}-\textrm{\textbf{[C2]}},
\textrm{\textbf{[D1]}}-\textrm{\textbf{[D2]}},
\textrm{\textbf{[E2]}} and \textrm{\textbf{[F2]}}, as $h_n\longrightarrow0$, $nh_n\longrightarrow\infty$, $nh_n^2\longrightarrow 0$,
$\sqrt{n}\lambda_{1,n}\longrightarrow0$ and
$\sqrt{n}\lambda_{2,n}\longrightarrow\infty$,
\begin{align*}
    \mathbb{P}\left(\check{\mathbb{T}}_{n}>\chi^2_{\bar{p}-|\mathcal{\tilde{F}}_{\mathbb{I},n,1}|}(\alpha)\right)\stackrel{}{\longrightarrow}1
\end{align*}
under $H_1$.
\end{theorem}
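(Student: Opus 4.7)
The plan is to adapt the proof of Theorem \ref{checktcons} to the ergodic regime. The strategy decomposes into three steps: (i) establish the ergodic analogue of Lemma \ref{AeoracleH1}, so that the data-driven support $\tilde{\mathcal{F}}_{\mathbb{I},n,1}$ coincides with the limiting support $\bar{\mathcal{F}}_{1}$ with probability tending to one; (ii) show that $\check{\mathbb{T}}_{n} = n\mathbb{F}_{n}(\check{\theta}_{n})$ diverges to $\infty$ in probability; (iii) observe that the chi-squared threshold stays uniformly bounded. The transfer from the non-ergodic to the ergodic regime is clean because Theorem \ref{Qtheorem} supplies $\mathbb{Q}_{\mathbb{XX}} \stackrel{P}{\longrightarrow} {\bf{\Sigma}}_{0}$ under $h_n \to 0$, $nh_n \to \infty$, while Theorem \ref{thetatheorem} gives $\hat{\theta}_{n}$ the same $\sqrt{n}$-rate of convergence under $nh_n^{2} \to 0$ as in the non-ergodic case; no new probabilistic input is required beyond the stated rate conditions.

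For step (i), I would repeat the KKT/subgradient analysis behind Lemma \ref{AeoracleH1}. Under $H_1$, \textbf{[E2]} combined with the uniform convergence $\mathbb{F}_{n} \to \mathbb{U}$ on the compact space $\Theta$ (continuous mapping applied to $\mathbb{Q}_{\mathbb{XX}} \to {\bf{\Sigma}}_{0}$) yields $\hat{\theta}_{n} \stackrel{P}{\longrightarrow} \bar{\theta}$, and \textbf{[F2]} upgrades this to $\sqrt{n}(\hat{\theta}_{n} - \bar{\theta}) = O_{P}(1)$ via a second-order Taylor expansion of $\mathbb{F}_{n}$ around $\bar{\theta}$. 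For $j \in \bar{\mathcal{F}}_{1}$, $|\hat{\theta}_{n}^{(j)}|$ stays bounded away from zero, so $\kappa_{n}^{(j)} = \lambda_{1,n}|\hat{\theta}_{n}^{(j)}|^{-\gamma}$ with high probability and the penalty contribution is $o_{P}(n^{-1/2})$, which is too small to zero out the $j$-th coordinate of the PLSA minimizer. For $j \notin \bar{\mathcal{F}}_{1}$, $\hat{\theta}_{n}^{(j)} \stackrel{P}{\longrightarrow} 0$, hence $|\hat{\theta}_{n}^{(j)}| < \delta$ with high probability and $\kappa_{n}^{(j)} = \lambda_{2,n}$; the condition $\sqrt{n}\lambda_{2,n} \to \infty$ then dominates the $O_{P}(n^{-1/2})$ score term and forces $\tilde{\theta}_{\mathbb{I},n}^{(j)} = 0$ exactly.

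For steps (ii) and (iii), on the event $\{\tilde{\mathcal{F}}_{\mathbb{I},n,1} = \bar{\mathcal{F}}_{1}\}$ the restricted set $\tilde{\Theta}_{n}$ contains $\bar{\theta}$, so the P--O estimator satisfies $\check{\theta}_{n} \stackrel{P}{\longrightarrow} \bar{\theta}$ by \textbf{[E2]} and uniform convergence restricted to this subset. Continuity then gives $\mathbb{F}_{n}(\check{\theta}_{n}) \stackrel{P}{\longrightarrow} \mathbb{U}(\bar{\theta})$, and under $H_{1}$ we have ${\bf{\Sigma}}_{0} \neq {\bf{\Sigma}}(\bar{\theta})$, so $\mathbb{U}(\bar{\theta}) > 0$ and $\check{\mathbb{T}}_{n} \stackrel{P}{\longrightarrow} \infty$. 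Since $|\tilde{\mathcal{F}}_{\mathbb{I},n,1}|$ takes values in $\{0,1,\ldots,q\}$, the threshold $\chi^{2}_{\bar{p}-|\tilde{\mathcal{F}}_{\mathbb{I},n,1}|}(\alpha)$ is uniformly bounded above, which completes the argument. The main obstacle is verifying that the Taylor expansion in step (i) is valid at the $\sqrt{n}$-rate under the ergodic conditions; this is handled exactly as in Theorem \ref{thetatheorem}, where the constant-volatility assumption makes the $\sqrt{n}$-behavior of $\hat{\theta}_{n}$ identical in both regimes, so the sparsity-recovery argument transfers without modification.
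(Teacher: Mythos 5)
Your proposal is correct and follows essentially the same route as the paper, whose own proof of this statement is simply to substitute Theorem \ref{Qtheorem} for Theorem \ref{Qtheoremnon} and repeat the non-ergodic chain Lemma \ref{abbarprob} -- Lemma \ref{starop1} -- Lemma \ref{thetaop1} -- Lemma \ref{AeoracleH1} -- Lemma \ref{testcons} used for Theorem \ref{checktcons}; your steps (i) and (ii) reproduce exactly that transfer, including the observation that the constant-volatility setting makes the $\sqrt{n}$-rate of $\hat{\theta}_{n}$ identical in both regimes under $nh_n^2\to 0$. The one place you genuinely deviate is the handling of the random degrees of freedom in step (iii): the paper goes through Proposition \ref{TpropH1}, bounding the difference $\mathbb{P}\bigl(\check{\mathbb{T}}_{n}>\chi^2_{\bar{p}-|\tilde{\mathcal{F}}_{\mathbb{I},n,1}|}(\alpha)\bigr)-\mathbb{P}\bigl(\check{\mathbb{T}}_{n}>\chi^2_{\bar{p}-|\bar{\mathcal{F}}_{1}|}(\alpha)\bigr)$ by $\mathbb{P}(\bar{D}_{n}^{c})$ and then proving divergence against the fixed threshold, whereas you bound the random threshold uniformly by the deterministic constant $\chi^2_{\bar{p}}(\alpha)$ (since $|\tilde{\mathcal{F}}_{\mathbb{I},n,1}|\in\{0,\dots,q\}$ and $\chi^2_{r}(\alpha)$ is increasing in $r$) and combine this with $n^{-1}\check{\mathbb{T}}_{n}\stackrel{P}{\longrightarrow}\mathbb{U}(\bar{\theta})>0$. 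Your variant is slightly more elementary and equally valid; it even shows that the oracle property is not strictly needed for the final inequality, since $\mathbb{F}_{n}(\check{\theta}_{n})\geq\inf_{\theta\in\Theta}\mathbb{F}_{n}(\theta)=\mathbb{F}_{n}(\hat{\theta}_{n})$ already forces divergence, although you do still use Lemma \ref{AeoracleH1} (as the paper does) to identify $\tilde{\Theta}_{n}$ with $\bar{\Theta}$ and obtain the sharper limit $\check{\theta}_{n}\stackrel{P}{\longrightarrow}\bar{\theta}$.
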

\section{Example and simulation results for sem}
\subsection{True model}
The stochastic process $\mathbb{X}_{1,0,t}$ is defined as the following true factor model:
\begin{align*}
    \mathbb{X}_{1,0,t}=\begin{pmatrix}
    1 & 2 & 0 & 0\\
    0 & 0 & 1 & 3
    \end{pmatrix}^{\top}\xi_{0,t}+\delta_{0,t},
\end{align*}
where $\{\mathbb{X}_{1,0,t}\}_{t\geq 0}$ is a four-dimensional observable vector process, 
$\{\xi_{0,t}\}_{t\geq 0}$ is a two-dimensional latent common factor vector process, and $\{\delta_{0,t}\}_{t\geq 0}$ is a four-dimensional latent unique factor vector process. The stochastic process $\mathbb{X}_{2,0,t}$ is defined by the true factor model as follows:
\begin{align*}
    \mathbb{X}_{2,0,t}=\begin{pmatrix}
    1\\
    3
    \end{pmatrix}\eta_{0,t}+\varepsilon_{0,t},
\end{align*}
where $\{\mathbb{X}_{2,0,t}\}_{t\geq 0}$ is a two-dimensional observable vector process, 
$\{\eta_{0,t}\}_{t\geq 0}$ is a one-dimensional latent common factor vector process, and
$\{\varepsilon_{0,t}\}_{t\geq 0}$ is a two-dimensional latent unique factor vector process.
Furthermore, the relationship between $\eta_{0,t}$ and $\xi_{0,t}$ is expressed as follows:
\begin{align*}
    \eta_{0,t}=\begin{pmatrix}
    1 & 2
    \end{pmatrix}\xi_{0,t}+\zeta_{0,t},
\end{align*}
where $\{\zeta_{0,t}\}_{t\geq 0}$ is a one-dimensional latent unique factor vector process. $\{\xi_{0,t}\}_{t\geq 0}$ satisfies the following two-dimensional OU process:
\begin{align*}
    \dd \xi_{0,t}=-\left\{\begin{pmatrix}
    0.5 & 0.3\\
    0.2 & 0.4
    \end{pmatrix}\xi_{0,t}-\begin{pmatrix}
    2\\
    4
    \end{pmatrix}\right\}\dd t+
    \begin{pmatrix}
    1 & 1\\
    0 & 2
    \end{pmatrix}\dd W_{1,t},\ \ (t\in [0,T]),\ \ 
    c_1=\begin{pmatrix}
    3\\
    5
    \end{pmatrix},
\end{align*}
where $W_{1,t}$ is a two-dimensional standard Wiener process. $\{\delta_{0,t}\}_{t\geq 0}$ is defined as the following four-dimensional-OU process:
\begin{align*}
    \dd \delta_{0,t}=-\begin{pmatrix}
    3 & 0 & 0 & 0\\
    0 & 2 & 0 & 0\\
    0 & 0 & 3 & 0\\
    0 & 0 & 0 & 2
    \end{pmatrix}\delta_{0,t}\dd t+
    \begin{pmatrix}
    1 & 0 & 0 & 0\\
    0 & 2 & 0 & 0\\
    0 & 0 & 2 & 0\\
    0 & 0 & 0 & 1
    \end{pmatrix}
    \dd W_{2,t},\ \ (t\in [0,T]),\ \ 
    c_2=0,
\end{align*}
where $W_{2,t}$ is a four-dimensional standard Wiener process. $\{\varepsilon_{0,t}\}_{t\geq 0}$ is defined by the two-dimensional OU process as follows:
\begin{align*}
    \dd \varepsilon_{0,t}=-\begin{pmatrix}
    2 & 0 \\
    0 & 3
    \end{pmatrix}\varepsilon_{0,t}\dd t+
    \begin{pmatrix}
    1 & 0\\
    0 & 3
    \end{pmatrix}\dd W_{3,t},\ \ (t\in [0,T]),\ \ 
    c_3=0,
\end{align*}
where $W_{3,t}$ is a two-dimensional standard Wiener process. $\{\zeta_{0,t}\}_{t\geq 0}$ satisfies the following one-dimensional OU process:
\begin{align*}
    \dd \zeta_{0,t}=-\zeta_{0,t}\dd t+2
    \dd W_{4,t}\ \ (t\in [0,T]),\ \ c_4=0,
\end{align*}
where $W_{4,t}$ is the one-dimensional standard Wiener process. We assume that $W_{1,t}$, $W_{2,t}$, $W_{3,t}$ and $W_{4,t}$ are independent. Figure \ref{truefigure} shows the path diagram of the true model.
\begin{figure}[h]
    \centering
    \includegraphics[width=0.8\columnwidth]{./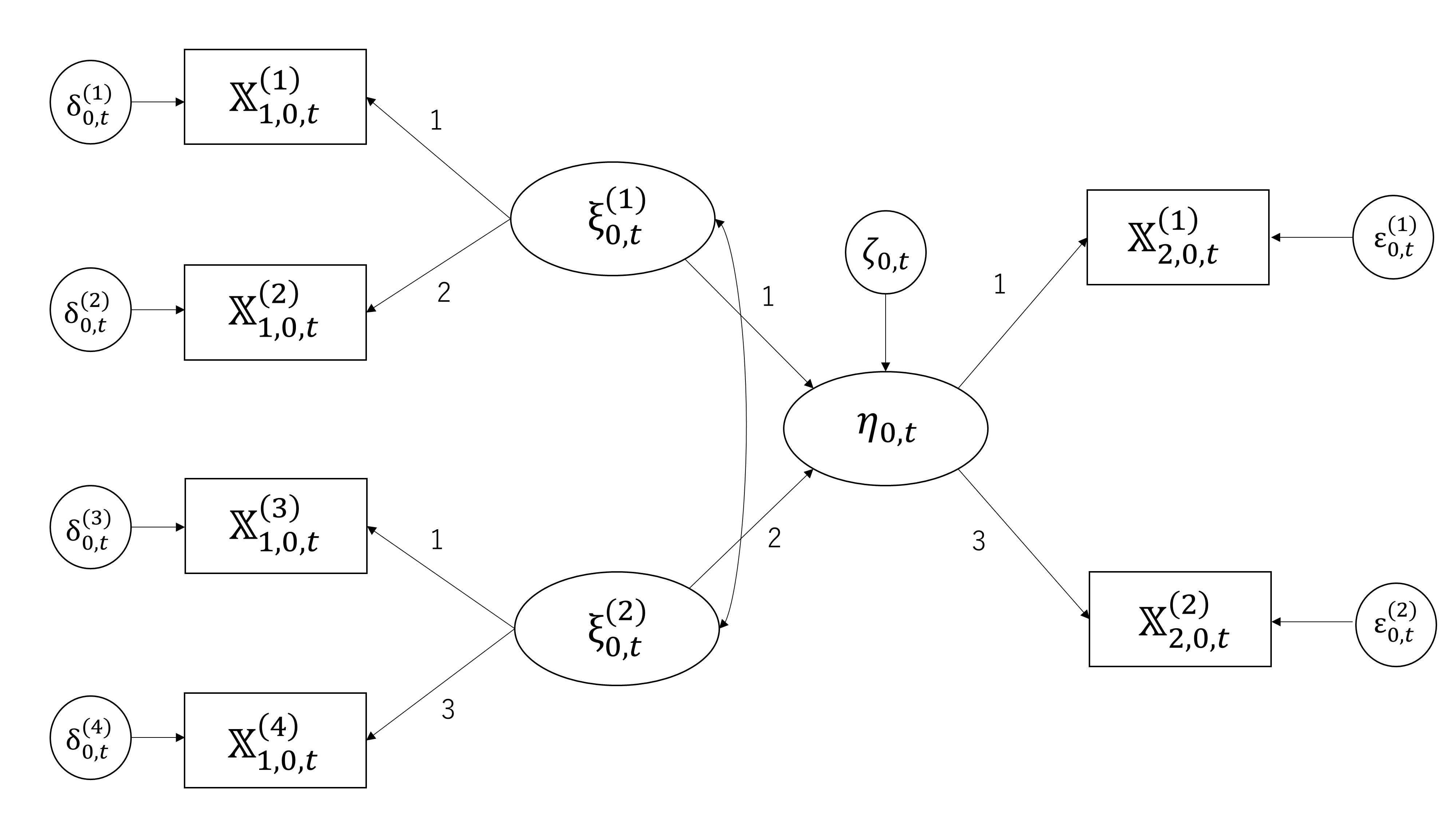}
    \caption{Path diagram of the true model.}\label{truefigure}
\end{figure}
\subsection{Correctly specified parametric model}
Set $p_1=4$, $p_2=2$, $k_1=2$ and $k_2=1$ in the parametric model (\ref{X})-(\ref{zetaP}). Suppose
\begin{align*}
    {\bf{\Lambda}}_{x_1}=
    \begin{pmatrix}
    1 & ({\bf{\Lambda}}_{x_1})_{21} & 0 & 0\\
    0 & 0 & 1 &({\bf{\Lambda}}_{x_1})_{42}
    \end{pmatrix}^{\top}\in\mathbb{R}^{4\times 2},
\end{align*}
where $({\bf{\Lambda}}_{x_1})_{21}$ and $({\bf{\Lambda}}_{x_1})_{42}$ are not zero,
\begin{align*}
    {\bf{\Lambda}}_{x_2}=\begin{pmatrix}
    1 & ({\bf{\Lambda}}_{x_{2}})_{21}
    \end{pmatrix}^{\top}\in\mathbb{R}^{2\times 1},
\end{align*}
where $({\bf{\Lambda}}_{x_2})_{21}$ is not zero, ${\bf{\Gamma}}_{11}$ and ${\bf{\Gamma}}_{12}$ are not zero, ${\bf{\Sigma}}_{\xi\xi}\in\mathbb{R}^{2\times 2}$ is a positive definite matrix, $({\bf{\Sigma}}_{\xi\xi})_{12}$ is not zero, ${\bf{\Sigma}}_{\delta\delta}\in\mathbb{R}^{4\times 4}$ and ${\bf{\Sigma}}_{\varepsilon\varepsilon}\in\mathbb{R}^{2\times 2}$ are positive definite diagonal matrices, and ${\bf{\Sigma}}_{\zeta\zeta}>0$. The parameter is expressed as 
\begin{align*}
    \theta&=\Bigl(({\bf{\Lambda}}_{x_1})_{21},({\bf{\Lambda}}_{x_1})_{42},({\bf{\Lambda}}_{x_2})_{21},{\bf{\Gamma}}_{11},{\bf{\Gamma}}_{12},({\bf{\Sigma}}_{\xi\xi})_{11},({\bf{\Sigma}}_{\xi\xi})_{12},
    ({\bf{\Sigma}}_{\xi\xi})_{22},\\
    &\qquad\qquad({\bf{\Sigma}}_{\delta\delta})_{11},({\bf{\Sigma}}_{\delta\delta})_{22},({\bf{\Sigma}}_{\delta\delta})_{33},({\bf{\Sigma}}_{\delta\delta})_{44},({\bf{\Sigma}}_{\varepsilon\varepsilon})_{11},({\bf{\Sigma}}_{\varepsilon\varepsilon})_{22},{\bf{\Sigma}}_{\zeta\zeta}\Bigr)^{\top}\in\Theta,
\end{align*}
where $\Theta=[-100,100]^5\times[0.1,100]\times[-100,100]\times[0.1,100]^8$. Let
\begin{align*}
    \theta_{0}=\Bigl(
    2, 3, 3, 1, 2, 2, 2, 4, 1, 4, 4, 1, 1, 9, 4
    \Bigr)^{\top}\in\Theta.
\end{align*}
It holds ${\bf{\Sigma}}_0={\bf{\Sigma}}(\theta_0)$, which implies that the model is a correctly specified parametric model. In addition, we have
\begin{align}
    {\bf{\Sigma}}(\theta_1)
    ={\bf{\Sigma}}(\theta_2)\Longrightarrow\theta_1=\theta_2.\label{model1iden}
\end{align}
For details of (\ref{model1iden}), see Appendix \ref{idenap}. Figure \ref{corfigure} shows the path diagram of the correctly specified parametric model.
\begin{figure}[h]
    \centering
    \includegraphics[width=0.8\columnwidth]{./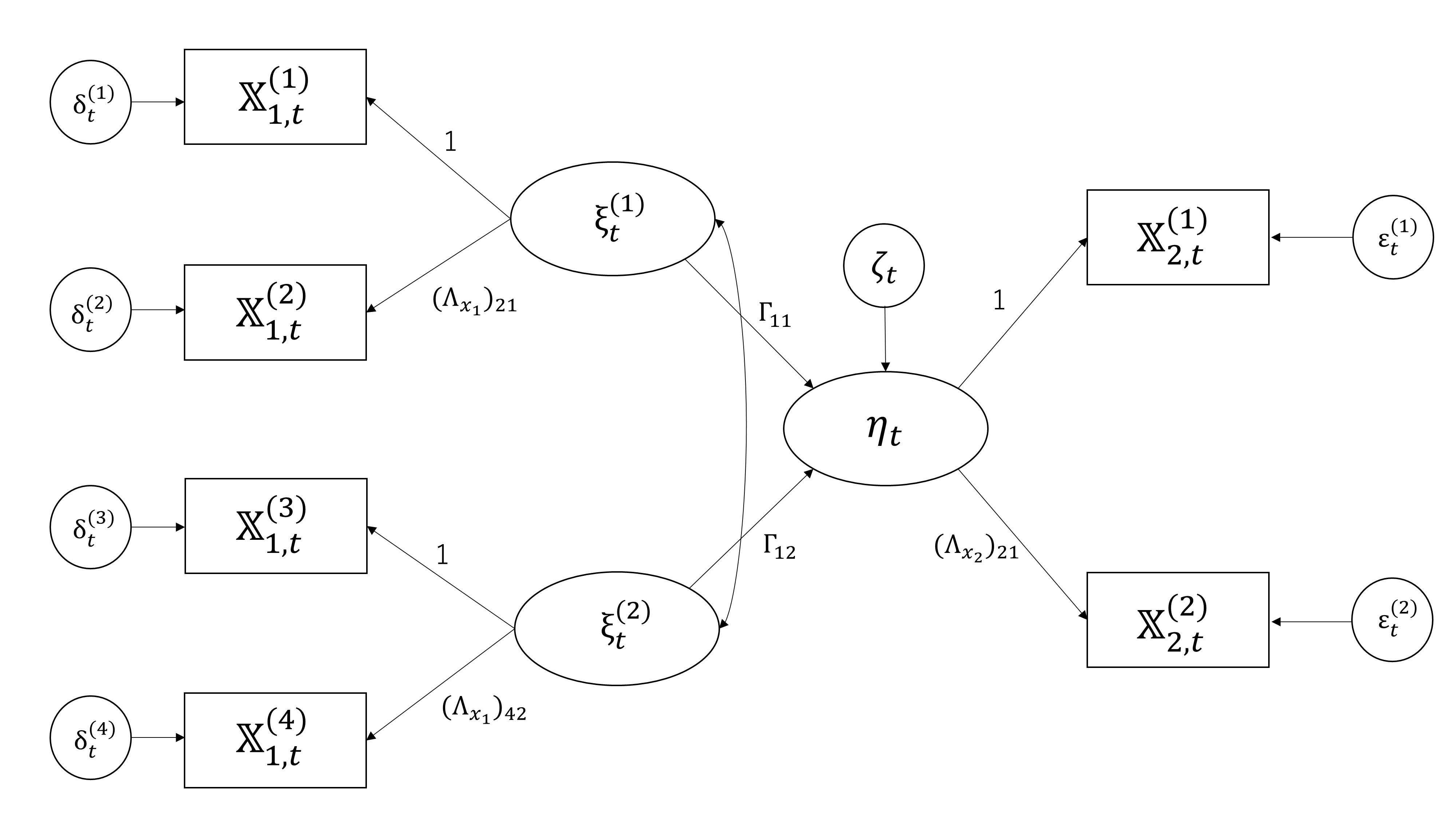}
    \caption{Path diagram of the correctly specified parametric model.}\label{corfigure}
\end{figure}
\subsection{Missspecified parametric model}
\subsubsection{Model A}
Let $p_1=4$, $p_2=2$, $k_1=1$ and $k_2=1$ in the parametric model (\ref{X})-(\ref{zetaP}). Assume 
\begin{align*}
    {\bf{\Lambda}}_{x_1}=\begin{pmatrix}
    1 & ({\bf{\Lambda}}_{x_1})_{21} & ({\bf{\Lambda}}_{x_1})_{31} & ({\bf{\Lambda}}_{x_1})_{41}    
    \end{pmatrix}^{\top}\in\mathbb{R}^{4\times 1},
\end{align*}
where $({\bf{\Lambda}}_{x_1})_{21}$, $({\bf{\Lambda}}_{x_1})_{31}$ and $({\bf{\Lambda}}_{x_1})_{41}$ are not zero,
\begin{align*}
    {\bf{\Lambda}}_{x_2}=\begin{pmatrix}
    1 &  ({\bf{\Lambda}}_{x_2})_{21}    
    \end{pmatrix}^{\top}\in\mathbb{R}^{2\times 1},
\end{align*}
where $({\bf{\Lambda}}_{x_2})_{21}$ is not zero, ${\bf{\Gamma}}\in\mathbb{R}$ is not zero, ${\bf{\Sigma}}_{\xi\xi}>0$, ${\bf{\Sigma}}_{\zeta\zeta}>0$, and ${\bf{\Sigma}}_{\delta\delta}\in\mathbb{R}^{4\times 4}$
and ${\bf{\Sigma}}_{\varepsilon\varepsilon}\in\mathbb{R}^{2\times 2}$ are positive definite diagonal matrices. The parameter is expressed as follows:
\begin{align*}
        \theta&=\Bigl(({\bf{\Lambda}}_{x_1})_{21}, ({\bf{\Lambda}}_{x_1})_{31}, ({\bf{\Lambda}}_{x_1})_{41}, ({\bf{\Lambda}}_{x_2})_{21}, {\bf{\Gamma}}, {\bf{\Sigma}}_{\xi\xi},({\bf{\Sigma}}_{\delta\delta})_{11},\\
    &\qquad\qquad\quad({\bf{\Sigma}}_{\delta\delta})_{22},({\bf{\Sigma}}_{\delta\delta})_{33},({\bf{\Sigma}}_{\delta\delta})_{44},({\bf{\Sigma}}_{\varepsilon\varepsilon})_{11},({\bf{\Sigma}}_{\varepsilon\varepsilon})_{22},{\bf{\Sigma}}_{\zeta\zeta}\Bigr)^{\top}\in\Theta,
\end{align*}
where $\Theta=[-100,100]^5\times[0.1,100]^8$. Figure \ref{missA} shows the path diagram of Model A.
\begin{figure}[h]
    \centering
    \includegraphics[width=0.8\columnwidth]{./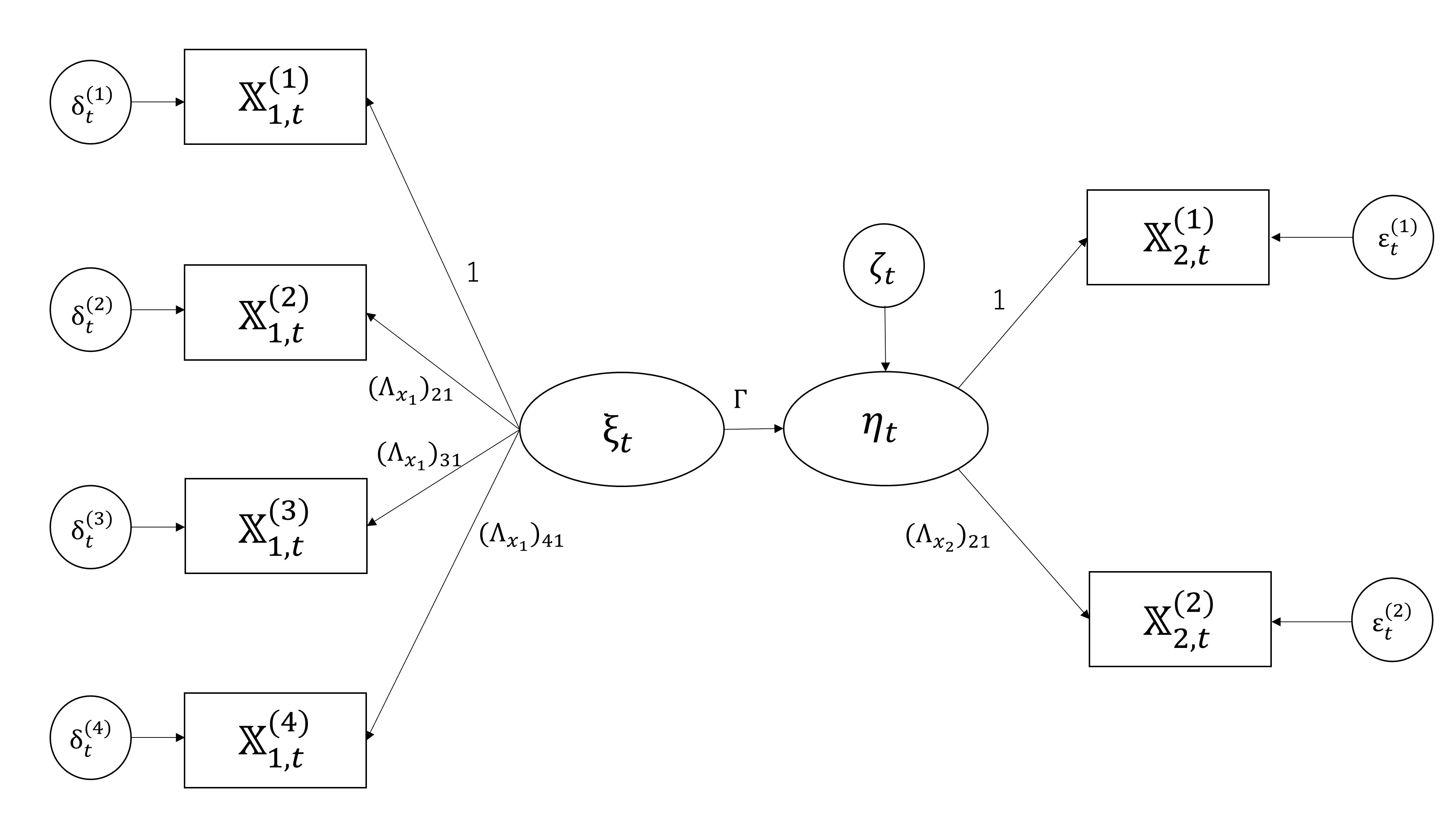}
    \caption{Path diagram of Model A.}\label{missA}
\end{figure}
\subsubsection{Model B}
Set $p_1=4$, $p_2=2$, $k_1=2$ and $k_2=1$ in the parametric model (\ref{X})-(\ref{zetaP}). Suppose
\begin{align*}
    {\bf{\Lambda}}_{x_1}=
    \begin{pmatrix}
    1 & 0 & 0 & ({\bf{\Lambda}}_{x_1})_{41}\\
    0 & ({\bf{\Lambda}}_{x_1})_{22} & 1 & 0
    \end{pmatrix}^{\top}\in\mathbb{R}^{4\times 2},
\end{align*}
where $({\bf{\Lambda}}_{x_{1}})_{22}$ and $({\bf{\Lambda}}_{x_1})_{41}$ are not zero, 
\begin{align*}
    {\bf{\Lambda}}_{x_2}=\begin{pmatrix}
    1 & ({\bf{\Lambda}}_{x_{2}})_{21}
    \end{pmatrix}^{\top}\in\mathbb{R}^{2\times 1},
\end{align*}
where $({\bf{\Lambda}}_{x_2})_{21}$ is not zero,
${\bf{\Gamma}}_{11}$ and ${\bf{\Gamma}}_{12}$ are not zero,
${\bf{\Sigma}}_{\xi\xi}\in\mathbb{R}^{2\times 2}$ is a positive definite matrix, $({\bf{\Sigma}}_{\xi\xi})_{12}$ is not zero, ${\bf{\Sigma}}_{\delta\delta}\in\mathbb{R}^{4\times 4}$ and ${\bf{\Sigma}}_{\varepsilon\varepsilon}\in\mathbb{R}^{2\times 2}$ 
are positive definite diagonal matrices, 
and ${\bf{\Sigma}}_{\zeta\zeta}>0$. The parameter is expressed as 
\begin{align*}
    \theta&=\Bigl(({\bf{\Lambda}}_{x_1})_{22},({\bf{\Lambda}}_{x_1})_{41},({\bf{\Lambda}}_{x_2})_{21},{\bf{\Gamma}}_{11},{\bf{\Gamma}}_{12},({\bf{\Sigma}}_{\xi\xi})_{11},({\bf{\Sigma}}_{\xi\xi})_{12},
    ({\bf{\Sigma}}_{\xi\xi})_{22},\\
    &\qquad\qquad({\bf{\Sigma}}_{\delta\delta})_{11},({\bf{\Sigma}}_{\delta\delta})_{22},({\bf{\Sigma}}_{\delta\delta})_{33},({\bf{\Sigma}}_{\delta\delta})_{44},({\bf{\Sigma}}_{\varepsilon\varepsilon})_{11},({\bf{\Sigma}}_{\varepsilon\varepsilon})_{22},{\bf{\Sigma}}_{\zeta\zeta}\Bigr)^{\top}\in\Theta,
\end{align*}
where $\Theta=[-100,100]^5\times[0.1,100]\times[-100,100]\times[0.1,100]^8$. Figure \ref{missB} shows the path diagram of Model B.
\begin{figure}[h]
    \centering
    \includegraphics[width=0.8\columnwidth]{./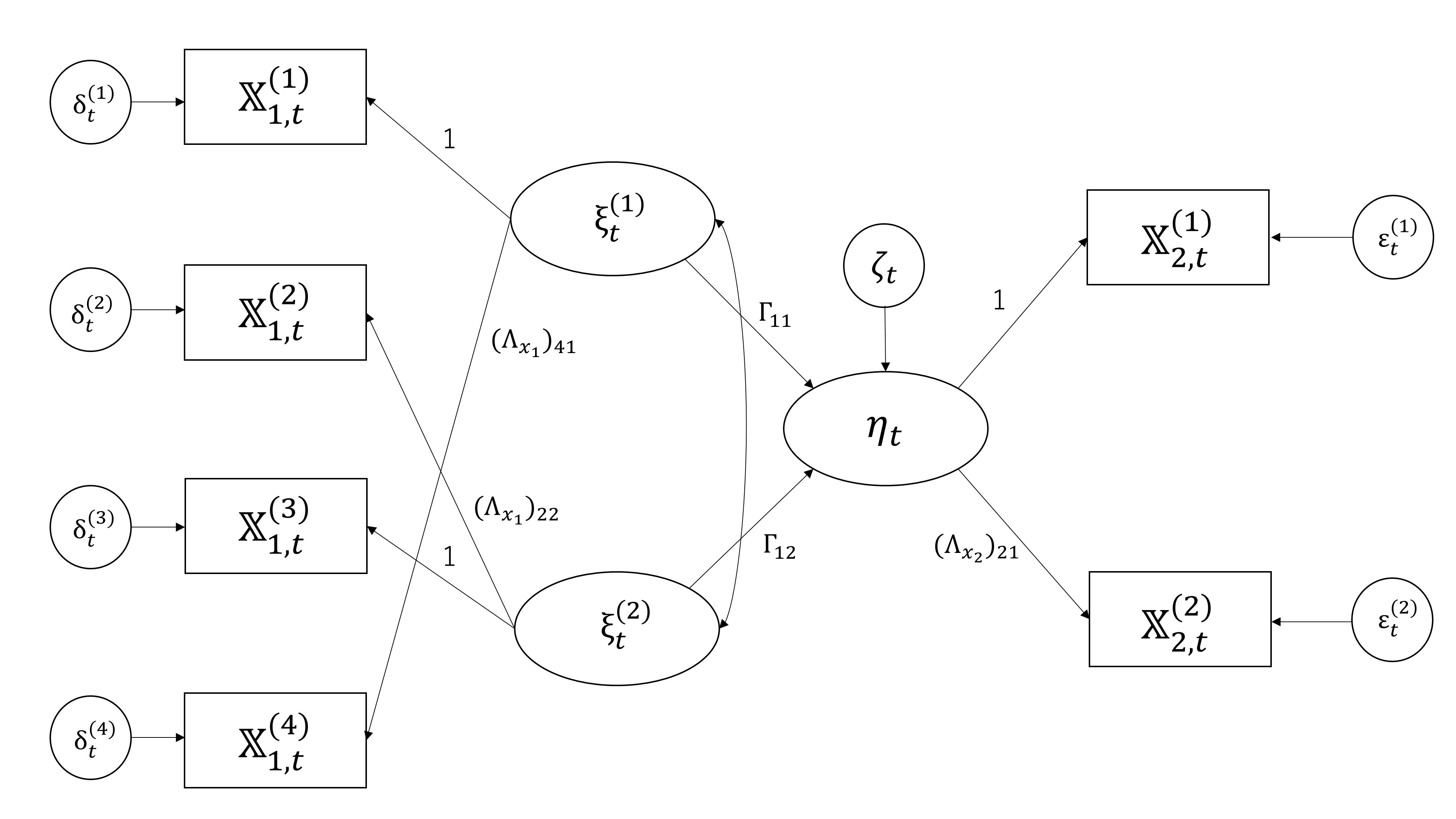}
    \caption{Path diagram of Model B.}\label{missB}
\end{figure}
\subsection{Simulation results}
Let $(n,h_n,T)=(10^4,10^{-3},10^{1})$. We generated 10,000 independent sample paths from the true model. To optimize $\mathbb{F}_{n}(\theta)$, we use optim() with the BFGS method in R language. The initial value of the optimization is set to $\theta_{0}$. See Appendix \ref{ergodic} for simulation results of the ergodic case.
	
\subsubsection{Correctly specified parametric model}
First, we check the asymptotic performance of $\mathbb{Q}_{\mathbb{XX}}$. Table \ref{QQtable} shows the sample mean and the sample standard deviation (SD) of $\mathbb{Q}_{\mathbb{XX}}$. Figure \ref{QQfigure} shows the histogram, the Q-Q plot and the empirical distribution of $\sqrt{n}((\mathbb{Q}_{\mathbb{XX}})_{11}-({\bf{\Sigma}}_0)_{11})$. It seems from Table \ref{QQtable} and Figure \ref{QQfigure} that Theorem 1 holds true for this example. Next, we investigate the asymptotic performance of $\hat{\theta}_{n}$. Table \ref{thetatable} shows the sample mean and the sample SD of $\hat{\theta}_{n}$ and we deduce that $\hat{\theta}_{n}$ has consistency. Figure \ref{thetafigure} shows the histogram, the Q-Q plot and the empirical distribution of $\sqrt{n}(\hat{\theta}_{n}^{(1)}-\theta_{0}^{(1)})$. These simulation results show that Theorem 2 seems to be correct in this example. Table \ref{testtable1} shows the sample mean and the sample SD of the test statistic $\mathbb{T}_{n}$. Figure \ref{testfigure} shows the histogram, the Q-Q plot and the empirical distribution of the test statistic $\mathbb{T}_{n}$. 
Table \ref{testtable1} and  Figure \ref{testfigure} indicate that the test statistic $\mathbb{T}_{n}$ seems to converge $\chi^2_6$ under $H_0$. See Appendix \ref{simulation} for details of simulation results.
\subsubsection{Missspecified parametric model}
Table \ref{testtable2} shows the number of rejections of the quasi-likelihood ratio test in Model A and Model B, which implies that the null hypothesis is rejected in both Model A and Model B tests all 10000 times. Table \ref{testtable3} shows the quartiles of  the test statistics $\mathbb{T}_{n}$ in Model A and Model B. From Table \ref{testtable3}, we deduce that Model B is closer to the true model than Model A.
\newpage
\begin{table}[h]
    \ \\ \ \\ \ \\ \ \\ \ \\ \ \\
    \centering
    \begin{tabular}{ccccc}
    & $(\mathbb{Q}_{\mathbb{XX}})_{11}$ & $(\mathbb{Q}_{\mathbb{XX}})_{12}$ & $(\mathbb{Q}_{\mathbb{XX}})_{13}$ & $(\mathbb{Q}_{\mathbb{XX}})_{14}$ \\\hline
    Mean (True value) & 3.002 (3.000) & 4.002 (4.000) & 2.000 (2.000)& 6.001 (6.000) \\
    SD (Theoretical value) & 0.042 (0.042) & 0.072 (0.072)& 0.052 (0.053) & 0.121 (0.121)\\
    & $(\mathbb{Q}_{\mathbb{XX}})_{15}$ & $(\mathbb{Q}_{\mathbb{XX}})_{16}$ & $(\mathbb{Q}_{\mathbb{XX}})_{22}$ & $(\mathbb{Q}_{\mathbb{XX}})_{23}$\\\hline
    Mean (True value) & 6.001 (6.000) & 18.003 (18.000) & 12.007 (12.000) & 4.001 (4.000)\\
    SD (Theoretical value) & 0.113 (0.114) & 0.337 (0.341)& 0.170 (0.170) & 0.106 (0.106)  \\
    & $(\mathbb{Q}_{\mathbb{XX}})_{24}$ & $(\mathbb{Q}_{\mathbb{XX}})_{25}$ & $(\mathbb{Q}_{\mathbb{XX}})_{26}$ & $(\mathbb{Q}_{\mathbb{XX}})_{33}$\\\hline
    Mean (True value) & 12.002 (12.000) & 12.002 (12.000) & 36.007 (36.000) & 8.006 (8.000)\\
    SD (Theoretical value) & 0.245 (0.242) & 0.229 (0.227) & 0.686 (0.681) & 0.112 (0.113) \\
    & $(\mathbb{Q}_{\mathbb{XX}})_{34}$ & $(\mathbb{Q}_{\mathbb{XX}})_{35}$ & $(\mathbb{Q}_{\mathbb{XX}})_{36}$ & $(\mathbb{Q}_{\mathbb{XX}})_{44}$\\\hline
    Mean (True value) & 12.003 (12.000) & 10.003 (10.000) & 30.009 (30.000) & 37.010 (37.000)\\
    SD (Theoretical value) & 0.210 (0.210) & 0.187 (0.187)& 0.561 (0.560) & 0.530 (0.523) \\
    & $(\mathbb{Q}_{\mathbb{XX}})_{45}$ & $(\mathbb{Q}_{\mathbb{XX}})_{46}$ & $(\mathbb{Q}_{\mathbb{XX}})_{55}$ & $(\mathbb{Q}_{\mathbb{XX}})_{56}$\\\hline
    Mean (True value) & 30.006 (30.000) & 90.021 (90.000) & 31.008 (31.000) & 90.023 (90.000)\\
    SD (Theoretical value) & 0.457 (0.452) & 1.369 (1.357) & 0.441 (0.438) & 1.302 (1.294) \\
    & $(\mathbb{Q}_{\mathbb{XX}})_{66}$ &&& \\\hline
    Mean (True value) & 279.081 (279.000) & & &\\
    SD (Theoretical value) & 3.961 (3.946) & &  \\ \\
\end{tabular}
\caption{Sample mean and sample standard deviation (SD) of $\mathbb{Q}_{\mathbb{XX}}$.}
\label{QQtable}
\end{table}
\ \\
\begin{figure}[h]
    \centering
    \includegraphics[width=0.32\columnwidth]{./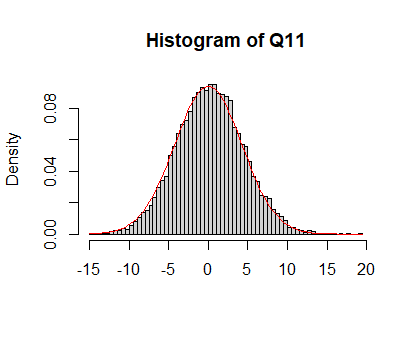}
    \includegraphics[width=0.32\columnwidth]{./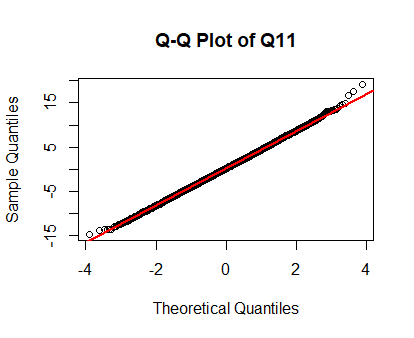}
    \includegraphics[width=0.32\columnwidth]{./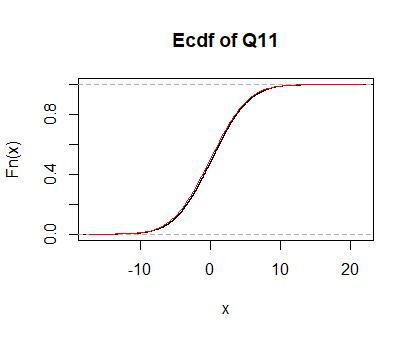}
    \caption{Histogram (left), Q-Q plot (middle) and empirical distribution (right) of $\sqrt{n}((\mathbb{Q}_{\mathbb{XX}})_{11}-({\bf{\Sigma}}_0)_{11})$. The red lines are theoretical curves.}
    \label{QQfigure}
\end{figure}
\newpage
\begin{table}[h]
\ \\ \ \\ \ \\ \ \\ \ \\ \ \\
\centering
\begin{tabular}{ccccc}
    & $\hat{\theta}_{n}^{(1)}$ & $\hat{\theta}_{n}^{(2)}$ & $\hat{\theta}_{n}^{(3)}$ & $\hat{\theta}_{n}^{(4)}$ \\\hline
    Mean (True value) & 2.000 (2.000) & 3.000 (3.000) & 3.000 (3.000)& 0.999 (1.000) \\
    SD (Theoretical value) & 0.026 (0.026) & 0.336 (0.336) & 0.009 (0.008) & 0.036 (0.036) \\
    & $\hat{\theta}_{n}^{(5)}$ & $\hat{\theta}_{n}^{(6)}$ & $\hat{\theta}_{n}^{(7)}$ & $\hat{\theta}_{n}^{(8)}$\\\hline
    Mean (True value) & 2.001 (2.000) & 2.001 (2.000) & 2.000 (2.000) & 4.002 (4.000)\\
    SD (Theoretical value) & 0.030 (0.030) & 0.044 (0.044)& 0.045 (0.046) & 0.100 (0.100) \\
    & $\hat{\theta}_{n}^{(9)}$ & $\hat{\theta}_{n}^{(10)}$ & $\hat{\theta}_{n}^{(11)}$ & $\hat{\theta}_{n}^{(12)}$\\\hline
    Mean (True value) & 1.001 (1.000)  &  4.003 (4.000) & 4.004 (4.000)& 1.004 (1.000)\\
    SD (Theoretical value) & 0.024 (0.024) & 0.096 (0.096) & 0.059 (0.060) & 0.183 (0.182) \\
    & $\hat{\theta}_{n}^{(13)}$ & $\hat{\theta}_{n}^{(14)}$ & $\hat{\theta}_{n}^{(15)}$ & \\\hline
    Mean (True value) & 1.001 (1.000) & 9.007 (9.000) & 3.999 (4.000)&\\
    SD (Theoretical value) & 0.038 (0.038) & 0.341 (0.343) & 0.110 (0.109)\\ \\
\end{tabular}
\caption{Sample mean and sample standard deviation (SD) of $\hat{\theta}_{n}$.}
\label{thetatable}
\end{table}
\ \\
\begin{figure}[h]
    \centering
    \includegraphics[width=0.32\columnwidth]{./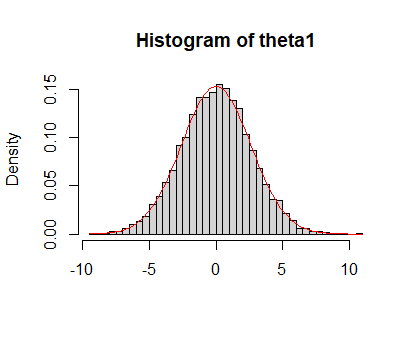}
    \includegraphics[width=0.32\columnwidth]{./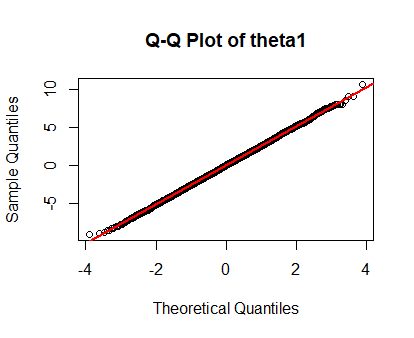}
    \includegraphics[width=0.32\columnwidth]{./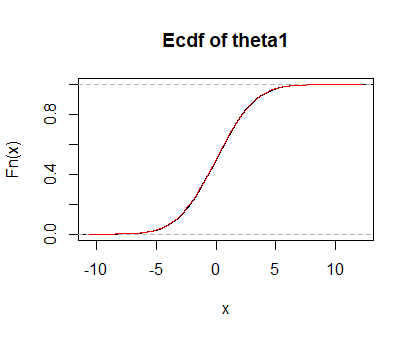}
    \caption{Histogram (left), Q-Q plot (middle) and empirical distribution (right) of $\sqrt{n}(\hat{\theta}_{n}^{(1)}-\theta_{0}^{(1)})$. The red lines are theoretical curves.}
\label{thetafigure}
\end{figure}
\newpage
\begin{table}[h]
    \ \\ \ \\ \ \\
    \centering
    \begin{tabular}{cc}
    \hline
    Mean\ \  (True value) &\quad 5.980\ \ (6.000)\\
    SD\ \ (Theoretical value) &\quad 3.400\ \ (3.464)\\ \\
    \end{tabular}
\caption{Sample mean and sample standard deviation (SD) of the test statistic $\mathbb{T}_{n}$.}
\label{testtable1}
\end{table}
\ \\ \ \\
\begin{figure}[h]
    \centering
    \includegraphics[width=0.32\columnwidth]{./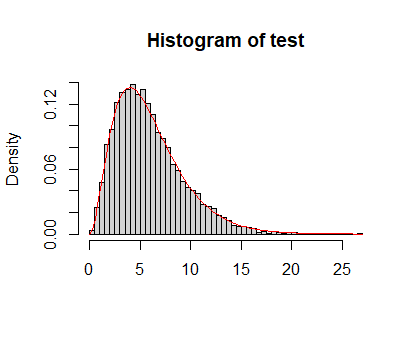}
    \includegraphics[width=0.32\columnwidth]{./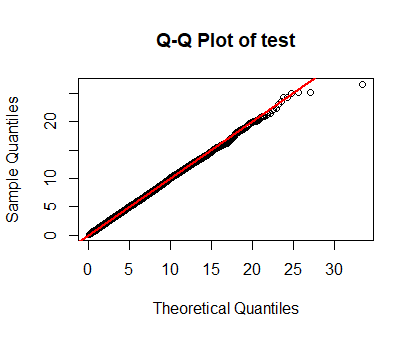}
    \includegraphics[width=0.32\columnwidth]{./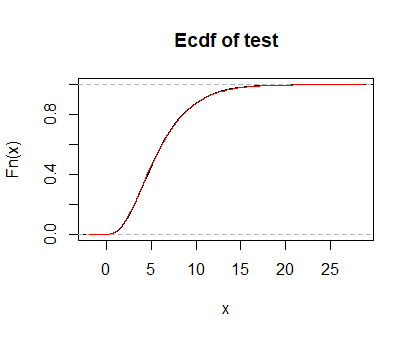}
\caption{Histogram (left), Q-Q plot (middle) and empirical distribution (right) of the test statistic $\mathbb{T}_{n}$. The red lines are theoretical curves.}
\label{testfigure}
\end{figure}
\ \\
\ \\
\begin{table}[h]
    \centering
    \begin{tabular}{cc}
    \hline
    Model A &\quad 10000\\
    Model B &\quad 10000\\ \ \\
    \end{tabular}
\caption{The number of rejections of the quasi-likelihood ratio test in Model A and Model B.}
\label{testtable2}
\end{table}
\ \\ 
\ \\
\begin{table}[h]
    \centering
    \begin{tabular}{cccccc}
    &\quad Min &\  $Q1$ &\  Median &\  $Q3$ &\  Max\\\hline
    Model A &\quad 2391 &\ 2720 &\ 2791 &\ 2862 &\ 3205 \\
    Model B &\quad 1793 &\ 2064 &\ 2123 &\ 2183 &\ 2442 \\ \\
    \end{tabular}
\caption{Quartile of the test statistic $\mathbb{T}_{n}$ in Model A and Model B.}
\label{testtable3}
\end{table}
\newpage
\section{Example and simulation results for ssem}
\subsection{True model}
The stochastic process $\mathbb{X}_{1,0,t}$ is defined by the true factor model as follows:
\begin{align*}
    \mathbb{X}_{1,0,t}=\begin{pmatrix}
    1 & 0 & 0 & 3 & 0 & 0 & 4 & 0 & 0\\
    0 & 1 & 0 & 0 & 2 & 0 & 0 & 6 & 0\\
    0 & 0 & 1 & 0 & 0 & 5 & 0 & 0 & 3
    \end{pmatrix}^{\top}\xi_{0,t}+\delta_{0,t},
\end{align*}
where $\{\mathbb{X}_{1,0,t}\}_{t\geq 0}$ is a nine-dimensional observable vector process, 
$\{\xi_{0,t}\}_{t\geq 0}$ is a three-dimensional latent common factor vector process, 
$\{\delta_{0,t}\}_{t\geq 0}$ is a nine-dimensional latent unique
factor vector process. The stochastic process $\mathbb{X}_{2,0,t}$ is defined as the following true factor model:
\begin{align*}
    \mathbb{X}_{2,0,t}=\begin{pmatrix}
    1 & 0 & 5 & 0 & 7 & 0 \\
    0 & 1 & 0 & 3 & 0 & 2
    \end{pmatrix}^{\top}\eta_{0,t}+\varepsilon_{0,t},
\end{align*}
where $\{\mathbb{X}_{2,0,t}\}_{t\geq 0}$ is a six-dimensional observable vector process.
$\{\eta_{0,t}\}_{t\geq 0}$ is a two-dimensional latent common factor vector process and 
$\{\varepsilon_{0,t}\}_{t\geq 0}$ is a six-dimensional latent unique factor vector process.
Moreover, we express the relationship between $\eta_{0,t}$ and $\xi_{0,t}$ as follows:
\begin{align*}
    \eta_{0,t}=\begin{pmatrix}
    5 & 2 & 0\\
    0 & 0 & 2
    \end{pmatrix}\xi_{0,t}+\zeta_{0,t},
\end{align*}
where $\{\zeta_{0,t}\}_{t\geq 0}$ is a two-dimensional latent unique factor vector process. Suppose that $\{\xi_{0,t}\}_{t\geq 0}$ is defined as the following three-dimensional OU process:
\begin{align*}
    \dd \xi_{0,t}=-\left\{\begin{pmatrix}
    0.5 & 0.4 & 0.1\\
    0.2 & 0.2 & 0.6\\
    0.3 & 0.4 & 0.2 \\
    \end{pmatrix}\xi_{0,t}-\begin{pmatrix}
    2\\
    4\\
    2
    \end{pmatrix}\right\}\dd t+
    \begin{pmatrix}
    2 & 0 & 0 \\
    0 & 1 & 0 \\
    0 & 0 & 3 
    \end{pmatrix}
    \dd W_{1,t},\ (t\in [0,T]),\ \ 
    \xi_{0,0}=\begin{pmatrix}
    3\\
    5\\
    2\\
    \end{pmatrix},
\end{align*}
where $W_{1,t}$ is a three-dimensional standard Wiener process. $\{\delta_{0,t}\}_{t\geq 0}$ satisfies the following nine-dimensional OU process:
\begin{align*}
    \dd \delta_{0,t}&=-B_2\xi_{0,t}\dd t+{\bf{S}}_{2,0}\dd W_{2,t},\ (t\in [0,T]),\ \delta_{0,0}=0,
\end{align*}
where $B_2=\Diag(3,2,3,2,2,3,1,3,1)$, ${\bf{S}}_{2,0}=\Diag(1,2,1,5,2,3,1,2,3)$ and $W_{2,t}$ is a nine-dimensional standard Wiener process. $\{\varepsilon_{0,t}\}_{t\geq 0}$ is defined as the following six-dimensional OU process:
\begin{align*}
    \dd \varepsilon_{0,t}&=-B_3\varepsilon_{0,t}\dd t+{\bf{S}}_{3,0}\dd W_{3,t},\ (t\in [0,T]),\ \varepsilon_{0,0}=0,
\end{align*}
where $B_3=\Diag(1,3,2,3,2,2)$, ${\bf{S}}_{3,0}=\Diag(3,1,2,1,5,2)$ and $W_{3,t}$ is a six-dimensional standard Wiener process. $\{\zeta_{0,t}\}_{t\geq 0}$ satisfies the following two-dimensional OU process:
\begin{align*}
    \dd \zeta_{0,t}&=-\begin{pmatrix}
    3 & 0\\
    0 & 1
    \end{pmatrix}\zeta_{0,t}\dd t+\begin{pmatrix}
    3 & 0\\
    0 & 1
    \end{pmatrix}\dd W_{4,t},\ (t\in [0,T]),\ \zeta_{0,0}=0,
\end{align*}
where $W_{4,t}$ is a two-dimensional standard Wiener process. Figure \ref{truesparse} shows the path diagram of the true model.
\begin{figure}[h]
    \centering
    \includegraphics[width=0.8\columnwidth]{./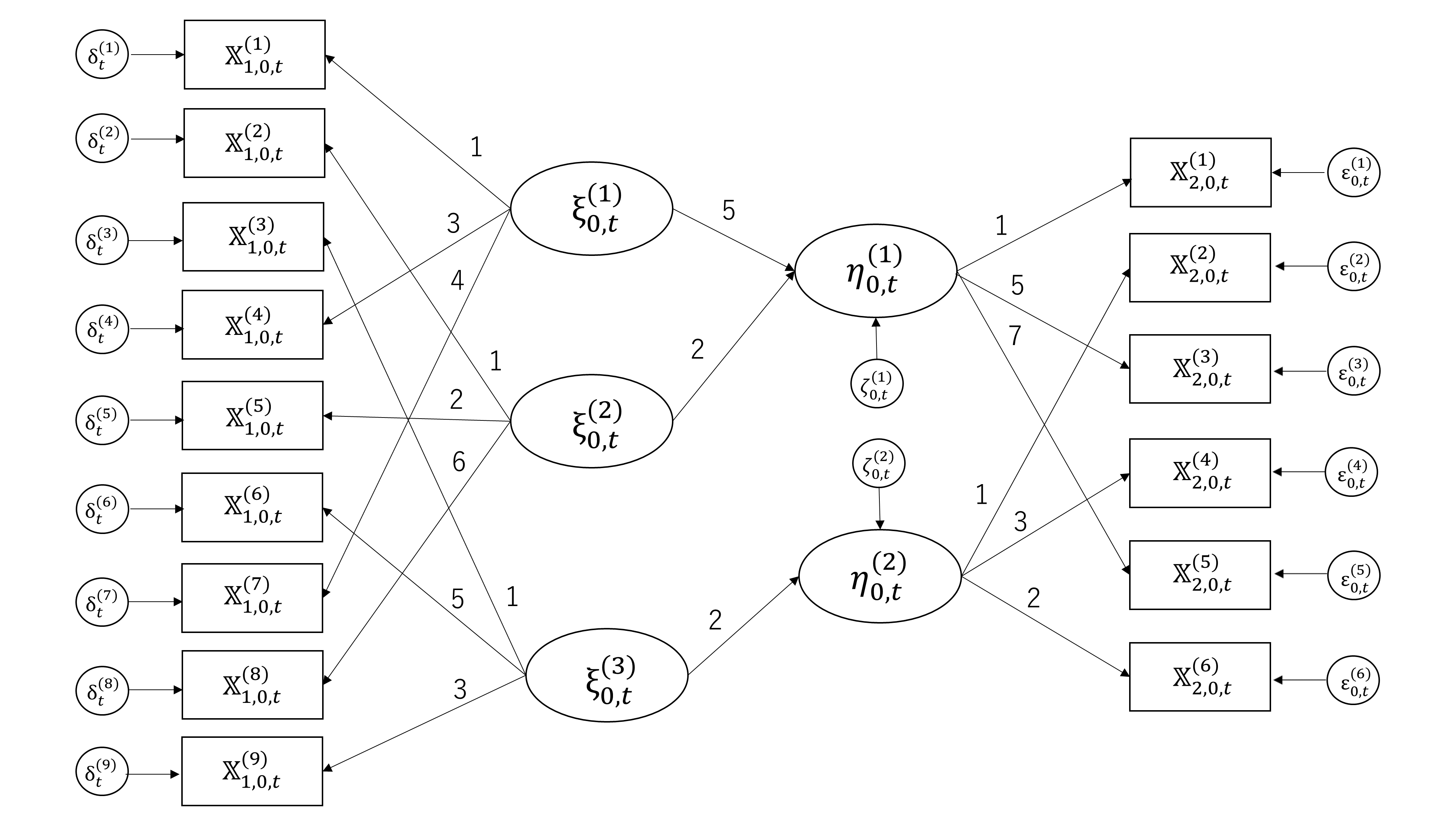}
    \caption{Path diagram of the true model.}\label{truesparse}
\end{figure}
\subsection{Correctly specified parametric model}
Let $p_1=9$, $p_2=6$, $k_1=3$ and $k_2=2$ in the parametric model (\ref{X})-(\ref{zetaP}). Assume
\begin{align*}
    {\bf{\Lambda}}_{x_1}=\Bigl(
    \mathbb{I}_3, {\bf{A}}_{x_1}^{\top}\Bigr)^{\top},\quad
    {\bf{\Lambda}}_{x_2}=\Bigl(
    \mathbb{I}_2, {\bf{A}}_{x_2}^{\top}\Bigr)^{\top}
\end{align*}
where ${\bf{A}}_{x_1}\in\mathbb{R}^{6\times 3}$ and ${\bf{A}}_{x_2}\in\mathbb{R}^{4\times 2}$, ${\bf{A}}_{x_2}$ is a full column rank matrix. Moreover, we suppose that ${\bf{\Lambda}}_{x_1}$ meets the identifiability condition of Theorem 5.1 in Anderson and Rubin \cite{Anderson(1956)}. The parameter is expressed as
\begin{align*}
    \theta=\Bigl(\vec {\bf{A}}_{x_1}^{\top}, \vec {\bf{A}}_{x_2}^{\top}, \vec{{\bf{\Gamma}}}^{\top}, \vech{\bf{\Sigma}}_{\xi\xi}^{\top},\diag{\bf{\Sigma}}_{\delta\delta}^{\top},
    \diag{\bf{\Sigma}}_{\varepsilon\varepsilon}^{\top},
    \vech{\bf{\Sigma}}_{\zeta\zeta}^{\top}\Bigr)^{\top}\in\Theta
\end{align*}
where 
\begin{align*}
    \Theta&=[-100,100]^{32}\times[0.1,100]\times[-100,100]^2\times[0.1,100]\\
    &\qquad\times[-100,100]\times[0.1,100]^{17}\times[-100,100]\times[0.1,100],
\end{align*}${\bf{\Gamma}}\in\mathbb{R}^{2\times 3}$ is a full row rank matrix, ${\bf{\Sigma}}_{\xi\xi}\in\mathbb{R}^{3\times 3}$ and  
${\bf{\Sigma}}_{\zeta\zeta}\in\mathbb{R}^{2\times 2}$
are positive definite matrices,
${\bf{\Sigma}}_{\delta\delta}\in\mathbb{R}^{9\times 9}$ and
${\bf{\Sigma}}_{\varepsilon\varepsilon}\in\mathbb{R}^{6\times 6}$ 
are positive definite diagonal matrices.
Set
\begin{align*}
    \theta_0&=\Bigl(3,0,0,0,2,0,0,0,5,4,0,0,0,6,0,0,0,3,
    5,0,0,3,7,0,0,2,5,2,0,\\
    &\qquad\qquad 0,0,2,4,0,0,1,0,9,1,4,1,25,4,9,1,4,9,9,1,4,1,25,4,9,0,1\Bigr)^{\top}\in\Theta
\end{align*}
and we note that ${\bf{\Sigma}}_0={\bf{\Sigma}}(\theta_0)$, which implies that the model is a correctly specified parametric model. The model satisfies
\begin{align}
    {\bf{\Sigma}}(\theta_1)
    ={\bf{\Sigma}}(\theta_2)\Longrightarrow\theta_1=\theta_2. \label{model2iden}
\end{align}
For the proof of (\ref{model2iden}), see Appendix \ref{idenap2}. Figure \ref{corsparse} shows the path diagram of the correctly specified parametric model.
\begin{figure}[t]
    \centering
    \includegraphics[width=0.8\columnwidth]{./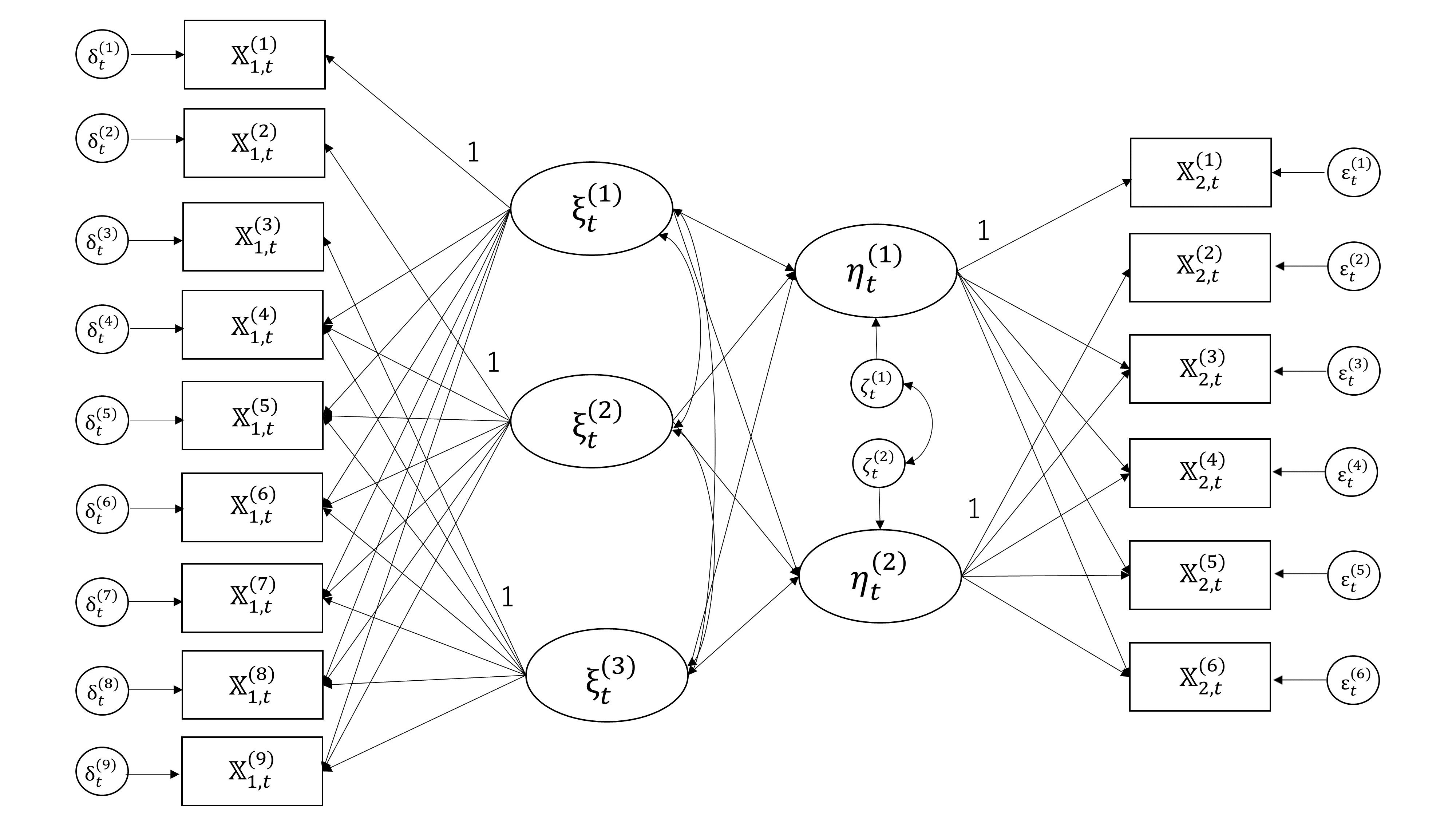}
    \caption{Path diagram of the correctly specified parametric model.}\label{corsparse}
\end{figure}
\subsection{Missspecified parametric model}
Set $p_1=9$, $p_2=6$, $k_1=2$ and $k_2=2$ in the parametric model (\ref{X})-(\ref{zetaP}). Suppose
\begin{align*}
    {\bf{\Lambda}}_{x_1}=\Bigl(\mathbb{I}_2, {\bf{A}}_{x_1}^{\top}\Bigr)^{\top},\quad
    {\bf{\Lambda}}_{x_2}=\Bigl(
    \mathbb{I}_2, {\bf{A}}_{x_2}^{\top}\Bigr)^{\top}
\end{align*}
where ${\bf{A}}_{x_1}\in\mathbb{R}^{6\times 2}$, ${\bf{A}}_{x_2}\in\mathbb{R}^{4\times 2}$, ${\bf{A}}_{x_2}$ is a full column rank matrix. Furthermore, it is assumed that ${\bf{\Lambda}}_{x_1}$ satisfies the identifiability condition of Theorem 5.1 in Anderson and Rubin \cite{Anderson(1956)}. The parameter is
\begin{align*}
    \theta=\Bigl(\vec {\bf{A}}_{x_1}^{\top}, \vec {\bf{A}}_{x_2}^{\top}, \vec{{\bf{\Gamma}}}^{\top}, \vech{\bf{\Sigma}}_{\xi\xi}^{\top},\diag{\bf{\Sigma}}_{\delta\delta}^{\top},
    \diag{\bf{\Sigma}}_{\varepsilon\varepsilon}^{\top},\vech{\bf{\Sigma}}_{\zeta\zeta}^{\top}\Bigr)^{\top}\in\Theta
\end{align*}
where 
\begin{align*}
    \Theta&=[-100,100]^{26}\times [0.1,100]\times[-100,100]\times [0.1,100]^{17}\times[-100,100]\times[0.1,100],
\end{align*}
${\bf{\Gamma}}\in\mathbb{R}^{2\times 2}$ is non-singular,
${\bf{\Sigma}}_{\xi\xi}\in\mathbb{R}^{2\times 2}$ and
${\bf{\Sigma}}_{\zeta\zeta}\in\mathbb{R}^{2\times 2}$
are positive definite matrices, 
${\bf{\Sigma}}_{\delta\delta}\in\mathbb{R}^{9\times 9}$ and  
${\bf{\Sigma}}_{\varepsilon\varepsilon}\in\mathbb{R}^{6\times 6}$ 
are positive definite diagonal matrices.
Figure \ref{misssparse} shows the path diagram of the missspecified parametric model.
\subsection{Simulation results}
Let $(n,h_n,T)=(10^4,10^{-4},1)$. 10,000 independent sample paths are generated from the true model. Let $\delta=0.1$, $\lambda_{1,n}=n^{-0.6}$, $\lambda_{2,n}=\delta^{-1}$ and $\gamma=4$. To optimize $\mathbb{F}_n$, we use optim() with the BFGS method in R language. The subgradient method is used in order to optimize $\mathbb{Q}_{\mathbb{I},n}(\theta)$. See Shor \cite{Shor(2012)} for the subgradient method. The initial value of the optimization is set to $\theta_0$.

\subsubsection{Correctly specified parametric model}
Table \ref{POtesttable1} shows the sample mean and sample standard deviation of $\check{\mathbb{T}}_n$. Figure \ref{testPOfigure1} shows Histogram, Q-Q plot and empirical distribution of $\check{\mathbb{T}}_n$. From Table \ref{POtesttable1} and Figure \ref{testPOfigure1}, we can see that $\check{\mathbb{T}}_n$ converges in distribution to $\chi^2_{87}$ under $H_0$.
\subsubsection{Missspecified parametric model}
Table \ref{POtesttable2} shows the number of rejections of the penalized quasi-likelihood ratio test. It seems from Table \ref{POtesttable2} that Theorem 10 holds true for this example.
\clearpage
\begin{figure}[h]
    \ \\ \ \\ \ \\\ \\ \ \\ \ \\ \ \\ \ \\
    \includegraphics[width=0.8\columnwidth]{./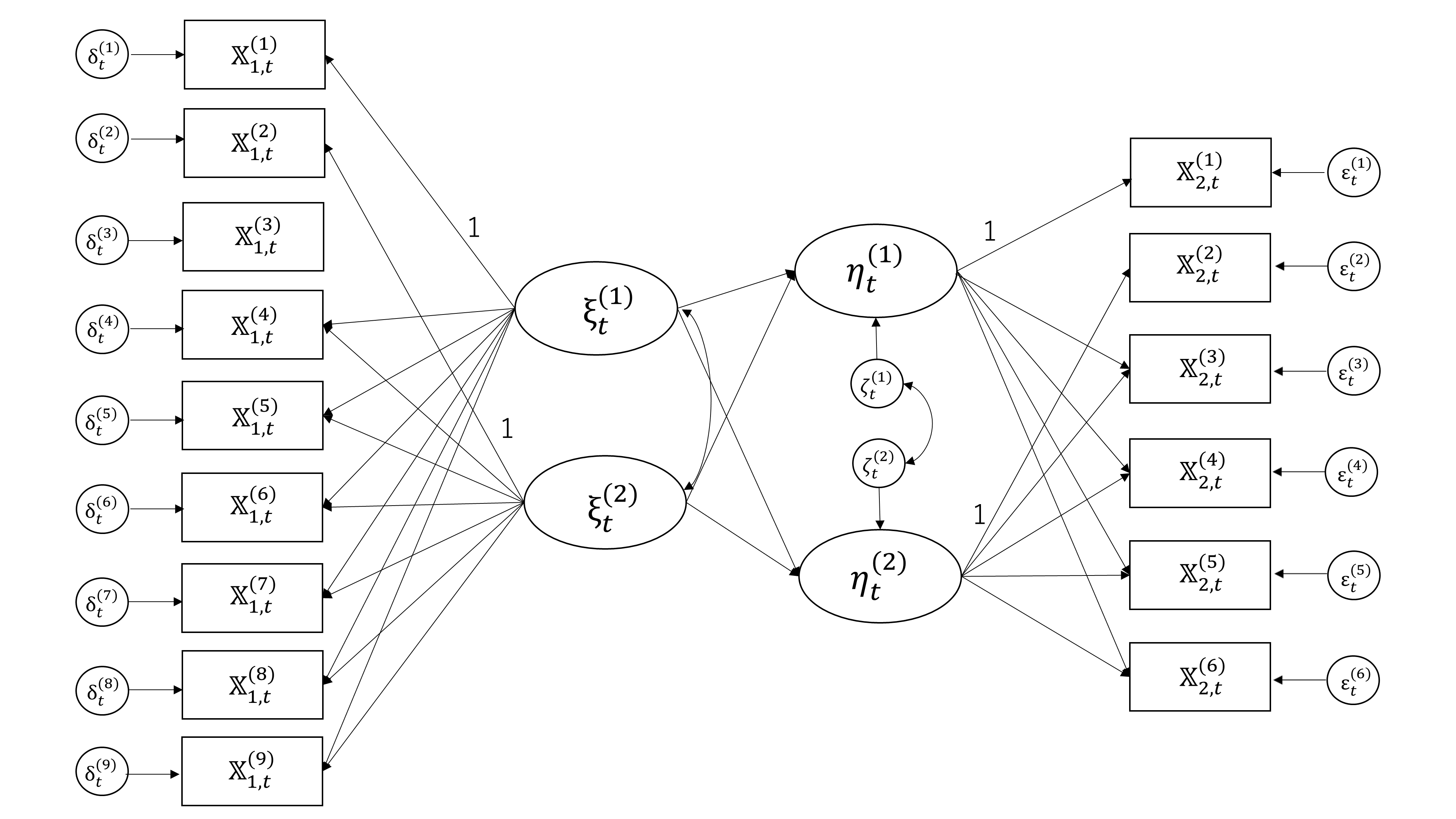}
    \caption{Path diagram of the missspecified parametric model.}\label{misssparse}
\end{figure}
\begin{table}[h]
    \ \\ \ \\ \ \\\ \\ \ \\ \ \\ \ \\ \ \\
    \begin{tabular}{cc}
    \hline
    Mean\ \  (True value) &\quad  86.684\ \ (87.000)\\
    SD\ \ (Theoretical value) &\quad 13.096\ \ (13.191)\\ \\
    \end{tabular}
\caption{Sample mean and sample standard deviation (SD) of the test statistic $\check{\mathbb{T}}_{n}$.} \label{POtesttable1}
\end{table}
\clearpage
\begin{figure}[h]
    \ \\ \ \\ \ \\\ \\ \ \\ \ \\ \ \\ \ \\ \ \\ \ \\ \ \\ \ \\ \ \\
    \centering
    \includegraphics[width=0.32\columnwidth]{./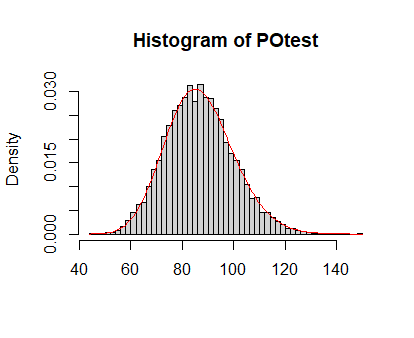}
    \includegraphics[width=0.32\columnwidth]{./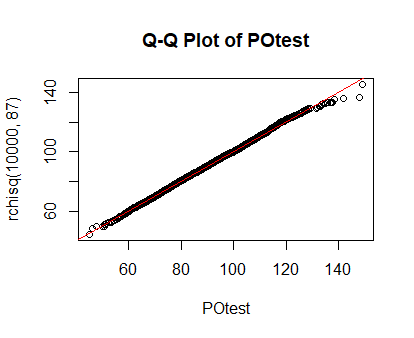}
    \includegraphics[width=0.32\columnwidth]{./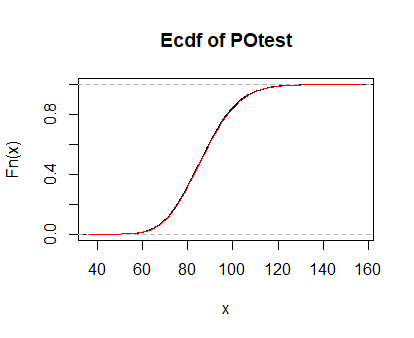}
\caption{Histogram (left), Q-Q plot (middle) and empirical distribution (right) of the test statistic $\check{\mathbb{T}}_{n}$. The red lines are theoretical curve.}\label{testPOfigure1}
\end{figure}
\begin{table}[h]
    \ \\ \ \\ \ \\ \ \\
    \centering
    \begin{tabular}{cc}
    \hline
    Missspecified parametric model &\quad 10000\\ \ \\
    \end{tabular}
\caption{The number of rejections of the penalized quasi-likelihood ratio test.}\label{POtesttable2}
\end{table}
\clearpage
\section{Proofs}
Let $\Delta Y_i=Y_{t_{i}^n}-Y_{t_{i-1}^n}$ for any stochastic process $Y_t$. Set
\begin{align*}
    \mathbb{Q}_{\xi\xi,0}&=\frac{1}{T}\sum_{i=1}^n(\Delta\xi_{0,i})(\Delta\xi_{0,i})^\top, \ \mathbb{Q}_{\delta\delta,0}=\frac{1}{T}\sum_{i=1}^n(\Delta\delta_{0,i})(\Delta\delta_{0,i})^\top, \ \mathbb{Q}_{\varepsilon\varepsilon,0}=\frac{1}{T}\sum_{i=1}^n(\Delta\varepsilon_{0,i})(\Delta\varepsilon_{0,i})^\top,\\
    \mathbb{Q}_{\zeta\zeta,0}&=\frac{1}{T}\sum_{i=1}^n(\Delta\zeta_{0,i})(\Delta\zeta_{0,i})^\top, \
    \mathbb{Q}_{\xi\delta,0}=\frac{1}{T}\sum_{i=1}^n(\Delta\xi_{0,i})(\Delta\delta_{0,i})^\top, \ \mathbb{Q}_{\xi\varepsilon,0}=\frac{1}{T}\sum_{i=1}^n(\Delta\xi_{0,i})(\Delta\varepsilon_{0,i})^\top,\\
    \mathbb{Q}_{\xi\zeta,0}&=\frac{1}{T}\sum_{i=1}^n(\Delta\xi_{0,i})(\Delta\zeta_{0,i})^\top, \  \mathbb{Q}_{\delta\varepsilon,0}=\frac{1}{T}\sum_{i=1}^n(\Delta\delta_{0,i})(\Delta\varepsilon_{0,i})^\top, \  \mathbb{Q}_{\delta\zeta,0}=\frac{1}{T}\sum_{i=1}^n(\Delta\delta_{0,i})(\Delta\zeta_{0,i})^\top,\\
    \mathbb{Q}_{\varepsilon\zeta,0}&=\frac{1}{T}\sum_{i=1}^n(\Delta\varepsilon_{0,i})(\Delta\zeta_{0,i})^{\top}.
\end{align*}
Define $\bar{r}=\sum_{i=1}^4 r_i$. Let 
\begin{align*}
    \mathscr{F}^{n}_{i}
    =\sigma(W_{1,s},W_{2,s},W_{3,s},W_{4,s},s\leq t_{i}^n)
\end{align*}
for $i=1,\cdots n$. Without loss of generality, $T=1$ is supposed. We then have $h_n=n^{-1}$.
\begin{lemma}\label{Qnonlemma}
Under \textrm{\textbf{[A1]}}, \textrm{\textbf{[B1]}},
\textrm{\textbf{[C1]}} and \textrm{\textbf{[D1]}}, as $h_n\longrightarrow 0$,
\begin{align*}
    \mathbb{Q}_{\xi\xi,0}\stackrel{P}{\longrightarrow}{\bf{\Sigma}}_{\xi\xi,0}, \  
    \mathbb{Q}_{\delta\delta,0}\stackrel{P}{\longrightarrow}{\bf{\Sigma}}_{\delta\delta,0}, \ 
    \mathbb{Q}_{\varepsilon\varepsilon,0}\stackrel{P}{\longrightarrow}
    {\bf{\Sigma}}_{\varepsilon\varepsilon,0},\qquad\qquad\quad\\
    \mathbb{Q}_{\zeta\zeta,0}\stackrel{P}{\longrightarrow} {\bf{\Sigma}}_{\zeta\zeta,0},\ \mathbb{Q}_{\xi\delta,0}\stackrel{P}{\longrightarrow} O_{k_1\times p_1},\
    \mathbb{Q}_{\xi\varepsilon,0}\stackrel{P}{\longrightarrow} O_{k_1\times p_2},\qquad\qquad\\
    \mathbb{Q}_{\xi\zeta,0}\stackrel{P}{\longrightarrow} O_{k_1\times k_2},\  \mathbb{Q}_{\delta\varepsilon,0}\stackrel{P}{\longrightarrow} O_{p_1\times p_2},\ \mathbb{Q}_{\delta\zeta} \stackrel{P}{\longrightarrow}O_{p_1\times k_2}, \  \mathbb{Q}_{\varepsilon\zeta,0}\stackrel{P}{\longrightarrow} O_{p_2\times k_2}.
\end{align*}
\end{lemma}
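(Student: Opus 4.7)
The plan is to handle all ten convergence statements uniformly by an It\^o decomposition of each increment. For each $Y\in\{\xi,\delta,\varepsilon,\zeta\}$ with corresponding drift $B_Y\in\{B_1,B_2,B_3,B_4\}$, volatility ${\bf{S}}_{Y,0}$, and driving Wiener process $W_Y$, write $\Delta Y_{0,i}=\Delta D_{Y,i}+\Delta M_{Y,i}$, where
\begin{align*}
    \Delta D_{Y,i}=\int_{t_{i-1}^n}^{t_i^n}B_Y(Y_{0,s})\,\dd s,\qquad \Delta M_{Y,i}={\bf{S}}_{Y,0}\Delta W_{Y,i}.
\end{align*}
For each ordered pair $(Y,Z)$, expand $(\Delta Y_{0,i})(\Delta Z_{0,i})^\top$ into four pieces: $\Delta D_{Y,i}\Delta D_{Z,i}^\top$, $\Delta D_{Y,i}\Delta M_{Z,i}^\top$, $\Delta M_{Y,i}\Delta D_{Z,i}^\top$, and $\Delta M_{Y,i}\Delta M_{Z,i}^\top$. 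Sum over $i=1,\ldots,n$ and argue that only the last piece survives in the limit.

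For the three pieces involving at least one drift factor, I use the Cauchy--Schwarz bound $\|\Delta D_{Y,i}\|^2\leq h_n\int_{t_{i-1}^n}^{t_i^n}|B_Y(Y_{0,s})|^2\,\dd s$, which yields $\sum_{i=1}^n\|\Delta D_{Y,i}\|^2\leq h_n\int_0^1|B_Y(Y_{0,s})|^2\,\dd s=O_p(h_n)$. Almost sure finiteness of the integral follows from the polynomial growth in \textbf{[A1]}(iii)--\textbf{[D1]}(iii) combined with the uniform moment bounds $\sup_t\E[|Y_{0,t}|^\ell]<\infty$. Meanwhile $\sum_{i=1}^n\|\Delta M_{Y,i}\|^2$ has expectation $nh_n\tr({\bf{\Sigma}}_{YY,0})=\tr({\bf{\Sigma}}_{YY,0})$ and is therefore $O_p(1)$. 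A Frobenius matrix Cauchy--Schwarz inequality then bounds the three cross-sums $\sum_i\Delta D_{Y,i}\Delta D_{Z,i}^\top$, $\sum_i\Delta D_{Y,i}\Delta M_{Z,i}^\top$, $\sum_i\Delta M_{Y,i}\Delta D_{Z,i}^\top$ in norm by $O_p(h_n)$, $O_p(\sqrt{h_n})$, $O_p(\sqrt{h_n})$ respectively, all $o_p(1)$.

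It remains to treat $\sum_{i=1}^n(\Delta M_{Y,i})(\Delta M_{Z,i})^\top={\bf{S}}_{Y,0}\bigl(\sum_{i=1}^n\Delta W_{Y,i}\Delta W_{Z,i}^\top\bigr){\bf{S}}_{Z,0}^\top$. When $Y=Z$, a direct $L^2$ calculation using independence of Brownian increments across disjoint intervals gives $\sum_i\Delta W_{Y,i}\Delta W_{Y,i}^\top\stackrel{P}{\longrightarrow}\mathbb{I}_{r_Y}$, producing the claimed limit ${\bf{S}}_{Y,0}{\bf{S}}_{Y,0}^\top={\bf{\Sigma}}_{YY,0}$. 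When $Y\neq Z$, independence of $W_Y$ and $W_Z$ yields $\E[\Delta W_{Y,i}\Delta W_{Z,i}^\top]=0$ entrywise and $\E\bigl[\|\sum_i\Delta W_{Y,i}\Delta W_{Z,i}^\top\|^2\bigr]=nr_Yr_Zh_n^2=O(h_n)\longrightarrow 0$, so the inner sum vanishes in probability and the outer product is the zero matrix.

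The main obstacle is the careful control of the drift contributions: because $\Delta D_{Y,i}$ depends on the entire path of $Y_0$ over $[t_{i-1}^n,t_i^n]$ rather than being $\mathscr{F}^{n}_{i-1}$-measurable, one cannot annihilate the drift--martingale cross-terms by a conditional centering argument. The pathwise Cauchy--Schwarz reduction above sidesteps this by collapsing the whole summation to the a.s.\ finite quantity $\int_0^1|B_Y(Y_{0,s})|^2\,\dd s$, whose integrability is exactly what the polynomial growth of $B_Y$ and the uniform moment assumptions in \textbf{[A1]}--\textbf{[D1]} guarantee. Since $T=1$ is fixed, no ergodicity hypothesis is required at this stage.
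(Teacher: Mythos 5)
Your argument is correct, but it follows a genuinely different route from the paper. The paper establishes each convergence through conditional moment expansions: Lemma 7 of Kessler (1997) gives $\sum_{i}\E[\Delta\xi_{0,i}^{(j_1)}\Delta\xi_{0,i}^{(j_2)}\mid\mathscr{F}^{n}_{i-1}]\stackrel{P}{\longrightarrow}({\bf{\Sigma}}_{\xi\xi,0})_{j_1j_2}$ together with the vanishing of the conditional fourth-moment sums, and then Lemma 9 of Genon-Catalot and Jacod (1993) converts these two facts into convergence in probability of the realized covariance itself; the remaining nine limits are asserted to follow in the same way. You instead decompose each increment pathwise into a drift integral plus a constant-volatility Brownian increment, annihilate every term containing a drift factor by the bound $\sum_i|\Delta D_{Y,i}|^2\leq h_n\int_0^1|B_Y(Y_{0,s})|^2\,\dd s=O_p(h_n)$ combined with a Frobenius--Cauchy--Schwarz estimate, and compute the surviving Brownian quadratic covariation terms directly in $L^2$. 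This works precisely because the volatilities are constant, so the martingale parts are exact Gaussian increments with known covariance and no conditional-expectation machinery is needed; it also handles all ten limits, including the cross terms between independent driving noises, in one uniform computation rather than by appeal to "similarly." What you give up is generality: the paper's route via Kessler's expansions would survive state-dependent diffusion coefficients, where your $\Delta M_{Y,i}={\bf{S}}_{Y,0}\Delta W_{Y,i}$ identity breaks down. One cosmetic remark: almost-sure finiteness of $\int_0^1|B_Y(Y_{0,s})|^2\,\dd s$ already follows from path continuity of $Y_{0}$ and continuity of $B_Y$, so a single such random variable is automatically tight; the moment bounds \textbf{[A1]}(ii)--\textbf{[D1]}(ii) are not strictly needed for that particular step, though invoking them does no harm.
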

\begin{proof}
It holds from Lemma 7 in Kessler \cite{kessler(1997)} that
\begin{align*}
    \sum_{i=1}^n \E\Bigl[\Delta\xi_{0,i}^{(j_1)}\Delta\xi_{0,i}^{(j_2)}\big|\mathscr{F}^{n}_{i-1}\Bigl]
    &=\sum_{i=1}^n\bigl\{h_n({\bf{S}}_{1,0}{\bf{S}}_{1,0}^\top)_{j_1j_2}+R_i(h_n^2,\xi)\bigr\}\\
    &=({\bf{\Sigma}}_{\xi\xi,0})_{j_1j_2}+\frac{1}{n}\times\frac{1}{n}\sum_{i=1}^n R_i(1,\xi)
    \stackrel{P}{\longrightarrow}({\bf{\Sigma}}_{\xi\xi,0})_{j_1j_2}
\end{align*}
for $j_1,j_2=1,\cdots,k_1$, which implies
\begin{align*}
    \sum_{i=1}^n\E\Bigl[\Delta\xi_{0,i}\Delta\xi_{0,i}^{\top}
    \big|\mathscr{F}^{n}_{i-1}\Bigl]\stackrel{P}{\longrightarrow}{\bf{\Sigma}}_{\xi\xi,0}.
\end{align*}
Since 
\begin{align*}
    &\quad\ \sum_{i=1}^n \E\Bigl[\Delta\xi_{0,i}^{(j_1)}\Delta\xi_{0,i}^{(j_2)}
    \Delta\xi_{0,i}^{(j_3)}\Delta\xi_{0,i}^{(j_4)}\big|\mathscr{F}^{n}_{i-1}\Bigl]
    \stackrel{P}{\longrightarrow} 0
\end{align*}
for $j_1,j_2,j_3,j_4=1,\cdots,k_1$ in an analogous manner, we obtain
\begin{align*}
    \sum_{i=1}^n\E\Bigl[\Delta\xi_{0,i}\Delta\xi_{0,i}^{\top}
    \Delta\xi_{0,i}\Delta\xi_{0,i}^{\top}\big|\mathscr{F}^{n}_{i-1}\Bigl]
    \stackrel{P}{\longrightarrow}O_{k_1\times k_1}.
\end{align*}
Therefore, it follows from Lemma 9 in Genon-Catalot and Jacod \cite{Genon(1993)} that
\begin{align*}
    \mathbb{Q}_{\xi\xi,0}=\sum_{i=1}^n\Delta\xi_{0,i}\Delta\xi_{0,i}^{\top}
    \stackrel{P}{\longrightarrow}{\bf{\Sigma}}_{\xi\xi,0}.
\end{align*}
In a similar way, the other results can be shown.
\end{proof}
\begin{lemma}\label{EXlemmanon}
Under \textrm{\textbf{[A1]}}, \textrm{\textbf{[B1]}},
\textrm{\textbf{[C1]}} and
\textrm{\textbf{[D1]}}, as $h_n\longrightarrow 0$, 
\begin{align}
    \sum_{i=1}^{n}\left|\E\left[\sqrt{n}\Delta \mathbb{X}_{i}^{(j_1)}\Delta \mathbb{X}_{i}^{(j_2)}-\frac{1}{\sqrt{n}}({\bf{\Sigma}}_0)_{j_1j_2}\Big|\mathscr{F}^{n}_{i-1}\right]\right|\stackrel{P}{\longrightarrow}0 \label{EXXnonp}
\end{align}
for $j_1,j_2=1,\cdots,p$,
\begin{align}
    \begin{split}
    &\quad\ \ \sum_{i=1}^{[nt]}\E\left[\left\{\sqrt{n}\Delta \mathbb{X}_{i}^{(j_1)}\Delta \mathbb{X}_{i}^{(j_2)}-\frac{1}{\sqrt{n}}({\bf{\Sigma}}_0)_{j_1j_2}\right\}\right.\\
    &\left.\qquad\qquad\qquad\qquad\times\left\{\sqrt{n}\Delta \mathbb{X}_{i}^{(j_3)}\Delta \mathbb{X}_{i}^{(j_4)}-\frac{1}{\sqrt{n}}({\bf{\Sigma}}_0)_{j_3j_4}\right\}\Big|\mathscr{F}^{n}_{i-1}\right]\\
    &\qquad\qquad-\sum_{i=1}^{[nt]}\E\left[\sqrt{n}\Delta \mathbb{X}_{i}^{(j_1)}\Delta \mathbb{X}_{i}^{(j_2)}-\frac{1}{\sqrt{n}}({\bf{\Sigma}}_0)_{j_1j_2}\Big|\mathscr{F}^{n}_{i-1}\right]\\
    &\qquad\qquad\qquad\qquad\qquad\qquad\times\E\left[\sqrt{n}\Delta \mathbb{X}_{i}^{(j_3)}\Delta \mathbb{X}_{i}^{(j_4)}-\frac{1}{\sqrt{n}}({\bf{\Sigma}}_0)_{j_3j_4}\Big|\mathscr{F}^{n}_{i-1}\right]\\
    &\stackrel{P}{\longrightarrow}t\bigl\{({\bf{\Sigma}}_0)_{j_{1}j_{3}}({\bf{\Sigma}}_0)_{j_{2}j_{4}}+({\bf{\Sigma}}_0)_{j_{1}j_{4}}({\bf{\Sigma}}_0)_{j_{2}j_{3}}\bigr\},\quad\forall t\in [0,1]\label{EXXXXnonp}
    \end{split}
\end{align}
for $j_1,j_2,j_3,j_4=1,\cdots,p$, 
\begin{align}
    \sum_{i=1}^{[nt]}\E\left[\sqrt{n}\Delta \mathbb{X}_{i}^{(j_1)}\Delta \mathbb{X}_{i}^{(j_2)}\Delta \bar{W}_i^{(j_3)}\big|\mathscr{F}^{n}_{i-1}\right]\stackrel{P}{\longrightarrow}0,\quad\forall t\in [0,1]\label{EXWnonp}
\end{align}
for $j_1,j_2=1,\cdots, p, j_3=1,\cdots,\bar{r}$, and
\begin{align}
    \begin{split}
    &\sum_{i=1}^n\E\left[\Bigl|\sqrt{n}\Delta \mathbb{X}_i^{(j_1)}\Delta \mathbb{X}_i^{(j_2)}-\frac{1}{\sqrt{n}}({\bf{\Sigma}}_0)_{j_1j_2}\Bigr|^4\Big|\mathscr{F}^{n}_{i-1}\right]\stackrel{P}{\longrightarrow}0\label{EXX4nonp}
    \end{split}
\end{align}
for $j_1,j_2=1,\cdots, p$.
\end{lemma}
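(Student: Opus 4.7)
The plan is to reduce every statement to Kessler's conditional-moment expansion for a joint diffusion. Set $Z_{0,t}=(\xi_{0,t}^\top,\delta_{0,t}^\top,\varepsilon_{0,t}^\top,\zeta_{0,t}^\top)^\top$. Because $W_{1,t},\ldots,W_{4,t}$ are independent, $Z_{0,t}$ is itself a diffusion on $\mathbb R^{k_1+p_1+p_2+k_2}$ with drift $z\mapsto(B_1(z_1)^\top,B_2(z_2)^\top,B_3(z_3)^\top,B_4(z_4)^\top)^\top$ and block-diagonal diffusion coefficient $\Diag(S_{1,0},S_{2,0},S_{3,0},S_{4,0})$. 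From (\ref{X})--(\ref{eta}) and $\Psi_0=\mathbb I_{k_2}-\mathbf B_0$ one reads off $\mathbb X_t=M Z_{0,t}$ for an explicit constant matrix $M$ with $M\Diag(\Sigma_{\xi\xi,0},\Sigma_{\delta\delta,0},\Sigma_{\varepsilon\varepsilon,0},\Sigma_{\zeta\zeta,0})M^\top=\Sigma_0$. Under [A1], [B1], [C1] and [D1] the coefficients fit Kessler's framework, so Lemma 7 of Kessler \cite{kessler(1997)} gives, for every polynomial $f$ and every $\ell\geq 0$,
\[
\E\bigl[f(Z_{0,t_i^n})\bigm|\mathscr F^n_{i-1}\bigr]=\sum_{j=0}^{\ell}\frac{h_n^j}{j!}\mathcal L_0^j f(Z_{0,t_{i-1}^n})+R_i(h_n^{\ell+1},Z_{0,t_{i-1}^n}),
\]
where $\mathcal L_0$ is the generator of $Z_{0,t}$. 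Taking $f(z)=(Mz)_{j_1}(Mz)_{j_2}$ with $\ell=1$, and combining with the first-order expansions for $\E[\Delta\mathbb X_i^{(j_a)}\mid\mathscr F^n_{i-1}]$, yields the workhorse identity
\[
\E\bigl[\Delta\mathbb X_i^{(j_1)}\Delta\mathbb X_i^{(j_2)}\bigm|\mathscr F^n_{i-1}\bigr]=h_n(\Sigma_0)_{j_1j_2}+R_i(h_n^2,Z_{0,t_{i-1}^n}).
\]

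From this, (\ref{EXXnonp}) is immediate: each summand is bounded by $\sqrt n\cdot R_i(h_n^2,\cdot)$, so the total is $\leq n\cdot\sqrt n\cdot h_n^2\cdot C(1+\max_i|Z_{0,t_{i-1}^n}|)^C=O_P(n^{-1/2})$ thanks to the uniform moment bounds [A1](ii)--[D1](ii). Likewise, (\ref{EXX4nonp}) follows from $|A_i|^4\leq C\bigl(n^2|\Delta\mathbb X_i^{(j_1)}|^4|\Delta\mathbb X_i^{(j_2)}|^4+n^{-2}\bigr)$ with $A_i=\sqrt n\Delta\mathbb X_i^{(j_1)}\Delta\mathbb X_i^{(j_2)}-n^{-1/2}(\Sigma_0)_{j_1j_2}$, together with the conditional eighth-moment estimate $\E[|\Delta\mathbb X_i^{(j_a)}|^8\mid\mathscr F^n_{i-1}]=O(h_n^4)$ that is another consequence of Kessler's expansion; summation gives $n\cdot O(n^2 h_n^4)=O(n^{-1})$. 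For (\ref{EXWnonp}), writing $\Delta\mathbb X_i=M\Diag(S_{1,0},S_{2,0},S_{3,0},S_{4,0})\Delta\bar W_i+O(h_n)$, the leading contribution to $\Delta\mathbb X_i^{(j_1)}\Delta\mathbb X_i^{(j_2)}\Delta\bar W_i^{(j_3)}$ is an odd-order product of centred jointly Gaussian variables and so has vanishing conditional expectation; the drift correction yields $O(h_n^2)$ per summand, giving a total bound $\sqrt n\cdot[nt]\cdot O(h_n^2)=O(n^{-1/2})$.

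The main obstacle is (\ref{EXXXXnonp}). I would push Kessler's expansion to $\ell=2$ with $f(z)=\prod_{a=1}^{4}(Mz)_{j_a}$; the $h_n^2$ coefficient is $\tfrac12\mathcal L_0^2 f(Z)$, and a direct computation (equivalently Wick's formula applied to the leading Gaussian part $M\Diag(S_{j,0})\Delta\bar W_i$) identifies it with the three-pairing sum:
\[
\E\!\Bigl[\prod_{a=1}^{4}\Delta\mathbb X_i^{(j_a)}\Big|\mathscr F^n_{i-1}\Bigr]=h_n^2\bigl\{(\Sigma_0)_{j_1j_2}(\Sigma_0)_{j_3j_4}+(\Sigma_0)_{j_1j_3}(\Sigma_0)_{j_2j_4}+(\Sigma_0)_{j_1j_4}(\Sigma_0)_{j_2j_3}\bigr\}+R_i(h_n^3,Z_{0,t_{i-1}^n}).
\]
Expanding the conditional second moment of $A_iB_i$ with the two centring constants $n^{-1/2}(\Sigma_0)_{j_aj_b}$ and invoking the second-order identity twice, the $(\Sigma_0)_{j_1j_2}(\Sigma_0)_{j_3j_4}$ pair cancels, leaving
\[
\E\bigl[A_iB_i\bigm|\mathscr F^n_{i-1}\bigr]=\frac{1}{n}\bigl\{(\Sigma_0)_{j_1j_3}(\Sigma_0)_{j_2j_4}+(\Sigma_0)_{j_1j_4}(\Sigma_0)_{j_2j_3}\bigr\}+R_i(h_n^2,Z_{0,t_{i-1}^n}).
\]
The product $\E[A_i\mid\mathscr F^n_{i-1}]\E[B_i\mid\mathscr F^n_{i-1}]=O(n h_n^4)=O(n^{-3})$ is absorbed into the remainder. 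Summing over $i\leq[nt]$ and using $[nt]/n\to t$ together with the moment bounds gives the stated limit $t\{(\Sigma_0)_{j_1j_3}(\Sigma_0)_{j_2j_4}+(\Sigma_0)_{j_1j_4}(\Sigma_0)_{j_2j_3}\}$ in probability, completing the proof.
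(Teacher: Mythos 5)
Your proposal is correct, and it reaches the same two key conditional-moment identities that the paper's proof rests on, namely $\E[\Delta\mathbb X_i^{(j_1)}\Delta\mathbb X_i^{(j_2)}\mid\mathscr F^n_{i-1}]=h_n({\bf\Sigma}_0)_{j_1j_2}+R_i(h_n^2,\cdot)$ and the fourth-moment three-pairing expansion, after which the cancellation of the $({\bf\Sigma}_0)_{j_1j_2}({\bf\Sigma}_0)_{j_3j_4}$ term and the Ces\`aro summation of the remainders are identical to the paper's. The organizational route is genuinely different, though: the paper never forms the joint diffusion $Z_{0,t}$. It instead decomposes $\Delta\mathbb X_{1,i}=A_{i,n}+B_{i,n}$ and $\Delta\mathbb X_{2,i}=C_{i,n}+D_{i,n}+E_{i,n}$, proves a battery of per-component moment lemmas (Lemmas \ref{Alemma}--\ref{AClemma}, \ref{klemma}, and \ref{AWlemma}--\ref{EWlemma}) by applying Kessler's expansion to each latent diffusion separately, and then assembles the moments of $\Delta\mathbb X_i$ by multiplying out and factoring the cross terms through the independence of $W_{1,t},\dots,W_{4,t}$. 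Your single application of the expansion to polynomials of $MZ_{0,t}$, with Wick's formula identifying the $h_n^2$ coefficient, buys considerable economy; the paper's component-wise version buys explicit control of where each latent factor contributes, which is reused elsewhere in the appendix. Two points deserve tightening. First, in (\ref{EXXnonp}) and its siblings you control $\sum_i R_i(h_n^2,\cdot)$ by $n h_n^2\,C(1+\max_i|Z_{0,t_{i-1}^n}|)^C$; the maximum is not $O_P(1)$, and the clean argument (the one the paper uses) is $\sum_i R_i(h_n^2,\cdot)=h_n\cdot\frac1n\sum_iR_i(1,\cdot)$ with the average $O_P(1)$ by the uniform moment bounds in [A1](ii)--[D1](ii). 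Second, in (\ref{EXXXXnonp}) and (\ref{EXWnonp}) the identification of the leading coefficient with the Wick sum of the Gaussian part requires verifying that the drift and the residual $\int(B(\cdot_s)-B(\cdot_{t_{i-1}^n}))\,ds$ contribute only at order $h_n^{5/2}$ or higher to the relevant products; this is exactly the content of the paper's Lemmas \ref{Ilemma} and \ref{AWlemma}, so the detail is available, but it is the one place where your sketch defers real work.
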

\begin{proof}
    See Appendix \ref{EXlemmanonproof}.
\end{proof}
\begin{proof}[\textbf{Proof of Theorem 1}]
We first show
\begin{align}
    \mathbb{Q}_{\mathbb{XX}}\stackrel{P}{\longrightarrow}{\bf{\Sigma}}_0.\label{Qxxprob}
\end{align}
In order to show (\ref{Qxxprob}), it is sufficient to show that
\begin{align}
    \mathbb{Q}_{\mathbb{X}_1\mathbb{X}_1}&\stackrel{P}{\longrightarrow}{\bf{\Lambda}}_{x_1,0}{\bf{\Sigma}}_{\xi\xi,0}{\bf{\Lambda}}_{x_1,0}^{\top}+{\bf{\Sigma}}_{\delta\delta,0}\ (={\bf{\Sigma}}_0^{11}),\label{Qx1x1cons}\\
    \mathbb{Q}_{\mathbb{X}_1\mathbb{X}_2}&\stackrel{P}{\longrightarrow}
    {\bf{\Lambda}}_{x_1,0}{\bf{\Sigma}}_{\xi\xi,0}{\bf{\Gamma}}_0^{\top}
    {\bf{\Psi}}_0^{-1\top}{\bf{\Lambda}}_{x_2,0}^{\top}\ (={\bf{\Sigma}}_0^{12}),\label{Qx1x2cons}\\
    \mathbb{Q}_{\mathbb{X}_2\mathbb{X}_2}&\stackrel{P}{\longrightarrow}{\bf{\Lambda}}_{x_2,0}{\bf{\Psi}}_0^{-1}({\bf{\Gamma}}_0{\bf{\Sigma}}_{\xi\xi,0}{\bf{\Gamma}}_0^{\top}+{\bf{\Sigma}}_{\zeta\zeta,0}){\bf{\Psi}}_0^{-1\top}{\bf{\Lambda}}_{x_2,0}^{\top}+{\bf{\Sigma}}_{\varepsilon\varepsilon,0}\ (={\bf{\Sigma}}_0^{22}). \label{Qx2x2cons}
\end{align}
Using Lemma \ref{Qnonlemma} and Slutsky's theorem, one gets
\begin{align*}
    \mathbb{Q}_{\mathbb{X}_1\mathbb{X}_1}
    &=\sum_{i=1}^n({\bf{\Lambda}}_{x_1,0}\Delta\xi_{0,i}+\Delta\delta_{0,i})({\bf{\Lambda}}_{x_1,0}\Delta\xi_{0,i}+\Delta\delta_{0,i})^{\top}\\
    &={\bf{\Lambda}}_{x_1,0}\mathbb{Q}_{\xi\xi,0}{\bf{\Lambda}}_{x_1,0}^{\top}
    +{\bf{\Lambda}}_{x_1,0}\mathbb{Q}_{\xi\delta,0}
    +\mathbb{Q}_{\xi\delta,0}^{\top}{\bf{\Lambda}}_{x_1,0}^{\top}
    +\mathbb{Q}_{\delta\delta,0}\\
    &\stackrel{P}{\longrightarrow}{\bf{\Lambda}}_{x_1,0}{\bf{\Sigma}}_{\xi\xi,0}{\bf{\Lambda}}_{x_1,0}^{\top}+{\bf{\Sigma}}_{\delta\delta,0},
\end{align*}
which yields (\ref{Qx1x1cons}). In a similar way, since
\begin{align*}	 
    \qquad \mathbb{Q}_{\mathbb{X}_1\mathbb{X}_2}&={\bf{\Lambda}}_{x_1,0}\mathbb{Q}_{\xi\xi,0}{\bf{\Gamma}}_0^{\top}
    {\bf{\Psi}}_0^{-1\top}{\bf{\Lambda}}_{x_2,0}^{\top}
    +{\bf{\Lambda}}_{x_1,0}\mathbb{Q}_{\xi\zeta,0}
    {\bf{\Psi}}_0^{-1\top}{\bf{\Lambda}}_{x_2,0}^{\top}
    +{\bf{\Lambda}}_{x_1,0}\mathbb{Q}_{\xi\varepsilon,0}\qquad\quad\\
    &\quad+\mathbb{Q}_{\xi\delta,0}^{\top}{\bf{\Gamma}}_0^{\top}{\bf{\Psi}}_0^{-1\top}
    {\bf{\Lambda}}_{x_2,0}^{\top}+\mathbb{Q}_{\delta\zeta,0}{\bf{\Psi}}_0^{-1\top}
    {\bf{\Lambda}}_{x_2,0}^{\top}+\mathbb{Q}_{\delta\varepsilon,0}\\
    &\stackrel{P}{\longrightarrow}
    {\bf{\Lambda}}_{x_1,0}{\bf{\Sigma}}_{\xi\xi,0}
    {\bf{\Gamma}}_0^{\top}{\bf{\Psi}}_0^{-1\top}{\bf{\Lambda}}_{x_2,0}^{\top}
\end{align*}
and
\begin{align*}
    \mathbb{Q}_{\mathbb{X}_2\mathbb{X}_2}&={\bf{\Lambda}}_{x_2,0}{\bf{\Psi}}_0^{-1}{\bf{\Gamma}}_0
    \mathbb{Q}_{\xi\xi,0}{\bf{\Gamma}}_0^{\top}{\bf{\Psi}}_0^{-1\top}
    {\bf{\Lambda}}_{x_2,0}^{\top}
    +{\bf{\Lambda}}_{x_2,0}{\bf{\Psi}}_0^{-1}{\bf{\Gamma}}_0 \mathbb{Q}_{\xi\zeta,0}{\bf{\Psi}}_0^{-1\top}{\bf{\Lambda}}_{x_2,0}^{\top}\\
    &\quad+{\bf{\Lambda}}_{x_2,0}{\bf{\Psi}}_0^{-1}{\bf{\Gamma}}_0 \mathbb{Q}_{\xi\varepsilon,0}+{\bf{\Lambda}}_{x_2,0}{\bf{\Psi}}_0^{-1}\mathbb{Q}_{\xi\zeta,0}^{\top}{\bf{\Gamma}}_0^{\top}{\bf{\Psi}}_0^{-1\top}{\bf{\Lambda}}_{x_2,0}^{\top}\\
    &\quad+{\bf{\Lambda}}_{x_2,0}{\bf{\Psi}}_0^{-1}\mathbb{Q}_{\zeta\zeta,0}
    {\bf{\Psi}}_0^{-1\top}{\bf{\Lambda}}_{x_2,0}^{\top}
    +{\bf{\Lambda}}_{x_2,0}{\bf{\Psi}}_0^{-1}\mathbb{Q}_{\varepsilon\zeta,0}^{\top}
    \\
    &\quad+\mathbb{Q}_{\xi\varepsilon,0}^{\top}{\bf{\Gamma}}_0^{\top}
    {\bf{\Psi}}_0^{-1\top}{\bf{\Lambda}}_{x_2,0}^{\top}
    +\mathbb{Q}_{\varepsilon\zeta,0}{\bf{\Psi}}_0^{-1\top}{\bf{\Lambda}}_{x_2,0}^{\top}
    +\mathbb{Q}_{\varepsilon\varepsilon,0}\\
    &\stackrel{P}{\longrightarrow}{\bf{\Lambda}}_{x_2,0}{\bf{\Psi}}_0^{-1}({\bf{\Gamma}}_0{\bf{\Sigma}}_{\xi\xi,0}{\bf{\Gamma}}_0^{\top}+{\bf{\Sigma}}_{\zeta\zeta,0}){\bf{\Psi}}_0^{-1\top}{\bf{\Lambda}}_{x_2,0}^{\top}+{\bf{\Sigma}}_{\varepsilon\varepsilon,0},
\end{align*}
we obtain (\ref{Qx1x2cons}) and (\ref{Qx2x2cons}). Next, we prove
\begin{align}
    \sqrt{n}(\vech{\mathbb{Q}_{\mathbb{XX}}}-\vech{{\bf{\Sigma}}_0})\stackrel{d}{\longrightarrow} N_{\bar{p}}(0,{\bf{W}}_0),\label{vechasymnon}
\end{align}
where 
\begin{align*}
    {\bf{W}}_0=2\mathbb{D}_{p}^{+}({\bf{\Sigma}}_0\otimes{\bf{\Sigma}}_0)\mathbb{D}_{p}^{+\top}.
\end{align*}
Consider the following convergence:
\begin{align}
    \sqrt{n}(\vec{\mathbb{Q}_{\mathbb{XX}}}-\vec{{\bf{\Sigma}}_0})\stackrel{d}{\longrightarrow} N_{p^2}(0,{\bar{\bf{W}}}_0),\label{vecasymnon}
\end{align}
where
\begin{align*}
    (\bar{{\bf{W}}}_{0})_{p(j_1-1)+j_2,\ p(j_3-1)+j_4}=({\bf{\Sigma}}_0)_{j_1j_3}({\bf{\Sigma}}_0)_{j_2j_4}+({\bf{\Sigma}}_0)_{j_1j_4}({\bf{\Sigma}}_0)_{j_2j_3}
\end{align*}
for $j_1,j_2,j_3,j_4=1,\cdots,p$. If (\ref{vecasymnon}) holds, then it follows from the continuous mapping theorem that
\begin{align}
    \begin{split}
    \sqrt{n}(\vech{\mathbb{Q}_{\mathbb{XX}}}-\vech{{\bf{\Sigma}}_0})&=f(\sqrt{n}(\vec{\mathbb{Q}_{\mathbb{XX}}}-\vec{{\bf{\Sigma}}_0}))\\
    &\stackrel{d}{\longrightarrow} f(N_{p^2}(0,{\bf{\bar{W}}}_0))\sim N_{\bar{p}}\bigl(0,\mathbb{D}_{p}^{+}{\bf{\bar{W}}}_0\mathbb{D}_{p}^{+\top}\bigr), \label{veccon}
    \end{split}
\end{align}
where $f(x)=\mathbb{D}_{p}^{+}x$ for $x\in\mathbb{R}^{\bar{p}}$. In an analogous manner to Lemma 6 in Kusano and Uchida \cite{Kusano(2022)},
\begin{align*}
    \mathbb{D}_{p}^{+}\bar{{\bf{W}}}_0\mathbb{D}_{p}^{+\top}={\bf{W}}_0,
\end{align*}
so that we obtain (\ref{vechasymnon}) from (\ref{veccon}). Consequently, it is sufficient to prove (\ref{vecasymnon}) in order to prove (\ref{vechasymnon}). Let
\begin{align*}
    L_{i,n}&=\sqrt{n}\vec{\Delta \mathbb{X}_{i}\Delta \mathbb{X}_{i}^{\top}}-\frac{1}{\sqrt{n}}\vec{{\bf{\Sigma}}_0}.
\end{align*}
The left side of (\ref{vecasymnon}) is expressed as
\begin{align*}
    \sqrt{n}(\vec \mathbb{Q}_{\mathbb{XX}}-\vec{\bf{\Sigma}}_0)=\sum_{i=1}^n L_{i,n}.
\end{align*}
From Theorem 3.2 in Jacod \cite{Jacod(1997)}, it is sufficient to prove the following convergences in order to show (\ref{vecasymnon}):
\begin{align}
    \sup_{t\in[0,1]}\Bigl|\sum_{i=1}^{[nt]}\E\left[L_{i,n}
    |\mathscr{F}^{n}_{i-1}\right]\Bigr|&\stackrel{P\ }{\longrightarrow}0,\label{sup}\\
    \sum_{i=1}^{[nt]}\E\left[L_{i,n}L_{i,n}^{\top}|\mathscr{F}^{n}_{i-1}\right]-\E\left[L_{i,n}|\mathscr{F}^{n}_{i-1}\right]\E\left[L_{i,n}|\mathscr{F}^{n}_{i-1}\right]^{\top}&\stackrel{P\ }{\longrightarrow}t\bar{{\bf{W}}}_0,\quad\forall t\in [0,1],\label{LLnon}\\
     \sum_{i=1}^{[nt]}\E\left[L_{i,n}(\Delta\bar{W}_i)^{\top}
    |\mathscr{F}^{n}_{i-1}\right]&\stackrel{P}{\longrightarrow}O_{p^2\times \bar{r}},\quad\forall t\in [0,1],\label{W}\\
    \sum_{i=1}^{n}\E\left[|L_{i,n}|^4
    |\mathscr{F}^{n}_{i-1}\right]&\stackrel{P}{\longrightarrow}0,\label{E4}\\
     \sum_{i=1}^{[nt]}\E\left[L_{i,n}\Delta N_i
    |\mathscr{F}^{n}_{i-1}\right]&\stackrel{P}{\longrightarrow}0,\quad\forall t\in [0,1],\ \forall N_t\in \mathcal{M}_b(\bar{W}^{\perp}),\label{N}
\end{align}
where $\bar{W}_t=(W_{1,t}^{\top},W_{2,t}^{\top},W_{3,t}^{\top},W_{4,t}^{\top})^{\top}$ and $\mathcal{M}_b(\bar{W}^{\perp})$ is the class of all bounded martingales which is orthogonal to $\bar{W}_t$. First, we will prove (\ref{sup}). Since 
\begin{align*}
    \sup_{t\in[0,1]}\Bigl|\sum_{i=1}^{[nt]}\E\left[L_{i,n}
    |\mathscr{F}^{n}_{i-1}\right]\Bigr|&\leq\sup_{t\in[0,1]}\sum_{i=1}^{[nt]}\Bigl|\E\left[L_{i,n}
    |\mathscr{F}^{n}_{i-1}\right]\Bigr|\\
    &\leq\sum_{i=1}^{n}\Bigl|\E\left[L_{i,n}
    |\mathscr{F}^{n}_{i-1}\right]\Bigr|\leq C_p\sum_{i=1}^{n}\sum_{j=1}^{p^2}\Bigl|\E\left[L_{i,n}^{(j)}
    \big|\mathscr{F}^{n}_{i-1}\right]\Bigr|,
\end{align*}
it is sufficient to prove 
\begin{align}
    \sum_{i=1}^{n}\Bigl|\E\left[L_{i,n}^{(j)}
    \big|\mathscr{F}^{n}_{i-1}\right]\Bigr|\stackrel{P}{\longrightarrow}0 \label{ELj}
\end{align}
for $j=1,\cdots,p^2$ in order to prove (\ref{sup}). It holds from (\ref{EXXnonp}) that we obtain (\ref{ELj}). Furthermore, (\ref{EXXXXnonp}) and (\ref{EXWnonp}) yield (\ref{LLnon}) and (\ref{W}) respectively. Next, we prove (\ref{E4}). Note that
\begin{align}
    \begin{split}
    0&\leq \sum_{i=1}^n\E\left[|L_{i,n}|^4|\mathscr{F}^{n}_{i-1}\right]\\
    &\leq\sum_{i=1}^n\E\left[\Bigl|
    \sum_{u=1}^{p^2}L_{i,n}^{(u)2}\Bigr|^2\Big|\mathscr{F}^{n}_{i-1}\right]\leq C_{p}\sum_{u=1}^{p^2}\sum_{i=1}^n\E\left[|L_{i,n}^{(u)}|^4\big|\mathscr{F}^{n}_{i-1}\right]. 
    \label{L4ine}
    \end{split}
\end{align}
From (\ref{EXX4nonp}), we have
\begin{align*}
    \sum_{i=1}^n\E\left[|L_{i,n}^{(u)}|^4\big|\mathscr{F}^{n}_{i-1}\right]\stackrel{P}{\longrightarrow} 0
\end{align*}
for $u=1,\cdots,p^2$, so that 
(\ref{E4}) holds from (\ref{L4ine}). Finally, we prove (\ref{N}). Since $L_{i,n}-\mathbb{E}[L_{i,n}|\mathscr{F}^{n}_{i-1}]$ is the zero-mean martingale, it holds from the martingale representation theorem that there exists a stochastic process $\eta_{t,n}\in\mathbb{R}^{p^2\times \bar{r}}$ such that
\begin{align*}
    L_{i,n}-\mathbb{E}[L_{i,n}|\mathscr{F}^{n}_{i-1}]=\int_{t_{i-1}^n}^{t_{i}^n}\eta_{t,n}d\bar{W}_t.
\end{align*}
Note that $[\bar{W},N]_t=0$ since $N_t\in \mathcal{M}_b(\bar{W}^{\perp})$, where $[\bar{W},N]_t$ is the covariation process of $\bar{W}_t$ and $N_t$. It follows
\begin{align*}
    \E\Bigl[L_{i,n}\Delta N_i|\mathscr{F}^{n}_{i-1}\Bigr]&=\E\Bigl[\bigl(L_{i,n}-\mathbb{E}[L_{i,n}|\mathscr{F}^{n}_{i-1}]\bigr)\Delta N_i\big|\mathscr{F}^{n}_{i-1}\Bigr]\\
    &=\E\left[\left(\int_{t_{i-1}^n}^{t_{i}^n}\eta_{t,n}d\bar{W}_t\right)\left(\int_{t_{i-1}^n}^{t_{i}^n}d N_t\right)\Big|\mathscr{F}^{n}_{i-1}\right]\\
    &=\E\left[\int_{t_{i-1}^n}^{t_{i}^n}\eta_{t,n}d[\bar{W},N]_t\Big|\mathscr{F}^{n}_{i-1}\right]=0,
\end{align*}
which yields (\ref{N}). Therefore, we obtain (\ref{vecasymnon}).
\end{proof}
\begin{proof}[\textbf{Proofs of Theorems 2-4}]
We can prove the results in the same way as the proofs of Theorems 2-4 in Kusano and Uchida \cite{Kusano(2022)}. See also Appendix \ref{Proof2-4}.
\end{proof}
\begin{proof}[\textbf{Proof of Theorem 5}]
See Appendix \ref{Qproof}.
\end{proof}
\begin{proof}[\textbf{Proofs of Theorems 6-8}]
Using Theorem 5, we can show Theorems 6-8 in a similar manner to Theorems 2-4.
\end{proof}
Set
\begin{align*}
    \mathcal{F}_{0}=\Bigl\{j\in\{1,\cdots,q\}\ \big|\ \theta^{(j)}_{0}=0\Bigr\}.
\end{align*}
Let $a_{n}=\max_{j\in\mathcal{F}_{1}}\kappa_{n}^{(j)}$ and $b_{n}=\min_{j\in\mathcal{F}_{0}}\kappa_{n}^{(j)}$.
\begin{lemma}\label{abprob}
Under \textrm{\textbf{[A1]}}, \textrm{\textbf{[B1]}},
\textrm{\textbf{[C1]}},
\textrm{\textbf{[D1]}} and \textrm{\textbf{[E1]}} (i),
as $h_n\longrightarrow0$,
$\sqrt{n}\lambda_{1,n}\longrightarrow0$ and
$\sqrt{n}\lambda_{2,n}\longrightarrow\infty$,
\begin{align*}
    \sqrt{n}a_{n}\stackrel{P}{\longrightarrow} 0,\quad \frac{1}{\sqrt{n}b_{n}}\stackrel{P}{\longrightarrow}0.
\end{align*}
\end{lemma}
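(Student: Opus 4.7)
The strategy is to combine the consistency of the unpenalized estimator with the rate conditions on the tuning parameters, and then to dispatch the maxima and minima by exploiting the finiteness of the index sets $\mathcal{F}_1$ and $\mathcal{F}_0$. First, I would invoke Theorem \ref{thetatheoremnon} to obtain $\hat{\theta}_n \stackrel{P}{\longrightarrow}\theta_0$ under the hypotheses of the lemma; note that the consistency statement of Theorem \ref{thetatheoremnon} relies only on the identifiability condition [E1](i) (as pointed out in Remark \ref{identification}), which is precisely what is assumed here.

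For $\sqrt{n}a_n\stackrel{P}{\longrightarrow}0$, fix $j\in\mathcal{F}_1$. By the standing assumption $0<\delta<\min_{j\in\mathcal{F}_1}|\theta_0^{(j)}|$ we have $|\theta_0^{(j)}|>\delta$, so the continuous mapping theorem yields $1_{\{|\hat{\theta}_n^{(j)}|\geq\delta\}}\stackrel{P}{\longrightarrow}1$, $1_{\{|\hat{\theta}_n^{(j)}|<\delta\}}\stackrel{P}{\longrightarrow}0$ and $|\hat{\theta}_n^{(j)}|^{-\gamma}\stackrel{P}{\longrightarrow}|\theta_0^{(j)}|^{-\gamma}<\infty$. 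Writing
\begin{align*}
\sqrt{n}\kappa_n^{(j)}=\sqrt{n}\lambda_{1,n}\,|\hat{\theta}_n^{(j)}|^{-\gamma}1_{\{|\hat{\theta}_n^{(j)}|\geq\delta\}}+\sqrt{n}\lambda_{2,n}\,1_{\{|\hat{\theta}_n^{(j)}|<\delta\}},
\end{align*}
the first term vanishes in probability by Slutsky since $\sqrt{n}\lambda_{1,n}\to0$ while the factor $|\hat{\theta}_n^{(j)}|^{-\gamma}$ converges to a finite constant, and the second term vanishes by the one-line bound $\PP(\sqrt{n}\lambda_{2,n}\,1_{\{|\hat{\theta}_n^{(j)}|<\delta\}}>\varepsilon)\leq\PP(|\hat{\theta}_n^{(j)}|<\delta)\to 0$ for every $\varepsilon>0$. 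Taking the maximum over the finite set $\mathcal{F}_1$ preserves convergence to zero, giving $\sqrt{n}a_n\stackrel{P}{\longrightarrow}0$.

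For $1/(\sqrt{n}b_n)\stackrel{P}{\longrightarrow}0$, fix $j\in\mathcal{F}_0$; then $\theta_0^{(j)}=0<\delta$, so $|\hat{\theta}_n^{(j)}|\stackrel{P}{\longrightarrow}0$ and $1_{\{|\hat{\theta}_n^{(j)}|<\delta\}}\stackrel{P}{\longrightarrow}1$. On the event $\{|\hat{\theta}_n^{(j)}|<\delta\}$, whose probability tends to one, $\kappa_n^{(j)}=\lambda_{2,n}$, so the assumption $\sqrt{n}\lambda_{2,n}\longrightarrow\infty$ gives $(\sqrt{n}\kappa_n^{(j)})^{-1}\stackrel{P}{\longrightarrow}0$. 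Since $1/(\sqrt{n}b_n)=\max_{j\in\mathcal{F}_0}(\sqrt{n}\kappa_n^{(j)})^{-1}$ is the maximum over a finite collection each of which tends to zero in probability, the conclusion follows.

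No substantive obstacle arises: the argument is essentially bookkeeping via Slutsky's theorem, the continuous mapping theorem, and finiteness of $\mathcal{F}_0,\mathcal{F}_1$. The only point that requires a moment's care is the product of a deterministic divergent sequence with a vanishing indicator, which is handled by the explicit union bound above.
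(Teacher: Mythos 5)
Your argument is correct and follows essentially the same route as the paper's proof: both reduce everything to the consistency of $\hat{\theta}_n$ (which, as you rightly note, needs only [E1](i)), use it to show that the events $\{|\hat{\theta}_n^{(j)}|\geq\delta\}$ for $j\in\mathcal{F}_1$ and $\{|\hat{\theta}_n^{(j)}|<\delta\}$ for $j\in\mathcal{F}_0$ have probability tending to one, and then conclude from $\sqrt{n}\lambda_{1,n}\to 0$ and $\sqrt{n}\lambda_{2,n}\to\infty$ together with finiteness of the index sets. The only cosmetic difference is that the paper bounds $\sqrt{n}a_n\leq\sqrt{n}\lambda_{1,n}\delta^{-\gamma}$ deterministically on the good event rather than invoking the continuous mapping theorem for $|\hat{\theta}_n^{(j)}|^{-\gamma}$, but this changes nothing of substance.
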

\begin{proof}
Note that $\displaystyle 0<\min_{j\in\mathcal{F}_{1}}|\theta_{0}^{(j)}|-\delta$. On
\begin{align*}
    \Bigl\{|\hat{\theta}_{n}^{(j)}-\theta_{0}^{(j)}|<\min_{j\in\mathcal{F}_{1}}|\theta_{0}^{(j)}|-\delta\Bigr\},
\end{align*}
it holds that for $j\in{\mathcal{F}_{1}}$,
\begin{align*}
    |\hat{\theta}_{n}^{(j)}|=|(\theta_{0}^{(j)}-\hat{\theta}_{n}^{(j)})-\theta_{0}^{(j)}|&\geq|\theta_{0}^{(j)}|-|\theta_{0}^{(j)}-\hat{\theta}_{n}^{(j)}|\\
    &>\min_{j\in\mathcal{F}_{1}}|\theta_{0}^{(j)}|-\Bigl\{\min_{j\in\mathcal{F}_{1}}|\theta_{0}^{(j)}|-\delta\Bigr\}\geq \delta,
\end{align*}
so that 
\begin{align}
    \Bigl\{|\hat{\theta}_{n}^{(j)}-\theta_{0}^{(j)}|<\min_{j\in\mathcal{F}_{1}}|\theta_{0}^{(j)}|-\delta\Bigr\}\subset\Bigl\{|\hat{\theta}_{n}^{(j)}|>\delta\Bigr\}\label{thetaine}
\end{align}
for $j\in{\mathcal{F}_{1}}$. Set
\begin{align*}
    B_{n}=\bigcap_{j\in{\mathcal{F}_{1}}}\Bigl\{|\hat{\theta}_{n}^{(j)}|\geq\delta\Bigr\}.
\end{align*}
Theorem \ref{thetatheoremnon} and (\ref{thetaine}) yield
\begin{align}
    \mathbb{P}\bigl(B_{n}^{c}\bigr)
    &\leq\sum_{j\in{\mathcal{F}_{1}}}\mathbb{P}
    \Bigl(|\hat{\theta}_{n}^{(j)}|\leq\delta\Bigr)\leq\sum_{j\in{\mathcal{F}_{1}}}
    \mathbb{P}\Bigl(|\hat{\theta}_{n}^{(j)}-\theta_{0}^{(j)}|\geq\min_{j\in\mathcal{F}_{1}}|\theta_{0}^{(j)}|-\delta\Bigr)\longrightarrow 0\label{Cprob}
\end{align}
as $n\longrightarrow\infty$. Since
\begin{align*}
    \sqrt{n}a_n&=\sqrt{n}\max_{j\in\mathcal{F}_{1}}
    \kappa_{n}^{(j)}\\
    &=\sqrt{n}\lambda_{1,n}
    \max_{j\in\mathcal{F}_{1}}|\hat{\theta}_{n}^{(j)}|^{-\gamma}\leq \sqrt{n}\lambda_{1,n}\delta^{-\gamma}
\end{align*}
on $B_n$, we see from $\sqrt{n}\lambda_{1,n}\longrightarrow 0$ and (\ref{Cprob}) that for all $\varepsilon>0$,
\begin{align*}
    \mathbb{P}\Bigl(\sqrt{n}a_{n}>\varepsilon\Bigr)
    &\leq\mathbb{P}\Bigl(\bigl\{\sqrt{n}a_{n}>\varepsilon\bigr\}\cap B_{n}\Bigr)
    +\mathbb{P}\Bigl(\bigl\{\sqrt{n}a_{n}>\varepsilon\bigr\}\cap B_{n}^c\Bigr)\\
    &\leq\mathbb{P}\Bigl(\bigl\{\sqrt{n}\lambda_{1,n}\delta^{-\gamma} >\varepsilon\bigr\}\cap B_{n}\Bigr)+\mathbb{P}\bigl(B_{n}^c\bigr)\\
    &\leq\mathbb{P}\bigl(\sqrt{n}\lambda_{1,n} >\varepsilon\delta^{\gamma}\bigr)+\mathbb{P}\bigl(B_{n}^c\bigr)\longrightarrow 0
\end{align*}
as $n\longrightarrow\infty$, which implies $\sqrt{n}a_{n}\stackrel{P}{\longrightarrow}0$. Let
\begin{align*}
    C_{n}=\bigcap_{j\in{\mathcal{F}_{0}}}\Bigl\{|\hat{\theta}_{n}^{(j)}|<\delta\Bigr\}.
\end{align*}
From Theorem \ref{thetatheoremnon}, one gets
\begin{align}                  
    \mathbb{P}\bigl(C_{n}^{c}\bigr)
    \leq\sum_{j\in{\mathcal{F}_{0}}}
    \mathbb{P}\Bigl(|\hat{\theta}_{n}^{(j)}|\geq\delta\Bigr)
    =\sum_{j\in{\mathcal{F}_{0}}}\mathbb{P}
    \Bigl(|\hat{\theta}_{n}^{(j)}-\theta_0^{(j)}|\geq\delta\Bigr)
    \longrightarrow 0\label{Dprob}
\end{align}
as $n\longrightarrow\infty$. Note that on $C_n$,
\begin{align*}
    \sqrt{n}b_{n}
    =\sqrt{n}\min_{j\in\mathcal{F}_{0}}\kappa_{n}^{(j)}
    =\sqrt{n}\lambda_{2,n}.
\end{align*}
As it follows from $\sqrt{n}\lambda_{2,n}\longrightarrow\infty$ and (\ref{Dprob}) that 
\begin{align*}
    \mathbb{P}\left(\frac{1}{\sqrt{n}b_{n}}>\varepsilon\right)&\leq\mathbb{P}\left(\Bigl\{\frac{1}{\sqrt{n}b_{n}}>\varepsilon\Bigr\}\cap C_{n}\right)+\mathbb{P}\left(\Bigl\{\frac{1}{\sqrt{n}b_{n}}>\varepsilon\Bigr\}\cap C_{n}^{c}\right)\\
    &\leq\mathbb{P}\left(\Bigl\{\frac{1}{\sqrt{n}\lambda_{2,n}}>\varepsilon\Bigr\}\cap C_{n}\right)+\mathbb{P}\bigl(C_{n}^{c}\bigr)\\
    &\leq\mathbb{P}\left(\frac{1}{\sqrt{n}\lambda_{2,n}}>\varepsilon\right)+\mathbb{P}\bigl(C_{n}^{c}\bigr)\longrightarrow 0
\end{align*}
for all $\varepsilon>0$ as $n\longrightarrow\infty$, we have $\frac{1}{\sqrt{n}b_{n}}\stackrel{P}{\longrightarrow}0$.
\end{proof}
\begin{proof}[\textbf{Proofs of Lemmas \ref{Aeoracle}-\ref{Aeasym}}]
Since it holds from \textbf{[F1]} that $\tilde{G}_n\stackrel{P}{\longrightarrow} G$,
Assumption 1 in Suzuki and Yoshida \cite{Suzuki(2020)} is satisfied. Theorem 2 yields 
\begin{align*}
    \sqrt{n}(\hat{\theta}_n-\theta_0)=O_p(1),
\end{align*}
which satisfies Assumption 2 in Suzuki and Yoshida \cite{Suzuki(2020)}. From Lemma \ref{abprob}, one gets $\sqrt{n}a_n=o_p(1)$ and $\sqrt{n}b_{n}\stackrel{P}{\longrightarrow}\infty$. Hence, Theorems $2$ and $3$ in Suzuki and Yoshida \cite{Suzuki(2020)} imply
\begin{align*}
    \mathbb{P}\Bigl(\tilde{\mathcal{F}}_{G,n}
    =\mathcal{F}_{1}\Bigr)\stackrel{}{\longrightarrow}1
\end{align*}
and
\begin{align*}
    \sqrt{n}(\tilde{\theta}_{G,n}-\theta_{0})_{\mathcal{F}_{1}}-\mathfrak{G}_{G}\bigl\{\sqrt{n}(\hat{\theta}_{n}-\theta_{0})\bigr\}\stackrel{P}{\longrightarrow}0.
\end{align*}
Note that it follows from Lemma \ref{Aposlemma} that ${\bf{A}}(\theta_0)$ is a positive definite matrix. Theorem 2 yields
\begin{align*}
    \sqrt{n}(\hat{\theta}_n-\theta_0)\stackrel{d}{\longrightarrow}{\bf{A}}(\theta_0)^{-\frac{1}{2}}\zeta,
\end{align*}
where $\zeta$ is a $q$-dimensional standard normal random vector. Consequently, Assumption 3 in Suzuki and Yoshida \cite{Suzuki(2020)} is satisfied. Therefore, we see from  Theorem $3$ in Suzuki and Yoshida \cite{Suzuki(2020)} that as $G={\bf{A}}(\theta_0)$,
\begin{align*}
    \sqrt{n}(\tilde{\theta}_{G,n}-\theta_{0})_{\mathcal{F}_{1}}\stackrel{d}{\longrightarrow} N_{|\mathcal{F}_{1}|}\Bigl(0,{\bf{A}}_{\mathcal{F}}^{11}(\theta_0)^{-1}\Bigr). &\qedhere
\end{align*}
\end{proof}
The restricted parameter space is set to
\begin{align*}
    \underline{\Theta}=\Bigl\{\theta\in\Theta\ \big|\ \theta^{(j)}=0\ \ (j\in \mathcal{F}_{0})\Bigr\}.
\end{align*}
The restricted quasi-likelihood estimator $\underline{\theta}_{n}$ is defined as follows:
\begin{align*}
    \mathbb{F}_{n}(\underline{\theta}_{n})=\inf_{\theta\in\underline{\Theta}}\mathbb{F}_{n}(\theta).
\end{align*}
\begin{lemma}\label{seiyakutheta}
Under \textrm{\textbf{[A1]}}, \textrm{\textbf{[B1]}},
\textrm{\textbf{[C1]}},
\textrm{\textbf{[D1]}} and
\textrm{\textbf{[E1]}},
as $h_n\longrightarrow0$, 
\begin{align*}
    \sqrt{n}(\underline{\theta}_{n}-\theta_{0})_{\mathcal{F}_{1}}
    \stackrel{d}{\longrightarrow}N_{|\mathcal{F}_{1}|}\Bigl(0, {\bf{A}}_{\mathcal{F}}^{11}(\theta_0)^{-1}\Bigr).
\end{align*}
\end{lemma}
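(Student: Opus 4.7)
The plan is to reduce the constrained optimization defining $\underline{\theta}_n$ to an unconstrained one over a lower-dimensional parameter space, and then invoke Theorem \ref{thetatheoremnon}. Let $\iota:\mathbb{R}^{q_0}\to\mathbb{R}^{q}$ denote the embedding that pads a vector with zeros in the coordinates indexed by $\mathcal{F}_{0}$, set $\varphi_{0}=(\theta_{0}^{(j)})_{j\in\mathcal{F}_{1}}\in\mathbb{R}^{q_0}$, and define the reduced covariance map $\tilde{{\bf{\Sigma}}}(\varphi)={\bf{\Sigma}}(\iota(\varphi))$. The restricted parameter space $\underline{\Theta}=\{\theta\in\Theta : \theta^{(j)}=0\ (j\in\mathcal{F}_{0})\}$ corresponds to $\underline{\Phi}=\iota^{-1}(\underline{\Theta})\subset\mathbb{R}^{q_0}$, which is again convex compact, and $\varphi_{0}\in\Int\underline{\Phi}$ since $\theta_{0}\in\Int\Theta$ and the zero-constraints are satisfied exactly by $\theta_{0}$. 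Writing $\underline{\varphi}_{n}=\iota^{-1}(\underline{\theta}_{n})$, we have $\mathbb{F}_{n}(\iota(\underline{\varphi}_{n}))=\inf_{\varphi\in\underline{\Phi}}\mathbb{F}_{n}(\iota(\varphi))$, so $\underline{\varphi}_{n}$ is precisely the minimum contrast estimator of the reduced model with quasi-Gaussian log-contrast built from $\tilde{{\bf{\Sigma}}}$.

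Next, I verify that the reduced model meets the hypotheses of Theorem \ref{thetatheoremnon}. The diffusion assumptions \textrm{\textbf{[A1]}}, \textrm{\textbf{[B1]}}, \textrm{\textbf{[C1]}}, \textrm{\textbf{[D1]}} pertain to the data-generating SDEs and are independent of the parametrization, so they carry over verbatim. For the identifiability/rank condition \textrm{\textbf{[E1]}}, part (i) follows from the original \textrm{\textbf{[E1]}}(i): if $\tilde{{\bf{\Sigma}}}(\varphi_{1})=\tilde{{\bf{\Sigma}}}(\varphi_{2})$, then ${\bf{\Sigma}}(\iota(\varphi_{1}))={\bf{\Sigma}}(\iota(\varphi_{2}))$, hence $\iota(\varphi_{1})=\iota(\varphi_{2})$ and therefore $\varphi_{1}=\varphi_{2}$. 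For part (ii), the chain rule gives $\partial_{\varphi}\vech\tilde{{\bf{\Sigma}}}(\varphi)|_{\varphi=\varphi_{0}}=\Delta_{\cdot,\mathcal{F}_{1}}$, i.e., the submatrix $\tilde{\Delta}$ of $\Delta$ consisting of the columns indexed by $\mathcal{F}_{1}$; since $\rank\Delta=q$ by \textrm{\textbf{[E1]}}(ii), any subset of its columns retains full column rank, so $\rank\tilde{\Delta}=q_{0}$.

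Applying Theorem \ref{thetatheoremnon} to the reduced model therefore yields $\sqrt{n}(\underline{\varphi}_{n}-\varphi_{0})\stackrel{d}{\longrightarrow}N_{q_{0}}\bigl(0,(\tilde{\Delta}^{\top}{\bf{W}}(\theta_{0})^{-1}\tilde{\Delta})^{-1}\bigr)$. To identify the asymptotic covariance with the advertised ${\bf{A}}_{\mathcal{F}}^{11}(\theta_{0})^{-1}$, observe that ${\bf{A}}(\theta_{0})=\Delta^{\top}{\bf{W}}(\theta_{0})^{-1}\Delta$ and that the $(1,1)$-block indexed by $\mathcal{F}_{1}\times\mathcal{F}_{1}$ is obtained by selecting precisely the rows and columns of $\Delta$ corresponding to $\mathcal{F}_{1}$ on each side, giving ${\bf{A}}_{\mathcal{F}}^{11}(\theta_{0})=\tilde{\Delta}^{\top}{\bf{W}}(\theta_{0})^{-1}\tilde{\Delta}$; this matrix is positive definite as noted just before Lemma \ref{Aeoracle}. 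Since $(\underline{\theta}_{n}-\theta_{0})_{\mathcal{F}_{1}}=\underline{\varphi}_{n}-\varphi_{0}$, the conclusion follows.

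The main obstacle is the clean verification of the reduction, in particular confirming that $\varphi_{0}$ lies in the interior of $\underline{\Phi}$ (so that the first-order argument underlying Theorem \ref{thetatheoremnon} remains valid) and that the identifiability assumption \textrm{\textbf{[E1]}}(i) for the full model descends to the reduced model; once these are in place, the rest is a bookkeeping identification of the asymptotic variance as the Schur-type block ${\bf{A}}_{\mathcal{F}}^{11}(\theta_{0})$.
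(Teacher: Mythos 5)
Your proposal is correct and in substance coincides with the paper's proof: the paper establishes consistency of $\underline{\theta}_{n}$ and then repeats the Taylor-expansion argument of Theorem \ref{thetatheoremnon} with the score and Hessian restricted to the $\mathcal{F}_{1}$ coordinates, which produces exactly the quantities $\Delta_{\mathcal{F}_1}^{\top}{\bf{W}}(\theta_0)^{-1}\Delta_{\mathcal{F}_1}={\bf{A}}_{\mathcal{F}}^{11}(\theta_0)$ that your reparametrization $\varphi\mapsto\iota(\varphi)$ delivers by applying Theorem \ref{thetatheoremnon} as a black box to the reduced model. Your checks (interiority of $\varphi_0$, descent of $[\mathbf{E1}]$(i), full column rank of the column-submatrix $\tilde{\Delta}$, and the identification ${\bf{A}}_{\mathcal{F}}^{11}(\theta_0)=\tilde{\Delta}^{\top}{\bf{W}}(\theta_0)^{-1}\tilde{\Delta}$) are exactly the points needed to justify that reduction, so no gap remains.
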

\begin{proof}
    See Appendix \ref{seiyakuthetaproof}.
\end{proof}
\begin{proof}[\textbf{Proof of Lemma \ref{POasym}}]
Lemma \ref{seiyakutheta} yields
\begin{align*}
    \sqrt{n}(\underline{\theta}_{n}-\theta_{0})_{\mathcal{F}_{1}}
    \stackrel{d}{\longrightarrow}{\bf{A}}_{\mathcal{F}}^{11}(\theta_0)^{-\frac{1}{2}}\eta,
\end{align*}
where $\eta$ is a $|\mathcal{F}_1|$-dimensional standard normal random vector. Thus, Assumption 5 (ii) in Suzuki and Yoshida \cite{Suzuki(2020)} is satisfied, so that Theorem $5$ (a) in 
 Suzuki and Yoshida \cite{Suzuki(2020)} implies
\begin{align*}
    \sqrt{n}(\check{\theta}_{n}-\theta_{0})_{\mathcal{F}_{1}}\stackrel{d}{\longrightarrow}N_{|\mathcal{F}_{1}|}\Bigl(0,{\bf{A}}_{\mathcal{F}}^{11}(\theta_0)^{-1}\Bigr).
    &\qedhere
\end{align*}
\end{proof}
\begin{lemma}\label{seiyakuT}
 Under \textrm{\textbf{[A1]}}, \textrm{\textbf{[B1]}},
\textrm{\textbf{[C1]}},
\textrm{\textbf{[D1]}} and
\textrm{\textbf{[E1]}},
as $h_n\longrightarrow0$,  
\begin{align*}
    n\mathbb{F}_{n}(\underline{\theta}_{n})\stackrel{d}{\longrightarrow}\chi^2_{\bar{p}-|\mathcal{F}_{1}|}   
\end{align*}
under $H_0$.
\end{lemma}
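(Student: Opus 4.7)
The plan is to mirror the proof of Theorem~\ref{chitheoremnon} but carried out on the restricted parameter space $\underline{\Theta}$. Identifying $\underline{\Theta}$ with a convex compact subset of $\mathbb{R}^{|\mathcal{F}_1|}$ by dropping the coordinates pinned to zero, the restricted problem becomes an ordinary minimum-contrast problem on an $|\mathcal{F}_1|$-dimensional parameter space whose Jacobian at $\theta_0$ equals the submatrix $\Delta_{\mathcal{F}_1}:=(\Delta_{\cdot,1},\dots,\Delta_{\cdot,q_0})$ consisting of the first $|\mathcal{F}_1|$ columns of $\Delta$. Assumption~\textbf{[E1]}~(i) transfers automatically to the restricted model, and since $\Delta$ has rank $q$ by \textbf{[E1]}~(ii), a subset of its columns is linearly independent and hence $\rank\Delta_{\mathcal{F}_1}=|\mathcal{F}_1|$. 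Moreover, under $H_0$ one has $\theta_0\in\Int\underline{\Theta}$, so the usual interior first-order conditions apply.

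Next, I would set $Z_n=\sqrt{n}(\vech\mathbb{Q}_{\mathbb{XX}}-\vech{\bf{\Sigma}}_0)$, which converges in law to $Z\sim N_{\bar p}(0,{\bf{W}}(\theta_0))$ by Theorem~\ref{Qtheoremnon}, and observe that $\tilde{\mathrm V}(\mathbb{Q}_{\mathbb{XX}},{\bf{\Sigma}}(\underline{\theta}_n))\stackrel{P}{\to}{\bf{W}}(\theta_0)^{-1}$ by Theorem~\ref{Qtheoremnon}, Lemma~\ref{seiyakutheta} and continuity of matrix inversion. A Taylor expansion of $\vech{\bf{\Sigma}}(\underline{\theta}_n)$ around $\theta_0$ combined with Lemma~\ref{seiyakutheta} gives
\begin{align*}
\sqrt{n}\bigl(\vech\mathbb{Q}_{\mathbb{XX}}-\vech{\bf{\Sigma}}(\underline{\theta}_n)\bigr)=Z_n-\Delta_{\mathcal{F}_1}\sqrt{n}(\underline{\theta}_n-\theta_0)_{\mathcal{F}_1}+o_p(1).
\end{align*}
The first-order optimality condition for $\underline{\theta}_n$, together with the above convergences, yields
\begin{align*}
\sqrt{n}(\underline{\theta}_n-\theta_0)_{\mathcal{F}_1}=\bigl(\Delta_{\mathcal{F}_1}^\top{\bf{W}}(\theta_0)^{-1}\Delta_{\mathcal{F}_1}\bigr)^{-1}\Delta_{\mathcal{F}_1}^\top{\bf{W}}(\theta_0)^{-1}Z_n+o_p(1),
\end{align*}
and substituting back into $n\mathbb{F}_n(\underline{\theta}_n)$ produces the quadratic form
\begin{align*}
n\mathbb{F}_n(\underline{\theta}_n)=Z_n^\top M(\theta_0)Z_n+o_p(1),
\end{align*}
where
\begin{align*}
M(\theta_0)={\bf{W}}(\theta_0)^{-1}-{\bf{W}}(\theta_0)^{-1}\Delta_{\mathcal{F}_1}\bigl(\Delta_{\mathcal{F}_1}^\top{\bf{W}}(\theta_0)^{-1}\Delta_{\mathcal{F}_1}\bigr)^{-1}\Delta_{\mathcal{F}_1}^\top{\bf{W}}(\theta_0)^{-1}.
\end{align*}

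Finally, I would verify by direct computation that ${\bf{W}}(\theta_0)^{1/2}M(\theta_0){\bf{W}}(\theta_0)^{1/2}$ is idempotent and has trace $\bar p-|\mathcal{F}_1|$; then $Z^\top M(\theta_0)Z\sim\chi^2_{\bar p-|\mathcal{F}_1|}$ by the standard representation of quadratic forms in Gaussians, and the continuous mapping theorem together with Slutsky's theorem deliver $n\mathbb{F}_n(\underline{\theta}_n)\stackrel{d}{\to}\chi^2_{\bar p-|\mathcal{F}_1|}$. The main obstacle is the rigorous justification of the linearization of $\underline{\theta}_n$, in particular ensuring uniform smoothness of $\vech{\bf{\Sigma}}(\cdot)$ on a shrinking neighbourhood of $\theta_0$ inside $\underline{\Theta}$ and the joint convergence of $\tilde{\mathrm V}(\mathbb{Q}_{\mathbb{XX}},{\bf{\Sigma}}(\underline{\theta}_n))$ to ${\bf{W}}(\theta_0)^{-1}$. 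This is the same style of argument already used in the proofs of Theorems~\ref{thetatheoremnon}-\ref{chitheoremnon} and referenced in the appendix for Theorems~2-4, so the adaptation is mechanical once the restricted analogue of \textbf{[E1]} has been set up.
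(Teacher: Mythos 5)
Your proposal is correct and follows essentially the same route as the paper: linearize $\sqrt{n}(\underline{\theta}_n-\theta_0)_{\mathcal{F}_1}$ via the restricted first-order condition (the paper's Lemma \ref{seiyakutheta}), reduce $n\mathbb{F}_n(\underline{\theta}_n)$ to a quadratic form in $\sqrt{n}(\vech\mathbb{Q}_{\mathbb{XX}}-\vech{\bf{\Sigma}}(\theta_0))$ with matrix ${\bf{W}}(\theta_0)^{-1}-{\bf{W}}(\theta_0)^{-1}\Delta_{\mathcal{F}_1}(\Delta_{\mathcal{F}_1}^{\top}{\bf{W}}(\theta_0)^{-1}\Delta_{\mathcal{F}_1})^{-1}\Delta_{\mathcal{F}_1}^{\top}{\bf{W}}(\theta_0)^{-1}$, and identify ${\bf{W}}(\theta_0)^{1/2}M(\theta_0){\bf{W}}(\theta_0)^{1/2}$ as a projection of rank $\bar{p}-|\mathcal{F}_1|$. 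The only cosmetic difference is that the paper expands $\mathbb{F}_n$ itself to second order in the $\mathcal{F}_1$-coordinates rather than expanding $\vech{\bf{\Sigma}}(\underline{\theta}_n)$ and substituting, which is algebraically equivalent.
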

\begin{proof}
    See Appendix \ref{seiyakuTproof}.
\end{proof}
\begin{proof}[\textbf{Proof of Theorem \ref{tildeT}}]
Set $D_{n}=\bigl\{\tilde{\mathcal{F}}_{\mathbb{I},n,1}=\mathcal{F}_{1}\bigr\}$.
Note that $\tilde{\Theta}_{n}=\underline{\Theta}$ on $D_{n}$. The definitions of $\check{\theta}_{n}$ and $\underline{\theta}_{n}$ imply that on $D_{n}$,
\begin{align*}
    \mathbb{F}_{n}(\check{\theta}_{n})=\mathbb{F}_{n}(\underline{\theta}_{n}).
\end{align*}
Thus, for all $\varepsilon>0$, we see that under $H_0$, 
\begin{align*}
    \mathbb{P}(D_{n})&\leq\mathbb{P}\Bigl(n\mathbb{F}_{n}(\check{\theta}_{n})=n\mathbb{F}_{n}(\underline{\theta}_{n})\Bigr)\leq\mathbb{P}\Bigl(\bigl|n\mathbb{F}_{n}(\check{\theta}_{n})-n\mathbb{F}_{n}(\underline{\theta}_{n})\bigr|<\varepsilon\Bigr),
\end{align*}
so that it follows from Lemma \ref{Aeoracle} that under $H_0$,
\begin{align*}
     n\mathbb{F}_{n}(\check{\theta}_{n})-n\mathbb{F}_{n}(\underline{\theta}_{n})\stackrel{P}{\longrightarrow}0.
\end{align*}
Therefore, Lemma \ref{seiyakuT} and Slutsky's theorem  yield
\begin{align*}
     \check{\mathbb{T}}_{n}=n\mathbb{F}_{n}(\check{\theta}_{n})\stackrel{d}{\longrightarrow}\chi^2_{\bar{p}-|\mathcal{F}_{1}|}   
\end{align*}
under $H_0$.
\end{proof}
\begin{proof}[\textbf{Proof of Proposition \ref{Tprop}}]
It holds
\begin{align*}
    &\quad\ \mathbb{P}\left(\check{\mathbb{T}}_{n}>\chi^2_{\bar{p}-|\mathcal{\tilde{F}}_{\mathbb{I},n,1}|}(\alpha)\right)-\mathbb{P}\left(\check{\mathbb{T}}_{n}>\chi^2_{\bar{p}-|\mathcal{F}_{1}|}(\alpha)\right)\\
    &\leq\mathbb{P}\left(\bigl\{\check{\mathbb{T}}_{n}>\chi^2_{\bar{p}-|\mathcal{\tilde{F}}_{\mathbb{I},n,1}|}(\alpha)\bigr\}\cap D_{n}\right)\\
    &\quad+\mathbb{P}\left(\bigl\{\check{\mathbb{T}}_{n}>\chi^2_{\bar{p}-|\mathcal{\tilde{F}}_{\mathbb{I},n,1}|}(\alpha)\bigr\}\cap D_{n}^{c}\right)-\mathbb{P}\left(\check{\mathbb{T}}_{n}>\chi^2_{\bar{p}-|\mathcal{F}_{1}|}(\alpha)\right)\\
    &\leq\mathbb{P}\left(\bigl\{\check{\mathbb{T}}>\chi^2_{\bar{p}-|\mathcal{F}_{1}|}(\alpha)\bigr\}\cap D_{n}\right)+\mathbb{P}\bigl(D_{n}^{c}\bigr)-\mathbb{P}\left(\check{\mathbb{T}}_{n}>\chi^2_{\bar{p}-|\mathcal{F}_{1}|}(\alpha)\right)\qquad\ \ \\
    &\leq \mathbb{P}\left(\check{\mathbb{T}}_{n}>\chi^2_{\bar{p}-|\mathcal{F}_{1}|}(\alpha)\right)+\mathbb{P}\bigl(D_{n}^{c}\bigr)-\mathbb{P}\left(\check{\mathbb{T}}_{n}>\chi^2_{\bar{p}-|\mathcal{F}_{1}|}(\alpha)\right)\\
    &=\mathbb{P}\bigl(D_{n}^{c}\bigr).
\end{align*}
In a similar way, one has
\begin{align*}
    &\quad\ \mathbb{P}\left(\check{\mathbb{T}}_{n}>\chi^2_{\bar{p}-|\mathcal{F}_{1}|}(\alpha)\right)-\mathbb{P}\left(\check{\mathbb{T}}_{n}>\chi^2_{\bar{p}-|\mathcal{\tilde{F}}_{\mathbb{I},n,1}|}(\alpha)\right)\\
    &\leq\mathbb{P}\left(\bigl\{\check{\mathbb{T}}_{n}>\chi^2_{\bar{p}-|\mathcal{F}_{1}|}(\alpha)\bigr\}\cap D_{n}\right)\\
    &\quad+\mathbb{P}\left(\bigl\{\check{\mathbb{T}}_{n}>\chi^2_{\bar{p}-|\mathcal{F}_{1}|}(\alpha)\bigr\}\cap D_{n}^c\right)-\mathbb{P}\left(\check{\mathbb{T}}_{n}>\chi^2_{\bar{p}-|\mathcal{\tilde{F}}_{\mathbb{I},n,1}|}(\alpha)\right)\\
    &\leq\mathbb{P}\left(\bigl\{\check{\mathbb{T}}_{n}>\chi^2_{\bar{p}-|\mathcal{\tilde{F}}_{\mathbb{I},n,1}|}(\alpha)\bigr\}\cap D_{n}\right)+\mathbb{P}\bigl(D_{n}^{c}\bigr)-\mathbb{P}\left(\check{\mathbb{T}}_{n}>\chi^2_{\bar{p}-|\mathcal{\tilde{F}}_{\mathbb{I},n,1}|}(\alpha)\right)\\
    &\leq\mathbb{P}\left(\check{\mathbb{T}}_{n}>\chi^2_{\bar{p}-|\mathcal{\tilde{F}}_{\mathbb{I},n,1}|}(\alpha)\right)+\mathbb{P}\bigl(D_{n}^{c}\bigr)-\mathbb{P}\left(\check{\mathbb{T}}_{n}>\chi^2_{\bar{p}-|\mathcal{\tilde{F}}_{\mathbb{I},n,1}|}(\alpha)\right)\\
    &=\mathbb{P}\bigl(D_{n}^{c}\bigr).
\end{align*}
Therefore, we obtain
\begin{align}
    \left|\mathbb{P}\left(\check{\mathbb{T}}_{n}>
    \chi^2_{\bar{p}-|\mathcal{\tilde{F}}_{\mathbb{I},n,1}|}(\alpha)\right)-\mathbb{P}\left(\check{\mathbb{T}}_{n}>\chi^2_{\bar{p}-|\mathcal{F}_{1}|}(\alpha)\right)\right|\leq\mathbb{P}\bigl(D_{n}^{c}\bigr),\label{propine}
\end{align}
so that Lemma \ref{Aeoracle} shows that under $H_0$, 
\begin{align*}
    \mathbb{P}\left(\check{\mathbb{T}}_{n}>\chi^2_{\bar{p}-|\mathcal{\tilde{F}}_{\mathbb{I},n,1}|}(\alpha)\right)-\mathbb{P}\left(\check{\mathbb{T}}_{n}>\chi^2_{\bar{p}-|\mathcal{F}_{1}|}(\alpha)\right)\longrightarrow 0
\end{align*}
 as $n\longrightarrow\infty$.
\end{proof}
Let
\begin{align*}
    \bar{\mathcal{F}}_{0}=\Bigl\{j\in\{1,\cdots,q\}\ |\ \bar{\theta}^{(j)}=0\Bigr\}.
\end{align*}
Set $\bar{a}_{n}=\max_{j\in\bar{\mathcal{F}}_{1}}\kappa_{n}^{(j)}$ and $\bar{b}_{n}=\min_{j\in\bar{\mathcal{F}}_{0}}\kappa_{n}^{(j)}$.
\begin{lemma}\label{abbarprob}
Under\textrm{\textbf{[A1]}}, \textrm{\textbf{[B1]}},
\textrm{\textbf{[C1]}},
\textrm{\textbf{[D1]}} and
\textrm{\textbf{[E2]}}, as $h_n\longrightarrow0$,
$\sqrt{n}\lambda_{1,n}\longrightarrow0$ and
$\sqrt{n}\lambda_{2,n}\longrightarrow\infty$,
\begin{align*}
    \sqrt{n}\bar{a}_{n}\stackrel{P}{\longrightarrow} 0,\quad \frac{1}{\sqrt{n}\bar{b}_{n}}\stackrel{P}{\longrightarrow}0
\end{align*}
under $H_1$.
\end{lemma}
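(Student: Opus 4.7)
The plan is to replicate the argument of Lemma \ref{abprob} almost verbatim, with $\theta_0$ replaced by $\bar{\theta}$ and with the consistency $\hat{\theta}_n \stackrel{P}{\longrightarrow}\theta_0$ (which held under $H_0$ by Theorem \ref{thetatheoremnon}) replaced by the consistency $\hat{\theta}_n \stackrel{P}{\longrightarrow}\bar{\theta}$ under $H_1$. The latter is precisely the content of the remark following assumption $[{\bf{E2}}]$ and is supplied by Lemma \ref{starconslemma}. The separation $0<\delta<\min_{j\in\bar{\mathcal{F}}_{1}}|\bar{\theta}^{(j)}|$ assumed just above $[{\bf{F2}}]$ plays the role that $0<\delta<\min_{j\in\mathcal{F}_{1}}|\theta_{0}^{(j)}|$ played in Lemma \ref{abprob}.

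For the first claim, I would introduce $\bar{B}_{n}=\bigcap_{j\in\bar{\mathcal{F}}_{1}}\{|\hat{\theta}_{n}^{(j)}|\geq\delta\}$. The triangle inequality $|\hat{\theta}_{n}^{(j)}|\geq|\bar{\theta}^{(j)}|-|\hat{\theta}_{n}^{(j)}-\bar{\theta}^{(j)}|$ gives, for $j\in\bar{\mathcal{F}}_{1}$,
\begin{align*}
\Bigl\{|\hat{\theta}_{n}^{(j)}-\bar{\theta}^{(j)}|<\min_{j\in\bar{\mathcal{F}}_{1}}|\bar{\theta}^{(j)}|-\delta\Bigr\}\subset\Bigl\{|\hat{\theta}_{n}^{(j)}|>\delta\Bigr\},
\end{align*}
so that $\mathbb{P}(\bar{B}_{n}^{c})\leq\sum_{j\in\bar{\mathcal{F}}_{1}}\mathbb{P}(|\hat{\theta}_{n}^{(j)}-\bar{\theta}^{(j)}|\geq\min_{j\in\bar{\mathcal{F}}_{1}}|\bar{\theta}^{(j)}|-\delta)\longrightarrow 0$ by Lemma \ref{starconslemma}. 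On $\bar{B}_{n}$ we have $\kappa_{n}^{(j)}=\lambda_{1,n}|\hat{\theta}_{n}^{(j)}|^{-\gamma}\leq\lambda_{1,n}\delta^{-\gamma}$ for every $j\in\bar{\mathcal{F}}_{1}$, hence $\sqrt{n}\bar{a}_{n}\leq\sqrt{n}\lambda_{1,n}\delta^{-\gamma}$. Splitting $\{\sqrt{n}\bar{a}_{n}>\varepsilon\}$ over $\bar{B}_{n}$ and $\bar{B}_{n}^{c}$ as in Lemma \ref{abprob} and using $\sqrt{n}\lambda_{1,n}\longrightarrow 0$ then yields $\sqrt{n}\bar{a}_{n}\stackrel{P}{\longrightarrow}0$.

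For the second claim, I would set $\bar{C}_{n}=\bigcap_{j\in\bar{\mathcal{F}}_{0}}\{|\hat{\theta}_{n}^{(j)}|<\delta\}$. Since $\bar{\theta}^{(j)}=0$ for $j\in\bar{\mathcal{F}}_{0}$, the consistency $\hat{\theta}_{n}\stackrel{P}{\longrightarrow}\bar{\theta}$ directly gives $\mathbb{P}(\bar{C}_{n}^{c})\leq\sum_{j\in\bar{\mathcal{F}}_{0}}\mathbb{P}(|\hat{\theta}_{n}^{(j)}-\bar{\theta}^{(j)}|\geq\delta)\longrightarrow 0$. On $\bar{C}_{n}$ the indicator in the definition of $\kappa_{n}^{(j)}$ selects the second branch, so $\kappa_{n}^{(j)}=\lambda_{2,n}$ for every $j\in\bar{\mathcal{F}}_{0}$ and thus $\sqrt{n}\bar{b}_{n}=\sqrt{n}\lambda_{2,n}$. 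Decomposing $\{1/(\sqrt{n}\bar{b}_{n})>\varepsilon\}$ over $\bar{C}_{n}$ and $\bar{C}_{n}^{c}$ and using $\sqrt{n}\lambda_{2,n}\longrightarrow\infty$ finishes the proof.

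There is no genuine obstacle here; the lemma is essentially a transcription of Lemma \ref{abprob} to the misspecified regime. The only point that requires care is invoking consistency under $H_1$, which is not immediate from Theorem \ref{thetatheoremnon} but is supplied by Lemma \ref{starconslemma} under $[{\bf{E2}}]$. Once that is in hand, the two probability bounds on $\mathbb{P}(\bar{B}_{n}^{c})$ and $\mathbb{P}(\bar{C}_{n}^{c})$ together with the rate conditions $\sqrt{n}\lambda_{1,n}\to 0$ and $\sqrt{n}\lambda_{2,n}\to\infty$ conclude the argument.
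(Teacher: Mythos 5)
Your proof is correct and follows exactly the route the paper takes: the paper's own proof simply notes that $0<\min_{j\in\bar{\mathcal{F}}_{1}}|\bar{\theta}^{(j)}|-\delta$, invokes Lemma \ref{starconslemma} for $\hat{\theta}_{n}\stackrel{P}{\longrightarrow}\bar{\theta}$ under $H_1$, and then says the result follows "in an analogous manner to Lemma \ref{abprob}" — which is precisely the transcription you carried out in detail.
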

\begin{proof}
Note that $0<\min_{j\in\bar{\mathcal{F}}_{1}}|\bar{\theta}^{(j)}|-\delta$ and it holds from Lemma \ref{starconslemma} that $\hat{\theta}_n\stackrel{P}{\longrightarrow}\bar{\theta}$ under $H_1$. In an analogous manner to Lemma \ref{abprob}, we can obtain the result.
\end{proof}
\begin{lemma}\label{starop1}
Under \textrm{\textbf{[A1]}}, \textrm{\textbf{[B1]}},
\textrm{\textbf{[C1]}},
\textrm{\textbf{[D1]}},
\textrm{\textbf{[E2]}} and \textrm{\textbf{[F2]}},
as $h_n\longrightarrow0$,
\begin{align*}
    \sqrt{n}(\hat{\theta}_{n}-\bar{\theta})=O_p(1)
\end{align*}
under $H_1$.
\end{lemma}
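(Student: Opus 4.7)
The plan is to run the standard M-estimator Taylor expansion argument for $\hat{\theta}_n$ around $\bar{\theta}$, using Lemma \ref{starconslemma} for consistency under $H_1$ and \textrm{\textbf{[F2]}} for invertibility of the limiting Hessian. Since $\mathbb{Q}_{\mathbb{XX}} \stackrel{P}{\longrightarrow} {\bf{\Sigma}}_0$ with ${\bf{\Sigma}}_0 > 0$ by Theorem 1, the event $\{\mathbb{Q}_{\mathbb{XX}} > 0\}$ has probability tending to $1$, and on that event $\mathbb{F}_n(\theta) = \rm{F}(\mathbb{Q}_{\mathbb{XX}}, {\bf{\Sigma}}(\theta))$ is smooth in $\theta$. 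Under $H_1$, Lemma \ref{starconslemma} yields $\hat{\theta}_n \stackrel{P}{\longrightarrow} \bar{\theta}$, and since $\bar{\theta}$ is a critical point of $\mathbb{U}$ with non-singular Hessian by \textrm{\textbf{[F2]}} (so $\bar{\theta}\in\Int\Theta$ is a strict local minimum), $\hat{\theta}_n\in\Int\Theta$ with probability tending to $1$ and satisfies the first-order condition $\partial_\theta \mathbb{F}_n(\hat{\theta}_n) = 0$. An integral-form Taylor expansion then gives
\begin{align*}
0 = \partial_\theta \mathbb{F}_n(\hat{\theta}_n) = \partial_\theta \mathbb{F}_n(\bar{\theta}) + H_n(\hat{\theta}_n - \bar{\theta}),
\end{align*}
where $H_n = \int_0^1 \partial^2_\theta \mathbb{F}_n\bigl(\bar{\theta} + s(\hat{\theta}_n - \bar{\theta})\bigr)\,ds$, so it suffices to show (a) $\sqrt{n}\,\partial_\theta \mathbb{F}_n(\bar{\theta}) = O_p(1)$ and (b) $H_n$ converges in probability to an invertible matrix.

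For (a), using the explicit formula
\begin{align*}
\partial_{\theta^{(j)}} \rm{F}(S, {\bf{\Sigma}}(\theta)) = \tr\bigl\{{\bf{\Sigma}}(\theta)^{-1} \partial_{\theta^{(j)}} {\bf{\Sigma}}(\theta)\bigr\} - \tr\bigl\{{\bf{\Sigma}}(\theta)^{-1} \partial_{\theta^{(j)}} {\bf{\Sigma}}(\theta)\, {\bf{\Sigma}}(\theta)^{-1} S\bigr\},
\end{align*}
which is affine in $S$, together with the optimality condition $\partial_\theta \mathbb{U}(\bar{\theta}) = \partial_\theta \rm{F}({\bf{\Sigma}}_0, {\bf{\Sigma}}(\bar{\theta})) = 0$ at the interior minimizer $\bar{\theta}$, we have
\begin{align*}
\sqrt{n}\,\partial_\theta \mathbb{F}_n(\bar{\theta}) = \sqrt{n}\bigl\{\partial_\theta \rm{F}(\mathbb{Q}_{\mathbb{XX}}, {\bf{\Sigma}}(\bar{\theta})) - \partial_\theta \rm{F}({\bf{\Sigma}}_0, {\bf{\Sigma}}(\bar{\theta}))\bigr\},
\end{align*}
which is a bounded linear function of $\sqrt{n}(\vech \mathbb{Q}_{\mathbb{XX}} - \vech {\bf{\Sigma}}_0) = O_p(1)$ by Theorem 1, hence $O_p(1)$. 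For (b), the joint continuity of $\partial^2_\theta \rm{F}(S, {\bf{\Sigma}}(\theta))$ on a neighborhood of $({\bf{\Sigma}}_0, \bar{\theta})$, combined with $\mathbb{Q}_{\mathbb{XX}} \stackrel{P}{\longrightarrow} {\bf{\Sigma}}_0$ and $\hat{\theta}_n \stackrel{P}{\longrightarrow} \bar{\theta}$, yields $H_n \stackrel{P}{\longrightarrow} \partial^2_\theta \mathbb{U}(\bar{\theta})$, which is non-singular by \textrm{\textbf{[F2]}}. Slutsky's theorem then delivers $\sqrt{n}(\hat{\theta}_n - \bar{\theta}) = -H_n^{-1}\sqrt{n}\,\partial_\theta \mathbb{F}_n(\bar{\theta}) = O_p(1)$.

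The main obstacle I anticipate is the local uniform handling of $\partial^2_\theta \mathbb{F}_n$ along the random segment from $\bar{\theta}$ to $\hat{\theta}_n$, since both the data-dependent $\mathbb{Q}_{\mathbb{XX}}$ and the evaluation points are random. This is resolved by exploiting that ${\bf{\Sigma}}(\theta)$ is smooth and stays positive definite on a neighborhood of $\bar{\theta}$ by Lemma \ref{Sigmaposlemma}, so that $\partial^2_\theta \rm{F}(S, {\bf{\Sigma}}(\theta))$ depends continuously on $(S,\theta)$ uniformly on a small compact neighborhood of $({\bf{\Sigma}}_0,\bar{\theta})$; the continuous mapping theorem together with the consistency of $\hat{\theta}_n$ and convergence of $\mathbb{Q}_{\mathbb{XX}}$ then closes the argument.
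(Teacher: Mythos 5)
Your proposal is correct and follows the same M-estimator skeleton as the paper: a first-order Taylor expansion of $\partial_{\theta}\mathbb{F}_{n}$ around $\bar{\theta}$, the bound $\sqrt{n}\,\partial_{\theta}\mathbb{F}_{n}(\bar{\theta})=O_p(1)$, and convergence of the integrated Hessian to $\partial^2_{\theta}\mathbb{U}(\bar{\theta})$, which is invertible by \textbf{[F2]}. Where you genuinely diverge is in how the score term is controlled. The paper works with the quadratic-form representation $\tilde{\rm{F}}(\mathbb{Q}_{\mathbb{XX}},{\bf{\Sigma}}(\theta))=(\vech{\mathbb{Q}_{\mathbb{XX}}}-\vech{{\bf{\Sigma}}(\theta)})^{\top}\tilde{\rm{V}}(\cdot)(\vech{\mathbb{Q}_{\mathbb{XX}}}-\vech{{\bf{\Sigma}}(\theta)})$; because the weight matrix $\tilde{V}_n(\bar{\theta})$ itself depends on $\mathbb{Q}_{\mathbb{XX}}$ and multiplies the non-vanishing residual $r=\vech{{\bf{\Sigma}}_0}-\vech{{\bf{\Sigma}}(\bar{\theta})}$, the paper must establish a separate delta-method lemma (Lemma \ref{Vbar}) giving $\sqrt{n}(\tilde{V}_n(\bar{\theta})-\bar{{\bf{W}}}(\bar{\theta}))=O_p(1)$ and the analogous rate for $\partial_{\theta^{(i)}}\tilde{V}_n(\bar{\theta})$, and then cancels the $O(\sqrt{n})$ terms using $\partial_{\theta}\mathbb{U}(\bar{\theta})=0$. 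You instead pass to the log-determinant form ${\rm{F}}(S,{\bf{\Sigma}}(\theta))=\log\det{\bf{\Sigma}}(\theta)-\log\det S+\tr\{{\bf{\Sigma}}(\theta)^{-1}S\}-p$, valid on $\{\mathbb{Q}_{\mathbb{XX}}>0\}$ whose probability tends to one, where $\partial_{\theta}{\rm{F}}(S,{\bf{\Sigma}}(\theta))$ is affine in $S$; the stationarity of $\bar{\theta}$ then makes $\sqrt{n}\,\partial_{\theta}\mathbb{F}_{n}(\bar{\theta})$ an exactly linear image of $\sqrt{n}(\vech{\mathbb{Q}_{\mathbb{XX}}}-\vech{{\bf{\Sigma}}_0})=O_p(1)$, so the auxiliary delta-method lemma is not needed at all. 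Your route is shorter and arguably cleaner for this lemma; the paper's route has the advantage that Lemma \ref{Vbar} and the $\tilde{V}_n$ machinery are reused elsewhere (e.g.\ in the proofs of Theorems 2--3 and Lemma \ref{seiyakuT}), so the bookkeeping is consistent across the whole development. One small caveat, which applies equally to the paper's own proof: both arguments implicitly take $\bar{\theta}\in\Int\Theta$ (so that $\partial_{\theta}\mathbb{U}(\bar{\theta})=0$) and $\hat{\theta}_{n}\in\Int\Theta$ eventually (so that $\partial_{\theta}\mathbb{F}_{n}(\hat{\theta}_{n})=0$); your parenthetical deriving interiority from \textbf{[F2]} is not a valid inference, but since the paper makes the same tacit assumption this is not a gap specific to your argument.
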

\begin{proof}
    See Appendix \ref{testconsproof}.
\end{proof}
\begin{lemma}\label{thetaop1}
Under \textrm{\textbf{[A1]}}, \textrm{\textbf{[B1]}},
\textrm{\textbf{[C1]}},
\textrm{\textbf{[D1]}},
\textrm{\textbf{[E2]}} and \textrm{\textbf{[F2]}}, as $h_n\longrightarrow0$, $\sqrt{n}\lambda_{1,n}\longrightarrow0$ and $\sqrt{n}\lambda_{2,n}\longrightarrow\infty$,
\begin{align*}
    \sqrt{n}(\tilde{\theta}_{\mathbb{I},n}-\bar{\theta})=O_p(1)
\end{align*}
under $H_1$.
\end{lemma}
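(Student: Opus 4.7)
The plan is to follow the standard localization trick: reparameterize $u = \sqrt{n}(\theta - \bar{\theta})$, study the localized objective
\[
V_n(u) = n\bigl[\mathbb{Q}_{\mathbb{I},n}(\bar{\theta}+u/\sqrt{n}) - \mathbb{Q}_{\mathbb{I},n}(\bar{\theta})\bigr],
\]
and exploit the fact that $\tilde{u}_n := \sqrt{n}(\tilde{\theta}_{\mathbb{I},n}-\bar{\theta})$ lies in $\sqrt{n}(\Theta-\bar{\theta})$, a convex set containing $0$, hence $V_n(\tilde{u}_n) \leq V_n(0) = 0$. The quadratic part of $\mathbb{Q}_{\mathbb{I},n}$ expands exactly to
\[
n\sum_{j=1}^{q}\Bigl\{(\bar{\theta}^{(j)}+u^{(j)}/\sqrt{n}-\hat{\theta}_n^{(j)})^2 - (\bar{\theta}^{(j)}-\hat{\theta}_n^{(j)})^2\Bigr\} = |u|^2 - 2 Z_n^{\top} u,
\]
where $Z_n = \sqrt{n}(\hat{\theta}_n-\bar{\theta}) = O_p(1)$ by Lemma \ref{starop1}.

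For the penalty term, split according to $\bar{\mathcal{F}}_1$ and $\bar{\mathcal{F}}_0$. For $j \in \bar{\mathcal{F}}_0$, since $\bar{\theta}^{(j)}=0$ the contribution is $\sqrt{n}\kappa_n^{(j)}|u^{(j)}| \geq 0$, so it only helps the inequality. For $j \in \bar{\mathcal{F}}_1$ the reverse triangle inequality gives
\[
n\kappa_n^{(j)}\Bigl(|\bar{\theta}^{(j)}+u^{(j)}/\sqrt{n}|-|\bar{\theta}^{(j)}|\Bigr) \geq -\sqrt{n}\kappa_n^{(j)}|u^{(j)}| \geq -\sqrt{n}\bar{a}_n|u^{(j)}|,
\]
and summing and applying Cauchy--Schwarz bounds the whole $\bar{\mathcal{F}}_1$ penalty below by $-\sqrt{n}\bar{a}_n\sqrt{|\bar{\mathcal{F}}_1|}\,|u|$. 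By Lemma \ref{abbarprob} the factor $\sqrt{n}\bar{a}_n$ is $o_p(1)$ under $H_1$.

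Combining, from $V_n(\tilde{u}_n)\leq 0$ one obtains
\[
|\tilde{u}_n|^2 \leq 2|Z_n|\,|\tilde{u}_n| + \sqrt{n}\bar{a}_n\sqrt{|\bar{\mathcal{F}}_1|}\,|\tilde{u}_n|,
\]
so $|\tilde{u}_n| \leq 2|Z_n| + \sqrt{n}\bar{a}_n\sqrt{|\bar{\mathcal{F}}_1|} = O_p(1)$, which is the claim. The only point requiring care is the penalty bookkeeping on $\bar{\mathcal{F}}_1$ coordinates, where the absolute-value difference can be signed; but the reverse triangle inequality together with the $o_p(1)$ rate of $\sqrt{n}\bar{a}_n$ ensures that this perturbation is of lower order than the quadratic term $|u|^2$ and cannot prevent the $\sqrt{n}$-consistency. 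No delicate stochastic argument beyond Lemmas \ref{starop1} and \ref{abbarprob} is needed.
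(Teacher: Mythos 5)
Your proposal is correct and follows essentially the same argument as the paper: both exploit $\mathbb{Q}_{\mathbb{I},n}(\tilde{\theta}_{\mathbb{I},n})\leq\mathbb{Q}_{\mathbb{I},n}(\bar{\theta})$, expand the quadratic term around $\bar{\theta}$, lower-bound the penalty difference by dropping the nonnegative $\bar{\mathcal{F}}_0$ contributions and applying the reverse triangle inequality with the rate $\sqrt{n}\bar{a}_n=o_p(1)$ on $\bar{\mathcal{F}}_1$, and conclude from the resulting quadratic inequality together with Lemmas \ref{starop1} and \ref{abbarprob}. The reparameterization $u=\sqrt{n}(\theta-\bar{\theta})$ and the use of Cauchy--Schwarz (giving $\sqrt{|\bar{\mathcal{F}}_1|}$ rather than the paper's cruder $|\bar{\mathcal{F}}_1|$) are only cosmetic differences.
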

\begin{proof}
Note that
\begin{align*}
    \sum_{j=1}^{q}\kappa_{n}^{(j)}
    |\tilde{\theta}_{\mathbb{I},n}^{(j)}|\geq \sum_{j\in\bar{\mathcal{F}}_{1}}\kappa_{n}^{(j)}|\tilde{\theta}_{\mathbb{I},n}^{(j)}|,\quad \sum_{j=1}^{q}\kappa_{n}^{(j)}|\bar{\theta}^{(j)}|=\sum_{j\in\bar{\mathcal{F}}_{1}}\kappa_{n}^{(j)}|\bar{\theta}^{(j)}|.
\end{align*}
Since it holds from the definition of $\tilde{\theta}_{\mathbb{I},n}$ that
\begin{align*}
    0&\geq \mathbb{Q}_{\mathbb{I},n}(\tilde{\theta}_{\mathbb{I},n})-\mathbb{Q}_{\mathbb{I},n}(\bar{\theta})\\
    &=\sum_{j=1}^{q}(\tilde{\theta}^{(j)}_{\mathbb{I},n}-\bar{\theta}^{(j)}+\bar{\theta}^{(j)}-\hat{\theta}^{(j)}_{n})^2+\sum_{j=1}^{q}\kappa_{n}^{(j)}|\tilde{\theta}_{\mathbb{I},n}^{(j)}|-\sum_{j=1}^{q}(\bar{\theta}^{(j)}-\hat{\theta}_{n}^{(j)})^2-\sum_{j=1}^{q}\kappa_{n}^{(j)}|\bar{\theta}^{(j)}|\\
    &=\sum_{j=1}^{q}(\tilde{\theta}_{\mathbb{I},n}^{(j)}-\bar{\theta}^{(j)})^2+2\sum_{i=1}^n(\tilde{\theta}_{\mathbb{I},n}^{(j)}-\bar{\theta}^{(j)})(\bar{\theta}^{(j)}-\hat{\theta}_{n}^{(j)})+\sum_{j=1}^{q}\kappa_{n}^{(j)}|\tilde{\theta}_{\mathbb{I},n}^{(j)}|-\sum_{j=1}^{q}\kappa_{n}^{(j)}|\bar{\theta}^{(j)}|\\
    &\geq\sum_{j=1}^{q}(\tilde{\theta}_{\mathbb{I},n}^{(j)}-\bar{\theta}^{(j)})^2-2\sum_{i=1}^n(\tilde{\theta}_{\mathbb{I},n}^{(j)}-\bar{\theta}^{(j)})(\hat{\theta}_{n}^{(j)}-\bar{\theta}^{(j)})+\sum_{j\in\bar{\mathcal{F}}_{1}}\kappa_{n}^{(j)}(|\tilde{\theta}_{\mathbb{I},n}^{(j)}|-|\bar{\theta}^{(j)}|)\\
    &\geq\sum_{j=1}^{q}(\tilde{\theta}_{\mathbb{I},n}^{(j)}-\bar{\theta}^{(j)})^2-2\sum_{i=1}^n(\tilde{\theta}_{\mathbb{I},n}^{(j)}-\bar{\theta}^{(j)})(\hat{\theta}_{n}^{(j)}-\bar{\theta}^{(j)})-\sum_{j\in\bar{\mathcal{F}}_{1}}\kappa_{n}^{(j)}|\tilde{\theta}_{\mathbb{I},n}^{(j)}-\bar{\theta}^{(j)}|\\
    &\geq\sum_{j=1}^{q}(\tilde{\theta}_{\mathbb{I},n}^{(j)}-\bar{\theta}^{(j)})^2-2\sum_{i=1}^n(\tilde{\theta}_{\mathbb{I},n}^{(j)}-\bar{\theta}^{(j)})(\hat{\theta}_{n}^{(j)}-\bar{\theta}^{(j)})-\bar{a}_n\sum_{j\in\bar{\mathcal{F}}_{1}}|\tilde{\theta}_{\mathbb{I},n}^{(j)}-\bar{\theta}^{(j)}|\\
    &\geq|\tilde{\theta}_{\mathbb{I},n}-\bar{\theta}|^2-2|\tilde{\theta}_{\mathbb{I},n}-\bar{\theta}||\hat{\theta}_{n}-\bar{\theta}|-\bar{a}_{n}|{\bar{\mathcal{F}}}_{1}||\tilde{\theta}_{\mathbb{I},n}-\bar{\theta}|,
\end{align*}
one has
\begin{align*}
    0&\geq|\tilde{\theta}_{\mathbb{I},n}-\bar{\theta}|\Bigl\{|\sqrt{n}(\tilde{\theta}_{\mathbb{I},n}-\bar{\theta})|-2|\sqrt{n}(\hat{\theta}_{n}-\bar{\theta})|-\sqrt{n}\bar{a}_{n}|{\bar{\mathcal{F}}}_{1}|\Bigr\},
\end{align*}
which yields 
\begin{align*}
    |\sqrt{n}(\tilde{\theta}_{\mathbb{I},n}-\bar{\theta})|\leq 2|\sqrt{n}(\hat{\theta}_{n}-\bar{\theta})|+\sqrt{n}\bar{a}_{n}|{\bar{\mathcal{F}}}_{1}|.
\end{align*}
Thus, it follows from Lemma \ref{abbarprob} and \ref{starop1} that under $H_1$,
\begin{align*}
    2|\sqrt{n}(\hat{\theta}_{n}-\bar{\theta})|+\sqrt{n}\bar{a}_{n}|{\bar{\mathcal{F}}}_{1}|=O_p(1),
\end{align*}
so that we have $\sqrt{n}(\tilde{\theta}_{\mathbb{I},n}-\bar{\theta})=O_p(1)$.
\end{proof}
\begin{proof}[\textbf{Proof of Lemma \ref{AeoracleH1}}]
Set $E_{n}^{(j)}=\bigl\{\tilde{\theta}_{\mathbb{I},n}^{(j)}\neq0\bigr\}$ for $j\in\bar{\mathcal{F}}_{0}$. On $E_{n}^{(j)}$, 
\begin{align*}
    \sqrt{n}\left.\frac{\partial\mathbb{Q}_{\mathbb{I},n}(\theta)}{\partial\theta^{(j)}}\right|_{\theta=\tilde{\theta}_{\mathbb{I},n}}=2\sqrt{n}(\tilde{\theta}_{\mathbb{I},n}^{(j)}-\hat{\theta}_{n}^{(j)})+\sqrt{n}\kappa_{n}^{(j)}sign(\tilde{\theta}_{\mathbb{I},n}^{(j)})
\end{align*}
for $j\in\bar{\mathcal{F}}_{0}$, where 
\begin{align*}
    sign(x)=\begin{cases}
    1, & (x>0),\\
    0, & (x=0),\\
    -1, & (x<0)
    \end{cases}
\end{align*}
for $x\in\mathbb{R}$. The definition of $\tilde{\theta}_{\mathbb{I},n}$ implies that for $j\in\bar{\mathcal{F}}_{0}$,
\begin{align*}
    2\sqrt{n}(\tilde{\theta}_{\mathbb{I},n}^{(j)}-\hat{\theta}_{n}^{(j)})
    =-\sqrt{n}\kappa_{n}^{(j)}sign(\tilde{\theta}_{\mathbb{I},n}^{(j)})
\end{align*}
on $E_{n}^{(j)}$, so that for $j\in\bar{\mathcal{F}}_{0}$,
\begin{align*}
\begin{split}
    \bigl|2\sqrt{n}(\tilde{\theta}_{\mathbb{I},n}^{(j)}-\hat{\theta}_{n}^{(j)})\bigr|&=\sqrt{n}\kappa_{n}^{(j)}\bigl|sign(\tilde{\theta}_{\mathbb{I},n}^{(j)})\bigr|=\sqrt{n}\kappa_{n}^{(j)}\geq \sqrt{n}\bar{b}_{n}
\end{split}
\end{align*}
on $E_{n}^{(j)}$. Note that Lemma \ref{starop1} and Lemma \ref{thetaop1} yield
\begin{align*}
    \sqrt{n}(\tilde{\theta}_{\mathbb{I},n}-\hat{\theta}_{n})=\sqrt{n}(\tilde{\theta}_{\mathbb{I},n}-\bar{\theta})-\sqrt{n}(\hat{\theta}_{n}-\bar{\theta})=O_p(1)
\end{align*}
under $H_1$. Since it holds from Lemma \ref{abbarprob} that for $j\in\bar{\mathcal{F}}_{0}$,
\begin{align*}
    2\sqrt{n}(\tilde{\theta}_{\mathbb{I},n}^{(j)}-\hat{\theta}_{n}^{(j)})\frac{1}{\sqrt{n}\bar{b}_{n}}=o_p(1)
\end{align*}
under $H_1$, one gets
\begin{align}       
    \mathbb{P}\bigl(E_{n}^{(j)}\bigr)
    \leq\mathbb{P}\left(\Bigl| 2\sqrt{n}(\tilde{\theta}_{\mathbb{I},n}^{(j)}-\hat{\theta}_{n}^{(j)})\frac{1}{\sqrt{n}\bar{b}_{n}}\Bigr|\geq 1\right)\longrightarrow 0\label{tilde0}
\end{align}
for $j\in\bar{\mathcal{F}}_{0}$ under $H_1$ as $n\longrightarrow\infty$. On
\begin{align*}
    \Bigl\{|(\tilde{\theta}_{\mathbb{I},n}-\bar{\theta})_{\bar{\mathcal{F}}_{1}}|<\min_{j\in\bar{\mathcal{F}}_{1}}|\bar{\theta}^{(j)}|\Bigr\},
\end{align*}
we see
\begin{align*}
    |\bar{\theta}^{(j)}|-|\tilde{\theta}_{\mathbb{I},n}^{(j)}|\leq
    |\tilde{\theta}_{\mathbb{I},n}^{(j)}-\bar{\theta}^{(j)}|\leq|(\tilde{\theta}_{\mathbb{I},n}-\bar{\theta})_{\bar{\mathcal{F}}_{1}}|<\min_{j\in\bar{\mathcal{F}}_{1}}|\bar{\theta}^{(j)}|\leq |\bar{\theta}^{(j)}|
\end{align*}
for $j\in\bar{\mathcal{F}}_{1}$, which implies that $|\tilde{\theta}_{\mathbb{I},n}^{(j)}|>0$ for $j\in\bar{\mathcal{F}}_{1}$. Thus, one has
\begin{align*}
    \Bigl(\bigcap_{j\in\bar{\mathcal{F}}_{0}}(E_n^{(j)})^c\Bigr)\cap\Bigl\{|(\tilde{\theta}_{\mathbb{I},n}-\bar{\theta})_{\bar{\mathcal{F}}_{1}}|<\min_{j\in\bar{\mathcal{F}}_{1}}|\bar{\theta}^{(j)}|\Bigr\}
    \subset\Bigl\{\tilde{\mathcal{F}}_{\mathbb{I},n,1}=\bar{\mathcal{F}}_{1}\Bigr\}.
\end{align*}
Therefore, it holds from (\ref{tilde0}) and Lemma \ref{thetaop1} that under $H_1$, 
\begin{align*}
    \mathbb{P}\left(\tilde{\mathcal{F}}_{\mathbb{I},n,1}\neq\bar{\mathcal{F}}_{1}\right)
    &\leq\sum_{j\in\bar{\mathcal{F}}_{0}}\mathbb{P}\bigl(E_{n}^{(j)}\bigr)
    +\mathbb{P}\left(|(\tilde{\theta}_{\mathbb{I},n}-
    \bar{\theta})_{\bar{\mathcal{F}}_{1}}|
    \geq\min_{j\in\bar{\mathcal{F}}_{1}}|\bar{\theta}^{(j)}|\right)\\
    &\leq\sum_{j\in\bar{\mathcal{F}}_{0}}\mathbb{P}\bigl(E_{n}^{(j)}\bigr)
    +\mathbb{P}\left(\frac{1}{\sqrt{n}}|\sqrt{n}(\tilde{\theta}_{\mathbb{I},n}-\bar{\theta})_{\bar{\mathcal{F}}_{1}}|\geq\min_{j\in\bar{\mathcal{F}}_{1}}|\bar{\theta}^{(j)}|\right)\longrightarrow 0,
\end{align*}
which implies
\begin{align*}
    \mathbb{P}\left(\tilde{\mathcal{F}}_{\mathbb{I},n,1}    =\bar{\mathcal{F}}_{1}\right)\longrightarrow 1
\end{align*}
under $H_1$ as $n\longrightarrow\infty$.
\end{proof}
\begin{lemma}\label{testcons}
Under \textrm{\textbf{[A1]}}, \textrm{\textbf{[B1]}},
\textrm{\textbf{[C1]}},
\textrm{\textbf{[D1]}}, \textrm{\textbf{[E2]}} and \textrm{\textbf{[F2]}},
as $h_n\longrightarrow0$,
$\sqrt{n}\lambda_{1,n}\longrightarrow0$ and
$\sqrt{n}\lambda_{2,n}\longrightarrow\infty$,
\begin{align*}
    \check{\theta}_{n}\stackrel{P}{\longrightarrow}\bar{\theta}
\end{align*}
under $H_1$.
\end{lemma}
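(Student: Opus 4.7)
The plan is to reduce the minimization of $\mathbb{F}_n$ over the data-dependent set $\tilde\Theta_n$ to minimization over a fixed restricted parameter space, then apply a standard M-estimation consistency argument. Define
\begin{align*}
\bar\Theta = \Bigl\{\theta\in\Theta \ \big|\ \theta^{(j)}=0\ (j\in \bar{\mathcal F}_0)\Bigr\},
\end{align*}
and let $A_n=\bigl\{\tilde{\mathcal F}_{\mathbb I,n,1}=\bar{\mathcal F}_1\bigr\}$. By Lemma \ref{AeoracleH1}, $\mathbb P(A_n)\to 1$ under $H_1$, and on $A_n$ one has $\tilde\Theta_n=\bar\Theta$. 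Note that $\bar\theta\in\bar\Theta$ by the definition of $\bar{\mathcal F}_0$.

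Next, I would introduce the auxiliary restricted estimator $\bar\theta_n$ defined by
\begin{align*}
\mathbb F_n(\bar\theta_n)=\inf_{\theta\in\bar\Theta}\mathbb F_n(\theta),
\end{align*}
so that $\check\theta_n=\bar\theta_n$ on $A_n$. It therefore suffices to prove $\bar\theta_n\stackrel{P}{\longrightarrow}\bar\theta$ under $H_1$, because then for every $\varepsilon>0$,
\begin{align*}
\mathbb P\bigl(|\check\theta_n-\bar\theta|>\varepsilon\bigr)
\le \mathbb P\bigl(|\bar\theta_n-\bar\theta|>\varepsilon\bigr)+\mathbb P(A_n^c)\longrightarrow 0.
\end{align*}

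To prove $\bar\theta_n\stackrel{P}{\longrightarrow}\bar\theta$, I would mimic the strategy used for Lemma \ref{starconslemma} (consistency of $\hat\theta_n$ to $\bar\theta$ under $H_1$), but carried out on the compact restricted set $\bar\Theta\subset\Theta$. The key ingredients are: (i) $\mathbb Q_{\mathbb{XX}}\stackrel{P}{\longrightarrow}{\bf\Sigma}_0$ by Theorem \ref{Qtheoremnon}; (ii) $\theta\mapsto{\bf\Sigma}(\theta)$ is continuous and $\det{\bf\Sigma}(\theta)$ is bounded away from zero on the compact $\Theta$ by Lemma \ref{Sigmaposlemma}, so that $\tilde{\rm V}(\mathbb Q_{\mathbb{XX}},{\bf\Sigma}(\theta))$ eventually coincides with ${\rm V}(\mathbb Q_{\mathbb{XX}},{\bf\Sigma}(\theta))$ with probability going to one, and $(\mathbb Q_{\mathbb{XX}},{\bf\Sigma}(\theta))\mapsto \mathrm F(\mathbb Q_{\mathbb{XX}},{\bf\Sigma}(\theta))$ is continuous there; (iii) these two facts together yield the uniform convergence
\begin{align*}
\sup_{\theta\in\Theta}\bigl|\mathbb F_n(\theta)-\mathbb U(\theta)\bigr|\stackrel{P}{\longrightarrow}0.
\end{align*}
Combining this uniform convergence with the identifiability of the minimum of $\mathbb U$ on $\Theta$ guaranteed by \textbf{[E2]} (which in particular makes $\bar\theta$ the unique minimizer of $\mathbb U$ over the closed subset $\bar\Theta$, since $\bar\theta\in\bar\Theta$), a standard argmin-continuity argument for M-estimators on a compact set gives $\bar\theta_n\stackrel{P}{\longrightarrow}\bar\theta$, which completes the proof.

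The main obstacle I anticipate is checking that the uniform convergence $\sup_{\theta\in\Theta}|\mathbb F_n(\theta)-\mathbb U(\theta)|\stackrel{P}{\to}0$ holds despite the definitional switch between $\tilde{\rm V}$ and ${\rm V}$ when $\mathbb Q_{\mathbb{XX}}$ is singular; however, since $\mathbb Q_{\mathbb{XX}}\stackrel{P}{\longrightarrow}{\bf\Sigma}_0>0$, the singular case occurs only on an event of probability tending to zero, so this is a routine issue that can be handled by a standard truncation argument as in the proof of Lemma \ref{starconslemma}.
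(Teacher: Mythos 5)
Your proposal is correct and follows essentially the same route as the paper: both reduce to the oracle event $\{\tilde{\mathcal F}_{\mathbb I,n,1}=\bar{\mathcal F}_1\}$ via Lemma \ref{AeoracleH1}, exploit the inequality $\mathbb F_n(\check\theta_n)\le\mathbb F_n(\bar\theta)$ on that event, and conclude with the uniform convergence of Lemma \ref{Fproblemma} together with the well-separated minimum supplied by \textbf{[E2]}. The only cosmetic difference is that you introduce the auxiliary restricted minimizer $\bar\theta_n$ explicitly, whereas the paper runs the same three-way probability bound directly on $\check\theta_n$.
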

\begin{proof}
From \textrm{\textbf{[E2]}}, for any $\varepsilon>0$, there exists $\delta>0$ such that
\begin{align}
    \bigl|\check{\theta}_{n}-\bar{\theta}\bigr|>\varepsilon\Longrightarrow \mathbb{U}(\check{\theta}_{n})-\mathbb{U}(\bar{\theta})>\delta.\label{assumptionIstar}
\end{align}
Let $\bar{D}_{n}=\{\tilde{\mathcal{F}}_{\mathbb{I},n,1}=\bar{\mathcal{F}}_{1}\}$ and
\begin{align*}
   \bar{\Theta}=\Bigl\{\theta\in\Theta\ \big|\ \theta^{(j)}=0\ \ (j\in\bar{\mathcal{F}}_{0})\Bigr\}.
\end{align*}
Since $\tilde{\Theta}=\bar{\Theta}$ on $\bar{D}_{n}$ and $\bar{\theta}\in\bar{\Theta}$, we have
\begin{align*}
    \mathbb{F}_{n}(\check{\theta}_{n})=\inf_{\theta\in\bar{\Theta}}\mathbb{F}_{n}(\theta)\leq\mathbb{F}_{n}(\bar{\theta})
\end{align*}
on $\bar{D}_{n}$. Thus, one gets
\begin{align*}
    \mathbb{P}\bigl(\bar{D}_{n}\bigr)\leq\mathbb{P}\left(\tilde{\rm{F}}(\mathbb{Q}_{\mathbb{XX}},{\bf{\Sigma}}(\check{\theta}_{n}))-\tilde{\rm{F}}(\mathbb{Q}_{\mathbb{XX}},{\bf{\Sigma}}(\bar{\theta}))\leq\frac{\delta}{3}\right),
\end{align*}
so that it follows from Lemma \ref{AeoracleH1} that under $H_1$,
\begin{align}
    \mathbb{P}\left(\tilde{\rm{F}}(\mathbb{Q}_{\mathbb{XX}},{\bf{\Sigma}}(\check{\theta}_{n}))-\tilde{\rm{F}}(\mathbb{Q}_{\mathbb{XX}},{\bf{\Sigma}}(\bar{\theta}))>\frac{\delta}{3}\right)\longrightarrow 0\label{asymin}
\end{align}
as $n\longrightarrow\infty$. It holds from Lemma \ref{Fproblemma}, (\ref{assumptionIstar}) and (\ref{asymin}) that
\begin{align*}
    0&\leq \PP\Bigl(|\check{\theta}_{n}-\bar{\theta}|>\varepsilon\Bigr)\\
    &\leq\PP\Bigl(\mathbb{U}(\check{\theta}_{n})-\mathbb{U}(\bar{\theta})>\delta\Bigr)\\
    &\leq\PP\left(\rm{F}({\bf{\Sigma}}_0,{\bf{\Sigma}}(\check{\theta}_{n}))-\tilde{\rm{F}}(\mathbb{Q}_{\mathbb{XX}},{\bf{\Sigma}}(\check{\theta}_{n}))>\frac{\delta}{3}\right)\\
    &\qquad+\PP\left(\tilde{\rm{F}}(\mathbb{Q}_{\mathbb{XX}},{\bf{\Sigma}}(\check{\theta}_{n}))-\tilde{\rm{F}}(\mathbb{Q}_{\mathbb{XX}},{\bf{\Sigma}}(\bar{\theta}))>\frac{\delta}{3}\right)\\
    &\qquad\qquad+\PP\left(\tilde{\rm{F}}(\mathbb{Q}_{\mathbb{XX}},{\bf{\Sigma}}(\bar{\theta}))-\rm{F}({\bf{\Sigma}}_0,{\bf{\Sigma}}(\bar{\theta}))>\frac{\delta}{3}\right)\\
    &\leq 2\PP\left(\sup_{\theta\in\Theta}\left|\tilde{\rm{F}}(\mathbb{Q}_{\mathbb{XX}},{\bf{\Sigma}}(\theta))-\rm{F}({\bf{\Sigma}}_0,{\bf{\Sigma}}(\theta))\right|>\frac{\delta}{3}\right)\\
    &\qquad\qquad\quad+\mathbb{P}\left(\tilde{\rm{F}}(\mathbb{Q}_{\mathbb{XX}},{\bf{\Sigma}}(\check{\theta}_{n}))-\tilde{\rm{F}}(\mathbb{Q}_{\mathbb{XX}},{\bf{\Sigma}}(\bar{\theta}))>\frac{\delta}{3}\right)\stackrel{}{\longrightarrow}0
\end{align*}
under $H_1$ as $n\longrightarrow\infty$, which yields $\check{\theta}_{n}\stackrel{P}{\longrightarrow}\bar{\theta}$ under $H_1$.
\end{proof}
\begin{proposition}\label{TpropH1}
Under \textrm{\textbf{[A1]}}, \textrm{\textbf{[B1]}},
\textrm{\textbf{[C1]}},
\textrm{\textbf{[D1]}},
\textrm{\textbf{[E2]}} and \textrm{\textbf{[F2]}},
as $h_n\longrightarrow 0$,
$\sqrt{n}\lambda_{1,n}\longrightarrow0$ and
$\sqrt{n}\lambda_{2,n}\longrightarrow\infty$,
\begin{align*}
    \mathbb{P}\left(\check{\mathbb{T}}_{n}>\chi^2_{\bar{p}-|\mathcal{\tilde{F}}_{\mathbb{I},n,1}|}(\alpha)\right)-\mathbb{P}\left(\check{\mathbb{T}}_{n}>\chi^2_{\bar{p}-|\bar{\mathcal{F}}_{1}|}(\alpha)\right)\longrightarrow 0
\end{align*}
under $H_1$.
\end{proposition}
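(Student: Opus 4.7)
The plan is to mirror exactly the strategy used for Proposition \ref{Tprop}, replacing the role of $\mathcal{F}_1$ with $\bar{\mathcal{F}}_1$ and the role of Lemma \ref{Aeoracle} (which provides selection consistency under $H_0$) with Lemma \ref{AeoracleH1} (which provides selection consistency under $H_1$). The key observation is that the statement is \emph{not} a distributional limit theorem for $\check{\mathbb{T}}_n$ under $H_1$; it merely says that substituting the estimated active-set size $|\tilde{\mathcal{F}}_{\mathbb{I},n,1}|$ for the true $|\bar{\mathcal{F}}_1|$ in the critical value produces an asymptotically negligible change in the rejection probability. This is purely a consequence of selection consistency and does not need any CLT-type input.

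First, I would set $\bar{D}_n = \bigl\{\tilde{\mathcal{F}}_{\mathbb{I},n,1} = \bar{\mathcal{F}}_1\bigr\}$. On $\bar{D}_n$, the random degree of freedom $\bar{p} - |\tilde{\mathcal{F}}_{\mathbb{I},n,1}|$ coincides deterministically with $\bar{p} - |\bar{\mathcal{F}}_1|$, so the two events
\begin{align*}
\Bigl\{\check{\mathbb{T}}_n > \chi^2_{\bar{p}-|\tilde{\mathcal{F}}_{\mathbb{I},n,1}|}(\alpha)\Bigr\} \cap \bar{D}_n \quad \text{and} \quad \Bigl\{\check{\mathbb{T}}_n > \chi^2_{\bar{p}-|\bar{\mathcal{F}}_1|}(\alpha)\Bigr\} \cap \bar{D}_n
\end{align*}
are identical. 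Splitting each probability according to $\bar{D}_n$ and its complement, and bounding the contribution on $\bar{D}_n^c$ crudely by $\mathbb{P}(\bar{D}_n^c)$, gives the two-sided estimate
\begin{align*}
\left|\mathbb{P}\left(\check{\mathbb{T}}_n > \chi^2_{\bar{p}-|\tilde{\mathcal{F}}_{\mathbb{I},n,1}|}(\alpha)\right) - \mathbb{P}\left(\check{\mathbb{T}}_n > \chi^2_{\bar{p}-|\bar{\mathcal{F}}_1|}(\alpha)\right)\right| \leq \mathbb{P}\bigl(\bar{D}_n^c\bigr),
\end{align*}
which is the exact analogue of inequality (\ref{propine}) in the proof of Proposition \ref{Tprop}.

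Finally, I would invoke Lemma \ref{AeoracleH1}, whose hypotheses \textbf{[A1]}, \textbf{[B1]}, \textbf{[C1]}, \textbf{[D1]}, \textbf{[E2]}, \textbf{[F2]} together with $\sqrt{n}\lambda_{1,n} \to 0$ and $\sqrt{n}\lambda_{2,n} \to \infty$ match the present assumptions. That lemma yields $\mathbb{P}(\bar{D}_n) \to 1$ under $H_1$, i.e.\ $\mathbb{P}(\bar{D}_n^c) \to 0$, and the inequality above then delivers the conclusion. There is essentially no analytic obstacle; the entire content of the proposition is carried by the selection-consistency lemma, and the argument is a verbatim transcription of the proof of Proposition \ref{Tprop}. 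The one point worth checking carefully is that the inclusion-exclusion-style bound genuinely produces a two-sided estimate with the same right-hand side $\mathbb{P}(\bar{D}_n^c)$ in both directions, which is the same bookkeeping already performed explicitly under $H_0$.
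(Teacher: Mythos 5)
Your proposal is correct and is essentially identical to the paper's own proof: the paper likewise defines $\bar{D}_{n}=\{\tilde{\mathcal{F}}_{\mathbb{I},n,1}=\bar{\mathcal{F}}_{1}\}$, derives the two-sided bound $\bigl|\mathbb{P}(\check{\mathbb{T}}_{n}>\chi^2_{\bar{p}-|\tilde{\mathcal{F}}_{\mathbb{I},n,1}|}(\alpha))-\mathbb{P}(\check{\mathbb{T}}_{n}>\chi^2_{\bar{p}-|\bar{\mathcal{F}}_{1}|}(\alpha))\bigr|\leq\mathbb{P}(\bar{D}_{n}^{c})$ exactly as in Proposition \ref{Tprop}, and concludes via Lemma \ref{AeoracleH1}. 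No gaps.
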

\begin{proof}
In a similar way to Proposition \ref{Tprop}, we obtain
\begin{align*}
    \left|\mathbb{P}\left(\check{\mathbb{T}}_{n}>\chi^2_{\bar{p}-|\mathcal{\tilde{F}}_{\mathbb{I},n,1}|}(\alpha)\right)-\mathbb{P}\left(\check{\mathbb{T}}_{n}>\chi^2_{\bar{p}-|\bar{\mathcal{F}}_{1}|}(\alpha)\right)\right|\leq\mathbb{P}\bigl(\bar{D}_{n}^{c}\bigr).
\end{align*}
Since it follows from Lemma \ref{AeoracleH1} that $\mathbb{P}(\bar{D}_{n}^c)\stackrel{}{\longrightarrow}0$ under $H_1$ as $n\longrightarrow\infty$, 
\begin{align*}
    \mathbb{P}\left(\check{\mathbb{T}}_{n}>\chi^2_{\bar{p}-|\mathcal{\tilde{F}}_{\mathbb{I},n,1}|}(\alpha)\right)-\mathbb{P}\left(\check{\mathbb{T}}_{n}>\chi^2_{\bar{p}-|\bar{\mathcal{F}}_{1}|}(\alpha)\right)\longrightarrow 0
\end{align*}
under $H_1$ as $n\longrightarrow\infty$.
\end{proof}
\begin{proof}[\textbf{Proof of Theorem \ref{checktcons}}]
In an analogous manner to Theorem $4$, it holds from Lemma \ref{testcons} that
\begin{align*}
    \frac{1}{n}\check{\mathbb{T}}_{n}\stackrel{P}{\longrightarrow} \mathbb{U}(\bar{\theta})
\end{align*}
under $H_1$. Recall that $\mathbb{U}(\bar{\theta})>0$ under $H_1$. Under $H_1$, 
\begin{align*}
    \mathbb{P}\left(\check{\mathbb{T}}_{n}>\chi^2_{\bar{p}-|\bar{\mathcal{F}}_{1}|}(\alpha)\right)
    &=1-\mathbb{P}\left(\frac{1}{n}\check{\mathbb{T}}_{n}\leq\frac{1}{n}\chi^2_{\bar{p}-|\bar{\mathcal{F}}_{1}|}(\alpha)\right)\stackrel{}{\longrightarrow}1
\end{align*}
as $n\longrightarrow\infty$. Therefore, Proposition \ref{TpropH1} and  Slutsky’s theorem imply that under $H_1$,
\begin{align*}
    \mathbb{P}\left(\check{\mathbb{T}}_{n}>\chi^2_{\bar{p}-|\mathcal{\tilde{F}}_{\mathbb{I},n,1}|}(\alpha)\right)\stackrel{}{\longrightarrow}1
\end{align*}
as $n\longrightarrow\infty$.
\end{proof}
\begin{proof}[\textbf{Proofs of Theorems 11-12}]
Using Theorem 5, we can prove Theorems 11-12 in a similar way to Theorems 9-10.
\end{proof}

\clearpage
\section{Appendix}
First, we introduce notation. Decompose $\Delta \mathbb{X}_{1,i}$ into
\begin{align*}
    \Delta \mathbb{X}_{1,i}=A_{i,n}+B_{i,n},
\end{align*}
where $A_{i,n}={\bf{\Lambda}}_{x_1,0}\Delta\xi_{0,i}$ and $B_{i,n}=\Delta\delta_{0,i}$. Recall that 
\begin{align*}
    \mathbb{X}_{2,t}={\bf{\Lambda}}_{x_2,0}{\bf{\Psi}}_{0}^{-1}{\bf{\Gamma}}_{0}\xi_{0,t}
    +{\bf{\Lambda}}_{x_2,0}{\bf{\Psi}}_{0}^{-1}\zeta_{0,t}+\varepsilon_{0,t}.
\end{align*}
$\Delta \mathbb{X}_{2,i}$ is decomposed into
\begin{align*}
    \Delta \mathbb{X}_{2,i}=C_{i,n}+D_{i,n}+E_{i,n},
\end{align*}
where
\begin{align*}
    C_{i,n}={\bf{\Lambda}}_{x_2,0}{\bf{\Psi}}_0^{-1}{\bf{\Gamma}}_0\Delta\xi_{0,i},\ 
    D_{i,n}={\bf{\Lambda}}_{x_2,0}{\bf{\Psi}}_0^{-1}\Delta\zeta_{0,i},\
    E_{i,n}=\Delta\varepsilon_{0,i}.
\end{align*}
In addition, we decompose ${\bf{\Sigma}}_0$ into
\begin{align*}
    {\bf{\Sigma}}_0=\begin{pmatrix}
    {\bf{\Sigma}}_{0}^{11} & {\bf{\Sigma}}_{0}^{12}\\
    {\bf{\Sigma}}_{0}^{12\top} & {\bf{\Sigma}}_{0}^{22}
    \end{pmatrix}=
    \begin{pmatrix}
    A+B & F\\
    F^{\top} & C+D+E
    \end{pmatrix},
\end{align*}
where
\begin{align*}
    &\quad A={\bf{\Lambda}}_{x_1,0}{\bf{\Sigma}}_{\xi\xi,0}{\bf{\Lambda}}_{x_1,0}^{\top},\ B={\bf{\Sigma}}_{\delta\delta,0},\ C={\bf{\Lambda}}_{x_2,0}{\bf{\Psi}}_0^{-1}{\bf{\Gamma}}_0{\bf{\Sigma}}_{\xi\xi,0}{\bf{\Gamma}}_0^{\top}{\bf{\Psi}}_0^{-1\top}{\bf{\Lambda}}_{x_2,0}^{\top},\\
    &D={\bf{\Lambda}}_{x_2,0}{\bf{\Psi}}_0^{-1}{\bf{\Sigma}}_{\zeta\zeta,0}
    {\bf{\Psi}}_0^{-1\top}{\bf{\Lambda}}_{x_2,0}^{\top},\
    E={\bf{\Sigma}}_{\varepsilon\varepsilon,0},\ F={\bf{\Lambda}}_{x_1,0}{\bf{\Sigma}}_{\xi\xi,0}{\bf{\Gamma}}_{0}^{\top}{\bf{\Psi}}_0^{-1\top}{\bf{\Lambda}}_{x_2,0}^{\top}.
\end{align*}
Let $R_i(h_n^{\ell},\xi)=R(h_n^{\ell},\xi_{0,t_{i-1}^n})$, $R_i(h_n^{\ell},\delta)=R(h_n^{\ell},\delta_{0,t_{i-1}^n})$, $R_i(h_n^{\ell},\varepsilon)=R(h_n^{\ell},\varepsilon_{0,t_{i-1}^n})$ and $R_i(h_n^{\ell},\zeta)=R(h_n^{\ell},\zeta_{0,t_{i-1}^n})$ for $\ell\geq 0$.  For any twice-differentiable function $f$, $\partial_{x}f(a)=\left.\partial_{x}f(x)\right|_{x=a}$ and $\partial^2_{x}f(a)=\left.\partial^2_{x}f(x)\right|_{x=a}$.
\subsection{Details of (\ref{quasi}) and (\ref{quasilog})}\label{quasi-likelihood}
Let $\Xi_{t}$ be the Euler-Maruyama approximation of $\xi_{t}$. One has 
\begin{align*}
    \Delta\Xi_i&=B_{1}(\Xi_{t_{i-1}^n})h_n+{\bf{S}}_1\Delta W_{1,i}.
\end{align*}
In the same way, $D_{t}$, $E_{t}$ and $Z_{t}$ are set to the Euler-Maruyama approximation of $\delta_{t}$, $\varepsilon_{t}$ and $\zeta_{t}$ respectively. We get 
\begin{align*}
    \Delta D_{i}&=B_{2}(D_{t_{i-1}^n})h_n+{\bf{S}}_2\Delta W_{2,i},\\
    \Delta E_{i}&=B_{3}(E_{t_{i-1}^n})h_n+{\bf{S}}_3\Delta W_{3,i},\\
    \Delta Z_{i}&=B_{4}(Z_{t_{i-1}^n})h_n+{\bf{S}}_4\Delta W_{4,i}.
\end{align*}
Note that it holds from \textbf{[A1]}(iii) and \textbf{[B1]}(iii) that
\begin{align*}
    \begin{split}
    {\bf{\Lambda}}_{x_1}\Delta \Xi_i +\Delta D_i
    &={\bf{\Lambda}}_{x_1}{\bf{S}}_1\Delta W_{1,i}+{\bf{S}}_2\Delta W_{2,i}+R(h_n,\Xi_{t_{i-1}^n})+R(h_n,\Delta_{t_{i-1}^n}).
    \end{split}
\end{align*}
If we set $\bar{\mathbb{X}}_{1,t}$ as an approximation of $\mathbb{X}_{1,t}$, (\ref{X}) yields
\begin{align}
    \Delta\bar{\mathbb{X}}_{1,i}={\bf{\Lambda}}_{x_1}{\bf{S}}_1\Delta W_{1,i}+{\bf{S}}_2\Delta W_{2,i}. \label{X1app}
\end{align}
In a similar way, since it follows from \textbf{[A1]} (iii), \textbf{[C1]} (iii) and \textbf{[D1]} (iii) that
\begin{align*}
    \begin{split}
    &{\bf{\Lambda}}_{x_2}{\bf{\Psi}}^{-1}{\bf{\Gamma}}\Delta\Xi_{i}+{\bf{\Lambda}}_{x_2}{\bf{\Psi}}^{-1}\Delta Z_i+\Delta E_i\\
    &\qquad\qquad\qquad={\bf{\Lambda}}_{x_2}{\bf{\Psi}}^{-1}{\bf{\Gamma}} {\bf{S}}_1\Delta W_{1,i}+{\bf{\Lambda}}_{x_2}{\bf{\Psi}}^{-1}{\bf{S}}_4\Delta W_{4,i}+{\bf{S}}_3\Delta W_{3,i}\\
    &\qquad\qquad\qquad\qquad\qquad\qquad+R(h_n,\Xi_{t_{i-1}^n})+R(h_n,Z_{t_{i-1}^n})+R(h_n,E_{t_{i-1}^n}), \\
    \end{split}
\end{align*}
(\ref{Y}) and (\ref{eta}) imply
\begin{align}
    \begin{split}
    \Delta\bar{\mathbb{X}}_{2,i}&={\bf{\Lambda}}_{x_2}{\bf{\Psi}}^{-1}{\bf{\Gamma}} {\bf{S}}_1\Delta W_{1,i}+{\bf{\Lambda}}_{x_{2}}{\bf{\Psi}}^{-1}{\bf{S}}_4\Delta W_{4,i}+{\bf{S}}_3\Delta W_{3,i},\label{X2app}
    \end{split}
\end{align}
where $\bar{\mathbb{X}}_{2,t}$ denotes an approximation of $\mathbb{X}_{2,t}$.
Set $\bar{\mathbb{X}}_{t}=(\bar{\mathbb{X}}_{1,t}^{\top},\bar{\mathbb{X}}_{2,t}^{\top})^{\top}$. It holds from (\ref{X1app}) and (\ref{X2app}) that
\begin{align*}
    \Delta\bar{\mathbb{X}}_{i}
    &=\begin{pmatrix}
    {\bf{\Lambda}}_{x_1}{\bf{S}}_1 & O_{p_1\times r_1}\\
    O_{p_2\times r_1} &{\bf{\Lambda}}_{x_2}{\bf{\Psi}}^{-1}{\bf{\Gamma}} {\bf{S}}_1
    \end{pmatrix}
    \begin{pmatrix}
    \Delta W_{1,i}\\
    \Delta W_{1,i}
    \end{pmatrix}\\
    &\qquad\quad+\begin{pmatrix}
    {\bf{S}}_2 \\
    O_{p_2\times r_2} 
    \end{pmatrix}\Delta W_{2,i}+\begin{pmatrix}
    O_{p_1\times r_3} \\
    {\bf{S}}_3
    \end{pmatrix}\Delta W_{3,i}+\begin{pmatrix}
    O_{p_1\times r_4}\\
    {\bf{\Lambda}}_{x_2}{\bf{\Psi}}^{-1}{\bf{S}}_4
    \end{pmatrix}\Delta W_{4,i}.
\end{align*}
The property of the Brownian motion implies
\begin{align}
    \begin{pmatrix}
    \Delta W_{1,i}\\
    \Delta W_{1,i}
    \end{pmatrix}\sim N_{2r_1}\left(0,h_n\begin{pmatrix}
    \mathbb{I}_{r_{1}} & \mathbb{I}_{r_{1}}\\
    \mathbb{I}_{r_{1}} & \mathbb{I}_{r_{1}}
    \end{pmatrix}\right),\label{W1-1}
\end{align}
so that we see from (\ref{W1-1}) that
\begin{align*}
    \begin{split}
    &\begin{pmatrix}
    {\bf{\Lambda}}_{x_1}{\bf{S}}_1 & O_{p_1\times r_1}\\
    O_{p_2\times r_1} &{\bf{\Lambda}}_{x_2}{\bf{\Psi}}^{-1}{\bf{\Gamma}} {\bf{S}}_1
    \end{pmatrix}
    \begin{pmatrix}
    \Delta W_{1,i}\\
    \Delta W_{1,i}
    \end{pmatrix}\\
    &\qquad\qquad\qquad\sim N_{p}\left(0,h_n\begin{pmatrix}
    {\bf{\Lambda}}_{x_1}{\bf{\Sigma}}_{\xi\xi}{\bf{\Lambda}}_{x_1}^{\top}
    & {\bf{\Lambda}}_{x_1}{\bf{\Sigma}}_{\xi\xi}{\bf{\Gamma}}^{\top}{\bf{\Psi}}^{-1\top}{\bf{\Lambda}}_{x_2}^{\top}\\
    {\bf{\Lambda}}_{x_2}{\bf{\Psi}}^{-1}{\bf{\Gamma}}{\bf{\Sigma}}_{\xi\xi}{\bf{\Lambda}}_{x_1}^{\top}	&{\bf{\Lambda}}_{x_2}{\bf{\Psi}}^{-1}{\bf{\Gamma}}{\bf{\Sigma}}_{\xi\xi}{\bf{\Gamma}}^{\top}{\bf{\Psi}}^{-1\top}{\bf{\Lambda}}_{x_2}^{\top}
    \end{pmatrix}\right).
    \end{split}
\end{align*}
In an analogous manner, we have
\begin{align*}
    \begin{pmatrix}
    {\bf{S}}_2 \\
    O_{p_2\times r_2} 
    \end{pmatrix}
    \Delta W_{2,i}
    &\sim N_{p}\left(0,h_n\begin{pmatrix}
    {\bf{\Sigma}}_{\delta\delta} & O_{p_1\times p_2}\\
    O_{p_2\times p_1} & O_{p_2\times p_2}
    \end{pmatrix}\right),\\
    \begin{pmatrix}
    O_{p_1\times r_3} \\
    {\bf{S}}_3
    \end{pmatrix}\Delta W_{3,i}
    &\sim N_{p}\left(0,h_n\begin{pmatrix}
    O_{p_1\times p_1} & O_{p_1\times p_2}\\
    O_{p_2\times p_1} & {\bf{\Sigma}}_{\varepsilon\varepsilon}
    \end{pmatrix}\right),\\
    \begin{split}
    \quad\begin{pmatrix}
    O_{p_1\times r_4} \\
    {\bf{\Lambda}}_{x_2}{\bf{\Psi}}^{-1}{\bf{S}}_4
    \end{pmatrix}
    \Delta W_{4,i}&\sim N_{p}\left(0,h_n\begin{pmatrix}
    O_{p_1\times p_1} & O_{p_1\times p_2}\\
    O_{p_2\times p_1}&{\bf{\Lambda}}_{x_2}{\bf{\Psi}}^{-1}{\bf{\Sigma}}_{\zeta\zeta}{\bf{\Psi}}^{-1\top}{\bf{\Lambda}}_{x_2}^{\top} 
\end{pmatrix}\right).
\end{split}
\end{align*}
Therefore, since $W_{1,t}$, $W_{2,t}$, $W_{3,t}$ and $W_{4,t}$ are independent, it follows that 
\begin{align*}
    \Delta\bar{\mathbb{X}}_{i}\sim N_{p}\bigl(0,h_n{\bf{\Sigma}}(\theta)\bigr).
\end{align*}
Hence, one has the following joint probability density function of $(\bar{\mathbb{X}}_{t_{i}^n})_{0\leq i\leq n}$: 
\begin{align*}
    \prod_{i=1}^{n}\frac{1}{(2\pi)^{\frac{p}{2}}\det{(h_n{\bf{\Sigma}}(\theta)})^{\frac{1}{2}}}\exp{\left\{-\frac{1}{2h_n}(\bar{x}_{t_{i}^n}-\bar{x}_{t_{i-1}^n})^{\top}{\bf{\Sigma}}(\theta)^{-1}(\bar{x}_{t_{i}^n}-\bar{x}_{t_{i-1}^n})\right\}}.
\end{align*}
The quasi-likelihood is set to (\ref{quasi}). Since
\begin{align*}
    \log \mathbb{L}_{n}(\theta)
    &=\sum_{i=1}^n\left\{-\frac{p}{2}\log(2\pi)-\frac{p}{2}\log h_n
    -\frac{1}{2}\log\det{{\bf{\Sigma}}(\theta)}-\frac{1}{2h_n}(\Delta \mathbb{X}_i)^{\top}{\bf{\Sigma}}(\theta)^{-1}(\Delta \mathbb{X}_i)\right\}\\
    &=-\frac{pn}{2}\log(2\pi)-\frac{pn}{2}\log h_n-\frac{n}{2}\log\det{{\bf{\Sigma}}(\theta)}-\frac{1}{2h_n}\sum_{i=1}^n\tr{\Bigl\{{\bf{\Sigma}}(\theta)^{-1}(\Delta \mathbb{X}_i)(\Delta \mathbb{X}_i)^{\top}\Bigr\}}\\
    &=-\frac{pn}{2}\log(2\pi)-\frac{pn}{2}\log h_n-\frac{n}{2}\log\det{{\bf{\Sigma}}(\theta)}-\frac{n}{2}\tr{\Bigl\{{\bf{\Sigma}}(\theta)^{-1}\mathbb{Q}_{\mathbb{XX}}\Bigr\}},
\end{align*}
we obtain the quasi-log likelihood function (\ref{quasilog}).
\subsection{Proof of (\ref{model1iden})}\label{idenap}
Assume that
\begin{align}
    {\bf{\Sigma}}(\theta_1)={\bf{\Sigma}}(\theta_2).\label{sigmam1}
\end{align}
From the (1,3)-th element of (\ref{sigmam1}), we obtain
\begin{align}
    ({\bf{\Sigma}}_{\xi\xi,1})_{12}=({\bf{\Sigma}}_{\xi\xi,2})_{12}.\label{sigmaxi12}
\end{align}
Since it holds from the (2,3)-th and (1,4)-th elements of (\ref{sigmam1}) that
\begin{align*}
    ({\bf{\Lambda}}_{x_1,1})_{21}({\bf{\Sigma}}_{\xi\xi,1})_{12}&=({\bf{\Lambda}}_{x_1,2})_{21}({\bf{\Sigma}}_{\xi\xi,2})_{12},\\  ({\bf{\Lambda}}_{x_1,1})_{42}({\bf{\Sigma}}_{\xi\xi,1})_{12}&=({\bf{\Lambda}}_{x_1,2})_{42}({\bf{\Sigma}}_{\xi\xi,2})_{12}
\end{align*}
and $({\bf{\Sigma}}_{\xi\xi})_{12}$ is not zero, we see from  (\ref{sigmaxi12}) that
\begin{align}
    ({\bf{\Lambda}}_{x_1,1})_{21}=({\bf{\Lambda}}_{x_1,2})_{21},\  ({\bf{\Lambda}}_{x_1,1})_{42}=({\bf{\Lambda}}_{x_1,2})_{42}.\label{lambdax}
\end{align}
As it follows from the (1,2)-th and (3,4)-th elements of (\ref{sigmam1}) that
\begin{align*}
    ({\bf{\Lambda}}_{x_1,1})_{21}({\bf{\Sigma}}_{\xi\xi,1})_{11}&=({\bf{\Lambda}}_{x_1,2})_{21}({\bf{\Sigma}}_{\xi\xi,2})_{11},\\  ({\bf{\Lambda}}_{x_1,1})_{42}({\bf{\Sigma}}_{\xi\xi,1})_{22}&=({\bf{\Lambda}}_{x_1,2})_{42}({\bf{\Sigma}}_{\xi\xi,2})_{22}
\end{align*}
and $({\bf{\Lambda}}_{x_1})_{21}$ and $({\bf{\Lambda}}_{x_1})_{42}$ are not zero, it holds from (\ref{lambdax}) that
\begin{align}
    ({\bf{\Sigma}}_{\xi\xi,1})_{11}=({\bf{\Sigma}}_{\xi\xi,2})_{11}, 
    \ ({\bf{\Sigma}}_{\xi\xi,1})_{22}=({\bf{\Sigma}}_{\xi\xi,2})_{22}.
    \label{sigmaxi}
\end{align}
Hence, (\ref{sigmaxi12})-(\ref{sigmaxi}) imply that
\begin{align*} 
    {\bf{\Lambda}}_{x_1,1}{\bf{\Sigma}}_{\xi\xi,1}{\bf{\Lambda}}_{x_1,1}^{\top}
    ={\bf{\Lambda}}_{x_1,2}{\bf{\Sigma}}_{\xi\xi,2}{\bf{\Lambda}}_{x_1,2}^{\top},
\end{align*}
so that we see from (\ref{sigmam1}) that
\begin{align}
    {\bf{\Sigma}}_{\delta\delta,1}
    ={\bf{\Sigma}}_{\delta\delta,2}.\label{delta2}
\end{align}
Moreover, it holds from the (1,5) and (3,5)-th elements of (\ref{sigmam1}) that
\begin{align*}
    {\bf{\Sigma}}_{\xi\xi,1}{\bf{\Gamma}}_1^{\top}
    ={\bf{\Sigma}}_{\xi\xi,2}{\bf{\Gamma}}_2^{\top}
\end{align*}
and ${\bf{\Sigma}}_{\xi\xi}$ is a positive definite matrix, which yields
\begin{align}
    {\bf{\Gamma}}_1={\bf{\Gamma}}_2 \label{gamma}
\end{align}
from (\ref{sigmaxi12}) and (\ref{sigmaxi}). Note that ${\bf{\Sigma}}_{\xi\xi}{\bf{\Gamma}}^{\top}\neq 0$ since ${\bf{\Gamma}}$ is not a zero vector and ${\bf{\Sigma}}_{\xi\xi}$ is a positive definite matrix. It follows from the (1,6)-th element of (\ref{sigmam1}) that
\begin{align*}
    ({\bf{\Lambda}}_{x_2,1})_{21}({\bf{\Sigma}}_{\xi\xi,1}{\bf{\Gamma}}_1^{\top})_{11}=({\bf{\Lambda}}_{x_2,2})_{21}
    ({\bf{\Sigma}}_{\xi\xi,2}{\bf{\Gamma}}_2^{\top})_{11},
\end{align*}
which implies
\begin{align}
    {\bf{\Lambda}}_{x_2,1}={\bf{\Lambda}}_{x_2,2}\label{lambday}
\end{align}
from (\ref{sigmaxi12}), (\ref{sigmaxi}) and (\ref{gamma}). 
Since it holds from the (5,6)-th element of (\ref{sigmam1}) that
\begin{align*}
    &({\bf{\Lambda}}_{x_2,1})_{21}{\bf{\Gamma}}_1{\bf{\Sigma}}_{\xi\xi,1}{\bf{\Gamma}}_1^{\top}+({\bf{\Lambda}}_{x_2,1})_{21}{\bf{\Sigma}}_{\zeta\zeta,1}\\
    &\qquad\qquad\qquad\qquad\quad=({\bf{\Lambda}}_{x_2,2})_{21}{\bf{\Gamma}}_2{\bf{\Sigma}}_{\xi\xi,2}{\bf{\Gamma}}_2^{\top}+({\bf{\Lambda}}_{x_2,2})_{21}{\bf{\Sigma}}_{\zeta\zeta,2}
\end{align*}
and $({\bf{\Lambda}}_{x_2})_{21}$ is not zero, (\ref{sigmaxi12}), (\ref{sigmaxi}), (\ref{gamma}) and (\ref{lambday}) show
\begin{align}
    {\bf{\Sigma}}_{\zeta\zeta,1}={\bf{\Sigma}}_{\zeta\zeta,2}. \label{sigmazeta}
\end{align}
Furthermore, we see from (\ref{sigmam1}) that
\begin{align*}
    {\bf{\Lambda}}_{x_2,1}({\bf{\Gamma}}_1{\bf{\Sigma}}_{\xi\xi,1}{\bf{\Gamma}}_1^{\top}+{\bf{\Sigma}}_{\zeta\zeta,1}){\bf{\Lambda}}_{x_2,1}^{\top}+{\bf{\Sigma}}_{\varepsilon\varepsilon,1}={\bf{\Lambda}}_{x_2,2}({\bf{\Gamma}}_2{\bf{\Sigma}}_{\xi\xi,2}{\bf{\Gamma}}_2^{\top}+{\bf{\Sigma}}_{\zeta\zeta,2}){\bf{\Lambda}}_{x_2,2}^{\top}+{\bf{\Sigma}}_{\varepsilon\varepsilon,2}
\end{align*}
and it follows from (\ref{sigmaxi12}), (\ref{sigmaxi}), (\ref{gamma}) and (\ref{sigmazeta}) that
\begin{align}
    {\bf{\Sigma}}_{\varepsilon\varepsilon,1}
    ={\bf{\Sigma}}_{\varepsilon\varepsilon,2}\label{sigmavarepsilon}.
\end{align}
Therefore, from (\ref{sigmaxi12})-(\ref{sigmavarepsilon}), 
we obtain $\theta_1=\theta_2$, which implies (\ref{model1iden}).
\subsection{Proof of (\ref{model2iden})}\label{idenap2}
Suppose that
\begin{align}
    {\bf{\Sigma}}(\theta_1)={\bf{\Sigma}}(\theta_2). \label{sigmam2}
\end{align}
It follows from (\ref{sigmam2}) and Theorem 5.1 in Anderson and Rubin \cite{Anderson(1956)} that
\begin{align}
    {\bf{\Lambda}}_{x_1,1}{\bf{\Sigma}}_{\xi\xi,1}{\bf{\Lambda}}_{x_1,1}^{\top}
    ={\bf{\Lambda}}_{x_1,2}{\bf{\Sigma}}_{\xi\xi,2}{\bf{\Lambda}}_{x_1,2}^{\top}\label{Lxi}
\end{align}
and
\begin{align}
    {\bf{\Sigma}}_{\delta\delta,1}={\bf{\Sigma}}_{\delta\delta,2}. \label{delta}
\end{align}
Recall that
\begin{align*}
    {\bf{\Lambda}}_{x_1}{\bf{\Sigma}}_{\xi\xi}{\bf{\Lambda}}_{x_1}^{\top}
    =\begin{pmatrix}
    {\bf{\Sigma}}_{\xi\xi} & {\bf{\Sigma}}_{\xi\xi}{\bf{A}}_{x_1}^{\top}\\
    {\bf{A}}_{x_1}{\bf{\Sigma}}_{\xi\xi} & {\bf{A}}_{x_1}{\bf{\Sigma}}_{\xi\xi}{\bf{A}}_{x_1}^{\top}
    \end{pmatrix}
\end{align*}
and ${\bf{\Sigma}}_{\xi\xi}$ is a positive definite matrix. From (\ref{Lxi}), we have
\begin{align}
    {\bf{\Lambda}}_{x_1,1}={\bf{\Lambda}}_{x_1,2},\ {\bf{\Sigma}}_{\xi\xi,1}={\bf{\Sigma}}_{\xi\xi,2}.\label{L1}
\end{align}
Note that
\begin{align*}
    {\bf{\Sigma}}^{12}(\theta)
    =\begin{pmatrix}
    {\bf{\Sigma}}_{\xi\xi}{\bf{\Gamma}}^{\top} & {\bf{\Sigma}}_{\xi\xi}{\bf{\Gamma}}^{\top}{\bf{A}}_{x_2}^{\top}\\
    {\bf{A}}_{x_1}{\bf{\Sigma}}_{\xi\xi}{\bf{\Gamma}}^{\top} & {\bf{A}}_{x_1}{\bf{\Sigma}}_{\xi\xi}{\bf{\Gamma}}^{\top}{\bf{A}}_{x_2}^{\top}
    \end{pmatrix}
\end{align*}
and ${\bf{\Gamma}}$ is a full row rank matrix. As it holds from (\ref{sigmam2}) and (\ref{L1}) that
\begin{align*}
    {\bf{\Sigma}}_{\xi\xi,1}({\bf{\Gamma}}_1^{\top}-{\bf{\Gamma}}_2^{\top})=O_{3\times 2},
\end{align*}
one has
\begin{align}
    {\bf{\Gamma}}_1={\bf{\Gamma}}_2.\label{G}
\end{align}
In a similar way, (\ref{sigmam2}), (\ref{L1}) and (\ref{G}) yield
\begin{align*}
    {\bf{\Sigma}}_{\xi\xi,1}{\bf{\Gamma}}_1^{\top}({\bf{A}}_{x_2,1}^{\top}-{\bf{A}}_{x_2,2}^{\top})=O_{3\times 4}.
\end{align*}
Since ${\bf{\Sigma}}_{\xi\xi}{\bf{\Gamma}}^{\top}$ is a full column rank matrix, it follows Lemma 11.3.1 in Harville \cite{Harville(1998)} that
\begin{align}
    {\bf{\Lambda}}_{x_2,1}={\bf{\Lambda}}_{x_2,2}.\label{L2}
\end{align}
Recall that
\begin{align*}
    {\bf{\Sigma}}^{22}(\theta)={\bf{\Lambda}}_{x_2}({\bf{\Gamma}}{\bf{\Sigma}}_{\xi\xi}{\bf{\Gamma}}^{\top}
    +{\bf{\Sigma}}_{\zeta\zeta}){\bf{\Lambda}}_{x_2}^{\top}+{\bf{\Sigma}}_{\varepsilon\varepsilon}
\end{align*}
and ${\bf{\Sigma}}_{\varepsilon\varepsilon}$ is a diagonal matrix. As it holds from (\ref{L1})-(\ref{L2}) that
\begin{align*}
    {\bf{\Lambda}}_{x_2,1}{\bf{\Gamma}}_1{\bf{\Sigma}}_{\xi\xi,1}{\bf{\Gamma}}_1^{\top}{\bf{\Lambda}}_{x_2,1}^{\top}
    ={\bf{\Lambda}}_{x_2,2}{\bf{\Gamma}}_2{\bf{\Sigma}}_{\xi\xi,2}{\bf{\Gamma}}_2^{\top}{\bf{\Lambda}}_{x_2,2}^{\top},
\end{align*}
we see from (\ref{sigmam2}) that
\begin{align}
    {\bf{\Lambda}}_{x_2,1}{\bf{\Sigma}}_{\zeta\zeta,1}{\bf{\Lambda}}_{x_2,1}^{\top}
    +{\bf{\Sigma}}_{\varepsilon\varepsilon,1}
    ={\bf{\Lambda}}_{x_2,2}{\bf{\Sigma}}_{\zeta\zeta,2}{\bf{\Lambda}}_{x_2,2}^{\top}
    +{\bf{\Sigma}}_{\varepsilon\varepsilon,2}.\label{sigma2}
\end{align}
Note that
\begin{align*}
    {\bf{\Lambda}}_{x_2}{\bf{\Sigma}}_{\zeta\zeta}{\bf{\Lambda}}_{x_2}^{\top}+{\bf{\Sigma}}_{\varepsilon\varepsilon}
    =\begin{pmatrix}
    {\bf{\Sigma}}_{\zeta\zeta} & {\bf{\Sigma}}_{\zeta\zeta}{\bf{A}}_{x_2}^{\top}\\
    {\bf{A}}_{x_2}{\bf{\Sigma}}_{\zeta\zeta} & {\bf{A}}_{x_2}{\bf{\Sigma}}_{\zeta\zeta}{\bf{A}}_{x_2}^{\top}
    \end{pmatrix}+
    \begin{pmatrix}
    {\bf{\Sigma}}^{11}_{\varepsilon\varepsilon} & O_{2\times 4}\\
    O_{4\times 2} & {\bf{\Sigma}}^{22}_{\varepsilon\varepsilon}
    \end{pmatrix},
\end{align*}
where ${\bf{\Sigma}}^{11}_{\varepsilon\varepsilon}=\Diag(({\bf{\Sigma}}_{\varepsilon\varepsilon})_{11},({\bf{\Sigma}}_{\varepsilon\varepsilon})_{22})^{\top}$ and ${\bf{\Sigma}}^{22}_{\varepsilon\varepsilon}=\Diag(({\bf{\Sigma}}_{\varepsilon\varepsilon})_{33},\cdots,({\bf{\Sigma}}_{\varepsilon\varepsilon})_{66})^{\top}$. (\ref{L2}) and (\ref{sigma2}) imply  
\begin{align*}
    {\bf{A}}_{x_2,1}({\bf{\Sigma}}_{\zeta\zeta,1}-{\bf{\Sigma}}_{\zeta\zeta,2})=O_{4\times 2}.
\end{align*}
${\bf{A}}_{x_2}$ is a full column rank matrix, so that one gets
\begin{align}
    {\bf{\Sigma}}_{\zeta\zeta,1}={\bf{\Sigma}}_{\zeta\zeta,2}.\label{zeta}
\end{align}
From (\ref{L2})-(\ref{zeta}), we obtain
\begin{align}
    {\bf{\Sigma}}_{\varepsilon\varepsilon,1}={\bf{\Sigma}}_{\varepsilon\varepsilon,2}. \label{e}
\end{align}
Therefore, it holds from (\ref{delta})-(\ref{L2}), (\ref{zeta}) and (\ref{e}) that $\theta_1=\theta_2$.
\subsection{Proof of Lemma \ref{EXlemmanon}}\label{EXlemmanonproof}
\begin{lemma}\label{Alemma}
Under \textrm{\textbf{[A1]}},
\begin{align*}
    &\quad\E\left[A_{i,n}^{(j_1)}\big|\mathscr{F}^{n}_{i-1}\right]=R_i(h_n,\xi),\\
    &\quad\E\left[A_{i,n}^{(j_1)}A_{i,n}^{(j_2)}
    \big|\mathscr{F}^{n}_{i-1}\right]=h_nA_{j_1j_2}+R_i(h_n^2,\xi),\\
    &\quad\E\left[A_{i,n}^{(j_1)}A_{i,n}^{(j_2)}A_{i,n}^{(j_3)}
    \big|\mathscr{F}^{n}_{i-1}\right]=R_i(h_n^2,\xi),\\	&\quad\E\left[A_{i,n}^{(j_1)}A_{i,n}^{(j_2)}A_{i,n}^{(j_3)}A_{i,n}^{(j_4)}\big|\mathscr{F}^{n}_{i-1}\right]=h_n^2(A_{j_1j_2}A_{j_3j_4}+A_{j_1j_3}A_{j_2j_4}+A_{j_1j_4}A_{j_2j_3})
    +R_i(h_n^3,\xi)
\end{align*}
for $j_1,j_2,j_3,j_4=1,\cdots,p_1$.
\end{lemma}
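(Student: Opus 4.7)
The plan is to apply the standard conditional moment expansions for diffusion increments to the components of $\Delta\xi_{0,i}$ and then lift them to $A_{i,n} = {\bf{\Lambda}}_{x_1,0}\Delta\xi_{0,i}$ by multilinearity of the conditional expectation.

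Under \textbf{[A1]}, Lemma 7 in Kessler \cite{kessler(1997)} gives, for any indices $k_1,\dots,k_4\in\{1,\dots,k_1\}$,
\begin{align*}
\mathbb{E}\bigl[\Delta\xi_{0,i}^{(k_1)}\big|\mathscr{F}^{n}_{i-1}\bigr] &= R_i(h_n,\xi),\\
\mathbb{E}\bigl[\Delta\xi_{0,i}^{(k_1)}\Delta\xi_{0,i}^{(k_2)}\big|\mathscr{F}^{n}_{i-1}\bigr] &= h_n({\bf{\Sigma}}_{\xi\xi,0})_{k_1k_2} + R_i(h_n^2,\xi),\\
\mathbb{E}\bigl[\Delta\xi_{0,i}^{(k_1)}\Delta\xi_{0,i}^{(k_2)}\Delta\xi_{0,i}^{(k_3)}\big|\mathscr{F}^{n}_{i-1}\bigr] &= R_i(h_n^2,\xi),
\end{align*}
together with the fourth-moment expansion whose leading term is $h_n^2$ times the sum over the three pair-partitions of $\{1,2,3,4\}$ of $({\bf{\Sigma}}_{\xi\xi,0})_{k_{\sigma(1)}k_{\sigma(2)}}({\bf{\Sigma}}_{\xi\xi,0})_{k_{\sigma(3)}k_{\sigma(4)}}$, with remainder $R_i(h_n^3,\xi)$. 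These orders are produced by the It\^o-Taylor scheme and are valid since $B_1\in C^{4}_{\uparrow}(\mathbb{R}^{k_1})$ and $\sup_t\mathbb{E}[|\xi_t|^\ell]<\infty$ for every $\ell$.

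Second, I would expand the products through the loading matrix,
\begin{equation*}
A_{i,n}^{(j_1)}\cdots A_{i,n}^{(j_m)} = \sum_{k_1,\dots,k_m}\!\!({\bf{\Lambda}}_{x_1,0})_{j_1k_1}\!\cdots({\bf{\Lambda}}_{x_1,0})_{j_mk_m}\Delta\xi_{0,i}^{(k_1)}\!\cdots\Delta\xi_{0,i}^{(k_m)},
\end{equation*}
take conditional expectations inside the finite sum, and identify the resulting constants. Using $\sum_{k_1,k_2}({\bf{\Lambda}}_{x_1,0})_{j_1k_1}({\bf{\Lambda}}_{x_1,0})_{j_2k_2}({\bf{\Sigma}}_{\xi\xi,0})_{k_1k_2}=A_{j_1j_2}$ yields the second-moment claim, and the same identification applied to each of the three pair-partitions in the fourth-moment expansion produces precisely $A_{j_1j_2}A_{j_3j_4}+A_{j_1j_3}A_{j_2j_4}+A_{j_1j_4}A_{j_2j_3}$. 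The third-moment statement is immediate because every term in its expansion is an $R_i(h_n^2,\xi)$ scaled by a bounded constant, and a finite sum of such remainders remains in the same class.

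The only subtlety lies in keeping the $R_i$ remainder orders intact across all products; this is controlled by the polynomial growth of $B_1$ and its derivatives and the moment bounds in \textbf{[A1]}(ii)-(iii), which are exactly the hypotheses of Kessler's expansion. No additional obstacle arises.
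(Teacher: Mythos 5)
Your proposal is correct and takes essentially the same route as the paper: the paper disposes of Lemmas \ref{Alemma}--\ref{AClemma} by citing Lemmas 2--3 of Kusano and Uchida \cite{Kusano(2022)}, which rest on exactly the Kessler-type conditional moment expansions of $\Delta\xi_{0,i}$ that you invoke, lifted to $A_{i,n}={\bf{\Lambda}}_{x_1,0}\Delta\xi_{0,i}$ by multilinearity and the identification $\sum_{k,l}({\bf{\Lambda}}_{x_1,0})_{j_1k}({\bf{\Lambda}}_{x_1,0})_{j_2l}({\bf{\Sigma}}_{\xi\xi,0})_{kl}=A_{j_1j_2}$. The only blemish is notational: you reuse $k_1$ both as a summation index and as the dimension of $\xi_{0,t}$.
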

\begin{lemma}\label{Blemma}
Under \textrm{\textbf{[B1]}},
\begin{align*}
    &\quad\E\left[B_{i,n}^{(j_1)}\big|\mathscr{F}^{n}_{i-1}\right]=R_i(h_n,\delta),\\
    &\quad\E\left[B_{i,n}^{(j_1)}B_{i,n}^{(j_2)}
    \big|\mathscr{F}^{n}_{i-1}\right]=h_nB_{j_1j_2}+R_i(h_n^2,\delta),\\
    &\quad\E\left[B_{i,n}^{(j_1)}B_{i,n}^{(j_2)}B_{i,n}^{(j_3)}
    \big|\mathscr{F}^{n}_{i-1}\right]=R_i(h_n^2,\delta),\\
    &\quad\E\left[B_{i,n}^{(j_1)}B_{i,n}^{(j_2)}
    B_{i,n}^{(j_3)}B_{i,n}^{(j_4)}
    \big|\mathscr{F}^{n}_{i-1}\right]=h_n^2(B_{j_1j_2}B_{j_3j_4}+B_{j_1j_3}B_{j_2j_4}
    +B_{j_1j_4}B_{j_2j_3})+R_i(h_n^3,\delta)
\end{align*}
for $j_1,j_2,j_3,j_4=1,\cdots,p_1$.
\end{lemma}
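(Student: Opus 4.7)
The plan is to mirror the proof of Lemma \ref{Alemma}, replacing $A_{i,n}={\bf{\Lambda}}_{x_1,0}\Delta\xi_{0,i}$ by $B_{i,n}=\Delta\delta_{0,i}$ and the coefficient matrix $A={\bf{\Lambda}}_{x_1,0}{\bf{\Sigma}}_{\xi\xi,0}{\bf{\Lambda}}_{x_1,0}^{\top}$ by $B={\bf{\Sigma}}_{\delta\delta,0}={\bf{S}}_{2,0}{\bf{S}}_{2,0}^{\top}$. Since $\delta_{0,t}$ is itself a diffusion satisfying \textbf{[B1]}, it is enough to apply the moment expansion of Kessler \cite{kessler(1997)} (Lemma 7 there) to the process $\delta_{0,t}$ with $C_{\uparrow}^{4}$ test functions $x\mapsto x^{(j_1)}$, $x\mapsto x^{(j_1)}x^{(j_2)}$, $x\mapsto x^{(j_1)}x^{(j_2)}x^{(j_3)}$ and $x\mapsto x^{(j_1)}x^{(j_2)}x^{(j_3)}x^{(j_4)}$ on the small interval $[t_{i-1}^n,t_i^n]$.

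First I would treat the first moment. Iterating the dynamics once gives
\begin{align*}
\E\bigl[B_{i,n}^{(j_1)}\big|\mathscr{F}^n_{i-1}\bigr]=h_n B_2^{(j_1)}(\delta_{0,t_{i-1}^n})+R_i(h_n^2,\delta),
\end{align*}
and since $B_2\in C^{4}_{\uparrow}(\mathbb{R}^{p_1})$, the leading drift term is of the form $R_i(h_n,\delta)$, yielding the first claim. Next, for the second moment I would apply the moment expansion to the generator $\mathcal{L}_2$ of $\delta_{0,t}$ acting on $x^{(j_1)}x^{(j_2)}$; the pure-drift contribution is $O(h_n^2)$ and the quadratic-variation term contributes $h_n({\bf{S}}_{2,0}{\bf{S}}_{2,0}^{\top})_{j_1j_2}=h_nB_{j_1j_2}$, leaving a remainder of order $R_i(h_n^2,\delta)$. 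The third moment follows similarly: by the generator expansion applied to $x^{(j_1)}x^{(j_2)}x^{(j_3)}$, all terms of order $h_n$ cancel (they carry a single factor coming from either the drift or the quadratic variation, and at $h_n^1$ produce a function of $\delta_{0,t_{i-1}^n}$ that, multiplied by the remaining linear factors and taken at order $h_n$, vanishes), so the whole expression is $R_i(h_n^2,\delta)$.

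For the fourth moment I would expand $\E\bigl[B_{i,n}^{(j_1)}B_{i,n}^{(j_2)}B_{i,n}^{(j_3)}B_{i,n}^{(j_4)}\big|\mathscr{F}^n_{i-1}\bigr]$ up to order $h_n^3$. The drift contribution is of order $h_n^3$; the leading non-vanishing part comes from two applications of the diffusion coefficient (i.e.\ the $\mathcal{L}_2^{2}$ term in the Kessler expansion acting on $x^{(j_1)}x^{(j_2)}x^{(j_3)}x^{(j_4)}$), which produces exactly the three Wick pairings $B_{j_1j_2}B_{j_3j_4}+B_{j_1j_3}B_{j_2j_4}+B_{j_1j_4}B_{j_2j_3}$ multiplied by $h_n^2$; all other intermediate terms either cancel or are absorbed into the remainder $R_i(h_n^3,\delta)$ via \textbf{[B1]}(ii)--(iii). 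This is the Isserlis-type identity one expects because the diffusion increment is, at leading order, centered Gaussian with covariance $h_n{\bf{S}}_{2,0}{\bf{S}}_{2,0}^{\top}$.

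The only step requiring care is the bookkeeping of remainder terms at each order of $h_n$, ensuring that every remainder admits a bound of the form $|R_i(h_n^{\ell},\delta)|\le h_n^{\ell}C(1+|\delta_{0,t_{i-1}^n}|)^{C}$; this is guaranteed by $B_2\in C^{4}_{\uparrow}(\mathbb{R}^{p_1})$ together with the polynomial moment bound $\sup_{t}\E|\delta_t|^{\ell}<\infty$ from \textbf{[B1]}(ii), exactly as in the proof of Lemma \ref{Alemma}. I expect no genuine obstacle beyond this routine verification, since the argument is a direct transcription of the Kessler expansion to the coefficients $(B_2,{\bf{S}}_{2,0})$ of $\delta_{0,t}$.
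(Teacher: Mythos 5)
Your plan is correct and matches the paper's approach: the paper proves Lemmas \ref{Alemma}--\ref{AClemma} by deferring to Lemmas 2--3 of Kusano and Uchida \cite{Kusano(2022)}, which are themselves the Kessler-type conditional moment expansions you describe, applied here to the diffusion $\delta_{0,t}$ with coefficients $(B_2,{\bf{S}}_{2,0})$ and polynomial test functions. The order-by-order bookkeeping you outline (drift term at first order, quadratic variation giving $h_nB_{j_1j_2}$, vanishing third moments at order $h_n$, and the Isserlis pairings at order $h_n^2$, with remainders controlled by \textbf{[B1]}(ii)--(iii)) is exactly the intended argument.
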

\begin{lemma}\label{Clemma}
Under \textrm{\textbf{[A1]}},
\begin{align*}
    &\quad\E\left[C_{i,n}^{(j_1)}\big|\mathscr{F}^{n}_{i-1}\right]=R_i(h_n,\xi),\\
    &\quad\E\left[C_{i,n}^{(j_1)}C_{i,n}^{(j_2)}
    \big|\mathscr{F}^{n}_{i-1}\right]=h_nC_{j_1j_2}+R_i(h_n^2,\xi),\\
    &\quad\E\left[C_{i,n}^{(j_1)}C_{i,n}^{(j_2)}C_{i,n}^{(j_3)}
    \big|\mathscr{F}^{n}_{i-1}\right]=R_i(h_n^2,\xi),\\
    &\quad\E\left[C_{i,n}^{(j_1)}C_{i,n}^{(j_2)}
    C_{i,n}^{(j_3)}C_{i,n}^{(j_4)}\big|\mathscr{F}^{n}_{i-1}\right]=h_n^2(C_{j_1j_2}C_{j_3j_4}+C_{j_1j_3}C_{j_2j_4}+C_{j_1j_4}C_{j_2j_3})+R_i(h_n^3,\xi)
\end{align*}
for $j_1,j_2,j_3,j_4=1,\cdots,p_2$.
\end{lemma}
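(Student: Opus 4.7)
The plan is to exploit the fact that $C_{i,n}$ is simply a deterministic linear transformation of the increment $\Delta\xi_{0,i}$, so the four moment identities reduce to the corresponding moment identities for $\Delta\xi_{0,i}$ already established via Lemma~7 of Kessler~\cite{kessler(1997)}. Concretely, write $M={\bf{\Lambda}}_{x_2,0}{\bf{\Psi}}_0^{-1}{\bf{\Gamma}}_0\in\mathbb{R}^{p_2\times k_1}$, so that $C_{i,n}=M\,\Delta\xi_{0,i}$ and
\begin{align*}
    C_{i,n}^{(j)}=\sum_{\ell=1}^{k_1}M_{j\ell}\,\Delta\xi_{0,i}^{(\ell)}\qquad(j=1,\dots,p_2).
\end{align*}
Moreover, since $C=M{\bf{\Sigma}}_{\xi\xi,0}M^{\top}$, one has $C_{j_1j_2}=\sum_{\ell_1,\ell_2}M_{j_1\ell_1}M_{j_2\ell_2}({\bf{\Sigma}}_{\xi\xi,0})_{\ell_1\ell_2}$.

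First I would invoke Lemma~7 of Kessler~\cite{kessler(1997)} together with \textbf{[A1]} to record the four conditional moment identities for the components of $\Delta\xi_{0,i}$: the first moment is $R_i(h_n,\xi)$, the second is $h_n({\bf{\Sigma}}_{\xi\xi,0})_{\ell_1\ell_2}+R_i(h_n^2,\xi)$, the third is $R_i(h_n^2,\xi)$, and the fourth equals $h_n^2\bigl\{({\bf{\Sigma}}_{\xi\xi,0})_{\ell_1\ell_2}({\bf{\Sigma}}_{\xi\xi,0})_{\ell_3\ell_4}+({\bf{\Sigma}}_{\xi\xi,0})_{\ell_1\ell_3}({\bf{\Sigma}}_{\xi\xi,0})_{\ell_2\ell_4}+({\bf{\Sigma}}_{\xi\xi,0})_{\ell_1\ell_4}({\bf{\Sigma}}_{\xi\xi,0})_{\ell_2\ell_3}\bigr\}+R_i(h_n^3,\xi)$. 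These are exactly the moment estimates already used for $A_{i,n}$ in Lemma~\ref{Alemma}.

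Next, I would expand each product $C_{i,n}^{(j_1)}\cdots C_{i,n}^{(j_r)}$ via the linear representation above, take the conditional expectation termwise using linearity, and substitute the Kessler estimates. Collecting the leading terms yields precisely $h_n\sum_{\ell_1,\ell_2}M_{j_1\ell_1}M_{j_2\ell_2}({\bf{\Sigma}}_{\xi\xi,0})_{\ell_1\ell_2}=h_n C_{j_1j_2}$ for the quadratic case, and the analogous three-term sum of products $C_{j_a j_b}C_{j_c j_d}$ for the quartic case. The error terms combine into $R_i(h_n^2,\xi)$, $R_i(h_n^2,\xi)$, and $R_i(h_n^3,\xi)$ respectively, using the fact that a finite linear combination of $R_i(h_n^k,\xi)$ terms is again of the form $R_i(h_n^k,\xi)$ by the polynomial-growth bound in \textbf{[A1]}(iii).

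There is no genuine obstacle here; the argument is entirely parallel to Lemma~\ref{Alemma} (with ${\bf{\Lambda}}_{x_1,0}$ replaced by $M={\bf{\Lambda}}_{x_2,0}{\bf{\Psi}}_0^{-1}{\bf{\Gamma}}_0$), and the only point requiring care is the bookkeeping of symmetrizations in the fourth-order identity to match the three-term expression $C_{j_1j_2}C_{j_3j_4}+C_{j_1j_3}C_{j_2j_4}+C_{j_1j_4}C_{j_2j_3}$.
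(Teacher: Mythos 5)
Your proposal is correct and follows essentially the same route as the paper, which disposes of Lemmas \ref{Alemma}--\ref{AClemma} in one stroke by appeal to Lemmas 2--3 of Kusano and Uchida \cite{Kusano(2022)}: since $C_{i,n}=M\Delta\xi_{0,i}$ with $M={\bf{\Lambda}}_{x_2,0}{\bf{\Psi}}_0^{-1}{\bf{\Gamma}}_0$ and $C=M{\bf{\Sigma}}_{\xi\xi,0}M^{\top}$, the argument is word-for-word that of Lemma \ref{Alemma} with ${\bf{\Lambda}}_{x_1,0}$ replaced by $M$. The reduction to Kessler's conditional moment estimates for $\Delta\xi_{0,i}$, the termwise expansion of the linear combinations, and the absorption of finitely many $R_i(h_n^k,\xi)$ terms into a single one are exactly the intended steps.
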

\begin{lemma}\label{Dlemma}
Under \textrm{\textbf{[D1]}},
\begin{align*}
    &\quad\E\left[D_{i,n}^{(j_1)}\big|\mathscr{F}^{n}_{i-1}\right]=R_i(h_n,\zeta),\\
    &\quad\E\left[D_{i,n}^{(j_1)}D_{i,n}^{(j_2)}
    \big|\mathscr{F}^{n}_{i-1}\right]=h_nD_{j_1j_2}+R_i(h_n^2,\zeta),\\
    &\quad\E\left[D_{i,n}^{(j_1)}D_{i,n}^{(j_2)}
    D_{i,n}^{(j_3)}\big|\mathscr{F}^{n}_{i-1}\right]=R_i(h_n^2,\zeta),\\
    &\quad\E\left[D_{i,n}^{(j_1)}D_{i,n}^{(j_2)}
    D_{i,n}^{(j_3)}D_{i,n}^{(j_4)}\big|\mathscr{F}^{n}_{i-1}\right]
    =h_n^2(D_{j_1j_2}D_{j_3j_4}+D_{j_1j_3}
    D_{j_2j_4}+D_{j_1j_4}D_{j_2j_3})
     +R_i(h_n^3,\zeta)
\end{align*}
for $j_1,j_2,j_3,j_4=1,\cdots,p_2$.
\end{lemma}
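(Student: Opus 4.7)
The plan is to reduce the claim to the known moment expansions for the increments of the diffusion $\zeta_{0,t}$ and then push them through the linear map $M := {\bf{\Lambda}}_{x_2,0}{\bf{\Psi}}_0^{-1}\in\mathbb{R}^{p_2\times k_2}$ defining $D_{i,n}=M\,\Delta\zeta_{0,i}$. Note that $D=M{\bf{\Sigma}}_{\zeta\zeta,0}M^\top$, so componentwise $D_{j_1j_2}=\sum_{a,b}M_{j_1a}M_{j_2b}({\bf{\Sigma}}_{\zeta\zeta,0})_{ab}$.

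First I would invoke Lemma~7 of Kessler~\cite{kessler(1997)} (which applies under \textbf{[D1]}, since $B_4$ is globally Lipschitz and lies in $C^4_\uparrow(\mathbb R^{k_2})$, and the moment bound \textbf{[D1]}(ii) is available) to obtain, for indices $a_1,\ldots,a_4\in\{1,\ldots,k_2\}$, the one-step conditional-moment expansions
\begin{align*}
\E\!\left[\Delta\zeta_{0,i}^{(a_1)}\big|\mathscr F^{n}_{i-1}\right]&=R_i(h_n,\zeta),\\
\E\!\left[\Delta\zeta_{0,i}^{(a_1)}\Delta\zeta_{0,i}^{(a_2)}\big|\mathscr F^{n}_{i-1}\right]&=h_n({\bf{\Sigma}}_{\zeta\zeta,0})_{a_1a_2}+R_i(h_n^2,\zeta),\\
\E\!\left[\Delta\zeta_{0,i}^{(a_1)}\Delta\zeta_{0,i}^{(a_2)}\Delta\zeta_{0,i}^{(a_3)}\big|\mathscr F^{n}_{i-1}\right]&=R_i(h_n^2,\zeta),\\
\E\!\left[\Delta\zeta_{0,i}^{(a_1)}\cdots\Delta\zeta_{0,i}^{(a_4)}\big|\mathscr F^{n}_{i-1}\right]&=h_n^2\!\!\sum_{\text{pairings}}\!\!({\bf{\Sigma}}_{\zeta\zeta,0})_{\bullet\bullet}({\bf{\Sigma}}_{\zeta\zeta,0})_{\bullet\bullet}+R_i(h_n^3,\zeta),
\end{align*}
where the sum in the last line is Isserlis/Wick's formula (three pairings) and the remainder terms $R_i$ satisfy the standard polynomial-growth bound in $\zeta_{0,t_{i-1}^n}$.

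Next I would substitute $D_{i,n}^{(j)}=\sum_{a}M_{ja}\Delta\zeta_{0,i}^{(a)}$ into each desired moment, pull the deterministic coefficients $M_{j_\ell a_\ell}$ outside the conditional expectation by linearity, and collect the resulting multilinear sums. Using $D_{j_1j_2}=\sum_{a_1,a_2}M_{j_1a_1}M_{j_2a_2}({\bf{\Sigma}}_{\zeta\zeta,0})_{a_1a_2}$, the second-moment identity becomes $\E[D_{i,n}^{(j_1)}D_{i,n}^{(j_2)}|\mathscr F^n_{i-1}]=h_n D_{j_1j_2}+R_i(h_n^2,\zeta)$. The third and fourth moment formulas follow similarly: the three pairings in the Wick expansion become, after contraction with the $M$'s, the three products $D_{j_1j_2}D_{j_3j_4}+D_{j_1j_3}D_{j_2j_4}+D_{j_1j_4}D_{j_2j_3}$. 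The remainder terms, being finite linear combinations of $R_i(h_n^\ell,\zeta)$ with bounded coefficients, remain of the same form $R_i(h_n^\ell,\zeta)$ by the closure properties of the class $R$.

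The only mildly delicate point is the fourth-moment calculation: one must verify that the three Wick pairings on the $\zeta$-side do indeed contract through $M$ to give exactly the three symmetric products of $D_{j_1j_2}$'s, with no extra terms hiding in the remainder. This is just careful bookkeeping over the $3!!=3$ pairings of four indices, but it is the step where an error is most likely. Once this is checked, the lemma follows immediately; the arguments for Lemmas~\ref{Alemma}--\ref{Clemma} are identical, replacing $(M,\zeta,{\bf{\Sigma}}_{\zeta\zeta,0})$ by the corresponding triples $({\bf{\Lambda}}_{x_1,0},\xi,{\bf{\Sigma}}_{\xi\xi,0})$, $(\mathbb I_{p_1},\delta,{\bf{\Sigma}}_{\delta\delta,0})$, and $({\bf{\Lambda}}_{x_2,0}{\bf{\Psi}}_0^{-1}{\bf{\Gamma}}_0,\xi,{\bf{\Sigma}}_{\xi\xi,0})$.
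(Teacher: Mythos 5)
Your plan is correct and follows essentially the same route as the paper, which proves Lemmas \ref{Alemma}--\ref{AClemma} by deferring to Lemmas 2--3 of Kusano and Uchida \cite{Kusano(2022)}, i.e.\ Kessler-type conditional moment expansions for the increments of the underlying diffusion combined with multilinearity under the constant matrix $M={\bf{\Lambda}}_{x_2,0}{\bf{\Psi}}_0^{-1}$. The Wick-pairing contraction and the closure of the class $R_i(h_n^{\ell},\zeta)$ under finite linear combinations are exactly the bookkeeping carried out there, so nothing is missing.
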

\begin{lemma}\label{Elemma}
Under \textrm{\textbf{[C1]}},
\begin{align*}
    &\quad\E\left[E_{i,n}^{(j_1)}\big|\mathscr{F}^{n}_{i-1}\right]=R_i(h_n,\varepsilon),\\
    &\quad\E\left[E_{i,n}^{(j_1)}E_{i,n}^{(j_2)}
    \big|\mathscr{F}^{n}_{i-1}\right]=h_nE_{j_1j_2}+R_i(h_n^2,\varepsilon),\\
    &\quad\E\left[E_{i,n}^{(j_1)}E_{i,n}^{(j_2)}
    E_{i,n}^{(j_3)}\big|\mathscr{F}^{n}_{i-1}\right]=R_i(h_n^2,\varepsilon),\\
    &\quad\E\left[E_{i,n}^{(j_1)}E_{i,n}^{(j_2)}
    E_{i,n}^{(j_3)}E_{i,n}^{(j_4)}\big|\mathscr{F}^{n}_{i-1}\right]=h_n^2(E_{j_1j_2}E_{j_3j_4}+E_{j_1j_3}E_{j_2j_4}+E_{j_1j_4}E_{j_2j_3})+R_i(h_n^3,\varepsilon)
\end{align*}
for $j_1,j_2,j_3,j_4=1,\cdots,p_2$.
\end{lemma}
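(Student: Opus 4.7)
The plan is to repeat, essentially verbatim, the computation already carried out for $A_{i,n}$, $B_{i,n}$, $C_{i,n}$ and $D_{i,n}$ in Lemmas~\ref{Alemma}--\ref{Dlemma}, with the diffusion $\varepsilon_{0,t}$ of (6) playing the role of $\xi_{0,t}$, $\delta_{0,t}$, $\zeta_{0,t}$. Concretely, I will invoke Lemma~7 of Kessler~\cite{kessler(1997)} for the diffusion $\varepsilon_{0,t}$: under \textbf{[C1]} (in particular $B_3\in C^4_\uparrow(\mathbb{R}^{p_2})$) and because the volatility ${\bf{S}}_{3,0}$ is constant with $E={\bf{S}}_{3,0}{\bf{S}}_{3,0}^\top={\bf{\Sigma}}_{\varepsilon\varepsilon,0}$, this yields the It\^o--Taylor expansion
\[
\E\bigl[f(\varepsilon_{0,t_{i}^n})\,\big|\,\mathscr{F}^{n}_{i-1}\bigr]=\sum_{k=0}^{K}\frac{h_n^{k}}{k!}\,\mathcal{L}^{k}f(\varepsilon_{0,t_{i-1}^n})+R\bigl(h_n^{K+1},\varepsilon_{0,t_{i-1}^n}\bigr)
\]
for $f\in C^{2K}_{\uparrow}(\mathbb{R}^{p_2})$, where $\mathcal{L}=B_3(x)\cdot\nabla+\tfrac12\sum_{a,b=1}^{p_2}E_{ab}\,\partial^2_{x_ax_b}$ is the infinitesimal generator of $\varepsilon_{0,t}$.

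I then apply this identity to the centred polynomial test functions $f_k(x)=\prod_{\ell=1}^{k}\bigl(x^{(j_\ell)}-\varepsilon^{(j_\ell)}_{0,t_{i-1}^n}\bigr)$ for $k=1,2,3,4$; each $f_k$ vanishes at the base point $c:=\varepsilon_{0,t_{i-1}^n}$ together with every derivative of order strictly less than $k$. For $k=1$, $\mathcal{L}f_1(c)=B_3^{(j_1)}(c)$, which has polynomial growth in $c$, so the expansion gives exactly the first display $R_i(h_n,\varepsilon)$. For $k=2$ the first-derivative part of $\mathcal{L}f_2(c)$ is killed by the centring and only $\tfrac12\tr(E\,\nabla^2 f_2)=E_{j_1j_2}$ survives, producing $h_nE_{j_1j_2}+R_i(h_n^2,\varepsilon)$. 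For $k=3$ one has $f_3(c)=\mathcal{L}f_3(c)=0$ (first and second derivatives of $f_3$ vanish at $c$), so the expansion automatically starts at order $h_n^2$. For $k=4$ one has $f_4(c)=\mathcal{L}f_4(c)=0$ and the leading term is $\tfrac{h_n^2}{2}\mathcal{L}^2 f_4(c)$; only the purely second-order iterate $\bigl(\tfrac12\sum_{a,b}E_{ab}\partial^2_{x_ax_b}\bigr)^{2}f_4$ survives at $c$, and a direct combinatorial count of the three Wick pairings yields exactly $h_n^{2}\bigl(E_{j_1j_2}E_{j_3j_4}+E_{j_1j_3}E_{j_2j_4}+E_{j_1j_4}E_{j_2j_3}\bigr)$.

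Throughout, the remainders inherit polynomial growth in $c=\varepsilon_{0,t_{i-1}^n}$ from $B_3\in C^{4}_{\uparrow}$ and from the polynomial nature of $f_k$, which matches exactly the definition of $R_i(h_n^{\ell},\varepsilon)=R(h_n^{\ell},\varepsilon_{0,t_{i-1}^n})$. The step that demands genuine care is the fourth-moment case: one must check that every cross-term in $\mathcal{L}^{2}f_4(c)$ that mixes the drift $B_3$ with the second-order part of $\mathcal{L}$, or that pairs $B_3$ with itself, still carries at least one factor $(x^{(j_\ell)}-c_{j_\ell})$ at $c$ and therefore vanishes, leaving only the Wick-type contribution from the iterated diffusion operator. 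This bookkeeping is the main obstacle in the calculation, but it is the same algebraic check already performed inside the proofs of Lemmas~\ref{Alemma}--\ref{Dlemma}, so no new idea is required.
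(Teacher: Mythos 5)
Your proposal is correct and follows essentially the same route as the paper, which disposes of Lemmas \ref{Alemma}--\ref{AClemma} by citing Lemmas 2--3 of Kusano and Uchida \cite{Kusano(2022)}; those in turn rest on exactly the Kessler Lemma 7 It\^o--Taylor expansion of $\E[f(\varepsilon_{0,t_i^n})|\mathscr{F}^n_{i-1}]$ applied to centred monomials that you write out. Your bookkeeping of the vanishing orders at the base point (including the drift--diffusion cross terms in $\mathcal{L}^2 f_4$) and the Wick-pairing count for the fourth moment are accurate, so no gap remains.
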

\begin{lemma}\label{AClemma}
Under \textrm{\textbf{[A1]}},
\begin{align*}
    \E\left[A_{i,n}^{(j_1)}C_{i,n}^{(j_2)}\big|\mathscr{F}^{n}_{i-1}\right]
    =h_n F_{j_1j_2}+R_i(h_n^2,\xi)\qquad\qquad\qquad\qquad\qquad\qquad\qquad\qquad\quad\ \  
\end{align*}
for $j_1=1,\cdots,p_1, j_2=1,\cdots,p_2$,
\begin{align*}
    \E\left[A_{i,n}^{(j_1)}A_{i,n}^{(j_2)}C_{i,n}^{(j_3)}\big|\mathscr{F}^{n}_{i-1}\right]=R_i(h_n^2,\xi)\qquad\qquad\qquad\qquad\qquad\qquad\qquad\qquad\qquad\qquad
\end{align*}
for $j_1,j_2=1,\cdots,p_1, j_3=1,\cdots,p_2$,
\begin{align*}
    \E\left[A_{i,n}^{(j_1)}C_{i,n}^{(j_2)}C_{i,n}^{(j_3)}\big|\mathscr{F}^{n}_{i-1}\right]=R_i(h_n^2,\xi)\qquad\qquad\qquad\qquad\qquad\qquad\qquad\qquad\qquad\qquad
\end{align*}
for $j_1=1,\cdots,p_1, j_2,j_3=1,\cdots,p_2$,
\begin{align*}
    \quad\E\left[A_{i,n}^{(j_1)}A_{i,n}^{(j_2)}
    A_{i,n}^{(j_3)}C_{i,n}^{(j_4)}\big|\mathscr{F}^{n}_{i-1}\right]=h_n^2(A_{j_1j_2}F_{j_3j_4}+A_{j_1j_3}F_{j_2j_4}+F_{j_1j_4}A_{j_2j_3})+R_i(h_n^3,\xi)
\end{align*}
for $j_1,j_2,j_3=1,\cdots,p_1, j_4=1,\cdots,p_2$,
\begin{align*}
    \quad\E\left[A_{i,n}^{(j_1)}A_{i,n}^{(j_2)}
    C_{i,n}^{(j_3)}C_{i,n}^{(j_4)}\big|\mathscr{F}^{n}_{i-1}\right]=h_n^2(A_{j_1j_2}C_{j_3j_4}+F_{j_1j_3}F_{j_2j_4}+F_{j_1j_4}F_{j_2j_3})+R_i(h_n^3,\xi)
\end{align*}
for $j_1,j_2=1,\cdots,p_1, j_3,j_4=1,\cdots,p_2$, and
\begin{align*}
    \quad\E\left[A_{i,n}^{(j_1)}C_{i,n}^{(j_2)}
    C_{i,n}^{(j_3)}C_{i,n}^{(j_4)}\big|\mathscr{F}^{n}_{i-1}\right]=h_n^2(F_{j_1j_2}C_{j_3j_4}+F_{j_1j_3}C_{j_2j_4}+F_{j_1j_4}C_{j_2j_3})+R_i(h_n^3,\xi)
\end{align*}
for $j_1,j_2,j_3,j_4=1,\cdots,p_2$.
\end{lemma}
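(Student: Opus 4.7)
The key observation is that both $A_{i,n}$ and $C_{i,n}$ are linear transforms of the same increment $\Delta\xi_{0,i}$: writing ${\bf{\Lambda}}_1={\bf{\Lambda}}_{x_1,0}$ and ${\bf{\Lambda}}_2={\bf{\Lambda}}_{x_2,0}{\bf{\Psi}}_0^{-1}{\bf{\Gamma}}_0$, we have $A_{i,n}={\bf{\Lambda}}_1\Delta\xi_{0,i}$ and $C_{i,n}={\bf{\Lambda}}_2\Delta\xi_{0,i}$, so that $A={\bf{\Lambda}}_1{\bf{\Sigma}}_{\xi\xi,0}{\bf{\Lambda}}_1^\top$, $C={\bf{\Lambda}}_2{\bf{\Sigma}}_{\xi\xi,0}{\bf{\Lambda}}_2^\top$ and $F={\bf{\Lambda}}_1{\bf{\Sigma}}_{\xi\xi,0}{\bf{\Lambda}}_2^\top$. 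Thus the problem reduces entirely to moment expansions of products of coordinates of $\Delta\xi_{0,i}$, which are furnished by Lemma 7 in Kessler \cite{kessler(1997)} (the same input used in Lemma \ref{Qnonlemma}). Concretely, under \textbf{[A1]}, that lemma yields $\mathbb E[\Delta\xi_{0,i}^{(a)}\Delta\xi_{0,i}^{(b)}\mid\mathscr F^n_{i-1}]=h_n({\bf{\Sigma}}_{\xi\xi,0})_{ab}+R_i(h_n^2,\xi)$, $\mathbb E[\Delta\xi_{0,i}^{(a)}\Delta\xi_{0,i}^{(b)}\Delta\xi_{0,i}^{(c)}\mid\mathscr F^n_{i-1}]=R_i(h_n^2,\xi)$, and the Isserlis-type fourth-moment expansion $\mathbb E[\prod_{k=1}^4\Delta\xi_{0,i}^{(a_k)}\mid\mathscr F^n_{i-1}]=h_n^2\{({\bf{\Sigma}}_{\xi\xi,0})_{a_1a_2}({\bf{\Sigma}}_{\xi\xi,0})_{a_3a_4}+({\bf{\Sigma}}_{\xi\xi,0})_{a_1a_3}({\bf{\Sigma}}_{\xi\xi,0})_{a_2a_4}+({\bf{\Sigma}}_{\xi\xi,0})_{a_1a_4}({\bf{\Sigma}}_{\xi\xi,0})_{a_2a_3}\}+R_i(h_n^3,\xi)$.

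The plan is then to expand each product by multilinearity and substitute. For the first identity,
\[
\mathbb E\bigl[A_{i,n}^{(j_1)}C_{i,n}^{(j_2)}\bigm|\mathscr F^n_{i-1}\bigr]
=\sum_{a,b}({\bf{\Lambda}}_1)_{j_1a}({\bf{\Lambda}}_2)_{j_2b}\,\mathbb E\bigl[\Delta\xi_{0,i}^{(a)}\Delta\xi_{0,i}^{(b)}\bigm|\mathscr F^n_{i-1}\bigr]
=h_n({\bf{\Lambda}}_1{\bf{\Sigma}}_{\xi\xi,0}{\bf{\Lambda}}_2^\top)_{j_1j_2}+R_i(h_n^2,\xi)=h_nF_{j_1j_2}+R_i(h_n^2,\xi),
\]
where the boundedness of $({\bf{\Lambda}}_1)_{j_1a}({\bf{\Lambda}}_2)_{j_2b}$ preserves the $R(h_n^\ell,\xi)$ class. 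The two triple-product identities follow the same way: the sums of three $\Delta\xi_{0,i}$-factors are $R_i(h_n^2,\xi)$, and absorbing the fixed matrix coefficients leaves the same remainder order.

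For the three fourth-order identities, one plugs the Isserlis-type expansion into the four-fold sum. For instance,
\[
\mathbb E\bigl[A_{i,n}^{(j_1)}A_{i,n}^{(j_2)}C_{i,n}^{(j_3)}C_{i,n}^{(j_4)}\bigm|\mathscr F^n_{i-1}\bigr]
=\sum_{a,b,c,d}({\bf{\Lambda}}_1)_{j_1a}({\bf{\Lambda}}_1)_{j_2b}({\bf{\Lambda}}_2)_{j_3c}({\bf{\Lambda}}_2)_{j_4d}\,\mathbb E\bigl[\Delta\xi_{0,i}^{(a)}\Delta\xi_{0,i}^{(b)}\Delta\xi_{0,i}^{(c)}\Delta\xi_{0,i}^{(d)}\bigm|\mathscr F^n_{i-1}\bigr],
\]
and the three Wick pairings collapse, via ${\bf{\Lambda}}_1{\bf{\Sigma}}_{\xi\xi,0}{\bf{\Lambda}}_1^\top=A$, ${\bf{\Lambda}}_2{\bf{\Sigma}}_{\xi\xi,0}{\bf{\Lambda}}_2^\top=C$, and ${\bf{\Lambda}}_1{\bf{\Sigma}}_{\xi\xi,0}{\bf{\Lambda}}_2^\top=F$, into the three contractions $A_{j_1j_2}C_{j_3j_4}+F_{j_1j_3}F_{j_2j_4}+F_{j_1j_4}F_{j_2j_3}$, with remainder $R_i(h_n^3,\xi)$. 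The remaining two fourth-order identities in the statement are handled identically, with the $(3,1)$ and $(1,3)$ mixtures of ${\bf{\Lambda}}_1$'s and ${\bf{\Lambda}}_2$'s producing the appropriate combinations of $A$'s and $F$'s, and of $F$'s and $C$'s, respectively.

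There is no real obstacle here: the argument is a bookkeeping exercise combining multilinearity with Kessler's moment expansions, exactly parallel to Lemmas \ref{Alemma}--\ref{Elemma} but with two different loading matrices acting on the same diffusion $\xi_{0,t}$; the only mild subtlety is keeping track of which pairing in the fourth moment produces an $A$, a $C$, or an $F$ factor, which is dictated by whether the paired indices both come from ${\bf{\Lambda}}_1$, both from ${\bf{\Lambda}}_2$, or one of each.
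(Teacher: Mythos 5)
Your proposal is correct and is essentially the argument the paper intends: the paper disposes of Lemmas \ref{Alemma}--\ref{AClemma} by deferring to Lemmas 2--3 of Kusano and Uchida \cite{Kusano(2022)}, which carry out exactly this computation — writing $A_{i,n}$ and $C_{i,n}$ as fixed linear images of the common increment $\Delta\xi_{0,i}$ and substituting Kessler-type conditional moment expansions of $\Delta\xi_{0,i}$ up to fourth order. Your identification of which Wick pairing yields an $A$, $C$, or $F$ factor matches the definitions $A={\bf{\Lambda}}_{x_1,0}{\bf{\Sigma}}_{\xi\xi,0}{\bf{\Lambda}}_{x_1,0}^{\top}$, $C={\bf{\Lambda}}_{x_2,0}{\bf{\Psi}}_0^{-1}{\bf{\Gamma}}_0{\bf{\Sigma}}_{\xi\xi,0}{\bf{\Gamma}}_0^{\top}{\bf{\Psi}}_0^{-1\top}{\bf{\Lambda}}_{x_2,0}^{\top}$ and $F={\bf{\Lambda}}_{x_1,0}{\bf{\Sigma}}_{\xi\xi,0}{\bf{\Gamma}}_0^{\top}{\bf{\Psi}}_0^{-1\top}{\bf{\Lambda}}_{x_2,0}^{\top}$, so nothing further is needed.
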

\begin{proof}[Proofs of Lemmas \ref{Alemma}-\ref{AClemma}.]
The results can be shown in a similar way to Lemmas 2-3 in Kusano and Uchida \cite{Kusano(2022)}.
\end{proof}
\begin{lemma}\label{klemma}
Under \textrm{\textbf{[A1]}}, \textrm{\textbf{[B1]}}, \textrm{\textbf{[C1]}} and \textrm{\textbf{[D1]}},
\begin{align*}
    \E\left[\bigl|A_{i,n}^{(j)}\bigr|^{\ell}\big|\mathscr{F}^{n}_{i-1}\right]&=R_i(h_n^\frac{\ell}{2},\xi)\quad(j=1,\cdots, p_1),\\
    \E\left[\bigl|B_{i,n}^{(j)}\bigr|^{\ell}\big|\mathscr{F}^{n}_{i-1}\right]&=R_i(h_n^\frac{\ell}{2},\delta)\quad(j=1,\cdots, p_1),\\
    \E\left[\bigl|C_{i,n}^{(j)}\bigr|^{\ell}\big|\mathscr{F}^{n}_{i-1}\right]&=R_i(h_n^\frac{\ell}{2},\xi)\quad(j=1,\cdots, p_2),\\
    \E\left[\bigl|D_{i,n}^{(j)}\bigr|^{\ell}\big|\mathscr{F}^{n}_{i-1}\right]&=R_i(h_n^\frac{\ell}{2},\zeta)\quad(j=1,\cdots, p_2),\\  
    \E\left[\bigl|E_{i,n}^{(j)}\bigr|^{\ell}\big|\mathscr{F}^{n}_{i-1}\right]&=R_i(h_n^\frac{\ell}{2},\varepsilon)\quad(j=1,\cdots, p_2)
\end{align*}
for $\ell\geq 2$.
\end{lemma}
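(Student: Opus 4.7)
The plan is to prove all five bounds by the same template, since $A_{i,n}$, $B_{i,n}$, $C_{i,n}$, $D_{i,n}$, $E_{i,n}$ are all deterministic linear transformations of increments of one of the four underlying diffusions $\xi_{0,t}$, $\delta_{0,t}$, $\varepsilon_{0,t}$, $\zeta_{0,t}$. Concretely, $A_{i,n}=\mathbf{\Lambda}_{x_1,0}\Delta\xi_{0,i}$, $B_{i,n}=\Delta\delta_{0,i}$, $C_{i,n}=\mathbf{\Lambda}_{x_2,0}\mathbf{\Psi}_0^{-1}\mathbf{\Gamma}_0\Delta\xi_{0,i}$, $D_{i,n}=\mathbf{\Lambda}_{x_2,0}\mathbf{\Psi}_0^{-1}\Delta\zeta_{0,i}$, $E_{i,n}=\Delta\varepsilon_{0,i}$, so each coordinate is bounded in absolute value by a constant times the Euclidean norm of the corresponding diffusion increment. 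Thus it suffices to prove $\E[|\Delta\xi_{0,i}|^\ell\mid\mathscr{F}^n_{i-1}]=R_i(h_n^{\ell/2},\xi)$ and the three analogous statements for $\delta_{0,t}$, $\varepsilon_{0,t}$, $\zeta_{0,t}$.

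To handle $\Delta\xi_{0,i}$ (the other three cases are identical), I would write
\begin{align*}
\Delta\xi_{0,i}=\int_{t_{i-1}^n}^{t_i^n}B_1(\xi_{0,s})\,\dd s+\mathbf{S}_{1,0}\Delta W_{1,i}
\end{align*}
and use the elementary inequality $|a+b|^\ell\leq 2^{\ell-1}(|a|^\ell+|b|^\ell)$ to split the bound into a drift contribution and a diffusion contribution. For the diffusion contribution, $\mathbf{S}_{1,0}\Delta W_{1,i}$ is Gaussian with covariance $h_n\mathbf{S}_{1,0}\mathbf{S}_{1,0}^\top$, hence has all $\ell$-th moments bounded by a constant times $h_n^{\ell/2}$, which is already of the required form $R_i(h_n^{\ell/2},\xi)$.

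For the drift contribution, Jensen's inequality yields
\begin{align*}
\E\!\left[\left|\int_{t_{i-1}^n}^{t_i^n}B_1(\xi_{0,s})\,\dd s\right|^\ell\,\bigg|\,\mathscr{F}^n_{i-1}\right]\leq h_n^{\ell-1}\int_{t_{i-1}^n}^{t_i^n}\E\!\left[\,|B_1(\xi_{0,s})|^\ell\,\bigl|\,\mathscr{F}^n_{i-1}\right]\dd s .
\end{align*}
By \textbf{[A1]}(iii), $B_1\in C^4_\uparrow(\mathbb{R}^{k_1})$, so $|B_1(x)|\leq C(1+|x|)^C$; together with the standard a priori moment control of a Lipschitz SDE (\textbf{[A1]}(i)), which gives $\E[(1+|\xi_{0,s}|)^{C\ell}\mid\mathscr{F}^n_{i-1}]\leq C(1+|\xi_{0,t_{i-1}^n}|)^{C'}$ uniformly in $s\in[t_{i-1}^n,t_i^n]$, the right-hand side is bounded by $h_n^\ell\cdot C(1+|\xi_{0,t_{i-1}^n}|)^{C'}=R_i(h_n^\ell,\xi)$. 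Since $\ell\geq 2$, this is dominated by $R_i(h_n^{\ell/2},\xi)$, and combining with the diffusion contribution gives the claimed bound. Exactly the same argument, using \textbf{[B1]}, \textbf{[C1]}, \textbf{[D1]} respectively, handles $\Delta\delta_{0,i}$, $\Delta\varepsilon_{0,i}$, $\Delta\zeta_{0,i}$.

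The statement is essentially a conditional version of Lemma 1 of Kessler \cite{kessler(1997)}; no new difficulty arises, and there is no substantial obstacle—only bookkeeping across the four diffusions and the five linear combinations. If desired, the proof can be shortened by simply citing the Kessler bound on increments of an SDE with Lipschitz coefficients and polynomial-growth drift, and then observing that multiplication by the fixed matrices $\mathbf{\Lambda}_{x_1,0}$, $\mathbf{\Lambda}_{x_2,0}\mathbf{\Psi}_0^{-1}\mathbf{\Gamma}_0$, $\mathbf{\Lambda}_{x_2,0}\mathbf{\Psi}_0^{-1}$ preserves the $R(h_n^{\ell/2},\cdot)$ form.
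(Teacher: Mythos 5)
Your argument is correct and is essentially the paper's own proof: the paper disposes of this lemma in one line by citing Lemma 6 of Kessler \cite{kessler(1997)}, and your drift-plus-Gaussian decomposition with Jensen's inequality, polynomial growth of the drift, and the a priori conditional moment bound is exactly the standard argument behind that citation, combined with the same observation that the fixed loading matrices only contribute constants. No gap.
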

\begin{proof}
In an analogous manner to Lemma 6 in Kessler \cite{kessler(1997)}, 
the results can be shown.
\end{proof}
\begin{lemma}\label{EX1X1lemma}
Under \textrm{\textbf{[A1]}}, and \textrm{\textbf{[B1]}},
\begin{align}
    \begin{split}
    &\quad\ \E\left[\Delta \mathbb{X}^{(j_1)}_{1,i}\Delta \mathbb{X}^{(j_2)}_{1,i}|\mathscr{F}^{n}_{i-1}\right]\\
    &=h_n({\bf{\Sigma}}_0^{11})_{j_1j_2}+h_n^2\bigl\{R_{i}(1,\xi)+R_{i}(1,\delta)+R_{i}(1,\xi)R_{i}(1,\delta)\bigr\},\label{EX1X1}
    \end{split}\\
    \begin{split}
    &\quad\ \E\left[\Delta \mathbb{X}^{(j_1)}_{1,i}\Delta \mathbb{X}^{(j_2)}_{1,i}\Delta \mathbb{X}^{(j_3)}_{1,i}\Delta \mathbb{X}^{(j_4)}_{1,i}\big|\mathscr{F}^{n}_{i-1}\right]\\
    &=h_n^2\{({\bf{\Sigma}}_0^{11})_{j_1j_2}({\bf{\Sigma}}_0^{11})_{j_3j_4}+({\bf{\Sigma}}_0^{11})_{j_1j_3}({\bf{\Sigma}}_0^{11})_{j_2j_4}+({\bf{\Sigma}}_0^{11})_{j_1j_4}({\bf{\Sigma}}_0^{11})_{j_2j_3}\}\\
    &\quad\qquad\qquad\qquad\qquad\qquad\quad+h_n^3\{R_{i}(1,\xi)+R_{i}(1,\delta)+R_{i}(1,\xi)R_{i}(1,\delta)\}\label{EX1X1X1X1}
    \end{split}
\end{align}
for $j_1,j_2,j_3,j_4=1,\cdots,p_1$.
\end{lemma}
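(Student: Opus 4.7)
The plan is to exploit the decomposition $\Delta\mathbb{X}_{1,i}=A_{i,n}+B_{i,n}$ (where $A_{i,n}={\bf{\Lambda}}_{x_1,0}\Delta\xi_{0,i}$ and $B_{i,n}=\Delta\delta_{0,i}$), the conditional independence of $A_{i,n}$ and $B_{i,n}$ given $\mathscr{F}^n_{i-1}$ (which follows from the independence of $W_{1,t}$ and $W_{2,t}$), and the moment estimates already recorded in Lemmas \ref{Alemma} and \ref{Blemma}. No deep new idea is needed; the proof is purely combinatorial bookkeeping, and the only delicate point is verifying that the leading terms assemble correctly into Isserlis-type products of $({\bf{\Sigma}}_0^{11})_{jk}=A_{jk}+B_{jk}$.

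For (\ref{EX1X1}) I would simply expand
\[
\E\bigl[\Delta\mathbb{X}_{1,i}^{(j_1)}\Delta\mathbb{X}_{1,i}^{(j_2)}\,\big|\,\mathscr{F}^n_{i-1}\bigr]
=\E[A_{i,n}^{(j_1)}A_{i,n}^{(j_2)}|\mathscr{F}^n_{i-1}]+\E[B_{i,n}^{(j_1)}B_{i,n}^{(j_2)}|\mathscr{F}^n_{i-1}]+(\text{two cross terms}),
\]
use conditional independence to factor each cross term as $\E[A_{i,n}^{(j_a)}|\mathscr{F}^n_{i-1}]\,\E[B_{i,n}^{(j_b)}|\mathscr{F}^n_{i-1}]$, and apply Lemmas \ref{Alemma} and \ref{Blemma}. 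The pure terms contribute $h_n(A_{j_1j_2}+B_{j_1j_2})=h_n({\bf{\Sigma}}_0^{11})_{j_1j_2}$ plus $R_i(h_n^2,\xi)+R_i(h_n^2,\delta)$, while each cross term contributes $R_i(h_n,\xi)R_i(h_n,\delta)=h_n^2R_i(1,\xi)R_i(1,\delta)$. Writing $R_i(h_n^2,\xi)=h_n^2R_i(1,\xi)$ and similarly for $\delta$, the claimed form is immediate.

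For (\ref{EX1X1X1X1}) I would expand the four-fold product into the $2^4=16$ monomials in $A_{i,n}$ and $B_{i,n}$ and group them by type AAAA, AAAB, AABB, ABBB, BBBB (with $\binom{4}{k}$ terms of type with $k$ $A$-factors). By conditional independence each mixed expectation factorises, and I apply Lemmas \ref{Alemma} and \ref{Blemma}:
\begin{itemize}
\item AAAA contributes $h_n^2\{A_{j_1j_2}A_{j_3j_4}+A_{j_1j_3}A_{j_2j_4}+A_{j_1j_4}A_{j_2j_3}\}+R_i(h_n^3,\xi)$, and analogously BBBB.
\item Each AAAB term factors as $R_i(h_n^2,\xi)\,R_i(h_n,\delta)=h_n^3R_i(1,\xi)R_i(1,\delta)$, and similarly for ABBB.
\item Each AABB term of the form $\E[A^{(j_a)}A^{(j_b)}|\mathscr{F}^n_{i-1}]\,\E[B^{(j_c)}B^{(j_d)}|\mathscr{F}^n_{i-1}]$ equals $h_n^2 A_{j_aj_b}B_{j_cj_d}+h_n^3\{R_i(1,\xi)+R_i(1,\delta)+R_i(1,\xi)R_i(1,\delta)\}$.
\end{itemize}
Summing the six AABB monomials over the partitions $I\cup I^c=\{1,2,3,4\}$ with $|I|=2$ yields exactly the six cross terms
$A_{j_1j_2}B_{j_3j_4}+B_{j_1j_2}A_{j_3j_4}+A_{j_1j_3}B_{j_2j_4}+B_{j_1j_3}A_{j_2j_4}+A_{j_1j_4}B_{j_2j_3}+B_{j_1j_4}A_{j_2j_3}$
that are needed so that AAAA${}+{}$BBBB${}+{}$AABB together equal
\[
h_n^2\!\!\sum_{\text{pair partitions}}({\bf{\Sigma}}_0^{11})_{j_{\cdot}j_{\cdot}}({\bf{\Sigma}}_0^{11})_{j_{\cdot}j_{\cdot}}.
\]
All remaining contributions are $h_n^3$-multiples of $R_i(1,\xi)$, $R_i(1,\delta)$, or $R_i(1,\xi)R_i(1,\delta)$, giving the advertised remainder.

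The only step that demands care is this last matching: one must see that the six AABB pairings line up one-to-one with the six cross products arising from expanding the Isserlis-type sum $\sum (A+B)_{\cdot\cdot}(A+B)_{\cdot\cdot}$. Once this bookkeeping is laid out (most cleanly by labelling the three pair-partitions of $\{1,2,3,4\}$ and checking that each contributes one $AB$ and one $BA$ term), the rest is routine substitution from Lemmas \ref{Alemma} and \ref{Blemma}.
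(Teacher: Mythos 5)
Your proof is correct and follows essentially the same route as the paper: the paper's own proof of this lemma simply defers to Lemmas \ref{Alemma}--\ref{Blemma} and Lemma 4 of Kusano and Uchida \cite{Kusano(2022)}, and the argument it has in mind is exactly your decomposition $\Delta\mathbb{X}_{1,i}=A_{i,n}+B_{i,n}$, conditional factorization of mixed moments via the independence of $W_{1,t}$ and $W_{2,t}$, and substitution of the moment estimates (this is precisely the strategy written out for the analogous Lemma \ref{EX2X2lemma}). Your bookkeeping of the six AABB pairings against the cross terms of the Isserlis-type sum is the right verification and matches the intended computation.
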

\begin{proof}
From Lemma \ref{Alemma} and Lemma \ref{Blemma}, 
the results can be shown in a similar way to Lemma 4 in Kusano and Uchida \cite{Kusano(2022)}.
\end{proof}
\begin{lemma}\label{EX2X2lemma}
Under \textrm{\textbf{[A1]}}, \textrm{\textbf{[C1]}} and \textrm{\textbf{[D1]}},
\begin{align}
    \begin{split}
    &\quad\ \E\left[\Delta \mathbb{X}^{(j_1)}_{2,i}\Delta \mathbb{X}^{(j_2)}_{2,i}\big|\mathscr{F}^{n}_{i-1}\right]\\
    &=h_n({\bf{\Sigma}}_0^{22})_{j_1j_2}+h_n^2\bigl\{R_{i}(1,\xi)+R_{i}(1,\varepsilon)+R_{i}(1,\zeta)\\
    &\qquad\qquad+R_i(1,\xi)R_i(1,\varepsilon)+R_i(1,\xi)R_i(1,\zeta)
    +R_i(1,\varepsilon)R_i(1,\zeta)\bigr\},\label{EX2X2}
    \end{split}\\
    \begin{split}
    &\quad\ \E\left[\Delta \mathbb{X}^{(j_1)}_{2,i}\Delta \mathbb{X}^{(j_2)}_{2,i}\Delta \mathbb{X}^{(j_3)}_{2,i}\Delta \mathbb{X}^{(j_4)}_{2,i}\big|\mathscr{F}^{n}_{i-1}\right]\\
    &=h_n^2\{({\bf{\Sigma}}_0^{22})_{j_1j_2}({\bf{\Sigma}}_0^{22})_{j_3j_4}+({\bf{\Sigma}}_0^{22})_{j_1j_3}({\bf{\Sigma}}_0^{22})_{j_2j_4}+({\bf{\Sigma}}_0^{22})_{j_1j_4}({\bf{\Sigma}}_0^{22})_{j_2j_3}\}\\
    &\quad+h_n^3\bigl\{R_i(1,\xi)+R_i(1,\varepsilon)+R_i(1,\zeta)+R_i(1,\xi)R(1,\varepsilon)
    \\
    &\qquad\qquad\qquad+R_i(1,\xi)R_i(1,\zeta)+R_i(1,\varepsilon)R_i(1,\zeta)
    +R_i(1,\xi)R_i(1,\varepsilon)R_i(1,\zeta)\bigr\}\label{EX2X2X2X2}
    \end{split}
\end{align}
for $j_1,j_2,j_3,j_4=1,\cdots,p_2$.
\end{lemma}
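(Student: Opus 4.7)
The plan is to exploit the decomposition $\Delta\mathbb{X}_{2,i}=C_{i,n}+D_{i,n}+E_{i,n}$ recorded in the appendix together with the independence of the Brownian motions $W_{1,t}$, $W_{3,t}$, $W_{4,t}$ that drive $\xi_{0,t}$, $\varepsilon_{0,t}$, $\zeta_{0,t}$ respectively. This reduces everything to the one-family moment formulas of Lemmas \ref{Clemma}, \ref{Dlemma}, \ref{Elemma}, at the cost of some combinatorial bookkeeping that mirrors the pattern already used in Lemma \ref{EX1X1lemma}.

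For the second-order formula (\ref{EX2X2}), I would expand
\[
\Delta\mathbb{X}_{2,i}^{(j_1)}\Delta\mathbb{X}_{2,i}^{(j_2)}=\sum_{X,Y\in\{C,D,E\}}X_{i,n}^{(j_1)}Y_{i,n}^{(j_2)}
\]
and take conditional expectations termwise. Since $C_{i,n}$, $D_{i,n}$, $E_{i,n}$ are measurable functions of increments of independent Brownian motions, the six mixed-family cross terms factor into products of single-family conditional means, each of which is $R_i(h_n,\cdot)$ by Lemmas \ref{Clemma}, \ref{Dlemma}, \ref{Elemma}; such products yield contributions of order $h_n^2$ with the polynomial growth structure stated in (\ref{EX2X2}). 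The three diagonal terms give exactly $h_n C_{j_1j_2}$, $h_n D_{j_1j_2}$, $h_n E_{j_1j_2}$ up to $h_n^2$ remainders, and summing them produces $h_n(C+D+E)_{j_1j_2}=h_n({\bf{\Sigma}}_0^{22})_{j_1j_2}$ as required.

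For the fourth-order formula (\ref{EX2X2X2X2}), the same expansion gives $3^4=81$ terms indexed by $(X_1,X_2,X_3,X_4)\in\{C,D,E\}^4$. By the independence argument, any term in which some letter from $\{C,D,E\}$ appears an odd number of times factors through a single-family odd-order conditional moment, which by Lemmas \ref{Clemma}, \ref{Dlemma}, \ref{Elemma} is of order $R_i(h_n^2,\cdot)$; multiplied by the remaining factors (each of order $O(h_n)$ or smaller) these fall into the $h_n^3$ remainder. The surviving terms are the three ``pure'' ones and the $3\binom{4}{2}=18$ ``two-and-two'' ones where two of the four letters agree with one family and the other two with another. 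For each pure term the desired $h_n^2$ expression comes directly from the fourth-moment clause of the relevant Lemma \ref{Clemma}/\ref{Dlemma}/\ref{Elemma}; for each two-and-two term, independence factors the conditional expectation into a product of two second-order moments, producing $h_n^2$ times a product of $C,D,E$ entries, plus an $h_n^3$ remainder. Collecting all contributions and regrouping the $3+18=21$ products of entries according to the three index pairings $(j_1j_2,j_3j_4)$, $(j_1j_3,j_2j_4)$, $(j_1j_4,j_2j_3)$, each pairing reassembles as $(C+D+E)_{\cdot\cdot}(C+D+E)_{\cdot\cdot}=({\bf{\Sigma}}_0^{22})_{\cdot\cdot}({\bf{\Sigma}}_0^{22})_{\cdot\cdot}$.

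The main obstacle is this last combinatorial regrouping: one must verify that the $21$ surviving products assemble with the right multiplicities to form exactly the three index pairings in (\ref{EX2X2X2X2}) and no more. I would handle it by noting that the three-family moment problem is formally identical to computing the fourth moment of a sum $C+D+E$ of three independent centered Gaussians with covariances $h_n C$, $h_n D$, $h_n E$, for which Isserlis's theorem gives precisely the stated symmetric sum of three pair products of $({\bf{\Sigma}}_0^{22})$ entries; the single- and two-family lemmas say that our discretized increments agree with that Gaussian computation up to the stated $h_n^3$ remainder, and polynomial growth is preserved because each remainder from Lemmas \ref{Clemma}, \ref{Dlemma}, \ref{Elemma} is of the $R_i(\cdot,\xi)$, $R_i(\cdot,\zeta)$, $R_i(\cdot,\varepsilon)$ form and is multiplied by bounded-in-$\ell$ moments handled through Lemma \ref{klemma}.
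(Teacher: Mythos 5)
Your proposal is correct and takes essentially the same route as the paper: decompose $\Delta \mathbb{X}_{2,i}=C_{i,n}+D_{i,n}+E_{i,n}$, apply the single-family moment formulas of Lemmas \ref{Clemma}--\ref{Elemma}, and use the independence of $W_{1,t}$, $W_{3,t}$, $W_{4,t}$ to factor every mixed term into a product of single-family conditional moments (the paper simply asserts that the fourth-order identity follows ``in the same way,'' whereas you spell out the $81$-term bookkeeping via the Wick/Isserlis pattern). Two harmless slips in your write-up: a letter of multiplicity one contributes a first moment $R_i(h_n,\cdot)$ rather than $R_i(h_n^2,\cdot)$ (the product is nonetheless $O(h_n^3)$ in every odd configuration), and the three pure terms each yield three pair products, so $27$ rather than $21$ products reassemble into the three index pairings of entries of ${\bf{\Sigma}}_0^{22}$.
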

\begin{proof}
First, we consider (\ref{EX2X2}). It holds from Lemmas \ref{Clemma}-\ref{Elemma} that for $j_1,j_2=1,\cdots,p_2$, 
\begin{align*}
    &\quad\ \E\left[C_{i,n}^{(j_1)}C_{i,n}^{(j_2)}\big|\mathscr{F}^{n}_{i-1}\right]+\E\left[D_{i,n}^{(j_1)}D_{i,n}^{(j_2)}\big|\mathscr{F}^{n}_{i-1}\right]+\E\left[E_{i,n}^{(j_1)}E_{i,n}^{(j_2)}\big|\mathscr{F}^{n}_{i-1}\right]\\
    &=h_n({\bf{\Sigma}}^{22}_0)_{j_1j_2}+h_n^2\bigl\{R_i(1,\xi)+R_i(1,\varepsilon)+R_i(1,\zeta)\bigr\}.
\end{align*}
It follows from Lemmas \ref{Clemma}-\ref{Elemma}
and the independence of $\xi_{0,t}$, $\varepsilon_{0,t}$, and $\zeta_{0,t}$ that
\begin{align*}
    &\E\left[C_{i,n}^{(j_1)}D_{i,n}^{(j_2)}\big|\mathscr{F}^{n}_{i-1}\right]=R_i(h_n,\xi)R_i(h_n,\zeta),\\
    &\E\left[C_{i,n}^{(j_1)}E_{i,n}^{(j_2)}\big|\mathscr{F}^{n}_{i-1}\right]=R_i(h_n,\xi)R_i(h_n,\varepsilon),\\
    &\E\left[D_{i,n}^{(j_1)}E_{i,n}^{(j_2)}\big|\mathscr{F}^{n}_{i-1}\right]=R_i(h_n,\zeta)R_i(h_n,\varepsilon)
\end{align*}
for $j_1,j_2=1,\cdots,p_2$. Therefore, we obtain
\begin{align*}
    \E\left[\Delta \mathbb{X}^{(j_1)}_{2,i}\Delta \mathbb{X}^{(j_2)}_{2,i}\big|\mathscr{F}^{n}_{i-1}\right]
    &=\E\left[(C_{i,n}^{(j_1)}+D_{i,n}^{(j_1)}+E_{i,n}^{(j_1)})(C_{i,n}^{(j_2)}+D_{i,n}^{(j_2)}+E_{i,n}^{(j_2)})\big|\mathscr{F}^{n}_{i-1}\right]\\
    &=h_n({\bf{\Sigma}}^{22}_0)_{j_1j_2}+h_n^2\bigl\{R_{i}(1,\xi)+R_{i}(1,\varepsilon)+R_{i}(1,\zeta)\\
    &\quad+R_i(1,\xi)R_i(1,\varepsilon)+R_i(1,\xi)R_i(1,\zeta)+R_i(1,\varepsilon)R_i(1,\zeta)\bigr\}
\end{align*}
for $j_1,j_2=1,\cdots,p_2$. Furthermore, from 
Lemmas \ref{Clemma}-\ref{Elemma}, (\ref{EX2X2X2X2}) 
can be shown in the same way.
\end{proof}
\begin{lemma}\label{EX1X2lemma}
Under \textrm{\textbf{[A1]}}, \textrm{\textbf{[B1]}}, \textrm{\textbf{[C1]}} and \textrm{\textbf{[D1]}},
\begin{align*}
    &\quad\ \E\left[\Delta \mathbb{X}^{(j_1)}_{1,i}\Delta \mathbb{X}^{(j_2)}_{2,i}\big|\mathscr{F}^{n}_{i-1}\right]\\
    &=h_n({\bf{\Sigma}}_0^{12})_{j_1j_2}+h_n^2\bigl\{R_i(1,\xi)+R_i(1,\xi)R_i(1,\delta)+R_i(1,\xi)R_i(1,\varepsilon)
    \qquad\qquad\qquad\quad\ \\
    &\quad+R_i(1,\xi)R_i(1,\zeta)+R_i(1,\delta)R_i(1,\zeta)
    +R_i(1,\delta)R_i(1,\varepsilon)\bigr\}
\end{align*}
for $j_1=1,\cdots,p_1,\ j_2=1,\cdots,p_2$,
\begin{align*}
    &\quad\ \E\left[\Delta \mathbb{X}^{(j_1)}_{1,i}\Delta \mathbb{X}^{(j_2)}_{1,i}\Delta \mathbb{X}^{(j_3)}_{1,i}\Delta \mathbb{X}^{(j_4)}_{2,i}\big|\mathscr{F}^{n}_{i-1}\right]\\
    &=h_n^2\{({\bf{\Sigma}}_0^{11})_{j_1j_2}({\bf{\Sigma}}_0^{12})_{j_3j_4}+({\bf{\Sigma}}_0^{11})_{j_1j_3}({\bf{\Sigma}}_0^{12})_{j_2j_4}+({\bf{\Sigma}}_0^{12})_{j_1j_4}({\bf{\Sigma}}_0^{11})_{j_2j_3}\}\qquad\qquad\qquad\quad\ \\
    &\quad+h_n^3\bigl\{R_i(1,\xi)+R_i(1,\delta)+R_i(1,\xi)R_i(1,\delta)+R_i(1,\xi)R_i(1,\varepsilon)
    \\  &\quad+R_i(1,\xi)R_i(1,\zeta)+R_i(1,\delta)R_i(1,\varepsilon)+R_i(1,\delta)R_i(1,\zeta)\\
    &\quad+R_i(1,\xi)R_i(1,\delta)R_i(1,\varepsilon)
    +R_i(1,\xi)R_i(1,\delta)R_i(1,\zeta)\bigr\}\quad\ \
\end{align*}
for $j_1,j_2,j_3=1,\cdots,p_1,\ j_4=1,\cdots,p_2$,
\begin{align*}
    &\quad\ \E\left[\Delta \mathbb{X}^{(j_1)}_{1,i}\Delta \mathbb{X}^{(j_2)}_{1,i}\Delta \mathbb{X}^{(j_3)}_{2,i}\Delta \mathbb{X}^{(j_4)}_{2,i}\big|\mathscr{F}^{n}_{i-1}\right]\\
    &=h_n^2\{({\bf{\Sigma}}^{11}_0)_{j_1j_2}({\bf{\Sigma}}^{22}_0)_{j_3j_4}+({\bf{\Sigma}}^{12}_0)_{j_1j_3}({\bf{\Sigma}}^{12}_0)_{j_2j_4}+({\bf{\Sigma}}^{12}_0)_{j_1j_4}({\bf{\Sigma}}^{12}_0)_{j_2j_3}\}\\
    &\quad+h_n^3\bigl\{R_i(1,\xi)+R_i(1,\delta)+R_i(1,\varepsilon)+R_i(1,\zeta)
    +R_i(1,\xi)R_i(1,\delta)\\
    &\quad+R_i(1,\xi)R_i(1,\varepsilon)
    +R_i(1,\xi)R_i(1,\zeta)
    +R_i(1,\delta)R_i(1,\varepsilon)+R_i(1,\delta)R_i(1,\zeta)\\
    &\quad+R_i(1,\xi)R_i(1,\delta)R_i(1,\varepsilon)
    +R_i(1,\xi)R_i(1,\delta)R_i(1,\zeta)
    +R_i(1,\xi)R_i(1,\varepsilon)R_i(1,\zeta)\quad\ \\
    &\quad+R_i(1,\delta)R_i(1,\varepsilon)
    R_i(1,\zeta)\bigr\}
    +h_n^4R_i(1,\xi)R_i(1,\delta)R_i(1,\zeta)R_i(1,\varepsilon)
\end{align*}
for $j_1,j_2=1,\cdots,p_1, \ j_3,j_4=1,\cdots,p_2$, and
\begin{align*}
    &\quad\ \E\left[\Delta \mathbb{X}^{(j_1)}_{1,i}\Delta \mathbb{X}^{(j_2)}_{2,i}\Delta \mathbb{X}^{(j_3)}_{2,i}\Delta \mathbb{X}^{(j_4)}_{2,i}\big|\mathscr{F}^{n}_{i-1}\right]\\
    &=h_n^2\{({\bf{\Sigma}}^{12}_0)_{j_1j_2}({\bf{\Sigma}}^{22}_0)_{j_3j_4}+({\bf{\Sigma}}^{12}_0)_{j_1j_3}({\bf{\Sigma}}^{22}_0)_{j_2j_4}+({\bf{\Sigma}}^{12}_0)_{j_1j_4}({\bf{\Sigma}}^{22}_0)_{j_2j_3}\}\\
    &\quad+h_n^3\bigl\{R_i(1,\xi)+R_i(1,\varepsilon)+R_i(1,\zeta)+R_i(1,\xi)R_i(1,\delta)+R_i(1,\xi)R_i(1,\varepsilon)\\
    &\quad+R_i(1,\xi)R_i(1,\zeta)
    +R_i(1,\delta)R_i(1,\varepsilon)
    +R_i(1,\delta)R_i(1,\zeta)
    +R_i(1,\xi)R_i(1,\delta)R_i(1,\varepsilon)
    \\
    &\quad+R_i(1,\xi)R_i(1,\delta)R_i(1,\zeta)
    +R_i(1,\xi)R_i(1,\varepsilon)R_i(1,\zeta)
    +R_i(1,\delta)R_i(1,\varepsilon)
    R_i(1,\zeta)\bigr\}\\
    &\quad+h_n^4 R_i(1,\xi)R_i(1,\delta)R_i(1,\varepsilon)R_i(1,\zeta)\ 
\end{align*}
for $j_1=1,\cdots,p_1, \ j_2,j_3,j_4=1,\cdots,p_2$.
\end{lemma}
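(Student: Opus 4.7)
The plan is to expand $\Delta\mathbb{X}_{1,i}=A_{i,n}+B_{i,n}$ and $\Delta\mathbb{X}_{2,i}=C_{i,n}+D_{i,n}+E_{i,n}$ throughout each product, then exploit (a) the mutual independence of $\xi_{0,t},\delta_{0,t},\varepsilon_{0,t},\zeta_{0,t}$, which factorises any conditional expectation of a product across the four driving processes, together with (b) the moment expansions from Lemmas \ref{Alemma}--\ref{AClemma} and the generic bound in Lemma \ref{klemma}. Notice that $A$ and $C$ are both $\xi$-driven (hence correlated), $B$ is $\delta$-driven, $D$ is $\zeta$-driven, and $E$ is $\varepsilon$-driven; so any mixed monomial in $\{A,B,C,D,E\}$ factors into at most four conditional expectations, one per driving process. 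The identification $F=\Lambda_{x_1,0}\Sigma_{\xi\xi,0}\Gamma_0^{\top}\Psi_0^{-1\top}\Lambda_{x_2,0}^{\top}=\Sigma_0^{12}$, $A+B$-covariance $=\Sigma_0^{11}$ and $C+D+E$-covariance $=\Sigma_0^{22}$ allow us to recognise the leading terms.

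First I would carry out the two-factor case as the template. Multilinearity gives
\[
\E\!\left[\Delta\mathbb{X}_{1,i}^{(j_1)}\Delta\mathbb{X}_{2,i}^{(j_2)}\big|\mathscr{F}^{n}_{i-1}\right]=\sum_{U\in\{A,B\},V\in\{C,D,E\}}\E\!\left[U^{(j_1)}V^{(j_2)}\big|\mathscr{F}^{n}_{i-1}\right].
\]
Only the pair $(A,C)$ is not separable by independence and, by Lemma \ref{AClemma}, yields $h_nF_{j_1j_2}+R_i(h_n^2,\xi)=h_n(\Sigma_0^{12})_{j_1j_2}+R_i(h_n^2,\xi)$. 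Every other pair factorises into two single-factor conditional expectations, each of order $R_i(h_n,\cdot)$ by the first bullet of the relevant lemma among \ref{Alemma}--\ref{Elemma}; their product produces an $h_n^2$ remainder of the exact mixed form appearing in the statement. Summing these remainders gives precisely the error structure asserted in (1).

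For the three four-factor identities, I would apply the same expansion, grouping the $\leq 2^3\cdot 3 = 24$, $2^2\cdot 3^2=36$, and $2\cdot 3^3=54$ monomial terms respectively by the multiset of driving processes appearing. Monomials using only $\xi$-driven factors (pure $A,C$ monomials) are computed directly by Lemma \ref{AClemma} and contribute the three Isserlis-type products $(\Sigma_0^{**})_{j_aj_b}(\Sigma_0^{**})_{j_cj_d}$ at order $h_n^2$; pure $B$ monomials of degree two in $B$ combine with a pure $A,C$ monomial of degree two to give contributions involving $B_{j_aj_b}$ that assemble into $(\Sigma_0^{11})_{j_aj_b}=A_{j_aj_b}+B_{j_aj_b}$; pure $E$ and pure $D$ monomials similarly complete $(\Sigma_0^{22})_{j_cj_d}=C_{j_cj_d}+D_{j_cj_d}+E_{j_cj_d}$. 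Every monomial whose process-multiset is not one of these ``completing'' patterns contains at least one factor standing alone in its process class, hence factorises into a product of a first-moment $R_i(h_n,\cdot)$ and higher-moment terms; applying Lemma \ref{klemma} and multiplying the orders then yields a remainder of order $h_n^{3}$ or $h_n^{4}$ whose $(\xi,\delta,\varepsilon,\zeta)$-profile matches the sums of $R_i(1,\cdot)$ products listed in the statement.

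The main obstacle is purely combinatorial: verifying that, after summing the expanded monomials, (i) the $h_n^2$ contributions assemble exactly into the stated three Isserlis products built from the appropriate block of $\Sigma_0$, and (ii) the $h_n^3$ and $h_n^4$ remainders carry all and only the cross-process $R_i(1,\cdot)$ factors claimed. This bookkeeping is routine but lengthy; it can be organised by tabulating monomials according to how many of their factors come from each of the four independent processes and then matching to the leading formulas via $A+B=\Sigma_0^{11}$, $C+D+E=\Sigma_0^{22}$, and $F=\Sigma_0^{12}$. The argument parallels Lemma \ref{EX2X2lemma} but with the asymmetry introduced by the $A$--$C$ correlation, so I expect no new analytic difficulty beyond careful accounting.
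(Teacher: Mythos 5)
Your proposal is correct and takes essentially the same route as the paper: the paper's proof of this lemma simply states that the results follow from Lemmas \ref{Alemma}--\ref{AClemma} in a manner analogous to Lemma \ref{EX2X2lemma}, which is exactly the expansion into $A,B$ and $C,D,E$ monomials, factorisation by independence of the driving processes, and assembly via $A+B={\bf{\Sigma}}_0^{11}$, $C+D+E={\bf{\Sigma}}_0^{22}$, $F={\bf{\Sigma}}_0^{12}$ that you describe. Your spot-check of the two-factor case and the combinatorial bookkeeping for the leading $h_n^2$ terms and the $h_n^3$, $h_n^4$ remainders are consistent with the stated formulas.
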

\begin{proof}
From Lemmas \ref{Alemma}-\ref{AClemma},
the results can be shown in an analogous manner to Lemma \ref{EX2X2lemma}.
\end{proof}
\begin{lemma}\label{Ilemma}
Under \textbf{[A1]},
\begin{align*}
    \mathbb{E}\left[\Bigl|\int_{t_{i-1}^n}^{t_{i}^n}(B_1(\xi_{0,s})-B_1(\xi_{0,t_{i-1}^n}))ds\Bigr|^k\Big|\mathscr{F}^{n}_{i-1}\right]=R_i(h_n^{k+\frac{k}{2}},\xi)
\end{align*}
for $k\geq 2$.
\end{lemma}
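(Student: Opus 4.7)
The plan is to reduce the conditional $k$-th moment of the time integral to a space-time moment bound on the diffusion increments, and then invoke the standard moment estimate available under \textbf{[A1]}.

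First, I would apply Jensen's inequality (or equivalently Hölder with exponents $k$ and $k/(k-1)$) to pull the $k$-th power inside the integral: for any integrable $f$ on $[t_{i-1}^n,t_i^n]$,
\begin{equation*}
\Bigl|\int_{t_{i-1}^n}^{t_i^n} f(s)\,ds\Bigr|^k \;\leq\; h_n^{\,k-1}\int_{t_{i-1}^n}^{t_i^n}|f(s)|^k\,ds.
\end{equation*}
Setting $f(s)=B_1(\xi_{0,s})-B_1(\xi_{0,t_{i-1}^n})$ and using the Lipschitz property \textbf{[A1]}(i), we have $|f(s)|^k\leq C^k|\xi_{0,s}-\xi_{0,t_{i-1}^n}|^k$. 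Taking conditional expectations and applying Fubini gives
\begin{equation*}
\mathbb{E}\!\left[\Bigl|\int_{t_{i-1}^n}^{t_i^n}(B_1(\xi_{0,s})-B_1(\xi_{0,t_{i-1}^n}))\,ds\Bigr|^k\Big|\mathscr{F}^n_{i-1}\right] \leq C^k h_n^{k-1}\int_{t_{i-1}^n}^{t_i^n}\mathbb{E}\!\left[|\xi_{0,s}-\xi_{0,t_{i-1}^n}|^k\big|\mathscr{F}^n_{i-1}\right]ds.
\end{equation*}

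Next, I would invoke the classical moment estimate for diffusion increments: under \textbf{[A1]}(i)--(iii), the drift $B_1$ is Lipschitz and of polynomial growth and ${\bf S}_{1,0}$ is a constant matrix, so a standard application of the Burkholder--Davis--Gundy inequality together with Gronwall's lemma yields, for all $k\geq 2$ and $s\in[t_{i-1}^n,t_i^n]$,
\begin{equation*}
\mathbb{E}\!\left[|\xi_{0,s}-\xi_{0,t_{i-1}^n}|^k\big|\mathscr{F}^n_{i-1}\right] \;\leq\; C(s-t_{i-1}^n)^{k/2}(1+|\xi_{0,t_{i-1}^n}|)^{C} \;=\; R_i(h_n^{k/2},\xi).
\end{equation*}
This estimate is used throughout the paper (it is the same bound that underlies Lemma~\ref{klemma}) and is standard under Kessler's framework cited in Remark~1.

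Plugging this back in produces
\begin{equation*}
\mathbb{E}\!\left[\Bigl|\int_{t_{i-1}^n}^{t_i^n}(B_1(\xi_{0,s})-B_1(\xi_{0,t_{i-1}^n}))\,ds\Bigr|^k\Big|\mathscr{F}^n_{i-1}\right] \;\leq\; C\,h_n^{k-1}\cdot h_n\cdot h_n^{k/2}(1+|\xi_{0,t_{i-1}^n}|)^C \;=\; R_i(h_n^{3k/2},\xi),
\end{equation*}
which is exactly the claim since $k+k/2=3k/2$. There is no real obstacle here; the only non-trivial input is the standard diffusion-increment moment bound, which is a direct consequence of \textbf{[A1]}. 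The gain of an extra factor $h_n^{k/2}$ over the naive bound comes precisely from the Lipschitz cancellation at the base point $\xi_{0,t_{i-1}^n}$, which is what makes this lemma useful in the Itô--Taylor expansions leading to Lemmas \ref{Alemma}--\ref{AClemma}.
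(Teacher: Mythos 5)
Your proof is correct and follows essentially the same route as the paper's: Hölder/Jensen to extract the factor $h_n^{k-1}$, Fubini, the Lipschitz property of $B_1$ from \textbf{[A1]}(i), and the standard increment moment bound $\E[|\xi_{0,s}-\xi_{0,t_{i-1}^n}|^k\,|\,\mathscr{F}^n_{i-1}]=R_i(h_n^{k/2},\xi)$ (which the paper takes from Lemma 6 in Kessler rather than rederiving via BDG and Gronwall). The final bookkeeping $h_n^{k-1}\cdot h_n\cdot h_n^{k/2}=h_n^{k+\frac{k}{2}}$ matches the paper exactly.
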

\begin{proof}
Note that we see from Lemma 6 in Kessler \cite{kessler(1997)} that
\begin{align*}
    t_{i-1}^n\leq s\leq t_{i}^n \Longrightarrow \E\Bigl[|\xi_{0,s}-\xi_{0,t_{i-1}^n}|^{k}\big|\mathscr{F}^{n}_{i-1}\Bigr]=R_i(h_n^{\frac{k}{2}},\xi)
\end{align*}
for $k\geq 2$. It holds from H{\"o}lder's inequality that 
\begin{align*}
   &\quad\  \mathbb{E}\left[\Bigl|\int_{t_{i-1}^n}^{t_{i}^n}(B_1(\xi_{0,s})-B_1(\xi_{0,t_{i-1}^n}))ds\Bigr|^k\Big|\mathscr{F}^{n}_{i-1}\right]\\
   &\leq\E\left[\left(\int_{t_{i-1}^n}^{t_{i}^n}|B_1(\xi_{0,s})-B_1(\xi_{0,t_{i-1}^n})|ds\right)^{k}\Big|\mathscr{F}^{n}_{i-1}\right]\\
    &\leq\E\left[\left\{\left(\int_{t_{i-1}^n}^{t_{i}^n}|B_1(\xi_{0,s})-B_1(\xi_{0,t_{i-1}^n})|^{k}ds\right)^{\frac{1}{k}}\left(\int_{t_{i-1}^n}^{t_{i}^n}1^{\frac{k}{k-1}}ds\right)^{\frac{k-1}{k}}\right\}^{k}
    \Big|\mathscr{F}^{n}_{i-1}\right]\\
    &\leq h_n^{k-1}\E\left[\int_{t_{i-1}^n}^{t_{i}^n}|B_1(\xi_{0,s})-B_1(\xi_{0,t_{i-1}^n})|^{k}ds\Big|\mathscr{F}^{n}_{i-1}\right]\\
    &\leq h_n^{k-1}\int_{t_{i-1}^n}^{t_{i}^n}\E\Bigl[|B_1(\xi_{0,s})-B_1(\xi_{0,t_{i-1}^n})|^{k}\big|\mathscr{F}^{n}_{i-1}\Bigr]ds\\
    &\leq C_1h_n^{k-1}\int_{t_{i-1}^n}^{t_{i}^n}\E\Bigl[|\xi_{0,s}-\xi_{0,t_{i-1}^n}|^{k}\big|\mathscr{F}^{n}_{i-1}\Bigr]ds\\
    &\leq C_2 h_n^{k+\frac{k}{2}}\bigl(1+|\xi_{0,t_{i-1}^n}|\bigr)^{C_2}
\end{align*}
for $k\geq 2$, which yields
\begin{align*}
    \mathbb{E}\left[\Bigl|\int_{t_{i-1}^n}^{t_{i}^n}(B_1(\xi_{0,s})-B_1(\xi_{0,t_{i-1}^n}))ds\Bigr|^k\Big|\mathscr{F}^{n}_{i-1}\right]=R_i(h_n^{k+\frac{k}{2}},\xi) 
\end{align*}
for $k\geq 2$. 
\end{proof}
\begin{lemma}\label{AWlemma}
Under \textbf{[A1]},
\begin{align}
    \E\Bigl[A_{i,n}^{(j_1)}\Delta W_{1,i}^{(j_2)}\big|\mathscr{F}^{n}_{i-1}\Bigr]&=R_i(h_n,\xi)\label{AW}
\end{align}
for $j_1=1,\cdots,p_1, j_2=1,\cdots,r_1$,
\begin{align}
    \E\Bigl[A_{i,n}^{(j_1)}A_{i,n}^{(j_2)}\Delta W_{1,i}^{(j_3)}\big|\mathscr{F}^{n}_{i-1}\Bigr]&=R_i(h_n^2,\xi) \label{AAW}
\end{align}
for $j_1,j_2=1,\cdots,p_1, j_3=1,\cdots,r_1$, and
\begin{align}
    \E\Bigl[A_{i,n}^{(j_1)}C_{i,n}^{(j_2)}\Delta W_{1,i}^{(j_3)}\big|\mathscr{F}^{n}_{i-1}\Bigr]&=R_i(h_n^2,\xi) \label{ACW}
\end{align}
for $j_1=1,\cdots, p_1, j_2=1,\cdots, p_2, j_3=1,\cdots,r_1$.
\end{lemma}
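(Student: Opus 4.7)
The strategy is a routine decomposition--and--moment calculation exploiting the linearity of $A_{i,n}$ and $C_{i,n}$ in $\Delta\xi_{0,i}$, together with the moment control on the remainder given by Lemma \ref{Ilemma} and the Gaussianity of the Brownian increments $\Delta W_{1,i}$.

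First, I would decompose
\begin{align*}
\Delta\xi_{0,i}=M_i+N_i+\rho_i,
\end{align*}
where $M_i=B_1(\xi_{0,t_{i-1}^n})h_n$ is $\mathscr{F}^{n}_{i-1}$-measurable, $N_i={\bf{S}}_{1,0}\Delta W_{1,i}$ is the principal diffusion part, and $\rho_i=\int_{t_{i-1}^n}^{t_{i}^n}(B_1(\xi_{0,s})-B_1(\xi_{0,t_{i-1}^n}))\,ds$ is the remainder controlled by Lemma \ref{Ilemma}. Since $A_{i,n}={\bf{\Lambda}}_{x_1,0}\Delta\xi_{0,i}$ and $C_{i,n}={\bf{\Lambda}}_{x_2,0}{\bf{\Psi}}_0^{-1}{\bf{\Gamma}}_0\Delta\xi_{0,i}$, every conditional expectation in the lemma reduces, up to a bounded deterministic linear combination, to conditional moments of products of components of $M_i$, $N_i$, $\rho_i$ and $\Delta W_{1,i}$.

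For (\ref{AW}), the decomposition gives three terms. The $M_i$--term is $\mathscr{F}^{n}_{i-1}$-measurable and multiplied by $\E[\Delta W_{1,i}^{(j_2)}|\mathscr{F}^{n}_{i-1}]=0$. The $N_i$--term contributes $({\bf{S}}_{1,0})_{k,j_2}h_n$ via the isometry $\E[\Delta W_{1,i}^{(a)}\Delta W_{1,i}^{(b)}|\mathscr{F}^{n}_{i-1}]=\delta_{ab}h_n$, which is clearly of the form $R_i(h_n,\xi)$. The $\rho_i$--term is controlled by Cauchy--Schwarz together with Lemma \ref{Ilemma} (with $k=2$) as $|\E[\rho_i^{(k)}\Delta W_{1,i}^{(j_2)}|\mathscr{F}^{n}_{i-1}]|\le\sqrt{R_i(h_n^3,\xi)}\sqrt{h_n}=R_i(h_n^{2},\xi)\subset R_i(h_n,\xi)$. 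Summing with constant coefficients from ${\bf{\Lambda}}_{x_1,0}$ yields (\ref{AW}).

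For (\ref{AAW}) and (\ref{ACW}) the structure is identical (only the coefficient matrix in front of $\Delta\xi_{0,i}$ differs), so it suffices to estimate $\E[(M+N+\rho)_i^{(k)}(M+N+\rho)_i^{(l)}\Delta W_{1,i}^{(j_3)}|\mathscr{F}^{n}_{i-1}]$ componentwise. The $MM$, $MN$, $NM$ cross-terms all contribute $O(h_n^{2})$ in the sense of $R_i(h_n^{2},\xi)$, with the drift factor producing the polynomial growth in $\xi$. The crucial observation is that $\E[N_i^{(k)}N_i^{(l)}\Delta W_{1,i}^{(j_3)}|\mathscr{F}^{n}_{i-1}]=0$, which follows because this is a polynomial in the centered Gaussian vector $\Delta W_{1,i}/\sqrt{h_n}$ of odd total degree $3$, so every monomial $\E[\Delta W_{1,i}^{(a)}\Delta W_{1,i}^{(b)}\Delta W_{1,i}^{(j_3)}|\mathscr{F}^{n}_{i-1}]$ vanishes by parity and independence of distinct components. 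The remaining terms involving $\rho_i$ are handled by iterated Cauchy--Schwarz and Lemma \ref{Ilemma}, giving at worst $R_i(h_n^{5/2},\xi)$, which is absorbed into $R_i(h_n^{2},\xi)$.

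The only real subtlety — and the step I would execute most carefully — is the vanishing of the pure diffusion triple $\E[N_i^{(k)}N_i^{(l)}\Delta W_{1,i}^{(j_3)}|\mathscr{F}^{n}_{i-1}]$ in (\ref{AAW}) and (\ref{ACW}); without this cancellation, that term would be of order $h_n$ and the claimed $R_i(h_n^{2},\xi)$ would fail. Everything else is a bookkeeping exercise in orders of $h_n$ and in verifying that the polynomial-in-$\xi_{0,t_{i-1}^n}$ bounds from Lemma \ref{Ilemma} and from \textbf{[A1]}(iii) combine admissibly into the symbol $R(\cdot,\xi)$.
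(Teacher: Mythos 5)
Your proposal is correct and follows essentially the same route as the paper: the identical decomposition $\Delta\xi_{0,i}=h_nB_1(\xi_{0,t_{i-1}^n})+{\bf{S}}_{1,0}\Delta W_{1,i}+I_{1,i}$, the same use of Lemma \ref{Ilemma} with Cauchy--Schwarz to control the remainder, and the same key cancellation of the pure third-order Wiener moment for (\ref{AAW}) and (\ref{ACW}). The order bookkeeping you give ($R_i(h_n^2,\xi)$ for the remainder term in (\ref{AW}), at worst $R_i(h_n^{5/2},\xi)$ for the $\rho$-terms in the second moments) matches the paper's estimates.
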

\begin{proof}
First, we will prove (\ref{AW}). One has
\begin{align*}
    \Delta\xi_{0,i}=I_{1,i}+h_n B_1(\xi_{0,t_{i-1}^n})+{\bf{S}}_{1,0}\Delta W_{1,i},
\end{align*}
where
\begin{align*}
    I_{1,i}=\int_{t_{i-1}^n}^{t_{i}^n}(B_1(\xi_{0,s})-B_1(\xi_{0,t_{i-1}^n}))ds.
\end{align*}
For $j_1=1,\cdots,p_1,j_2=1,\cdots,r_1$, we see
\begin{align*}
    \E\Bigl[A_{i,n}^{(j_1)}\Delta W_{1,i}^{(j_2)}\big|\mathscr{F}^{n}_{i-1}\Bigr]&=\E\Bigl[\bigl({\bf{\Lambda}}_{x_1,0}[j_1,]I_{1,i}\bigr)\Delta W_{1,i}^{(j_2)}\big|\mathscr{F}^{n}_{i-1}\Bigr]\\
    &\quad+h_n{\bf{\Lambda}}_{x_1,0}[j_1,]B_1(\xi_{0,t_{i-1}^n})\E\Bigl[\Delta W_{1,i}^{(j_2)}\Bigr]\\
    &\quad+\E\Bigl[\bigl({\bf{\Lambda}}_{x_1,0}[j_1,]{\bf{S}}_{1,0}\Delta W_{1,i}\bigr)
    \Delta W_{1,i}^{(j_2)}\big|\mathscr{F}^{n}_{i-1}\Bigr].
\end{align*}
The Cauchy-Schwartz inequality and Lemma \ref{Ilemma} yield
\begin{align}
    \begin{split}
    \left|\E\Bigl[\bigl({\bf{\Lambda}}_{x_1,0}[j_1,]I_{1,i}\bigr)\Delta W_{1,i}^{(j_2)}\big|\mathscr{F}^{n}_{i-1}\Bigr]\right|
    &\leq C_1\E\Bigl[|I_{1,i}||\Delta W_{1,i}^{(j_2)}|\big|\mathscr{F}^{n}_{i-1}\Bigr]\\
    &\leq C_1\E\Bigl[|I_{1,i}|^2\big|\mathscr{F}^{n}_{i-1}\Bigr]^{\frac{1}{2}}\E\Bigl[\bigr|\Delta W_{1,i}^{(j_2)}\big|^2\Bigr]^{\frac{1}{2}}\\
    &\leq R_i(h_n^{2},\xi)\label{EIW}
    \end{split}
\end{align}
for $j_1=1,\cdots,p_1,j_2=1,\cdots,r_1$. In a similar way, it is shown that
\begin{align}
    \begin{split}
    \left|\E\Bigl[\bigl({\bf{\Lambda}}_{x_1,0}[j_1,]{\bf{S}}_{1,0}\Delta W_{1,i}\bigr)
    \Delta W_{1,i}^{(j_2)}\big|\mathscr{F}^{n}_{i-1}\Bigr]\right|\leq C_2 h_n
    \end{split}\label{EWW}
\end{align}
for $j_1=1,\cdots,p_1,j_2=1,\cdots,r_1$. Thus, one gets (\ref{AW}). 

Next, we consider (\ref{AAW}). It holds from the Cauchy-Schwartz inequality and Lemma \ref{Ilemma} that 
\begin{align*}
    &\left|\E\Bigl[\bigl({\bf{\Lambda}}_{x_1,0}[j_1,]I_{1,i}\bigr)\bigl({\bf{\Lambda}}_{x_1,0}[j_2,]I_{1,i}\bigr)\Delta W_{1,i}^{(j_3)}\big|\mathscr{F}^{n}_{i-1}\Bigr]\right|\\
    &\qquad\qquad\qquad\qquad\leq C_3\E\Bigl[|I_{1,i}|^4\big|\mathscr{F}^{n}_{i-1}\Bigr]^{\frac{1}{2}}\E\Bigl[\bigl|\Delta W_{1,i}^{(j_3)}\bigr|^2\Bigr]^{\frac{1}{2}}\leq R_i(h_n^\frac{7}{2},\xi)
\end{align*}
for $j_1,j_2=1,\cdots,p_1, j_3=1,\cdots,r_1$. In an analogous way, 
\begin{align*}
    &\left|\E\Bigl[\bigl({\bf{\Lambda}}_{x_1,0}[j_1,]I_{1,i}\bigr)\bigl({\bf{\Lambda}}_{x_1,0}[j_2,]{\bf{S}}_{1,0}\Delta W_{1,i}\bigr)\Delta W_{1,i}^{(j_3)}\big|\mathscr{F}^{n}_{i-1}\Bigr]\right|\\
    &\qquad\quad\leq C_4\E\Bigl[|I_{1,i}|^2\big|\mathscr{F}^{n}_{i-1}\Bigr]^{\frac{1}{2}}\E\Bigl[\bigl|\Delta W_{1,i}\bigr|^4\Bigr]^{\frac{1}{4}}\E\Bigl[\bigl|\Delta W_{1,i}^{(j_3)}\bigr|^4\Bigr]^{\frac{1}{4}}\leq R_i(h_n^\frac{5}{2},\xi)
\end{align*}
for $j_1,j_2=1,\cdots,p_1, j_3=1,\cdots,r_1$. (\ref{EIW}) and (\ref{EWW}) imply
\begin{align*}
    h_n{\bf{\Lambda}}_{x_1,0}[j_1,]B_1(\xi_{0,t_{i-1}^n})\E\Bigl[\bigl({\bf{\Lambda}}_{x_1,0}[j_2,]I_{1,i}\bigr)\Delta W_{1,i}^{(j_3)}\big|\mathscr{F}^{n}_{i-1}\Bigr]&=R_i(h_n^3,\xi)
\end{align*}
and
\begin{align*}
    h_n{\bf{\Lambda}}_{x_1,0}[j_1,]B_1(\xi_{0,t_{i-1}^n})\E\Bigl[\bigl({\bf{\Lambda}}_{x_1,0}[j_2,]{\bf{S}}_{1,0}\Delta W_{1,i}\bigr)\Delta W_{1,i}^{(j_3)}\Bigr]&=R_i(h_n^2,\xi)
\end{align*}
for $j_1,j_2=1,\cdots,p_1,j_3=1,\cdots,r_1$. Furthermore, 
\begin{align*}
    &\quad\ \E\Bigl[\bigl({\bf{\Lambda}}_{x_1,0}[j_1,]{\bf{S}}_{1,0}\Delta W_{1,i}\bigr)\bigl({\bf{\Lambda}}_{x_1,0}[j_2,]{\bf{S}}_{1,0}\Delta W_{1,i}\bigr)\Delta W_{1,i}^{(j_3)}\Bigr]\\
    &=\sum_{\ell=1}^{r}\sum_{m=1}^{r}({\bf{\Lambda}}_{x_1,0} {\bf{S}}_{1,0})_{j_1\ell}({\bf{\Lambda}}_{x_1,0} {\bf{S}}_{1,0})_{j_2m}\E\Bigl[\Delta W_{1,i}^{(\ell)}\Delta W_{1,i}^{(m)}\Delta W_{1,i}^{(j_3)}\Bigr]=0
\end{align*}
for $j_1,j_2=1,\cdots,p_1, j_3=1,\cdots,r_1$.
Therefore, we obtain (\ref{AAW}). In an analogous manner, (\ref{ACW}) holds.
\end{proof}
\begin{lemma}\label{BWlemma}
Under \textbf{[B1]},
\begin{align*}
    \E\Bigl[B_{i,n}^{(j_1)}\Delta W_{2,i}^{(j_2)}\big|\mathscr{F}^{n}_{i-1}\Bigr]=R_i(h_n,\delta)
\end{align*}
for $j_1=1,\cdots,p_1,j_2=1,\cdots,r_2$, and
\begin{align*}
    \E\Bigl[B_{i,n}^{(j_1)}B_{i,n}^{(j_2)}\Delta W_{2,i}^{(j_3)}\big|\mathscr{F}^{n}_{i-1}\Bigr]=R_i(h_n^2,\delta)
\end{align*}
for $j_1,j_2=1,\cdots,p_1,j_3=1,\cdots,r_2$.
\end{lemma}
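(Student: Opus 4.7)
The plan is to proceed in complete parallel with the proof of Lemma \ref{AWlemma}, exploiting the fact that $\delta_{0,t}$ obeys a diffusion equation of exactly the same structural form as $\xi_{0,t}$, with the Lipschitz/polynomial-growth/moment conditions \textbf{[B1]} playing the role \textbf{[A1]} played there. The starting point is the Euler-type decomposition
\begin{align*}
    B_{i,n}=\Delta\delta_{0,i}=I_{2,i}+h_n B_{2}(\delta_{0,t_{i-1}^n})+{\bf{S}}_{2,0}\Delta W_{2,i},
\end{align*}
where $I_{2,i}=\int_{t_{i-1}^n}^{t_{i}^n}\bigl(B_{2}(\delta_{0,s})-B_{2}(\delta_{0,t_{i-1}^n})\bigr)ds$. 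Exactly the same argument used in Lemma \ref{Ilemma}, combined with the moment bound from Lemma 6 in Kessler \cite{kessler(1997)} applied to $\delta_{0,t}$ under \textbf{[B1]}, yields $\E\bigl[|I_{2,i}|^k\bigl|\mathscr{F}_{i-1}^n\bigr]=R_i(h_n^{k+k/2},\delta)$ for $k\geq 2$; I will invoke this as the $\delta$-analog of Lemma \ref{Ilemma}.

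For the first identity, I expand the product $B_{i,n}^{(j_1)}\Delta W_{2,i}^{(j_2)}$ into three pieces. The Cauchy--Schwarz inequality together with the $\delta$-analog of Lemma \ref{Ilemma} and the fact that $\E\bigl[|\Delta W_{2,i}^{(j_2)}|^2\bigr]=h_n$ give $|\E[I_{2,i}^{(j_1)}\Delta W_{2,i}^{(j_2)}|\mathscr{F}_{i-1}^n]|\leq R_i(h_n^{2},\delta)$. The middle term vanishes because $\E[\Delta W_{2,i}^{(j_2)}]=0$. The remaining pure Brownian term reduces to $\sum_{\ell=1}^{r_2}({\bf{S}}_{2,0})_{j_1\ell}\,\E[\Delta W_{2,i}^{(\ell)}\Delta W_{2,i}^{(j_2)}]=h_n({\bf{S}}_{2,0})_{j_1 j_2}$, which is a deterministic quantity of order $h_n$ and therefore qualifies as $R_i(h_n,\delta)$. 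Adding the three contributions gives $R_i(h_n,\delta)$ as claimed.

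For the second identity, the same three-term decomposition is used on each of the two $B$-factors, producing a sum of nine conditional expectations. Each term that contains at least one $I_{2,i}$ factor is controlled by Cauchy--Schwarz and the $\delta$-analog of Lemma \ref{Ilemma}, yielding contributions bounded by $R_i(h_n^{5/2},\delta)$ or smaller. Each term containing an $h_n B_2(\delta_{0,t_{i-1}^n})$ factor carries an explicit $h_n$, and after pairing the remaining factor with $\Delta W_{2,i}^{(j_3)}$ yields an $R_i(h_n^2,\delta)$ bound via the previous step or the Cauchy--Schwarz argument. The only potentially nontrivial term is the pure Brownian piece
\begin{align*}
    \E\bigl[({\bf{S}}_{2,0}\Delta W_{2,i})^{(j_1)}({\bf{S}}_{2,0}\Delta W_{2,i})^{(j_2)}\Delta W_{2,i}^{(j_3)}\bigr]
    =\sum_{\ell,m=1}^{r_2}({\bf{S}}_{2,0})_{j_1\ell}({\bf{S}}_{2,0})_{j_2 m}\E\bigl[\Delta W_{2,i}^{(\ell)}\Delta W_{2,i}^{(m)}\Delta W_{2,i}^{(j_3)}\bigr],
\end{align*}
which vanishes identically because the third moment of any joint Gaussian increment is zero. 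Summing the nine pieces yields $R_i(h_n^2,\delta)$.

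No genuine obstacle is anticipated; the only non-routine ingredient is the $\delta$-analog of Lemma \ref{Ilemma}, which is immediate from \textbf{[B1]} and has already been used implicitly in Lemmas \ref{Blemma} and \ref{klemma}. The book-keeping over the nine cross terms in the second identity is the most tedious step but is entirely mechanical once the bounds on $I_{2,i}$ and the vanishing of odd Gaussian moments are in hand.
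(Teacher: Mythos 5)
Your proposal is correct and follows exactly the route the paper intends: the paper proves Lemma \ref{BWlemma} only by remarking that it is "analogous to Lemma \ref{AWlemma}," and your argument is precisely that analogy carried out — the Euler-type decomposition of $\Delta\delta_{0,i}$, the $\delta$-analog of Lemma \ref{Ilemma} under \textbf{[B1]}, Cauchy--Schwarz on the terms containing $I_{2,i}$, and the vanishing of odd Gaussian moments for the pure Brownian piece. No gaps.
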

\begin{lemma}
Under \textbf{[A1]},
\begin{align*}
    \E\Bigl[C_{i,n}^{(j_1)}\Delta W_{1,i}^{(j_2)}\big|\mathscr{F}^{n}_{i-1}\Bigr]=R_i(h_n,\xi)
\end{align*}
for $j_1=1,\cdots,p_2,j_2=1,\cdots,r_1$, and
\begin{align*}
    \E\Bigl[C_{i,n}^{(j_1)}C_{i,n}^{(j_2)}\Delta W_{1,i}^{(j_3)}\big|\mathscr{F}^{n}_{i-1}\Bigr]=R_i(h_n^2,\xi)
\end{align*}
for $j_1,j_2=1,\cdots,p_2,j_3=1,\cdots,r_1$.
\end{lemma}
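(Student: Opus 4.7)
The plan is to observe that $C_{i,n}={\bf{\Lambda}}_{x_2,0}{\bf{\Psi}}_0^{-1}{\bf{\Gamma}}_0\Delta\xi_{0,i}$ has exactly the same structure as $A_{i,n}={\bf{\Lambda}}_{x_1,0}\Delta\xi_{0,i}$; only the constant matrix multiplying $\Delta\xi_{0,i}$ changes. Hence the estimates can be copied almost verbatim from Lemma \ref{AWlemma}, with ${\bf{\Lambda}}_{x_1,0}$ replaced by ${\bf{M}}:={\bf{\Lambda}}_{x_2,0}{\bf{\Psi}}_0^{-1}{\bf{\Gamma}}_0$. I would state this reduction explicitly at the start so that no independent calculation is required.

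Concretely, first I would write the Euler--Maruyama-type decomposition
\begin{align*}
    \Delta\xi_{0,i}=I_{1,i}+h_n B_1(\xi_{0,t_{i-1}^n})+{\bf{S}}_{1,0}\Delta W_{1,i},
\end{align*}
where $I_{1,i}=\int_{t_{i-1}^n}^{t_{i}^n}\bigl(B_1(\xi_{0,s})-B_1(\xi_{0,t_{i-1}^n})\bigr)ds$, and then multiply by ${\bf{M}}[j_1,]$ to get the corresponding decomposition for $C_{i,n}^{(j_1)}$. For the first assertion, I would split $\E[C_{i,n}^{(j_1)}\Delta W_{1,i}^{(j_2)}\,|\,\mathscr{F}^{n}_{i-1}]$ into the three pieces coming from $I_{1,i}$, from the drift term, and from ${\bf{S}}_{1,0}\Delta W_{1,i}$. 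The first piece is $O_i(h_n^{2})$ by Cauchy--Schwarz together with Lemma \ref{Ilemma}, the second vanishes since $\E[\Delta W_{1,i}^{(j_2)}]=0$, and the third is $O(h_n)$ because it reduces to a second moment of the Brownian increment. Summing these gives $R_i(h_n,\xi)$, just as in (\ref{AW})--(\ref{EWW}).

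For the second assertion, I would expand $C_{i,n}^{(j_1)}C_{i,n}^{(j_2)}\Delta W_{1,i}^{(j_3)}$ by bilinearity into nine terms corresponding to the three-way split of each $\Delta\xi_{0,i}$. Terms involving at least one factor of $I_{1,i}$ are handled by repeated use of the Cauchy--Schwarz inequality and Lemma \ref{Ilemma} and contribute at most $R_i(h_n^{5/2},\xi)$; terms involving the drift factor $h_n B_1(\xi_{0,t_{i-1}^n})$ pick up an explicit $h_n$ and so remain $R_i(h_n^2,\xi)$ after using Lemma \ref{AWlemma}-type bounds on the remaining factor; and the pure Wiener-increment term reduces to a third moment of independent components of $\Delta W_{1,i}$, which is identically zero. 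Combining the estimates gives $R_i(h_n^2,\xi)$.

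There is no serious obstacle here: the main point is the cosmetic one that the estimates depend only on $\Delta\xi_{0,i}$, not on which constant matrix multiplies it, so the argument is a direct transcription of (\ref{AW})--(\ref{ACW}) in Lemma \ref{AWlemma}. The only bookkeeping care is to verify that the remainders stay of the form $R_i(\cdot,\xi)$ (rather than $R_i(\cdot,\zeta)$ or $R_i(\cdot,\varepsilon)$), which follows because the only latent process appearing through polynomial growth bounds is $\xi_{0,t_{i-1}^n}$.
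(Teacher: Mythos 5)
Your proposal is correct and matches the paper's own treatment: the paper proves this lemma by the one-line remark that it follows in an analogous manner to Lemma \ref{AWlemma}, which is exactly your reduction of $C_{i,n}={\bf{\Lambda}}_{x_2,0}{\bf{\Psi}}_0^{-1}{\bf{\Gamma}}_0\Delta\xi_{0,i}$ to the case $A_{i,n}={\bf{\Lambda}}_{x_1,0}\Delta\xi_{0,i}$ with only the constant premultiplying matrix changed. Your detailed term-by-term accounting (Cauchy--Schwarz with Lemma \ref{Ilemma} for the $I_{1,i}$ pieces, vanishing odd Gaussian moments, and the observation that all polynomial-growth remainders involve only $\xi_{0,t_{i-1}^n}$ so stay of the form $R_i(\cdot,\xi)$) is precisely the content of the argument the paper is invoking.
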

\begin{lemma}
Under \textbf{[D1]},
\begin{align*}
    \E\Bigl[D_{i,n}^{(j_1)}\Delta W_{4,i}^{(j_2)}\big|\mathscr{F}^{n}_{i-1}\Bigr]=R_i(h_n,\zeta)
\end{align*}
for $j_1=1,\cdots,p_2,j_2=1,\cdots,r_4$, and
\begin{align*}
    \E\Bigl[D_{i,n}^{(j_1)}D_{i,n}^{(j_2)}\Delta W_{4,i}^{(j_3)}\big|\mathscr{F}^{n}_{i-1}\Bigr]=R_i(h_n^2,\zeta)
\end{align*}
for $j_1,j_2=1,\cdots,p_2,j_3=1,\cdots,r_4$.
\end{lemma}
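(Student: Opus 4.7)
\begin{prf}
The plan is to imitate verbatim the argument of Lemma \ref{AWlemma} (and Lemma \ref{BWlemma}), exploiting the fact that $D_{i,n}={\bf{\Lambda}}_{x_{2},0}{\bf{\Psi}}_{0}^{-1}\Delta\zeta_{0,i}$ depends only on the $\zeta$ process. First, by (\ref{zetaP}) applied at the true value,
\begin{align*}
    \Delta\zeta_{0,i}=I_{4,i}+h_{n}B_{4}(\zeta_{0,t_{i-1}^{n}})+{\bf{S}}_{4,0}\Delta W_{4,i},\qquad I_{4,i}=\int_{t_{i-1}^{n}}^{t_{i}^{n}}\bigl(B_{4}(\zeta_{0,s})-B_{4}(\zeta_{0,t_{i-1}^{n}})\bigr)ds,
\end{align*}
so that $D_{i,n}^{(j_{1})}$ decomposes into three linear pieces in $I_{4,i}$, $h_{n}B_{4}(\zeta_{0,t_{i-1}^{n}})$, and ${\bf{S}}_{4,0}\Delta W_{4,i}$.

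For the first claim, expand $\E[D_{i,n}^{(j_{1})}\Delta W_{4,i}^{(j_{2})}|\mathscr{F}^{n}_{i-1}]$ into three conditional expectations. The $I_{4,i}$ piece is handled by Cauchy--Schwarz together with the $\zeta$-analogue of Lemma \ref{Ilemma} (which holds under \textbf{[D1]} by the same proof), giving an $R_{i}(h_{n}^{2},\zeta)$ bound. The $h_{n}B_{4}(\zeta_{0,t_{i-1}^{n}})$ piece vanishes because $\E[\Delta W_{4,i}^{(j_{2})}]=0$. The ${\bf{S}}_{4,0}\Delta W_{4,i}$ piece produces a pure quadratic Gaussian expectation of order $h_{n}$ by the isometry of the Wiener increment. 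Summing these bounds yields $R_{i}(h_{n},\zeta)$.

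For the second claim, expand the product $D_{i,n}^{(j_{1})}D_{i,n}^{(j_{2})}\Delta W_{4,i}^{(j_{3})}$ into the nine triple-cross terms arising from the decomposition. Terms involving two $I_{4,i}$ factors or one $I_{4,i}$ and one Brownian factor are controlled by iterated Cauchy--Schwarz, the $\zeta$-analogue of Lemma \ref{Ilemma}, and the bound $\E[|\Delta W_{4,i}|^{k}]\le C h_{n}^{k/2}$; each such term is at worst $R_{i}(h_{n}^{5/2},\zeta)$. Terms with one $I_{4,i}$ and one drift $h_{n}B_{4}$ pick up an extra $h_{n}$, giving $R_{i}(h_{n}^{3},\zeta)$. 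The pure Brownian triple term $\E[({\bf{\Lambda}}_{x_{2},0}{\bf{\Psi}}_{0}^{-1}{\bf{S}}_{4,0}\Delta W_{4,i})^{(j_{1})}({\bf{\Lambda}}_{x_{2},0}{\bf{\Psi}}_{0}^{-1}{\bf{S}}_{4,0}\Delta W_{4,i})^{(j_{2})}\Delta W_{4,i}^{(j_{3})}]$ is identically zero because it is the Gaussian expectation of an odd-degree polynomial in the increment. Collecting all contributions yields $R_{i}(h_{n}^{2},\zeta)$.

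No genuine obstacle is expected, since every estimate is the direct $\zeta$-translation of a bound already recorded for $\xi$ in Lemma \ref{AWlemma}; the only care needed is the verification that Lemma \ref{Ilemma} carries over to $\zeta_{0,t}$ under \textbf{[D1]}, which is immediate because its proof uses only the Lipschitz continuity and polynomial growth of the drift and the $L^{k}$-moment bound on increments of the diffusion, both of which hold for $\zeta_{0,t}$ under \textbf{[D1]}.
\end{prf}
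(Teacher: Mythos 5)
Your proposal is correct and is essentially the paper's own proof: the paper disposes of this lemma with the single remark that it follows in an analogous manner to Lemma \ref{AWlemma}, and your argument is precisely that analogue, with the decomposition $\Delta\zeta_{0,i}=I_{4,i}+h_nB_4(\zeta_{0,t_{i-1}^n})+{\bf{S}}_{4,0}\Delta W_{4,i}$, the $\zeta$-version of Lemma \ref{Ilemma}, Cauchy--Schwarz, and the vanishing of odd Gaussian moments. The only class of terms you do not explicitly enumerate in the second claim is the drift$\times{\bf{S}}_{4,0}\Delta W_{4,i}\times\Delta W_{4,i}^{(j_3)}$ cross term, which is in fact the one of exact order $h_n^2$, but it is bounded by $h_nR_i(1,\zeta)\cdot Ch_n=R_i(h_n^2,\zeta)$ exactly as in the paper's treatment of (\ref{AAW}), so nothing is lost.
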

\begin{lemma}\label{EWlemma}
Under \textbf{[C1]},
\begin{align*}
    \E\Bigl[E_{i,n}^{(j_1)}\Delta W_{3,i}^{(j_2)}\big|\mathscr{F}^{n}_{i-1}\Bigr]=R_i(h_n,\varepsilon)
\end{align*}
for $j_1=1,\cdots,p_2,j_2=1,\cdots,r_3$, and
\begin{align*}
    \E\Bigl[E_{i,n}^{(j_1)}E_{i,n}^{(j_2)}\Delta W_{3,i}^{(j_3)}\big|\mathscr{F}^{n}_{i-1}\Bigr]=R_i(h_n^2,\varepsilon)
\end{align*}
for $j_1,j_2=1,\cdots,p_2,j_3=1,\cdots,r_3$.
\end{lemma}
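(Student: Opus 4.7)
The plan is to imitate the proof of Lemma \ref{AWlemma} verbatim, with symbols renamed: here $E_{i,n}=\Delta\varepsilon_{0,i}$ (rather than $A_{i,n}={\bf{\Lambda}}_{x_1,0}\Delta\xi_{0,i}$), so I use SDE (6) to write
\begin{align*}
    E_{i,n}=I_{3,i}+h_n B_3(\varepsilon_{0,t_{i-1}^n})+{\bf{S}}_{3,0}\Delta W_{3,i},
\end{align*}
where $I_{3,i}=\int_{t_{i-1}^n}^{t_{i}^n}(B_3(\varepsilon_{0,s})-B_3(\varepsilon_{0,t_{i-1}^n}))ds$. The first preparatory step is to establish the $\varepsilon$-analog of Lemma \ref{Ilemma}: under \textbf{[C1]}, $\E[|I_{3,i}|^k\mid \mathscr{F}^{n}_{i-1}]=R_i(h_n^{k+k/2},\varepsilon)$ for $k\ge 2$. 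The proof is a line-by-line copy of Lemma \ref{Ilemma}, using \textbf{[C1]}(i) for the Lipschitz bound on $B_3$, \textbf{[C1]}(ii) for moment control via Lemma 6 of Kessler \cite{kessler(1997)}, and H\"older's inequality.

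For the first identity, I expand $E_{i,n}^{(j_1)}\Delta W_{3,i}^{(j_2)}$ into three summands. The term involving $I_{3,i}$ is controlled by Cauchy-Schwartz and the preliminary lemma with $k=2$, yielding $R_i(h_n^{2},\varepsilon)$. The drift term $h_n B_3(\varepsilon_{0,t_{i-1}^n})^{(j_1)}\Delta W_{3,i}^{(j_2)}$ vanishes in conditional expectation because $\E[\Delta W_{3,i}^{(j_2)}]=0$. The remaining Brownian term $({\bf{S}}_{3,0}\Delta W_{3,i})^{(j_1)}\Delta W_{3,i}^{(j_2)}$ has conditional expectation $h_n ({\bf{S}}_{3,0})_{j_1 j_2}$, which is $O(h_n)$ and hence of the form $R_i(h_n,\varepsilon)$. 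Summing gives $R_i(h_n,\varepsilon)$.

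For the second identity, expanding $E_{i,n}^{(j_1)} E_{i,n}^{(j_2)}\Delta W_{3,i}^{(j_3)}$ produces nine terms. The three terms carrying two factors of $I_{3,i}$ or one $I_{3,i}$ paired with ${\bf{S}}_{3,0}\Delta W_{3,i}$ are each bounded by H\"older's inequality with the $k=4$ and $k=2$ moment estimates for $I_{3,i}$, giving bounds of order $R_i(h_n^{7/2},\varepsilon)$ and $R_i(h_n^{5/2},\varepsilon)$ respectively, and similarly the terms pairing $h_n B_3$ with $I_{3,i}$ produce $R_i(h_n^{3},\varepsilon)$. The term $h_n^2 B_3^{(j_1)} B_3^{(j_2)}\Delta W_{3,i}^{(j_3)}$ is zero in expectation, as is the triple-Brownian term $({\bf{S}}_{3,0}\Delta W_{3,i})^{(j_1)}({\bf{S}}_{3,0}\Delta W_{3,i})^{(j_2)}\Delta W_{3,i}^{(j_3)}$, since any third-order monomial in components of $\Delta W_{3,i}$ has mean zero. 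The two remaining cross terms $h_n B_3({\bf{S}}_{3,0}\Delta W_{3,i})\Delta W_{3,i}^{(j_3)}$ are of order $h_n^2$. Collecting these bounds gives $R_i(h_n^2,\varepsilon)$.

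There is no substantive obstacle; the argument is a bookkeeping transcription of Lemma \ref{AWlemma}, with \textbf{[C1]} taking the role of \textbf{[A1]} and the identity matrix replacing ${\bf{\Lambda}}_{x_1,0}$. The only point meriting care is verifying that the odd-order Brownian monomials vanish, so that no $O(h_n^{3/2})$ or $O(h_n^{1/2})$ leading term survives in conditional expectation and the remainder sits at the claimed order.
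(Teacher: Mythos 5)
Your proposal is correct and follows exactly the route the paper intends: the paper proves Lemmas 8--10 by declaring them "analogous to Lemma \ref{AWlemma}," and your transcription — decomposing $E_{i,n}$ via the Euler-type expansion of (1.6), establishing the $\varepsilon$-analog of Lemma \ref{Ilemma}, and bounding each cross term by Cauchy--Schwartz/H\"older while noting that odd Gaussian moments vanish — is precisely that analogy carried out. The orders you obtain for each term ($R_i(h_n^2,\varepsilon)$, $R_i(h_n^{7/2},\varepsilon)$, $R_i(h_n^{5/2},\varepsilon)$, etc.) match those in the paper's proof of Lemma \ref{AWlemma}.
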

\begin{proof}[Proofs of Lemmas \ref{BWlemma}-\ref{EWlemma}]
The results can be shown in an analogous manner to Lemma \ref{AWlemma}. 
\end{proof}
\begin{proof}[\textbf{Proof of Lemma \ref{EXlemmanon}}]
First, we prove (\ref{EXXnonp}). It is sufficient to show that
\begin{align}
    \sum_{i=1}^{n}\left|\E\left[\sqrt{n}\Delta \mathbb{X}_{1,i}^{(j_1)}\Delta \mathbb{X}_{1,i}^{(j_2)}-\frac{1}{\sqrt{n}}({\bf{\Sigma}}_0^{11})_{j_1j_2}\Big|\mathscr{F}^{n}_{i-1}\right]\right|\stackrel{P}{\longrightarrow}0 \label{EX11nonp}
\end{align}
for $j_1,j_2=1,\cdots,p_1$,                          
\begin{align}
    \sum_{i=1}^{n}\left|\E\left[\sqrt{n}\Delta \mathbb{X}_{1,i}^{(j_1)}\Delta \mathbb{X}_{2,i}^{(j_2)}-\frac{1}{\sqrt{n}}({\bf{\Sigma}}_0^{12})_{j_1j_2}\Big|\mathscr{F}^{n}_{i-1}\right]\right|&\stackrel{P}{\longrightarrow}0 \label{EX12nonp}
\end{align}
for $j_1=1,\cdots,p_1$, $j_2=1,\cdots,p_2$, and
\begin{align}
    \sum_{i=1}^{n}\left|\E\left[\sqrt{n}\Delta \mathbb{X}_{2,i}^{(j_1)}\Delta \mathbb{X}_{2,i}^{(j_2)}-\frac{1}{\sqrt{n}}({\bf{\Sigma}}_0^{22})_{j_1j_2}\Big|\mathscr{F}^{n}_{i-1}\right]\right|&\stackrel{P}{\longrightarrow}0 \label{EX22nonp}
\end{align}
for $j_1,j_2=1,\cdots,p_2$. From (\ref{EX1X1}), we have
\begin{align*}
    &\quad\ \sum_{i=1}^{n}\left|\E\left[\sqrt{n}\Delta \mathbb{X}_{1,i}^{(j_1)}\Delta \mathbb{X}_{1,i}^{(j_2)}-\frac{1}{\sqrt{n}}({\bf{\Sigma}}_0^{11})_{j_1j_2}\Big|\mathscr{F}^{n}_{i-1}\right]\right|\\
    &=\frac{1}{\sqrt{n}}\sum_{i=1}^{n}\left|n\E\Bigl[\Delta \mathbb{X}_{1,i}^{(j_1)}\Delta \mathbb{X}_{1,i}^{(j_2)}\big|\mathscr{F}^{n}_{i-1}\Bigr]-({\bf{\Sigma}}_0^{11})_{j_1j_2}\right|\\
    &\leq\frac{1}{\sqrt{n}}\sum_{i=1}^{n}\Bigl\{n^{-1}R_{i}(1,\xi)+n^{-1}R_{i}(1,\delta)+n^{-1}R_{i}(1,\xi)R_{i}(1,\delta)\Bigr\}\\
    &\leq\frac{1}{\sqrt{n}}\left\{\frac{1}{n}\sum_{i=1}^n R_{i}(1,\xi)+\frac{1}{n}\sum_{i=1}^n 
    R_{i}(1,\delta)+\frac{1}{n}\sum_{i=1}^n R_{i}(1,\xi)R_{i}(1,\delta)\right\}\stackrel{P}{\longrightarrow}0
\end{align*}
for $j_1,j_2=1,\cdots,p_1$, which implies (\ref{EX11nonp}). In the same way, from Lemmas \ref{EX2X2lemma}-\ref{EX1X2lemma}, we obtain (\ref{EX1X2prob}) and (\ref{EX2X2prob}). Next, we show (\ref{EXXXXnonp}). Note that $nt-1<[nt]\leq nt$. Since it holds from Lemma \ref{EX1X1lemma} that
\begin{align*}
    &\quad\ \sum_{i=1}^{[nt]}\E\left[\Bigl\{\sqrt{n}\Delta \mathbb{X}_{1,i}^{(j_1)}\Delta \mathbb{X}_{1,i}^{(j_2)}-\frac{1}{\sqrt{n}}({\bf{\Sigma}}^{11}_0)_{j_1j_2}\Bigr\}\right.\\
    &\left.\qquad\qquad\qquad\qquad\qquad\qquad\quad\times \Bigl\{\sqrt{n}\Delta \mathbb{X}_{1,i}^{(j_3)}\Delta \mathbb{X}_{1,i}^{(j_4)}-\frac{1}{\sqrt{n}}({\bf{\Sigma}}^{11}_0)_{j_3j_4}\Bigr\}\Big|\mathscr{F}^{n}_{i-1}\right]\\
    &=n\sum_{i=1}^{[nt]}\E\Bigl[\Delta \mathbb{X}_{1,i}^{(j_1)}\Delta \mathbb{X}_{1,i}^{(j_2)}\Delta \mathbb{X}_{1,i}^{(j_3)}\Delta \mathbb{X}_{1,i}^{(j_4)}\big|\mathscr{F}^{n}_{i-1}\Bigr]\\
    &\quad-({\bf{\Sigma}}^{11}_0)_{j_3j_4}\times\sum_{i=1}^{[nt]}\E\Bigl[\Delta \mathbb{X}_{1,i}^{(j_1)}\Delta \mathbb{X}_{1,i}^{(j_2)}\big|\mathscr{F}^{n}_{i-1}\Bigr]\\
    &\quad-({\bf{\Sigma}}^{11}_0)_{j_1j_2}\times\sum_{i=1}^{[nt]}\E\Bigl[\Delta \mathbb{X}_{1,i}^{(j_3)}\Delta \mathbb{X}_{1,i}^{(j_4)}\big|\mathscr{F}^{n}_{i-1}\Bigr]+\frac{[nt]}{n}({\bf{\Sigma}}^{11}_0)_{j_1j_2}({\bf{\Sigma}}^{11}_0)_{j_3j_4}\\
    &=\frac{[nt]}{n}\bigl\{({\bf{\Sigma}}_0^{11})_{j_{1}j_{3}}({\bf{\Sigma}}^{11}_0)_{j_{2}j_{4}}+({\bf{\Sigma}}^{11}_0)_{j_{1}j_{4}}({\bf{\Sigma}}^{11}_0)_{j_{2}j_{3}}\bigr\}\\
    &\qquad\qquad\qquad+\frac{1}{n^2}\sum_{i=1}^{[nt]}\bigl\{R_{i}(1,\xi)+R_{i}(1,\delta)+R_{i}(1,\xi)R_{i}(1,\delta)\bigr\},
\end{align*}
one has
\begin{align*}
    &\left|\sum_{i=1}^{[nt]}\mathbb{E}\left[\Bigl\{\sqrt{n}\Delta \mathbb{X}_{1,i}^{(j_1)}\Delta \mathbb{X}_{1,i}^{(j_2)}-\frac{1}{\sqrt{n}}({\bf{\Sigma}}^{11}_0)_{j_1j_2}\Bigr\}\right.\right.\\
    &\left.\qquad\qquad\quad\times\Bigl\{\sqrt{n}\Delta \mathbb{X}_{1,i}^{(j_3)}\Delta \mathbb{X}_{1,i}^{(j_4)}-\frac{1}{\sqrt{n}}({\bf{\Sigma}}^{11}_0)_{j_3j_4}\Bigr\}\Big|\mathscr{F}^{n}_{i-1}\right]\\
    &\qquad\qquad\qquad\qquad\qquad\qquad\qquad-t\bigl\{({\bf{\Sigma}}_0^{11})_{j_{1}j_{3}}({\bf{\Sigma}}^{11}_0)_{j_{2}j_{4}}+({\bf{\Sigma}}^{11}_0)_{j_{1}j_{4}}({\bf{\Sigma}}^{11}_0)_{j_{2}j_{3}}\bigr\}\Bigr|\\
    &\leq\Bigl|\frac{[nt]-nt}{n}\Bigr|\bigl|({\bf{\Sigma}}_0^{11})_{j_{1}j_{3}}({\bf{\Sigma}}^{11}_0)_{j_{2}j_{4}}+({\bf{\Sigma}}^{11}_0)_{j_{1}j_{4}}({\bf{\Sigma}}^{11}_0)_{j_{2}j_{3}}\bigr|\\
    &\qquad\qquad\qquad\qquad+\frac{1}{n^2}\sum_{i=1}^{[nt]}\bigl|R_{i}(1,\xi)+R_{i}(1,\delta)+R_{i}(1,\xi)R_{i}(1,\delta)\bigr|\\
    &\leq\frac{1}{n}\bigl|({\bf{\Sigma}}_0^{11})_{j_{1}j_{3}}({\bf{\Sigma}}^{11}_0)_{j_{2}j_{4}}+({\bf{\Sigma}}^{11}_0)_{j_{1}j_{4}}({\bf{\Sigma}}^{11}_0)_{j_{2}j_{3}}\bigr|\\
    &\qquad+\frac{1}{n}\left\{\frac{1}{n}\sum_{i=1}^n R_{i}(1,\xi)+\frac{1}{n}\sum_{i=1}^n 
    R_{i}(1,\delta)+\frac{1}{n}\sum_{i=1}^n R_{i}(1,\xi)R_{i}(1,\delta)\right\}\stackrel{P}{\longrightarrow}0,
\end{align*}
which implies
\begin{align}
\begin{split}
    &\sum_{i=1}^{[nt]}\E\left[\Bigl\{\sqrt{n}\Delta \mathbb{X}_{1,i}^{(j_1)}\Delta \mathbb{X}_{1,i}^{(j_2)}-\frac{1}{\sqrt{n}}({\bf{\Sigma}}^{11}_0)_{j_1j_2}\Bigr\}\right.\\
    &\left.\qquad\quad\times\Bigl\{\sqrt{n}\Delta \mathbb{X}_{1,i}^{(j_3)}\Delta \mathbb{X}_{1,i}^{(j_4)}-\frac{1}{\sqrt{n}}({\bf{\Sigma}}^{11}_0)_{j_3j_4}\Bigr\}\Big|\mathscr{F}^{n}_{i-1}\right]\\
    &\qquad\qquad\qquad\qquad\qquad\stackrel{P}{\longrightarrow}t\bigl\{({\bf{\Sigma}}_0^{11})_{j_{1}j_{3}}({\bf{\Sigma}}^{11}_0)_{j_{2}j_{4}}+({\bf{\Sigma}}^{11}_0)_{j_{1}j_{4}}({\bf{\Sigma}}^{11}_0)_{j_{2}j_{3}}\bigr\}\label{EXXXXnon}
\end{split}
\end{align}
for $j_1,j_2,j_3,j_4=1,\cdots,p_1$. Furthermore, it holds from Lemma \ref{EX1X1lemma} that
\begin{align*}
    &\quad\left|\sum_{i=1}^{[nt]}\E\left[\sqrt{n}\Delta \mathbb{X}_{1,i}^{(j_1)}\Delta \mathbb{X}_{1,i}^{(j_2)}-\frac{1}{\sqrt{n}}({\bf{\Sigma}}^{11}_0)_{j_1j_2}\Big|\mathscr{F}^{n}_{i-1}\right]\right.\\
    &\left.\qquad\qquad\qquad\qquad\qquad\quad\times\E\left[\sqrt{n}\Delta \mathbb{X}_{1,i}^{(j_3)}\Delta \mathbb{X}_{1,i}^{(j_3)}-\frac{1}{\sqrt{n}}({\bf{\Sigma}}^{11}_0)_{j_3j_4}\Big|\mathscr{F}^{n}_{i-1}\right]\right|\\
    &\leq \frac{1}{n^2}\times\left\{\frac{1}{n}\sum_{i=1}^n R_{i}(1,\xi)+\frac{1}{n}\sum_{i=1}^n 
    R_{i}(1,\delta)+\frac{1}{n}\sum_{i=1}^n R_{i}(1,\xi)R_{i}(1,\delta)\right\}\stackrel{P}{\longrightarrow}0,
\end{align*}
so that
\begin{align}
\begin{split}
    &\left|\sum_{i=1}^{[nt]}\E\left[\sqrt{n}\Delta \mathbb{X}_{1,i}^{(j_1)}\Delta \mathbb{X}_{1,i}^{(j_2)}-\frac{1}{\sqrt{n}}({\bf{\Sigma}}^{11}_0)_{j_1j_2}\Big|\mathscr{F}^{n}_{i-1}\right]\right.\\
    &\left.\qquad\qquad\qquad\qquad\times\E\left[\sqrt{n}\Delta \mathbb{X}_{1,i}^{(j_3)}\Delta \mathbb{X}_{1,i}^{(j_4)}-\frac{1}{\sqrt{n}}({\bf{\Sigma}}^{11}_0)_{j_3j_4}\Big|\mathscr{F}^{n}_{i-1}\right]\right|\stackrel{P}{\longrightarrow}0\label{EXXEXXnon}
\end{split}
\end{align}
for $j_1,j_2,j_3,j_4=1,\cdots,p_1$. Hence, (\ref{EXXXXnon}) and (\ref{EXXEXXnon}) yield
\begin{align*}
    &\quad\ \ \sum_{i=1}^{[nt]}\E\left[\left\{\sqrt{n}\Delta \mathbb{X}_{1,i}^{(j_1)}\Delta \mathbb{X}_{1,i}^{(j_2)}-\frac{1}{\sqrt{n}}({\bf{\Sigma}}_0^{11})_{j_1j_2}\right\}\right.\\
    &\left.\qquad\qquad\qquad\qquad\qquad\qquad\times\left\{\sqrt{n}\Delta \mathbb{X}_{1,i}^{(j_3)}\Delta \mathbb{X}_{1,i}^{(j_4)}-\frac{1}{\sqrt{n}}({\bf{\Sigma}}_0^{11})_{j_3j_4}\right\}\Big|\mathscr{F}^{n}_{i-1}\right]\\
    &\qquad\qquad\qquad-\sum_{i=1}^{[nt]}\E\left[\sqrt{n}\Delta \mathbb{X}_{1,i}^{(j_1)}\Delta \mathbb{X}_{1,i}^{(j_2)}-\frac{1}{\sqrt{n}}({\bf{\Sigma}}^{11}_0)_{j_1j_2}\Big|\mathscr{F}^{n}_{i-1}\right]\\
    &\qquad\qquad\qquad\qquad\qquad\qquad\qquad\qquad\times\E\left[\sqrt{n}\bigl(\Delta \mathbb{X}_{1,i}^{(j_3)}\Delta \mathbb{X}_{1,i}^{(j_4)}-\frac{1}{\sqrt{n}}({\bf{\Sigma}}^{11}_0)_{j_3j_4}\Big|\mathscr{F}^{n}_{i-1}\right]\\
    &\stackrel{P}{\longrightarrow} t\bigl\{({\bf{\Sigma}}_0^{11})_{j_{1}j_{3}}({\bf{\Sigma}}^{11}_0)_{j_{2}j_{4}}+({\bf{\Sigma}}^{11}_0)_{j_{1}j_{4}}({\bf{\Sigma}}^{11}_0)_{j_{2}j_{3}}\bigr\}
\end{align*}
for $j_1,j_2,j_3,j_4=1,\cdots,p_1$. In an analogous manner, we have
\begin{align*}
    &\quad\ \ \sum_{i=1}^{[nt]}\E\left[\left\{\sqrt{n}\Delta \mathbb{X}_{1,i}^{(j_1)}\Delta \mathbb{X}_{1,i}^{(j_2)}-\frac{1}{\sqrt{n}}({\bf{\Sigma}}_0^{11})_{j_1j_2}\right\}\right.\\
    &\left.\qquad\qquad\qquad\qquad\qquad\qquad\times\left\{\sqrt{n}\Delta \mathbb{X}_{1,i}^{(j_3)}\Delta \mathbb{X}_{2,i}^{(j_4)}-\frac{1}{\sqrt{n}}({\bf{\Sigma}}_0^{12})_{j_3j_4}\right\}\Big|\mathscr{F}^{n}_{i-1}\right]\\
    &\qquad\qquad\qquad-\sum_{i=1}^{[nt]}\E\left[\sqrt{n}\Delta \mathbb{X}_{1,i}^{(j_1)}\Delta \mathbb{X}_{1,i}^{(j_2)}-\frac{1}{\sqrt{n}}({\bf{\Sigma}}^{11}_0)_{j_1j_2}\Big|\mathscr{F}^{n}_{i-1}\right]\\
    &\qquad\qquad\qquad\qquad\qquad\qquad\qquad\qquad\times\E\left[\sqrt{n}\bigl(\Delta \mathbb{X}_{1,i}^{(j_3)}\Delta \mathbb{X}_{2,i}^{(j_4)}-\frac{1}{\sqrt{n}}({\bf{\Sigma}}^{12}_0)_{j_3j_4}\Big|\mathscr{F}^{n}_{i-1}\right]\\
    &\stackrel{P}{\longrightarrow} t\bigl\{({\bf{\Sigma}}_0^{11})_{j_{1}j_{3}}({\bf{\Sigma}}^{12}_0)_{j_{2}j_{4}}+({\bf{\Sigma}}^{12}_0)_{j_{1}j_{4}}({\bf{\Sigma}}^{11}_0)_{j_{2}j_{3}}\bigr\}
\end{align*}
for $j_1,j_2,j_3=1,\cdots,p_1,\ j_4=1,\cdots,p_2$,
\begin{align*}
    &\quad\ \ \sum_{i=1}^{[nt]}\E\left[\left\{\sqrt{n}\Delta \mathbb{X}_{1,i}^{(j_1)}\Delta \mathbb{X}_{1,i}^{(j_2)}-\frac{1}{\sqrt{n}}({\bf{\Sigma}}_0^{11})_{j_1j_2}\right\}\right.\\
    &\left.\qquad\qquad\qquad\qquad\qquad\qquad\times\left\{\sqrt{n}\Delta \mathbb{X}_{2,i}^{(j_3)}\Delta \mathbb{X}_{2,i}^{(j_4)}-\frac{1}{\sqrt{n}}({\bf{\Sigma}}_0^{22})_{j_3j_4}\right\}\Big|\mathscr{F}^{n}_{i-1}\right]\\
    &\qquad\qquad\qquad-\sum_{i=1}^{[nt]}\E\left[\sqrt{n}\Delta \mathbb{X}_{1,i}^{(j_1)}\Delta \mathbb{X}_{1,i}^{(j_2)}-\frac{1}{\sqrt{n}}({\bf{\Sigma}}^{11}_0)_{j_1j_2}\Big|\mathscr{F}^{n}_{i-1}\right]\\
    &\qquad\qquad\qquad\qquad\qquad\qquad\qquad\qquad\times\E\left[\sqrt{n}\bigl(\Delta \mathbb{X}_{1,i}^{(j_3)}\Delta \mathbb{X}_{2,i}^{(j_4)}-\frac{1}{\sqrt{n}}({\bf{\Sigma}}^{12}_0)_{j_3j_4}\Big|\mathscr{F}^{n}_{i-1}\right]\\
    &\stackrel{P}{\longrightarrow} t\bigl\{({\bf{\Sigma}}_0^{12})_{j_{1}j_{3}}({\bf{\Sigma}}^{22}_0)_{j_{2}j_{4}}+({\bf{\Sigma}}^{12}_0)_{j_{1}j_{4}}({\bf{\Sigma}}^{12}_0)_{j_{2}j_{3}}\bigr\}
\end{align*}
for $j_1,j_3=1,\cdots,p_1,\ j_2,j_4=1,\cdots,p_2$,
\begin{align*}
    &\quad\ \ \sum_{i=1}^{[nt]}\E\left[\left\{\sqrt{n}\Delta \mathbb{X}_{1,i}^{(j_1)}\Delta \mathbb{X}_{2,i}^{(j_2)}-\frac{1}{\sqrt{n}}({\bf{\Sigma}}_0^{12})_{j_1j_2}\right\}\right.\\
    &\left.\qquad\qquad\qquad\qquad\qquad\qquad\times\left\{\sqrt{n}\Delta \mathbb{X}_{1,i}^{(j_3)}\Delta \mathbb{X}_{2,i}^{(j_4)}-\frac{1}{\sqrt{n}}({\bf{\Sigma}}_0^{12})_{j_3j_4}\right\}\Big|\mathscr{F}^{n}_{i-1}\right]\\
    &\qquad\qquad\qquad-\sum_{i=1}^{[nt]}\E\left[\sqrt{n}\Delta \mathbb{X}_{1,i}^{(j_1)}\Delta \mathbb{X}_{1,i}^{(j_2)}-\frac{1}{\sqrt{n}}({\bf{\Sigma}}^{11}_0)_{j_1j_2}\Big|\mathscr{F}^{n}_{i-1}\right]\\
    &\qquad\qquad\qquad\qquad\qquad\qquad\qquad\qquad\times\E\left[\sqrt{n}\bigl(\Delta \mathbb{X}_{1,i}^{(j_3)}\Delta \mathbb{X}_{2,i}^{(j_4)}-\frac{1}{\sqrt{n}}({\bf{\Sigma}}^{12}_0)_{j_3j_4}\Big|\mathscr{F}^{n}_{i-1}\right]\\
    &\stackrel{P}{\longrightarrow} t\bigl\{({\bf{\Sigma}}_0^{12})_{j_{1}j_{3}}({\bf{\Sigma}}^{22}_0)_{j_{2}j_{4}}+({\bf{\Sigma}}^{12}_0)_{j_{1}j_{4}}({\bf{\Sigma}}^{12\top}_{0})_{j_{2}j_{3}}\bigr\}
\end{align*}
for $j_1,j_3=1,\cdots,p_1,\ j_2,j_4=1,\cdots,p_2$,
\begin{align*}
    &\quad\ \ \sum_{i=1}^{[nt]}\E\left[\left\{\sqrt{n}\Delta \mathbb{X}_{1,i}^{(j_1)}\Delta \mathbb{X}_{2,i}^{(j_2)}-\frac{1}{\sqrt{n}}({\bf{\Sigma}}_0^{12})_{j_1j_2}\right\}\right.\\
    &\left.\qquad\qquad\qquad\qquad\qquad\qquad\times\left\{\sqrt{n}\Delta \mathbb{X}_{2,i}^{(j_3)}\Delta \mathbb{X}_{2,i}^{(j_4)}-\frac{1}{\sqrt{n}}({\bf{\Sigma}}_0^{22})_{j_3j_4}\right\}\Big|\mathscr{F}^{n}_{i-1}\right]\\
    &\qquad\qquad\qquad-\sum_{i=1}^{[nt]}\E\left[\sqrt{n}\Delta \mathbb{X}_{1,i}^{(j_1)}\Delta \mathbb{X}_{1,i}^{(j_2)}-\frac{1}{\sqrt{n}}({\bf{\Sigma}}^{11}_0)_{j_1j_2}\Big|\mathscr{F}^{n}_{i-1}\right]\\
    &\qquad\qquad\qquad\qquad\qquad\qquad\qquad\qquad\times\E\left[\sqrt{n}\bigl(\Delta \mathbb{X}_{1,i}^{(j_3)}\Delta \mathbb{X}_{2,i}^{(j_4)}-\frac{1}{\sqrt{n}}({\bf{\Sigma}}^{12}_0)_{j_3j_4}\Big|\mathscr{F}^{n}_{i-1}\right]\\
    &\stackrel{P}{\longrightarrow} t\bigl\{({\bf{\Sigma}}_0^{12})_{j_{1}j_{3}}({\bf{\Sigma}}^{22}_0)_{j_{2}j_{4}}+({\bf{\Sigma}}^{12}_0)_{j_{1}j_{4}}({\bf{\Sigma}}^{22}_{0})_{j_{2}j_{3}}\bigr\}
\end{align*}
for $j_1=1,\cdots,p_1,\ j_2,j_3,j_4=1,\cdots,p_2$, and
\begin{align*}
    &\quad\ \ \sum_{i=1}^{[nt]}\E\left[\left\{\sqrt{n}\Delta \mathbb{X}_{2,i}^{(j_1)}\Delta \mathbb{X}_{2,i}^{(j_2)}-\frac{1}{\sqrt{n}}({\bf{\Sigma}}_0^{22})_{j_1j_2}\right\}\right.\\
    &\left.\qquad\qquad\qquad\qquad\qquad\qquad\times\left\{\sqrt{n}\Delta \mathbb{X}_{2,i}^{(j_3)}\Delta \mathbb{X}_{2,i}^{(j_4)}-\frac{1}{\sqrt{n}}({\bf{\Sigma}}_0^{22})_{j_3j_4}\right\}\Big|\mathscr{F}^{n}_{i-1}\right]\\
    &\qquad\qquad\qquad-\sum_{i=1}^{[nt]}\E\left[\sqrt{n}\Delta \mathbb{X}_{1,i}^{(j_1)}\Delta \mathbb{X}_{1,i}^{(j_2)}-\frac{1}{\sqrt{n}}({\bf{\Sigma}}^{22}_0)_{j_1j_2}\Big|\mathscr{F}^{n}_{i-1}\right]\\
    &\qquad\qquad\qquad\qquad\qquad\qquad\qquad\qquad\times\E\left[\sqrt{n}\bigl(\Delta \mathbb{X}_{2,i}^{(j_3)}\Delta \mathbb{X}_{2,i}^{(j_4)}-\frac{1}{\sqrt{n}}({\bf{\Sigma}}^{22}_0)_{j_3j_4}\Big|\mathscr{F}^{n}_{i-1}\right]\\
    &\stackrel{P}{\longrightarrow} t\bigl\{({\bf{\Sigma}}_0^{22})_{j_{1}j_{3}}({\bf{\Sigma}}^{22}_0)_{j_{2}j_{4}}+({\bf{\Sigma}}^{22}_0)_{j_{1}j_{4}}({\bf{\Sigma}}^{22}_{0})_{j_{2}j_{3}}\bigr\}
\end{align*}
for $j_1,j_2,j_3,j_4=1,\cdots,p_2$, which yields (\ref{EXXXXnonp}). Next, we will prove (\ref{EXWnonp}). 
For the proof of (\ref{EXWnonp}), it is sufficient to show
\begin{align}\label{X11W}
    \sum_{i=1}^{[nt]}\E\left[\sqrt{n}\Delta \mathbb{X}_{1,i}^{(j_1)}\Delta \mathbb{X}_{1,i}^{(j_2)}\Delta \bar{W}_i^{(j_3)}\big|\mathscr{F}^{n}_{i-1}\right]\stackrel{P}{\longrightarrow}0
\end{align}
for $j_1,j_2=1,\cdots, p_1, j_3=1,\cdots,\bar{r}$, 
\begin{align}\label{X12W}
    \sum_{i=1}^{[nt]}\E\left[\sqrt{n}\Delta \mathbb{X}_{1,i}^{(j_1)}\Delta \mathbb{X}_{2,i}^{(j_2)}\Delta \bar{W}_i^{(j_3)}\big|\mathscr{F}^{n}_{i-1}\right]\stackrel{P}{\longrightarrow}0
\end{align}
for $j_1=1,\cdots, p_1, j_2=1,\cdots, p_2, j_3=1,\cdots,\bar{r}$, and
\begin{align}\label{X22W}
    \sum_{i=1}^{[nt]}\E\left[\sqrt{n}\Delta \mathbb{X}_{2,i}^{(j_1)}\Delta \mathbb{X}_{2,i}^{(j_2)}\Delta \bar{W}_i^{(j_3)}\big|\mathscr{F}^{n}_{i-1}\right]\stackrel{P}{\longrightarrow}0
\end{align}
for $j_1,j_2=1,\cdots, p_2, j_3=1,\cdots,\bar{r}$. Recalling that
\begin{align*}
    \E\Bigl[B_{i,n}^{(j_1)}B_{i,n}^{(j_2)}\Delta W_{1,i}^{(j_3)}\big|\mathscr{F}^{n}_{i-1}\Bigr]=\E\Bigl[B_{i,n}^{(j_1)}B_{i,n}^{(j_2)}\big|\mathscr{F}^{n}_{i-1}\Bigr]\E\Bigl[\Delta W_{1,i}^{(j_3)}\Bigr],
\end{align*}
Lemma \ref{Blemma} and Lemma \ref{AWlemma} yield
\begin{align*}
    \E\Bigl[\Delta \mathbb{X}_{1,i}^{(j_1)}\Delta \mathbb{X}_{1,i}^{(j_2)}\Delta W_{1,i}^{(j_3)}\big|\mathscr{F}^{n}_{i-1}\Bigr]=h_n^2\bigl\{R_{i}(1,\xi)+R_{i}(1,\xi)R_{i}(1,\delta)\bigr\}
\end{align*}
for $j_1,j_2=1,\cdots, p_1, j_3=1,\cdots,r_1$. Since
\begin{align*}
    \left|\sum_{i=1}^{[nt]}\E\left[\sqrt{n}\Delta \mathbb{X}_{1,i}^{(j_1)}\Delta \mathbb{X}_{1,i}^{(j_2)}\Delta W_{1,i}^{(j_3)}\big|\mathscr{F}^{n}_{i-1}\right]\right|
    &\leq\sqrt{n}\sum_{i=1}^{[nt]}\left|\E\Bigl[\Delta \mathbb{X}_{1,i}^{(j_1)}\Delta \mathbb{X}_{1,i}^{(j_2)}\Delta W_{1,i}^{(j_3)}\big|\mathscr{F}^{n}_{i-1}\Bigr]\right|\\
    &\leq\sqrt{n}\sum_{i=1}^{n}\left|\E\Bigl[\Delta \mathbb{X}_{1,i}^{(j_1)}\Delta \mathbb{X}_{1,i}^{(j_2)}\Delta W_{1,i}^{(j_3)}\big|\mathscr{F}^{n}_{i-1}\Bigr]\right|\\
    &\leq\frac{1}{\sqrt{n}}\left\{\frac{1}{n}\sum_{i=1}^{n}R_{i}(1,\xi)+\frac{1}{n}\sum_{i=1}^{n}R_{i}(1,\xi)R_{i}(1,\delta)\right\}\\
    &\stackrel{P}{\longrightarrow}0,
\end{align*}
we see
\begin{align}
    \sum_{i=1}^{[nt]}\E\left[\sqrt{n}\Delta \mathbb{X}_{1,i}^{(j_1)}\Delta \mathbb{X}_{1,i}^{(j_2)}\Delta W_{1,i}^{(j_3)}\big|\mathscr{F}^{n}_{i-1}\right]\stackrel{P}{\longrightarrow}0
    \label{X11W1}
\end{align}
for $j_1,j_2=1,\cdots,p_1, j_3=1,\cdots,r_1$. In an analogous manner,
\begin{align*}
    \E\Bigl[\Delta \mathbb{X}_{1,i}^{(j_1)}\Delta \mathbb{X}_{1,i}^{(j_2)}\Delta W_{2,i}^{(j_3)}\big|\mathscr{F}^{n}_{i-1}\Bigr]=h_n^2\bigl\{R_{i}(1,\delta)+R_{i}(1,\xi)R_{i}(1,\delta)\bigr\}
\end{align*}
for $j_1,j_2=1,\cdots,p_1, j_3=1,\cdots,r_1$, which yields
\begin{align}
    \sum_{i=1}^{[nt]}\E\left[\sqrt{n}\Delta \mathbb{X}_{1,i}^{(j_1)}\Delta \mathbb{X}_{1,i}^{(j_2)}\Delta W_{2,i}^{(j_3)}\big|\mathscr{F}^{n}_{i-1}\right]\stackrel{P}{\longrightarrow}0
    \label{X11W2}
\end{align}
for $j_1,j_2=1,\cdots,p_1, j_3=1,\cdots,r_2$. It holds from the independence of $W_{1,t}$, $W_{2,t}$, $W_{3,t}$ and $W_{4,t}$ that
\begin{align}
    \E\Bigl[\Delta \mathbb{X}_{1,i}^{(j_1)}\Delta \mathbb{X}_{1,i}^{(j_2)}\Delta W_{3,i}^{(j_3)}\big|\mathscr{F}^{n}_{i-1}\Bigr]=0 \label{X11W3}
\end{align}
for $j_1,j_2=1,\cdots, p_1, j_3=1,\cdots,r_3$, and
\begin{align}
    \E\Bigl[\Delta \mathbb{X}_{1,i}^{(j_1)}\Delta \mathbb{X}_{1,i}^{(j_2)}\Delta W_{4,i}^{(j_4)}\big|\mathscr{F}^{n}_{i-1}\Bigr]=0 \label{X11W4}
\end{align}
for $j_1,j_2=1,\cdots, p_1, j_3=1,\cdots,r_4$. Hence, (\ref{X11W1})-(\ref{X11W4}) imply (\ref{X11W}). In a similar way, we obtain (\ref{X12W}) and (\ref{X22W}). Finally, we prove (\ref{EXX4nonp}). It is sufficient to show that
\begin{align}
    \begin{split}
    &\sum_{i=1}^n\E\left[\left|\sqrt{n}\Delta \mathbb{X}_{1,i}^{(j_1)}\Delta \mathbb{X}_{1,i}^{(j_2)}-\frac{1}{\sqrt{n}}({\bf{\Sigma}}_0^{11})_{j_1j_2}\right|^4\Big|\mathscr{F}^{n}_{i-1}\right]\stackrel{P}{\longrightarrow}0\label{EX1X14}
    \end{split}
\end{align}
for $j_1,j_2=1,\cdots, p_1$,
\begin{align}
    \begin{split}
    &\sum_{i=1}^n\E\left[\left|\sqrt{n}\Delta \mathbb{X}_{1,i}^{(j_1)}\Delta \mathbb{X}_{2,i}^{(j_2)}-\frac{1}{\sqrt{n}}({\bf{\Sigma}}_0^{12})_{j_1j_2}\right|^4\Big|\mathscr{F}^{n}_{i-1}\right]\stackrel{P}{\longrightarrow}0\label{EX1X24}
    \end{split}
\end{align}
for $j_1=1,\cdots, p_1,\ j_2=1,\cdots, p_2$, and
\begin{align}
    \begin{split}
    &\sum_{i=1}^n\E\left[\left|\sqrt{n}\Delta \mathbb{X}_{2,i}^{(j_1)}\Delta \mathbb{X}_{2,i}^{(j_2)}-\frac{1}{\sqrt{n}}({\bf{\Sigma}}_0^{22})_{j_1j_2}\right|^4\Big|\mathscr{F}^{n}_{i-1}\right]\stackrel{P}{\longrightarrow}0\label{EX2X24}
    \end{split}
\end{align}
for $j_1,j_2=1,\cdots, p_2$. Note that for $j_1,j_2=1,\cdots,p_1$,
\begin{align}
    \begin{split}
    0&\leq\sum_{i=1}^n\E\left[\left|\sqrt{n}\Delta \mathbb{X}_{1,i}^{(j_1)}\Delta \mathbb{X}_{1,i}^{(j_2)}-\frac{1}{\sqrt{n}}({\bf{\Sigma}}_0^{11})_{j_1j_2}\right|^4\Big|\mathscr{F}^{n}_{i-1}\right]\\
    &\leq C_1 n^2\sum_{i=1}^n\E\left[\big|\Delta \mathbb{X}_{1,i}^{(j_1)}\Delta \mathbb{X}_{1,i}^{(j_2)}\bigr|^4\big|\mathscr{F}^{n}_{i-1}\right]+\frac{C_{1}}{n}({\bf{\Sigma}}_0^{11})_{j_1j_2}^4.\label{X1X14ine}
    \end{split}
\end{align}
Using the Cauchy-Schwartz's inequality and Lemma \ref{klemma}, we have
\begin{align*}
    \E\left[\big|A^{(j_1)}_{i,n}A^{(j_2)}_{i,n}\big|^4
    \big|\mathscr{F}^{n}_{i-1}\right]
    &\leq\E\left[\big|A^{(j_1)}_{i,n}\big|^8\big|\mathscr{F}^{n}_{i-1}\right]^{\frac{1}{2}}\E\left[\big|A^{(j_2)}_{i,n}\big|^8\big|\mathscr{F}^{n}_{i-1}\right]^{\frac{1}{2}}\leq R_i(h_n^4,\xi)
\end{align*}
and
\begin{align*}
    \E\left[\big|B^{(j_1)}_{i,n}B^{(j_2)}_{i,n}\big|^4
    \big|\mathscr{F}^{n}_{i-1}\right]
    &\leq\E\left[\big|B^{(j_1)}_{i,n}\big|^8\big|\mathscr{F}^{n}_{i-1}\right]^{\frac{1}{2}}\E\left[\big|B^{(j_2)}_{i,n}\big|^8\big|\mathscr{F}^{n}_{i-1}\right]^{\frac{1}{2}}\leq
    R_i(h_n^4,\delta)
\end{align*}
for $j_1,j_2=1,\cdots,p_1$. Recall that $\xi_{0,t}$ and $\delta_{0,t}$ is independent. It follows from Lemma \ref{klemma} that
\begin{align*}
    \E\left[\bigl|A^{(j_1)}_{i,n}B^{(j_2)}_{i,n}\bigr|^4\big|\mathscr{F}^{n}_{i-1}\right]&=R_i(h_n^2,\xi)R_i(h_n^2,\delta)
\end{align*}
for $j_1,j_2=1,\cdots,p_1$. Thus, for $j_1,j_2=1,\cdots,p_1$, one has
\begin{align*}
    0&\leq C_1 n^2\sum_{i=1}^n\E\left[\left|\Delta \mathbb{X}_{1,i}^{(j_1)}\Delta \mathbb{X}_{1,i}^{(j_2)}\right|^4\Big|\mathscr{F}^{n}_{i-1}\right]\\
    &\leq C_{1}n^2\sum_{i=1}^n\E\left[\bigl|(A^{(j_1)}_{i,n}+B^{(j_1)}_{i,n})(A^{(j_2)}_{i,n}+B^{(j_2)}_{i,n})\bigr|^4\big|\mathscr{F}^{n}_{i-1}\right]\\
    &\leq C_{2}n^2\sum_{i=1}^n
    \E\left[\bigl|A^{(j_1)}_{i,n}A^{(j_2)}_{i,n}\bigr|^4\big|\mathscr{F}^{n}_{i-1}\right]+C_{2}n^2\sum_{i=1}^n
    \E\left[\bigl|A^{(j_1)}_{i,n}B^{(j_2)}_{i,n}\bigr|^4\big|\mathscr{F}^{n}_{i-1}\right]\\
    &\quad +C_{2}n^2\sum_{i=1}^n\E\left[\bigl|B^{(j_1)}_{i,n}A^{(j_2)}_{i,n}\bigr|^4\big|\mathscr{F}^{n}_{i-1}\right]+C_{2}n^2\sum_{i=1}^n\E\left[\bigl|B^{(j_1)}_{i,n}B^{(j_2)}_{i,n}\bigr|^4\big|\mathscr{F}^{n}_{i-1}\right]\\
    &\leq\frac{C_{2}}{n}\left\{\frac{1}{n}\sum_{i=1}^n R_i(1,\xi)+\frac{1}{n}\sum_{i=1}^n R_i(1,\xi)R_i(1,\delta)+\frac{1}{n}\sum_{i=1}^n R_i(1,\delta)\right\}\stackrel{P}{\longrightarrow}0,
\end{align*}
which yields
\begin{align}
    C_{1}n^2\sum_{i=1}^n\E\left[\left|\Delta \mathbb{X}_{1,i}^{(j_1)}\Delta \mathbb{X}_{1,i}^{(j_2)}\right|^4\Big|\mathscr{F}^{n}_{i-1}\right]\stackrel{P}{\longrightarrow}0\label{CX1X14}
\end{align}
for $j_1,j_2=1,\cdots,p_1$. Hence, we obtain (\ref{EX1X14}) from (\ref{X1X14ine}) and (\ref{CX1X14}). In the same way, we can show (\ref{EX1X24}) and (\ref{EX2X24}).     
\end{proof}
\subsection{Proofs of Theorems 2-4}\label{Proof2-4}
For any matrix $M$, $\mathbb{C}(M)$ denotes the column space of $M$.
\begin{lemma}\label{positive}
$M\in\mathbb{R}^{(p_1+p_2)\times(p_1+p_2)}$ is set to
\begin{align*}
    M=\begin{pmatrix}
    M^{11} & M^{12}\\
    M^{12\top} & M^{22}
\end{pmatrix},
\end{align*}
where $M^{11}\in\mathbb{R}^{p_1\times p_1}$, $M^{12}\in\mathbb{R}^{p_1\times p_2}$ and $M^{22}\in\mathbb{R}^{p_2\times p_2}$. 
\begin{enumerate}[$(i)$]
    \item If $M^{11}$ is a positive definite matrix, and $M^{22}-M^{12\top}(M^{11})^{-1}M^{12}$ is a positive definite matrix, then $M$ is a positive definite matrix.
    \item If $M^{11}$ is a semi-positive definite matrix, $M^{22}-M^{12\top}(M^{11})^{-}M^{12}$ is a semi-positive definite matrix, and $\mathbb{C}(M^{12})\subset \mathbb{C}(M^{11})$, then $M$ is a semi-positive definite matrix.
\end{enumerate}
\end{lemma}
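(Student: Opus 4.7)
The plan is to prove both parts via a block LDU decomposition (the Schur-complement factorization), which reduces positivity of $M$ to positivity of its diagonal blocks after triangular conjugation. The triangular factors are invertible, so they preserve (semi-)definiteness under congruence.

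For part (i), I would define
\begin{align*}
    L = \begin{pmatrix} \mathbb{I}_{p_1} & O_{p_1\times p_2} \\ M^{12\top}(M^{11})^{-1} & \mathbb{I}_{p_2} \end{pmatrix}, \qquad D = \begin{pmatrix} M^{11} & O_{p_1\times p_2} \\ O_{p_2\times p_1} & M^{22}-M^{12\top}(M^{11})^{-1}M^{12} \end{pmatrix},
\end{align*}
and verify by direct block multiplication that $M = L D L^\top$. Since $L$ is lower triangular with identity diagonal blocks, $\det L = 1$, so $L$ is non-singular. The hypothesis makes both diagonal blocks of $D$ positive definite, hence $D > 0$, and congruence by the invertible matrix $L$ preserves positive definiteness, giving $M > 0$.

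For part (ii), I would set up the analogous factorization with $(M^{11})^{-1}$ replaced by the Moore-Penrose inverse $(M^{11})^{+}$. The key point that requires care is verifying $M = LDL^\top$ when $M^{11}$ is only semi-positive definite; the off-diagonal identities reduce to showing $M^{11}(M^{11})^{+}M^{12} = M^{12}$, and this is where the assumption $\mathbb{C}(M^{12})\subset\mathbb{C}(M^{11})$ enters, since $M^{11}(M^{11})^{+}$ is the orthogonal projector onto $\mathbb{C}(M^{11})$ (a standard Moore–Penrose identity, e.g.\ Harville \cite{Harville(1998)}). Once this factorization is established, $L$ is again triangular with identity diagonal blocks and hence non-singular, so $M$ is congruent to the block-diagonal $D$; both diagonal blocks of $D$ are semi-positive definite by hypothesis, therefore $M$ is semi-positive definite.

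The main obstacle is part (ii): the algebraic identity $M^{11}(M^{11})^{+}M^{12} = M^{12}$ must be handled carefully, because without it the factorization fails and semi-positivity cannot be inferred. I would isolate this as a short preliminary calculation using the column-space containment to write $M^{12} = M^{11} K$ for some matrix $K$, and then apply $M^{11}(M^{11})^{+}M^{11} = M^{11}$. The rest of the argument is routine block matrix algebra.
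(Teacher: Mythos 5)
Your proof is correct. The paper itself offers no argument for this lemma beyond the citation ``See Theorem 14.8.5 in Harville \cite{Harville(1998)}'', and your block $LDL^{\top}$ (Schur-complement) factorization, with congruence by the unit-triangular factor preserving (semi-)definiteness, is precisely the standard proof of that cited result; your isolation of the identity $M^{11}(M^{11})^{+}M^{12}=M^{12}$ via $M^{12}=M^{11}K$ is exactly the point where the column-space hypothesis is needed. One minor remark: the statement uses a generalized inverse $(M^{11})^{-}$ rather than the Moore--Penrose inverse $(M^{11})^{+}$, but under $\mathbb{C}(M^{12})\subset\mathbb{C}(M^{11})$ the Schur complement $M^{22}-M^{12\top}(M^{11})^{-}M^{12}$ is invariant to the choice of generalized inverse, so your specialization loses nothing.
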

\begin{proof}
See Theorem 14.8.5 in Harville \cite{Harville(1998)}.
\end{proof}
\begin{lemma}\label{Sigmaposlemma}
${\bf{\Sigma}}_0$, ${\bf{\Sigma}}(\theta)$ and $\rm{V}({\bf{\Sigma}}_0,{\bf{\Sigma}}(\theta))$ are  positive definite matrices.
\end{lemma}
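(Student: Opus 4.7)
The plan is to establish positive definiteness of all three matrices in turn, using the block factorization of ${\bf{\Sigma}}(\theta)$ together with elementary properties of the Kronecker product and the duplication matrix.

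First, I would observe that ${\bf{\Sigma}}(\theta)$ admits the factorization
\begin{align*}
    {\bf{\Sigma}}(\theta) = {\bf{L}}(\theta)\,{\bf{\Omega}}(\theta)\,{\bf{L}}(\theta)^{\top} + {\bf{E}}(\theta),
\end{align*}
where
\begin{align*}
    {\bf{L}}(\theta)=\begin{pmatrix}
    {\bf{\Lambda}}_{x_1} & O_{p_1\times k_2}\\
    {\bf{\Lambda}}_{x_2}{\bf{\Psi}}^{-1}{\bf{\Gamma}} & {\bf{\Lambda}}_{x_2}{\bf{\Psi}}^{-1}
    \end{pmatrix},\quad
    {\bf{\Omega}}(\theta)=\begin{pmatrix}
    {\bf{\Sigma}}_{\xi\xi} & O\\
    O & {\bf{\Sigma}}_{\zeta\zeta}
    \end{pmatrix},\quad
    {\bf{E}}(\theta)=\begin{pmatrix}
    {\bf{\Sigma}}_{\delta\delta} & O\\
    O & {\bf{\Sigma}}_{\varepsilon\varepsilon}
    \end{pmatrix}.
\end{align*}
A direct block-wise multiplication confirms that this matches the definition (\ref{sigmatheta}). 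Because ${\bf{\Sigma}}_{\xi\xi}={\bf{S}}_1 {\bf{S}}_1^{\top}$ and ${\bf{\Sigma}}_{\zeta\zeta}={\bf{S}}_4 {\bf{S}}_4^{\top}$ are symmetric positive semi-definite, the first term is positive semi-definite, and since ${\bf{\Sigma}}_{\delta\delta}$ and ${\bf{\Sigma}}_{\varepsilon\varepsilon}$ are assumed positive definite, ${\bf{E}}(\theta)$ is positive definite. Hence ${\bf{\Sigma}}(\theta)$ is positive definite. The same argument, applied with the subscript $0$ throughout, shows that ${\bf{\Sigma}}_0$ is positive definite.

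Next, for $\rm{V}({\bf{\Sigma}}_0,{\bf{\Sigma}}(\theta))$, I would set
\begin{align*}
    {\bf{M}}(\lambda_1,\lambda_2)=(1-\lambda_1\lambda_2){\bf{\Sigma}}(\theta)+\lambda_1\lambda_2{\bf{\Sigma}}_0,\qquad (\lambda_1,\lambda_2)\in[0,1]^2.
\end{align*}
Since $\lambda_1\lambda_2\in[0,1]$, ${\bf{M}}(\lambda_1,\lambda_2)$ is a convex combination of two positive definite matrices and is therefore positive definite. Its inverse ${\bf{M}}(\lambda_1,\lambda_2)^{-1}$ is positive definite, and by the standard property of the Kronecker product of positive definite matrices, ${\bf{M}}(\lambda_1,\lambda_2)^{-1}\otimes{\bf{M}}(\lambda_1,\lambda_2)^{-1}$ is positive definite. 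For any nonzero $x\in\mathbb{R}^{p^2}$, the map $(\lambda_1,\lambda_2)\mapsto x^{\top}\bigl({\bf{M}}(\lambda_1,\lambda_2)^{-1}\otimes{\bf{M}}(\lambda_1,\lambda_2)^{-1}\bigr)x$ is continuous and strictly positive on $[0,1]^2$, so
\begin{align*}
    x^{\top}\!\int_{0}^{1}\!\!\int_{0}^{1}\lambda_2\bigl({\bf{M}}(\lambda_1,\lambda_2)^{-1}\otimes{\bf{M}}(\lambda_1,\lambda_2)^{-1}\bigr)\,d\lambda_1 d\lambda_2\, x>0,
\end{align*}
which shows that the inner double integral, call it ${\bf{K}}$, is a positive definite $p^2\times p^2$ matrix.

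Finally, I would use that the duplication matrix $\mathbb{D}_p\in\mathbb{R}^{p^2\times\bar{p}}$ has full column rank $\bar{p}$ (Harville \cite{Harville(1998)}, Chapter 16). For any nonzero $y\in\mathbb{R}^{\bar{p}}$, $\mathbb{D}_p y\neq 0$, and therefore $y^{\top}\mathbb{D}_{p}^{\top}{\bf{K}}\,\mathbb{D}_p y=(\mathbb{D}_p y)^{\top}{\bf{K}}(\mathbb{D}_p y)>0$. This gives positive definiteness of $\rm{V}({\bf{\Sigma}}_0,{\bf{\Sigma}}(\theta))=\mathbb{D}_{p}^{\top}{\bf{K}}\,\mathbb{D}_p$. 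The only step that requires genuine care is the verification of the block factorization of ${\bf{\Sigma}}(\theta)$, since keeping track of the ${\bf{\Psi}}^{-1}$ factors is error-prone; the rest of the argument is a routine chain of elementary facts.
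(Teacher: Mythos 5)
Your argument is correct, and it reaches the conclusion by a genuinely more direct route than the paper at the one step that requires real work. The paper uses the same additive split of ${\bf{\Sigma}}(\theta)$ into a latent-structure part plus $\mathrm{blockdiag}({\bf{\Sigma}}_{\delta\delta},{\bf{\Sigma}}_{\varepsilon\varepsilon})$, but it establishes positive semi-definiteness of the latent part via the generalized Schur-complement criterion for partitioned matrices (Lemma \ref{positive} (ii), i.e.\ Theorem 14.8.5 in Harville \cite{Harville(1998)}), which forces it to compute $V-U^{\top}T^{-}U$ with a generalized inverse and to verify the column-space inclusion $\mathbb{C}(U)\subset\mathbb{C}(T)$ using the full column rank of ${\bf{\Lambda}}_{x_1}$. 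Your explicit Gram factorization ${\bf{L}}(\theta){\bf{\Omega}}(\theta){\bf{L}}(\theta)^{\top}$ — equivalently $\bigl({\bf{L}}(\theta)\tilde{{\bf{S}}}\bigr)\bigl({\bf{L}}(\theta)\tilde{{\bf{S}}}\bigr)^{\top}$ with $\tilde{{\bf{S}}}=\mathrm{blockdiag}({\bf{S}}_1,{\bf{S}}_4)$ — makes the semi-definiteness manifest and bypasses the Moore--Penrose machinery entirely; the block-wise verification you flag as the delicate step does check out against (\ref{sigmatheta}). What the paper's route buys is that Lemma \ref{positive} is reused elsewhere, so the authors get this case almost for free once that lemma is stated; what your route buys is a shorter, self-contained proof with no appeal to generalized inverses. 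For $\rm{V}({\bf{\Sigma}}_0,{\bf{\Sigma}}(\theta))$ the two arguments coincide in substance (convex combination of positive definite matrices, Kronecker product, integration, injectivity of the duplication map); note only that the paper's displayed proof writes $\mathbb{D}_p^{+}$ where its own definition of $\rm{V}$ has $\mathbb{D}_p$ — your version, using the full column rank of $\mathbb{D}_p$, is the one consistent with the stated definition, and the vanishing of the weight $\lambda_2$ on a null set does not affect positivity of the integral, as you implicitly use.
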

\begin{proof}
First, we decompose ${\bf{\Sigma}}(\theta)$ into
\begin{align*}
    {\bf{\Sigma}}(\theta)&=\begin{pmatrix}
    T & U\\
    U^{\top} & V
    \end{pmatrix}
    +\begin{pmatrix}
    {\bf{\Sigma}}_{\delta\delta} & O_{p_1\times p_2}\\
    O_{p_2\times p_1} & {\bf{\Sigma}}_{\varepsilon\varepsilon}
    \end{pmatrix},
\end{align*}
where 
\begin{align*}
    T={\bf{\Lambda}}_{x_1}{\bf{\Sigma}}_{\xi\xi}{\bf{\Lambda}}_{x_1}^{\top},\ 
    U={\bf{\Lambda}}_{x_1}{\bf{\Sigma}}_{\xi\xi}{\bf{\Gamma}}^{\top}{\bf{\Psi}}^{-1\top}{\bf{\Lambda}}_{x_2}^{\top},\ 
    V={\bf{\Lambda}}_{x_2}{\bf{\Psi}}^{-1}({\bf{\Gamma}}{\bf{\Sigma}}_{\xi\xi}{\bf{\Gamma}}^{\top}+{\bf{\Sigma}}_{\zeta\zeta}){\bf{\Psi}}^{-1\top}{\bf{\Lambda}}_{x_2}^{\top}.
\end{align*}
Recalling that ${\bf{\Sigma}}_{\xi\xi}$ is a semi-positive definite matrix, one has
\begin{align}
    T=\Bigl({\bf{\Lambda}}_{x_1}{\bf{\Sigma}}_{\xi\xi}^{\frac{1}{2}}\Bigr)\Bigl({\bf{\Lambda}}_{x_1}{\bf{\Sigma}}_{\xi\xi}^{\frac{1}{2}}\Bigr)^{\top}\geq 0.\label{l1-1}
\end{align}
Note that ${\bf{\Lambda}}_{x_1}^{-}{\bf{\Lambda}}_{x_1}=\mathbb{I}_{k_{1}}$ and ${\bf{\Lambda}}_{x_1}^{\top}({\bf{\Lambda}}_{x_1}^{\top})^{-}=\mathbb{I}_{k_{1}}$ since $\rank {\bf{\Lambda}}_{x_{1}}$ is a full row rank matrix. As it holds 
\begin{align*}
    {\bf{\Lambda}}_{x_1}^{-}T({\bf{\Lambda}}_{x_1}^{\top})^{-}={\bf{\Sigma}}_{\xi\xi},
\end{align*}
we obtain
\begin{align*}
    U^{\top} T^{-} U
    &={\bf{\Lambda}}_{x_2}{\bf{\Psi}}^{-1}{\bf{\Gamma}}{\bf{\Sigma}}_{\xi\xi}{\bf{\Lambda}}_{x_1}^{\top}T^{-}{\bf{\Lambda}}_{x_1}
    {\bf{\Sigma}}_{\xi\xi}{\bf{\Gamma}}^{\top}{\bf{\Psi}}^{-1\top}{\bf{\Lambda}}_{x_2}^{\top}\\
    &={\bf{\Lambda}}_{x_2}{\bf{\Psi}}^{-1}{\bf{\Gamma}}{\bf{\Lambda}}_{x_1}^{-}
    TT^{-}T({\bf{\Lambda}}_{x_1}^{\top})^{-}{\bf{\Gamma}}^{\top}{\bf{\Psi}}^{-1\top}{\bf{\Lambda}}_{x_2}^{\top}\\
    &={\bf{\Lambda}}_{x_2}{\bf{\Psi}}^{-1}{\bf{\Gamma}}{\bf{\Sigma}}_{\xi\xi}{\bf{\Gamma}}^{\top}
    {\bf{\Psi}}^{-1\top}{\bf{\Lambda}}_{x_2}^{\top}.
\end{align*}
Noting that ${\bf{\Sigma}}_{\zeta\zeta}$ is a semi-positive definite matrix, one gets
\begin{align}
    \begin{split}
    V-U^{\top}T^{-}U
    &={\bf{\Lambda}}_{x_2}{\bf{\Psi}}^{-1}{\bf{\Sigma}}_{\zeta\zeta}{\bf{\Psi}}^{-1\top}{\bf{\Lambda}}_{x_2}^{\top}\\
    &=\Bigl({\bf{\Lambda}}_{x_2}{\bf{\Psi}}^{-1}{\bf{\Sigma}}_{\zeta\zeta}^{\frac{1}{2}}\Bigr)\Bigl({\bf{\Lambda}}_{x_2}{\bf{\Psi}}^{-1}{\bf{\Sigma}}_{\zeta\zeta}^{\frac{1}{2}}\Bigr)^{\top}\geq 0.
    \end{split}\label{l1-2}
\end{align}
Furthermore, we set 
\begin{align*}
    F=({\bf{\Lambda}}_{x_1}^{\top})^{-}{\bf{\Gamma}}^{\top}{\bf{\Psi}}^{-1\top}{\bf{\Lambda}}_{x_2}^{\top},
\end{align*}
which yields $U=TF$. Thus, it follows from Lemma 4.2.2 in Harville \cite{Harville(1998)} that
\begin{align}
    \mathbb{C}(U)\subset \mathbb{C}(T).\label{l1-3}
\end{align}
Hence, Lemma \ref{positive} (ii), (\ref{l1-1}), (\ref{l1-2}) and (\ref{l1-3}) imply
\begin{align}
    \begin{pmatrix}
    T & U\\
    U^{\top} & V
    \end{pmatrix}\geq 0 .\label{l1-4}
\end{align}
Since
\begin{align*}
    {\bf{\Sigma}}_{\varepsilon\varepsilon}-O_{p_1\times p_2}^{\top}{\bf{\Sigma}}_{\delta\delta}^{-1}O_{p_1\times p_2}={\bf{\Sigma}}_{\varepsilon\varepsilon}>0,
\end{align*}
we see  from Lemma \ref{positive} (i) that
\begin{align}
    \begin{pmatrix}
    {\bf{\Sigma}}_{\delta\delta} & O_{p_1\times p_2}\\
    O_{p_2\times p_1} & {\bf{\Sigma}}_{\varepsilon\varepsilon}
    \end{pmatrix}>0. \label{l1-5}
\end{align}
Therefore, it holds from (\ref{l1-4}) and (\ref{l1-5}) that ${\bf{\Sigma}}(\theta)$ is a positive definite matrix. In the same way, we obtain ${\bf{\Sigma}}_0>0$. Since ${\bf{\Sigma}}_0$ and ${\bf{\Sigma}}(\theta)$ are positive definite matrices, it holds
\begin{align*}
    {\bf{\Sigma}}(\theta)+\lambda_1\lambda_2({\bf{\Sigma}}_0-{\bf{\Sigma}}(\theta))>0
\end{align*}
for $\lambda_1,\lambda_2\in[0,1]$. Noting that
\begin{align*}
    \mathbb{D}_{p}^{+}x=0\Longleftrightarrow x=0
\end{align*}
for $x\in\mathbb{R}^{\bar{p}}(\neq 0)$, if $\lambda_2$ is not zero, one has
\begin{align*}
    &\lambda_2x^{\top}\mathbb{D}_{p}^{+\top}({\bf{\Sigma}}(\theta)+\lambda_1\lambda_2({\bf{\Sigma}}_0-{\bf{\Sigma}}(\theta)))^{-1}\\
    &\qquad\qquad\qquad\qquad\otimes({\bf{\Sigma}}(\theta)+\lambda_1\lambda_2({\bf{\Sigma}}_0-{\bf{\Sigma}}(\theta)))^{-1}\mathbb{D}_{p}^{+}x>0
\end{align*}
for $\lambda_1,\lambda_2\in[0,1]$ and $x\in\mathbb{R}^{\bar{p}}(\neq 0)$. 
Therefore, we obtain $x^{\top}\rm{V}({\bf{\Sigma}}_0,{\bf{\Sigma}}(\theta))x>0$ for $x\in\mathbb{R}^{\bar{p}}(\neq 0)$. 
\end{proof}
\begin{lemma}\label{suplemma}
Let $f:\mathbb{R}^{p\times q}\times \mathbb{R}^{r}\rightarrow \mathbb{R}$ denote a continuous function
and $A$ be a compact subset set of $\mathbb{R}^{r}$. Then,
\begin{align*}
    \sup_{\alpha\in A}\left|f(Y,\alpha)-f(Y_0,\alpha)\right|\longrightarrow 0
\end{align*}
as $Y\longrightarrow Y_0$.
\end{lemma}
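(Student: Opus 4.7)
The plan is to prove this as a standard consequence of the Heine--Cantor theorem (uniform continuity of a continuous function on a compact set). The strategy is to restrict attention to a bounded neighborhood of $Y_0$, so that the domain of interest becomes a compact product, and then exploit uniform continuity on that compact product.

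First I would fix $\varepsilon>0$ and introduce the closed ball $\bar{B}(Y_0,1)\subset\mathbb{R}^{p\times q}$. Since $\bar{B}(Y_0,1)$ and $A$ are both compact, the product $\bar{B}(Y_0,1)\times A$ is a compact subset of $\mathbb{R}^{p\times q}\times\mathbb{R}^{r}$. By continuity of $f$ and the Heine--Cantor theorem, $f$ is uniformly continuous on this compact set. Hence there exists $\delta\in(0,1)$ such that for all $(Y_1,\alpha_1),(Y_2,\alpha_2)\in \bar{B}(Y_0,1)\times A$ with $|Y_1-Y_2|+|\alpha_1-\alpha_2|<\delta$, we have $|f(Y_1,\alpha_1)-f(Y_2,\alpha_2)|<\varepsilon$.

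Next, I would specialize this to $Y_1=Y$, $\alpha_1=\alpha_2=\alpha$, and $Y_2=Y_0$. Whenever $|Y-Y_0|<\delta$, the point $(Y,\alpha)$ lies in $\bar{B}(Y_0,1)\times A$ for every $\alpha\in A$, so uniform continuity gives $|f(Y,\alpha)-f(Y_0,\alpha)|<\varepsilon$ for all $\alpha\in A$ simultaneously. Taking the supremum over $\alpha\in A$ yields
\begin{align*}
\sup_{\alpha\in A}|f(Y,\alpha)-f(Y_0,\alpha)|\leq\varepsilon
\end{align*}
whenever $|Y-Y_0|<\delta$, which is exactly the claimed convergence as $Y\to Y_0$.

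There is no real obstacle here; the only point requiring mild care is to ensure we restrict $Y$ to a bounded neighborhood of $Y_0$ before invoking uniform continuity (since $f$ need not be uniformly continuous on the whole $\mathbb{R}^{p\times q}\times A$). Once this localization is made, the argument is a direct application of Heine--Cantor.
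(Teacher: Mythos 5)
Your proof is correct, but it takes a different route from the paper's. The paper argues by picking, for each $Y$, a near-maximizing point $\alpha_0\in A$ with $\sup_{\alpha\in A}|f(Y,\alpha)-f(Y_0,\alpha)|-\varepsilon<|f(Y,\alpha_0)-f(Y_0,\alpha_0)|$, and then invoking continuity of $f$ at $(Y_0,\alpha_0)$ to get a $\delta$ such that $\|Y-Y_0\|<\delta$ forces $|f(Y,\alpha_0)-f(Y_0,\alpha_0)|<\varepsilon$; combining the two inequalities gives the bound $2\varepsilon$ on the supremum. You instead localize $Y$ to the closed ball $\bar B(Y_0,1)$, apply Heine--Cantor on the compact product $\bar B(Y_0,1)\times A$, and read off a single $\delta$ valid for all $\alpha\in A$ simultaneously. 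Your version is actually the more careful one: in the paper's argument the near-maximizer $\alpha_0$ depends on $Y$, so the $\delta$ obtained from pointwise continuity at $(Y_0,\alpha_0)$ also depends on $Y$, and making that step uniform over the possible $\alpha_0$ is precisely what requires the uniform continuity you establish. In other words, the compactness of $A$ (which the paper's written proof never visibly uses) enters only through your Heine--Cantor step, so your argument supplies the detail that makes the paper's sketch rigorous. Your remark about needing to restrict $Y$ to a bounded neighborhood before invoking uniform continuity is also well taken, since $f$ need not be uniformly continuous on all of $\mathbb{R}^{p\times q}\times A$.
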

\begin{proof}
For all $\varepsilon>0$, there exists $\alpha_0\in\mathbb{R}^{r}$ such that
\begin{align}
\begin{split}
    &\sup_{\alpha\in A}\left|f(Y,\alpha)-f(Y_0,\alpha)\right|-\varepsilon \\
    &\qquad\qquad<\left|f(Y,\alpha_0)-f(Y_0,\alpha_0)\right|\leq\sup_{\alpha\in A}\left|f(Y,\alpha)-f(Y_0,\alpha)\right|.\label{supproof1}
\end{split}
\end{align}
By the continuity of $f$,
there exists $\delta>0$ such that
\begin{align}
    \|Y-Y_0\|<\delta\Longrightarrow \left|f(Y,\alpha_0)-f(Y_0,\alpha_0)\right|<\varepsilon. \label{supproof2}
\end{align}
Therefore, we see 
from (\ref{supproof1}) and (\ref{supproof2}) that
\begin{align*}
    \|Y-Y_0\|<\delta\Longrightarrow \sup_{\alpha\in A}\left|f(Y,\alpha)-f(Y_0,\alpha)\right|<2\varepsilon,
\end{align*}
which implies
\begin{align*}
    \sup_{\alpha\in A}\left|f(Y,\alpha)-f(Y_0,\alpha)\right|\longrightarrow 0
\end{align*}
as $Y\longrightarrow Y_0$.
\end{proof}
\begin{lemma}\label{Fproblemma}
Under \textrm{\textbf{[A1]}}, \textrm{\textbf{[B1]}},
\textrm{\textbf{[C1]}} and
\textrm{\textbf{[D1]}}, as $h_n\longrightarrow0$, 
\begin{align*} 
    \tilde{\rm{F}}(\mathbb{Q}_{\mathbb{XX}},{\bf{\Sigma}}(\theta))&\stackrel{P}{\longrightarrow}\rm{F}({\bf{\Sigma}}_0,{\bf{\Sigma}}(\theta))\quad\mbox{uniformly in }\theta,\\
    \partial_{\theta}^2\tilde{\rm{F}}(\mathbb{Q}_{\mathbb{XX}},{\bf{\Sigma}}(\theta))&\stackrel{P}{\longrightarrow}\partial_{\theta}^2 \rm{F}({\bf{\Sigma}}_0,{\bf{\Sigma}}(\theta))\quad\mbox{uniformly in }\theta.
\end{align*}
\end{lemma}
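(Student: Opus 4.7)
The plan is to reduce the claim to the joint continuity of $\rm{F}(\cdot,{\bf{\Sigma}}(\cdot))$ and its second $\theta$-derivative, combined with the consistency of $\mathbb{Q}_{\mathbb{XX}}$ already established in Theorem 1 and the deterministic uniform-continuity tool Lemma \ref{suplemma}. First, since ${\bf{\Sigma}}_0$ is positive definite by Lemma \ref{Sigmaposlemma} and $\mathbb{Q}_{\mathbb{XX}}\stackrel{P}{\longrightarrow}{\bf{\Sigma}}_0$ by Theorem 1, the event $A_n=\{\mathbb{Q}_{\mathbb{XX}}>0\}$ satisfies $\PP(A_n)\to 1$. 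On $A_n$ we have $\tilde{\rm{V}}(\mathbb{Q}_{\mathbb{XX}},{\bf{\Sigma}}(\theta))=\rm{V}(\mathbb{Q}_{\mathbb{XX}},{\bf{\Sigma}}(\theta))$ by definition, so $\tilde{\rm{F}}(\mathbb{Q}_{\mathbb{XX}},{\bf{\Sigma}}(\theta))=\rm{F}(\mathbb{Q}_{\mathbb{XX}},{\bf{\Sigma}}(\theta))$ for every $\theta\in\Theta$. It therefore suffices to prove the uniform convergence with $\rm{F}(\mathbb{Q}_{\mathbb{XX}},{\bf{\Sigma}}(\theta))$ in place of $\tilde{\rm{F}}(\mathbb{Q}_{\mathbb{XX}},{\bf{\Sigma}}(\theta))$, and analogously at the level of the second $\theta$-derivatives.

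Next, inspecting the closed form
\begin{align*}
    \rm{F}(Q,{\bf{\Sigma}}(\theta))=\log\det{\bf{\Sigma}}(\theta)-\log\det Q+\tr\bigl\{{\bf{\Sigma}}(\theta)^{-1}Q\bigr\}-p,
\end{align*}
one sees that it is continuous in $(Q,\theta)$ on the open set $\{Q>0\}\times\Theta$: the map $\theta\mapsto{\bf{\Sigma}}(\theta)$ is rational with non-vanishing denominator $\det{\bf{\Psi}}\neq 0$, Lemma \ref{Sigmaposlemma} ensures ${\bf{\Sigma}}(\theta)>0$ on all of $\Theta$, and $\det$, $\log$, $\tr$ and matrix inversion are smooth on their domains. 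Differentiating in $\theta$ preserves joint continuity, so $(Q,\theta)\mapsto\partial_{\theta}^2\rm{F}(Q,{\bf{\Sigma}}(\theta))$ is likewise continuous on that set. Since $\Theta$ is compact, Lemma \ref{suplemma} yields the deterministic uniform limits
\begin{align*}
    &\sup_{\theta\in\Theta}\bigl|\rm{F}(Q,{\bf{\Sigma}}(\theta))-\rm{F}({\bf{\Sigma}}_0,{\bf{\Sigma}}(\theta))\bigr|\longrightarrow 0,\\
    &\sup_{\theta\in\Theta}\bigl\|\partial_{\theta}^2\rm{F}(Q,{\bf{\Sigma}}(\theta))-\partial_{\theta}^2\rm{F}({\bf{\Sigma}}_0,{\bf{\Sigma}}(\theta))\bigr\|\longrightarrow 0
\end{align*}
as $Q\to{\bf{\Sigma}}_0$.

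Finally, I would promote these deterministic limits to convergence in probability by the standard $\varepsilon$--$\delta$ translation: for any $\varepsilon>0$ choose $\delta>0$ so that $\|Q-{\bf{\Sigma}}_0\|<\delta$ forces each of the two suprema above to fall below $\varepsilon$, and combine $\PP(\|\mathbb{Q}_{\mathbb{XX}}-{\bf{\Sigma}}_0\|<\delta)\to 1$ from Theorem 1 with $\PP(A_n)\to 1$ to conclude
\begin{align*}
    \PP\Bigl(\sup_{\theta\in\Theta}\bigl|\tilde{\rm{F}}(\mathbb{Q}_{\mathbb{XX}},{\bf{\Sigma}}(\theta))-\rm{F}({\bf{\Sigma}}_0,{\bf{\Sigma}}(\theta))\bigr|<\varepsilon\Bigr)\longrightarrow 1,
\end{align*}
and similarly for the second derivative statement. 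The argument is not expected to present any substantive obstacle: all the heavy lifting has already been done in Theorem 1 and Lemma \ref{Sigmaposlemma}. The only points requiring mild care are the replacement of $\tilde{\rm{F}}$ by $\rm{F}$ on the high-probability event $A_n$, and the verification that ${\bf{\Sigma}}(\theta)$, $\partial_{\theta}{\bf{\Sigma}}(\theta)$ and $\partial_{\theta}^2{\bf{\Sigma}}(\theta)$ remain bounded and ${\bf{\Sigma}}(\theta)^{-1}$ remains well-defined uniformly over the compact $\Theta$, which follow immediately from the uniform positive definiteness on $\Theta$ together with continuity on a compact set.
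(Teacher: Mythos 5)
Your proposal is correct and follows essentially the same route as the paper's proof: restrict to the high-probability event that $\mathbb{Q}_{\mathbb{XX}}$ is non-singular (where $\tilde{\rm{F}}={\rm{F}}$), invoke the continuity-plus-compactness Lemma \ref{suplemma} to get the deterministic uniform limits as $Q\to{\bf{\Sigma}}_0$, and transfer them to convergence in probability via Theorem \ref{Qtheoremnon} and Lemma \ref{Sigmaposlemma}. The only cosmetic difference is that you spell out the joint continuity of ${\rm{F}}$ and its $\theta$-derivatives a bit more explicitly than the paper does.
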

\begin{proof}
Set
\begin{align*}
    J_n=\bigl\{\mathbb{Q}_{\mathbb{XX}}\ \mbox{is non-singular}\bigr\}.
\end{align*}
Since ${\rm{F}}$ is continuous in $\theta$, from Lemma \ref{suplemma}, 
for any $\varepsilon>0$, there exists $\delta>0$ such that
\begin{align*} 
    \|\mathbb{Q}_{\mathbb{XX}}-{\bf{\Sigma}}_0\|<\delta
    \Longrightarrow \sup_{\theta\in\Theta}\bigl|\rm{F}(\mathbb{Q}_{\mathbb{XX}},{\bf{\Sigma}}(\theta))-\rm{F}({\bf{\Sigma}}_0,{\bf{\Sigma}}(\theta))\bigr|<\varepsilon
\end{align*}
on $J_{n}$. Therefore, one has
\begin{align}
    \begin{split}
    0&\leq\PP\Bigl(\bigl\{\|\mathbb{Q}_{\mathbb{XX}}-{\bf{\Sigma}}_0\|<\delta\bigr\}\cap J_n\Bigr)\\
    &\leq\PP\left(\left\{\sup_{\theta\in\Theta}\bigl|\rm{F}(\mathbb{Q}_{\mathbb{XX}},{\bf{\Sigma}}(\theta))-\rm{F}({\bf{\Sigma}}_0,{\bf{\Sigma}}(\theta))\bigr|<\varepsilon\right\}\cap J_n\right)\\
    &\leq\PP\left(\sup_{\theta\in\Theta}\bigl|\tilde{\rm{F}}(\mathbb{Q}_{\mathbb{XX}},{\bf{\Sigma}}(\theta))-\rm{F}({\bf{\Sigma}}_0,{\bf{\Sigma}}(\theta))\bigr|<\varepsilon\right).\label{Qine}
    \end{split}
\end{align}
Since it holds from Theorem \ref{Qtheoremnon} and Lemma \ref{Sigmaposlemma} that $\PP\bigl(J_{n}\bigr)\stackrel{}{\longrightarrow}1$,
\begin{align*}
    \PP\Bigl(\bigl\{\|\mathbb{Q}_{\mathbb{XX}}-{\bf{\Sigma}}_0\|<\delta\bigr\}\cap J_n\Bigr)\stackrel{}{\longrightarrow}1
\end{align*}
as $n\longrightarrow\infty$. Hence, it follows from (\ref{Qine}) that for all $\varepsilon>0$,
\begin{align*}
    \PP\left(\sup_{\theta\in\Theta}\bigl|\tilde{\rm{F}}(\mathbb{Q}_{\mathbb{XX}},{\bf{\Sigma}}(\theta))-\rm{F}({\bf{\Sigma}}_0,{\bf{\Sigma}}(\theta))\bigr|<\varepsilon\right)\longrightarrow 1,
\end{align*}
which implies 
\begin{align*}
    \sup_{\theta\in\Theta}\bigl|\tilde{\rm{F}}(\mathbb{Q}_{\mathbb{XX}},{\bf{\Sigma}}(\theta))-\rm{F}({\bf{\Sigma}}_0,{\bf{\Sigma}}(\theta))\bigr|\stackrel{P}{\longrightarrow}0.
\end{align*}
In the same way, we can show
\begin{align*}
    \sup_{\theta\in\Theta}\bigl|\partial^2_{\theta}\tilde{\rm{F}}(\mathbb{Q}_{\mathbb{XX}},{\bf{\Sigma}}(\theta))-\partial^2_{\theta}\rm{F}({\bf{\Sigma}}_0,{\bf{\Sigma}}(\theta))\bigr|\stackrel{P}{\longrightarrow}0. &\qedhere
\end{align*}
\end{proof}
\begin{lemma}\label{Vproblemma}
Under \textrm{\textbf{[A1]}}, \textrm{\textbf{[B1]}},
\textrm{\textbf{[C1]}} and
\textrm{\textbf{[D1]}}, 
as $h_n\longrightarrow0$, 
\begin{align*}
    \rm{V}(\mathbb{Q}_{\mathbb{XX}},{\bf{\Sigma}}(\theta))&\stackrel{P}{\longrightarrow}\rm{V}({\bf{\Sigma}}_0,{\bf{\Sigma}}(\theta)),\\
    \partial_{\theta^{(i)}}\rm{V}(\mathbb{Q}_{\mathbb{XX}},{\bf{\Sigma}}(\theta))&\stackrel{P}{\longrightarrow}\partial_{\theta^{(i)}}\rm{V}({\bf{\Sigma}}_0,{\bf{\Sigma}}(\theta))
\end{align*}
for $i=1,\cdots,q$.
\end{lemma}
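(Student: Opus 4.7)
The plan is to exhibit both sides of each convergence as continuous images of $\mathbb{Q}_{\mathbb{XX}}$ evaluated on the open cone of positive definite matrices, and then invoke Theorem \ref{Qtheoremnon} together with the continuous mapping theorem. First I would note that by Lemma \ref{Sigmaposlemma}, ${\bf{\Sigma}}_0>0$, so by Theorem \ref{Qtheoremnon} the event $J_n=\{\mathbb{Q}_{\mathbb{XX}}>0\}$ satisfies $\mathbb{P}(J_n)\to 1$ (this is exactly the argument already used in the proof of Lemma \ref{Fproblemma}). On $J_n$ the matrix $\mathbb{Q}_{\mathbb{XX}}$ lies in the open convex set of positive definite matrices, so ${\bf{\Sigma}}(\theta)+\lambda_1\lambda_2(\mathbb{Q}_{\mathbb{XX}}-{\bf{\Sigma}}(\theta))$ stays positive definite for every $(\lambda_1,\lambda_2)\in[0,1]^2$, which ensures that the integrand defining $\rm{V}(\mathbb{Q}_{\mathbb{XX}},{\bf{\Sigma}}(\theta))$ is well defined.

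Next I would fix $\theta\in\Theta$ and regard $Q\mapsto \rm{V}(Q,{\bf{\Sigma}}(\theta))$ as a function on the open set of positive definite matrices. For each $(\lambda_1,\lambda_2)$ the map
\[
Q\mapsto \bigl({\bf{\Sigma}}(\theta)+\lambda_1\lambda_2(Q-{\bf{\Sigma}}(\theta))\bigr)^{-1}\otimes\bigl({\bf{\Sigma}}(\theta)+\lambda_1\lambda_2(Q-{\bf{\Sigma}}(\theta))\bigr)^{-1}
\]
is continuous (indeed $C^\infty$) in $Q$ in a neighborhood of ${\bf{\Sigma}}_0$. Moreover, on any compact neighborhood $\mathcal{N}$ of ${\bf{\Sigma}}_0$ contained in the positive definite cone, the minimal eigenvalue of ${\bf{\Sigma}}(\theta)+\lambda_1\lambda_2(Q-{\bf{\Sigma}}(\theta))$ is bounded below away from zero uniformly in $(Q,\lambda_1,\lambda_2)\in\mathcal{N}\times[0,1]^2$, so the Kronecker product of inverses is uniformly bounded. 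Dominated convergence then yields continuity of $Q\mapsto \rm{V}(Q,{\bf{\Sigma}}(\theta))$ at ${\bf{\Sigma}}_0$. Combining this continuity with $\mathbb{Q}_{\mathbb{XX}}\stackrel{P}{\longrightarrow}{\bf{\Sigma}}_0$ and $\mathbb{P}(J_n)\to 1$ (so that we can restrict to $J_n$ without loss of probability) and applying the continuous mapping theorem gives the first convergence.

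For the second convergence, I would differentiate under the integral sign. Writing $M(Q,\lambda_1,\lambda_2,\theta)=({\bf{\Sigma}}(\theta)+\lambda_1\lambda_2(Q-{\bf{\Sigma}}(\theta)))^{-1}$, the identity $\partial_{\theta^{(i)}} M = -M(\partial_{\theta^{(i)}}[\,\cdot\,])M$ applied to $\partial_{\theta^{(i)}}\bigl({\bf{\Sigma}}(\theta)+\lambda_1\lambda_2(Q-{\bf{\Sigma}}(\theta))\bigr)=(1-\lambda_1\lambda_2)\partial_{\theta^{(i)}}{\bf{\Sigma}}(\theta)$ shows that $\partial_{\theta^{(i)}}[M\otimes M]$ is again a smooth function of $(Q,\lambda_1,\lambda_2,\theta)$, uniformly bounded on $\mathcal{N}\times[0,1]^2\times\Theta$. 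The interchange of $\partial_{\theta^{(i)}}$ and the double integral is then justified by dominated convergence, and the resulting integrand is continuous in $Q$. Applying the continuous mapping theorem once more yields $\partial_{\theta^{(i)}}\rm{V}(\mathbb{Q}_{\mathbb{XX}},{\bf{\Sigma}}(\theta))\stackrel{P}{\longrightarrow}\partial_{\theta^{(i)}}\rm{V}({\bf{\Sigma}}_0,{\bf{\Sigma}}(\theta))$.

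The only mildly delicate point will be the uniform domination needed to interchange limits/derivatives with the $\lambda_1,\lambda_2$ integration; once that lower bound on the eigenvalues on a fixed compact neighborhood of ${\bf{\Sigma}}_0$ is in hand, the rest is continuous mapping combined with Theorem \ref{Qtheoremnon}. No uniformity in $\theta$ is claimed in the statement, so no additional compactness argument over $\Theta$ (such as was needed in Lemma \ref{Fproblemma}) is required here.
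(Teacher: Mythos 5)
Your proposal is correct and follows essentially the same route as the paper: the paper proves Lemma \ref{Vproblemma} by declaring it ``analogous to Lemma \ref{Fproblemma}'', i.e.\ continuity of the map $Q\mapsto{\rm{V}}(Q,{\bf{\Sigma}}(\theta))$ on the positive definite cone combined with Theorem \ref{Qtheoremnon}, $\PP(J_n)\to 1$, and a continuous-mapping/$\varepsilon$--$\delta$ argument. Your additional details (the uniform eigenvalue lower bound on a compact neighborhood of ${\bf{\Sigma}}_0$, dominated convergence over the $\lambda_1,\lambda_2$ integral, and differentiation under the integral sign for the second claim) simply make explicit what the paper leaves implicit, and your observation that no uniformity in $\theta$ is needed here is accurate.
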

\begin{proof}
The results can be shown in an analogous manner to Lemma \ref{Fproblemma}.
\end{proof}
\begin{lemma}\label{Aposlemma}
Under \textrm{\textbf{[E1]}}(ii), $\Delta^{\top}{\bf{W}}(\theta_0)^{-1}\Delta$ is a positive definite matrix.
\end{lemma}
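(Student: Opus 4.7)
The plan is to reduce the claim to two ingredients: positive-definiteness of the middle weight matrix ${\bf{W}}(\theta_0)$, and the full-column-rank hypothesis on $\Delta$ from $[{\bf{E1}}]\,(ii)$. Once both are in hand, the conclusion follows from the elementary fact that if $W>0$ and $\Delta$ has full column rank, then $\Delta^\top W^{-1}\Delta>0$, because for any $x\in\mathbb{R}^q$ with $x\neq 0$ the vector $\Delta x$ is nonzero, so $x^\top\Delta^\top{\bf{W}}(\theta_0)^{-1}\Delta x=(\Delta x)^\top{\bf{W}}(\theta_0)^{-1}(\Delta x)>0$.

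The first step is to verify that ${\bf{W}}(\theta_0)=2\mathbb{D}_p^+({\bf{\Sigma}}(\theta_0)\otimes{\bf{\Sigma}}(\theta_0))\mathbb{D}_p^{+\top}$ is positive definite. By Lemma \ref{Sigmaposlemma}, ${\bf{\Sigma}}(\theta_0)$ is positive definite, so its Kronecker square ${\bf{\Sigma}}(\theta_0)\otimes{\bf{\Sigma}}(\theta_0)$ is positive definite on $\mathbb{R}^{p^2}$ (standard property of Kronecker products of positive-definite matrices).

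The second step is a rank fact about the duplication matrix. Since $\mathbb{D}_p\in\mathbb{R}^{p^2\times\bar p}$ has full column rank $\bar p$ (see Harville \cite{Harville(1998)}), we have $\mathbb{D}_p^+=(\mathbb{D}_p^\top\mathbb{D}_p)^{-1}\mathbb{D}_p^\top$, and hence $\mathbb{D}_p^{+\top}\in\mathbb{R}^{p^2\times\bar p}$ also has full column rank $\bar p$. Consequently, for any nonzero $y\in\mathbb{R}^{\bar p}$, the vector $\mathbb{D}_p^{+\top}y\in\mathbb{R}^{p^2}$ is nonzero, so
\begin{align*}
y^\top{\bf{W}}(\theta_0)y
=2(\mathbb{D}_p^{+\top}y)^\top\bigl({\bf{\Sigma}}(\theta_0)\otimes{\bf{\Sigma}}(\theta_0)\bigr)(\mathbb{D}_p^{+\top}y)>0.
\end{align*}
Thus ${\bf{W}}(\theta_0)>0$, and so ${\bf{W}}(\theta_0)^{-1}>0$.

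Combining these with $[{\bf{E1}}]\,(ii)$ (which says $\Delta\in\mathbb{R}^{\bar p\times q}$ has rank $q$, so $\Delta x=0\Rightarrow x=0$), the concluding inequality $x^\top\Delta^\top{\bf{W}}(\theta_0)^{-1}\Delta x>0$ for all $x\neq 0$ gives positive-definiteness of $\Delta^\top{\bf{W}}(\theta_0)^{-1}\Delta$. The only mildly nontrivial point is the rank statement about $\mathbb{D}_p^+$, but this is a textbook consequence of $\mathbb{D}_p$ having full column rank; no substantive obstacle arises in this lemma.
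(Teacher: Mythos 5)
Your proof is correct and follows essentially the same route as the paper's: positive definiteness of ${\bf{W}}(\theta_0)$ combined with the full column rank of $\Delta$ from $[{\bf{E1}}]$ (ii). The only difference is presentational — you argue directly via the quadratic form $x^{\top}\Delta^{\top}{\bf{W}}(\theta_0)^{-1}\Delta x>0$ and spell out why ${\bf{W}}(\theta_0)>0$ using the full column rank of $\mathbb{D}_{p}^{+\top}$, whereas the paper writes the matrix as a Gram matrix to get semi-definiteness and then cites Lemma 6 of Kusano and Uchida (2022) for $\det\Delta^{\top}{\bf{W}}(\theta_0)^{-1}\Delta\neq 0$; your version is slightly more self-contained.
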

\begin{proof}
It holds from Lemma \ref{Sigmaposlemma} that ${\bf{W}}(\theta_0)$ is a positive definite matrix, which implies
\begin{align*}
    \Delta^{\top}{\bf{W}}(\theta_0)^{-1}\Delta=\bigl(\Delta^{\top}{\bf{W}}(\theta_0)^{-\frac{1}{2}}\bigr)\bigl(\Delta^{\top}{\bf{W}}(\theta_0)^{-\frac{1}{2}}\bigr)^{\top}\geq 0.
\end{align*}
In a similar way to Lemma 6 in Kusano and Uchida \cite{Kusano(2022)}, we see 
\begin{align*}
    \det \Delta^{\top}{\bf{W}}(\theta_0)^{-1}\Delta\neq 0.
\end{align*}
Therefore, $\Delta^{\top}{\bf{W}}(\theta_0)^{-1}\Delta$ is a positive definite matrix.
\end{proof}
\begin{proof}[\textbf{Proof of Theorem 2}.]
We first prove 
\begin{align}
    \hat{\theta}_{n}\stackrel{P}{\longrightarrow}\theta_{0}.\label{thetacons}
\end{align}
$\bf{[E1]}$ (i) and Lemma \ref{Sigmaposlemma} yield
\begin{align*}
    \begin{split}
    \rm{F}({\bf{\Sigma}}(\theta_{0}),{\bf{\Sigma}}(\theta))=0
    &\Longleftrightarrow\vech{\bf{\Sigma}}(\theta_{0})-\vech{\bf{\Sigma}}(\theta)=0\Longleftrightarrow \theta_{0}=\theta.
    \end{split}
\end{align*}
For any $\varepsilon>0$, there exists $\delta>0$ such that
\begin{align*}
    |\hat{\theta}_{n}-\theta_{0}|>\varepsilon\Longrightarrow \rm{F}({\bf{\Sigma}}(\theta_{0}),{\bf{\Sigma}}(\hat{\theta}_{n}))-\rm{F}({\bf{\Sigma}}(\theta_{0}),{\bf{\Sigma}}(\theta_{0}))>\delta.
\end{align*} 
From the definition of $\hat{\theta}_{n}$, 
\begin{align*}
    \tilde{\rm{F}}(\mathbb{Q}_{\mathbb{XX}},{\bf{\Sigma}}(\hat{\theta}_{n}))=\mathbb{F}_{n}(\hat{\theta}_{n})\leq\mathbb{F}_{n}(\theta_0)=\tilde{\rm{F}}(\mathbb{Q}_{\mathbb{XX}},{\bf{\Sigma}}(\theta_0)).
\end{align*}
Note that ${\bf{\Sigma}}_0={\bf{\Sigma}}(\theta_{0})$. It follows from Lemma \ref{Fproblemma} that  
\begin{align*}
    0&\leq \PP\left(|\hat{\theta}_{n}-\theta_{0}|>\varepsilon\right)\\
    &\leq\PP\Bigl(\rm{F}({\bf{\Sigma}}(\theta_{0}),{\bf{\Sigma}}(\hat{\theta}_{n}))-\rm{F}({\bf{\Sigma}}(\theta_{0}),{\bf{\Sigma}}(\theta_{0}))>\delta\Bigr)\\
    &\leq\PP\left(\rm{F}({\bf{\Sigma}}(\theta_{0}),{\bf{\Sigma}}(\hat{\theta}_{n}))-\tilde{\rm{F}}(\mathbb{Q}_{\mathbb{XX}},{\bf{\Sigma}}(\hat{\theta}_{n}))>\frac{\delta}{3}\right)\\
    &\quad+\PP\left(\tilde{\rm{F}}(\mathbb{Q}_{\mathbb{XX}},{\bf{\Sigma}}(\hat{\theta}_{n}))-\tilde{\rm{F}}(\mathbb{Q}_{\mathbb{XX}},{\bf{\Sigma}}(\theta_{0}))>\frac{\delta}{3}\right)\\
    &\quad+\PP\left(\tilde{\rm{F}}(\mathbb{Q}_{\mathbb{XX}},{\bf{\Sigma}}(\theta_{0}))-\rm{F}({\bf{\Sigma}}(\theta_{0}),{\bf{\Sigma}}(\theta_{0}))>\frac{\delta}{3}\right)\\
    &\leq 2\PP\left(\sup_{\theta\in\Theta}\bigl|\tilde{\rm{F}}(\mathbb{Q}_{\mathbb{XX}},{\bf{\Sigma}}(\theta))-\rm{F}({\bf{\Sigma}}(\theta_{0}),{\bf{\Sigma}}(\theta))\bigr|>\frac{\delta}{3}\right)+0\stackrel{}{\longrightarrow}0
\end{align*}
as $n\longrightarrow\infty$, which yields (\ref{thetacons}). 

Next, we prove
\begin{align*}
    \sqrt{n}(\hat{\theta}_{n}-\theta_{0})\stackrel{d}{\longrightarrow}N_{q}\bigl(0,\bigl(\Delta^{\top}{\bf{W}}(\theta_0)^{-1}\Delta\bigr)^{-1}\bigr).
\end{align*}
The Taylor expansion of $\partial_{\theta}\mathbb{F}_{n}(\hat{\theta}_{n})$ around $\hat{\theta}_{n}=\theta_{0}$ is given by
\begin{align*}
    \begin{split}
    \partial_{\theta}\mathbb{F}_{n}(\hat{\theta}_{n})&=\partial_{\theta}\mathbb{F}_{n}(\theta_{0})+\int_{0}^{1}\partial^2_{\theta}\mathbb{F}_{n}(\ddot{\theta}_{n})d\lambda(\hat{\theta}_{n}-\theta_{0}),
    \end{split}
\end{align*}
where $\ddot{\theta}_{n}=\theta_{0}+\lambda(\hat{\theta}_{n}-\theta_{0})$. Since $\partial_{\theta}\mathbb{F}_{n}(\hat{\theta}_{n})=0$ from the definition of $\hat{\theta}_{n}$, one gets
\begin{align}
    -\sqrt{n}\partial_{\theta}\mathbb{F}_{n}(\theta_{0})=\int_{0}^{1}\partial^2_{\theta}\mathbb{F}_{n}(\ddot{\theta}_{n})d\lambda\sqrt{n}(\hat{\theta}_{n}-\theta_{0}).\label{thetataylor}
\end{align}
Let $\tilde{V}_{n}(\theta)=\rm{V}(\mathbb{Q}_{\mathbb{XX}},{\bf{\Sigma}}(\theta))$. Theorem 1 and Lemma \ref{Vproblemma} imply that the left-hand side of (\ref{thetataylor}) is given by
\begin{align*}
    \begin{split}
    -\sqrt{n}\partial_{\theta^{(i)}}\mathbb{F}_{n}(\theta_{0})
    &=2\bigl\{\partial_{\theta^{(i)}}\vech{\bf{\Sigma}}(\theta_{0})\bigr\}^{\top}\tilde{V}_{n}(\theta_0)\sqrt{n}(\vech \mathbb{Q}_{\mathbb{XX}}-\vech{\bf{\Sigma}}(\theta_{0}))\\
    &\quad-(\vech \mathbb{Q}_{\mathbb{XX}}-\vech{\bf{\Sigma}}(\theta_{0}))^{\top}\partial_{\theta^{(i)}}\tilde{V}_{n}(\theta_0)\sqrt{n}(\vech \mathbb{Q}_{\mathbb{XX}}-\vech{\bf{\Sigma}}(\theta_{0}))\\
    &=2\bigl\{\partial_{\theta^{(i)}}\vech{\bf{\Sigma}}(\theta_{0})\bigr\}^{\top}\tilde{V}_{n}(\theta_0)\sqrt{n}(\vech \mathbb{Q}_{\mathbb{XX}}-\vech{\bf{\Sigma}}(\theta_{0}))+o_{p}(1)
    \end{split}
\end{align*}
for $i=1,\cdots,q$. Thus, it follows from Theorem 1 and Lemma \ref{Vproblemma} that
\begin{align}
    \begin{split}
    -\sqrt{n}\partial_{\theta}\mathbb{F}_{n}(\theta_{0})&=2\Delta^{\top}\tilde{V}_{n}(\theta_0)\sqrt{n}(\vech \mathbb{Q}_{\mathbb{XX}}-\vech{\bf{\Sigma}}(\theta_{0}))+o_{p}(1)\\
    &\stackrel{d}{\longrightarrow}2\Delta^{\top}{\bf{W}}(\theta_0)^{-1}N_{\bar{p}}\bigl(0,{\bf{W}}(\theta_0)\bigr)\sim N_{q}\bigl(0,4\Delta^{\top}{\bf{W}}(\theta_0)^{-1}\Delta\bigr).\label{partialFprob}
    \end{split}
\end{align}
Set $A_{n}=\bigl\{|\hat{\theta}_{n}-\theta_{0}|\leq\rho_n\bigr\}$, where $\{\rho_n\}_{n\in\mathbb{N}}$ is a positive sequence such that  $\rho_n\longrightarrow0$ as $n\longrightarrow\infty$. Note that $\partial_{\theta}^2{\rm{F}}$ is uniform continuous in $\theta$ on $\Theta$ since $\partial_{\theta}^2{\rm{F}}$ is continuous in $\theta$ and $\Theta$ is a compact set. As it holds that
\begin{align*} 
    \partial_{\theta}^2
    \rm{F}({\bf{\Sigma}}(\theta_{0}),{\bf{\Sigma}}(\theta_{0}))=2\Delta^{\top}{\bf{W}}(\theta_0)^{-1}\Delta,
\end{align*}
we see
\begin{align}
    \sup_{|\theta-\theta_{0}|\leq\rho_n}
    \Bigl\|\partial_{\theta}^2\rm{F}({\bf{\Sigma}}(\theta_{0}),{\bf{\Sigma}}(\theta))
    -2\Delta^{\top}{\bf{W}}(\theta_0)^{-1}\Delta\Bigr\|\longrightarrow 0\label{uniprob}
\end{align}
as $n\longrightarrow\infty$. Hence, we see from Lemma \ref{Fproblemma}, (\ref{thetacons}) and (\ref{uniprob}) that for any $\varepsilon>0$,
\begin{align*}
    0&\leq \PP\left(\Bigl\|\int_{0}^{1}\partial_{\theta}^2\mathbb{F}_{n}(\ddot{\theta}_{n})d\lambda-2\Delta^{\top}{\bf{W}}(\theta_0)^{-1}\Delta\Bigr\|>\varepsilon\right)\\
    &\leq\mathbb{P}\left(\left\{\Bigl\|\int_{0}^{1}\partial_{\theta}^2\mathbb{F}_{n}(\ddot{\theta}_{n})d\lambda-2\Delta^{\top}{\bf{W}}(\theta_0)^{-1}\Delta\Bigr\|>\varepsilon\right\}\cap A_{n}\right)\\
    &\quad+\PP\left(\left\{\Bigl\|\int_{0}^{1}\partial_{\theta}^2\mathbb{F}_{n}(\ddot{\theta}_{n})d\lambda-2\Delta^{\top}{\bf{W}}(\theta_0)^{-1}\Delta\Bigr\|>\varepsilon\right\}\cap A_{n}^c\right)\\
    &\leq \PP\left(\sup_{|\theta-\theta_{0}|\leq\rho_n}\Bigl\|\partial_{\theta}^2\tilde{\rm{F}}(\mathbb{Q}_{\mathbb{XX}},{\bf{\Sigma}}(\theta))-2\Delta^{\top}{\bf{W}}(\theta_0)^{-1}\Delta\Bigr\|>\varepsilon\right)+\PP\bigl(A_{n}^{c}\bigr)\\
    &\leq\PP\left(\sup_{|\theta-\theta_{0}|\leq\rho_n}\Bigl\||\partial_{\theta}^2\tilde{\rm{F}}(\mathbb{Q}_{\mathbb{XX}},{\bf{\Sigma}}(\theta))
    -\partial_{\theta}^2\rm{F}({\bf{\Sigma}}(\theta_{0}),{\bf{\Sigma}}(\theta))\Bigr\|>\frac{\varepsilon}{2}\right)\\
    &\quad +\PP\left(\sup_{|\theta-\theta_{0}|\leq\rho_n}\Bigl\|\partial_{\theta}^2\rm{F}({\bf{\Sigma}}(\theta_{0}),{\bf{\Sigma}}(\theta))-2\Delta^{\top}{\bf{W}}(\theta_0)^{-1}\Delta\Bigr\|>\frac{\varepsilon}{2}\right)+\PP\bigl(A_{n}^{c}\bigr)\\
    &\leq\PP\left(\sup_{\theta\in\Theta}\Bigl\||\partial_{\theta}^2\tilde{\rm{F}}(\mathbb{Q}_{\mathbb{XX}},{\bf{\Sigma}}(\theta))
    -\partial_{\theta}^2\rm{F}({\bf{\Sigma}}(\theta_{0}),{\bf{\Sigma}}(\theta))\Bigr\|>\frac{\varepsilon}{2}\right)\\
    &\quad +\PP\left(\sup_{|\theta-\theta_{0}|\leq\rho_n}\Bigl\|\partial_{\theta}^2\rm{F}({\bf{\Sigma}}(\theta_{0}),{\bf{\Sigma}}(\theta))-2\Delta^{\top}{\bf{W}}(\theta_0)^{-1}\Delta\Bigr\|>\frac{\varepsilon}{2}\right)+\PP\bigl(A_{n}^{c}\bigr)\longrightarrow 0
\end{align*}
as $n\longrightarrow\infty$, which yields
\begin{align}
    \int_{0}^{1}\partial_{\theta}^2\mathbb{F}_{n}(\ddot{\theta}_{n})d\lambda\stackrel{P}{\longrightarrow}2\Delta^{\top}{\bf{W}}(\theta_0)^{-1}\Delta.\label{intprob}
\end{align}
Therefore, from (\ref{thetataylor}), (\ref{partialFprob}), (\ref{intprob}) and Lemma \ref{Aposlemma}, we obtain
\begin{align*}
    \qquad\qquad\qquad\sqrt{n}(\hat{\theta}_{n}-\theta_{0})&\stackrel{d}{\longrightarrow}(2\Delta^{\top}{\bf{W}}(\theta_0)^{-1}\Delta)^{-1}N_{q}\bigl(0,4\Delta^{\top}{\bf{W}}(\theta_0)^{-1}\Delta\bigr)\\
    &\ \sim N_{q}\bigl(0,(\Delta^{\top}{\bf{W}}(\theta_0)^{-1}\Delta)^{-1}\bigr).&&\qedhere
\end{align*}
\end{proof}
\begin{proof}[\textbf{Proof of Theorem 3}]
The Taylor expansion of $\mathbb{T}_{n}=n\mathbb{F}_{n}(\hat{\theta}_{n})$ around $\hat{\theta}_{n}=\theta_{0}$ is given by
\begin{align}
    \begin{split}
    \mathbb{T}_{n}&=n\mathbb{F}_{n}(\theta_{0})+n\partial_{\theta}\mathbb{F}_{n}(\theta_{0})^{\top}(\hat{\theta}_{n}-\theta_{0})\\
    &\qquad+n(\hat{\theta}_{n}-\theta_{0})^{\top}\left\{\int_{0}^{1}(1-\lambda)\partial^2_{\theta}\mathbb{F}_{n}(\ddot{\theta}_{n})d\lambda\right\}(\hat{\theta}_{n}-\theta_{0}).\label{testtaylor}
    \end{split}
\end{align}
In a similar way to Theorem 2, we obtain
\begin{align}
    \begin{split}
    \sqrt{n}\partial_{\theta}\mathbb{F}_{n}(\theta_{0})&=-2\Delta^{\top}\tilde{V}_{n}(\theta_0)\sqrt{n}(\vech \mathbb{Q}_{\mathbb{XX}}-\vech{\bf{\Sigma}}(\theta_{0}))+o_{p}(1) \label{nFstar}
    \end{split}
\end{align}
under $H_0$ and
\begin{align}
    \begin{split}
    \sqrt{n}(\hat{\theta}_{n}-\theta_{0})=(\Delta^{\top}{\bf{W}}(\theta_0)^{-1}\Delta)^{-1}\Delta^{\top}\tilde{V}_{n}(\theta_0)\sqrt{n}(\vech \mathbb{Q}_{\mathbb{XX}}-\vech{\bf{\Sigma}}(\theta_{0}))+o_{p}(1)\label{thetamstar} 
    \end{split}
\end{align}
under $H_0$. Let
\begin{align*}
    H_{n}(\theta_0)=\tilde{V}_{n}(\theta_0)\Delta(\Delta^{\top}{\bf{W}}(\theta_0)^{-1}\Delta)^{-1}\Delta^{\top}\tilde{V}_{n}(\theta_0).
\end{align*}
Theorem 1, Lemma \ref{Vproblemma}, (\ref{nFstar}) and (\ref{thetamstar}) imply that the second term on the right-hand side of (\ref{testtaylor}) is expressed as
\begin{align}
    \begin{split}
    &\quad\ \ n\partial_{\theta}\mathbb{F}_{n}(\theta_{0})^{\top}(\hat{\theta}_{n}-\theta_{0})\\
    &=-2\sqrt{n}(\vech \mathbb{Q}_{\mathbb{XX}}-\vech{\bf{\Sigma}}(\theta_{0}))^{\top}H_{n}(\theta_0)\sqrt{n}(\vech \mathbb{Q}_{\mathbb{XX}}-\vech{\bf{\Sigma}}(\theta_{0}))+o_p(1)\label{nFpar}
    \end{split}
\end{align}
under $H_0$. Recall that $A_{n}=\bigl\{|\hat{\theta}_{n}-\theta_{0}|\leq\rho_n\bigr\}$,
where a positive sequence $\{\rho_n\}_{n\in\mathbb{N}}$ satisfies $\rho_n\longrightarrow0$ as $n\longrightarrow\infty$. In an analogous manner to Theorem 2, it follows that for all $\varepsilon>0$,
\begin{align*}
    0&\leq\PP\left(\Bigl\|\int_{0}^{1}(1-\lambda)\partial^2_{\theta}\mathbb{F}_{n}(\ddot{\theta}_{n})d\lambda-\Delta^{\top}{\bf{W}}(\theta_0)^{-1}\Delta\Bigr\|>\varepsilon\right)\\
    &\leq\PP\left(\left\{\Bigl\|\int_{0}^{1}(1-\lambda)\bigl\{\partial^2_{\theta}\mathbb{F}_{n}(\ddot{\theta}_{n})-2\Delta^{\top}{\bf{W}}(\theta_0)^{-1}\Delta\bigr\}d\lambda\Bigr\|>\varepsilon\right\}\cap A_{n}\right)\\
    &\quad+\PP\left(\left\{\Bigl\|\int_{0}^{1}(1-\lambda)\bigl\{\partial^2_{\theta}\mathbb{F}_{n}(\ddot{\theta}_{n})-2\Delta^{\top}{\bf{W}}(\theta_0)^{-1}\Delta\bigr\}d\lambda\Bigr\|>\varepsilon\right\}\cap A_{n}^{c}\right)\\
    &\leq\PP\left(\sup_{|\theta-\theta_{0}|\leq\rho_{n}}\left\|\partial^2_{\theta}\tilde{\rm{F}}(\mathbb{Q}_{\mathbb{XX}},{\bf{\Sigma}}(\theta))-2\Delta^{\top}{\bf{W}}(\theta_0)^{-1}\Delta\right\|>\varepsilon\right)+\PP\bigl(A_{n}^{c}\bigr)\\
    &\leq\PP\left(\sup_{\theta\in\Theta}\left\|\partial^2_{\theta}\tilde{\rm{F}}(\mathbb{Q}_{\mathbb{XX}},{\bf{\Sigma}}(\theta))-\partial^2_{\theta}\tilde{\rm{F}}({\bf{\Sigma}}(\theta_{0}),{\bf{\Sigma}}(\theta))\right\|>\varepsilon\right)\\
    &\quad+\PP\left(\sup_{|\theta-\theta_{0}|\leq\rho_{n}}\left\|\partial^2_{\theta}\tilde{\rm{F}}({\bf{\Sigma}}(\theta_{0}),{\bf{\Sigma}}(\theta))-2\Delta^{\top}{\bf{W}}(\theta_0)^{-1}\Delta\right\|>\varepsilon\right)+\PP\bigl(A_{n}^{c}\bigr)\longrightarrow 0
\end{align*} 
as $n\longrightarrow\infty$ under $H_0$, so that
\begin{align}
    \int_{0}^{1}(1-\lambda)\partial_{\theta}^2\mathbb{F}_{n}(\ddot{\theta}_{n})d\lambda\stackrel{P}{\longrightarrow}\Delta^{\top}{\bf{W}}(\theta_0)^{-1}\Delta\label{intprob2}
\end{align}
under $H_{0}$. Thus, Theorem 1, Lemma \ref{Vproblemma} and (\ref{thetamstar}) imply that the third term on the right-hand side of (\ref{testtaylor}) is
\begin{align}
    \begin{split}
    &\quad\ n(\hat{\theta}_{n}-\theta_{0})^\top\left\{\int_{0}^{1}(1-\lambda)\partial_{\theta}^2\mathbb{F}_{n}(\ddot{\theta}_{n})d\lambda\right\}(\hat{\theta}_{n}-\theta_{0})\\
    &=\sqrt{n}(\vech \mathbb{Q}_{\mathbb{XX}}-\vech{\bf{\Sigma}}(\theta_{0}))^{\top}H_{n}(\theta_0)\sqrt{n}(\vech \mathbb{Q}_{\mathbb{XX}}-\vech{\bf{\Sigma}}(\theta_{0}))+o_p(1)\label{nFpar2}
    \end{split}
\end{align}
under $H_0$. Therefore, it follows from (\ref{nFpar}) and (\ref{nFpar2}) that (\ref{testtaylor}) is given by
\begin{align}
    \begin{split}
    \mathbb{T}_{n}=\sqrt{n}(\vech \mathbb{Q}_{\mathbb{XX}}-\vech{\bf{\Sigma}}(\theta_{0}))^{\top}(\tilde{V}_{n}(\theta_0)-H_{n}(\theta_0))\sqrt{n}(\vech \mathbb{Q}_{\mathbb{XX}}-\vech{\bf{\Sigma}}(\theta_{0}))+o_p(1)\label{testre1}
    \end{split}
\end{align}
under $H_0$. Set
\begin{align*}
    \gamma_{n}=\tilde{V}_{n}(\theta_0)^{\frac{1}{2}}\sqrt{n}(\vech \mathbb{Q}_{\mathbb{XX}}-\vech{\bf{\Sigma}}(\theta_{0}))
\end{align*}
and
\begin{align*}
    P_{n}(\theta_0)=\tilde{V}_{n}(\theta_0)^{-\frac{1}{2}}(\tilde{V}_{n}(\theta_0)-H_{n}(\theta_0))\tilde{V}_{n}(\theta_0)^{-\frac{1}{2}}.
\end{align*}
We can rewrite (\ref{testre1}) as
\begin{align}
    \mathbb{T}_{n}=\gamma_{n}^{\top}P_{n}(\theta_0)\gamma_{n}+o_p(1)\label{Tgamma}
\end{align}
under $H_0$. It follows from Lemma \ref{Vproblemma} and the continuous mapping theorem  that
under $H_0$,
\begin{align*}
    \begin{split}
    \tilde{V}_{n}(\theta_0)^{\frac{1}{2}}=f_1(\tilde{V}_{n}(\theta_0))\stackrel{P}{\longrightarrow}f_1({\bf{W}}(\theta_0)^{-1})={\bf{W}}(\theta_0)^{-\frac{1}{2}},
    \end{split}
\end{align*}
where $f_1(X)=X^{\frac{1}{2}}$ for $X\in\mathbb{R}^{\bar{p}\times\bar{p}}$. Theorem 1 and Slutsky's theorem show that
under $H_0$, 
\begin{align}
    \gamma_{n}\stackrel{d}{\longrightarrow}\gamma, \label{gammad}
\end{align}
where $\gamma\sim N_{\bar{p}}(0,\mathbb{I}_{\bar{p}})$. Set
\begin{align*}
    H(\theta_0)={\bf{W}}(\theta_0)^{-1}\Delta(\Delta^{\top}{\bf{W}}(\theta_0)^{-1}\Delta)^{-1}
    \Delta^{\top}{\bf{W}}(\theta_0)^{-1}.
\end{align*}
It follows from Lemma \ref{Vproblemma} and the continuous mapping theorem that under $H_0$,
\begin{align}
    H_{n}(\theta_0)=f_2(\tilde{V}_{n}(\theta_0))\stackrel{P}{\longrightarrow}f_2({\bf{W}}(\theta_0)^{-1})=H(\theta_0), \label{Hprob}
\end{align}
where for $X\in\mathbb{R}^{\bar{p}\times\bar{p}}$,
\begin{align*}
    f_2(X)=X\Delta(\Delta^{\top}{\bf{W}}(\theta_0)^{-1}\Delta)^{-1}
    \Delta^{\top}X.
\end{align*}
Since Lemma \ref{Vproblemma} and the continuous mapping theorem imply that under $H_0$,
\begin{align*}
    \begin{split}
    \tilde{V}_{n}(\theta_0)^{-\frac{1}{2}}=f_3(\tilde{V}_{n}(\theta_0))\stackrel{P}{\longrightarrow}f_3({\bf{W}}(\theta_0)^{-1})={\bf{W}}(\theta_0)^{\frac{1}{2}},
    \end{split}
\end{align*}
where $f_3(X)=X^{-\frac{1}{2}}$ for $X\in\mathbb{R}^{\bar{p}\times\bar{p}}$, 
we see from Lemma \ref{Vproblemma}, (\ref{Hprob}) and Slutsky's theorem that under $H_0$,
\begin{align}
    P_{n}(\theta_0)\stackrel{P}{\longrightarrow}P(\theta_0), \label{Pprob}
\end{align}
where 
\begin{align*}
    P(\theta_0)={\bf{W}}(\theta_0)^{\frac{1}{2}}({\bf{W}}(\theta_0)^{-1}-H(\theta_0)){\bf{W}}(\theta_0)^{\frac{1}{2}}.
\end{align*}
Furthermore, it follows  from the continuous mapping theorem and (\ref{gammad}) that under $H_0$,
\begin{align}
    \gamma_{n}^{\top}P(\theta_0)\gamma_{n}=f_4(\gamma_{n})\stackrel{d}{\longrightarrow}f_4(\gamma)=\gamma^{\top}P(\theta_0)\gamma,   \label{gammaPd}
\end{align}
where $f_4(x)=x^{\top}P(\theta_0)x$ for $x\in\mathbb{R}^{\bar{p}}$. We see from (\ref{gammad}) that $\gamma_{n}=O_{p}(1)$ under $H_0$, and it holds from (\ref{Pprob}) that
\begin{align}
    \gamma_{n}^{\top}P_{n}(\theta_0)\gamma_{n}-\gamma_{n}^{\top}P(\theta_0)\gamma_{n}\stackrel{P}{\longrightarrow} 0 \label{Pslu}
\end{align}
under $H_0$. Therefore, (\ref{Tgamma}), (\ref{gammaPd}), (\ref{Pslu}) and Slutsky's theorem yield 
\begin{align*}
    \mathbb{T}_{n}\stackrel{d}{\longrightarrow}\gamma^{\top}P(\theta_0)\gamma
\end{align*}
under $H_0$. Since one gets $\gamma^{\top}P(\theta_0)\gamma\sim\chi^2_{\bar{p}-q}$
in the same manner as Theorem 3 in Kusano and Uchida \cite{Kusano(2022)},
\begin{align*}
    \mathbb{T}_{n}\stackrel{d}{\longrightarrow}\chi^2_{\bar{p}-q}
\end{align*}
under $H_0$. 
\end{proof}
\begin{lemma}\label{starconslemma}
Under \textrm{\textbf{[A1]}}, \textrm{\textbf{[B1]}},
\textrm{\textbf{[C1]}},
\textrm{\textbf{[D1]}} and \textrm{\textbf{[E2]}},
as $h_n\longrightarrow0$, 
\begin{align*}
    \hat{\theta}_{n}\stackrel{P}{\longrightarrow}\bar{\theta}
\end{align*}
under $H_1$.
\end{lemma}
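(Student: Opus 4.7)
The plan is to mimic the consistency argument for Theorem 2, replacing the identification at $\theta_0$ with the identification at $\bar{\theta}$ given by assumption \textbf{[E2]}, and replacing $\rm{F}({\bf{\Sigma}}(\theta_0),\,\cdot\,)$ with $\mathbb{U}(\,\cdot\,)=\rm{F}({\bf{\Sigma}}_0,{\bf{\Sigma}}(\,\cdot\,))$. Concretely, by continuity of $\mathbb{U}$ on the compact set $\Theta$ together with \textbf{[E2]}, for every $\varepsilon>0$ there exists $\delta>0$ such that
\begin{align*}
    |\hat{\theta}_n-\bar{\theta}|>\varepsilon \Longrightarrow \mathbb{U}(\hat{\theta}_n)-\mathbb{U}(\bar{\theta})>\delta.
\end{align*}

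Next I would decompose
\begin{align*}
    \mathbb{U}(\hat{\theta}_n)-\mathbb{U}(\bar{\theta})
    &=\bigl\{\rm{F}({\bf{\Sigma}}_0,{\bf{\Sigma}}(\hat{\theta}_n))-\tilde{\rm{F}}(\mathbb{Q}_{\mathbb{XX}},{\bf{\Sigma}}(\hat{\theta}_n))\bigr\}\\
    &\quad+\bigl\{\tilde{\rm{F}}(\mathbb{Q}_{\mathbb{XX}},{\bf{\Sigma}}(\hat{\theta}_n))-\tilde{\rm{F}}(\mathbb{Q}_{\mathbb{XX}},{\bf{\Sigma}}(\bar{\theta}))\bigr\}\\
    &\quad+\bigl\{\tilde{\rm{F}}(\mathbb{Q}_{\mathbb{XX}},{\bf{\Sigma}}(\bar{\theta}))-\rm{F}({\bf{\Sigma}}_0,{\bf{\Sigma}}(\bar{\theta}))\bigr\}.
\end{align*}
The middle bracket is non-positive because $\hat{\theta}_n$ minimizes $\mathbb{F}_n(\theta)=\tilde{\rm{F}}(\mathbb{Q}_{\mathbb{XX}},{\bf{\Sigma}}(\theta))$ over $\Theta$ and $\bar{\theta}\in\Theta$, so if the left-hand side exceeds $\delta$ then at least one of the two extreme brackets exceeds $\delta/2$. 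Both extreme brackets are dominated (in absolute value) by $\sup_{\theta\in\Theta}|\tilde{\rm{F}}(\mathbb{Q}_{\mathbb{XX}},{\bf{\Sigma}}(\theta))-\rm{F}({\bf{\Sigma}}_0,{\bf{\Sigma}}(\theta))|$, which converges in probability to $0$ by Lemma \ref{Fproblemma}.

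Combining these ingredients gives, under $H_1$,
\begin{align*}
    \PP\bigl(|\hat{\theta}_n-\bar{\theta}|>\varepsilon\bigr)
    &\leq\PP\bigl(\mathbb{U}(\hat{\theta}_n)-\mathbb{U}(\bar{\theta})>\delta\bigr)\\
    &\leq 2\,\PP\!\left(\sup_{\theta\in\Theta}\bigl|\tilde{\rm{F}}(\mathbb{Q}_{\mathbb{XX}},{\bf{\Sigma}}(\theta))-\rm{F}({\bf{\Sigma}}_0,{\bf{\Sigma}}(\theta))\bigr|>\tfrac{\delta}{2}\right)\longrightarrow 0,
\end{align*}
which yields $\hat{\theta}_n\stackrel{P}{\longrightarrow}\bar{\theta}$.

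The argument is really the standard M-estimator consistency scheme, so I do not expect a serious obstacle; the one place to be careful is verifying that the uniform convergence in Lemma \ref{Fproblemma} (which was stated without reference to $H_0$ or $H_1$) is genuinely available here without any additional assumption on the relationship between $\bf{\Sigma}_0$ and the parametric family, and that the implication from \textbf{[E2]} to the quantitative separation $\mathbb{U}(\hat{\theta}_n)-\mathbb{U}(\bar{\theta})>\delta$ uses compactness of $\Theta$ together with continuity of $\mathbb{U}$ (so that $\bar{\theta}$ is a well-separated minimizer). Both are immediate from the setup, so once these are noted the lemma follows verbatim from the template of Theorem 2.
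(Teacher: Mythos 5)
Your proposal is correct and is essentially identical to the paper's proof: the same well-separation consequence of \textbf{[E2]}, the same three-term decomposition of $\mathbb{U}(\hat{\theta}_n)-\mathbb{U}(\bar{\theta})$ with the middle bracket non-positive by the definition of $\hat{\theta}_n$, and the same appeal to the uniform convergence in Lemma \ref{Fproblemma} (which indeed only needs \textbf{[A1]}--\textbf{[D1]} and so is available under $H_1$). The only cosmetic difference is your $\delta/2$ split versus the paper's $\delta/3$ union bound over all three brackets.
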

\begin{proof}
From \textrm{\textbf{[E2]}}, for any $\varepsilon>0$, there exists $\delta>0$ such that
\begin{align}
    |\hat{\theta}_{n}-\bar{\theta}|>\varepsilon\Longrightarrow \mathbb{U}(\hat{\theta}_{n})-\mathbb{U}(\bar{\theta})>\delta.\label{assumptionI}
\end{align}
As it holds from the definition of $\hat{\theta}_{n}$ that
\begin{align*}
    \tilde{\rm{F}}(\mathbb{Q}_{\mathbb{XX}},{\bf{\Sigma}}(\hat{\theta}_{n}))\leq\tilde{\rm{F}}(\mathbb{Q}_{\mathbb{XX}},{\bf{\Sigma}}(\bar{\theta})),
\end{align*}
we see from Lemma \ref{Fproblemma} and (\ref{assumptionI}) that  
for all $\varepsilon>0$,
\begin{align*}
    0&\leq \PP\left(|\hat{\theta}_{n}-\bar{\theta}|>\varepsilon\right)\\
    &\leq\PP\Bigl(\mathbb{U}(\hat{\theta}_{n})-\mathbb{U}(\bar{\theta})>\delta\Bigr)\\
    &\leq\PP\left(\rm{F}({\bf{\Sigma}}_0,{\bf{\Sigma}}(\hat{\theta}_{n}))-\tilde{\rm{F}}(\mathbb{Q}_{\mathbb{XX}},{\bf{\Sigma}}(\hat{\theta}_{n}))>\frac{\delta}{3}\right)\\
    &\quad+\PP\left(\tilde{\rm{F}}(\mathbb{Q}_{\mathbb{XX}},{\bf{\Sigma}}(\hat{\theta}_{n}))-\tilde{\rm{F}}(\mathbb{Q}_{\mathbb{XX}},{\bf{\Sigma}}(\bar{\theta}))>\frac{\delta}{3}\right)\\
    &\quad+\PP\left(\tilde{\rm{F}}(\mathbb{Q}_{\mathbb{XX}},{\bf{\Sigma}}(\bar{\theta}))-\rm{F}({\bf{\Sigma}}_0,{\bf{\Sigma}}(\bar{\theta}))>\frac{\delta}{3}\right)\\
    &\leq 2\PP\left(\sup_{\theta\in\Theta}\bigl|\tilde{\rm{F}}(\mathbb{Q}_{\mathbb{XX}},{\bf{\Sigma}}(\theta))-\rm{F}({\bf{\Sigma}}_0,{\bf{\Sigma}}(\theta))\bigr|>\frac{\delta}{3}\right)+0\stackrel{}{\longrightarrow}0
\end{align*}
under $H_1$ as $n\longrightarrow\infty$,
which implies $\hat{\theta}_{n}\stackrel{P}{\longrightarrow}\bar{\theta}$ under $H_1$.
\end{proof}
\begin{proof}[\textbf{Proof of Theorem 4}]
Since $\mathbb{U}(\theta)$ is continuous in $\theta$, it holds 
from the continuous mapping theorem and Lemma \ref{starconslemma} that
\begin{align}
    \mathbb{U}(\hat{\theta}_{n})\stackrel{P}{\longrightarrow}\mathbb{U}(\bar{\theta})\label{Fu1prob}
\end{align}
under $H_1$. It follows from Lemma \ref{Fproblemma} and (\ref{Fu1prob}) that for all $\varepsilon>0$,
\begin{align*}
    0&\leq \PP\left(\Bigl|\frac{1}{n}\mathbb{T}_{n}-\mathbb{U}(\bar{\theta})\Bigr|>\epsilon\right)\\
    &\leq\PP\left(\bigl|\tilde{\rm{F}}(\mathbb{Q}_{\mathbb{XX}},{\bf{\Sigma}}(\hat{\theta}_{n}))-\mathbb{U}(\hat{\theta}_{n})\bigr|>\frac{\epsilon}{2}\right)+\PP\left(\bigl|\mathbb{U}(\hat{\theta}_{n})-\mathbb{U}(\bar{\theta})\bigr|>\frac{\epsilon}{2}\right)\\
    &\leq\PP\left(\sup_{\theta\in\Theta}\bigl|\tilde{\rm{F}}(\mathbb{Q}_{\mathbb{XX}},{\bf{\Sigma}}(\theta))-\rm{F}({\bf{\Sigma}}_0,{\bf{\Sigma}}(\theta))\bigr|>\frac{\epsilon}{2}\right)\\
    &\qquad+\PP\left(\bigl|\mathbb{U}(\hat{\theta}_{n})-\mathbb{U}(\bar{\theta})\bigr|>\frac{\epsilon}{2}\right)\stackrel{}{\longrightarrow}0
\end{align*}
under $H_1$ as $n\longrightarrow\infty$, which implies that
\begin{align}
    \frac{1}{n}\mathbb{T}_{n}\stackrel{P}{\longrightarrow} \mathbb{U}(\bar{\theta})\label{Tcons}
\end{align}
under $H_1$. Note that $\vech{{\bf{\Sigma}}_0}-\vech{{\bf{\Sigma}}(\bar{\theta})}\neq 0$ under $H_1$. It follows from Lemma \ref{Sigmaposlemma} that $\mathbb{U}(\bar{\theta})>0$ under $H_1$.
Therefore, Lemma 3 in Kitagawa and Uchida \cite{kitagawa(2014)} and (\ref{Tcons}) imply that under $H_1$,
\begin{align*}
    \PP\Bigl(\mathbb{T}_{n}>\chi^2_{\bar{p}-q}(\alpha)\Bigr)
    &=1-\PP\left(\frac{1}{n}\mathbb{T}_{n}\leq \frac{1}{n}\chi^2_{\bar{p}-q}(\alpha)\right)\stackrel{}{\longrightarrow}1
\end{align*}
as $n\longrightarrow\infty$.
\end{proof}
\subsection{Proof of Theorem \ref{Qtheorem}}\label{Qproof}
\begin{lemma}\label{Qlemma}
Under \textrm{\textbf{[A1]}}-\textrm{\textbf{[A2]}}, \textrm{\textbf{[B1]}}-\textrm{\textbf{[B2]}},
\textrm{\textbf{[C1]}}-\textrm{\textbf{[C2]}}
and \textrm{\textbf{[D1]}}-\textrm{\textbf{[D2]}},
as $h_n\longrightarrow 0$ and $nh_n\longrightarrow\infty$,
\begin{align*}
    \mathbb{Q}_{\xi\xi,0}\stackrel{P}{\longrightarrow}{\bf{\Sigma}}_{\xi\xi,0}, \  
    \mathbb{Q}_{\delta\delta,0}\stackrel{P}{\longrightarrow}{\bf{\Sigma}}_{\delta\delta,0}, \ 
    \mathbb{Q}_{\varepsilon\varepsilon,0}\stackrel{P}{\longrightarrow}
    {\bf{\Sigma}}_{\varepsilon\varepsilon,0},\qquad\qquad\quad\\
    \mathbb{Q}_{\zeta\zeta,0}\stackrel{P}{\longrightarrow} {\bf{\Sigma}}_{\zeta\zeta,0},\ \mathbb{Q}_{\xi\delta,0}\stackrel{P}{\longrightarrow} O_{k_1\times p_1},\
    \mathbb{Q}_{\xi\varepsilon,0}\stackrel{P}{\longrightarrow} O_{k_1\times p_2},\qquad\qquad\\
    \mathbb{Q}_{\xi\zeta,0}\stackrel{P}{\longrightarrow} O_{k_1\times k_2},\  \mathbb{Q}_{\delta\varepsilon,0}\stackrel{P}{\longrightarrow} O_{p_1\times p_2},\ \mathbb{Q}_{\delta\zeta} \stackrel{P}{\longrightarrow}O_{p_1\times k_2}, \  \mathbb{Q}_{\varepsilon\zeta,0}\stackrel{P}{\longrightarrow} O_{p_2\times k_2}.
\end{align*}
\end{lemma}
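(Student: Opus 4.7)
The plan is to follow the strategy of Lemma \ref{Qnonlemma} but adjust the bookkeeping so that the normalization by $T = nh_n \to \infty$ is treated properly and the ergodic assumptions are used to control the averages of polynomial-growth remainders. Specifically, for each entry we write the summand as $X_{i,n} = T^{-1}\Delta\xi_{0,i}^{(j_1)}\Delta\xi_{0,i}^{(j_2)}$ (and analogously for the other cross/quadratic terms) and verify the two standard hypotheses of Lemma 9 in Genon-Catalot and Jacod \cite{Genon(1993)}: $\sum_{i=1}^{n}\E[X_{i,n}\mid\mathscr{F}^{n}_{i-1}]\stackrel{P}{\longrightarrow}({\bf{\Sigma}}_{\xi\xi,0})_{j_1j_2}$ and $\sum_{i=1}^{n}\E[X_{i,n}^{2}\mid\mathscr{F}^{n}_{i-1}]\stackrel{P}{\longrightarrow}0$.

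First, I would apply Lemma 7 in Kessler \cite{kessler(1997)} to obtain
\begin{align*}
    \sum_{i=1}^{n}\E\Bigl[\Delta\xi_{0,i}^{(j_1)}\Delta\xi_{0,i}^{(j_2)}\,\big|\,\mathscr{F}^{n}_{i-1}\Bigr]
    = nh_n({\bf{\Sigma}}_{\xi\xi,0})_{j_1j_2}+\sum_{i=1}^{n}R_i(h_n^2,\xi),
\end{align*}
so that division by $T = nh_n$ gives $({\bf{\Sigma}}_{\xi\xi,0})_{j_1j_2} + h_n\cdot n^{-1}\sum_{i=1}^{n}R_i(1,\xi)$. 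The second term vanishes because $h_n\to0$ and, by assumption $[\mathbf{A2}]$ together with the Riemann-sum identity $n^{-1}\sum_{i=1}^{n}g(\xi_{0,t_{i-1}^n}) = T^{-1}\sum_{i=1}^{n}g(\xi_{0,t_{i-1}^n})h_n$, one has $n^{-1}\sum_{i=1}^{n}R_i(1,\xi)\stackrel{P}{\longrightarrow}\int R(1,x)\,\pi_\xi(dx)$, which is finite by the polynomial growth of $R(1,\cdot)$ and the integrability of polynomials under $\pi_\xi$ (this last point being standard for ergodic diffusions satisfying $[\mathbf{A1}]$). For the Lindeberg-type hypothesis, the conditional fourth moment $\E[(\Delta\xi_{0,i}^{(j_1)})^2(\Delta\xi_{0,i}^{(j_2)})^2\mid\mathscr{F}^n_{i-1}] = O(h_n^2)+R_i(h_n^3,\xi)$ (Lemma \ref{Alemma}-type computation) gives $\sum_i\E[X_{i,n}^2\mid\mathscr{F}^{n}_{i-1}] = O(nh_n^2/T^2) = O(1/n)\to 0$ by the same ergodic-average argument. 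Invoking Lemma 9 in \cite{Genon(1993)} then yields $\mathbb{Q}_{\xi\xi,0}\stackrel{P}{\longrightarrow}{\bf{\Sigma}}_{\xi\xi,0}$, and $\mathbb{Q}_{\delta\delta,0},\mathbb{Q}_{\varepsilon\varepsilon,0},\mathbb{Q}_{\zeta\zeta,0}$ are handled identically using $[\mathbf{B2}]$, $[\mathbf{C2}]$, $[\mathbf{D2}]$.

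For the cross terms, e.g. $\mathbb{Q}_{\xi\delta,0}$, the independence of $W_{1,t}$ and $W_{2,t}$ makes the Wiener-Wiener contribution in $\E[\Delta\xi_{0,i}^{(j_1)}\Delta\delta_{0,i}^{(j_2)}\mid\mathscr{F}^n_{i-1}]$ vanish, leaving only drift-drift and drift-Wiener pieces bounded in absolute value by $R_i(h_n^2,\xi)R_i(h_n^2,\delta)^{1/2}+\ldots$; summing and dividing by $T$ produces an $h_n\cdot O_p(1)$ error by the ergodic averaging applied to both $\xi_{0,\cdot}$ and $\delta_{0,\cdot}$. The conditional second-moment hypothesis is verified by the same Cauchy-Schwarz-plus-Kessler-Lemma-7 computation as for the diagonal terms.

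The main obstacle I foresee is not any individual estimate but the uniform control of the remainders after division by $T$: one needs $T^{-1}\sum_{i=1}^{n}R_i(h_n^{\ell},\cdot) = h_n^{\ell-1}\cdot n^{-1}\sum_{i=1}^n R_i(1,\cdot)$ to stay $O_p(h_n^{\ell-1})$ for $\ell\ge 2$, which requires that the pathwise average $n^{-1}\sum_i R_i(1,\cdot)$ be $O_p(1)$. This in turn uses $[\mathbf{A2}]$--$[\mathbf{D2}]$ applied to functions of polynomial growth, so one must first justify that polynomials are $\pi$-integrable for each of the four invariant measures; this is routine under $[\mathbf{A1}](ii)$, $[\mathbf{B1}](ii)$, $[\mathbf{C1}](ii)$, $[\mathbf{D1}](ii)$ combined with ergodicity, but is the one point where the ergodic case genuinely uses strictly more than the non-ergodic case. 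Once this integrability is in hand, the rest of the proof is a line-by-line adaptation of Lemma \ref{Qnonlemma}.
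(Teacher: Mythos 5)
Your overall strategy is the one the paper intends: the paper's own proof of this lemma is a one-line deferral to Lemma 1 of Kusano and Uchida \cite{Kusano(2022)}, which is exactly the adaptation of the Lemma \ref{Qnonlemma} argument (Kessler's Lemma 7 for the conditional moments plus Lemma 9 of Genon-Catalot and Jacod) to the normalization $T=nh_n\to\infty$ that you carry out. Your bookkeeping of the leading terms and of the Lindeberg-type condition $\sum_i\E[X_{i,n}^2\mid\mathscr{F}^n_{i-1}]=O(1/n)$ is correct, as is the treatment of the cross terms via independence of the Wiener processes.

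One step is shaky as written, though it is easily repaired. You control the remainder $h_n\cdot n^{-1}\sum_{i=1}^n R_i(1,\xi)$ by invoking $[\mathbf{A2}]$ to claim $n^{-1}\sum_{i=1}^n R_i(1,\xi)\stackrel{P}{\longrightarrow}\int R(1,x)\,\pi_\xi(dx)$. This does not quite go through: the $R_i(1,\xi)$ are not evaluations of a single fixed function $g$ (the $R$-notation is a generic bound, and the underlying functions may vary with $i$ and $n$), and $[\mathbf{A2}]$ is stated for time averages $T^{-1}\int_0^T g(\xi_t)\,dt$ rather than for discrete Riemann sums, so an additional approximation and integrability argument would be needed. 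But none of this is necessary: by $[\mathbf{A1}]$(ii) one has $\E\bigl[n^{-1}\sum_{i=1}^n|R_i(1,\xi)|\bigr]\leq C\sup_t\E\bigl[(1+|\xi_{0,t}|)^C\bigr]<\infty$ uniformly in $n$, so $n^{-1}\sum_{i=1}^n R_i(1,\xi)=O_p(1)$ by Markov's inequality, and the factor $h_n\to0$ kills the remainder. The same moment bounds handle $\delta$, $\varepsilon$ and $\zeta$. With that substitution your proof is complete; in fact the ergodic assumptions $[\mathbf{A2}]$--$[\mathbf{D2}]$ play no essential role in this particular lemma because the volatilities are constant, so the leading term $T^{-1}\sum_i h_n{\bf{\Sigma}}_{\xi\xi,0}={\bf{\Sigma}}_{\xi\xi,0}$ is exact and only the remainders need to be averaged away.
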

\begin{proof}
The results can be shown in a similar way to Lemma 1 in Kusano and Uchida \cite{Kusano(2022)}.
\end{proof}
\begin{proof}[\textbf{Proof of Theorem \ref{Qtheorem}}]
In a similar way to Theorem \ref{Qtheoremnon}, Lemma \ref{Qlemma} and Slutsky’s theorem yield
\begin{align*}
    \mathbb{Q}_{\mathbb{XX}}\stackrel{P}{\longrightarrow}{\bf{\Sigma}}_0.
\end{align*}
Next, we consider
\begin{align}
    \sqrt{n}(\vech\mathbb{Q}_{\mathbb{XX}}-\vech {\bf{\Sigma}}_0)\stackrel{d}{\longrightarrow} N_{\bar{p}}(0,{\bf{W}}_0).\label{vechasym}
\end{align}
Recall that 
\begin{align*}
    \sqrt{n}(\vec\mathbb{Q}_{\mathbb{XX}}-\vec{\bf{\Sigma}}_0 )=\sum_{i=1}^{n}L_{i,n},
\end{align*}
where 
\begin{align*}
    L_{i,n}=\frac{1}{\sqrt{n}h_n}\vec \Delta \mathbb{X}_i\Delta \mathbb{X}_i^\top-\frac{1}{\sqrt{n}}\vec{\bf{\Sigma}}_0.
\end{align*}
If it is shown that
\begin{align}
    \sum_{i=1}^{n}L_{i,n}\stackrel{d}{\longrightarrow} N_{p^2}(0,\bar{{\bf{W}}}_0),\label{vecasym}
\end{align}
then, we have (\ref{vechasym}) in an analogous way to Theorem \ref{Qtheoremnon}. In a similar way to Lemma 5 in Kessler \cite{kessler(1997)}, if it holds that
\begin{align}
    \label{L}
    &\qquad\qquad\qquad\qquad\sum_{i=1}^n\E\left[L_{i,n}| \mathscr{F}^{n}_{i-1}\right]\stackrel{P}{\longrightarrow}0,\\
    \label{LL}
    \begin{split}
    &\sum_{i=1}^n\E\left[L_{i,n}L_{i,n}^\top| \mathscr{F}^{n}_{i-1}\right]-\sum_{i=1}^n\E\left[L_{i,n}| \mathscr{F}^{n}_{i-1}\right]
    \E\left[L_{i,n}| \mathscr{F}^{n}_{i-1}\right]^\top
    \stackrel{P}{\longrightarrow}\bar{{\bf{W}}}_0,
    \end{split}\\
    \label{L4}
    &\qquad\qquad\qquad\qquad
    \sum_{i=1}^n\E\left[|L_{i,n}|^4|\mathscr{F}^{n}_{i-1}\right]\stackrel{P}{\longrightarrow}0,
\end{align}
then we can obtain (\ref{vecasym}) from Theorems 3.2 and 3.4 in Hall and Heyde \cite{Hall(1981)}. It holds from Lemma \ref{EX1X1lemma} that
\begin{align*}
    &\quad\ \sum_{i=1}^n\E\left[\frac{1}{\sqrt{n}h_n}\Delta \mathbb{X}_{1,i}^{(j_1)}\Delta \mathbb{X}_{1,i}^{(j_2)}-\frac{1}{\sqrt{n}}({\bf{\Sigma}}_0)^{11}_{j_1j_2}\Big|\mathscr{F}^{n}_{i-1}\right]\\
    &=\frac{1}{\sqrt{n}}\sum_{i=1}^n\left\{\frac{1}{h_n}\E\left[\Delta \mathbb{X}_{1,i}^{(j_1)}\Delta \mathbb{X}_{1,i}^{(j_2)}\big|\mathscr{F}^{n}_{i-1}\right]-({\bf{\Sigma}}_0^{11})_{j_1j_2}\right\}\\
    &=\frac{h_n}{\sqrt{n}}\sum_{i=1}^n\bigl\{R_i(1,\xi)+R_i(1,\delta)+R_i(1,\xi)R_i(1,\delta)\bigr\}\\
    &=\sqrt{nh_n^2}\ \frac{1}{n}\sum_{i=1}^n\bigl\{R_i(1,\xi)+R_i(1,\delta)+R_i(1,\xi)R_i(1,\delta)\bigr\}\stackrel{P}{\longrightarrow} 0
\end{align*}
for $j_1,j_2=1,\cdots,p_1$, which yields
\begin{align}
    \begin{split}
    \sum_{i=1}^n\E\left[\frac{1}{\sqrt{n}h_n}\Delta \mathbb{X}_{1,i}^{(j_1)}\Delta \mathbb{X}_{1,i}^{(j_2)}-\frac{1}{\sqrt{n}}({\bf{\Sigma}}_0^{11})_{j_1j_2}\Big|\mathscr{F}^{n}_{i-1}\right]\stackrel{P}{\longrightarrow}0\label{EX1X1prob}
    \end{split}
\end{align}
for $j_1,j_2=1,\cdots,p_1$. In a similar way,                          
\begin{align}
    \begin{split}
    \sum_{i=1}^n\E\left[\frac{1}{\sqrt{n}h_n}\Delta \mathbb{X}_{1,i}^{(j_1)}\Delta \mathbb{X}_{2,i}^{(j_2)}-\frac{1}{\sqrt{n}}({\bf{\Sigma}}_0^{12})_{j_1j_2}\Big|\mathscr{F}^{n}_{i-1}\right]\stackrel{P}{\longrightarrow}0\label{EX1X2prob}
    \end{split}
\end{align}
for $j_1=1,\cdots,p_1$, $j_2=1,\cdots,p_2$, and
\begin{align}
    \begin{split}
    \sum_{i=1}^n\E\left[\frac{1}{\sqrt{n}h_n}\Delta \mathbb{X}_{2,i}^{(j_1)}\Delta \mathbb{X}_{2,i}^{(j_2)}-\frac{1}{\sqrt{n}}({\bf{\Sigma}}_0^{22})_{j_1j_2}\Big|\mathscr{F}^{n}_{i-1}\right]\stackrel{P}{\longrightarrow}0\label{EX2X2prob}
    \end{split}
\end{align}
for $j_1,j_2=1,\cdots,p_2$. Consequently, from (\ref{EX1X1prob})-(\ref{EX2X2prob}), one gets (\ref{L}). Next, we consider (\ref{LL}). It follows from Lemma \ref{EX1X1lemma} that
\begin{align*}
    &\quad\ \sum_{i=1}^n\E\left[\left\{\frac{1}{\sqrt{n}h_n}\Delta \mathbb{X}_{1,i}^{(j_1)}\Delta \mathbb{X}_{1,i}^{(j_2)}-\frac{1}{\sqrt{n}}({\bf{\Sigma}}_0^{11})_{j_1j_2}\right\}\right.\\
    &\qquad\qquad\qquad\qquad\qquad\times\left.\left\{\frac{1}{\sqrt{n}h_n}\Delta \mathbb{X}_{1,i}^{(j_3)}\Delta \mathbb{X}_{1,i}^{(j_4)}-\frac{1}{\sqrt{n}}({\bf{\Sigma}}_0^{11})_{j_3j_4}\right\}\Big|\mathscr{F}^{n}_{i-1}\right]\\
    &=\frac{1}{nh_n^2}\sum_{i=1}^n\E\left[\Delta \mathbb{X}^{(j_1)}_{1,i}\Delta \mathbb{X}^{(j_2)}_{1,i}\Delta \mathbb{X}^{(j_3)}_{1,i}\Delta \mathbb{X}^{(j_4)}_{1,i}\big|\mathscr{F}^{n}_{i-1}\right]\\
    &\quad-\frac{1}{nh_n}\sum_{i=1}^n\E\left[\Delta \mathbb{X}^{(j_1)}_{1,i}\Delta \mathbb{X}^{(j_2)}_{1,i}\big|\mathscr{F}^{n}_{i-1}\right]({\bf{\Sigma}}_0^{11})_{j_3j_4}\\
    &\quad-\frac{1}{nh_n}\sum_{i=1}^n\E\left[\Delta \mathbb{X}^{(j_3)}_{1,i}\Delta \mathbb{X}^{(j_4)}_{1,i}\big|\mathscr{F}^{n}_{i-1}\right]({\bf{\Sigma}}_0^{11})_{j_1j_2}
    +\frac{1}{n}\sum_{i=1}^n({\bf{\Sigma}}_0^{11})_{j_1j_2}({\bf{\Sigma}}_0^{11})_{j_3j_4}\\
    &=({\bf{\Sigma}}_0^{11})_{j_{1}j_{3}}({\bf{\Sigma}}_0^{11})_{j_{2}j_{4}}+({\bf{\Sigma}}_0^{11})_{j_{1}j_{4}}({\bf{\Sigma}}^{11}_0)_{j_{2}j_{3}}\\
    &\quad+\frac{h_n}{n}\sum_{i=1}^n\bigl\{R_i(1,\xi)+R_i(1,\delta)+R_i(1,\xi)R_i(1,\delta)\bigr\},
\end{align*}
which implies 
\begin{align}
\begin{split}
    &\quad\ \sum_{i=1}^n\E\left[\left\{\frac{1}{\sqrt{n}h_n}\Delta \mathbb{X}_{1,i}^{(j_1)}\Delta \mathbb{X}_{1,i}^{(j_2)}-\frac{1}{\sqrt{n}}({\bf{\Sigma}}_0^{11})_{j_1j_2}\right\}\right.\\
    &\qquad\qquad\qquad\qquad\qquad\times\left.\left\{\frac{1}{\sqrt{n}h_n}\Delta \mathbb{X}_{1,i}^{(j_3)}\Delta \mathbb{X}_{1,i}^{(j_4)}-\frac{1}{\sqrt{n}}({\bf{\Sigma}}_0^{11})_{j_3j_4}\right\}\Big|\mathscr{F}^{n}_{i-1}\right]\\
    &\stackrel{P}{\longrightarrow}({\bf{\Sigma}}_0^{11})_{j_{1}j_{3}}({\bf{\Sigma}}_0^{11})_{j_{2}j_{4}}+({\bf{\Sigma}}_0^{11})_{j_{1}j_{4}}({\bf{\Sigma}}_0^{11})_{j_{2}j_{3}}\label{XXXX}
\end{split}
\end{align}
for $j_1,j_2,j_3,j_4=1,\cdots,p_1$. Furthermore, it holds from Lemma \ref{EX1X1lemma} that
\begin{align*}
    &\quad\left|\sum_{i=1}^{n}\E\left[\frac{1}{\sqrt{n}h_n}\Delta \mathbb{X}_{1,i}^{(j_1)}\Delta \mathbb{X}_{1,i}^{(j_2)}-\frac{1}{\sqrt{n}}({\bf{\Sigma}}^{11}_0)_{j_1j_2}\Big|\mathscr{F}^{n}_{i-1}\right]\right.\\
    &\left.\qquad\qquad\qquad\qquad\qquad\quad\times\E\left[\frac{1}{\sqrt{n}h_n}\Delta \mathbb{X}_{1,i}^{(j_3)}\Delta \mathbb{X}_{1,i}^{(j_3)}-\frac{1}{\sqrt{n}}({\bf{\Sigma}}^{11}_0)_{j_3j_4}\Big|\mathscr{F}^{n}_{i-1}\right]\right|\\
    &\leq \sqrt{nh_n^2}\times\left\{\frac{1}{n}\sum_{i=1}^n R_{i}(1,\xi)+\frac{1}{n}\sum_{i=1}^n 
    R_{i}(1,\delta)+\frac{1}{n}\sum_{i=1}^n R_{i}(1,\xi)R_{i}(1,\delta)\right\}\stackrel{P}{\longrightarrow}0,
\end{align*}
so that
\begin{align}
\begin{split}
    &\left|\sum_{i=1}^{n}\E\left[\frac{1}{\sqrt{n}h_n}\Delta \mathbb{X}_{1,i}^{(j_1)}\Delta \mathbb{X}_{1,i}^{(j_2)}-\frac{1}{\sqrt{n}}({\bf{\Sigma}}^{11}_0)_{j_1j_2}\Big|\mathscr{F}^{n}_{i-1}\right]\right.\\
    &\left.\qquad\qquad\qquad\qquad\times\E\left[\frac{1}{\sqrt{n}h_n}\Delta \mathbb{X}_{1,i}^{(j_3)}\Delta \mathbb{X}_{1,i}^{(j_4)}-\frac{1}{\sqrt{n}}({\bf{\Sigma}}^{11}_0)_{j_3j_4}\Big|\mathscr{F}^{n}_{i-1}\right]\right|\stackrel{P}{\longrightarrow}0\label{EXXEXX}
\end{split}
\end{align}
for $j_1,j_2,j_3,j_4=1,\cdots,p_1$. Hence, (\ref{XXXX}) and (\ref{EXXEXX}) yield
\begin{align*}
    &\quad\ \ \sum_{i=1}^{n}\E\left[\left\{\frac{1}{\sqrt{n}h_n}\Delta \mathbb{X}_{1,i}^{(j_1)}\Delta \mathbb{X}_{1,i}^{(j_2)}-\frac{1}{\sqrt{n}}({\bf{\Sigma}}_0^{11})_{j_1j_2}\right\}\right.\\
    &\left.\qquad\qquad\qquad\qquad\times\left\{\frac{1}{\sqrt{n}h_n}\Delta \mathbb{X}_{1,i}^{(j_3)}\Delta \mathbb{X}_{1,i}^{(j_4)}-\frac{1}{\sqrt{n}}({\bf{\Sigma}}_0^{11})_{j_3j_4}\right\}\Big|\mathscr{F}^{n}_{i-1}\right]\\
    &\qquad\qquad-\sum_{i=1}^{n}\E\left[\frac{1}{\sqrt{n}h_n}\Delta \mathbb{X}_{1,i}^{(j_1)}\Delta \mathbb{X}_{1,i}^{(j_2)}-\frac{1}{\sqrt{n}}({\bf{\Sigma}}^{11}_0)_{j_1j_2}\Big|\mathscr{F}^{n}_{i-1}\right]\\
    &\qquad\qquad\qquad\qquad\qquad\qquad\times\E\left[\frac{1}{\sqrt{n}h_n}\Delta \mathbb{X}_{1,i}^{(j_3)}\Delta \mathbb{X}_{1,i}^{(j_4)}-\frac{1}{\sqrt{n}}({\bf{\Sigma}}^{11}_0)_{j_3j_4}\Big|\mathscr{F}^{n}_{i-1}\right]\\
    &\stackrel{P}{\longrightarrow} ({\bf{\Sigma}}_0^{11})_{j_{1}j_{3}}({\bf{\Sigma}}^{11}_0)_{j_{2}j_{4}}+({\bf{\Sigma}}^{11}_0)_{j_{1}j_{4}}({\bf{\Sigma}}^{11}_0)_{j_{2}j_{3}}
\end{align*}
for $j_1,j_2,j_3,j_4=1,\cdots,p_1$. In an analogous manner, we have
\begin{align*}
    &\quad\ \ \sum_{i=1}^{n}\E\left[\left\{\frac{1}{\sqrt{n}h_n}\Delta \mathbb{X}_{1,i}^{(j_1)}\Delta \mathbb{X}_{1,i}^{(j_2)}-\frac{1}{\sqrt{n}}({\bf{\Sigma}}_0^{11})_{j_1j_2}\right\}\right.\\
    &\left.\qquad\qquad\qquad\qquad\times\left\{\frac{1}{\sqrt{n}h_n}\Delta \mathbb{X}_{1,i}^{(j_3)}\Delta \mathbb{X}_{2,i}^{(j_4)}-\frac{1}{\sqrt{n}}({\bf{\Sigma}}_0^{12})_{j_3j_4}\right\}\Big|\mathscr{F}^{n}_{i-1}\right]\\
    &\qquad\qquad-\sum_{i=1}^{n}\E\left[\frac{1}{\sqrt{n}h_n}\Delta \mathbb{X}_{1,i}^{(j_1)}\Delta \mathbb{X}_{1,i}^{(j_2)}-\frac{1}{\sqrt{n}}({\bf{\Sigma}}^{11}_0)_{j_1j_2}\Big|\mathscr{F}^{n}_{i-1}\right]\\
    &\qquad\qquad\qquad\qquad\qquad\qquad\times\E\left[\frac{1}{\sqrt{n}h_n}\Delta \mathbb{X}_{1,i}^{(j_3)}\Delta \mathbb{X}_{2,i}^{(j_4)}-\frac{1}{\sqrt{n}}({\bf{\Sigma}}^{12}_0)_{j_3j_4}\Big|\mathscr{F}^{n}_{i-1}\right]\\
    &\stackrel{P}{\longrightarrow} ({\bf{\Sigma}}_0^{11})_{j_{1}j_{3}}({\bf{\Sigma}}^{12}_0)_{j_{2}j_{4}}+({\bf{\Sigma}}^{12}_0)_{j_{1}j_{4}}({\bf{\Sigma}}^{11}_0)_{j_{2}j_{3}}
\end{align*}
for $j_1,j_2,j_3=1,\cdots,p_1,\ j_4=1,\cdots,p_2$,
\begin{align*}
    &\quad\ \ \sum_{i=1}^{n}\E\left[\left\{\frac{1}{\sqrt{n}h_n}\Delta \mathbb{X}_{1,i}^{(j_1)}\Delta \mathbb{X}_{1,i}^{(j_2)}-\frac{1}{\sqrt{n}}({\bf{\Sigma}}_0^{11})_{j_1j_2}\right\}\right.\\
    &\left.\qquad\qquad\qquad\qquad\times\left\{\frac{1}{\sqrt{n}h_n}\Delta \mathbb{X}_{2,i}^{(j_3)}\Delta \mathbb{X}_{2,i}^{(j_4)}-\frac{1}{\sqrt{n}}({\bf{\Sigma}}_0^{22})_{j_3j_4}\right\}\Big|\mathscr{F}^{n}_{i-1}\right]\\
    &\qquad\qquad-\sum_{i=1}^{n}\E\left[\frac{1}{\sqrt{n}h_n}\Delta \mathbb{X}_{1,i}^{(j_1)}\Delta \mathbb{X}_{1,i}^{(j_2)}-\frac{1}{\sqrt{n}}({\bf{\Sigma}}^{11}_0)_{j_1j_2}\Big|\mathscr{F}^{n}_{i-1}\right]\\
    &\qquad\qquad\qquad\qquad\qquad\qquad\times\E\left[\frac{1}{\sqrt{n}h_n}\Delta \mathbb{X}_{1,i}^{(j_3)}\Delta \mathbb{X}_{2,i}^{(j_4)}-\frac{1}{\sqrt{n}}({\bf{\Sigma}}^{12}_0)_{j_3j_4}\Big|\mathscr{F}^{n}_{i-1}\right]\\
    &\stackrel{P}{\longrightarrow} ({\bf{\Sigma}}_0^{12})_{j_{1}j_{3}}({\bf{\Sigma}}^{22}_0)_{j_{2}j_{4}}+({\bf{\Sigma}}^{12}_0)_{j_{1}j_{4}}({\bf{\Sigma}}^{12}_0)_{j_{2}j_{3}}
\end{align*}
for $j_1,j_3=1,\cdots,p_1,\ j_2,j_4=1,\cdots,p_2$,
\begin{align*}
    &\quad\ \ \sum_{i=1}^{n}\E\left[\left\{\frac{1}{\sqrt{n}h_n}\Delta \mathbb{X}_{1,i}^{(j_1)}\Delta \mathbb{X}_{2,i}^{(j_2)}-\frac{1}{\sqrt{n}}({\bf{\Sigma}}_0^{12})_{j_1j_2}\right\}\right.\\
    &\left.\qquad\qquad\qquad\qquad\times\left\{\frac{1}{\sqrt{n}h_n}\Delta \mathbb{X}_{1,i}^{(j_3)}\Delta \mathbb{X}_{2,i}^{(j_4)}-\frac{1}{\sqrt{n}}({\bf{\Sigma}}_0^{12})_{j_3j_4}\right\}\Big|\mathscr{F}^{n}_{i-1}\right]\\
    &\qquad\qquad-\sum_{i=1}^{n}\E\left[\frac{1}{\sqrt{n}h_n}\Delta \mathbb{X}_{1,i}^{(j_1)}\Delta \mathbb{X}_{1,i}^{(j_2)}-\frac{1}{\sqrt{n}}({\bf{\Sigma}}^{11}_0)_{j_1j_2}\Big|\mathscr{F}^{n}_{i-1}\right]\\
    &\qquad\qquad\qquad\qquad\qquad\qquad\times\E\left[\frac{1}{\sqrt{n}h_n}\Delta \mathbb{X}_{1,i}^{(j_3)}\Delta \mathbb{X}_{2,i}^{(j_4)}-\frac{1}{\sqrt{n}}({\bf{\Sigma}}^{12}_0)_{j_3j_4}\Big|\mathscr{F}^{n}_{i-1}\right]\\
    &\stackrel{P}{\longrightarrow} ({\bf{\Sigma}}_0^{12})_{j_{1}j_{3}}({\bf{\Sigma}}^{22}_0)_{j_{2}j_{4}}+({\bf{\Sigma}}^{12}_0)_{j_{1}j_{4}}({\bf{\Sigma}}^{12\top}_{0})_{j_{2}j_{3}}
\end{align*}
for $j_1,j_3=1,\cdots,p_1,\ j_2,j_4=1,\cdots,p_2$,
\begin{align*}
    &\quad\ \ \sum_{i=1}^{n}\E\left[\left\{\frac{1}{\sqrt{n}h_n}\Delta \mathbb{X}_{1,i}^{(j_1)}\Delta \mathbb{X}_{2,i}^{(j_2)}-\frac{1}{\sqrt{n}}({\bf{\Sigma}}_0^{12})_{j_1j_2}\right\}\right.\\
    &\left.\qquad\qquad\qquad\qquad\times\left\{\frac{1}{\sqrt{n}h_n}\Delta \mathbb{X}_{2,i}^{(j_3)}\Delta \mathbb{X}_{2,i}^{(j_4)}-\frac{1}{\sqrt{n}}({\bf{\Sigma}}_0^{22})_{j_3j_4}\right\}\Big|\mathscr{F}^{n}_{i-1}\right]\\
    &\qquad\qquad-\sum_{i=1}^{n}\E\left[\frac{1}{\sqrt{n}h_n}\Delta \mathbb{X}_{1,i}^{(j_1)}\Delta \mathbb{X}_{1,i}^{(j_2)}-\frac{1}{\sqrt{n}}({\bf{\Sigma}}^{11}_0)_{j_1j_2}\Big|\mathscr{F}^{n}_{i-1}\right]\\
    &\qquad\qquad\qquad\qquad\qquad\qquad\times\E\left[\frac{1}{\sqrt{n}h_n}\Delta \mathbb{X}_{1,i}^{(j_3)}\Delta \mathbb{X}_{2,i}^{(j_4)}-\frac{1}{\sqrt{n}}({\bf{\Sigma}}^{12}_0)_{j_3j_4}\Big|\mathscr{F}^{n}_{i-1}\right]\\
    &\stackrel{P}{\longrightarrow} ({\bf{\Sigma}}_0^{12})_{j_{1}j_{3}}({\bf{\Sigma}}^{22}_0)_{j_{2}j_{4}}+({\bf{\Sigma}}^{12}_0)_{j_{1}j_{4}}({\bf{\Sigma}}^{22}_{0})_{j_{2}j_{3}}
\end{align*}
for $j_1=1,\cdots,p_1,\ j_2,j_3,j_4=1,\cdots,p_2$, and
\begin{align*}
    &\quad\ \ \sum_{i=1}^{n}\E\left[\left\{\frac{1}{\sqrt{n}h_n}\Delta \mathbb{X}_{2,i}^{(j_1)}\Delta \mathbb{X}_{2,i}^{(j_2)}-\frac{1}{\sqrt{n}}({\bf{\Sigma}}_0^{22})_{j_1j_2}\right\}\right.\\
    &\left.\qquad\qquad\qquad\qquad\times\left\{\frac{1}{\sqrt{n}h_n}\Delta \mathbb{X}_{2,i}^{(j_3)}\Delta \mathbb{X}_{2,i}^{(j_4)}-\frac{1}{\sqrt{n}}({\bf{\Sigma}}_0^{22})_{j_3j_4}\right\}\Big|\mathscr{F}^{n}_{i-1}\right]\\
    &\qquad\qquad-\sum_{i=1}^{n}\E\left[\frac{1}{\sqrt{n}h_n}\Delta \mathbb{X}_{1,i}^{(j_1)}\Delta \mathbb{X}_{1,i}^{(j_2)}-\frac{1}{\sqrt{n}}({\bf{\Sigma}}^{22}_0)_{j_1j_2}\Big|\mathscr{F}^{n}_{i-1}\right]\\
    &\qquad\qquad\qquad\qquad\qquad\qquad\times\E\left[\frac{1}{\sqrt{n}h_n}\Delta \mathbb{X}_{2,i}^{(j_3)}\Delta \mathbb{X}_{2,i}^{(j_4)}-\frac{1}{\sqrt{n}}({\bf{\Sigma}}^{22}_0)_{j_3j_4}\Big|\mathscr{F}^{n}_{i-1}\right]\\
    &\stackrel{P}{\longrightarrow} ({\bf{\Sigma}}_0^{22})_{j_{1}j_{3}}({\bf{\Sigma}}^{22}_0)_{j_{2}j_{4}}+({\bf{\Sigma}}^{22}_0)_{j_{1}j_{4}}({\bf{\Sigma}}^{22}_{0})_{j_{2}j_{3}}
\end{align*}
for $j_1,j_2,j_3,j_4=1,\cdots,p_2$, which yields (\ref{LL}). In an analogous manner to Theorem \ref{Qtheoremnon}, (\ref{L4}) holds.
\end{proof}
\subsection{Proof of Lemma \ref{seiyakutheta}}\label{seiyakuthetaproof}
\begin{proof}[\textbf{Proof of Lemma \ref{seiyakutheta}}]
In an analogous manner to Theorem \ref{thetatheoremnon}, for any $\varepsilon>0$, there exists $\delta>0$ such that
\begin{align*}
    |\underline{\theta}_{n}-\theta_{0}|>\varepsilon\Longrightarrow \rm{F}({\bf{\Sigma}}(\theta_{0}),{\bf{\Sigma}}(\underline{\theta}_{n}))-\rm{F}({\bf{\Sigma}}(\theta_{0}),{\bf{\Sigma}}(\theta_{0}))>\delta.
\end{align*} 
Note that $\theta_{0}\in\underline{\Theta}$. Since it holds from the definition of $\underline{\theta}_n$ that
\begin{align*}
    \mathbb{F}_{n}(\underline{\theta}_{n})
    =\inf_{\theta\in\underline{\Theta}}\mathbb{F}_{n}(\theta)\leq\mathbb{F}_{n}(\theta_{0}),
\end{align*}
we see from Lemma \ref{Fproblemma} that
\begin{align*}
    0&\leq \PP\Bigl(|\underline{\theta}_{n}-\theta_{0}|>\varepsilon\Bigr)\\
    &\leq\PP\left(\rm{F}({\bf{\Sigma}}(\theta_{0}),{\bf{\Sigma}}(\underline{\theta}_{n}))-\tilde{\rm{F}}(\mathbb{Q}_{\mathbb{XX}},{\bf{\Sigma}}(\underline{\theta}_{n}))>\frac{\delta}{3}\right)\\
    &\quad+\PP\left(\tilde{\rm{F}}(\mathbb{Q}_{\mathbb{XX}},{\bf{\Sigma}}(\underline{\theta}_{n}))-\tilde{\rm{F}}(\mathbb{Q}_{\mathbb{XX}},{\bf{\Sigma}}(\theta_{0}))>\frac{\delta}{3}\right)\\
    &\quad+\PP\left(\tilde{\rm{F}}(\mathbb{Q}_{\mathbb{XX}},{\bf{\Sigma}}(\theta_{0}))-\rm{F}({\bf{\Sigma}}(\theta_{0}),{\bf{\Sigma}}(\theta_{0}))>\frac{\delta}{3}\right)\\
    &\leq 2\PP\left(\sup_{\theta\in\Theta}\bigl|\tilde{\rm{F}}(\mathbb{Q}_{\mathbb{XX}},{\bf{\Sigma}}(\theta))-\rm{F}({\bf{\Sigma}}(\theta_{0}),{\bf{\Sigma}}(\theta))\bigr|>\frac{\delta}{3}\right)+0\stackrel{}{\longrightarrow}0
\end{align*}
as $n\longrightarrow\infty$, which yields
\begin{align}
    \underline{\theta}_{n}\stackrel{P}{\longrightarrow}\theta_{0}.\label{barcons}
\end{align}
The Taylor expansion of $\partial_{\theta}\mathbb{F}_{n}(\underline{\theta}_{n})$ around $\underline{\theta}_{n}=\theta_{0}$ is given by
\begin{align*}
    \partial_{\theta}\mathbb{F}_{n}(\underline{\theta}_{n})
    &=\partial_{\theta}\mathbb{F}_{n}(\theta_{0})+\int_{0}^{1}\partial^2_{\theta}\mathbb{F}_{n}(\underline{\ddot{\theta}}_{n})d\lambda(\underline{\theta}_{n}-\theta_{0}), 
\end{align*}
where $\underline{\ddot{\theta}}_{n}=\theta_{0}+\lambda(\underline{\theta}_{n}-\theta_{0})$. Noting that
\begin{align*}
\underline{\theta}_{n}-\theta_{0}=\Bigl(
    (\underline{\theta}_{n}-\theta_{0})_{\mathcal{F}_{1}}^{\top},\ 
     0^{\top}\Bigr)^{\top},
\end{align*}
we have
\begin{align*}
    \partial_{\theta_{\mathcal{F}_{1}}}\mathbb{F}_{n}(\underline{\theta}_{n})
    &=\partial_{\theta_{\mathcal{F}_{1}}}\mathbb{F}_{n}(\theta_{0})+\int_{0}^{1}\partial^2_{\theta_{\mathcal{F}_{1}}}\mathbb{F}_{n}(\underline{\ddot{\theta}}_{n})d\lambda(\underline{\theta}_{n}-\theta_{0})_{\mathcal{F}_{1}}.
\end{align*}
As it holds from the definition of $\underline{\theta}_{n}$ that $\partial_{\theta_{\mathcal{F}_{1}}}\mathbb{F}_{n}(\underline{\theta}_{n})=0$, 
\begin{align}
    -\sqrt{n}\partial_{\theta_{\mathcal{F}_{1}}}\mathbb{F}_{n}(\theta_{0})
    =\int_{0}^{1}\partial^2_{\theta_{\mathcal{F}_{1}}}\mathbb{F}_{n}(\underline{\ddot{\theta}}_{n})d\lambda\sqrt{n}(\underline{\theta}_{n}-\theta_{0})_{\mathcal{F}_{1}}.
    \label{eqseiyaku}
\end{align}
From (\ref{partialFprob}), one gets 
\begin{align}
    \begin{split}
    -\sqrt{n}\partial_{\theta_{\mathcal{F}_{1}}}\mathbb{F}_{n}(\theta_{0})&=2\Delta_{\mathcal{F}_1}^{\top}\tilde{V}_{n}(\theta_0)\sqrt{n}(\vech \mathbb{Q}_{\mathbb{XX}}-\vech{\bf{\Sigma}}(\theta_{0}))+o_{p}(1)\\
    &\stackrel{d}{\longrightarrow} N_{|\mathcal{F}_{1}|}\Bigl(0,4\Delta_{\mathcal{F}_1}^{\top}{\bf{W}}(\theta_0)^{-1}\Delta_{\mathcal{F}_1}\Bigr).
    \end{split}\label{partialnFseiyaku}
\end{align}
Furthermore, in a similar way to Theorem \ref{thetatheoremnon}, it follows from (\ref{barcons}) that
\begin{align}
    \int_{0}^{1}\partial^2_{\theta_{\mathcal{F}_{1}}}\mathbb{F}_{n}(\underline{\ddot{\theta}}_{n})d\lambda\stackrel{P}{\longrightarrow}2\Delta_{\mathcal{F}_1}^{\top}{\bf{W}}(\theta_0)^{-1}\Delta_{\mathcal{F}_1}.\label{intseiyaku}
\end{align}
Recall that $\Delta_{\mathcal{F}_1}^{\top}{\bf{W}}(\theta_0)^{-1}\Delta_{\mathcal{F}_1}$ is non-singular. Therefore, we see from (\ref{eqseiyaku})-(\ref{intseiyaku}) that
\begin{align*}
    \sqrt{n}(\underline{\theta}_{n}-\theta_{0})_{\mathcal{F}_{1}}\stackrel{d}{\longrightarrow}N_{|\mathcal{F}_{1}|}\Bigl(0,(\Delta_{\mathcal{F}_1}^{\top}{\bf{W}}(\theta_0)^{-1}\Delta_{\mathcal{F}_1})^{-1}\Bigr).
    &\qedhere
\end{align*}
\end{proof}
\subsection{Proof of Lemma \ref{seiyakuT}}\label{seiyakuTproof}
\begin{proof}[\textbf{Proof of Lemma \ref{seiyakuT}}]
The Taylor expansion of $n\mathbb{F}_{n}(\underline{\theta}_{n})$ around $\underline{\theta}_n=\theta_{0}$ is given by
\begin{align}
    \begin{split}
    n\mathbb{F}_{n}(\underline{\theta}_{n})
    &=n\mathbb{F}_{n}(\theta_{0})+n\partial_{\theta}\mathbb{F}_{n}(\theta_{0})^{\top}(\underline{\theta}_{n}-\theta_{0})\\
    &\quad+n(\underline{\theta}_{n}-\theta_{0})^{\top}\int_{0}^{1}(1-\lambda)\partial^2_{\theta}\mathbb{F}_{n}(\underline{\ddot{\theta}}_{n})d\lambda(\underline{\theta}_{n}-\theta_{0})\\
    &=n\mathbb{F}_{n}(\theta_{0})+n\partial_{\theta_{\mathcal{F}_{1}}}\mathbb{F}_{n}(\theta_{0})^{\top}(\underline{\theta}_{n}-\theta_{0})_{\mathcal{F}_{1}}\\
    &\quad+n(\underline{\theta}_{n}-\theta_{0})_{\mathcal{F}_{1}}^{\top}
    \int_{0}^{1}(1-\lambda)\partial^2_{\theta_{\mathcal{F}_1}}\mathbb{F}_{n}(\underline{\ddot{\theta}}_{n})d\lambda(\underline{\theta}_{n}-\theta_{0})_{\mathcal{F}_{1}}.
    \end{split}\label{nFseiyaku}
\end{align}
In a similar way to Lemma \ref{seiyakutheta}, Theorem \ref{Qtheoremnon} implies
\begin{align*}
    -\sqrt{n}\partial_{\theta_{\mathcal{F}_{1}}}\mathbb{F}_{n}(\theta_{0})&=2\Delta_{\mathcal{F}_1}^{\top}\tilde{V}_{n}(\theta_0)\sqrt{n}(\vech \mathbb{Q}_{\mathbb{XX}}-\vech{\bf{\Sigma}}(\theta_{0}))+o_{p}(1)
\end{align*}
and
\begin{align}
    \begin{split}
    \sqrt{n}(\underline{\theta}_{n}-\theta_{0})_{\mathcal{F}_{1}}&={\bf{A}}_{\mathcal{F}}^{11}(\theta_0)^{-1}\Delta_{\mathcal{F}_{1}}^{\top}\tilde{V}_{n}(\theta_0)\sqrt{n}(\vech \mathbb{Q}_{\mathbb{XX}}-\vech{\bf{\Sigma}}(\theta_{0}))+o_{p}(1)\label{nthetaseiyaku}
    \end{split}
\end{align}
under $H_0$. Let
\begin{align*}
     \underline{H}_{n}(\theta_0)=\tilde{V}_{n}(\theta_0)\Delta_{\mathcal{F}_{1}}(\Delta_{\mathcal{F}_1}^{\top}{\bf{W}}(\theta_0)^{-1}\Delta_{\mathcal{F}_1})^{-1}\Delta_{\mathcal{F}_{1}}^{\top}\tilde{V}_{n}(\theta_0).
\end{align*}
It holds from Theorem \ref{Qtheoremnon}, (\ref{partialnFseiyaku}) and (\ref{nthetaseiyaku}) that under $H_0$,
\begin{align}
\begin{split}
    &\quad\ n\partial_{\theta_{\mathcal{F}_{1}}}\mathbb{F}_{n}(\theta_{0})^{\top}(\underline{\theta}_{n}-\theta_{0})_{\mathcal{F}_{1}}\\
    &=-2\sqrt{n}(\vech \mathbb{Q}_{\mathbb{XX}}-\vech{\bf{\Sigma}}(\theta_{0}))^{\top}\underline{H}_{n}(\theta_0)\sqrt{n}(\vech \mathbb{Q}_{\mathbb{XX}}-\vech{\bf{\Sigma}}(\theta_{0}))+o_p(1).
\end{split}\label{nF}
\end{align}
In an analogous manner to Theorem $3$, (\ref{barcons}) yields
\begin{align*}
    \int_{0}^{1}(1-\lambda)\partial^2_{\theta_{\mathcal{F}_{1}}}\mathbb{F}_{n}(\underline{\ddot{\theta}}_{n})d\lambda\stackrel{P}{\longrightarrow}\Delta_{\mathcal{F}_1}^{\top}{\bf{W}}(\theta_0)^{-1}\Delta_{\mathcal{F}_1}
\end{align*}
under $H_0$, so that we see from Theorem \ref{Qtheoremnon} and (\ref{nthetaseiyaku}) that 
\begin{align}
\begin{split}
    &\quad\ n(\underline{\theta}_{n}-\theta_{0})_{\mathcal{F}_{1}}^{\top}
    \int_{0}^{1}(1-\lambda)\partial^2_{\theta_{\mathcal{F}_1}}\mathbb{F}_{n}(\underline{\ddot{\theta}}_{n})d\lambda(\underline{\theta}_{n}-\theta_{0})_{\mathcal{F}_{1}}\\
    &=\sqrt{n}(\vech \mathbb{Q}_{\mathbb{XX}}-\vech{\bf{\Sigma}}(\theta_{0}))^{\top}\underline{H}_{n}(\theta_0)\sqrt{n}(\vech \mathbb{Q}_{\mathbb{XX}}-\vech{\bf{\Sigma}}(\theta_{0}))+o_p(1)\label{nFn}
\end{split}
\end{align}
under $H_0$. Let 
\begin{align*}
    \underline{P}_{n}(\theta_0)&=\tilde{V}_{n}(\theta_0)^{-\frac{1}{2}}(\tilde{V}_{n}(\theta_0)-\underline{H}_{n}(\theta_0))\tilde{V}_{n}(\theta_0)^{-\frac{1}{2}}.
\end{align*}
Lemma \ref{Vproblemma} and Slutsky's theorem imply that
under $H_0$, 
\begin{align*}
    \underline{H}_n(\theta_0)\stackrel{P}{\longrightarrow}\underline{H}(\theta_0),\quad \underline{P}_n(\theta_0)\stackrel{P}{\longrightarrow}\underline{P}(\theta_0),
\end{align*}
where
\begin{align*}
    \underline{H}(\theta_0)&={\bf{W}}(\theta_0)^{-1}\Delta_{\mathcal{F}_{1}}(\Delta_{\mathcal{F}_1}^{\top}{\bf{W}}(\theta_0)^{-1}\Delta_{\mathcal{F}_1})^{-1}\Delta_{\mathcal{F}_{1}}^{\top}{\bf{W}}(\theta_0)^{-1},\\
    \underline{P}(\theta_0)&={\bf{W}}(\theta_0)^{\frac{1}{2}}({\bf{W}}(\theta_0)^{-1}-\underline{H}(\theta_0)){\bf{W}}(\theta_0)^{\frac{1}{2}}.
\end{align*}
Recall that
\begin{align*}
    \gamma_{n}=\tilde{V}_n(\theta_0)^{\frac{1}{2}}\sqrt{n}(\vech \mathbb{Q}_{\mathbb{XX}}-\vech{\bf{\Sigma}}(\theta_{0})).
\end{align*}
In a similar way to Theorem $3$, it follows from (\ref{nFseiyaku}), (\ref{nF}) and (\ref{nFn}) that under $H_0$,
\begin{align*}
    &\quad\ n\mathbb{F}_{n}(\bar{\theta}_{n})\\
    &=\sqrt{n}(\vech \mathbb{Q}_{\mathbb{XX}}-\vech{\bf{\Sigma}}(\theta_{0}))^{\top}
    \bigl(\tilde{V}_{n}(\theta_0)-\underline{H}_{n}(\theta_0)\bigr)\sqrt{n}(\vech \mathbb{Q}_{\mathbb{XX}}-\vech{\bf{\Sigma}}(\theta_{0}))+o_p(1)\\
    &=\gamma_{n}^{\top}\underline{P}_{n}(\theta_0)\gamma_{n}+o_p(1)\\
    &\stackrel{d}{\longrightarrow}\gamma^{\top}\underline{P}(\theta_0)\gamma.
\end{align*}
Since ${\bf{W}}(\theta_0)$ and $\underline{H}(\theta_0)$ are symmetric matrices, one gets $\underline{P}(\theta_0)=\underline{P}(\theta_0)^{\top}$. Furthermore, we have
\begin{align*}
    \underline{H}(\theta_0){\bf{W}}(\theta_0)\underline{H}(\theta_0)&=\underline{H}(\theta_0),
\end{align*}
which implies
\begin{align*}
    \underline{P}(\theta_0)^2&={\bf{W}}(\theta_0)^{\frac{1}{2}}\bigl({\bf{W}}(\theta_0)^{-1}-\underline{H}(\theta_0)\bigr){\bf{W}}(\theta_0)\bigl({\bf{W}}(\theta_0)^{-1}-\underline{H}(\theta_0)\bigr){\bf{W}}(\theta_0)^{\frac{1}{2}}\\
    &={\bf{W}}(\theta_0)^{\frac{1}{2}}\bigl({\bf{W}}(\theta_0)^{-1}-2\underline{H}(\theta_0)+\underline{H}(\theta_0){\bf{W}}(\theta_0)\underline{H}(\theta_0)\bigr){\bf{W}}(\theta_0)^{\frac{1}{2}}\\
    &={\bf{W}}(\theta_0)^{\frac{1}{2}}\bigl({\bf{W}}(\theta_0)^{-1}-\underline{H}(\theta_0)\bigr){\bf{W}}(\theta_0)^{\frac{1}{2}}=\underline{P}(\theta_0).
\end{align*}
Hence, $\underline{P}(\theta_0)$ is a projection matrix and
\begin{align*}
    \rank{\underline{P}(\theta_0)}&=\tr{\underline{P}(\theta_0)}\\
    &=\tr{\bigl(\mathbb{I}_{\bar{p}}-{\bf{W}}(\theta_0)\underline{H}(\theta_0)\bigr)}\\
    &=\bar{p}-\tr{\bigl(\Delta_{\mathcal{F}_{1}}(\Delta_{\mathcal{F}_1}^{\top}{\bf{W}}(\theta_0)^{-1}\Delta_{\mathcal{F}_1})^{-1}\Delta_{\mathcal{F}_{1}}^{\top}{\bf{W}}(\theta_0)^{-1}\bigr)}\\
    &=\bar{p}-\tr{\mathbb{I}_{|\mathcal{F}_{1}|}}=\bar{p}-|\mathcal{F}_{1}|.
\end{align*}
Therefore, it holds from Lemma 7 in Kusano and Uchida \cite{Kusano(2022)} that
\begin{align*}
    \gamma^{\top}\underline{P}(\theta_0)\gamma\sim\chi^2_{\bar{p}-|\mathcal{F}_{1}|},
\end{align*}
which implies
\begin{align*}
     n\mathbb{F}_{n}(\underline{\theta}_{n})\stackrel{d}{\longrightarrow}\chi^2_{\bar{p}-|\mathcal{F}_{1}|}   
  \end{align*}
under $H_0$.
\end{proof}
\subsection{Proof of Lemma \ref{starop1}}\label{testconsproof}
Let $\bar{{\bf{W}}}(\bar{\theta})=\rm{V}({\bf{\Sigma}}_0,{\bf{\Sigma}}(\bar{\theta}))$.
\begin{lemma}\label{Vbar}
Under \textrm{\textbf{[A1]}}, \textrm{\textbf{[B1]}},
\textrm{\textbf{[C1]}} and
\textrm{\textbf{[D1]}},
as $h_n\longrightarrow0$,
\begin{align}
    \sqrt{n}(\tilde{V}_n(\bar{\theta})-\bar{{\bf{W}}}(\bar{\theta}))=O_p(1) \label{Vasym}
\end{align}
and
\begin{align}
    \sqrt{n}(\partial_{\theta^{(i)}}\tilde{V}_n(\bar{\theta})-\partial_{\theta^{(i)}}\bar{{\bf{W}}}(\bar{\theta}))=O_p(1) \label{Vdasym}
\end{align}
for $i=1,\cdots,q$.
\end{lemma}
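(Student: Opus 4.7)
The plan is to combine Theorem~1 with a first-order Taylor expansion of $\rm{V}$ in its first argument, after reducing to the event where $\mathbb{Q}_{\mathbb{XX}}$ is positive definite. First I would invoke Theorem~1 and Lemma~\ref{Sigmaposlemma}: since $\mathbb{Q}_{\mathbb{XX}} \stackrel{P}{\longrightarrow} {\bf{\Sigma}}_0 > 0$, the event $J_n = \{\mathbb{Q}_{\mathbb{XX}} > 0\}$ has probability tending to one, and on $J_n$ one has $\tilde{V}_n(\bar\theta) = \rm{V}(\mathbb{Q}_{\mathbb{XX}}, {\bf{\Sigma}}(\bar\theta))$. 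It therefore suffices to show that $\sqrt n \|\rm{V}(\mathbb{Q}_{\mathbb{XX}}, {\bf{\Sigma}}(\bar\theta)) - \rm{V}({\bf{\Sigma}}_0, {\bf{\Sigma}}(\bar\theta))\| = O_p(1)$, together with the analogous bound for each $\partial_{\theta^{(i)}} \rm{V}$.

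Next I would establish that $M \mapsto \rm{V}(M, {\bf{\Sigma}}(\bar\theta))$ is continuously differentiable in a neighborhood of ${\bf{\Sigma}}_0$. Since both ${\bf{\Sigma}}_0$ and ${\bf{\Sigma}}(\bar\theta)$ are positive definite, there is a convex neighborhood $U$ of ${\bf{\Sigma}}_0$ such that for every $M \in U$ and every $\lambda_1,\lambda_2 \in [0,1]$ the convex combination ${\bf{\Sigma}}(\bar\theta) + \lambda_1\lambda_2(M - {\bf{\Sigma}}(\bar\theta))$ is positive definite with smallest eigenvalue bounded below by a constant $c > 0$, uniformly over $U \times [0,1]^2$. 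The integrand defining $\rm{V}$ is then smooth in $M$ with derivatives bounded uniformly in $(\lambda_1,\lambda_2)$, and differentiation under the integral sign shows that both $\rm{V}(\cdot, {\bf{\Sigma}}(\bar\theta))$ and $\partial_{\theta^{(i)}} \rm{V}(\cdot, {\bf{\Sigma}}(\theta))|_{\theta=\bar\theta}$ are $C^1$ on $U$ with bounded Jacobian.

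Then on the event $J_n \cap \{\mathbb{Q}_{\mathbb{XX}} \in U\}$, which still has probability tending to one, a componentwise mean-value expansion would give
\begin{align*}
    \bigl\|\rm{V}(\mathbb{Q}_{\mathbb{XX}}, {\bf{\Sigma}}(\bar\theta)) - \rm{V}({\bf{\Sigma}}_0, {\bf{\Sigma}}(\bar\theta))\bigr\| \leq C_n \|\mathbb{Q}_{\mathbb{XX}} - {\bf{\Sigma}}_0\|
\end{align*}
with $C_n = O_p(1)$, and the same estimate for $\partial_{\theta^{(i)}} \rm{V}$; multiplying by $\sqrt n$ and applying the $\sqrt n(\mathbb{Q}_{\mathbb{XX}} - {\bf{\Sigma}}_0) = O_p(1)$ conclusion from Theorem~1 would deliver (\ref{Vasym}) and (\ref{Vdasym}). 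The main obstacle I expect is the uniform lower bound $c > 0$ on the smallest eigenvalue of ${\bf{\Sigma}}(\bar\theta) + \lambda_1\lambda_2(M - {\bf{\Sigma}}(\bar\theta))$ over $U \times [0,1]^2$; this is what legitimizes the differentiation under the integral sign and guarantees boundedness of $C_n$, and it follows by shrinking $U$ enough that continuity of the smallest-eigenvalue functional, combined with compactness of $[0,1]^2$, yields the required uniform bound.
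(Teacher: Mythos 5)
Your argument is correct and is essentially the paper's: both proofs rest on the differentiability of $M\mapsto \rm{V}(M,{\bf{\Sigma}}(\bar{\theta}))$ (and of its $\theta$-derivatives) at $M={\bf{\Sigma}}_0$ together with the $\sqrt{n}$-tightness of $\mathbb{Q}_{\mathbb{XX}}-{\bf{\Sigma}}_0$ from Theorem \ref{Qtheoremnon}, after discarding the asymptotically negligible event where $\mathbb{Q}_{\mathbb{XX}}$ is singular. The paper simply packages this as the delta method applied componentwise to $g=\tilde{V}_{\bar{\theta}}\circ\vech^{-1}$, whereas you make the same first-order expansion explicit via a mean-value bound with a locally uniform eigenvalue estimate; the two are interchangeable here.
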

\begin{proof}
Set $Y_n=\vech{\mathbb{Q}_{\mathbb{XX}}}$ and $Y_0=\vech{{\bf{\Sigma}}_0}$. It holds from Theorem \ref{Qtheoremnon} that
\begin{align}
    \sqrt{n}(Y_n-Y_0)\stackrel{d}{\longrightarrow}Z,\label{YOp1}
\end{align}
where $Z\sim N_{\bar{p}}(0,{\bf{W}}_0)$. 
Let $\tilde{V}_{\theta}(X)=\rm{V}(X,{\bf{\Sigma}}(\theta))$ for any matrix $X\in\mathbb{R}^{p\times p}$, and
\begin{align*}
   g=\tilde{V}_{\bar{\theta}}\circ\vech^{-1},
\end{align*}
where $\vech^{-1}$ is the inverse of vech operator. It follows from the definition of $g$ and Lemma \ref{Sigmaposlemma} that
\begin{align*}
    g(Y_n)&=\tilde{V}_{\bar{\theta}}(\mathbb{Q}_{\mathbb{XX}})=\tilde{V}_n(\bar{\theta})
\end{align*}
and
\begin{align*}
    g(Y_0)&=\tilde{V}_{\bar{\theta}}({\bf{\Sigma}}_0)=\bar{{\bf{W}}}(\bar{\theta}).
\end{align*}
Note that $g(y)_{ij}$ is differentiable at $y=Y_0$ for $i,j=1,\cdots,p$. Therefore, we see from the delta method and (\ref{YOp1}) that
\begin{align*}
    \sqrt{n}(g(Y_n)_{ij}-g(Y_0)_{ij})\stackrel{d}{\longrightarrow}\partial_{y}g(Y_0)_{ij}^{\top}Z
\end{align*}
for $i,j=1,\cdots,p$, which implies (\ref{Vasym}). In an analogous manner, one has (\ref{Vdasym}).
\end{proof}
\begin{proof}[\textbf{Proof of Lemma \ref{starop1}}]
The Taylor expansion of $\mathbb{F}_{n}(\hat{\theta}_{n})$ around $\hat{\theta}_n=\bar{\theta}$ is given by
\begin{align*}
    \begin{split}
    \partial_{\theta}\mathbb{F}_{n}(\hat{\theta}_{n})&=\partial_{\theta}\mathbb{F}_{n}(\bar{\theta})+\int_{0}^{1}\partial^2_{\theta}\mathbb{F}_{n}(\ddot{\bar{\theta}}_{n})d\lambda(\hat{\theta}_{n}-\bar{\theta}),
    \end{split}
\end{align*}
where $\ddot{\bar{\theta}}_{n}=\bar{\theta}+\lambda(\hat{\theta}_{n}-\bar{\theta})$. The definition of $\hat{\theta}_{n}$ yields
\begin{align}
    \begin{split}
    -\sqrt{n}\partial_{\theta}\mathbb{F}_{n}(\bar{\theta})=\int_{0}^{1}\partial^2_{\theta}\mathbb{F}_{n}(\ddot{\bar{\theta}}_{n})d\lambda\sqrt{n}(\hat{\theta}_{n}-\bar{\theta}).
    \end{split}\label{bareq}
\end{align}
Let $u_{n}=\vech \mathbb{Q}_{\mathbb{XX}}-\vech{\bf{\Sigma}}_0$ and $r=\vech{{\bf{\Sigma}}_0}-\vech{{\bf{\Sigma}}(\bar{\theta})}$.
Note that for $i=1,\cdots, q$, 
\begin{align*}
    -\partial_{\theta^{(i)}}\mathbb{F}_{n}(\bar{\theta})&=2\bar{\Delta}_{i}^{\top}V_n(\bar{\theta})(u_n+r)-(u_n+r)^{\top}\partial_{\theta^{(i)}}V_n(\bar{\theta})(u_n+r),
\end{align*}
where $\bar{\Delta}_{i}=\partial_{\theta^{(i)}}\vech{\bf{\Sigma}}(\bar{\theta})$. 
One has 
\begin{align*}
    -\partial_{\theta^{(i)}}\mathbb{F}_{n}(\bar{\theta})&=2\bar{\Delta}_{i}^{\top}\tilde{V}_n(\bar{\theta})u_n+2\bar{\Delta}_{i}^{\top}(
    \tilde{V}_n(\bar{\theta})-\bar{{\bf{W}}}(\bar{\theta}))r+2\bar{\Delta}_{i}^{\top}\bar{{\bf{W}}}(\bar{\theta})r\\
    &\quad-u_n^{\top}\partial_{\theta^{(i)}}\tilde{V}_n(\bar{\theta})u_n-2u_n^{\top}\partial_{\theta^{(i)}}\tilde{V}_n(\bar{\theta})r\\
    &\quad-r^{\top}(\partial_{\theta^{(i)}}\tilde{V}_n(\bar{\theta})-\partial_{\theta^{(i)}}\bar{{\bf{W}}}(\bar{\theta}))r-r^{\top}\partial_{\theta^{(i)}}\bar{{\bf{W}}}(\bar{\theta})r.
\end{align*}
Since it holds from the definition of $\bar{\theta}$ that
\begin{align*}
    \partial_{\theta^{(i)}}\mathbb{U}(\bar{\theta})
    &=2\bar{\Delta}_{i}^{\top}\bar{{\bf{W}}}(\bar{\theta})r-r^{\top}\partial_{\theta^{(i)}}\bar{{\bf{W}}}(\bar{\theta})r=0,
\end{align*}
Theorem \ref{Qtheoremnon}, Lemma \ref{Vproblemma} and Lemma \ref{Vbar} imply
\begin{align}
    \begin{split}
    -\sqrt{n}\partial_{\theta^{(i)}}\mathbb{F}_{n}(\bar{\theta})&=2\bar{\Delta}_{i}^{\top}\tilde{V}_n(\bar{\theta})\sqrt{n}u_n+2\bar{\Delta}_{i}^{\top}\sqrt{n}(\tilde{V}_n(\bar{\theta})-\bar{{\bf{W}}}(\bar{\theta}))r\\
    &\quad-u_n^{\top}\partial_{\theta^{(i)}}\tilde{V}_n(\bar{\theta})\sqrt{n}u_n-2\sqrt{n}u_n^{\top}\partial_{\theta^{(i)}}\tilde{V}_n(\bar{\theta})r\\
    &\quad-r^{\top}\sqrt{n}(\partial_{\theta^{(i)}}\tilde{V}_n(\bar{\theta})-\partial_{\theta^{(i)}}\bar{{\bf{W}}}(\bar{\theta}))r\\
    &=O_p(1)\label{nFOp1}
    \end{split}
\end{align}
under $H_1$ for $i=1,\cdots, q$. In a similar way to Theorem \ref{thetatheoremnon}, from Lemma \ref{starconslemma}, we obtain
\begin{align}
    \int_{0}^{1}\partial^2_{\theta}\mathbb{F}_{n}(\ddot{\bar{\theta}}_{n})d\lambda\stackrel{P}{\longrightarrow}\partial^2_{\theta}\mathbb{U}(\bar{\theta})\label{starprob}
\end{align}
under $H_1$. Therefore, it follows from (\ref{bareq})-(\ref{starprob}) and \textbf{[F2]} that under $H_1$,
\begin{align*}
    \sqrt{n}(\hat{\theta}_{n}-\bar{\theta})=O_p(1). &\qedhere
\end{align*}
\end{proof}
\newpage
\subsection{Simulation results of the ergodic case}\label{ergodic}
Set $(n,h_n,T)=(10^6,10^{-4},10^2)$. First, we consider the correctly specified parametric model in Section 5.2. Table \ref{Qtableer}, Table \ref{thetatableer} and Table\ref{testtable1er} show a sample mean and a sample standard deviation of $\mathbb{Q}_{\mathbb{XX}}$, $\hat{\theta}_n$ and $\mathbb{T}_n$ respectively. Figure \ref{Qfigureer} shows histograms, Q-Q plots and empirical distributions of $\sqrt{n}((\mathbb{Q}_{\mathbb{XX}})_{ij}-({\bf{\Sigma}}_0)_{ij})$ for $i\leq j$ and $i,j=1,\cdots,6$. Figure \ref{thetafigureer} shows histograms, Q-Q plots and empirical distributions of $\sqrt{n}(\hat{\theta}_{n}^{(i)}-\theta_{0}^{(i)})$ for $i=1,\cdots,15$. Figure \ref{testfigure1er} shows Histogram, Q-Q plot  and empirical distribution of the test statistic $\mathbb{T}_n$. It seems from Tables \ref{Qtableer}-\ref{testtable1er} and Figures \ref{Qfigureer}-\ref{testfigure1er} that Theorems 5-7 hold true for this example. Next, we consider the missspecified parametric model in Section 5.3. Table 
\ref{testtable2er} shows the number of rejections of the quasi-likelihood ratio test in Model A and Model B. Table \ref{testtable3er} shows Quartiles of the test statistic $\mathbb{T}_{n}$ in Model A and Model B, which indicates that Theorem 8 holds true for this example.

\begin{table}[h]
    \ \\ \ \\ \ \\ \ \\ \ \\ \ \\ \ \\ 
    \begin{tabular}{ccccc}
    & $(\mathbb{Q}_{\mathbb{XX}})_{11}$ & $(\mathbb{Q}_{\mathbb{XX}})_{12}$ & $(\mathbb{Q}_{\mathbb{XX}})_{13}$ & $(\mathbb{Q}_{\mathbb{XX}})_{14}$ \\\hline
    Mean (True value) & 3.0003 (3.0000) & 4.0001 (4.0000) & 2.0001 (2.0000)& 6.0002 (6.0000) \\
    SD (Theoretical value) & 0.0042 (0.0042) & 0.0073 (0.0072)& 0.0053 (0.0053) & 0.0121 (0.0121)\\
    & $(\mathbb{Q}_{\mathbb{XX}})_{15}$ & $(\mathbb{Q}_{\mathbb{XX}})_{16}$ & $(\mathbb{Q}_{\mathbb{XX}})_{22}$ & $(\mathbb{Q}_{\mathbb{XX}})_{23}$\\\hline
    Mean (True value) & 6.0002 (6.0000) & 18.0008 (18.0000) & 12.0007 (12.0000) & 4.0002 (4.0000)\\
    SD (Theoretical value) & 0.0114 (0.0114) & 0.0342 (0.0341)& 0.0171 (0.0170) & 0.0105 (0.0106)  \\
    & $(\mathbb{Q}_{\mathbb{XX}})_{24}$ & $(\mathbb{Q}_{\mathbb{XX}})_{25}$ & $(\mathbb{Q}_{\mathbb{XX}})_{26}$ & $(\mathbb{Q}_{\mathbb{XX}})_{33}$\\\hline
    Mean (True value) & 12.0006 (12.0000) & 12.0005 (12.0000) & 36.0017 (36.0000) & 8.0007 (8.0000)\\
    SD (Theoretical value) & 0.0243 (0.0242) & 0.0227 (0.0227) & 0.0680 (0.0681) & 0.0113 (0.0113) \\
    & $(\mathbb{Q}_{\mathbb{XX}})_{34}$ & $(\mathbb{Q}_{\mathbb{XX}})_{35}$ & $(\mathbb{Q}_{\mathbb{XX}})_{36}$ & $(\mathbb{Q}_{\mathbb{XX}})_{44}$\\\hline
    Mean (True value) & 12.0002 (12.0000) & 10.0002 (10.0000) & 30.0006 (30.0000) & 37.0008 (37.0000)\\
    SD (Theoretical value) & 0.0209 (0.0210) & 0.0186 (0.0187)& 0.0559 (0.0560) & 0.0525 (0.0523) \\
    & $(\mathbb{Q}_{\mathbb{XX}})_{45}$ & $(\mathbb{Q}_{\mathbb{XX}})_{46}$ & $(\mathbb{Q}_{\mathbb{XX}})_{55}$ & $(\mathbb{Q}_{\mathbb{XX}})_{56}$\\\hline
    Mean (True value) & 30.0008 (30.0000) & 90.0024 (90.0000) & 31.0011 (31.0000) & 90.0031 (90.0000)\\
    SD (Theoretical value) & 0.0453 (0.0452) & 0.1364 (0.1357) & 0.0439 (0.0438) & 0.1298 (0.1294) \\
    & $(\mathbb{Q}_{\mathbb{XX}})_{66}$ &&& \\\hline
    Mean (True value) & 279.0107 (279.0000) & & &\\
    SD (Theoretical value) & 0.3965 (0.3946) & &  \\ \\
\end{tabular}
\caption{Sample mean and sample standard deviation (SD) of $\mathbb{Q}_{\mathbb{XX}}$.}\label{Qtableer}
\end{table}
\clearpage
\begin{table}[h]
\centering
\ \\ \ \\ \ \\ \ \\ \ \\ \ \\ \ \\ \ \\ \ \\ \ \\ \ \\ \ \\
\begin{tabular}{ccccc}
    & $\hat{\theta}_{n}^{(1)}$ & $\hat{\theta}_{n}^{(2)}$ & $\hat{\theta}_{n}^{(3)}$ & $\hat{\theta}_{n}^{(4)}$ \\\hline
    Mean (True value) & 2.0000 (2.0000) & 3.0000 (3.0000) & 3.0000 (3.0000)& 1.0000 (1.0000) \\
    SD (Theoretical value) & 0.0026 (0.0026) & 0.0033 (0.0034) & 0.0009 (0.0008) & 0.0036 (0.0036) \\
    & $\hat{\theta}_{n}^{(5)}$ & $\hat{\theta}_{n}^{(6)}$ & $\hat{\theta}_{n}^{(7)}$ & $\hat{\theta}_{n}^{(8)}$\\\hline
    Mean (True value) & 2.0000 (2.0000) & 2.0001 (2.0000) & 2.0000 (2.0000) & 4.0001 (4.0000)\\
    SD (Theoretical value) & 0.0030 (0.0030) & 0.0044 (0.0044)& 0.0045 (0.0046) & 0.0100 (0.0100) \\
    & $\hat{\theta}_{n}^{(9)}$ & $\hat{\theta}_{n}^{(10)}$ & $\hat{\theta}_{n}^{(11)}$ & $\hat{\theta}_{n}^{(12)}$\\\hline
    Mean (True value) & 1.0002 (1.0000)  &  4.0004 (4.0000) & 4.0006 (4.0000) & 0.9999 (1.0000)\\
    SD (Theoretical value) & 0.0024 (0.0024) & 0.0095 (0.0096) & 0.0059 (0.0060) & 0.0181 (0.0182) \\
    & $\hat{\theta}_{n}^{(13)}$ & $\hat{\theta}_{n}^{(14)}$ & $\hat{\theta}_{n}^{(15)}$ & \\\hline
    Mean (True value) & 1.0001 (1.0000) & 9.0013 (9.0000) & 4.0002 (4.0000)&\\
    SD (Theoretical value) & 0.0038 (0.0038) & 0.0342 (0.0343) & 0.0110 (0.0109)\\ \\
\end{tabular}
\caption{Sample mean and sample standard deviation (SD) of $\hat{\theta}_{n}$.} \label{thetatableer}
\end{table}
\newpage
\begin{figure}[h]
    \ \\ \ \\ \ \\
    \includegraphics[width=0.3\columnwidth]{./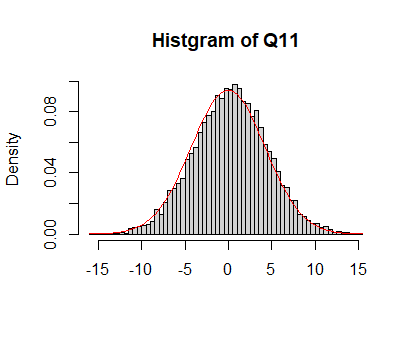}
    \includegraphics[width=0.3\columnwidth]{./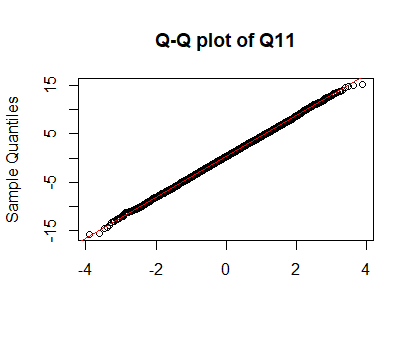}
    \includegraphics[width=0.3\columnwidth]{./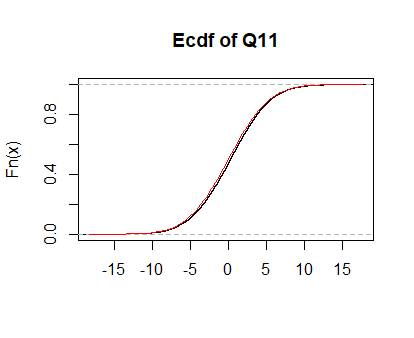}
    \\
    \includegraphics[width=0.3\columnwidth]{./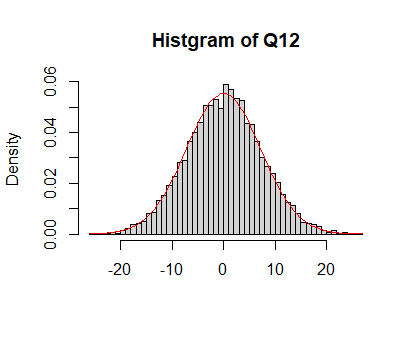}
    \includegraphics[width=0.3\columnwidth]{./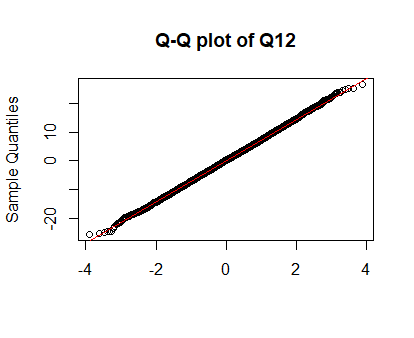}
    \includegraphics[width=0.3\columnwidth]{./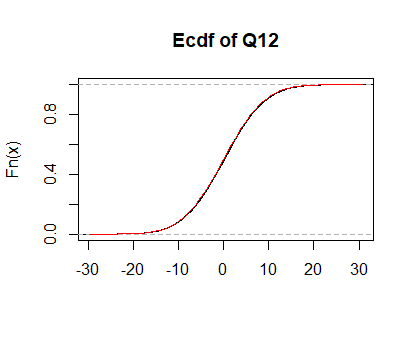}
    \\
    \includegraphics[width=0.3\columnwidth]{./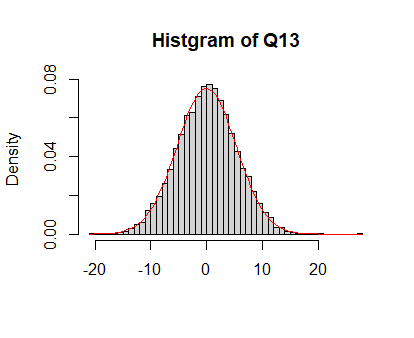}
    \includegraphics[width=0.3\columnwidth]{./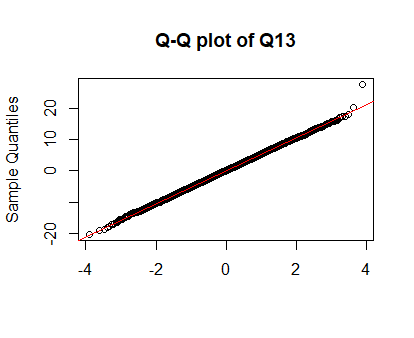}
    \includegraphics[width=0.3\columnwidth]{./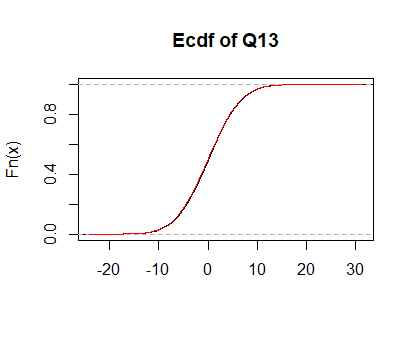}
    \\
    \includegraphics[width=0.3\columnwidth]{./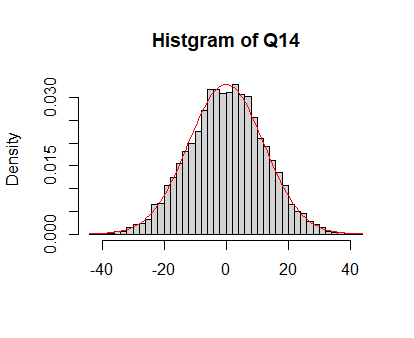}
    \includegraphics[width=0.3\columnwidth]{./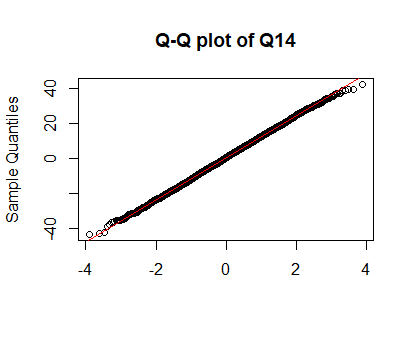}
    \includegraphics[width=0.3\columnwidth]{./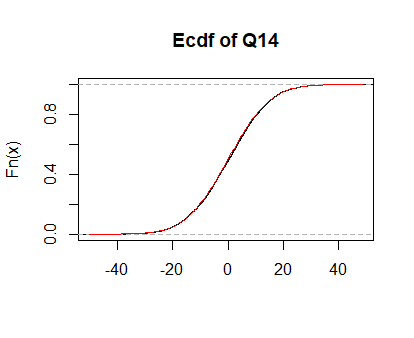}
    \\ 
    \includegraphics[width=0.3\columnwidth]{./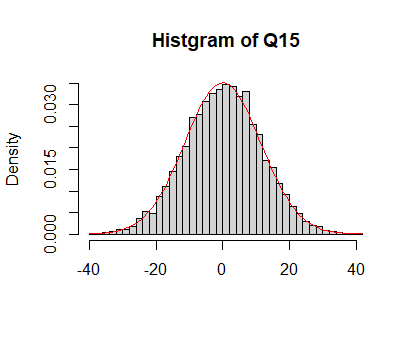}
    \includegraphics[width=0.3\columnwidth]{./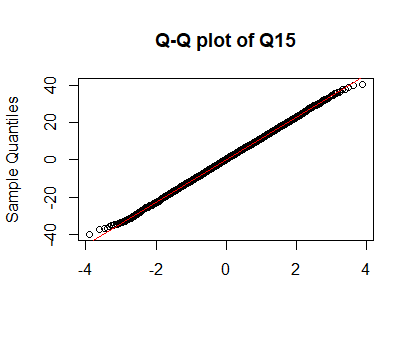}
    \includegraphics[width=0.3\columnwidth]{./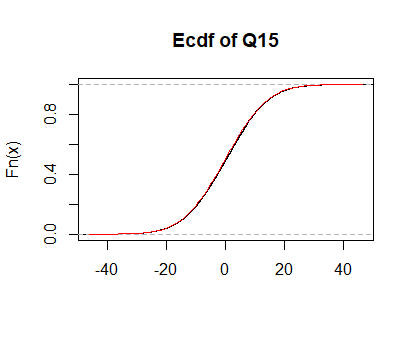}
\end{figure}
\newpage
\begin{figure}[h]
    \ \\ \ \\ \ \\
    \includegraphics[width=0.3\columnwidth]{./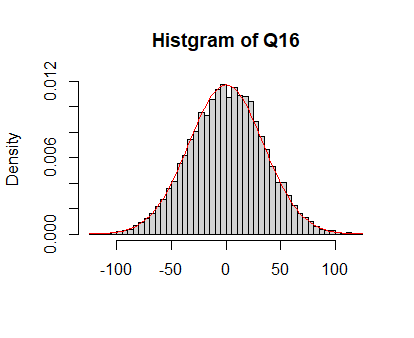}
    \includegraphics[width=0.3\columnwidth]{./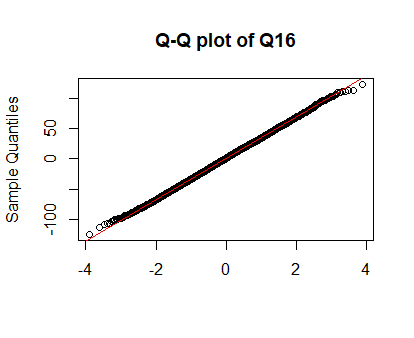}
    \includegraphics[width=0.3\columnwidth]{./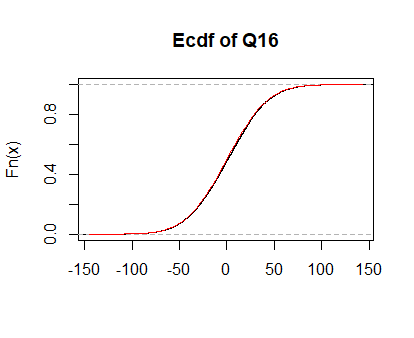}
    \\
    \includegraphics[width=0.3\columnwidth]{./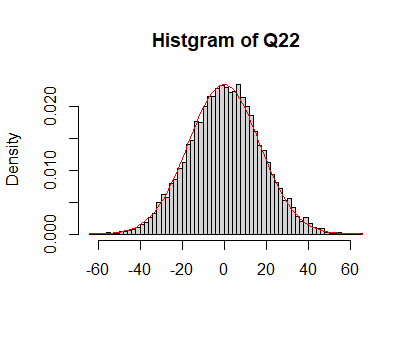}
    \includegraphics[width=0.3\columnwidth]{./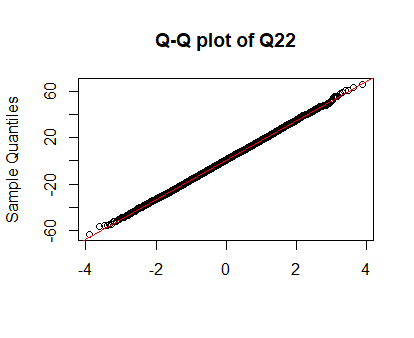}
    \includegraphics[width=0.3\columnwidth]{./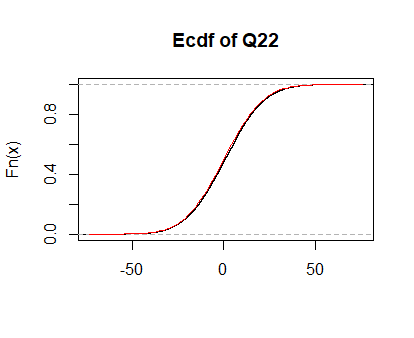}
    \\ 
    \includegraphics[width=0.3\columnwidth]{./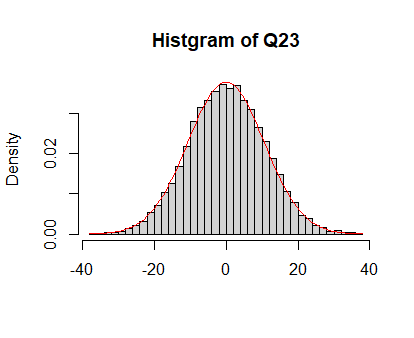}
    \includegraphics[width=0.3\columnwidth]{./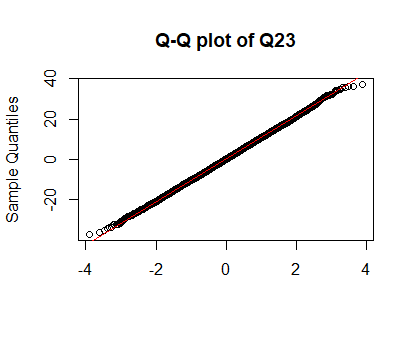}
    \includegraphics[width=0.3\columnwidth]{./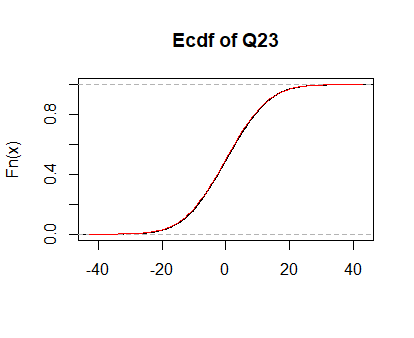}
    \\ 
    \includegraphics[width=0.3\columnwidth]{./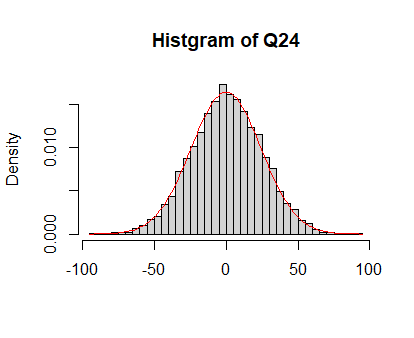}
    \includegraphics[width=0.3\columnwidth]{./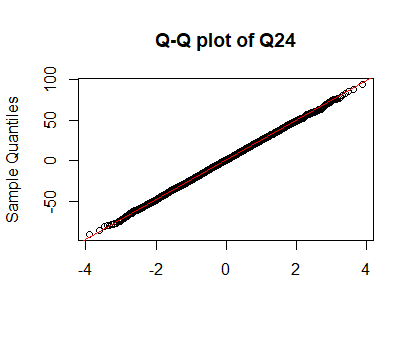}
    \includegraphics[width=0.3\columnwidth]{./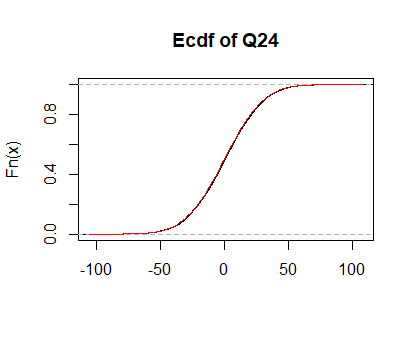}
    \\
    \includegraphics[width=0.3\columnwidth]{./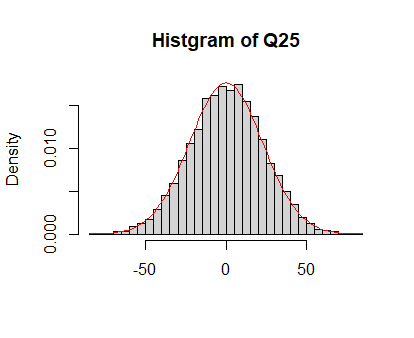}
    \includegraphics[width=0.3\columnwidth]{./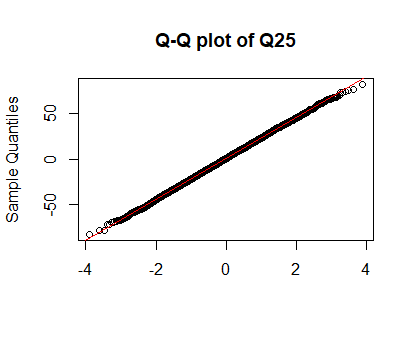}
    \includegraphics[width=0.3\columnwidth]{./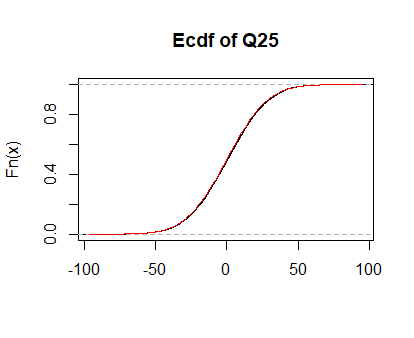}
    \\
\end{figure}
\newpage
\begin{figure}[h]
    \ \\ \ \\ \ \\
    \includegraphics[width=0.3\columnwidth]{./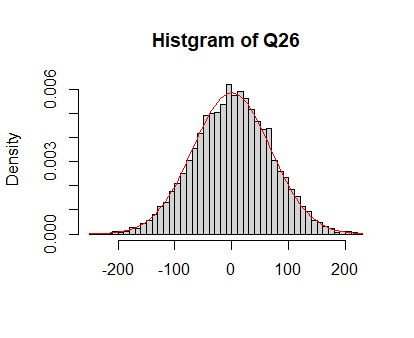}
    \includegraphics[width=0.3\columnwidth]{./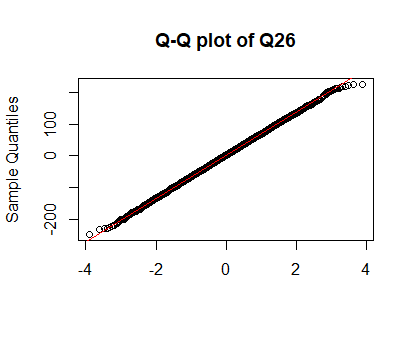}
    \includegraphics[width=0.3\columnwidth]{./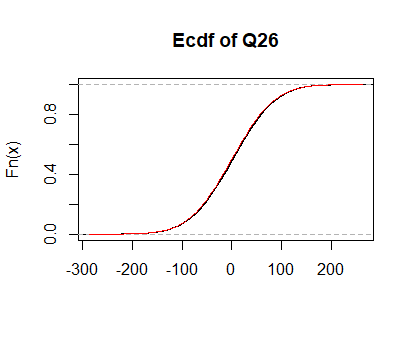}
    \\ 
    \includegraphics[width=0.3\columnwidth]{./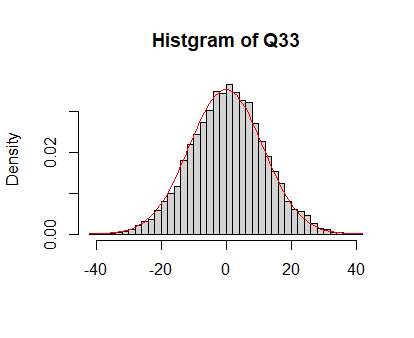}
    \includegraphics[width=0.3\columnwidth]{./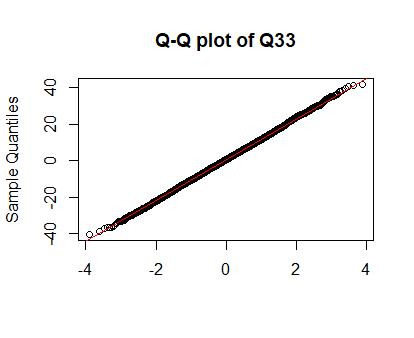}
    \includegraphics[width=0.3\columnwidth]{./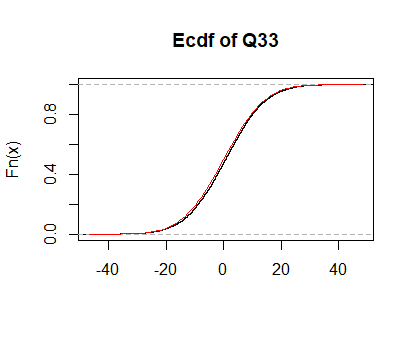}
    \\ 
    \includegraphics[width=0.3\columnwidth]{./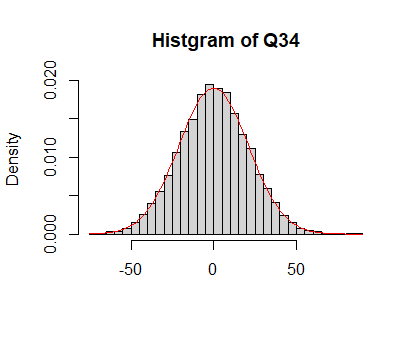}
    \includegraphics[width=0.3\columnwidth]{./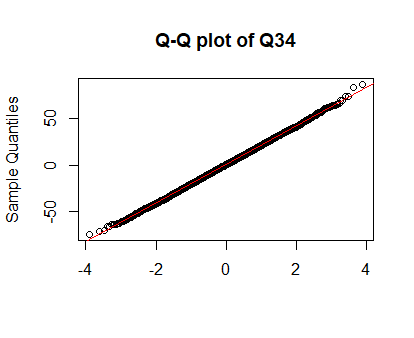}
    \includegraphics[width=0.3\columnwidth]{./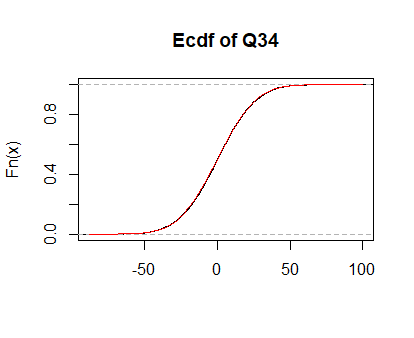}
    \\
    \includegraphics[width=0.3\columnwidth]{./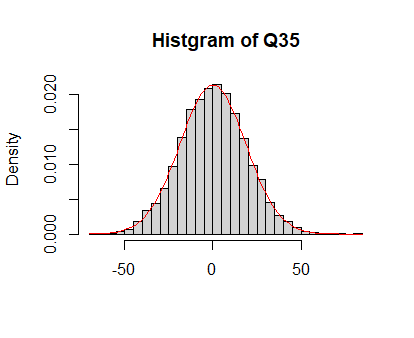}
    \includegraphics[width=0.3\columnwidth]{./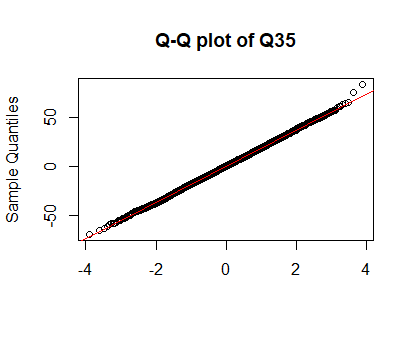}
    \includegraphics[width=0.3\columnwidth]{./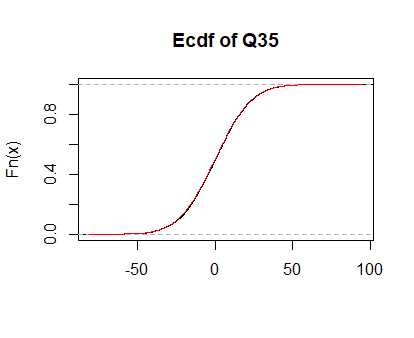}
    \\ 
    \includegraphics[width=0.3\columnwidth]{./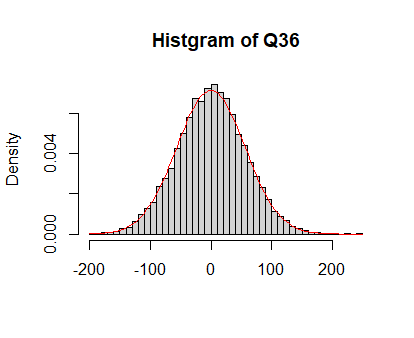}
    \includegraphics[width=0.3\columnwidth]{./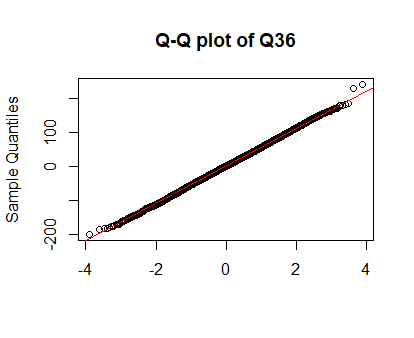}
    \includegraphics[width=0.3\columnwidth]{./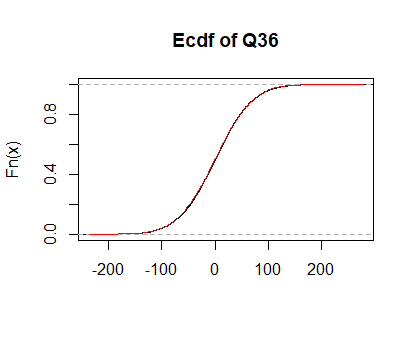}
    \\
\end{figure}
\newpage
\begin{figure}[h]
    \ \\ \ \\ \ \\
    \includegraphics[width=0.3\columnwidth]{./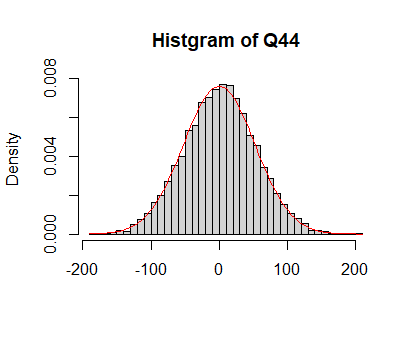}
    \includegraphics[width=0.3\columnwidth]{./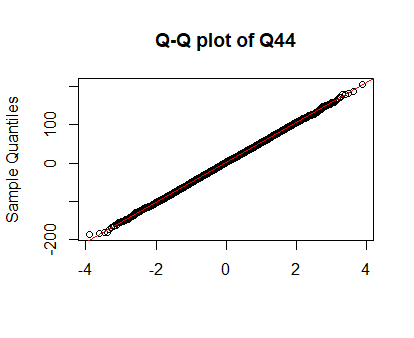}
    \includegraphics[width=0.3\columnwidth]{./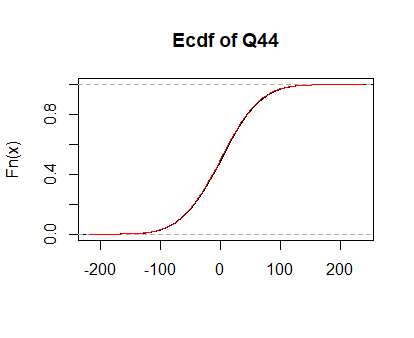}
    \\ 
    \includegraphics[width=0.3\columnwidth]{./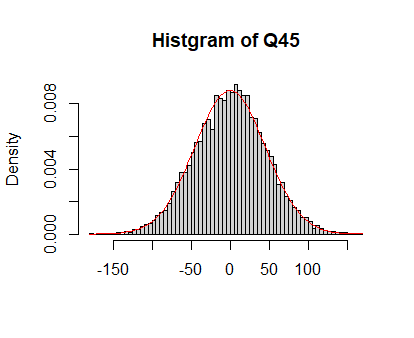}
    \includegraphics[width=0.3\columnwidth]{./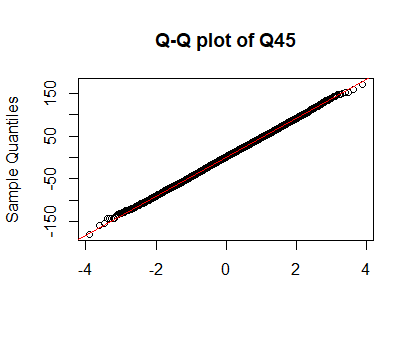}
    \includegraphics[width=0.3\columnwidth]{./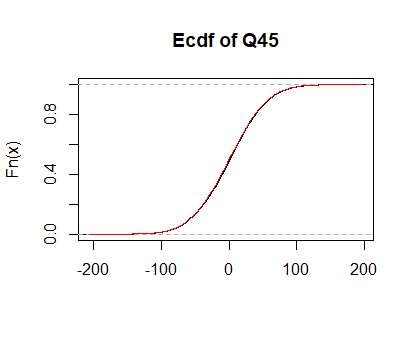}
    \\
    \includegraphics[width=0.3\columnwidth]{./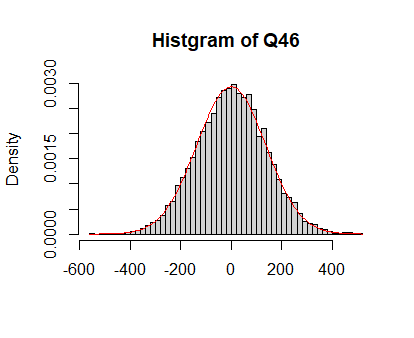}
    \includegraphics[width=0.3\columnwidth]{./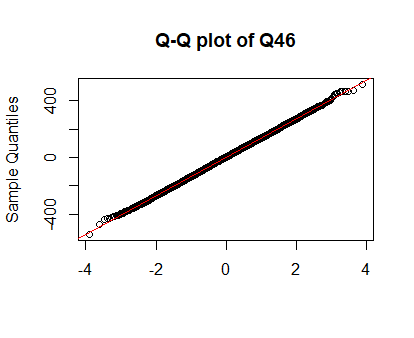}
    \includegraphics[width=0.3\columnwidth]{./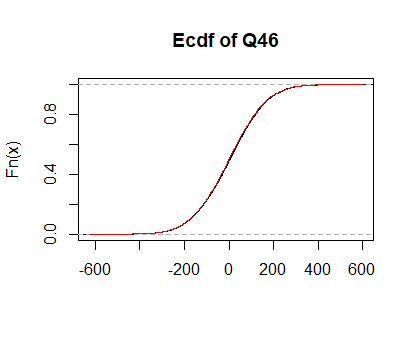}
    \\
    \includegraphics[width=0.3\columnwidth]{./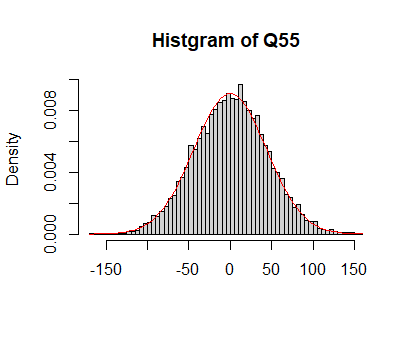}
    \includegraphics[width=0.3\columnwidth]{./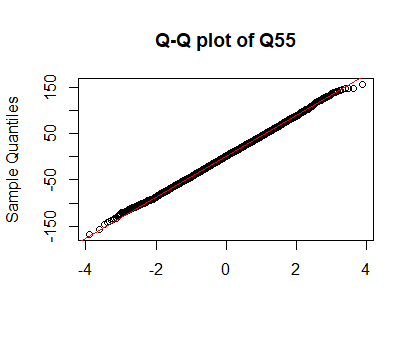}
    \includegraphics[width=0.3\columnwidth]{./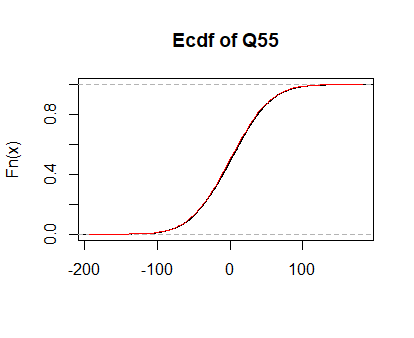}
    \\
    \includegraphics[width=0.3\columnwidth]{./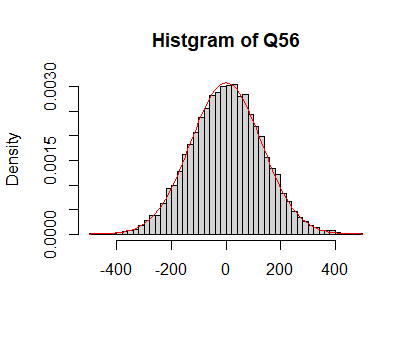}
    \includegraphics[width=0.3\columnwidth]{./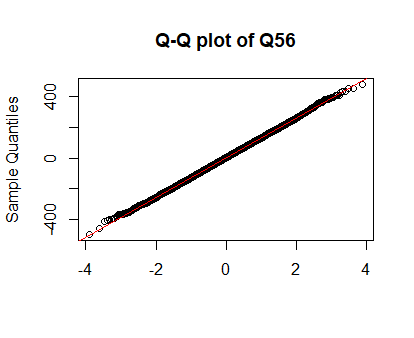}
    \includegraphics[width=0.3\columnwidth]{./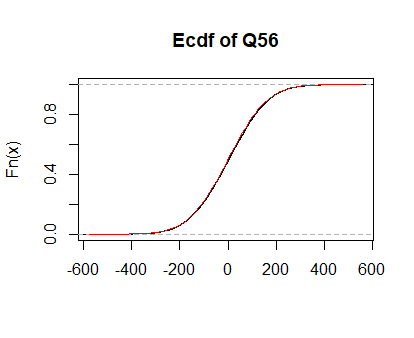}
    \\
\end{figure}
\newpage
\begin{figure}[h]
    \ \\ \ \\ \ \\
    \includegraphics[width=0.3\columnwidth]{./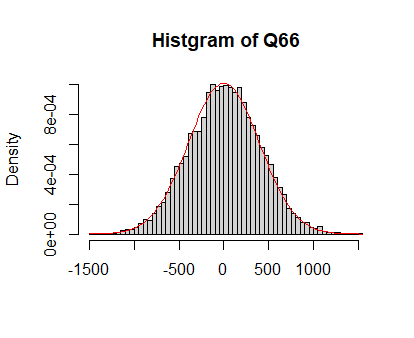}
    \includegraphics[width=0.3\columnwidth]{./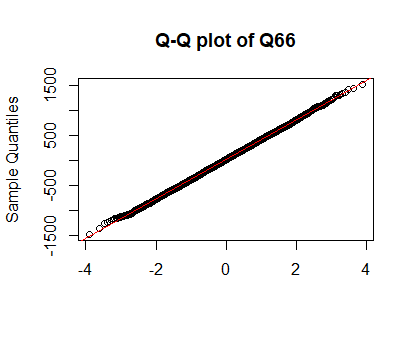}
    \includegraphics[width=0.3\columnwidth]{./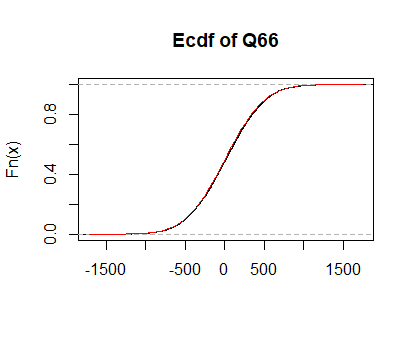}
    \caption{Histograms (left), Q-Q plots (middle) and empirical distributions (right) of $\sqrt{n}((\mathbb{Q}_{\mathbb{XX}})_{ij}-({\bf{\Sigma}}_0)_{ij})$ for $i\leq j$ and $i,j=1,\cdots,6$. The red lines are theoretical curves.}\label{Qfigureer}
\end{figure}
\newpage
\begin{figure}[h]
    \ \\ \ \\ \ \\
    \includegraphics[width=0.3\columnwidth]{./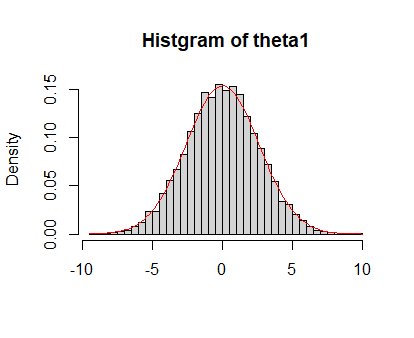}
    \includegraphics[width=0.3\columnwidth]{./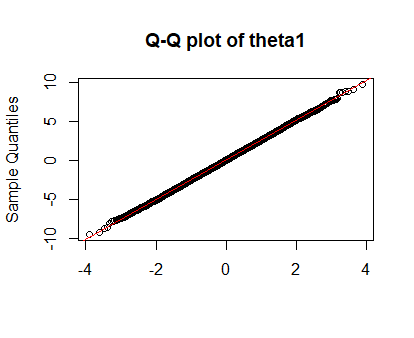}
    \includegraphics[width=0.3\columnwidth]{./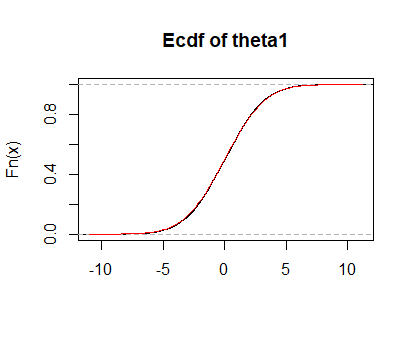}
    \\
    \includegraphics[width=0.3\columnwidth]{./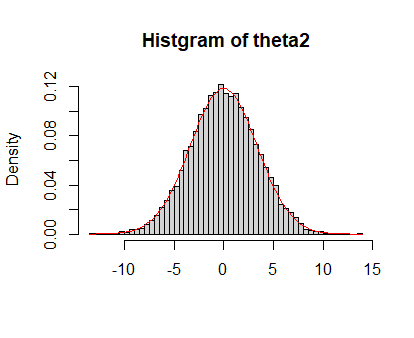}
    \includegraphics[width=0.3\columnwidth]{./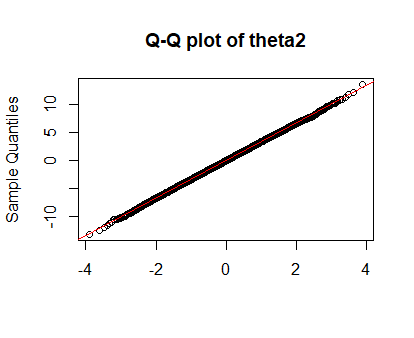}
    \includegraphics[width=0.3\columnwidth]{./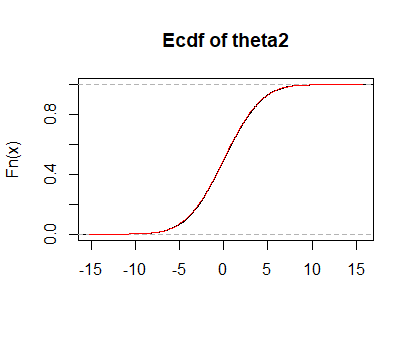}
    \\ 
    \includegraphics[width=0.3\columnwidth]{./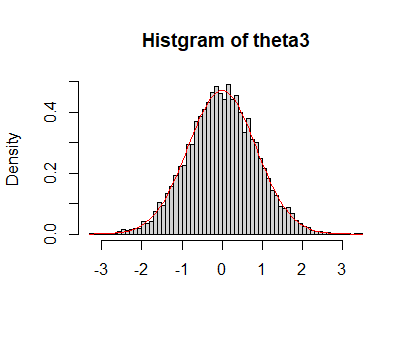}
    \includegraphics[width=0.3\columnwidth]{./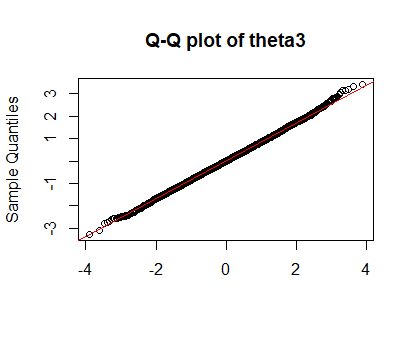}
    \includegraphics[width=0.3\columnwidth]{./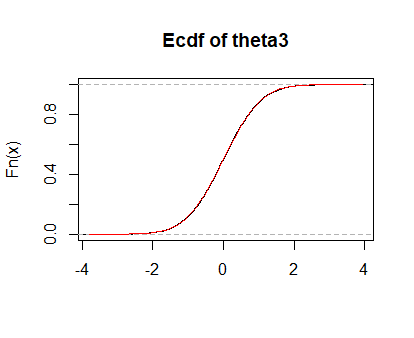}
    \\
    \includegraphics[width=0.3\columnwidth]{./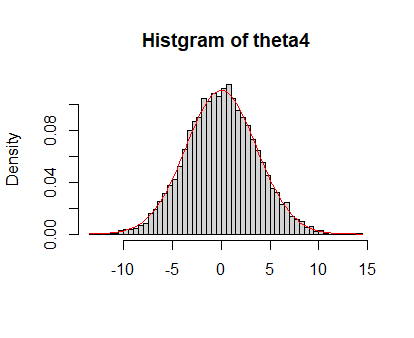}
    \includegraphics[width=0.3\columnwidth]{./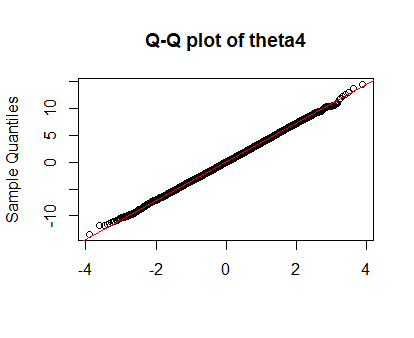}
    \includegraphics[width=0.3\columnwidth]{./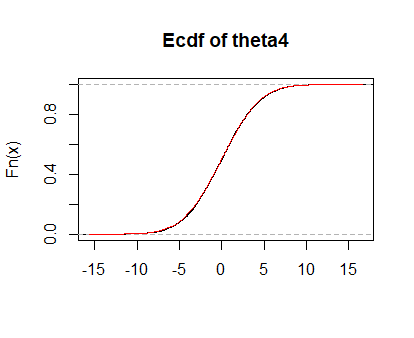}
    \\
    \includegraphics[width=0.3\columnwidth]{./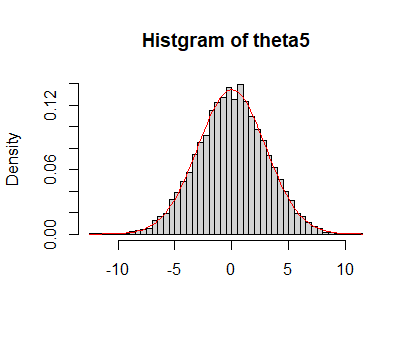}
    \includegraphics[width=0.3\columnwidth]{./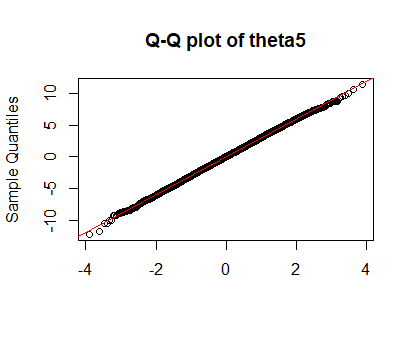}
    \includegraphics[width=0.3\columnwidth]{./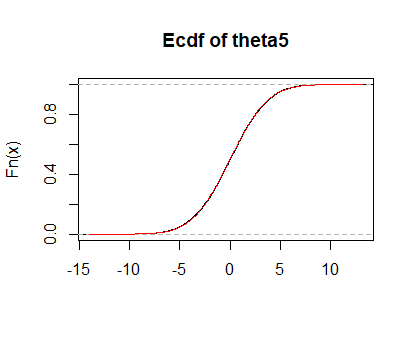}
\end{figure}
\newpage
\begin{figure}[h]
    \ \\ \ \\ \ \\
    \includegraphics[width=0.3\columnwidth]{./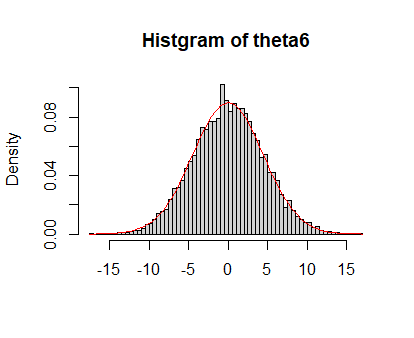}
    \includegraphics[width=0.3\columnwidth]{./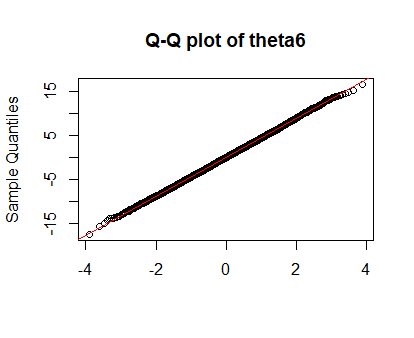}
    \includegraphics[width=0.3\columnwidth]{./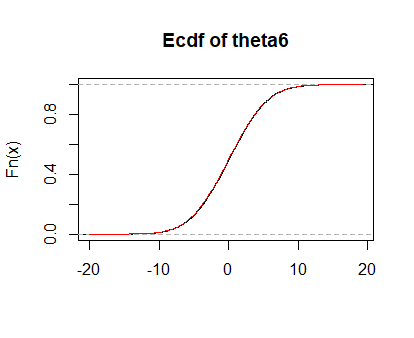}
    \\ 
    \includegraphics[width=0.3\columnwidth]{./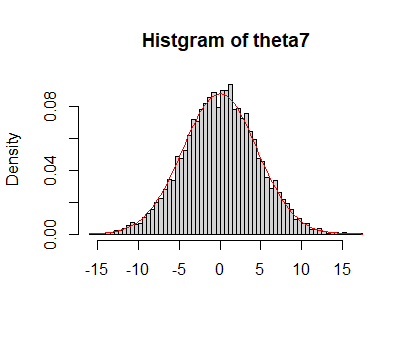}
    \includegraphics[width=0.3\columnwidth]{./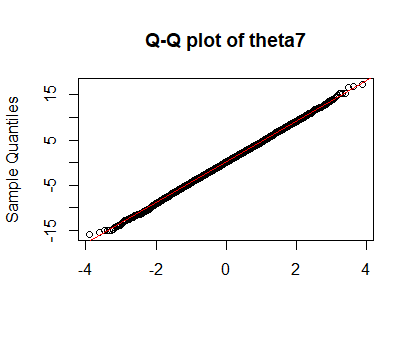}
    \includegraphics[width=0.3\columnwidth]{./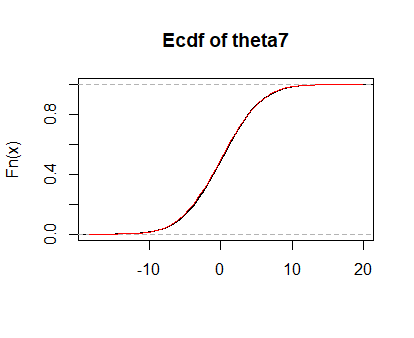}
    \\ 
    \includegraphics[width=0.3\columnwidth]{./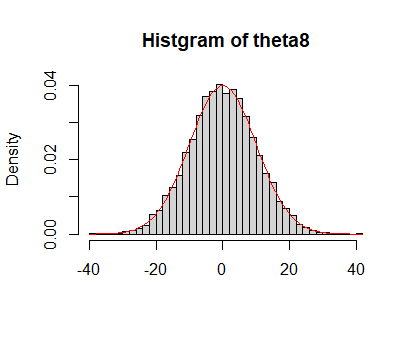}
    \includegraphics[width=0.3\columnwidth]{./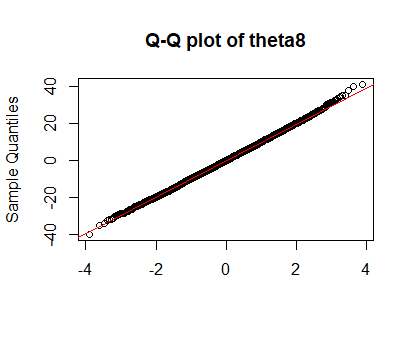}
    \includegraphics[width=0.3\columnwidth]{./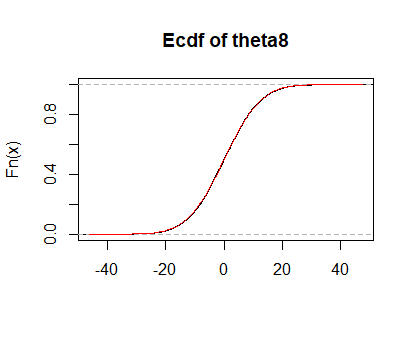}
    \\ 
    \includegraphics[width=0.3\columnwidth]{./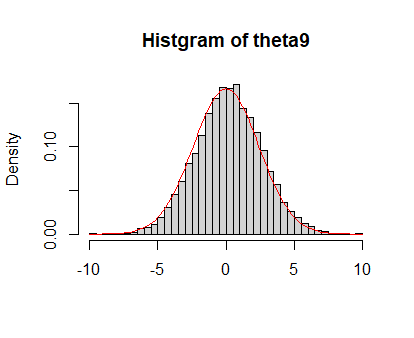}
    \includegraphics[width=0.3\columnwidth]{./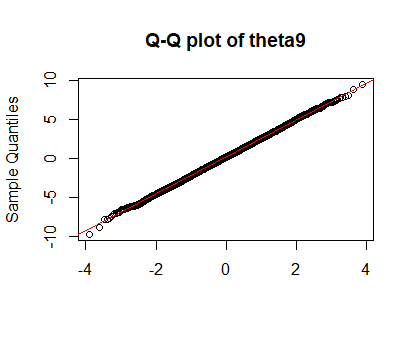}
    \includegraphics[width=0.3\columnwidth]{./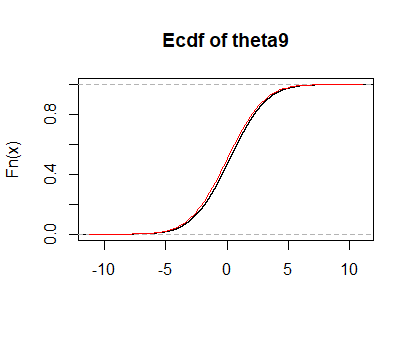}
    \\
    \includegraphics[width=0.3\columnwidth]{./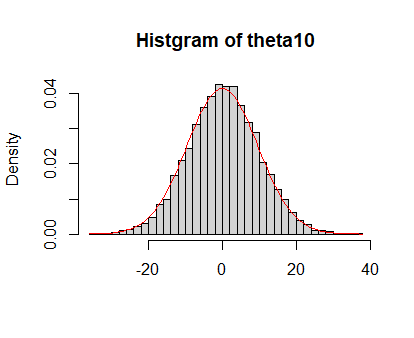}
    \includegraphics[width=0.3\columnwidth]{./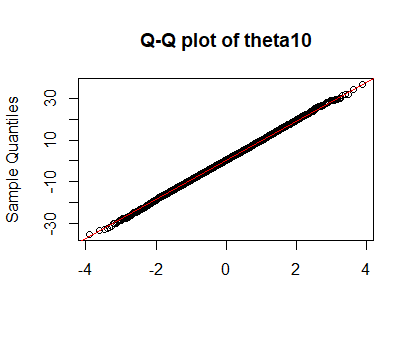}
    \includegraphics[width=0.3\columnwidth]{./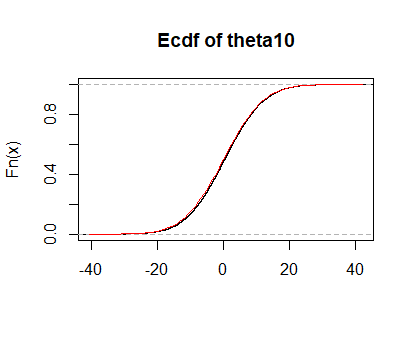}
\end{figure}
\newpage
\begin{figure}[h]
    \ \\ \ \\
    \includegraphics[width=0.3\columnwidth]{./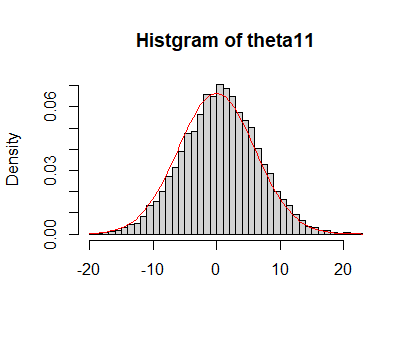}
    \includegraphics[width=0.3\columnwidth]{./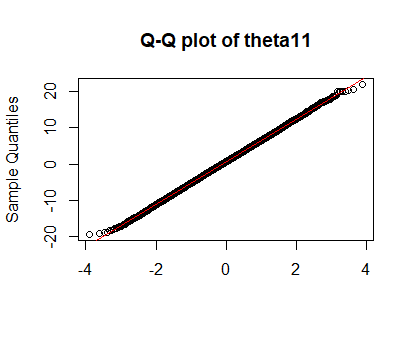}
    \includegraphics[width=0.3\columnwidth]{./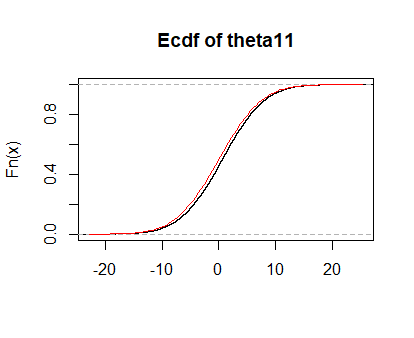}
    \\
    \includegraphics[width=0.3\columnwidth]{./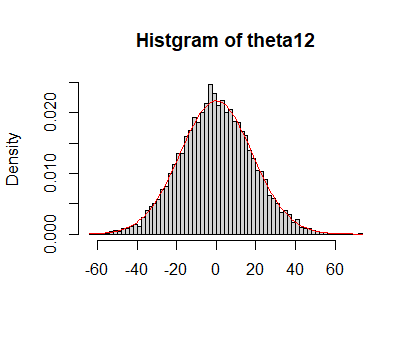}
    \includegraphics[width=0.3\columnwidth]{./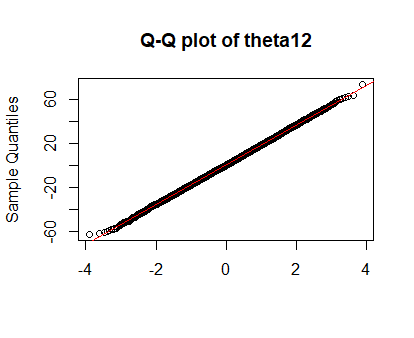}
    \includegraphics[width=0.3\columnwidth]{./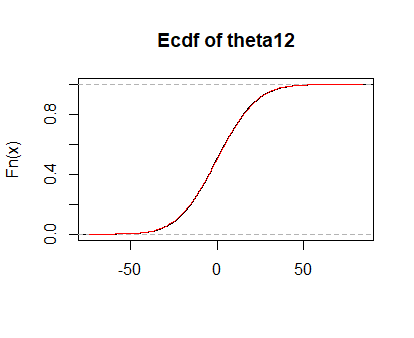}
    \\
    \includegraphics[width=0.3\columnwidth]{./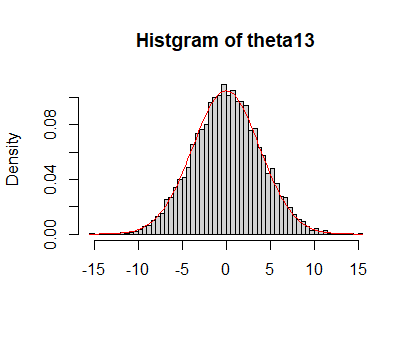}
    \includegraphics[width=0.3\columnwidth]{./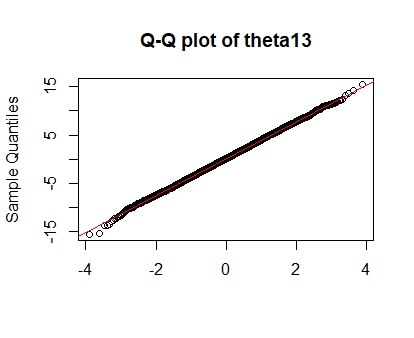}
    \includegraphics[width=0.3\columnwidth]{./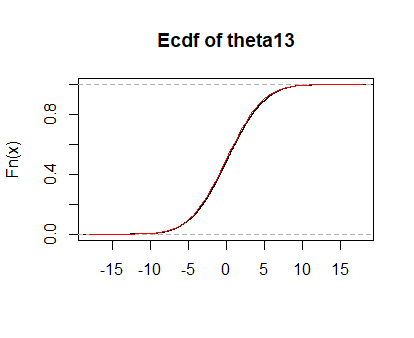}
    \\
    \includegraphics[width=0.3\columnwidth]{./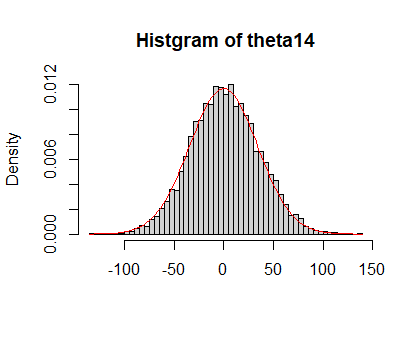}
    \includegraphics[width=0.3\columnwidth]{./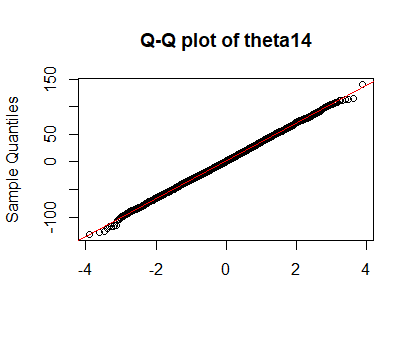}
    \includegraphics[width=0.3\columnwidth]{./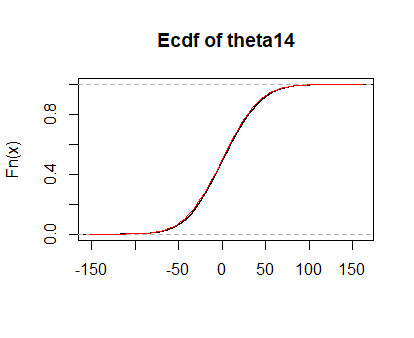}
    \\
    \includegraphics[width=0.3\columnwidth]{./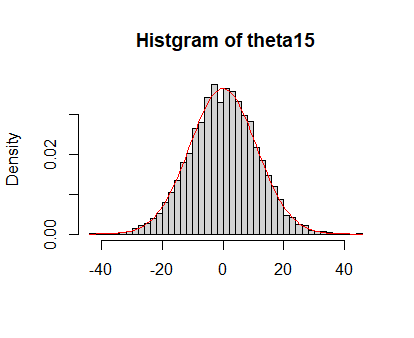}
    \includegraphics[width=0.3\columnwidth]{./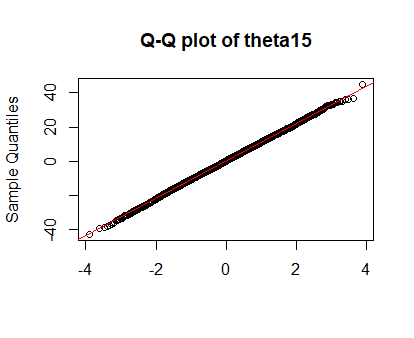}
    \includegraphics[width=0.3\columnwidth]{./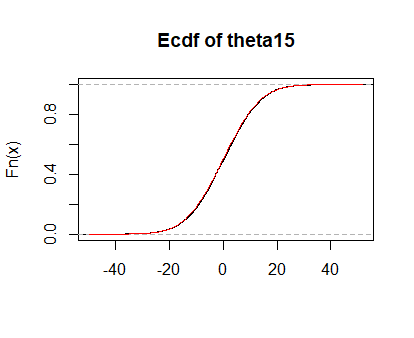}
    \caption{Histograms (left), Q-Q plots (middle) and empirical distributions (right) of $\sqrt{n}(\hat{\theta}_{n}^{(i)}-\theta_{0}^{(i)})$ for $i=1,\cdots,15$. The red lines are theoretical curves.}\label{thetafigureer}
\end{figure}
\clearpage
\ \\
\ \\
\ \\
\begin{table}[h]
    \centering
    \begin{tabular}{cc}
    \hline
    Mean\ \  (True value) &\quad  5.999\ \ (6.000)\\
    SD\ \ (Theoretical value) &\quad 3.449\ \ (3.464)\\ \\
    \end{tabular}
\caption{Sample mean and sample standard deviation (SD) of the test statistic $\mathbb{T}_{n}$.}
\label{testtable1er}
\end{table}
\ \\
\ \\
\begin{figure}[h]
    \centering
    \includegraphics[width=0.3\columnwidth]{./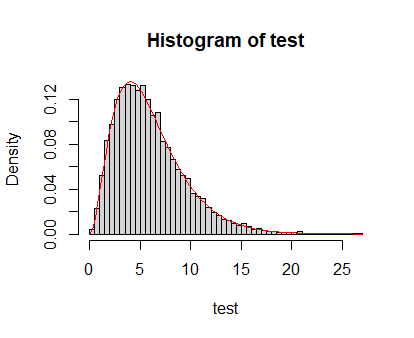}
    \includegraphics[width=0.3\columnwidth]{./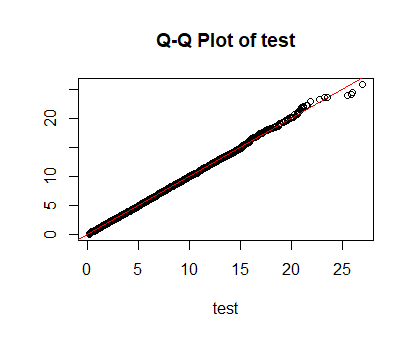}
    \includegraphics[width=0.3\columnwidth]{./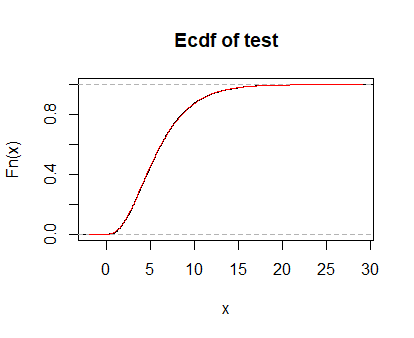}
\caption{Histogram (left), Q-Q plot (middle) and empirical distribution (right) of the test statistic $\mathbb{T}_{n}$. The red lines are theoretical curves.}
\label{testfigure1er}
\end{figure}
\ \\
\ \\
\begin{table}[h]
    \centering
    \begin{tabular}{cc}
    \hline
    Model A &\quad 10000\\
    Model B &\quad 10000\\ \ \\
    \end{tabular}
\caption{The number of rejections of the quasi-likelihood ratio test in Model A and Model B.}\label{testtable2er}
\end{table}
\begin{table}[h]
    \centering
    \begin{tabular}{cccccc}
    &\quad Min &\  $Q1$ &\  Median &\  $Q3$ &\  Max\\\hline
    Model A &\quad 274634 &\ 277845 &\ 278548 &\ 279250 &\ 282330  \\
    Model B &\quad 208785 &\ 211270 &\ 211870 &\ 212435 &\ 215120 \\ \\
    \end{tabular}
\caption{Quartile of the test statistic $\mathbb{T}_{n}$ in Model A and Model B.}
\label{testtable3er}
\end{table}
\clearpage
\subsection{Details of simulation results in Section 5}\label{simulation}
Figure \ref{Qfigurenon2} shows histograms, Q-Q plots and empirical distributions of $\sqrt{n}((\mathbb{Q}_{\mathbb{XX}})_{ij}-({\bf{\Sigma}}_0)_{ij})$ for $i\leq j$ and $i,j=1,\cdots,6$. Figure \ref{thetafigurenon2} shows histograms, Q-Q plots and empirical distributions of $\sqrt{n}(\hat{\theta}_{n}^{(i)}-\theta_{0}^{(i)})$ for $i=1,\cdots,15$.
\begin{figure}[h]
    \ \\ \ \\ \ \\ \ \\
    \includegraphics[width=0.3\columnwidth]{./files/histQ11.png}
    \includegraphics[width=0.3\columnwidth]{./files/QQQ11.png}
    \includegraphics[width=0.3\columnwidth]{./files/ecdfQ11.png}
    \\ 
    \includegraphics[width=0.3\columnwidth]{./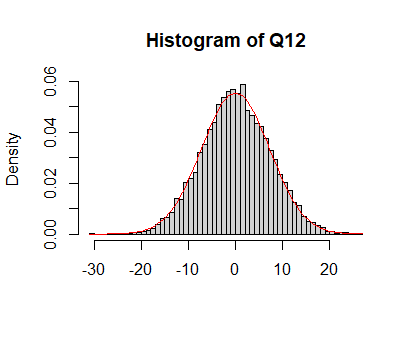}
    \includegraphics[width=0.3\columnwidth]{./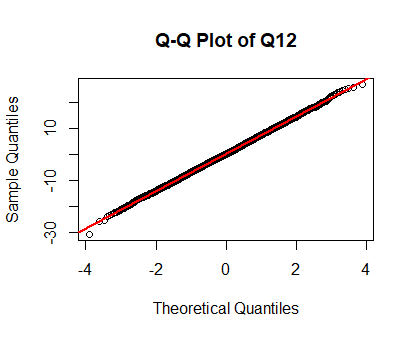}
    \includegraphics[width=0.3\columnwidth]{./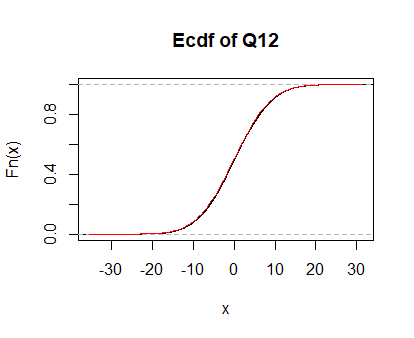}
    \\ 
    \includegraphics[width=0.3\columnwidth]{./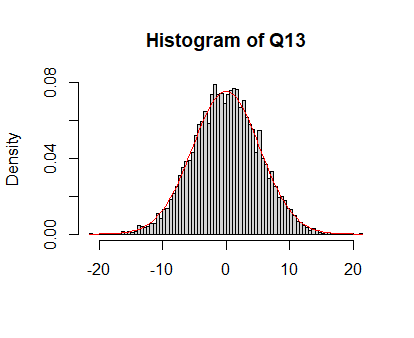}
    \includegraphics[width=0.3\columnwidth]{./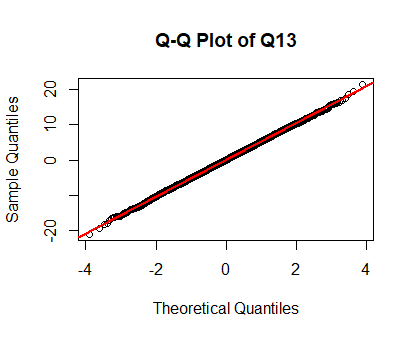}
    \includegraphics[width=0.3\columnwidth]{./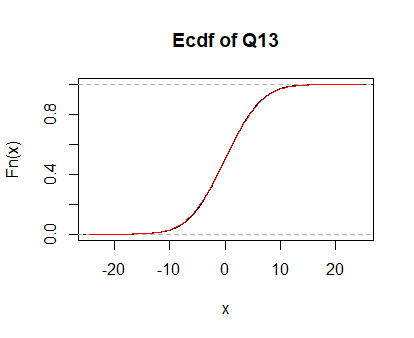}
    \\
    \includegraphics[width=0.3\columnwidth]{./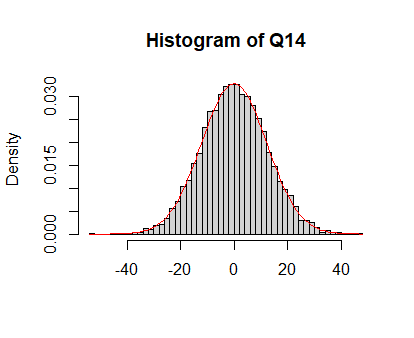}
    \includegraphics[width=0.3\columnwidth]{./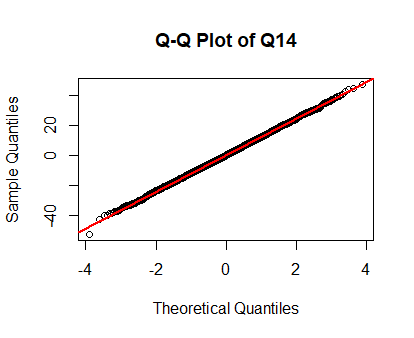}
    \includegraphics[width=0.3\columnwidth]{./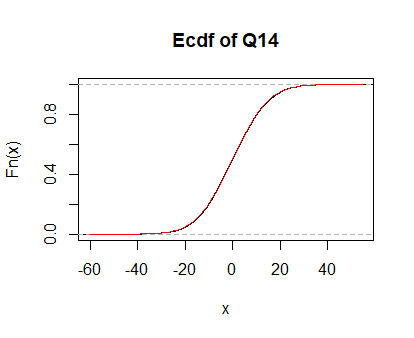}
\end{figure}
\newpage
\begin{figure}[h]
    \ \\ \ \\ \ \\
    \includegraphics[width=0.3\columnwidth]{./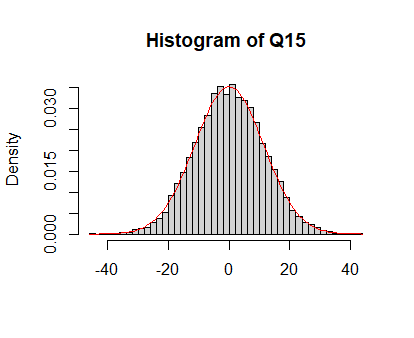}
    \includegraphics[width=0.3\columnwidth]{./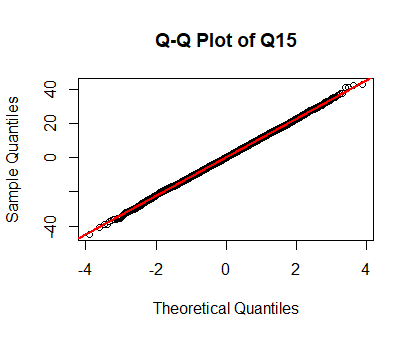}
    \includegraphics[width=0.3\columnwidth]{./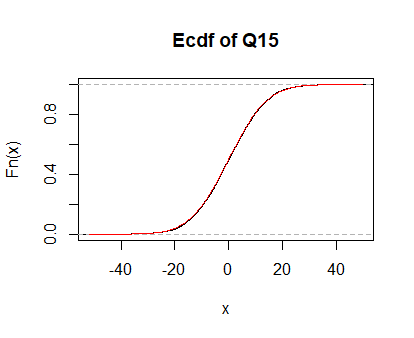}
    \\
    \includegraphics[width=0.3\columnwidth]{./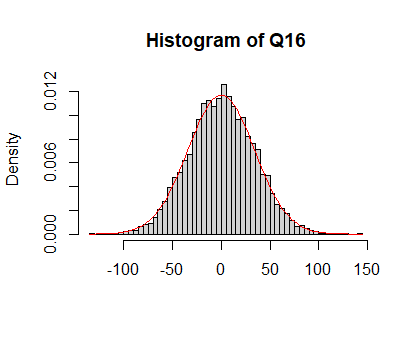}
    \includegraphics[width=0.3\columnwidth]{./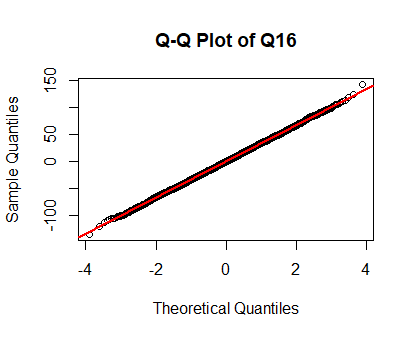}
    \includegraphics[width=0.3\columnwidth]{./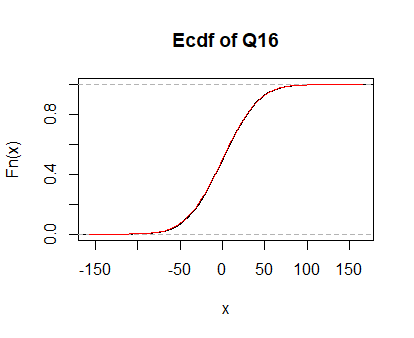}
    \\
    \includegraphics[width=0.3\columnwidth]{./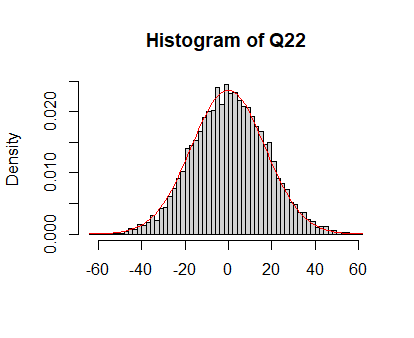}
    \includegraphics[width=0.3\columnwidth]{./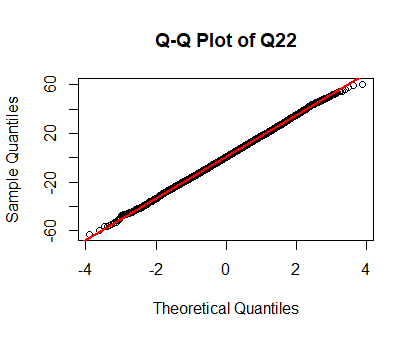}
    \includegraphics[width=0.3\columnwidth]{./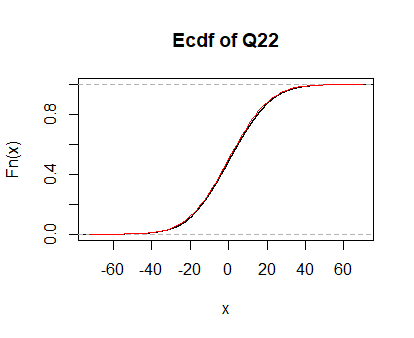}
    \\ 
    \includegraphics[width=0.3\columnwidth]{./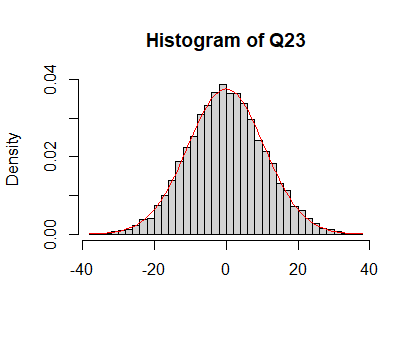}
    \includegraphics[width=0.3\columnwidth]{./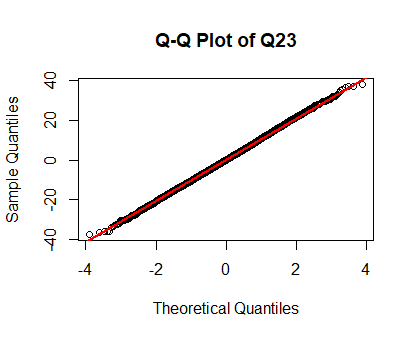}
    \includegraphics[width=0.3\columnwidth]{./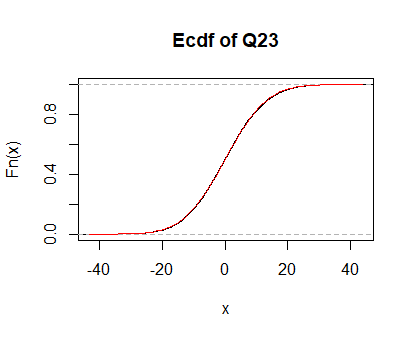}
    \\
    \includegraphics[width=0.3\columnwidth]{./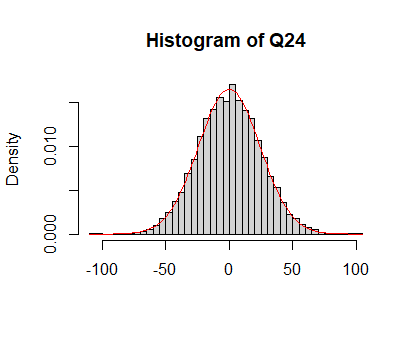}
    \includegraphics[width=0.3\columnwidth]{./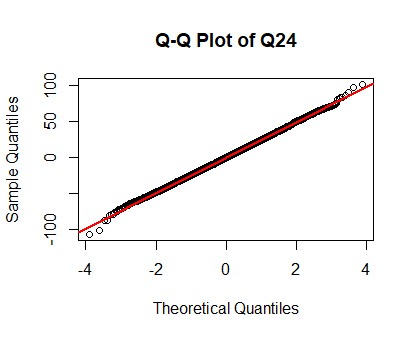}
    \includegraphics[width=0.3\columnwidth]{./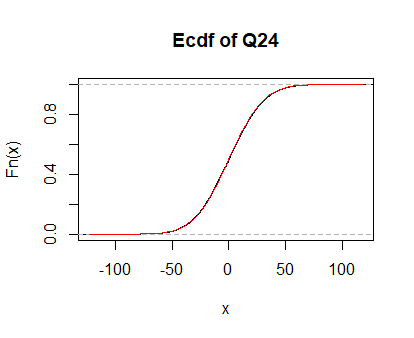}
\end{figure}
\newpage
\begin{figure}[h]
    \ \\ \ \\ \ \\
    \includegraphics[width=0.3\columnwidth]{./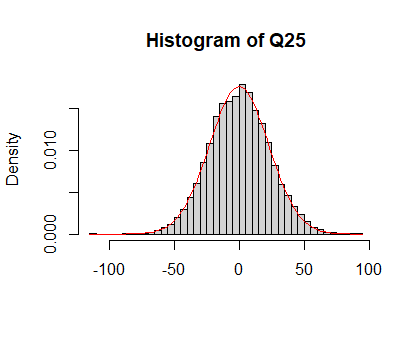}
    \includegraphics[width=0.3\columnwidth]{./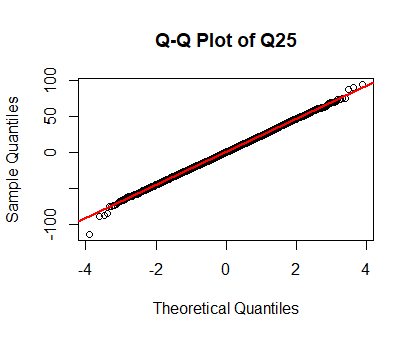}
    \includegraphics[width=0.3\columnwidth]{./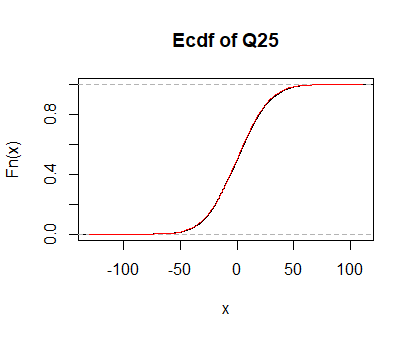}
    \\
    \includegraphics[width=0.3\columnwidth]{./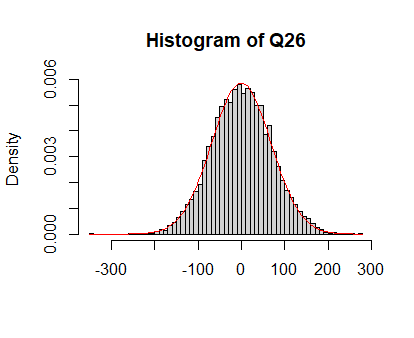}
    \includegraphics[width=0.3\columnwidth]{./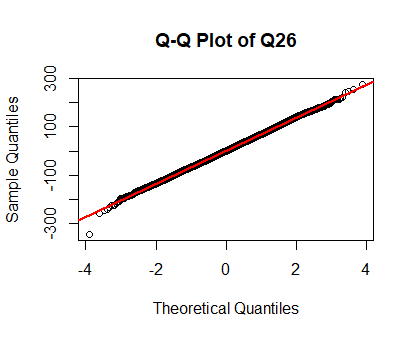}
    \includegraphics[width=0.3\columnwidth]{./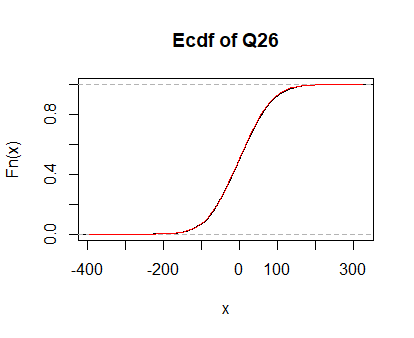}
    \\
    \includegraphics[width=0.3\columnwidth]{./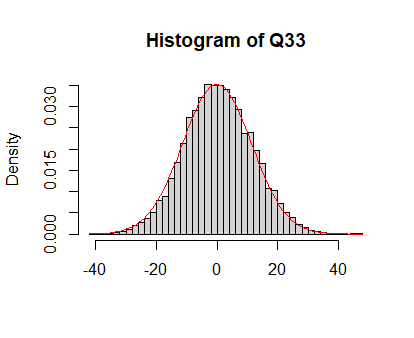}
    \includegraphics[width=0.3\columnwidth]{./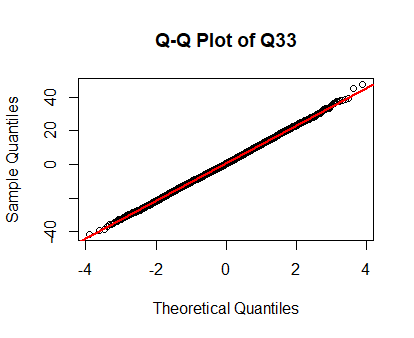}
    \includegraphics[width=0.3\columnwidth]{./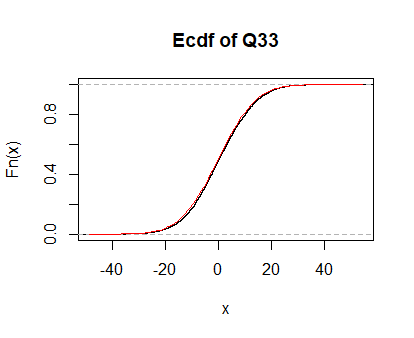}
    \\ 
    \includegraphics[width=0.3\columnwidth]{./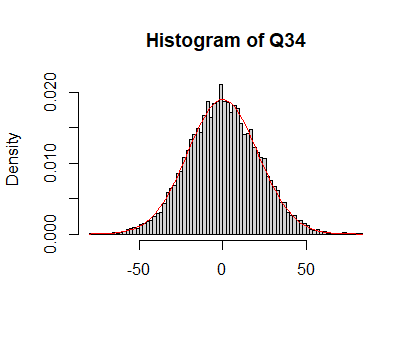}
    \includegraphics[width=0.3\columnwidth]{./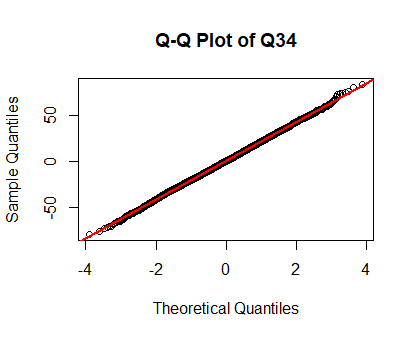}
    \includegraphics[width=0.3\columnwidth]{./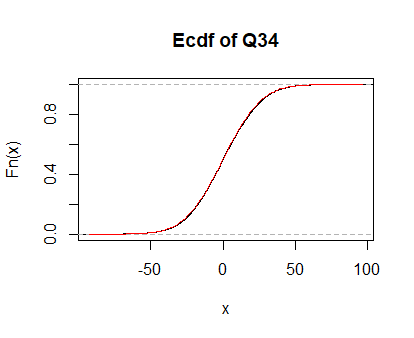}
    \\
    \includegraphics[width=0.3\columnwidth]{./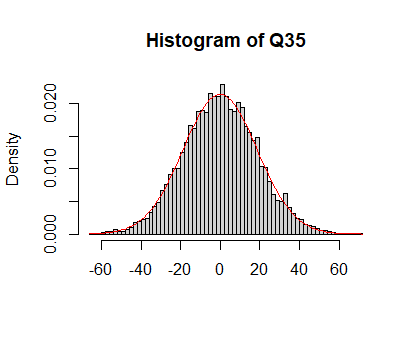}
    \includegraphics[width=0.3\columnwidth]{./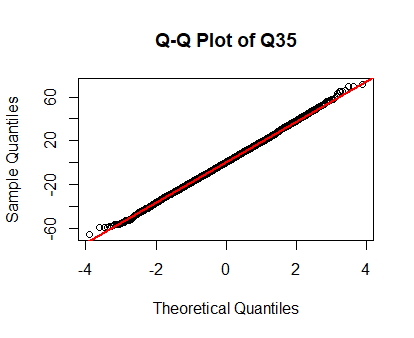}
    \includegraphics[width=0.3\columnwidth]{./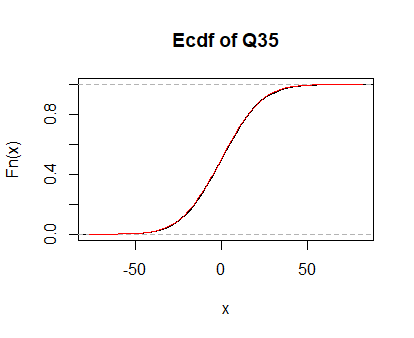}
\end{figure}
\newpage
\begin{figure}[h]
    \ \\ \ \\ \ \\
    \includegraphics[width=0.3\columnwidth]{./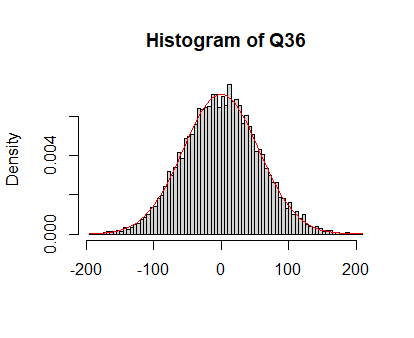}
    \includegraphics[width=0.3\columnwidth]{./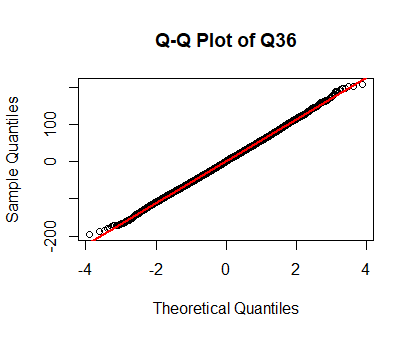}
    \includegraphics[width=0.3\columnwidth]{./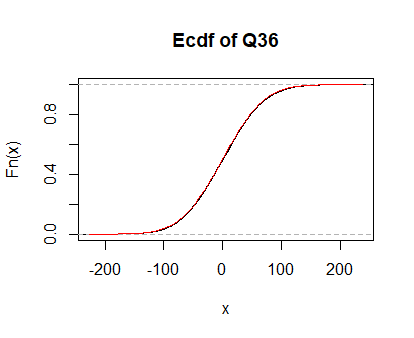}
    \\
    \includegraphics[width=0.3\columnwidth]{./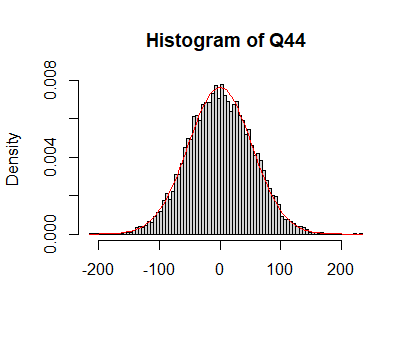}
    \includegraphics[width=0.3\columnwidth]{./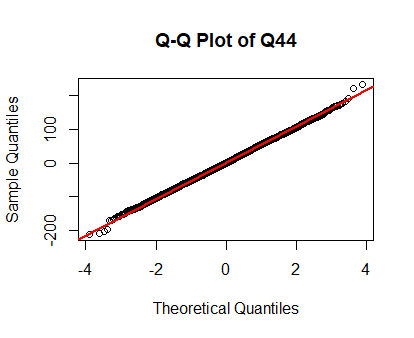}
    \includegraphics[width=0.3\columnwidth]{./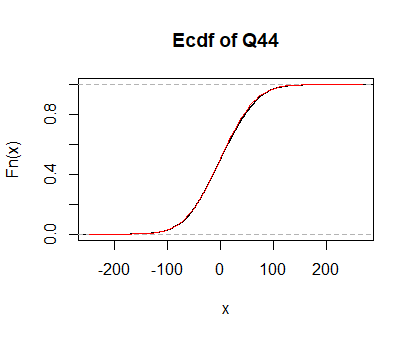}
    \\
    \includegraphics[width=0.3\columnwidth]{./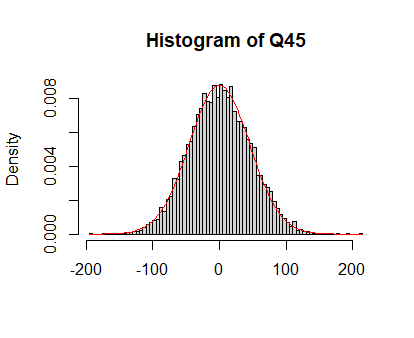}
    \includegraphics[width=0.3\columnwidth]{./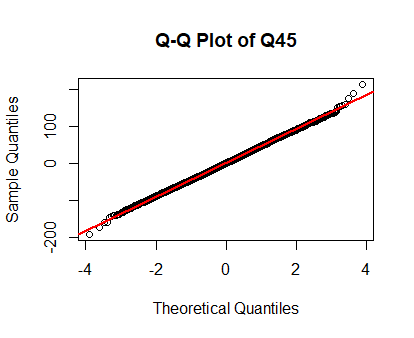}
    \includegraphics[width=0.3\columnwidth]{./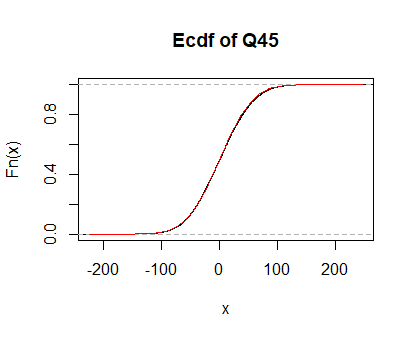}
    \\
    \includegraphics[width=0.3\columnwidth]{./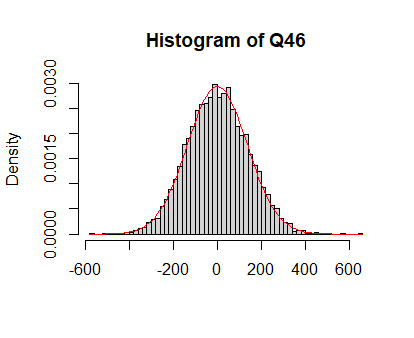}
    \includegraphics[width=0.3\columnwidth]{./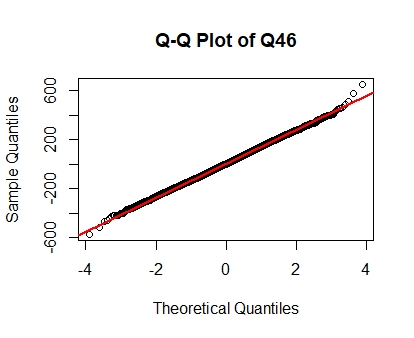}
    \includegraphics[width=0.3\columnwidth]{./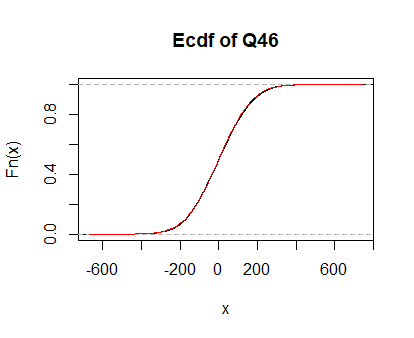}
    \\
    \includegraphics[width=0.3\columnwidth]{./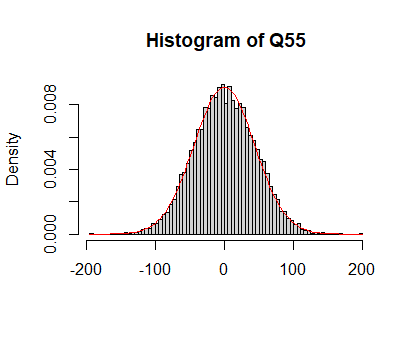}
    \includegraphics[width=0.3\columnwidth]{./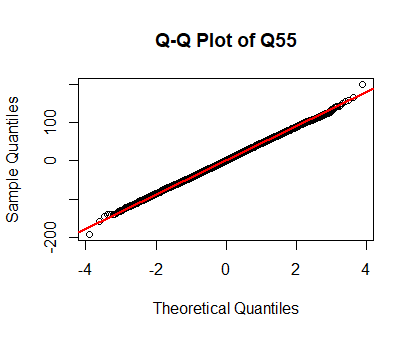}
    \includegraphics[width=0.3\columnwidth]{./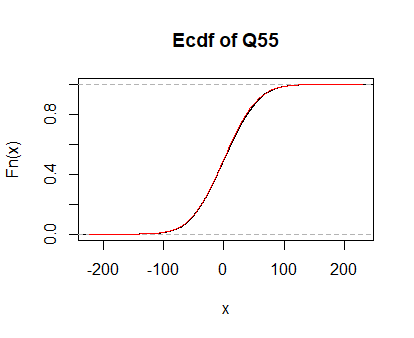}
\end{figure}
\newpage
\begin{figure}[h]
    \ \\ \ \\ \ \\
    \includegraphics[width=0.3\columnwidth]{./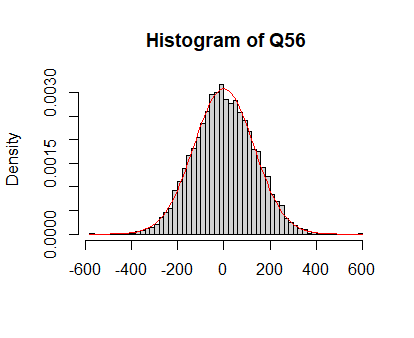}
    \includegraphics[width=0.3\columnwidth]{./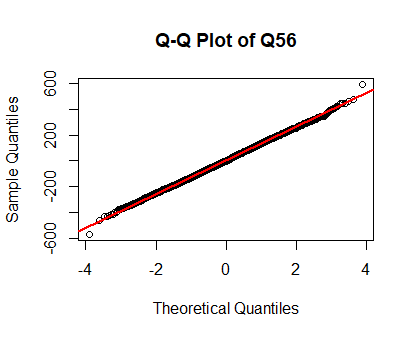}
    \includegraphics[width=0.3\columnwidth]{./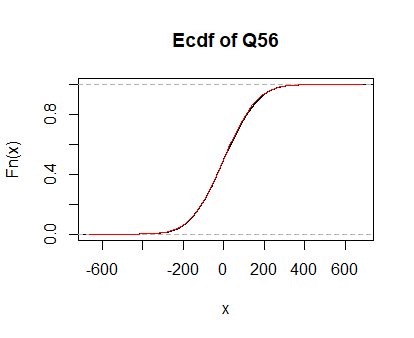}
    \\
    \includegraphics[width=0.3\columnwidth]{./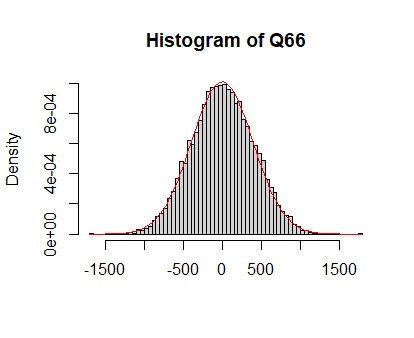}
    \includegraphics[width=0.3\columnwidth]{./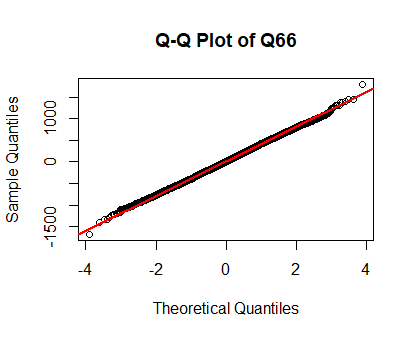}
    \includegraphics[width=0.3\columnwidth]{./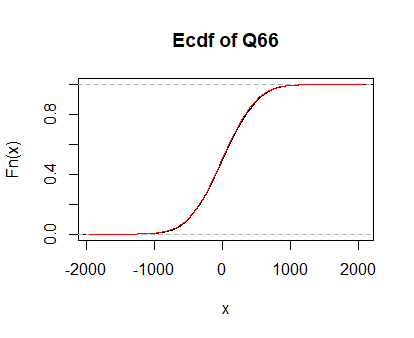}
    \caption{Histograms (left), Q-Q plots (middle) and empirical distributions (right) of $\sqrt{n}((\mathbb{Q}_{\mathbb{XX}})_{ij}-({\bf{\Sigma}}_0)_{ij})$
    for $i\leq j$ and $i,j=1,\cdots,6$. The red lines are theoretical curves.}
    \label{Qfigurenon2}
\end{figure}
\newpage
\begin{figure}[h]
    \ \\ \ \\ \ \\
    \includegraphics[width=0.3\columnwidth]{./files/histtheta1.png}
    \includegraphics[width=0.3\columnwidth]{./files/QQtheta1.png}
    \includegraphics[width=0.3\columnwidth]{./files/ecdftheta1.png}
    \\
    \includegraphics[width=0.3\columnwidth]{./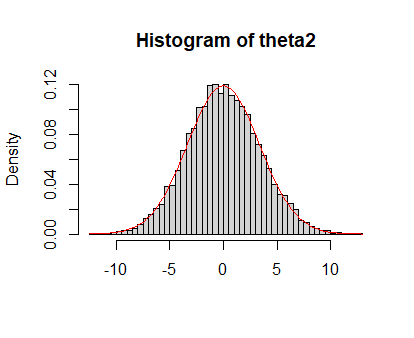}
    \includegraphics[width=0.3\columnwidth]{./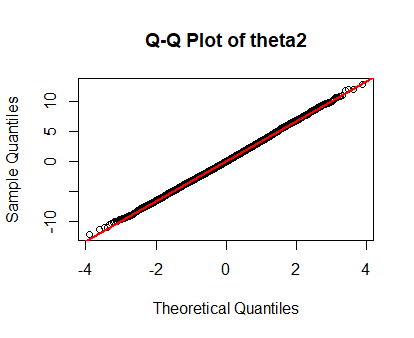}
    \includegraphics[width=0.3\columnwidth]{./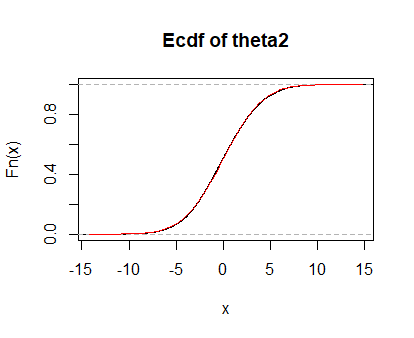}
    \\ 
    \includegraphics[width=0.3\columnwidth]{./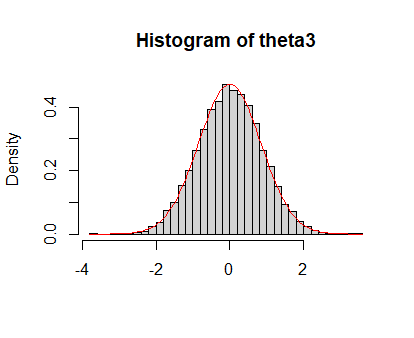}
    \includegraphics[width=0.3\columnwidth]{./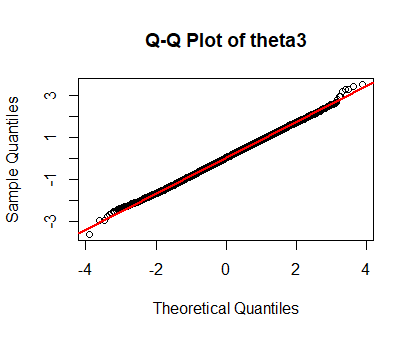}
    \includegraphics[width=0.3\columnwidth]{./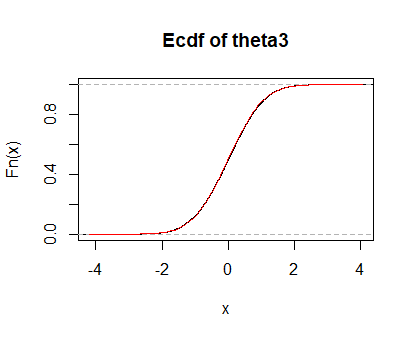}
    \\ 
    \includegraphics[width=0.3\columnwidth]{./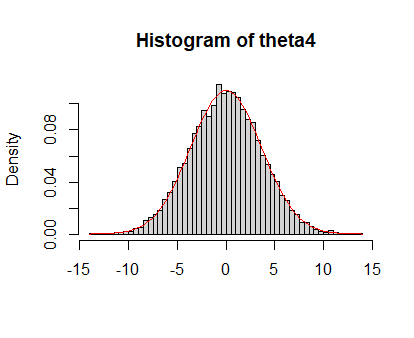}
    \includegraphics[width=0.3\columnwidth]{./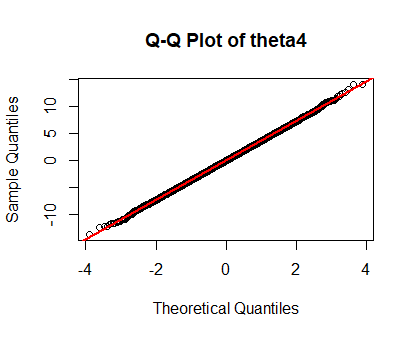}
    \includegraphics[width=0.3\columnwidth]{./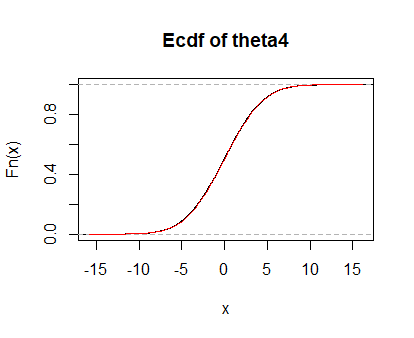}
    \\
    \includegraphics[width=0.3\columnwidth]{./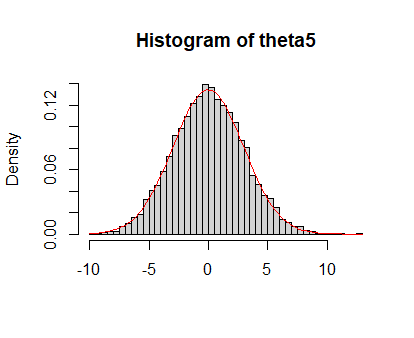}
    \includegraphics[width=0.3\columnwidth]{./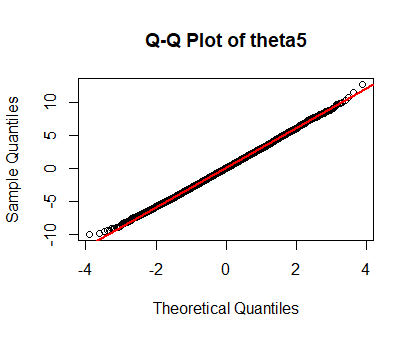}
    \includegraphics[width=0.3\columnwidth]{./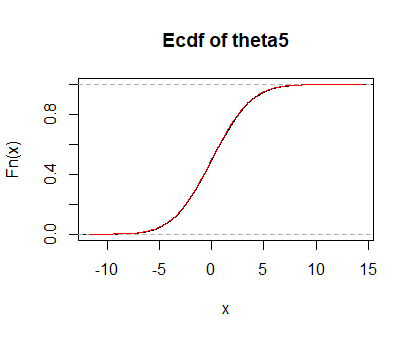}
\end{figure}
\newpage
\begin{figure}[h]
    \ \\ \ \\ \ \\
    \includegraphics[width=0.3\columnwidth]{./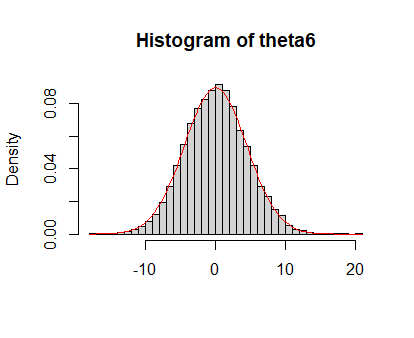}
    \includegraphics[width=0.3\columnwidth]{./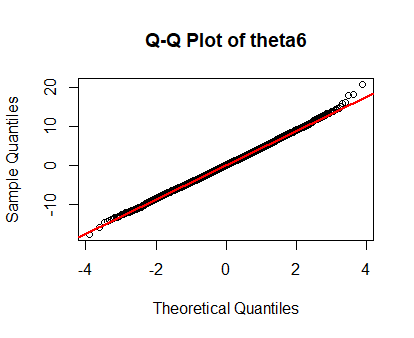}
    \includegraphics[width=0.3\columnwidth]{./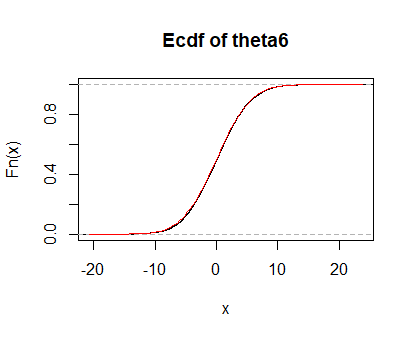}
    \\
    \includegraphics[width=0.3\columnwidth]{./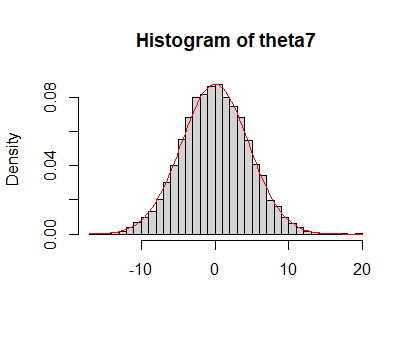}
    \includegraphics[width=0.3\columnwidth]{./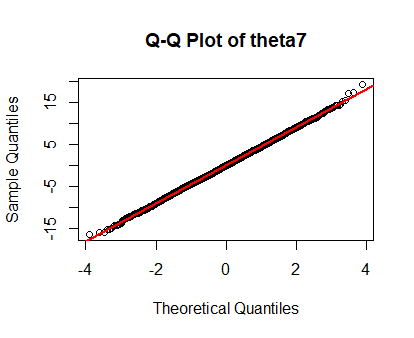}
    \includegraphics[width=0.3\columnwidth]{./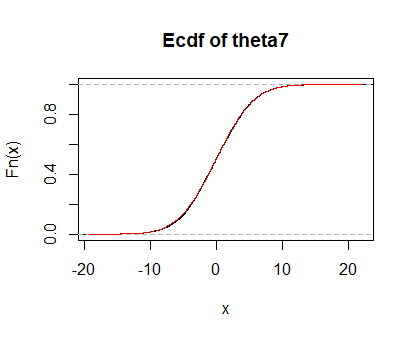}
    \\
    \includegraphics[width=0.3\columnwidth]{./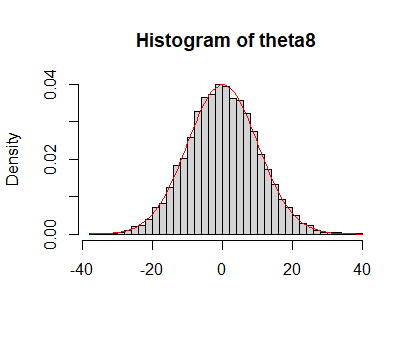}
    \includegraphics[width=0.3\columnwidth]{./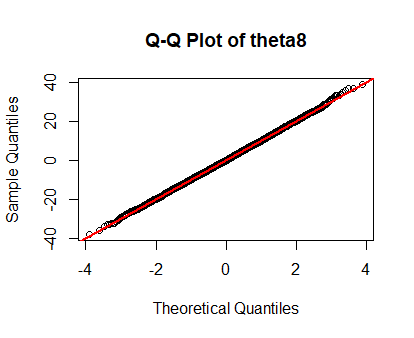}
    \includegraphics[width=0.3\columnwidth]{./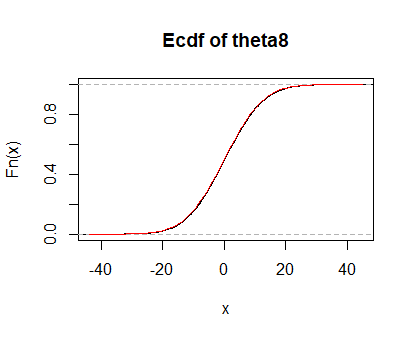}
    \\
    \includegraphics[width=0.3\columnwidth]{./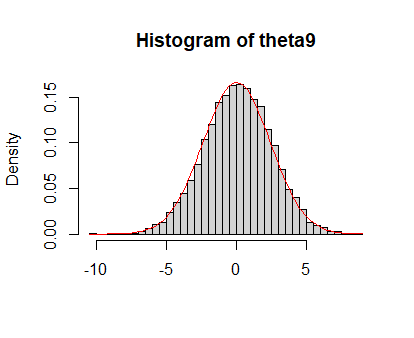}
    \includegraphics[width=0.3\columnwidth]{./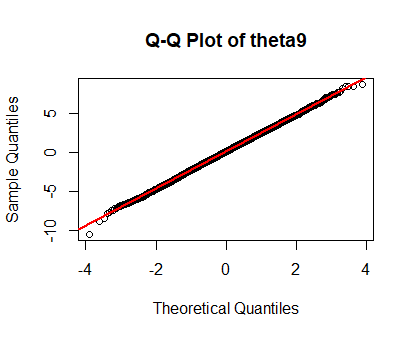}
    \includegraphics[width=0.3\columnwidth]{./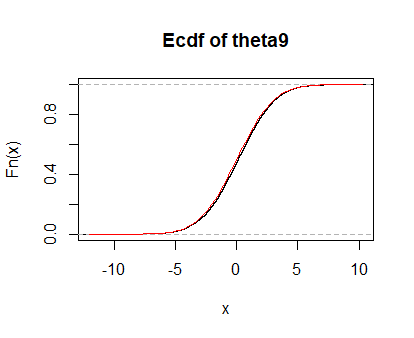}
    \\
    \includegraphics[width=0.3\columnwidth]{./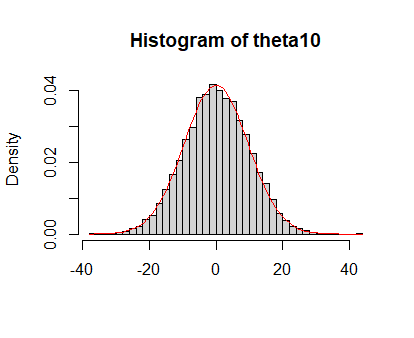}
    \includegraphics[width=0.3\columnwidth]{./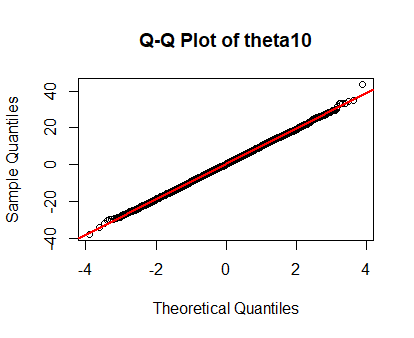}
    \includegraphics[width=0.3\columnwidth]{./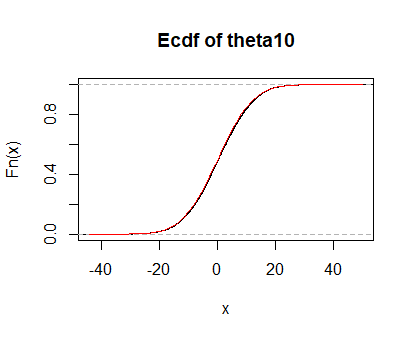}
\end{figure}
\newpage
\begin{figure}[h]
    \ \\ \ \\
    \includegraphics[width=0.3\columnwidth]{./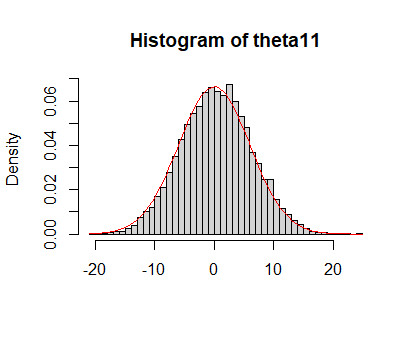}
    \includegraphics[width=0.3\columnwidth]{./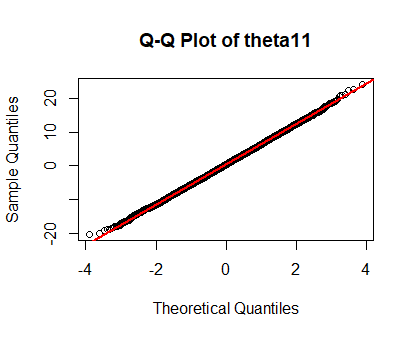}
    \includegraphics[width=0.3\columnwidth]{./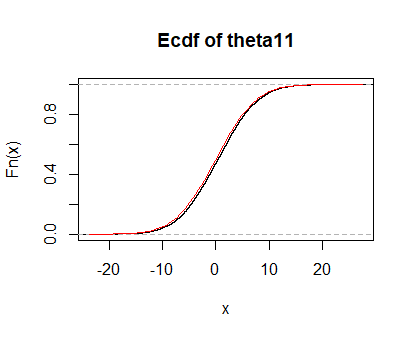}
    \\
    \includegraphics[width=0.3\columnwidth]{./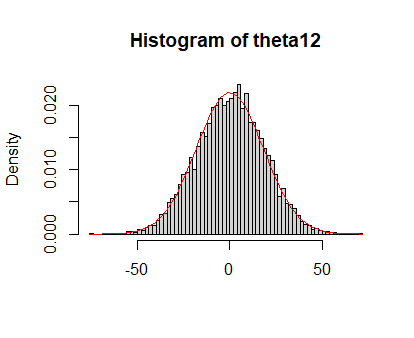}
    \includegraphics[width=0.3\columnwidth]{./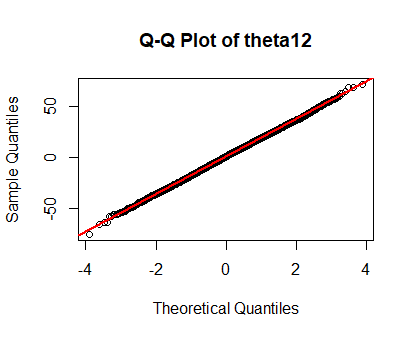}
    \includegraphics[width=0.3\columnwidth]{./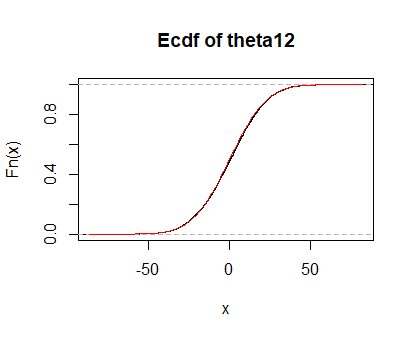}
    \\
    \includegraphics[width=0.3\columnwidth]{./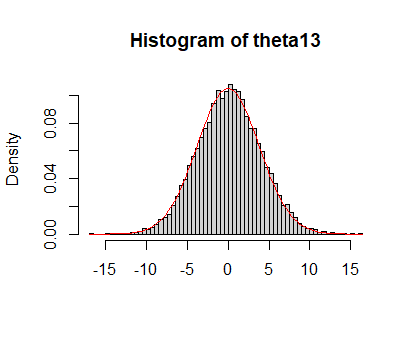}
    \includegraphics[width=0.3\columnwidth]{./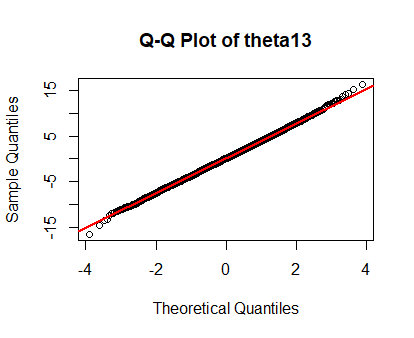}
    \includegraphics[width=0.3\columnwidth]{./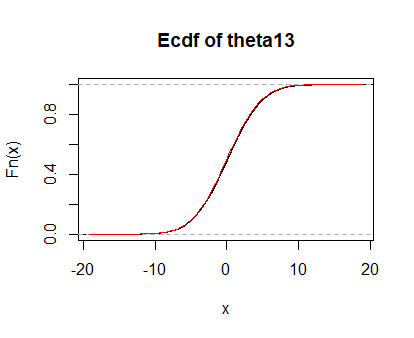}
    \\
    \includegraphics[width=0.3\columnwidth]{./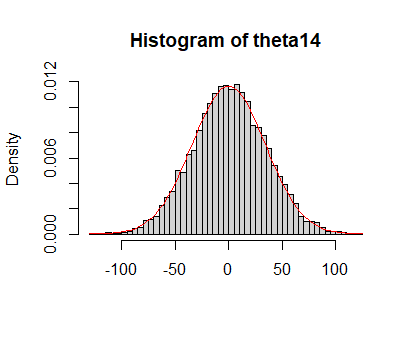}
    \includegraphics[width=0.3\columnwidth]{./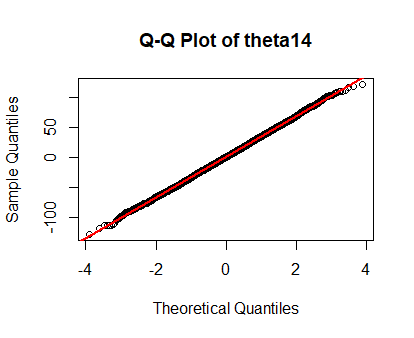}
    \includegraphics[width=0.3\columnwidth]{./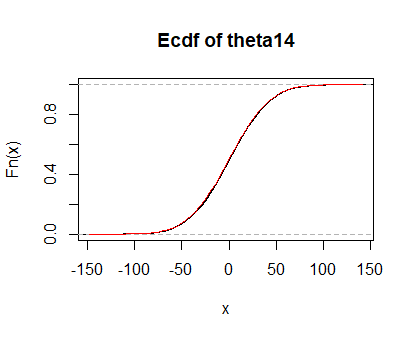}
    \\
    \includegraphics[width=0.3\columnwidth]{./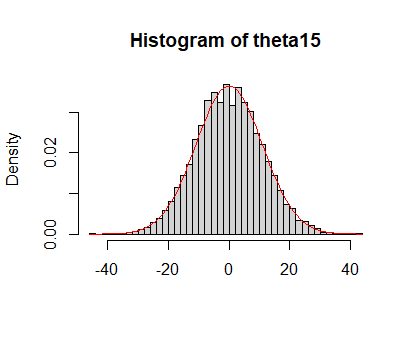}
    \includegraphics[width=0.3\columnwidth]{./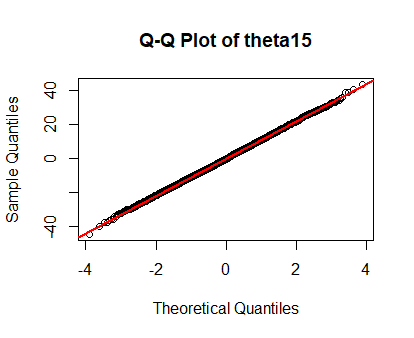}
    \includegraphics[width=0.3\columnwidth]{./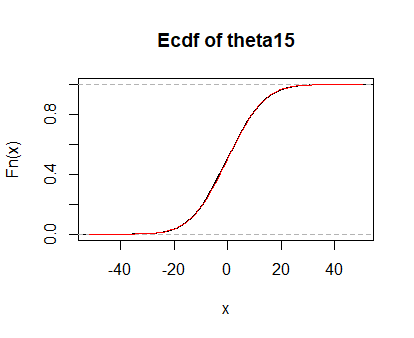}
    \caption{Histograms (left), Q-Q plots (middle) and empirical distributions (right) of $\sqrt{n}(\hat{\theta}_{n}^{(i)}-\theta_{0}^{(i)})$ for $i=1,\cdots,15$. The red lines are theoretical curves.} \label{thetafigurenon2}
\end{figure}
\end{document}